\newcounter{theorem}
\newtheorem{thm}[theorem]{Theorem}
\newtheorem{lemma}[theorem]{Lemma}
\newtheorem{prop}[theorem]{Proposition}
\newtheorem{cor}[theorem]{Corollary}
\newtheorem{conjecture}[theorem]{Conjecture}
\theoremstyle{definition}
\newtheorem{defn}[theorem]{Definition}
\theoremstyle{remark}
\newtheorem*{remark*}{Remark} 
\newtheorem{remark}[theorem]{Remark}
\newtheorem{example}[theorem]{Example}
\newtheorem{question}[theorem]{Question}
\newcommand{\eps}{\varepsilon}
\newcommand{\dl}{\delta}
\numberwithin{equation}{section}
\numberwithin{theorem}{section}
\newcommand{\Z}{\mathcal Z}
\newcommand{\N}{\mathbb N}
\newcommand{\Ad}{\mathrm{Ad}}
\newcommand{\Cu}{\mathcal{C}u}
\newcommand{\dimnuc}{\dim_{\mathrm{nuc}}}
\newcommand{\dr}{\mathrm{dr}}
\newcommand{\R}{\mathcal R}
\newcommand{\vnotimes}{\,\overline{\otimes}\,}
\newcommand{\F}{\mathcal F}
\newcommand{\M}{\mathcal M}
\newcommand{\Q}{\mathcal Q}
\newcommand{\labelledthing}[2]{\hspace{4pt}\buildrel {#2} \over #1 \hspace{3pt}} 
\newcommand{\labelledrightarrow}{\labelledthing{\longrightarrow}}
\title[Dimension and $2$-coloured classification]{Covering dimension of C$^{*}$-algebras and 2-coloured classification}
\author[J.\ Bosa]{Joan Bosa}
\address{\hskip-\parindent Joan Bosa. School of Mathematics and Statistics, University of Glasgow, Glasgow, G12 8QW, Scotland.}
\email{joan.bosa@glasgow.ac.uk}
\author[N.\ Brown]{Nathanial P.\ Brown}
\address{\hskip-\parindent Nathanial P.\ Brown. Department of Mathematics, The Pennsylvavia State University, University Park, State College, PA16802, USA.}
\email{nbrown@math.psu.edu}
\author[Y.\ Sato]{Yasuhiko Sato}
\address{\hskip-\parindent Yasuhiko Sato, Graduate School of Science, Kyoto University, Sakyo-ku, Kyoto 606-8502, Japan.}
\email{ysato@math.kyoto-u.ac.jp}
\author[A.\ Tikuisis]{Aaron Tikuisis}
\address{\hskip-\parindent Aaron Tikuisis, Institute of Mathematics, School of Natural and Computing Sciences, University of Aberdeen, AB24 3UE, Scotland.}
\email{a.tikuisis@abdn.ac.uk}
\author[S.\ White]{Stuart White}
\address{\hskip-\parindent Stuart White, School of Mathematics and Statistics, University of Glasgow, Glasgow, G12 8QW, Scotland and Mathematisches Institut der WWU M\"unster, Einsteinstra\ss{}e 62, 48149 M\"unster, Germany.}
\email{stuart.white@glasgow.ac.uk}
\author[W.\ Winter]{Wilhelm Winter}
\address{\hskip-\parindent
Wilhelm Winter, Mathematisches Institut der WWU M\"unster, Einsteinstra\ss{}e 62, 48149 M\"unster, Germany.}
\email{wwinter@uni-muenster.de}
\thanks{Research partially supported by EPSRC (grant no. I019227/1-2), by NSF (grant no. DMS-1201385), by JSPS (the Grant-in-Aid for Research Activity Start-up 25887031), by NSERC (PDF, held by AT), by an Alexander von Humboldt foundation fellowship (held by SW) and by the DFG (SFB 878).}
\subjclass[2010]{46L05, 46L35}
\begin{document}

\dedicatory{\begin{center}\emph{Dedicated to George Elliott on the occasion of his 70th birthday.}
\end{center}}

\begin{abstract}
We introduce the concept of finitely coloured equivalence for unital $^*$-{}homomorphisms between $\mathrm C^*$-algebras, for which unitary equivalence is the $1$-coloured case.  We use this notion to classify $^*$-homomorphisms from separable, unital, nuclear $\mathrm C^*$-algebras into ultrapowers of simple, unital, nuclear, $\Z$-stable $\mathrm C^*$-algebras with compact extremal trace space up to $2$-coloured equivalence by their behaviour on traces; this is based on a $1$-coloured classification theorem for certain order zero maps, also in terms of tracial data.

As an application we calculate the nuclear dimension of non-AF, simple, separable, unital, nuclear, $\Z$-stable $\mathrm C^*$-algebras with compact extremal trace space: it is 1. In the case that the extremal trace space also has finite topological covering dimension, this confirms the remaining open implication of the Toms-Winter conjecture. Inspired by homotopy-rigidity theorems in geometry and topology, we derive a ``homotopy equivalence implies isomorphism'' result for large classes of $\mathrm{C}^{*}$-algebras with finite nuclear dimension.
\end{abstract}

\maketitle

\tableofcontents

\renewcommand*{\thetheorem}{\Alph{theorem}}
\clearpage\section*{Introduction}

\noindent
Extending covering dimension to the noncommutative context, WW and Zacharias introduced nuclear dimension in \cite{WZ:Adv}. This is a topological concept formed by approximating $\mathrm C^*$-algebras by noncommutative partitions of unity, and then measuring the dimension through a covering number for these approximations. Via the Gelfand transform, commutative $\mathrm C^*$-algebras are of the form $C_0(X)$, and the nuclear dimension recaptures the dimension of the underlying space $X$.
Through examples, we see that the nuclear dimension starkly divides simple nuclear $\mathrm C^*$-algebras into two classes: the topologically infinite dimensional $\mathrm C^*$-algebras, where higher dimensional topological phenomena can occur, and low dimensional algebras. Indeed, all Kirchberg algebras have finite nuclear dimension (\cite{WZ:Adv,MS:DMJ,RSS:arXiv}), while the exotic examples of simple, infinite but not purely infinite $\mathrm{C}^*$-algebras from \cite{R:Acta} cannot have finite nuclear dimension.\footnote{By Kirchberg's dichotomy theorem \cite[Theorem 4.1.10]{R:Book} and the main result of \cite{W:Invent2}.}  We are left to decide when simple, stably finite $\mathrm C^*$-algebras enjoy this property. 

The following conjecture of Toms and WW predicts the answer: finite nuclear dimension should coincide with two other regularity properties of very different natures (cf. \cite{ET:BAMS,W:Invent1}; the precise form stated here is given in \cite[Conjecture 9.3]{WZ:Adv}).

\begin{conjecture}[Toms-Winter]\label{Con:TW} Let $A$ be a simple, separable, unital, infinite dimensional, nuclear $\mathrm C^*$-algebra. The following are equivalent.
\begin{enumerate}
\item $A$ has finite nuclear dimension.
\item $A$ tensorially absorbs the Jiang-Su algebra of \cite{JS:AJM} ($A$ is $\Z$-stable).
\item $A$ has strict comparison. 
\end{enumerate}
\end{conjecture}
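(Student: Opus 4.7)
The plan focuses on the one direction of Conjecture~\ref{Con:TW} which remains open in general, namely (ii)~$\Rightarrow$~(i) for algebras with compact extremal trace space of finite topological covering dimension, as announced in the abstract. The implication (i)~$\Rightarrow$~(ii) is due to WW, and (ii)~$\Leftrightarrow$~(iii) is known under the stated tracial hypothesis from work of Matui--Sato and subsequent refinements.

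My strategy is to show $\dimnuc A \le 1$ directly: for every finite $F \subset A$ and $\eps > 0$ I would produce a two-coloured approximation $A \labelledrightarrow{\psi} F_0 \oplus F_1 \labelledrightarrow{\varphi} A$ with $F_0, F_1$ finite-dimensional and each $\varphi|_{F_i}$ c.p.c.\ order zero, such that $\varphi\psi$ is within $\eps$ of the identity at each element of $F$. Using $A \cong A \otimes \Z$, I would first extract commuting matrix-algebra structures from the $\Z$-factor which commute with $A$ up to controlled error in trace norm. Tracially, the GNS completion at each extremal trace $\tau$ is McDuff, so c.p.c.\ order zero maps from a matrix algebra into $A$ whose image behaves like a unit at $\tau$ exist in abundance. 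The compactness and finite topological dimension of the extremal boundary $\partial_e T(A)$ should then allow me to patch these pointwise constructions into two globally defined c.p.c.\ order zero maps $\varphi_0, \varphi_1$ into $A$ whose sum is tracially close to the identity uniformly over all traces.

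The crux is passing from such a tracial approximation to a norm approximation on $F$. This is where the two-coloured classification theorem advertised in the abstract should be deployed: two c.p.c.\ order zero maps into an ultrapower which agree on traces can be conjugated by a pair of unitaries to make their sum coincide with a prescribed $*$-homomorphism, here the diagonal inclusion of $A$ into $A^\omega$. A standard reindexing then pulls this back into $A$ itself and gives the required norm-approximation of the identity on $F$, so $\dimnuc A \le 1$; since $A$ is not AF this forces equality to $1$.

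The main obstacle I anticipate is the two-coloured classification itself. Unitary equivalence of maps into a tracial ultrapower by tracial data is classically delicate, and even with $\Z$-stability and strict comparison in place, single-coloured (i.e.\ genuine unitary) equivalence of $*$-homomorphisms is too much to ask; this is precisely why the weakening to a $2$-coloured notion is essential. The technical heart of the argument should be a reduction to a $1$-coloured classification of order zero maps on the tracial side (flagged in the abstract as the underlying input), together with a mechanism for combining two such maps into a $*$-homomorphism up to $2$-coloured equivalence. Additionally, lifting pointwise tracial data to a uniform global construction will require careful use of the finite covering dimension of $\partial_e T(A)$, presumably through a partition-of-unity argument performed inside $A$, and this is where the compactness and finite-dimensionality hypotheses should enter essentially.
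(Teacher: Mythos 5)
Your outline matches the paper's strategy in broad strokes: decompose the argument into (a) producing c.p.c.\ order zero maps factoring through finite-dimensional algebras that reproduce all traces of $A$, (b) gluing these over $\partial_e T(A)$, and (c) deploying a $2$-coloured classification (built from a $1$-coloured classification of cone-like order zero maps $\phi(\cdot)\otimes k$) to upgrade tracial agreement with the first-factor embedding $A\hookrightarrow(A\otimes\Z)_\omega$ into a norm-approximation, then reindex. That is precisely the route of Lemma~\ref{lem:GoodTraceMaps}, Theorem~\ref{KeyLemmaP} and Theorem~\ref{thm:FiniteDim}.

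However, there is a genuine misunderstanding in your description of where the topological hypotheses enter, and it changes the result you would prove. You write that the partition-of-unity/patching step ``will require careful use of the finite covering dimension of $\partial_e T(A)$'' and that this is ``where the compactness and finite-dimensionality hypotheses should enter essentially.'' In fact, the paper's implication (ii)~$\Rightarrow$~(i) needs only that $T(A)$ be a Bauer simplex, i.e.\ that $\partial_e T(A)$ be compact; no covering-dimension hypothesis is imposed. The reason is Ozawa's $\mathrm W^*$-bundle trivialization theorem (Theorem~\ref{OzawaBundles}): the bundle $\overline{A}^{\mathrm{st}}$ over $\partial_e T(A)$ trivializes either when the base has finite covering dimension \emph{or} when $A$ is $\Z$-stable, and it is the $\Z$-stability route that is used here. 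The $\mathrm W^*$-bundle therefore admits the McDuff partition-of-unity machinery (Lemmas~\ref{W*SC:Lem1}--\ref{lem:W*RealizingTypes}) regardless of the dimension of $\partial_e T(A)$, so a plan that genuinely leans on finite covering dimension in the patching step would be establishing a weaker theorem. The finite topological dimension of the boundary is needed only to invoke the prior work (\cite{KR:Crelle,S:Preprint2,TWW:IMRN}) proving (iii)~$\Rightarrow$~(ii), so that the full equivalence of (i), (ii), (iii) follows.

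A smaller point: the source of the finite-dimensional models in your step (a) is not an abundance of matrix-algebra order zero maps into $A$ from the McDuff fibres, but \cite[Proposition 3.2]{SWW:arXiv}, which provides, for each extremal trace $\lambda$, a c.p.c.\ order zero map $A\to \Q_\omega$ recovering $\lambda$; this is what circumvents quasidiagonality, and is conceptually distinct from the $\mathrm W^*$-bundle McDuffness used in the patching. Keeping these two inputs separate is needed to make the structure of Lemma~\ref{lem:GoodTraceMaps} precise.
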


Let us give a brief description of properties (ii) and (iii) above; precise details are found in Section \ref{sect:2}.

A $\mathrm C^*$-algebra is $\Z$-stable if $A\otimes\Z\cong A$.  This should be thought of as the $\mathrm C^*$-algebra version of being a McDuff factor (a von Neumann II$_1$ factor that absorbs the hyperfinite II$_1$ factor $\R$ tensorially --- a fundamental concept in the theory of II$_1$ factors).  Indeed, the Jiang-Su algebra $\Z$ is the minimal nontrivial unital $\mathrm C^*$-algebra $\mathcal{D}$ which has two fundamental properties enjoyed by $\R$: $\mathcal{D}\cong \mathcal{D}\otimes \mathcal{D}$ and 
every unital endomorphism of $\mathcal{D}$ is approximately inner
\cite{W:JNCG}. 
As such, $\Z$-stability is the weakest tensorial absorption hypothesis analogous to the McDuff property that one can apply to $\mathrm C^*$-algebras (see \cite{TW:TAMS} for these \emph{strongly self-absorbing algebras} $\mathcal{D}$ and $\mathcal{D}$-absorption).  

Very roughly, $A$ has strict comparison whenever traces determine the order on positive elements. This $\mathrm C^*$-property is inspired by the fact that traces determine the order on projections in any von Neumann algebra of type II$_1$. Unlike the $\mathrm{W}^*$-case, however, having strict comparison isn't automatic (\cite{V:JFA}). With the help of the Cuntz semigroup, strict comparison can also be given an algebraic flavour via a natural order-completeness property (\cite{R:IJM}). 

The implications (i) $\Longrightarrow$ (ii) and (ii) $\Longrightarrow$ (iii) of Conjecture \ref{Con:TW} were proved by WW and by R{\o}rdam, respectively (\cite{W:Invent1,W:Invent2,R:IJM}), while (iii) $ \Longrightarrow $ (ii) is known to hold whenever the tracial state space $T(A)$ is a Bauer simplex (i.e., it is nonempty and its extreme boundary $\partial_e T(A)$ is compact) and $\partial_e T(A)$ has finite covering dimension (\cite{KR:Crelle,S:Preprint2,TWW:IMRN}) --- these extend the work \cite{MS:Acta} which handles the case of finitely many extremal traces. In more recent breakthroughs, (ii) $ \Longrightarrow $ (i) was proven in the monotracial case (\cite{MS:DMJ,SWW:arXiv}).  In the present paper, the following theorem is established.  
\begin{thm} 
\label{thm:TW}
The implication (ii) $ \Longrightarrow $ (i) of the Toms-Winter conjecture holds whenever $T(A)$ is a Bauer simplex. Hence the Toms-Winter conjecture holds if in addition $\partial_e T(A)$ has finite topological dimension.
\end{thm}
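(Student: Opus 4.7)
The plan is to bound $\dimnuc(A) \leq 1$ by producing, for each finite $F \subset A$ and $\eps > 0$, a 2-coloured c.p.c.\ order zero approximation of $\mathrm{id}_A$ passing through finite-dimensional algebras. The 2-coloured classification theorem advertised in the abstract is exactly the tool designed for this: two unital $^*$-homomorphisms from $A$ into an ultrapower of a simple, unital, nuclear, $\Z$-stable algebra with Bauer tracial simplex that induce the same map on traces must be 2-coloured equivalent, meaning that (up to an approximate unitary) one is conjugated into the other by a pair of contractions $s_1, s_2 \in A_\omega$ with $s_1^* s_1 + s_2^* s_2 = 1$ and with appropriate commutation properties. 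The idea is to compare the diagonal inclusion $\iota \colon A \to A_\omega$ with a carefully built, finite-dimensionally factored embedding $\sigma \colon A \to A_\omega$, and then read off the order zero decomposition from the equivalence.

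First I would construct $\sigma$. Nuclearity gives c.p.c.\ approximate factorisations $A \labelledrightarrow{\psi_n} F_n \labelledrightarrow{\varphi_n} A$ through finite-dimensional $F_n$. Using the hypothesis that $A$ is $\Z$-stable, combined with the Bauer simplex assumption so that $T(A) = \mathrm{Prob}(\partial_e T(A))$ with $\partial_e T(A)$ compact Hausdorff, a partition-of-unity argument over $\partial_e T(A)$ (as in \cite{KR:Crelle,TWW:IMRN,S:Preprint2}) lets one promote the pointwise w$^*$-approximations to a global, uniform-in-trace factorisation; pushed into $A_\omega$, this produces a unital $^*$-homomorphism $\sigma \colon A \to A_\omega$ of the form $\sigma = \lim_\omega \varphi_n \circ \psi_n$ (suitably cut down by approximately central $\Z$-provided matrix units) that agrees with $\iota$ on all limit traces.

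Second I would invoke the 2-coloured classification theorem to $\iota$ and $\sigma$, obtaining contractions $s_1, s_2 \in A_\omega \cap \iota(A)'$ (after a unitary adjustment) with $s_1^* s_1 + s_2^* s_2 = 1$ and $s_i^* \iota(a) s_i$ corresponding to $\sigma(a)$ in the two coloured pieces. Reindexing back to a representing sequence $(F_n, \psi_n, \varphi_n)$ and splitting the positive contractions coming from $s_i s_i^*$ across the finite-dimensional factors then gives finite-dimensional algebras $F^{(1)}_n \oplus F^{(2)}_n$, c.p.\ maps $\psi^{(i)}_n \colon A \to F^{(i)}_n$, and c.p.c.\ order zero maps $\varphi^{(i)}_n \colon F^{(i)}_n \to A$ with $\varphi^{(1)}_n \psi^{(1)}_n + \varphi^{(2)}_n \psi^{(2)}_n \to \mathrm{id}_A$ point-norm. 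This is exactly an approximation witnessing $\dimnuc(A) \leq 1$.

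The hardest step is the construction of $\sigma$: producing a single factorisation through finite-dimensional algebras that is uniformly tracially close to the identity across \emph{all} of $T(A)$ simultaneously. Compactness of $\partial_e T(A)$, together with $\Z$-stability providing the room to patch local approximations via matrix unit systems, is what makes this feasible, and this is why the Bauer simplex hypothesis enters. For the final sentence of the theorem, if $\partial_e T(A)$ additionally has finite covering dimension then (iii) $\Rightarrow$ (ii) is known by \cite{KR:Crelle,S:Preprint2,TWW:IMRN}; combining with the above proof of (ii) $\Rightarrow$ (i) and the already-established (i) $\Rightarrow$ (ii) $\Rightarrow$ (iii), all three conditions become equivalent.
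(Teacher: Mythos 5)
Your overall strategy — compare the diagonal embedding $\iota\colon A\to A_\omega$ with a map built from finite-dimensional factorisations and use coloured classification to decompose the identity — is exactly the paper's. But there is a genuine gap in the construction of $\sigma$, and it is precisely the difficulty the paper spends several sections circumventing.

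You claim the patching argument ``produces a unital $^*$-homomorphism $\sigma\colon A\to A_\omega$ of the form $\sigma=\lim_\omega \varphi_n\circ\psi_n$''. This is not achievable in general. The c.p.c.\ maps $\psi_n\colon A\to F_n$ coming from \cite[Proposition 3.2]{SWW:arXiv} are only asymptotically multiplicative against the $\|\cdot\|_{2,\tau}$-seminorms, not in operator norm; hence the resulting map into $\prod_\omega F_n$ (and thus into $A_\omega$) is only a c.p.c.\ \emph{order zero} map whose induced map modulo the trace-kernel ideal is a $^*$-homomorphism, but $\sigma(1_A)$ need not be $1_{A_\omega}$. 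Getting a genuine $^*$-homomorphism through finite-dimensional algebras is equivalent to requiring the traces of $A$ to be quasidiagonal, which is not among your hypotheses (and is in fact a significant open problem, related to the Blackadar--Kirchberg conjecture). This is why Lemma \ref{lem:GoodTraceMaps} produces only an order zero map $\Phi$, reserving the unital $^*$-homomorphism conclusion for the case $T(A)=T_{QD}(A)$.

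Consequently you cannot apply the $2$-coloured classification of $^*$-homomorphisms (Theorem \ref{thm:2colouredIntroVersion}/\ref{thm:2ColourUniqueness}) to $\iota$ and $\sigma$, since that theorem requires both inputs to be $^*$-homomorphisms. To push the argument through one needs a classification of a $^*$-homomorphism against a c.p.c.\ order zero map — this is Theorem \ref{KeyLemmaP} (the technical form of Theorem \ref{onecolourclass}), applied twice with $k:=h$ and $k:=1_\Z-h$ — together with the strengthened agreement-on-traces condition $\tau\circ\phi_1=\tau\circ\phi_2^m$ for all $m\in\N$ (cf.\ Theorem \ref{thm7.8}). This is where the real work lies: the $2\times 2$ matrix trick, property (SI), and the $\mathrm W^*$-bundle structure theory are all deployed to make the order zero version go through. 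Absent that, your argument proves only the decomposition rank bound in the quasidiagonal case, not $\dimnuc(A)\leq 1$ in full generality. Your final sentence on combining with \cite{KR:Crelle,S:Preprint2,TWW:IMRN} to close the loop when $\partial_e T(A)$ is finite-dimensional is fine.
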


It turns out that the passage from the monotracial case to Bauer simplices goes well beyond a straightforward generalization, and in fact requires a whole arsenal of new machinery. We describe the main ingredients of the proof of Theorem \ref{thm:TW} below, but first discuss an application. 

It is well-known that the class $\mathcal{C}$ of simple approximately subhomogeneous (ASH) algebras with no dimension growth provides models in the context of stable finiteness and finite nuclear dimension, meaning that if a stably finite algebra $A$ is as in the Toms-Winter conjecture then it ought to be isomorphic to an element in $\mathcal{C}$, because this class exhausts the Elliott invariant. Inspired by classical theorems in geometry (e.g., Mostow's rigidity theorem), let us say that a simple $\mathrm C^*$-algebra $A$ is \emph{homotopy rigid} if it is isomorphic to an element in $\mathcal{C}$ whenever it is homotopic to an element in $\mathcal{C}$.\footnote{Since homotopy doesn't preserve Elliott invariants, the two elements in $\mathcal{C}$ will be different, in general.} In \cite{MS:DMJ}, Matui and YS proved that having a unique trace implies homotopy rigidity; in their paper quasidiagonality is a crucial assumption which here follows from work of Voiculescu (\cite{V:Duke}). We note that the third condition in our next result simultaneously generalizes the first two.
 
\begin{thm} 
\label{thm:Rigidity}
Let $A$ be a simple, separable, unital, infinite dimensional $\mathrm{C}^{*}$-algebra with finite nuclear dimension. 
The following hypotheses imply homotopy rigidity. 
\begin{enumerate} 
\item $A$ is monotracial (the Matui-Sato theorem). 
\item $A$ has real rank zero and $T(A)$ is a Bauer simplex.
\item Projections separate traces on $A$ and $T(A)$ is a Bauer simplex.
\end{enumerate} 
\end{thm}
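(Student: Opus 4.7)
Hypothesis (iii) subsumes (i) (a singleton trace space is trivially a Bauer simplex with projections separating traces) and (ii) (real rank zero automatically forces projections to separate traces), so it suffices to argue under (iii). The strategy is to place $A$ into the scope of an Elliott-type classification theorem and then to realise $\mathrm{Ell}(A)$ as the Elliott invariant of some element of $\mathcal{C}$; together these force $A$ to be isomorphic to some $B' \in \mathcal{C}$. This $B'$ is generally distinct from the element of $\mathcal{C}$ to which $A$ is homotopic, since homotopy need not preserve trace spaces (as the footnote to the statement warns).

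\smallskip

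First I assemble the regularity properties of $A$. Nuclearity is built into finite nuclear dimension, and the (known) implication (i)$\Rightarrow$(ii) of Conjecture \ref{Con:TW} supplies $\mathcal{Z}$-stability. Homotopy equivalence with an element of $\mathcal{C}$ implies KK-equivalence; since algebras in $\mathcal{C}$ are subhomogeneous and in particular satisfy the UCT, $A$ satisfies the UCT as well. The Bauer simplex hypothesis gives $T(A)\neq\emptyset$, and simplicity makes every trace faithful, so the Tikuisis--White--Winter quasidiagonality theorem applies and $A$ is quasidiagonal. Thus $A$ is a simple, separable, unital, nuclear, $\mathcal{Z}$-stable, quasidiagonal, UCT $\mathrm{C}^{*}$-algebra with Bauer simplex $T(A)$ on which projections separate traces.

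\smallskip

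With these properties in hand I apply classification. In case (i) this is the Matui--Sato theorem, quoted directly; in case (ii) one invokes classification of real rank zero $\mathcal{Z}$-stable UCT algebras along the Lin/Winter line; case (iii) requires the 2-coloured classification of $^{*}$-homomorphisms into ultrapowers developed in the body of this paper, combined with $\mathcal{Z}$-stability, to extract enough tracial approximation structure (an interval/TAI-type decomposition after rationalisation) to run Elliott's intertwining argument. This classifies $A$ by its Elliott invariant. Finally, $K_{0}(A)$ is weakly unperforated by $\mathcal{Z}$-stability, so the known range-of-invariant results for simple unital ASH algebras with no dimension growth produce a $B' \in \mathcal{C}$ with $\mathrm{Ell}(B') \cong \mathrm{Ell}(A)$; classification then yields $A \cong B'$.

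\smallskip

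The main obstacle is the classification step in case (iii): passing from the relatively weak tracial hypothesis ``projections separate traces over a Bauer simplex'' to a tracial approximation structure strong enough to trigger an Elliott intertwining, without the stronger hypotheses of a unique trace or of real rank zero. This is precisely where the 2-coloured classification of the present paper is essential, as the standard unitary (1-coloured) classification of $^{*}$-homomorphisms into ultrapowers is not available under these tracial hypotheses.
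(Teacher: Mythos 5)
Your proposal is correct, and it shares the overall architecture with the paper's proof (reduce to case (iii), then establish UCT, quasidiagonality of traces, membership in a classifiable class, and produce a model via the known range of the invariant). The one genuine divergence is in how you establish that \emph{all} traces on $A$ are quasidiagonal. You invoke the Tikuisis--White--Winter theorem (faithful traces on separable nuclear UCT algebras are quasidiagonal), which does the job once UCT is in hand — but that theorem postdates the present paper and is strictly stronger than what is needed. The paper's own argument is Corollary \ref{cor8.6}: since $A$ is homotopic to an ASH algebra, Corollaries \ref{cor:TWQDTracialdr} and \cite[Corollary 2.2]{NW:ComptesRendus} give that the ASH model has all traces quasidiagonal, and Proposition \ref{prop:Ozawa} (a homotopy-domination argument due to Ozawa, requiring precisely the hypothesis that projections separate traces on $A$) transfers this back to $A$. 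So the paper uses the projections-separate-traces hypothesis essentially in \emph{both} the quasidiagonality step and the classification step, whereas your TWW route only needs it in the latter; on the other hand, your route leans on a much deeper external input. Two smaller points: you state the outcome as ``$A$ is quasidiagonal'' when what is actually needed (and what TWW delivers, by simplicity of $A$) is $T(A)=T_{QD}(A)$, required for membership in the class $\mathcal{C}_0$ of Corollary \ref{cor:BauerClass}; and your description of the classification step is imprecise — what the $2$-coloured machinery delivers is $\dr(A)\leq 1$ (Theorem \ref{thm:FiniteDim}), after which $A$ is rationally real rank zero and hence rationally TAF (not TAI), so Winter's localisation technique \cite{W:Invent1, Win:localizingEC} applies.
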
 

In an earlier version of this paper, we included the hypothesis that all traces are quasidiagonal in the above theorem; we are grateful to Narutaka Ozawa for pointing out that this hypothesis is superfluous.

\bigskip
To prove the results above we need a number of tools. The first such device, and arguably the main technical result of this paper, classifies a certain kind of maps between $\mathrm{C}^{*}$-algebras up to approximate unitary equivalence. Statements like this, with various conditions on the domain and target algebras, and with various classifying invariants, are by now omnipresent in the structure and classification theory of $\mathrm{C}^{*}$-algebras; they are often referred to as \emph{uniqueness results} for $^{*}$-homomorphisms. Approximate unitary equivalence is the equivalence relation of choice in this context, as it provides access to Elliott's intertwining argument (cf.\ \cite{Ell:intertwining-in-tuxedo}); the invariants will then inevitably keep track of tracial information and of ideal structures. When comparing $^{*}$-homomorphisms from this point of view, the invariants will also have to include all sorts of homological --- i.e., $K$-theoretic --- information. We will eliminate this complication by considering \emph{cones over $^{*}$-homomorphisms} or, more generally, certain types of order zero (i.e., orthogonality preserving) completely positive contractions. 

Very roughly, our result then says that two cones over $^{*}$-homomorphisms agree up to approximate unitary equivalence, provided the $^*$-homomorphisms carry the same tracial information. 
To be more precise, given a $\Z$-stable $\mathrm{C}^*$-algebra $B$, and a $^*$-homomorphism $\hat{\phi}:A\rightarrow B$, let $k$ be a positive contraction in $\Z$ with spectrum $[0,1]$. This gives a $^*$-homomorphism $C_0((0,1])\rightarrow\Z$ which maps the generator $\mathrm{id}_{(0,1]}\in C_0((0,1])$ to $k$.  Then define $\phi:C_0((0,1])\otimes A\rightarrow \Z\otimes B \cong B$ to be the tensor product of this $^*$-homomorphism with $\hat\phi$.\footnote{The notation here is chosen as $1_\Z\otimes\hat{\phi}$ is a \emph{supporting map} for $\phi$; a concept we use repeatedly in the paper --- see Lemma \ref{lem:SupportingMap}.}

\begin{thm}\label{onecolourclass}
Let $A$ be a separable, unital and nuclear $\mathrm{C}^*$-algebra, and let $B$ be a simple, separable, unital and exact $\mathrm{C}^*$-algebra such that $B$ is $\Z$-stable and $T(B)$ is a Bauer simplex.  Consider two injective $^*$-homomorphisms $\hat\phi_1,\hat\phi_2:A\rightarrow B$. Let $k\in\Z$ be a positive contraction of full spectrum and form the maps $\phi_i:C_0((0,1])\otimes A\rightarrow \Z\otimes B \cong B$ as described above.  Then $\phi_1$ and $\phi_2$ are approximately unitarily equivalent if and only if  $\hat\phi_1$ and $\hat\phi_2$ agree on traces.
\end{thm}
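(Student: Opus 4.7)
For the forward implication, assume $\phi_1\sim_{\mathrm{au}}\phi_2$. Each $\tau\in T(B)$ pulls back uniquely to the tracial state $\tau_\Z\otimes\tau$ on $\Z\otimes B\cong B$, where $\tau_\Z$ is the unique trace on $\Z$. Applying this to $\mathrm{id}_{(0,1]}\otimes a$ and using the tracial invariance of approximate unitary equivalence yields $\tau_\Z(k)\,\tau(\hat\phi_1(a))=\tau_\Z(k)\,\tau(\hat\phi_2(a))$ for every $a\in A$. Since $k$ has full spectrum and $\tau_\Z$ is faithful, $\tau_\Z(k)>0$, and $\hat\phi_1$ and $\hat\phi_2$ agree on all traces.

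For the substantive direction my plan is to work inside the ultrapower $B_\omega=\ell^\infty(B)/c_\omega(B)$, construct a single unitary $u\in B_\omega$ intertwining (the images of) $\phi_1$ and $\phi_2$, and then invoke separability of $C_0((0,1])\otimes A$ together with a standard reindexing/$\eps$-test argument to produce the approximate unitary equivalence in $B$ itself. To construct $u$ I would split the problem along the trace-kernel ideal $J_B\subseteq B_\omega$ consisting of sequences that vanish in the uniform tracial $2$-seminorm $\|\cdot\|_{2,T(B)}$. The Bauer simplex hypothesis should allow a concrete description of $B_\omega/J_B$ as (an ultraproduct of) a continuous field of the finite von Neumann completions $\pi_\tau(B)''$ indexed by $\tau\in\partial_e T(B)$; $\Z$-stability forces each fibre to be a McDuff II$_1$ factor. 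In this setting two $^*$-homomorphisms from the separable nuclear C$^*$-algebra $C_0((0,1])\otimes A$ into $B_\omega/J_B$ which induce the same tracial functional should be unitarily equivalent: the fibrewise statement is a Connes--Ocneanu style uniqueness theorem for embeddings into a McDuff II$_1$ factor, and compactness of $\partial_e T(B)$ permits a continuous/measurable selection that patches the fibrewise unitaries into a global one.

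The main obstacle is then to lift this unitary equivalence from the von Neumann quotient $B_\omega/J_B$ back to $B_\omega$. Here the cone structure of the domain is decisive: $C_0((0,1])\otimes A$ is projectionless, so no $K$-theoretic obstructions arise, and the positive generator $\mathrm{id}_{(0,1]}$ provides strictly positive ``room'' in which perturbations can be absorbed. I would carry out the lift using property (SI), available automatically in the present $\Z$-stable, Bauer simplex setting via the Matui--Sato machinery: (SI) translates tracial smallness (which we have modulo $J_B$) into honest Cuntz comparison, which can then be promoted to a unitary perturbation, while $\Z$-absorption inside $B_\omega\cap\hat\phi_i(A)'$ (commuting with the images of $k$) supplies the flexibility needed to assemble the global unitary. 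Concretely, I would organise this as a Cuntz-type comparison between the orthogonal amplifications $\phi_1\oplus 0$ and $0\oplus\phi_2$ inside $M_2(B_\omega)$, with the full spectrum of $k$ ensuring the strict positivity of the tracial weights required by (SI). I expect this final lifting step to constitute the bulk of the technical work.
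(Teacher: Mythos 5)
Your overall strategy tracks the paper's quite closely: work in the ultrapower, split along the trace-kernel ideal $J_{B_\omega}$, describe $B_\omega/J_{B_\omega}$ via the $\mathrm W^*$-bundle picture (this is exactly Ozawa's strict completion and the ultraproduct-of-bundles machinery in Section~\ref{sec:Bundles}), exploit $\mathcal Z$-stability to make the fibres McDuff, invoke property~(SI) via the Matui--Sato mechanism, and eventually pass through a $2\times 2$ amplification. The easy direction is fine, up to one cosmetic point: not every $\tau\in T(\Z\otimes B)$ is of the form $\tau_\Z\otimes\sigma$, but every trace restricts to $\tau_\Z\otimes\sigma$ on the C$^*$-subalgebra where $\phi_1,\phi_2$ take values, so the calculation goes through.

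The genuine gap is the final step, which you yourself flag as ``the bulk of the technical work'' but then describe only as ``Cuntz comparison, which can then be promoted to a unitary perturbation.'' That promotion is precisely the point where the standard machinery breaks down. Cuntz equivalence of positive elements is upgraded to (approximate) unitary equivalence via stable rank one (Ciuperca--Elliott), but stable rank one of the relative commutant sequence algebra $C:=M_2(B_\omega)\cap\pi(A)'\cap\{1-\pi(1_A)\}^\perp$ is not known --- indeed the paper explicitly records (Question~\ref{Q5.2}) that even for $A_\omega\cap A'$ with $A$ simple, monotracial and $\mathcal Z$-stable this is open. The paper circumvents this by not lifting a unitary from $B_\omega/J_{B_\omega}$ at all: instead the $2\times 2$ matrix trick (Lemma~\ref{lem:CombinedUnitaryEquiv}) reduces the problem to unitary equivalence of the positive elements $h_1=\phi_1(1_A)\oplus0$ and $h_2=0\oplus\phi_2(1_A)$ inside $C$; property~(SI) and the bundle structure give strict comparison and a tracial test set for $C$ (Theorem~\ref{thm:StrictCompTraceSurj}); and then a modification of the Robert--Santiago classification argument (Section~\ref{sec:TotallyFullClass}), built on showing that elements of $C$ with full two-sided zero divisors are limits of invertibles (Lemmas~\ref{lem:Robertsr1}--\ref{lem:GapInvertibles2}), yields unitary equivalence of \emph{totally full} positive elements from their Cuntz data without ever knowing stable rank one. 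Your proposal does not name this obstruction nor a substitute for it, so as written it stops exactly where the real work begins. Relatedly, ``lift a unitary from $B_\omega/J_{B_\omega}$ and correct it by (SI)'' is not the route the paper takes, and it is not clear how one would control the correction: (SI) yields Cuntz subequivalence from tracial smallness, but turning that into an exact intertwining unitary in $B_\omega$ still faces the same missing stable-rank-one input.
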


One can decompose a $^*$-homomorphism $\phi:A\rightarrow B$ as the sum $h\otimes \phi+(1-h)\otimes \phi$ of two maps of the form considered by Theorem \ref{onecolourclass}.  This leads to a new notion of equivalence of morphisms in the spirit of Cuntz-Pedersen equivalence of positive elements from \cite{CP:JFA} and the 2-coloured equivalence of projections lemma of Matui and YS (\cite[Lemma 2.1]{MS:DMJ}). We say that two unital $^*$-homomorphisms $\phi_i:A\rightarrow B$ ($i=1,2$) are $2$-coloured equivalent, if there exist $w^{(0)},w^{(1)}$ in $B$ such that 
\begin{align}\phi_1(a)&=w^{(0)}\phi_2(a)w^{(0)}{}^*+w^{(1)}\phi_2(a)w^{(1)}{}^*,\text{ and }\nonumber\\
\phi_2(a)&=w^{(0)}{}^*\phi_1(a)w^{(0)}+w^{(1)}{}^*\phi_1(a)w^{(1)},\quad a\in A,\label{e1.1}
\end{align} and such that $w^{(i)}{}^*w^{(i)}$ commutes with the image of $\phi_2$ and $w^{(i)}w^{(i)}{}^*$ commutes with the image of $\phi_1$.  This forms part of a general definition of $n$-coloured equivalence between unital $^*$-homomorphisms (Definition \ref{def:ColourEquiv}) for which the $n=1$ case is precisely unitary equivalence. 

In general we need approximate coloured equivalence (i.e., $2$-coloured equivalence upon embedding the codomain into the ultrapower $B_\omega$). In this way the approximate unitary equivalence classification result of Theorem \ref{onecolourclass} gives rise to the following theorem, which classifies morphisms by traces up to $2$-coloured equivalence; more general versions which do not require simplicity of $A$ will be given in Section \ref{sec:2colour}.

\begin{thm}\label{thm:2colouredIntroVersion}
Let $A$ be a simple, separable, unital, nuclear $\mathrm{C}^{*}$-algebra, and let $B$ be a simple, separable, unital, and exact $\mathrm{C}^*$-algebra such that $B$ is $\Z$-stable and $T(B)$ is a Bauer simplex. Let $\phi_1,\phi_2:A \to B_\omega$ be unital $^*$-homomorphisms. Then $\phi_1$ and $\phi_2$ are approximately $2$-coloured equivalent if and only if $\tau \circ \phi_1 = \tau \circ \phi_2$ for every $\tau \in T(B_\omega)$.
\end{thm}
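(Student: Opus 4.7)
\emph{Forward direction.} Pick witnesses $w^{(0)}, w^{(1)} \in B_\omega$ realizing the approximate $2$-coloured equivalence so that \eqref{e1.1} holds exactly. Substituting $a=1$ into the second identity gives $\sum_i w^{(i)*}w^{(i)} = 1$, and for any $\tau \in T(B_\omega)$ and $a \in A$ the tracial (cyclic) property yields
\[
\tau(\phi_1(a)) = \sum_i \tau\bigl(w^{(i)}\phi_2(a)w^{(i)*}\bigr) = \sum_i \tau\bigl(w^{(i)*}w^{(i)}\phi_2(a)\bigr) = \tau(\phi_2(a)).
\]

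\emph{Converse --- setup.} Suppose now that $\tau \phi_1 = \tau \phi_2$ for every $\tau \in T(B_\omega)$. The plan is to apply Theorem \ref{onecolourclass} to two pairs of cone maps and to assemble the resulting intertwining unitaries into witnesses $w^{(0)}, w^{(1)}$. Since $B$ is $\Z$-stable and $\phi_i(A)$ is separable, a reindexing argument supplies, for $i=1,2$, a unital embedding $\Z \hookrightarrow B_\omega \cap \phi_i(A)'$; let $k_i$ be the image of a positive contraction of full spectrum $[0,1]$ in $\Z$. Then $a \mapsto k_i\phi_i(a)$ and $a \mapsto (1-k_i)\phi_i(a)$ are order zero maps $A \to B_\omega$, and the induced $^*$-homomorphisms $C_0((0,1])\otimes A \to B_\omega$ fit the template of Theorem \ref{onecolourclass}, with $\hat\phi_i = \phi_i$ (which is injective since $A$ is simple and $\phi_i$ is unital).

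\emph{Converse --- conclusion.} An ultrapower variant of Theorem \ref{onecolourclass} applied to each pair of cones, using the tracial hypothesis, yields approximate unitary equivalences that a standard reindexing argument upgrades to exact intertwinings in $B_\omega$: unitaries $u_0, u_1 \in B_\omega$ with
\[
u_0 k_2\phi_2(a)u_0^* = k_1\phi_1(a), \qquad u_1(1-k_2)\phi_2(a)u_1^* = (1-k_1)\phi_1(a), \quad a\in A.
\]
Setting $a=1$ gives $u_0 k_2 u_0^* = k_1$, and hence $u_0 k_2^{1/2} u_0^* = k_1^{1/2}$ by continuous functional calculus (similarly for $u_1$). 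Define $w^{(0)} := u_0 k_2^{1/2}$ and $w^{(1)} := u_1 (1-k_2)^{1/2}$. Using the commutation of $k_2$ with $\phi_2(A)$,
\[
\sum_i w^{(i)}\phi_2(a)w^{(i)*} = u_0 k_2\phi_2(a)u_0^* + u_1(1-k_2)\phi_2(a)u_1^* = k_1\phi_1(a) + (1-k_1)\phi_1(a) = \phi_1(a),
\]
while the identity $k_2^{1/2}u_0^* = u_0^* k_1^{1/2}$ gives $w^{(0)*}\phi_1(a)w^{(0)} = u_0^* k_1\phi_1(a) u_0 = k_2\phi_2(a)$, with the analogue for $w^{(1)}$, so $\sum_i w^{(i)*}\phi_1(a)w^{(i)} = \phi_2(a)$. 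The required commutation conditions hold by construction: $w^{(i)*}w^{(i)} \in \{k_2, 1-k_2\} \subseteq \phi_2(A)'$ and $w^{(i)}w^{(i)*} \in \{k_1, 1-k_1\} \subseteq \phi_1(A)'$.

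\emph{Main obstacle.} The crux is supplying the right ultrapower version of Theorem \ref{onecolourclass} --- classifying cone maps into $B_\omega$ by their behaviour on $T(B_\omega)$ rather than on $T(B)$. This requires reindexing arguments to place the auxiliary $k_i$ inside the correct relative commutants and to convert approximate unitary equivalence into exact intertwinings in $B_\omega$, and $\Z$-stability of $B$ enters crucially at both steps. With that in hand, the reduction to $2$-coloured equivalence via the $k_i/(1-k_i)$ splitting is essentially the ``supporting map'' formalism foreshadowed in the introduction, and the computations above are routine bookkeeping.
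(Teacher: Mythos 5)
Your proposal follows essentially the same route as the paper's proof (which is Theorem \ref{thm:2ColourUniqueness}): split by a full-spectrum contraction $h$ of $\Z$ and its complement $1_\Z - h$, apply the $1$-coloured cone classification twice, and assemble the two unitaries into the $w^{(0)}, w^{(1)}$. The forward direction and the final verification that the $w^{(i)}$'s satisfy \eqref{e1.1}, including the functional-calculus trick $u_0 k_2^{1/2}u_0^* = k_1^{1/2}$, are all correct.

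There is one imprecision that, as written, leaves a genuine gap. You pick, ``for $i=1,2$, a unital embedding $\Z \hookrightarrow B_\omega \cap \phi_i(A)'$'' and set $k_i$ to be the image of a full-spectrum contraction, so $k_1$ and $k_2$ are potentially different elements of $B_\omega$ arising from unrelated embeddings. The ``ultrapower variant of Theorem \ref{onecolourclass}'' you need is the paper's Theorem \ref{KeyLemmaP}, and it classifies maps of the form $\phi_1(\cdot)\otimes k$ versus $\phi_2(\cdot)\otimes k$ with the \emph{same} $k$ sitting in a common tensor factor $\Z$, with the unitary witnessing equivalence living a priori in $(B\otimes\Z)_\omega$ rather than $B_\omega$. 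Two independent embeddings do not directly fit this template, nor do they directly produce a unitary in $B_\omega$. The missing step is a simultaneous reidentification: apply Lemma \ref{lem:Zfacts}(\ref{lem:Zfacts2}) to the separable set $S := \phi_1(A)\cup\phi_2(A)$, so that without loss of generality $B_n = C_n\otimes\Z$ and $\phi_i = \check\phi_i\otimes 1_\Z$ for both $i$; then take a single $h\in\Z_+$ of full spectrum in the common factor, so $k_1 = k_2 = 1\otimes h$ and Theorem \ref{KeyLemmaP} applies verbatim to $\check\phi_1$ and $\check\phi_2$ with $k:=h$ and again with $k:=1_\Z-h$, yielding unitaries in $B_\omega$ itself. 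This is exactly how the paper sets up the reduction, and once it is in place the rest of your argument goes through unchanged. (Your observation that injectivity of a unital $^*$-homomorphism from the simple algebra $A$ gives total fullness is correct and is needed to invoke Theorem \ref{KeyLemmaP}; and since $\phi_2$ is a $^*$-homomorphism, the tracial hypothesis of Theorem \ref{KeyLemmaP} with powers reduces to $\tau\circ\phi_1=\tau\circ\phi_2$.)
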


We view Theorem \ref{thm:2colouredIntroVersion} as a $2$-coloured $\mathrm{C}^*$-algebraic version of the classical fact that normal $^*$-homomorphisms $\M\rightarrow \mathcal N^\omega$ from a finite, injective, separably acting von Neumann algebra $\M$ into the ultrapower of a II$_1$ factor are classified, up to unitary equivalence, by their tracial data.

We can extend  $2$-coloured classification results to the situation where only one of $\phi_1$ or $\phi_2$ is a $^*$-homomorphism and the other an order zero map (albeit with a slightly weaker conclusion).  This enables us to calculate the nuclear dimension of broad classes of $\mathrm C^*$-algebras, which will imply Theorems \ref{thm:TW} and \ref{thm:Rigidity}.

In the stably finite setting, there is also a sharper version of Conjecture~\ref{Con:TW} which replaces nuclear dimension by decomposition rank (cf.\ \cite{KW:IJM}) in (i). It follows from \cite{MS:DMJ} and \cite{SWW:arXiv} that in the monotracial case the dividing line between decomposition rank and nuclear dimension is precisely quasidiagonality. Our methods carry this statement over to more general trace spaces;  \emph{quasidiagonality of all traces} is the property that distinguishes decomposition rank from nuclear dimension. The following summarizes our dimension computations in the stably finite case (we exclude the approximately finite dimensional (AF) algebras, as these are precisely the $\mathrm{C}^*$-algebras with nuclear dimension zero \cite[Remark 2.3(iii)]{WZ:Adv}).

\begin{thm}
\label{thm:dn1}
If $A$ is a non-AF, simple, separable, unital, nuclear, $\mathcal Z$-stable $\mathrm C^*$-algebra such that $T(A)$ is a Bauer simplex, then the nuclear dimension of $A$ is $1$.
Furthermore, if every $\tau \in T(A)$ is quasidiagonal, then the decomposition rank of $A$ is $1$.
\end{thm}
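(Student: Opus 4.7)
The lower bounds $\dimnuc(A)\ge 1$ and $\dr(A)\ge 1$ are immediate from \cite[Remark 2.3(iii)]{WZ:Adv} together with the non-AF hypothesis, so the task reduces to the matching upper bounds. The plan is to produce, for every finite $\F\subset A$ and $\eps>0$, a c.p.c.\ map $\psi\colon A\to F$ into some finite-dimensional $F$ together with c.p.c.\ order zero maps $\phi_0,\phi_1\colon F\to A$ with $\|(\phi_0+\phi_1)\psi(a)-a\|<\eps$ on $\F$. Passing to the ultrapower, this amounts to splitting the canonical inclusion $\mathrm{id}_A\colon A\hookrightarrow A_\omega$ as a sum of two c.p.c.\ order zero maps factoring jointly through a finite-dimensional subalgebra of $A_\omega$. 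My strategy is to build a \emph{single} c.p.c.\ order zero map $\Phi\colon A\to A_\omega$ that factors through a finite-dimensional algebra and whose supporting $^*$-homomorphism (in the sense of Lemma \ref{lem:SupportingMap}) agrees with $\mathrm{id}_A$ on every trace of $A_\omega$, and then to apply the order zero variant of the 2-coloured classification from Theorem \ref{thm:2colouredIntroVersion} (flagged in the paragraph preceding that statement) to split $\mathrm{id}_A$ into two ``colourings'' of $\Phi$.

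Constructing $\Phi$ is where I expect the real work to lie. The Bauer hypothesis makes $X:=\partial_e T(A)$ compact; nuclearity combined with Connes' theorem renders $\pi_\tau(A)''$ hyperfinite for every $\tau\in X$; and the trace-kernel quotient $A^\omega/J_A$ can be given the structure of a continuous $C(X)$-bundle whose fibre at $\tau\in X$ is a hyperfinite $\mathrm{II}_1$ factor (or matrix algebra). These ingredients should permit an approximately central embedding of a finite-dimensional algebra whose unit is tracially close to $1_A$ uniformly in $\tau\in X$. Combining this with $\Z$-stability --- which, exactly as in the setup of Theorem \ref{onecolourclass}, supplies a positive contraction of full spectrum commuting with everything relevant --- produces the c.p.c.\ order zero $\Phi$ with the required tracial matching.

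With $\Phi$ in hand, the order zero version of Theorem \ref{thm:2colouredIntroVersion} applied to the pair $\mathrm{id}_A,\Phi\colon A\to A_\omega$ delivers $w^{(0)},w^{(1)}\in A_\omega$ with $w^{(i)*}w^{(i)}$ commuting with $\Phi(A)$, $w^{(i)}w^{(i)*}$ commuting with $A$, and
\[ a=w^{(0)}\Phi(a)w^{(0)*}+w^{(1)}\Phi(a)w^{(1)*},\qquad a\in A. \]
Setting $\Phi_i:=\Ad(w^{(i)})\circ\Phi$, the commutation relations force each $\Phi_i$ to be c.p.c.\ order zero and to factor through the same finite-dimensional algebra as $\Phi$; a standard reindexing then pulls back to the $F,\psi,\phi_0,\phi_1$ in $A$ needed for $\dimnuc(A)\le 1$. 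For the decomposition rank claim, quasidiagonality of every trace permits one to arrange additionally that $\Phi(1_A)$ is essentially $1_{A_\omega}$ (this is the bridge, familiar from the monotracial work of \cite{MS:DMJ,SWW:arXiv}, from order zero to near-multiplicative approximations), which through the 2-coloured equations forces $w^{(0)*}w^{(0)}+w^{(1)*}w^{(1)}=1$ on $A$ and hence contractivity of $\phi_0+\phi_1$, giving $\dr(A)\le 1$. The principal obstacle is the construction of $\Phi$: in the monotracial case the relevant approximation lives inside a single hyperfinite factor, whereas the Bauer setting requires patching finite-dimensional approximations continuously across the whole of $X$, for which the $C(X)$-bundle structure of the trace-kernel quotient is indispensable.
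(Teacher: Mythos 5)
Your overall strategy coincides with the paper's (Theorem \ref{thm:FiniteDim} via Lemma \ref{lem:GoodTraceMaps}): construct an order zero $\Phi\colon A\to A_\omega$ factoring through finite-dimensional algebras and agreeing tracially with the canonical inclusion, then split $\mathrm{id}_A$ into two coloured copies of $\Phi$. The construction of $\Phi$ you sketch is essentially Lemma \ref{lem:GoodTraceMaps}, which glues the trace-witnessing maps of \cite[Proposition 3.2]{SWW:arXiv} (or quasidiagonality witnesses) over $\partial_eT(A)$ via the triviality of $\overline{A}^{\mathrm{st}}\cong C_\sigma(\partial_eT(A),\R)$ from Theorem \ref{OzawaBundles}.

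The step that does not hold up as written is the passage from the abstract $2$-coloured decomposition to the finite-stage order zero factorization. Writing $\Phi = H\circ\Theta$ with $\Theta\colon A\to\prod_\omega F_n$ and $H\colon\prod_\omega F_n\to A_\omega$ order zero, and setting $\Phi_i = w^{(i)}\Phi(\cdot)w^{(i)*}$, the map $y\mapsto w^{(i)}H(y)w^{(i)*}$ is order zero on $\prod_\omega F_n$ only if $w^{(i)*}w^{(i)}$ commutes with $H(\prod_\omega F_n)$, not merely with the smaller set $\Phi(A)=H(\Theta(A))$. Since $\Theta$ is not surjective (it is a composition with a conditional expectation onto $F_n$), the commutation relation delivered by Theorem \ref{thm7.8} --- which is $w^{(i)*}w^{(i)}\in\Phi(A)'$ --- is by itself insufficient, and ``the commutation relations force $\Phi_i$ to factor through the same finite-dimensional algebra'' needs justification. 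The paper avoids this by bypassing the $2$-coloured theorem and applying Theorem \ref{KeyLemmaP} directly: working with the first-factor embedding $\iota\colon A\to A\otimes\Z$, two applications of Theorem \ref{KeyLemmaP} (with $k=h$ and $k=1_\Z-h$) produce \emph{genuine unitaries} $w^{(0)},w^{(1)}\in(A\otimes\Z)_\omega$; conjugating the order zero maps $y\mapsto\eta_n(y)\otimes h$ and $y\mapsto\eta_n(y)\otimes(1_\Z-h)$ by lifts of these unitaries is then automatically order zero at every finite stage. Your argument can be repaired by unwinding the proof of Theorem \ref{thm7.8} (the $w^{(i)}$ there are $u_i(1\otimes g_i)$ for unitaries $u_i$ and $g_i\in\Z_+$ central relative to $H$), but that unwinding is precisely what the paper's citation of Theorem \ref{KeyLemmaP} accomplishes.

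A smaller imprecision: for decomposition rank, what quasidiagonality of traces buys you is not that $\Phi(1_A)$ is close to $1_{A_\omega}$ (that is automatic modulo $J_{A_\omega}$ from the tracial agreement), but that the maps $\theta_n\colon A\to F_n$ can be taken \emph{unital}. Then $\theta_n(1_A)=1_{F_n}$, and the second-leg map $\tilde\eta_n$ applied to $1_{F_n}\oplus 1_{F_n}$ is forced close to $1_{A\otimes\Z}$, which makes $\tilde\eta_n$ approximately contractive. Without unitality of $\theta_n$, the value of $\tilde\eta_n(1_{F_n}\oplus 1_{F_n})$ could have large norm even though $\Phi(1_A)$ is tracially close to the unit.
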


Here the optimal bound of $1$ on the nuclear dimension can be viewed as a $2$-coloured approximate finite dimensionality; one can approximate the nuclear $\mathrm{C}^*$-algebras of Theorem \ref{thm:dn1} by two order zero images of finite dimensional algebras. The two order zero maps arise from the two colours in Theorem \ref{thm:2colouredIntroVersion}.  

An alternative --- in fact almost orthogonal --- approach to the implication (ii) $\Longrightarrow$ (i) of the Toms-Winter conjecture is through the examination of $\mathrm{C}^*$-algebras arising as inductive limits of concrete building blocks.  This was initiated by AT and WW in \cite{TW:APDE}, who showed that $\Z$-stable approximately homogeneous (AH) $\mathrm{C}^*$-algebras have decomposition rank at most $2$, and very recently extended in \cite{ENST:preprint} (with the same estimate) to $\Z$-stable ASH algebras.  These results, and the dimension computations in the present paper complement each other nicely: here we require no inductive limit structure, and so our methods work under abstract axiomatic conditions; in particular we obtain nuclear dimension results in the absence of the Universal Coefficient Theorem (UCT) and quasidiagonality. In contrast, the results of \cite{TW:APDE,ENST:preprint} do not require simplicity or tracial state space restrictions, but only work for algebras with very concrete building blocks.

We end the paper by showing how our techniques can also be used to calculate the nuclear dimension of Kirchberg algebras without a UCT assumption. Aiming for the exact value of $1$ (and hence lowering the bound for Kirchberg algebras from \cite{MS:DMJ}) was inspired by recent work of Ruiz, Sims and S\o{}rensen (\cite{RSS:arXiv}) who obtain the optimal estimate for UCT Kirchberg algebras.

\begin{thm}
\label{thm:Kirchberg,boom!}
The nuclear dimension of every Kirchberg algebra is $1$.  
\end{thm}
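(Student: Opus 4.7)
The plan is to adapt the strategy of Theorem~\ref{thm:dn1} to the purely infinite setting, where the absence of traces replaces the Bauer simplex hypothesis for free, and pure infiniteness together with $\mathcal{O}_\infty$-stability takes over the role of the tracial matching machinery of Section~\ref{sec:2colour}. The first step is to establish a purely infinite analogue of Theorem~\ref{thm:2colouredIntroVersion}: for a Kirchberg algebra $A$ and any separable, unital, nuclear C*-algebra $D$, any two unital $^{*}$-homomorphisms $\pi_1, \pi_2 : D \to A_\omega$ are approximately $2$-coloured equivalent. The vacuous tracial compatibility condition is replaced by the following structural inputs: by Kirchberg's absorption theorem, $A$, $A_\omega$, and the relative commutants $A_\omega \cap \pi_i(D)'$ are all $\mathcal{O}_\infty$-stable; in particular the relative commutants contain pairs of isometries $s_1, s_2$ with $s_1 s_1^{*} + s_2 s_2^{*} = 1$ (coming from a copy of $\mathcal{O}_2$), and these can be combined with a Cuntz-sum absorption argument inside the ultrapower to produce partial isometries $w^{(0)}, w^{(1)}$ implementing the $2$-coloured equivalence between $\pi_1$ and $\pi_2$.

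With the purely infinite $2$-coloured classification in hand, I would then repeat the dimension reduction argument of Theorem~\ref{thm:dn1}. Given a finite $F \subset A$ and $\varepsilon > 0$, nuclearity supplies a c.p.c.\ factorization $A \xrightarrow{\psi} F_0 \xrightarrow{\phi} A$ of $\mathrm{id}_A$ up to $\varepsilon$ on $F$. Using the supporting-map machinery of Section~\ref{sec:2colour} (whose statements specialize cleanly once the tracial compatibility condition is trivial) together with the purely infinite $2$-coloured classification above, I would decompose $\phi$ as $\phi \approx \phi^{(0)} + \phi^{(1)}$ with each $\phi^{(i)} : F_0 \to A$ c.p.c.\ and order zero, working first in $A_\omega$ and then reindexing back to $A$. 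This yields $\dim_{\mathrm{nuc}}(A) \leq 1$, and the matching lower bound is immediate since $A$ is not AF (it has an infinite projection) and hence cannot have nuclear dimension $0$ by \cite[Remark 2.3(iii)]{WZ:Adv}.

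The main obstacle is the first step: the $2$-coloured machinery of Section~\ref{sec:2colour} is tailored to the stably finite, tracial setting, and the analogue in the Kirchberg case requires replacing tracial matching by Kirchberg's $\mathcal{O}_\infty$-absorption. The delicate point is to carry this through UCT-freely --- that is, without appealing to the full Kirchberg--Phillips classification of unital $^{*}$-homomorphisms up to approximate unitary equivalence, which depends on matching $KK$-classes exactly. I expect this to go through precisely because $2$-coloured equivalence is genuinely weaker than unitary equivalence: the two allowed partial isometries provide enough slack so that $\mathcal{O}_\infty$-absorption (rather than $KK$-agreement) suffices to relate any two unital $^{*}$-homomorphisms into $A_\omega$, and this is the phenomenon that lets the argument bypass the $KK$-theoretic obstructions that would otherwise require the UCT.
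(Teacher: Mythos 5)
Your high-level strategy is in the right direction: replace the tracial machinery of the stably finite case with purely infinite structure, and use quasidiagonality of cones to produce finite-dimensional factorizations (this is exactly what the paper's Lemma~\ref{lem:GoodKirchbergMaps} does). But the central step of your plan -- a direct proof that \emph{any} two unital ${}^*$-homomorphisms $\pi_1,\pi_2:D\to A_\omega$ are $2$-coloured equivalent, via ``Cuntz-sum absorption'' -- is not actually carried out, and I do not see how it would go. The Kirchberg-type absorption input (Lemma~\ref{lem:KUniquenessLem}) produces an \emph{asymmetric} relation: for an injective unital ${}^*$-homomorphism $\rho$ and a u.c.p.\ $\sigma$, an isometry $s$ with $s^*\rho(\cdot)s=\sigma(\cdot)$. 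The defining conditions of $2$-coloured equivalence (Definition~\ref{def:ColourEquiv}) are symmetric and come with the normalizations $\sum_i w^{(i)*}w^{(i)}=\sum_i w^{(i)}w^{(i)*}=1$ together with the commutation requirements on $w^{(i)*}w^{(i)}$ and $w^{(i)}w^{(i)*}$; a pair of one-sided absorption relations ($s^*\pi_2 s=\pi_1$ and $t^*\pi_1 t=\pi_2$) does not assemble into that structure without substantial further argument, and no such argument appears in the proposal. The paper takes a genuinely different route: it proves a \emph{$1$-coloured} (unitary) uniqueness theorem for c.p.c.\ order zero maps $\phi^{(1)},\phi^{(2)}$ with $f(\phi^{(i)})$ injective for all nonzero $f\in C_0((0,1])_+$ (Theorem~\ref{thm:TotFullCpcKirClass}), using Lemma~\ref{lem:KUniquenessLem} to establish that totally full positive elements in the relative commutant sequence algebra $C=M_2(B_\omega)\cap\pi(A)'\cap\{1-\pi(1_A)\}^\perp$ are all Cuntz equivalent (Lemma~\ref{lem:SimplePI}), and then running the Robert--Santiago-type classification of Lemma~\ref{thm:TotallyFullClass} together with the $2\times 2$ matrix trick (Lemma~\ref{lem:CombinedUnitaryEquiv}). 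The $2$-coloured statements are then \emph{corollaries} of the $1$-coloured one, obtained by tensoring with $h$ and $1_\Z-h$ for a full-spectrum $h\in\Z_+$, exactly as in Section~\ref{sec:2colour}.

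There is a second gap that your proposal acknowledges but does not fill. The nuclear dimension estimate compares the first-factor embedding $\iota:A\to A\otimes\mathcal{O}_\infty$ (a ${}^*$-homomorphism) with the map $\Phi:A\to A_\omega$ provided by Lemma~\ref{lem:GoodKirchbergMaps}, which is only c.p.c.\ order zero. A $2$-coloured uniqueness theorem for unital ${}^*$-homomorphisms therefore does not apply; the paper instead applies the $1$-coloured Theorem~\ref{thm:TotFullCpcKirClass} twice, to the full-spectrum order zero maps $\iota\otimes h$ and $\Phi\otimes h$ (and then to $\iota\otimes(1_{\mathcal{O}_\infty}-h)$ and $\Phi\otimes(1_{\mathcal{O}_\infty}-h)$). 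The technical point that $f(\Phi\otimes h)$ is injective for all nonzero $f$ is the purpose of Lemma~\ref{lem:TotFullCpc}, and this full-spectrum condition is not vacuous: for a unital ${}^*$-homomorphism $\pi$, $\pi(1_A)$ is a projection, so $(\pi-t)_+$ has the wrong spectral behaviour, which is why one must tensor with $h$ first. Relatedly, your closing explanation of why the UCT is avoided is not quite the mechanism at work: the $KK$-obstruction is killed primarily by passing to the cone picture, i.e., to full-spectrum order zero maps (whose associated ${}^*$-homomorphisms out of $C_0((0,1])\otimes A$ have trivial $KK$-class because the domain is contractible), and the two colours are then used to \emph{reassemble} the ${}^*$-homomorphism from its two cone pieces, rather than to provide slack in the $KK$-matching itself.
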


\bigskip
In summary, our \emph{formal main results} are the dimension computations of Theorems \ref{thm:dn1} and \ref{thm:Kirchberg,boom!}, together with their implications for the Toms-Winter conjecture and the Elliott program, Theorems \ref{thm:TW} and \ref{thm:Rigidity}. 

The main \emph{conceptual} novelties are (a) the systematic use of von Neumann bundle techniques in connection with central sequence algebra methods, (b) the classification of order zero maps up to approximate unitary equivalence by tracial information (Theorem \ref{onecolourclass}) and, based on this, (c) the $n$-coloured classification of $^{*}$-homomorphisms, again in terms of tracial data (Theorem \ref{thm:2colouredIntroVersion}).

In the remainder of this introduction we will say a bit more about the strategies of the proofs and the architecture of the paper.

\bigskip
\subsection*{\sc From classification to nuclear dimension}\hfill \\

\noindent
Before outlining the proofs of Theorems \ref{onecolourclass} and \ref{thm:2colouredIntroVersion}, which are long and quite technical, let us briefly explain how we get from there to Theorem \ref{thm:dn1} (from which Theorem \ref{thm:TW} is immediate), following Connes' strategy for proving that an  injective II$_1$ factor $\M$ is hyperfinite  (\cite{C:Ann}). This contains 
three main ingredients:
\begin{enumerate}
\item\label{CS.1} $\M$ is McDuff, i.e., $\M\cong \M\vnotimes \R$;
\item\label{CS.2} there exists a unital embedding
$\theta:\M\hookrightarrow\R^\omega$, into the ultrapower of the hyperfinite
II$_1$ factor;
\item\label{CS.3} $\M$ has approximately inner flip, meaning $x \otimes y \mapsto y\otimes x$ on $\M\bar{\otimes} \M$ is approximately inner.
\end{enumerate}
Using (iii), one compares  the first factor
embedding $\M\hookrightarrow (\M\vnotimes \R)^\omega$ with $x\mapsto
1\otimes \theta(x)$ using the approximately inner flip.  This moves
the finite dimensional approximations from $\R^\omega$ back into $\M$
yielding hyperfiniteness.

$\mathrm C^*$-algebra versions of this argument appeared in work of Effros
and Rosenberg (\cite{ER:PJM}), and provide the strategy used in
\cite{MS:DMJ} and \cite{SWW:arXiv}.  In the $\mathrm C^*$-algebra context, (i)
is naturally replaced by $\Z$-stability which, unlike the von Neumann algebra 
situation, must be imposed as a hypothesis; the exotic examples of
\cite{R:Acta,T:Ann,V:JFA,V:JAMS} are not $\Z$-stable.  Very recently YS, SW and WW proved a version
of (ii), with order zero maps in place of $^{*}$-homomorphisms (\cite[Proposition 3.2]{SWW:arXiv}). Having approximately inner flip in the $\mathrm C^*$-context is far too restrictive (e.g., an AF algebra with this property must be UHF; see also \cite{T:arXiv}, which characterizes those classifiable $\mathrm{C}^*$-algebras with approximately inner flip), but Matui and YS took a major step 
toward circumventing this problem by proving that simple, nuclear, UHF-stable $\mathrm C^*$-algebras with unique trace
have a $2$-coloured approximately inner flip (this is, in fact, a special case of Theorem \ref{thm:2colouredIntroVersion}). With an additional $2$-coloured reduction technique to pass from UHF-stable to $\Z$-stable $\mathrm C^*$-algebras, these $\mathrm C^*$-analogues of (i)-(ii) were used in \cite{SWW:arXiv} to prove that simple, separable, unital, nuclear, $\Z$-stable $\mathrm C^*$-algebras with unique trace have nuclear dimension at most 3. 

To extend past the monotracial situation, a replacement for a $2$-coloured
approximately inner flip is required, as such a flip allows for at most one trace (see \cite[Proposition 4.3]{SWW:arXiv}). At least for Bauer simplices, the $2$-coloured classification theorems (Theorem \ref{thm:2colouredIntroVersion} and its technical extensions allowing one of the two maps to be c.p.c.\ order zero) provide such a tool. Indeed, with $A$ as in Theorem \ref{thm:dn1}, we are able to compare the first factor embedding
$A\hookrightarrow (A\otimes\Z)_\omega$ with suitable maps from $A$
into $(A\otimes \Z)_\omega$ which factor through ultrapowers of finite
dimensional algebras, and agree with the first factor embedding on
traces.  We produce these maps by gluing together maps which behave
well on individual extremal traces of $A$ over the boundary, with
the individual maps either arising from \cite{SWW:arXiv} or from the
quasidiagonality assumption in Theorem \ref{thm:dn1}.  In contrast to
\cite{SWW:arXiv} this technique enables both $2$-colouring arguments
to be performed simultaneously, leading to nuclear dimension $1$ rather than anything greater.

\bigskip
\subsection*{\sc Outline of the proof of Theorem \ref{onecolourclass}} \hfill \\

\noindent
We outline the proof of Theorem \ref{onecolourclass}, from which the $2$-coloured classification theorem (Theorem \ref{thm:2colouredIntroVersion}) follows, so let $A$ and $B$ be as in Theorem \ref{onecolourclass}. Write $B_\omega$ for the ultrapower of $B$ and consider two unital $^*$-homomorphisms $\hat{\phi}_1,\hat{\phi}_2:A\rightarrow B_\omega$ which agree on traces. For this exposition, we additionally assume that $A$ is simple and that $B$ is nuclear.

Identify $B_\omega$ with $(B\otimes\Z)_\omega$, and fix a positive contraction $k\in \Z$ with full spectrum and consider the c.p.c.\ order zero maps $\phi_i:=\hat{\phi}_i(\cdot)\otimes k$ (approximate unitary equivalence of these c.p.c.\ order zero maps is the same as approximate unitary equivalence of the $^*$-homomorphisms $\phi_1,\phi_2$ appearing in Theorem \ref{onecolourclass}, see Proposition \ref{prop:Ord0Structure}).   To establish the unitary equivalence of $\phi_1$ and $\phi_2$, we use an order zero version of a $2\times 2$ matrix trick invented by Connes (\cite{Connes:ASENS}), to reduce the problem of classifying 
maps to that of classifying positive elements, with the complexity transferred to a relative commutant sequence algebra.  Define a $^*$-homomorphism $\pi:A\rightarrow M_2(B_\omega)$ by
\begin{equation}
\pi(x)=\begin{pmatrix}\hat{\phi}_1(x)\otimes 1_\Z&0\\0&\hat{\phi}_2(x)\otimes 1_\Z\end{pmatrix},\quad x\in A,
\end{equation}
and form the relative commutant sequence algebra $C:=M_2(B_\omega)\cap \pi(A)'$, noting that
\begin{equation}
\label{eq:IntroDefhi}
h_1:=\begin{pmatrix}\phi_1(1_A)&0\\0&0\end{pmatrix},\ h_2:=\begin{pmatrix}0&0\\0&\phi_2(1_A)\end{pmatrix}\in C.
\end{equation}
The $2\times 2$ matrix trick will show that $\phi_1$ and $\phi_2$ are unitarily equivalent in $B_\omega$ whenever $h_1$ and $h_2$ are unitarily equivalent in $C$; given the structure of $C$ it suffices to show that $h_1$ and $h_2$ are approximately unitarily equivalent.

A key tool for establishing approximate unitary equivalence of two positive elements $h_1$ and $h_2$ in a $\mathrm C^*$-algebra $C$ is the Cuntz semigroup, dating back to \cite{Cuntz:MA}.
The positive contractions $h_1$ and $h_2$ induce homomorphisms between the Cuntz semigroups of $C_0((0,1])$ and of $C$.  Using a modification of an argument of Robert and Santiago from \cite{RS:JFA}, we can show that, for the relative commutant sequence algebras used in the $2\times 2$ matrix trick, two totally full positive contractions $h_1$ and $h_2$ are unitarily equivalent if and only if they induce the same map at the level of the Cuntz semigroup (here totally full means that $f(h_i)$ is full in $C$ for all nonzero $f\in C_0((0,\|h_i\|])_+$). To show that $h_1$ and $h_2$ induce the same Cuntz semigroup maps, we develop structural properties of $C$.   The ultrapower $M_2(B_\omega)$ is equipped with the trace kernel ideal $J$, which gives the ideal $J\cap C\lhd C$. When $A$ is simple, Matui and YS show (using the language we adopt in this paper, see Definition \ref{def:SI}) in \cite{MS:DMJ} that the $^*$-homomorphism $\pi$ has property (SI). This is a technical tool which enables the transfer of some structural properties from the quotient $C/(J\cap C)$ back to $C$. For example, in the presence of property (SI), all traces on $C$ factor through $C/(J\cap C)$.

When $B$ has a unique trace $\tau$, the algebra $C/(J\cap C)$ is a finite von Neumann algebra: in fact it is the relative commutant $\R^\omega \cap \bar{\pi}(A)'$, where $\R^\omega$ is the ultrapower of the hyperfinite II$_1$ factor, and $\bar{\pi}$ is the induced map obtained from the quotient map $M_2(B_\omega)/J\cong \R^\omega$. As such, $C/(J\cap C)$ has a wealth of structural properties. For example, $C/(J\cap C)$ has strict comparison of positive elements. Using property (SI), strict comparison passes back to $C$. We can also use the structure of $C/(J\cap C)$ to see that every trace on $C$ lies in the closed convex hull of those traces of the form $\tau(\pi(a)\cdot)$  for some $a\in A_+$. Thus these latter traces can be used as a test set to show that $\rho(f(h_1))=\rho(f(h_2))>0$ for all traces $\rho$ on $C$ and nonzero $f\in C_0((0,1])_+$.  Strict comparison is then used to see that $h_1$ and $h_2$ are totally full in $C$ and induce the same Cuntz semigroup map.  This strategy of examining tracial behaviour of $h_1$ and $h_2$ inside $C$ has its spiritual origins in Haagerup's proof of injectivity implies hyperfiniteness (see \cite[Proof of Theorem 4.2]{H:JFA}).

The sketch of the previous paragraph works equally well when $B$ has finitely many extremal traces, as in this case $C/(J\cap C)$ is again a finite von Neumann algebra. In general, however, $C/(J\cap C)$ is not a von Neumann algebra, but when the set of extremal traces of $B$ is compact, $C/(J\cap C)$ is a relative commutant inside an ultraproduct of $\mathrm W^*$-bundles, in the sense of Ozawa, of finite von Neumann algebras\footnote{In fact each fibre is the hyperfinite II$_1$ factor.} over this compact extremal tracial boundary.  The $\Z$-stability assumption on $B$ ensures that these bundles are trivial by \cite{O:JMSUT}, which enables us to extend certain von Neumann algebraic structural properties to these bundles, in Section \ref{sec:Bundles}. In this way we obtain strict comparison and a test set of traces for $C$ via a more general version of property (SI), from which Theorem \ref{onecolourclass} follows as in the unique trace case.

\bigskip
\subsection*{\sc Structure of the paper} \hfill \\

\noindent
While we described the strategy for maps $A\rightarrow B_\omega$, where $B_\omega$ is the ultrapower of a fixed finite, simple, separable, unital, exact and $\Z$-stable $\mathrm{C}^*$-algebra, the proofs work when $B_\omega=\prod_\omega B_n$ is the ultraproduct of any sequence $(B_n)_{n=1}^\infty$ of such algebras. Further exactness is only used in Theorems \ref{onecolourclass} and \ref{thm:2colouredIntroVersion} in order to access Haagerup's result that quasitraces are traces for exact algebras (\cite{H:QTrace}). We will work in the generality of ultraproducts of $\mathrm{C}^*$-algebras whose quasitraces are traces in the main body of the paper.

In order to use the $2$-coloured strategy to prove Theorem \ref{thm:dn1}, we must actually work in the situation where the second map is only an order zero map.\footnote{The meaning of agreement on traces is strengthened in this case, see Theorem \ref{thm7.8}.} This gives rise to additional technicalities, which take up a significant part of the next 5 sections of the paper.  For example, it will not in general be possible to construct a $^*$-homomorphism $\pi$ used to define the algebra $C$ above, and we instead work with an order zero map $\pi$ and modify the definition of $C$ accordingly. Such \emph{supporting order zero maps} are constructed in Section \ref{sect:2} which also collects a number of preliminary facts regarding order zero maps, traces, ultraproducts, approximations by invertibles, and $\Z$-stability.  Section \ref{sec:Matrix} contains the version of our $2\times 2$ matrix trick for the order zero relative commutant sequence algebras we use.  In Section \ref{sec:Bundles} we develop structural properties of ultrapowers of $\mathrm{W}^*$-bundles, which we transfer to the algebras $C$ in Section \ref{sec:StrictComp}, via a more flexible notion of property (SI) which allows for nonsimple domain algebras and order zero maps.  In Section \ref{sec:TotallyFullClass}, we give our version of the Robert-Santiago classification result and use this to obtain Theorem \ref{KeyLemmaP} (which is the technical version of Theorem \ref{onecolourclass}) from which Theorems \ref{thm:TW}, \ref{thm:Rigidity}, \ref {thm:2colouredIntroVersion}, \ref{thm:dn1} and \ref{thm:Kirchberg,boom!} will follow.

In Section \ref{sec:2colour} we prove Theorem \ref{thm:2colouredIntroVersion} (as Theorem \ref{thm:2ColourUniqueness}) and further develop the notion of coloured equivalence. Our covering dimension estimates are then given in Section \ref{sec:FiniteDim}, which proves Theorem \ref{thm:dn1} (as Theorem \ref{thm:FiniteDim}). Section \ref{sec:TWQD} revisits the notion of quasidiagonal traces, and shows that vast classes of simple, stably finite, nuclear $\mathrm C^*$-algebras have the property that all traces are quasidiagonal; in this section, we also prove Theorem \ref{thm:Rigidity} (as Corollary \ref{cor8.6}).  
The paper ends with Section \ref{sec:KirAlgs}, in which we prove Theorem \ref{thm:Kirchberg,boom!}
 (as Corollary \ref{DimKirchberg}), and also show how our $2$-coloured methods can be applied in the setting of Kirchberg algebras. Indeed, in the absence of traces the purely infinite analogue of Theorem \ref{onecolourclass} provides a classification result for suitable order zero maps (Theorem \ref{thm:TotFullCpcKirClass}).
 
\bigskip
\subsection*{\sc Acknowledgements} \hfill \\

\noindent
This project has benefited greatly from conversations with a number of colleagues; we are particularly indebted to Rob Archbold, George Elliott, Thierry Giordano, Eberhard Kirchberg, Hiroki Matui, Narutaka Ozawa, Leonel Robert, Mikael R{\o}rdam, Luis Santiago, Andrew Toms, and Joachim Zacharias for sharing their conceptual insights and for plenty of inspiring discussions. We thank the organisers and funders of the following meetings: BIRS workshop `Dynamics and $\mathrm C^*$-algebras: Amenability and Soficity'; GPOTS 2015; Scottish Operator Algebras Seminars. We also thank Etienne Blanchard, Jorge Castillejos-Lopez, Samuel Evington, and Ilijas Farah for their helpful comments on earlier versions of this paper. Finally, we thank the referee for their careful reading and helpful comments on the previous version of this paper.

\renewcommand*{\thetheorem}{\roman{theorem}}
\setcounter{theorem}{0}
\numberwithin{theorem}{section}

\clearpage\section{Preliminaries}\label{sect:2}

\noindent
Let $A$ be a $\mathrm C^*$-algebra. Denote the positive cone of $A$ by $A_+$, the set of contractions by $A^1$, and the set of positive contractions by $A_+^1$.
Let $a,b \in A$. For $\eps > 0$, we write $a \approx_\eps b$ to mean $\|a-b\|<\eps$.
When $b$ is self-adjoint we write $a \vartriangleleft b$ to mean that $ba=ab=a$. In the case that $A$ is unital, this is equivalent to $a\in\{1_A-b\}^\perp$; we will often use this latter notation, particularly when $b$ is a positive contraction.
By a \emph{hereditary subalgebra} of $A$, we mean a $\mathrm C^*$-subalgebra $B$ of $A$ satisfying $x \in B_+$ whenever $0 \leq x \leq y \in B_+$; when $a \in A_+$, the hereditary subalgebra generated by $a$ is denoted $\mathrm{her}(a)$ and is equal to $\overline{aAa}$.

We use two pieces of functional calculus repeatedly in the paper.  Given $\eps>0$ and a positive element $a$ in a $\mathrm{C}^*$-algebra, write $(a-\eps)_+$ for the element obtained by appplying the function $t\mapsto \max(0,t-\eps)$ to $a$.  The function\footnote{When we use $g_\eps$ to define $g_\eps (a)$, we only need to consider $g_\eps$ as a function in $C_0((0,\|a\|])$.} $g_\eps \in C_0((0,\infty])$ is defined by
\begin{equation}
\label{eq:gepsDef}
g_\eps|_{(0,\eps/2]} \equiv 0,\quad g_\eps|_{[\eps,\infty]}\equiv 1, \quad\text{and}\quad g_\eps|_{[\eps/2,\eps]}\text{ is linear.} \end{equation}
In this way $(a-\eps)_+\vartriangleleft g_\eps(a)$ for $a\geq 0$.

The Jiang-Su algebra, denoted by $\Z$, was introduced in \cite{JS:AJM} and is central to this paper.  It is a simple, unital $\mathrm C^*$-algebra with a unique trace and the same $K$-theory as $\mathbb C$. Moreover $\Z$ is strongly self-absorbing in the sense that the first factor embedding $\Z \to \Z \otimes \Z$ is approximately unitarily equivalent to an isomorphism (\cite[Theorems 3 and 4]{JS:AJM}).
A $\mathrm C^*$-algebra $A$ is \emph{$\Z$-stable} if it is isomorphic to $A \otimes \Z$.  This is a regularity property, which gives rise to a wealth of additional structural properties (including the main results of the present article).  We will collect a number of these of relevance to us in Lemma \ref{lem:Zfacts} below.

Another key notion is that of totally full $^*$-homomorphisms.
In \cite[Definition 2.8]{DE:PLMS}, these were called ``full embeddings'' (in the unital case).

\begin{defn}
\label{def:TotallyFull}
Let $A,B$ be $\mathrm C^*$-algebras, let $\pi:A \to B$ be a ${}^*$-homo\-morphism. We say that
$\pi$ is \emph{totally full} if for every nonzero element $a\in A$, $\pi(a)$ is full in $B$ (i.e., $\pi(a)$ generates $B$ as a closed two-sided ideal).
Likewise, a positive element $b \in B_+$ is said to be \emph{totally full} if $b\neq 0$ and the $^*$-homomorphism $C_0((0,\|b\|]) \to B$ sending $\mathrm{id}_{(0,\|b\|]}$ to $b$ is totally full.
\end{defn}

\bigskip

\subsection{\sc Order zero maps}\hfill  \\

\noindent
Over the last 10 years it has become apparent that it is very natural to work with the class of maps between $\mathrm{C}^*$-algebras which preserve the order structure and orthogonality. Building on the work of Wolff (\cite{W:AM}), the structure theory for these maps was developed in \cite{WZ:MJM}.

\begin{defn} [{\cite[Definition 1.3]{WZ:MJM}}]
Let $A,B$ be $\mathrm C^*$-algebras. A completely positive and contractive (c.p.c.) map $\phi:A \to B$ is said to be \emph{order zero} if, for every $a,b \in A_+$ with $ab=0$, one has $\phi(a)\phi(b)=0$.
\end{defn}

\begin{prop}[{\cite[Corollary 3.1]{WZ:MJM}}]\label{prop.Cone}
\label{prop:Ord0Structure}
Let $A,B$ be $\mathrm C^*$-algebras.
There is a one-to-one correspondence between c.p.c.\ order zero maps $\phi:A \to B$ and ${}^*$-homomorphisms $\pi:C_0((0,1]) \otimes A \to B$, where $\phi$ and $\pi$ are related by the commuting diagram
\begin{equation}
\xymatrix{
A \ar[rr]^-{a \mapsto \mathrm{id}_{(0,1]} \otimes a} \ar[drr]_-{\phi} && C_0((0,1]) \otimes A \ar[d]^{\pi} \\
&& B \, .
}
\end{equation}
\end{prop}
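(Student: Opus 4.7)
The strategy is to establish the two directions of the correspondence separately, using the Winter--Zacharias structure theorem for c.p.c.\ order zero maps as the main engine.

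\emph{From $\pi$ to $\phi$ (easy direction):} Given a $^*$-homomorphism $\pi:C_0((0,1])\otimes A\to B$, define $\phi(a):=\pi(\mathrm{id}_{(0,1]}\otimes a)$. The map $a\mapsto \mathrm{id}_{(0,1]}\otimes a$ is a c.p.c.\ embedding of $A$ into $C_0((0,1])\otimes A$, so $\phi$ is c.p.c. For the order zero property, if $a,b\in A_+$ satisfy $ab=0$, then $(\mathrm{id}_{(0,1]}\otimes a)(\mathrm{id}_{(0,1]}\otimes b)=\mathrm{id}_{(0,1]}^{2}\otimes ab=0$, whence $\phi(a)\phi(b)=0$.

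\emph{From $\phi$ to $\pi$ (hard direction):} Starting from a c.p.c.\ order zero map $\phi:A\to B$, I would invoke the Winter--Zacharias structure theorem to produce a positive contraction $h$ in $\mathcal{M}(\mathrm C^*(\phi(A)))$ together with a $^*$-homomorphism $\pi_\phi:A\to \mathcal{M}(\mathrm C^*(\phi(A)))\cap\{h\}'$ satisfying $\phi(a)=h\,\pi_\phi(a)$ for all $a\in A$. The idea is to pass to the enveloping von Neumann algebra and extract $h$ as (a functional-calculus image of) $\phi(1_A)$ --- or, in the non-unital case, a limit of $\phi(e_\lambda)$ for an approximate unit --- then use the Cauchy--Schwarz inequality for c.p.\ maps to show that $a\mapsto h^{-1}\phi(a)$ (defined on a suitable corner) is multiplicative on $A_+$ and hence extends to a $^*$-homomorphism $\pi_\phi$. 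Given this data, define $\pi$ on the algebraic tensor product by $\pi(f\otimes a):=f(h)\,\pi_\phi(a)$; since $f(h)$ lies in the commutant of $\pi_\phi(A)$, this is multiplicative and $^*$-preserving, and it extends continuously to $C_0((0,1])\otimes A$ by nuclearity of $C_0((0,1])$ (or simply because the target is a $\mathrm C^*$-algebra and $\pi$ is contractive on elementary tensors of norm $\|f\|\|a\|$). The commuting square is immediate: $\pi(\mathrm{id}_{(0,1]}\otimes a)=h\,\pi_\phi(a)=\phi(a)$.

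\emph{Bijectivity and main obstacle:} Finally I would verify that the two constructions are mutual inverses. The restriction of $\pi$ constructed as above to $\mathrm{id}_{(0,1]}\otimes A$ recovers $\phi$ by the commuting square. Conversely, starting with $\pi$, setting $\phi(a)=\pi(\mathrm{id}_{(0,1]}\otimes a)$ and then rebuilding $\pi'$, one verifies $\pi'=\pi$ on generators of the form $\mathrm{id}_{(0,1]}^{n}\otimes a$ (using $(\mathrm{id}_{(0,1]}\otimes a)^n=\mathrm{id}_{(0,1]}^n\otimes a^n$ and the polar-type decomposition $\phi(a)=h\pi_\phi(a)$ together with the identity $\phi(a^n)=h^n\pi_\phi(a)^n$ which is characteristic of order zero maps); since $\mathrm{id}_{(0,1]}\otimes A$ generates $C_0((0,1])\otimes A$ as a $\mathrm C^*$-algebra (by Stone--Weierstrass applied to polynomials without constant term on $(0,1]$), this forces $\pi'=\pi$. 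The main obstacle is the structure theorem itself: the delicate point is to show that the c.p.c.\ contraction $a\mapsto h^{-1}\phi(a)$, a priori defined only via functional calculus in the bidual, actually defines a genuine $^*$-homomorphism into the multiplier algebra that commutes with $h$. Everything else is bookkeeping once this factorisation is in hand.
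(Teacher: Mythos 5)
The paper does not actually prove this proposition; it quotes it verbatim from Winter--Zacharias~\cite{WZ:MJM}, so there is no internal proof to compare against. Your argument is the standard one from that reference: the easy direction restricts $\pi$ along $a \mapsto \mathrm{id}_{(0,1]} \otimes a$, and the hard direction invokes the WZ structure theorem $\phi = h\,\pi_\phi(\cdot)$ to define $\pi(f \otimes a) := f(h)\pi_\phi(a)$ on elementary tensors, then extends by nuclearity of $C_0((0,1])$. The overall reasoning is sound, including the key observation that $\mathrm{id}_{(0,1]} \otimes A$ generates $C_0((0,1]) \otimes A$ as a $\mathrm{C}^*$-algebra (via Stone--Weierstrass together with $\overline{A\cdot A} = A$), which is what makes the bijectivity argument work. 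One small slip: the displayed identity ``$\phi(a^n) = h^n\pi_\phi(a)^n$'' is incorrect --- applying $\phi = h\,\pi_\phi(\cdot)$ to $a^n$ and using multiplicativity of $\pi_\phi$ gives $\phi(a^n) = h\,\pi_\phi(a)^n$, while the power $h^n$ appears only in the functional-calculus formula $f(\phi)(a) = f(h)\pi_\phi(a)$ for $f(t)=t^n$. This is immaterial, though, since your bijectivity argument ultimately rests on the generating-set observation and not on that identity.
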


It follows that, when $A$ is unital, c.p.c.\ order zero maps $\phi:A\to B$ are characterized as the c.p.c.\ maps satisfying the \emph{order zero identity}
\begin{equation}
\label{eq:Ord0Ident}
\phi(ab)\phi(1_A)=\phi(a)\phi(b),\quad a,b\in A.
\end{equation}
From this relation one can read off the following well known fact.
\begin{prop}
\label{prop:Ord0Homo}
Let $A,B$ be $\mathrm C^*$-algebras such that $A$ is unital, and let $\phi:A \to B$ be a c.p.c.\ order zero map.
Then $\phi$ is a ${}^*$-homomorphism if and only if $\phi(1_A)$ is a projection.
\end{prop}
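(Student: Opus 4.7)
The forward implication needs no real work: a $*$-homomorphism sends the projection $1_A$ to a projection, so $\phi(1_A) = \phi(1_A)^2$ is a self-adjoint idempotent. For the nontrivial direction, the plan is to lift $\phi$ via Proposition \ref{prop.Cone} to a $*$-homomorphism $\pi : C_0((0,1]) \otimes A \to B$ satisfying $\phi(a) = \pi(\iota \otimes a)$ for every $a \in A$, where $\iota := \mathrm{id}_{(0,1]}$, and to exploit projectionality of $h := \phi(1_A) = \pi(\iota \otimes 1_A)$ at the level of $\pi$.

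The opening observation will be that $h^2 = h$ translates into the single relation $\pi\bigl((\iota - \iota^2) \otimes 1_A\bigr) = h - h^2 = 0$, i.e.\ $(\iota - \iota^2) \otimes 1_A \in \ker \pi$. The main technical step --- and really the only step that needs a small trick --- is to promote this to $(\iota - \iota^2) \otimes a \in \ker \pi$ for every $a \in A$. Since $1 \otimes a$ is not itself an element of $C_0((0,1]) \otimes A$, I cannot just multiply. Instead the plan is to use that $\ker \pi$ is a closed two-sided ideal and approximate
\[
(\iota - \iota^2) \otimes a \;=\; \lim_\lambda \bigl((\iota - \iota^2) \otimes 1_A\bigr)(e_\lambda \otimes a)
\]
for an approximate identity $(e_\lambda)$ of $C_0((0,1])$. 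Unitality of $A$ is what makes $(\iota - \iota^2) \otimes 1_A$ an element of the algebra in the first place, and this is the only place unitality enters.

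Once $\pi(\iota \otimes a) = \pi(\iota^2 \otimes a)$ is established for every $a \in A$, multiplicativity of $\phi$ falls out in a single line:
\[
\phi(ab) \;=\; \pi(\iota \otimes ab) \;=\; \pi(\iota^2 \otimes ab) \;=\; \pi(\iota \otimes a)\,\pi(\iota \otimes b) \;=\; \phi(a)\phi(b).
\]
Combined with the fact that $\phi$, being c.p.c., is positive and hence $*$-preserving, this will upgrade $\phi$ to a $*$-homomorphism. I do not anticipate any serious obstacle; the entire content of the proof is packaged into the ideal-theoretic propagation step above, and the order zero identity \eqref{eq:Ord0Ident} is essentially absorbed into the structure theorem that supplies $\pi$.
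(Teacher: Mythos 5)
Your proof is correct, but it takes a genuinely different route from the one the paper has in mind. The paper presents this as something to ``read off'' from the order zero identity \eqref{eq:Ord0Ident}: writing $p:=\phi(1_A)$, complete positivity gives $0\le\phi(a)\le\|a\|\,p$ for $a\in A_+$, so $\phi(A)\subseteq\mathrm{her}(p)=pBp$ (here projectionality of $p$ is used), and therefore $\phi(ab)=\phi(ab)p=\phi(a)\phi(b)$ by \eqref{eq:Ord0Ident}. You instead invoke the structure theorem, Proposition~\ref{prop:Ord0Structure}, and argue in the cone: $h=h^2$ places $(\iota-\iota^2)\otimes 1_A$ in $\ker\pi$, and since $\ker\pi$ is a closed two-sided ideal and $(\iota-\iota^2)\otimes a=\lim_\lambda\bigl((\iota-\iota^2)\otimes 1_A\bigr)(e_\lambda\otimes a)$, the entire slice $(\iota-\iota^2)\otimes A$ lies in $\ker\pi$, whence $\phi(ab)=\pi(\iota\otimes ab)=\pi(\iota^2\otimes ab)=\phi(a)\phi(b)$. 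Both arguments are sound. The paper's route is the more economical one: it uses only the cut-down identity \eqref{eq:Ord0Ident}, which is itself a consequence of the structure theorem, plus a one-line observation about hereditary subalgebras. Yours leans on the structure theorem directly, which is heavier machinery, but in exchange the ideal-propagation step is a clean, purely algebraic calculation that requires no discussion of positivity order or hereditary subalgebras and would transfer unchanged to any setting where one has the corresponding lift $\pi$. (A minor observation: you could bypass the approximate identity entirely by noting $\bigl((\iota-\iota^2)\otimes a\bigr)^*\bigl((\iota-\iota^2)\otimes a\bigr)\le\|a\|^2(\iota-\iota^2)^2\otimes 1_A$, whose image under $\pi$ is $\|a\|^2(h-h^2)^2=0$; but the approximate-identity argument you give is equally valid.)
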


There is a continuous functional calculus for c.p.c.\ order zero maps (see \cite[Corollary 4.2]{WZ:MJM}): given a c.p.c.\ order zero map $\phi:A \to B$ and a positive contraction $f \in C_0((0,1])_+$, the c.p.c.\ order zero map $f(\phi):A\to B$ is defined as follows.
If $\pi:C_0((0,1]) \otimes A \to B$ is the ${}^*$-homomorphism satisfying $\phi(a) = \pi(\mathrm{id}_{(0,1]} \otimes a)$ then
\begin{equation}\label{OZFC} f(\phi)(a) := \pi(f \otimes a),\quad a\in A.
\end{equation}
Throughout this paper, when $\phi$ is an order zero map and $n\in\N$, $\phi^n$ refers to the functional calculus output, i.e., $\phi^n = f(\phi)$ where $f(t)=t^n$.

\bigskip

\subsection{\sc Traces and Cuntz comparison}\hfill  \\

\noindent
Let $A$ be a $\mathrm C^*$-algebra. In this paper we only consider bounded traces, so for us a \emph{trace} on $A$ is a state $\tau:A \to \mathbb C$ such that $\tau(ab)=\tau(ba)$ for every $a,b\in A$. (We use the term \emph{tracial functional} for positive bounded functionals with the trace property.) The set of all traces on $A$ is denoted $T(A)$. When $A$ is unital, this set is convex and compact under the weak-$^*$ topology, so has an extreme boundary, $\partial_eT(A)$; in fact, $T(A)$ is a Choquet simplex (\cite[Theorem II.6.8.11]{B:Encyc}).

Let $\tau:A \to \mathbb C$ be a trace.
Then, by tensoring with the non-normalized tracial functional on $M_k$, $\tau$ induces a tracial functional (also denoted $\tau$) on $M_k(A)$ for every $k\in \N$.
Also, it produces a function $d_\tau:M_k(A)_+ \to [0,\infty)$ defined by
\begin{equation} d_\tau(a) := \lim_{n\to\infty} \tau(a^{1/n}). \end{equation}

Let $A$ be a $\mathrm C^*$-algebra and let $a,b \in M_k(A)_+$ for some $k\in \N$.
Cuntz subequivalence, which has its origins in \cite{C:MS,Cuntz:MA}, is the relation $\preceq$ defined by $a \preceq b$ if there exists a sequence $(x_n)_{n=1}^\infty$ in $M_k(A)$ such that
\begin{equation} \lim_{n\to\infty} \|a-x_n^*bx_n\| = 0.
\end{equation}
Say that $a$ and $b$ are \emph{Cuntz equivalent}, if $a \preceq b$ and $b \preceq a$.
The Cuntz semigroup $W(A)$ (first studied in this form in \cite{R:JFA2}) is the quotient of $\bigcup_{n=1}^\infty M_n(A)_+$ by Cuntz equivalence,
where each $M_n(A)_+$ is treated as a subset of $M_{n+1}(A)_+$ by embedding it in the top-left corner. The class of $a \in M_n(A)_+$ in $W(A)$ is denoted $[a]$.
The set $W(A)$ inherits a semigroup structure defined by
\begin{equation}
[a] + [b] = \left[ \begin{pmatrix} a&0 \\ 0&b \end{pmatrix} \right].
\end{equation}
The Cuntz semigroup $W(A)$ is additionally endowed with an order induced by the preorder $\preceq$.

Recall from \cite[Theorem II.2.2]{BH:JFA} that for $\tau\in T(A)$, and for $a,b \in M_k(A)_+$, with $a \preceq b$ we have
\begin{equation} d_\tau(a) \leq d_\tau(b).\end{equation}
See \cite{APT:Contemp} for more about the Cuntz semigroup.

\begin{defn}[{cf.\ \cite[Section 6]{B:LMSnotes}}]
\label{defn:StrictComp}
The $\mathrm C^*$-algebra $A$ has \emph{strict comparison (of positive elements, with respect to bounded traces)}, if 
\begin{align}
\label{eq:StrictComp}
(\forall \tau\in T(A),\ d_\tau(a) < d_\tau(b))\Longrightarrow a\preceq b
\end{align}
for $k\in\N$ and $a,b\in M_k(A)_+$.
\end{defn}

Note that strict comparison, as in Definition \ref{defn:StrictComp}, is a property of the classical Cuntz semigroup $W(A)$ rather than the complete version $\Cu(A):=W(A\otimes\mathcal K)$ developed in \cite{CEI:Crelle}. Also, we emphasize that although we will be applying strict comparison to nonsimple $\mathrm C^*$-algebras, the above definition can be used to obtain Cuntz comparison \textit{only} when the element $b$ is full---for otherwise, $d_\tau(b)=0$ can occur for some trace $\tau$. Blackadar originally considered this property in the case that $A$ is a simple $\mathrm C^*$-algebra (\cite{B:LMSnotes}).
In the nonsimple case, a variation called ``almost-unperforation of the Cuntz semigroup'' is often the more suitable property, as it says something about Cuntz comparison of nonfull elements, and is therefore strictly stronger than the property we have defined. Nonetheless, the definition above is most useful for the arguments in this article.

Functionals on the Cuntz semigroup arise from $2$-quasitraces on the $\mathrm{C}^*$-algebra (\cite[Proposition 4.2]{ERS:AJM}).  It remains an open problem as to whether all $2$-quasitraces are traces; famously this is the case for exact $\mathrm{C}^*$-algebras by the work of Haagerup (\cite{H:QTrace}).  We will often need as a hypothesis on a $\mathrm C^*$-algebra that all its $2$-quasitraces are traces, and we write $QT(A)=T(A)$ to indicate that this condition holds for a $\mathrm C^*$-algebra $A$ (although the exact definition of a $2$-quasitrace will not be needed).  Note too that we do not ask that $d_\tau(a) < d_\tau(b)$ for all lower semicontinuous $2$-quasitraces in Definition \ref{defn:StrictComp}, as the definition we give here works best for our arguments (in retrospect, for the $\mathrm C^*$-algebras that we show this concept applies to, all quasitraces turn out to be traces, see Remark \ref{rmk:QTT}).

\begin{remark}\label{EQTT}
In \cite[Corollary 4.6]{R:IJM}, R\o{}rdam shows that simple, separable, unital, exact, $\Z$-stable $\mathrm{C}^*$-algebras have strict comparison (of positive elements by bounded traces).  Examining this proof, exactness is only used to access Haagerup's result (\cite{H:QTrace}) (see also the discussion in footnote \footref{CUWSC} below). As such every simple, separable, unital, $\Z$-stable $\mathrm{C}^*$-algebra $A$ with $QT(A)=T(A)$ has strict comparison in the sense of Definition \ref{defn:StrictComp}.
\end{remark}

\begin{remark}
\label{rmk:StrictCompFullHered}
If $A$ has strict comparison with respect to bounded traces, and $B$ is a full hereditary subalgebra of $A$, then $B$ also has strict comparison with respect to bounded traces.
This is because every trace $\tau\in T(A)$ restricts to a nonzero bounded tracial functional on $B$ (and hence a trace after renormalizing), so that if $a,b \in B_+$ satisfy \eqref{eq:StrictComp} for traces in $T(B)$ then \eqref{eq:StrictComp} holds for traces in $T(A)$, whence $a \preceq b$ in $A$ and therefore in $B$ (by \cite[Lemma 2.2(iii)]{KR:AJM}).
\end{remark}

\bigskip

\subsection{\sc Ultraproducts and the reindexing argument}\hfill  \\

\label{sec:Reindexing}

\noindent
Throughout this article, $\omega$ will always denote a free ultrafilter on $\N$, which we regard as fixed.
Let $(B_n)_{n=1}^\infty$ be a sequence of $\mathrm C^*$-algebras.
The bounded sequence algebra is defined to be
\begin{equation} \prod_{n=1}^\infty B_n := \{(b_n)_{n=1}^\infty \mid b_n \in B_n \text{ and } \|(b_n)_{n=1}^\infty\|:= \sup_n \|b_n\| < \infty\}. \end{equation}
The ultraproduct $\mathrm C^*$-algebra is defined to be
\begin{equation}
\prod_\omega B_n := \prod_{n=1}^\infty B_n \big/\{ (b_n)_{n=1}^\infty \mid \lim_{n\to\omega} \|b_n\| = 0\}.
\end{equation}
We will often also use $B_\omega$ to denote the ultraproduct $\prod_\omega B_n$.

Suppose now that for all $n$ we have  $T(B_n)\neq \emptyset$. The \emph{trace-kernel ideal} is defined to be
\begin{equation}
\label{eq:JBdef}
J_{B_\omega} := \{(b_n)_{n=1}^\infty \in B_\omega \mid \lim_{n\to \omega} \max_{\tau \in T(B_n)} \tau(b_n^*b_n) = 0\}. \end{equation}
Let $T_\omega(B_\omega)$ be the collection of \emph{limit traces} on $B_\omega$, i.e., those $\tau\in T(B_\omega)$ of the form $\tau((b_n)_{n=1}^\infty)= \lim_{n\to\omega} \tau_n(b_n)$ for some sequence of traces $\tau_n\in T(B_n)$.  Then $J_{B_\omega}$ consists of $b \in B_\omega$ for which
\begin{equation}
\label{eq:JBeqDef} \tau(b^*b) = 0, \quad \tau \in T_\omega(B_\omega); \end{equation}
equivalently,
\begin{equation}
\label{eq:JBeqDef2} \tau(|b|)=0,\quad \tau \in T_\omega(B_\omega), \end{equation}
where this last equivalence follows from $\tau(|b|)^2 \leq \tau(b^*b)\leq \tau(|b|)\|b\|$ (cf.\ \cite[Definition 4.3]{KR:Crelle}).
Here is a simple fact regarding the trace-kernel quotient, using this last characterization.

\begin{lemma}
\label{lem:ProjectionModJ}
Let $(B_n)_{n=1}^\infty$ be a sequence of unital $\mathrm C^*$-algebras such that $T(B_n)\neq \emptyset$ for all $n$, and set $B_\omega := \prod_\omega B_n$.
Let $a,e \in (B_\omega)_+$ be contractions such that $a \vartriangleleft e$ and $\tau(e) = d_\tau(a)$ for all $\tau \in T_\omega(B_\omega)$.
Then the image of $e$ in $B_\omega/J_{B_\omega}$ is a projection.
\end{lemma}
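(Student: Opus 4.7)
The plan is to show $\tau(e - e^2) = 0$ for every limit trace $\tau \in T_\omega(B_\omega)$; by the characterization \eqref{eq:JBeqDef2} of $J_{B_\omega}$ (applied to the positive element $e - e^2 = e(1-e)$), this will give $e - e^2 \in J_{B_\omega}$, whence the image of $e$ in $B_\omega/J_{B_\omega}$ is a projection.

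Fix such a $\tau$. First I would upgrade the relation $a \vartriangleleft e$ to the whole hereditary subalgebra generated by $a$: the identity $(1-e)a = 0$ means $1-e$ annihilates every element of the left ideal $\overline{B_\omega a}$, in particular $(1-e)a^{1/n} = 0$, so $ea^{1/n} = a^{1/n}$ (and symmetrically $a^{1/n} e = a^{1/n}$) for each $n \geq 1$. To capitalize on this I would pass to the GNS representation $\pi_\tau$, let $M := \pi_\tau(B_\omega)''$ with unique normal extension $\bar\tau$ of $\tau$, set $\tilde e := \pi_\tau(e)$, and let $p \in M$ be the support projection of $\pi_\tau(a)$, realized as the sot-limit of $\pi_\tau(a^{1/n})$. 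Taking sot-limits in $\tilde e \pi_\tau(a^{1/n}) = \pi_\tau(a^{1/n}) = \pi_\tau(a^{1/n}) \tilde e$ yields $\tilde e p = p \tilde e = p$, so $\tilde e$ commutes with $p$.

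Next, normality gives $\bar\tau(p) = \lim_n \tau(a^{1/n}) = d_\tau(a)$, which by hypothesis equals $\tau(e) = \bar\tau(\tilde e)$. Since $\tilde e$ and $p$ commute and $p$ is a projection, $\tilde e - p = \tilde e(1-p) \geq 0$, so the equality of traces forces $\bar\tau(\tilde e(1-p)) = 0$.

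To finish, I would use $p \tilde e = p$ to compute $\tilde e(1-\tilde e)p = p(1-\tilde e)\tilde e = (p - p\tilde e)\tilde e = 0$, so $\tilde e(1-\tilde e) = \tilde e(1-\tilde e)(1-p)$. The positive element $\tilde e(1-p)$ commutes with the positive contraction $1-\tilde e$, giving $0 \leq \tilde e(1-\tilde e)(1-p) \leq \tilde e(1-p)$. Applying $\bar\tau$ yields $\tau(e-e^2) = \bar\tau(\tilde e(1-\tilde e)) \leq \bar\tau(\tilde e(1-p)) = 0$, as required. There is no real obstacle here: the only mild subtlety is passing to the enveloping von Neumann algebra in order to make sense of the support projection of $a$, but this is entirely routine GNS machinery.
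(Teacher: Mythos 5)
Your argument is correct, but it takes a noticeably heavier route than the paper's. You pass to the GNS representation, invoke the von Neumann algebra $M=\pi_\tau(B_\omega)''$, the normal extension $\bar\tau$, and the support projection $p$ of $\pi_\tau(a)$, and then do a small commutation computation in $M$. The paper instead stays entirely at the C$^*$-level and simply squeezes: since $e$ is a positive contraction $\tau(e^2)\le\tau(e)$, and since $a\vartriangleleft e$ implies $ea^{1/n}e=a^{1/n}$ with $ea^{1/n}e\le e^2$, one has
\[
\tau(e^2)\le\tau(e)=d_\tau(a)=\lim_n\tau(a^{1/n})=\lim_n\tau(ea^{1/n}e)\le\tau(e^2),
\]
so $\tau(e-e^2)=0$ directly. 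Your approach makes the support projection explicit and uses normality of the trace on $\pi_\tau(B_\omega)''$ --- in effect you build the projection in the bidual that the lemma asserts exists in the quotient --- which is a perfectly legitimate and conceptually transparent way to see the result, but it brings in machinery (GNS, normal traces, sot-limits, support projections) that the one-line sandwich argument avoids. Both are valid; the paper's is shorter and purely elementary.
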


\begin{proof}
For $\tau \in T_\omega(B_\omega)$, we have
\begin{equation}
\tau(e^2) \leq \tau(e) = \lim_n \tau(a^{1/n}) 
\stackrel{a\vartriangleleft e}{=}
 \lim_n \tau(ea^{1/n}e) \leq \tau(e^2).
\end{equation}
Since $e-e^2 \geq 0$, it follows that $e-e^2 \in J_{B_\omega}$, as required.
\end{proof}

It is of particular relevance to this paper to know when the limit traces are weak$^*$-dense in $T(B_\omega)$, as in particular $J_{B_\omega}$ will then be the set of those $b\in B_\omega$ for which \eqref{eq:JBeqDef} holds for all $\tau \in T(B_\omega)$.  This density fails in general (\cite{BF:HMJ}), but does hold for $\mathrm{C}^*$-algebras with suitable regularity properties. Indeed, Ozawa establishes such a density result (\cite[Theorem 8]{O:JMSUT}) when each $B_n$ is $\Z$-stable and exact. This was generalised by Ng and Robert in \cite[Theorem 1.2]{NR:arXiv} to the case where each $B_n$ is unital and has strict comparison of full positive elements by bounded traces.\footnote{Note that Ng and Robert use the complete Cuntz semigroup $\Cu(A):=W(A\otimes\mathcal K)$ to define strict comparison, whereas in this paper we work primarily with the incomplete version $W(A)$.}  By results essentially due to R\o{}rdam (\cite[Section 4]{R:IJM}) this happens when each $B_n$ is additionally simple, $\Z$-stable and has $QT(B_n)=T(B_n)$.\footnote{\label{CUWSC}There is a detail required here to take care of the difference between $W(A)$, which R\o{}rdam works with in \cite{R:IJM}, and $\Cu(A)$, used in \cite{NR:arXiv}. R\o{}rdam shows in \cite[Theorem 4.5]{R:IJM} that any $\Z$-stable $\mathrm{C}^*$-algebra $B$ has almost unperforated Cuntz semigroup $W(B)$. Applying this to $B:=A\otimes\mathcal K$, it follows that $\Cu(A)$ is almost unperforated whenever $A$ is $\Z$-stable. If additionally $A$ is simple, then $\Cu(A)$ has strict comparison by its functionals, namely those induced by lower semicontinuous $2$-quasitraces, by \cite[Proposition 6.2]{ERS:AJM}.  Thus for simple, unital, $\Z$-stable $A$ with $QT(A)=T(A)$, $\Cu(A)$ has strict comparison by bounded traces on $A$ (extended to lower semicontinuous tracial functionals on $A\otimes\mathcal K$).}  Combining these results gives the following set of conditions ensuring that limit traces are weak$^*$-dense, which we will use repeatedly (noting that if one replaces the condition $QT(B_n)=T(B_n)$ by exactness, this reduces exactly to Ozawa's theorem (\cite[Theorem 8]{O:JMSUT})).

\begin{prop}[R\o{}rdam, Ng-Robert]\label{NoSillyTraces}
Let $(B_n)_{n=1}^\infty$ be a sequence of simple, separable, unital and $\mathcal Z$-stable $\mathrm{C}^*$-algebras such that $QT(B_n)=T(B_n)$ for all $n$.  Then $T_\omega(\prod_\omega B_n)$ is weak$^*$-dense in $T(\prod_\omega B_n)$. 
\end{prop}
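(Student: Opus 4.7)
The plan is to combine two known results: R\o{}rdam's almost unperforation theorem for $\Z$-stable $\mathrm C^*$-algebras (\cite[Section 4]{R:IJM}), which (together with simplicity) gives strict comparison by bounded traces, and the Ng--Robert density theorem (\cite[Theorem 1.2]{NR:arXiv}), which converts strict comparison on the fibres into weak$^*$-density of limit traces on the ultraproduct. Since the hypotheses match up almost verbatim, the ``proof'' is essentially a bookkeeping exercise that tracks how $\Z$-stability plus $QT=T$ implies the comparison hypothesis of Ng--Robert.

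The first step is to verify that each $B_n$ has strict comparison of (full) positive elements with respect to bounded traces, in the form used by Ng--Robert, i.e., at the level of the complete Cuntz semigroup $\Cu(B_n)=W(B_n\otimes\mathcal K)$. Since $B_n$ is $\Z$-stable, so is $B_n\otimes\mathcal K$, hence by \cite[Theorem 4.5]{R:IJM} the semigroup $W(B_n\otimes\mathcal K)=\Cu(B_n)$ is almost unperforated. Combining this with simplicity and \cite[Proposition 6.2]{ERS:AJM} shows that $\Cu(B_n)$ has strict comparison by its functionals, which correspond to lower semicontinuous $2$-quasitraces on $B_n$ (extended to $B_n\otimes\mathcal K$ in the usual way). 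Finally, the hypothesis $QT(B_n)=T(B_n)$ ensures that every such functional is in fact induced by a bounded trace on $B_n$; since $B_n$ is unital, boundedness is automatic and these are precisely the traces in $T(B_n)$.

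With this comparison property established, each $B_n$ satisfies the hypotheses of \cite[Theorem 1.2]{NR:arXiv}, which immediately yields that $T_\omega(\prod_\omega B_n)$ is weak$^*$-dense in $T(\prod_\omega B_n)$. The only point demanding care is the translation between the incomplete semigroup $W(A)$ used in our Definition \ref{defn:StrictComp} and the complete semigroup $\Cu(A)$ used by Ng--Robert; as indicated in footnote \ref{CUWSC}, one passes from $W$ to $\Cu$ by applying R\o{}rdam's result to $B_n\otimes\mathcal K$ rather than to $B_n$ directly, which is why the simplicity hypothesis must be combined with the passage through $\Cu$ before invoking \cite[Proposition 6.2]{ERS:AJM}.

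The main (and really only) obstacle is this semigroup bookkeeping, since the three external inputs---$\Z$-stability implies almost unperforation, simple plus almost unperforated implies strict comparison by functionals, and the Haagerup-type hypothesis $QT=T$ identifies those functionals with traces---have to be applied in the right order and at the right level ($\Cu$ rather than $W$) to land in the precise hypothesis of \cite[Theorem 1.2]{NR:arXiv}. No new ideas are required.
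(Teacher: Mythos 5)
Your proof matches the paper's argument essentially verbatim: the paper proves this proposition in the discussion preceding its statement and in footnote \ref{CUWSC}, by exactly the same chain (\cite[Theorem 4.5]{R:IJM} applied to $B_n\otimes\mathcal K$ to get almost unperforation of $\Cu(B_n)$, then \cite[Proposition 6.2]{ERS:AJM} for strict comparison by functionals, then $QT(B_n)=T(B_n)$ to identify these with bounded traces, and finally \cite[Theorem 1.2]{NR:arXiv}). Correct, and same approach.
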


A key property of the ultraproduct is that one can use an argument variously called ``Kirchberg's $\eps$-test'', the ``diagonal sequence argument'', the ``reindexing argument'', or ``saturation''. In this paper we primarily work with Kirchberg's formulation from \cite{K:Abel}, which is the most suitable for our purposes.

\begin{lemma}[{Kirchberg's $\eps$-test, \cite[Lemma A.1]{K:Abel}}]\label{epstest}
Let $X_1,X_2,\dots$ be a sequence of nonempty sets, and for each $k,n\in\N$, let $f^{(k)}_n:X_n\rightarrow [0,\infty)$ be a function.
Define $f^{(k)}_\omega:\prod_{n=1}^\infty X_n\to[0,\infty]$ by $f^{(k)}_\omega((s_n)_{n=1}^\infty)=\lim_{n\rightarrow\omega}f^{(k)}_n(s_n)$ for $(s_n)\in\prod_{n=1}^\infty X_n$.  Suppose that for all $m\in\N$ and $\eps>0$, there exists $(s_n)_{n=1}^\infty\in \prod_{n=1}^\infty X_n$ with $f^{(k)}_\omega((s_n))<\eps$ for $k=1,\dots,m$.  Then there exists $(t_n)_{n=1}^\infty\in \prod_{n=1}^\infty X_n$ such that $f^{(k)}_\omega((t_n))=0$ for all $k\in\N$.
\end{lemma}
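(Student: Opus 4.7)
The plan is a standard diagonal extraction: for each $m$ the hypothesis supplies a sequence that works well for the first $m$ functions up to tolerance $1/m$, and one has to stitch these sequences together coordinate-by-coordinate so that the resulting diagonal works simultaneously for every $k$. First, for each $m\in\N$ I would apply the hypothesis with $\varepsilon=1/m$ to obtain $(s^{(m)}_n)_{n=1}^\infty\in\prod_n X_n$ with $f^{(k)}_\omega((s^{(m)}_n))<1/m$ for $k=1,\dots,m$. The definition of the $\omega$-limit then gives
\[
A_m:=\bigl\{\,n\geq m\ :\ f^{(k)}_n(s^{(m)}_n)<1/m\text{ for all }k\leq m\,\bigr\}\in\omega,
\]
where the cofinite factor $\{n\geq m\}$ is inserted by hand.

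Next, I would replace each $A_m$ by $A_1\cap\dots\cap A_m$ (still in $\omega$), so that $A_1\supseteq A_2\supseteq\cdots$. Because $A_m\subseteq\{n\geq m\}$, the intersection $\bigcap_m A_m$ is empty; hence for every $n\in\N$ the integer
\[
m(n):=\max\{\,m\in\N\ :\ n\in A_m\,\}
\]
(with the convention $\max\emptyset=0$) is well defined and finite, while $\{n:m(n)\geq M\}\supseteq A_M\in\omega$ for each $M$. I would then set $t_n:=s^{(m(n))}_n$ when $m(n)\geq 1$ and choose $t_n\in X_n$ arbitrarily otherwise, which is allowed since each $X_n$ is nonempty.

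To verify the conclusion, fix $k\in\N$. For $n\in A_k$ the nesting gives $m(n)\geq k$ and $n\in A_{m(n)}$, whence by the defining property of $A_{m(n)}$ we get $f^{(k)}_n(t_n)=f^{(k)}_n(s^{(m(n))}_n)<1/m(n)$. Since $\{n:m(n)\geq M\}\in\omega$ for every $M$, this forces $f^{(k)}_\omega((t_n))=0$. The only real point of care — and what I expect to be the main (modest) obstacle — is arranging $m(n)\to\infty$ along $\omega$; this is exactly what the condition $A_m\subseteq\{n\geq m\}$ guarantees, since otherwise the nested intersection could fail to be empty and $m(n)$ could be bounded on a set in $\omega$, which would prevent the $1/m(n)$ bound from giving a vanishing $\omega$-limit.
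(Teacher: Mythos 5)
The paper does not prove this lemma --- it is cited directly from Kirchberg, \cite[Lemma A.1]{K:Abel} --- so there is no internal proof to compare against. Your argument is the standard diagonal (saturation) argument and is correct: the insertion of the cofinite factor $\{n \geq m\}$ is exactly what makes $m(n)$ escape to infinity along $\omega$, and nesting the $A_m$ guarantees that the single estimate $f^{(k)}_n(t_n)<1/m(n)$, valid for all $k\leq m(n)$, simultaneously handles every $k$ once $m(n)\geq k$. This is essentially the proof one finds in Kirchberg's appendix, and the verification is carried out correctly.
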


One of our most crucial applications of this argument is the existence of supporting order zero maps given in Lemma \ref{lem:SupportingMap}, for which we record a test for detecting c.p.c.\ order zero maps.
First we set up a way of using the $\eps$-test with contractive maps out of a separable $\mathrm C^*$-algebra.

\begin{lemma}\label{lem:ContractiveTest}
Let $A,B_n$ be $\mathrm C^*$-algebras such that $A$ is separable and unital, and write $B_\omega := \prod_\omega B_n$.
Fix a countable dense $\mathbb Q[i]$-$^*$-subalgebra $A_0$ of $A$.
Let $X_n$ denote the set of $^*$-linear maps $A_0\rightarrow B_n$.
Then there exist functions $g^{(k)}_n:X_n\rightarrow[0,\infty)$ indexed by $n\in\N$ and $k\in I$, for some countable index set $I$, such that for $(\phi_n)_{n=1}^\infty \in \prod_{n=1}^\infty X_n$, $(\phi_n)_{n=1}^\infty$ induces a contractive $^*$-linear map $A_0 \to B_\omega$ (which therefore extends by continuity to a contractive $^*$-linear map $A \to B_\omega$) if and only if $\lim_{n\to\omega} g^{(k)}_n(\phi_n) = 0$ for all $k \in I$.
\end{lemma}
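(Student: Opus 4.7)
My plan is to take the countable index set $I$ to be the dense $\mathbb Q[i]$-$^*$-subalgebra $A_0$ itself, and, for each $a \in A_0$, to use the single function
\[ g^{(a)}_n(\phi_n) := \max\bigl(0,\, \|\phi_n(a)\| - \|a\|\bigr). \]
I then need to verify that the conditions $\lim_{n\to\omega} g^{(a)}_n(\phi_n) = 0$ for all $a\in A_0$ are jointly equivalent to $(\phi_n)_{n=1}^\infty$ inducing a contractive $^*$-linear map $A_0 \to B_\omega$.

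For the forward implication, I would argue that if $(\phi_n)$ induces a contractive $^*$-linear map $\phi:A_0\to B_\omega$, then for each $a \in A_0$ the element $\phi(a) \in B_\omega$ is represented by (a bounded truncation of) $(\phi_n(a))_{n=1}^\infty$, its $B_\omega$-norm equals $\lim_{n\to\omega}\|\phi_n(a)\|$, and contractivity of $\phi$ gives $\lim_{n\to\omega}\|\phi_n(a)\| \leq \|a\|$, which is precisely $\lim_{n\to\omega} g^{(a)}_n(\phi_n) = 0$.

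For the converse, I would use that $\lim_{n\to\omega} g^{(a)}_n(\phi_n) = 0$ forces $\limsup_{n\to\omega}\|\phi_n(a)\| \leq \|a\|$, so after replacing $\phi_n(a)$ by $0$ on the $\omega$-small set where $\|\phi_n(a)\|>2\|a\|$, the sequence $(\phi_n(a))_n$ determines a well-defined element of $B_\omega$ of norm at most $\|a\|$. Since each $\phi_n$ is $\mathbb Q[i]$-linear and $^*$-preserving, the induced assignment $A_0 \to B_\omega$ inherits these properties componentwise, giving a contractive $^*$-linear map on the dense subalgebra, which then extends uniquely by continuity to a contractive $^*$-linear map $A\to B_\omega$.

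The only subtlety worth flagging is the meaning of ``induces a map $A_0\to B_\omega$'' given that the individual $\phi_n$ need not be bounded: the conditions $\lim_{n\to\omega} g^{(a)}_n(\phi_n)=0$ simultaneously supply both the essential boundedness of $(\phi_n(a))_n$ near $\omega$ needed for $(\phi_n(a))_n$ to represent an element of $B_\omega$ and the contractivity of the resulting map, so a single countable family of functions indexed by $A_0$ suffices.
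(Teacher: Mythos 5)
Your proof is correct and takes essentially the same approach as the paper: the paper enumerates $A_0$ as $(a_k)_{k=1}^\infty$ and defines $g^{(k)}_n(\phi_n) := \max\{\|\phi_n(a_k)\|-\|a_k\|,0\}$, which is exactly your family up to relabelling the index set. Your extra care about bounded truncations and about what ``induces a map'' means for possibly unbounded $\phi_n$ is sound and just spells out details the paper leaves implicit.
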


\begin{proof}
Enumerate $A_0$ as $(a_k)_{k=1}^\infty$.
A sequence $(\phi_n)_{n=1}^\infty \in \prod_{n=1}^\infty X_n$, represents a contractive map $A_0 \to B_\omega$ if and only if $\lim_{n\rightarrow\omega} \|\phi_n(a_k)\| \leq \|a_k\|$ for all $k$.
Therefore, the conclusion holds by defining $g^{(k)}_n$ by 
\begin{equation}
 g^{(k)}_n(\phi_n) := \max\{\|\phi_n(a_k)\|-\|a_k\|,0\},\quad\phi_n\in X_n.\qedhere
\end{equation}
\end{proof}

\begin{lemma}\label{orderzerotest}
Let $A,B_n$ be $\mathrm C^*$-algebras such that $A$ is separable and unital, and write $B_\omega := \prod_\omega B_n$.
Fix a countable dense $\mathbb Q[i]$-$^*$-subalgebra $A_0$ of $A$.
Let $X_n$ denote the set of $^*$-linear maps $A_0\rightarrow B_n$.
Then there exist functions $f^{(k)}_n:X_n\rightarrow[0,\infty)$ indexed by $n\in\N$ and $k\in I$, for some countable index set $I$, such that the following holds: a sequence $(\phi_n)_{n=1}^\infty \in \prod_{n=1}^\infty X_n$ induces a contractive $^*$-linear map $A_0 \to B_\omega$ which extends by continuity to a c.p.c.\ order zero map $\phi:A \to B_\omega$ if and only if $\lim_{n\to\omega} f^{(k)}_n(\phi_n) = 0$ for all $k \in I$.
\end{lemma}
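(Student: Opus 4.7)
The plan is to augment the contractivity tests from Lemma \ref{lem:ContractiveTest} with additional countable families of tests that detect (a) complete positivity of the induced map and (b) the order zero identity $\phi(ab)\phi(1_A) = \phi(a)\phi(b)$ for $a,b\in A_0$. By \eqref{eq:Ord0Ident} and Proposition \ref{prop:Ord0Structure}, together with contractivity these characterize c.p.c.\ order zero maps out of the unital algebra $A$. After enlarging $A_0$ if necessary by adjoining $\mathbb{Q}[i]$-multiples of $1_A$ (which keeps it countable, dense, and closed under $^{*}$), I may assume $1_A\in A_0$.

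Concretely, I begin with the functions $g^{(k)}_n$ from Lemma \ref{lem:ContractiveTest}. For each $m\in\N$ and each $a\in M_m(A_0)$ (countably many choices, since $A_0$ is countable), set
\begin{equation}
f^{(m,a)}_n(\phi_n) := \bigl\|\bigl(\phi^{(m)}_n(a^{*}a)\bigr)_{-}\bigr\|,
\end{equation}
where $\phi^{(m)}_n$ is the entrywise amplification of $\phi_n$ to $M_m(B_n)$ and $x_{-}$ denotes the negative part of a self-adjoint element, defined via continuous functional calculus applied to a function vanishing at $0$ (so well-defined in the possibly nonunital algebra $M_m(B_n)$; here $\phi^{(m)}_n(a^{*}a)$ is self-adjoint by $^{*}$-linearity of $\phi_n$). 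For each pair $(a,b)\in A_0\times A_0$ (again countably many), set
\begin{equation}
f^{(a,b)}_n(\phi_n) := \bigl\|\phi_n(ab)\phi_n(1_A)-\phi_n(a)\phi_n(b)\bigr\|.
\end{equation}
The disjoint union of these families, together with the $g^{(k)}_n$'s, forms the required countably indexed collection of tests.

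For the verification, suppose first that $(\phi_n)$ induces a c.p.c.\ order zero map $\phi:A\to B_\omega$. Contractivity gives $\lim_\omega g^{(k)}_n=0$ by Lemma \ref{lem:ContractiveTest}; complete positivity gives $\phi^{(m)}(a^{*}a)\geq 0$ in $M_m(B_\omega)$, and hence $\lim_\omega f^{(m,a)}_n=0$ because the negative part of an element of the ultraproduct has norm equal to the $\omega$-limit of the norms of the entrywise negative parts; and the order zero identity, which holds on all of $A$, restricts to $A_0$ to give $\lim_\omega f^{(a,b)}_n=0$. Conversely, if all $\omega$-limits vanish, then Lemma \ref{lem:ContractiveTest} produces a contractive $^{*}$-linear extension $\phi:A\to B_\omega$. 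The positivity tests yield $\phi^{(m)}(a^{*}a)\geq 0$ for every $a\in M_m(A_0)$; since $\{c^{*}c:c\in M_m(A_0)\}$ is dense in $M_m(A)_+$ (approximate $b^{1/2}$ for $b\in M_m(A)_+$ by $c\in M_m(A_0)$ and use norm-continuity of $c\mapsto c^{*}c$) and $\phi^{(m)}$ is norm-continuous, $\phi$ is completely positive. Finally, the order zero identity extends from $A_0\times A_0$ to $A\times A$ by bilinearity and continuity, whence $\phi$ is c.p.c.\ order zero by \eqref{eq:Ord0Ident}.

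The main subtlety is confirming that the positivity tests correctly detect positivity in the ultraproduct, i.e., that for a self-adjoint sequence $(b_n)\in\prod_n M_m(B_n)$ the represented element is positive in $M_m(B_\omega)$ if and only if $\lim_\omega\|(b_n)_{-}\|=0$. This follows from continuity of functional calculus for functions vanishing at $0$, which transfers cleanly to the (nonunital) ultraproduct. Everything else is routine given Lemma \ref{lem:ContractiveTest} and the characterization of c.p.c.\ order zero maps in \eqref{eq:Ord0Ident}.
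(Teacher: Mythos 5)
Your proof is correct and follows essentially the same strategy as the paper: take the contractivity tests from Lemma \ref{lem:ContractiveTest}, add a countable family of tests detecting $m$-positivity of the amplifications on a dense subset of $M_m(A)_+$, and add a countable family of tests for the order zero identity \eqref{eq:Ord0Ident} on $A_0\times A_0$. Your positivity test $\|(\phi_n^{(m)}(a^*a))_-\|$ coincides with the paper's $\inf_{b\in M_m(B_n)_+}\|\phi_n^{(m)}(\cdot)-b\|$ for self-adjoint arguments, so the two formulations are equivalent.
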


\begin{proof}
First take $g^{(k)}_n$ to be a family of functions provided by Lemma \ref{lem:ContractiveTest}, indexed by $k \in \mathbb N$.
For each $m\in\N$, enumerate all elements of $M_m(A_0)_+$ as $(a^{(m)}_k)_{k=1}^\infty$ (note that this sequence is dense in $M_m(A)_+$), and define
\begin{equation}
f^{(m,k)}_n(\phi_n)=\inf_{b\in M_m(B_n)_+}\|\phi_n^{(m)}(a^{(m)}_k)-b\|,\quad \phi_n\in X_n,
\end{equation} where $\phi^{(m)}_n:M_m(A_0)\rightarrow M_m(B_n)$ is the amplification of $\phi_n:A_0\rightarrow B_n$. 
Let $(\phi_n)_{n=1}^\infty\in \prod_{n=1}^\infty X_n$ extend by continuity to a contractive ${}^*$-linear map $A\rightarrow B_\omega$.
Then $(\phi_n)_{n=1}^\infty$ represents a completely positive (and therefore c.p.c.) map if and only if $\lim_{n\rightarrow\omega}f^{(m,k)}_n(\phi_n)=0$ for all $k,m\in\N$.\footnote{As $(\phi_n)_{n=1}^\infty$ induces a contractive map $A\rightarrow B_\omega$, the amplifications $(\phi_n^{(m)})_{n=1}^\infty$ induce continuous maps $M_m(A)\rightarrow M_m(B_\omega)$ for each $m\in\N$. Thus it suffices to test for $m$-positivity on a dense set from $M_m(A)_+$.}

To test for the additional condition of being order zero, enumerate the unit ball of $A_0$ as $(a'_k)_{k=1}^\infty$ (so that this sequence is dense in the unit ball of $A$), and define $h^{(k,l)}_n:X_n\rightarrow[0,\infty)$ by 
\begin{equation}h^{(k,l)}_n(\phi_n):=\|\phi_n(a'_ka'_l)\phi_n(1_A)-\phi_n(a'_k)\phi_n(a'_l)\|,\quad \phi_n\in X_n.
\end{equation}
Then, for a sequence $(\phi_n)_{n=1}^\infty\in\prod_{n=1}^\infty X_n$ which represents a c.p.c.\ map $\phi:A\rightarrow B_\omega$, the order zero identity (\ref{eq:Ord0Ident}) shows that $\phi$ is order zero if and only if $\lim_{n\rightarrow\omega}h^{(k,l)}_n(\phi_n)=0$ for all $k,l\in\N$.  Thus the functions $g^{(k)}_n$, $f^{(m,k)}_n$ and $h^{(k,l)}_n$ provide the required testing functions.
\end{proof}

\begin{remark}
\label{rmk:orderzerotestplus}
The previous lemma is easily strengthened to the following (a form that will be needed in Lemma  \ref{lem:GoodTraceMaps}).

Let $A$ be a separable unital $\mathrm C^*$-algebra, let $A_0$ be a countable dense $\mathbb Q[i]$-$^*$-subalgebra $A_0$ of $A$, and let $\mathcal C$ be a set of $\mathrm C^*$-algebras.  Let $X_n$ denote the set of $^*$-linear maps from $A_0$ to any $\mathrm C^*$-algebra in $\mathcal C$. Then there exist functions $f^{(k)}_n:X_n\rightarrow[0,\infty)$ indexed by $n\in\N$ and $k\in I$, for some countable index set $I$, such that for a sequence $(\phi_n)_{n=1}^\infty\in \prod_{n=1}^\infty X_n$ (with $\phi_n:A_0 \to B_n$ and $B_n \in \mathcal C$), $(\phi_n)_{n=1}^\infty$ induces a contractive map which extends by continuity to a c.p.c.\ order zero map $A \to \prod_\omega B_n$ if and only if $\lim_{n\rightarrow\omega}f^{(k)}_n(\phi_n)=0$ for all $k\in I$. 

Indeed, this version follows from Lemma \ref{orderzerotest} by taking $\bigoplus_{C \in \mathcal C} C$ for each $B_n$ in the lemma.
\end{remark}

\begin{lemma}[{cf.\ \cite[Lemma 2.2]{SWW:arXiv}}]
\label{lem:SupportingMap}
Let $A,B_n$ be unital $\mathrm C^*$-algebras such that $A$ is separable, set $B_\omega := \prod_\omega B_n$, and suppose that $S\subset B_\omega$ is separable and self-adjoint.
Let $\phi:A \to B_\omega\cap S'$ be a c.p.c.\ order zero map.
Then there exists a c.p.c.\ order zero map $\hat\phi:A \to B_\omega\cap S'$ such that
\begin{align}
\label{eq:Supporting}
\phi(ab)&=\hat{\phi}(a)\phi(b)=\phi(a)\hat{\phi}(b),\quad a,b\in A.
\end{align}
If the map $\tau \mapsto d_\tau(\phi(1_A))$ from $T(B_\omega)$ to $[0,1]\subset\mathbb R$ is continuous (with respect to the weak$^*$-topology on $T(B_\omega)$) then we can, in addition, arrange that
\begin{align}
\label{eq:SupportingTrace}
\tau(\hat{\phi}(a)) &= \lim_{m\to\infty} \tau(\phi^{1/m}(a)), \quad a\in A_+,\ \tau \in T_\omega(B_\omega),
\end{align}
where order zero map functional calculus (\ref{OZFC}) is used to interpret $\phi^{1/m}$. In this case, the induced map $\bar{\hat\phi}:A \to B_\omega/J_{B_\omega}$ is a $^*$-homomorphism.
\end{lemma}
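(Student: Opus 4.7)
The plan is to construct $\hat\phi$ as a reindexed limit of the c.p.c.\ order zero maps $\psi_m := \phi^{1/m}: A \to B_\omega \cap S'$ (obtained via order zero functional calculus \eqref{OZFC}), using Kirchberg's $\eps$-test. The key calculation will be the following: if $\pi: C_0((0,1]) \otimes A \to B_\omega$ is the $^*$-homomorphism corresponding to $\phi$ via Proposition \ref{prop:Ord0Structure}, then
\begin{equation*}
\psi_m(a)\phi(b) = \pi(t^{1/m} \otimes a)\pi(t \otimes b) = \pi(t^{(m+1)/m} \otimes ab)
\end{equation*}
converges in norm to $\pi(t \otimes ab) = \phi(ab)$ as $m \to \infty$ (since $t^{(m+1)/m} \to t$ uniformly on $[0,1]$), and symmetrically $\phi(a)\psi_m(b) \to \phi(ab)$. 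Each $\psi_m$ will automatically commute with $S$ because $\phi$ does, so the sequence $(\psi_m)_{m=1}^\infty$ forms an asymptotic supporting family from which I will extract a single map $\hat\phi$.

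For the first conclusion I plan to apply Kirchberg's $\eps$-test (Lemma \ref{epstest}). I will fix a countable dense $\mathbb Q[i]$-$^*$-subalgebra $A_0 \subseteq A$ with a self-adjoint basis, a countable dense subset $S_0 \subseteq S$, and, for each $a \in A_0$, $s \in S_0$ and $j \in \mathbb N$, representative sequences $(\phi(a)_n), (s_n), (\psi_j(a)_n) \in \prod_n B_n$ compatible with the $\mathbb Q[i]$-$^*$-structure of $A_0$. Taking $X_n$ to be the set of $^*$-linear maps $A_0 \to B_n$, I combine the following testing functions: (i) those from Lemma \ref{orderzerotest} ensuring $(\hat\phi_n)$ induces a c.p.c.\ order zero map $A \to B_\omega$; (ii) commutator estimates $\|[\hat\phi_n(a), s_n]\|$ for $a \in A_0$, $s \in S_0$ (forcing the image into $B_\omega \cap S'$); and (iii) the supporting conditions $\|\phi(ab)_n - \hat\phi_n(a)\phi(b)_n\|$ and $\|\phi(ab)_n - \phi(a)_n\hat\phi_n(b)\|$ for $a, b \in A_0$. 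For any finite collection of these tests with tolerance $\eps$, the witness will be $\psi_m$ for sufficiently large $m$---a c.p.c.\ order zero map, exactly commuting with $S$, satisfying (iii) to within $\eps$---lifted to a $^*$-linear sequence $\psi_{m,n}: A_0 \to B_n$ by choosing representatives on the basis. Lemma \ref{epstest} then yields the desired $\hat\phi$ satisfying \eqref{eq:Supporting}.

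For the trace conclusion, the essential input will be uniform convergence. Under the continuity hypothesis, the pointwise supremum $\sup_m \tau(\phi(1_A)^{1/m}) = d_\tau(\phi(1_A))$ of the increasing sequence of weak-$^*$-continuous functions on the compact space $T(B_\omega)$ is continuous, so Dini's theorem upgrades this to uniform convergence in $\tau$. For $a \in A_+$, I will use the operator inequality $\psi_{m'}(a) - \psi_m(a) \leq \|a\|(\phi(1_A)^{1/m'} - \phi(1_A)^{1/m})$ in $B_\omega$ (valid for $m' \geq m$ because $(t^{1/m'} - t^{1/m}) \otimes (\|a\|1_A - a)$ is positive in $C_0((0,1]) \otimes A$ and $\pi$ is a $^*$-homomorphism) to transfer the uniform convergence to $\tau(\psi_m(a)) \nearrow \lim_m \tau(\psi_m(a))$ in $\tau$. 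I then augment the $\eps$-test with the lower-bound testing functions $\max_{\tau \in T(B_n)}(\tau(\psi_j(a_k)_n) - \tau(\hat\phi_n(a_k)))_+$ for $a_k \in (A_0)_+$ and $j \in \mathbb N$; the diagonal reindexing that uniform convergence enables then produces an $\hat\phi$ satisfying $\tau(\hat\phi(a)) = \lim_m \tau(\psi_m(a))$ for $a \in (A_0)_+$ and $\tau \in T_\omega(B_\omega)$, which extends to $a \in A_+$ by continuity.

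Finally, taking $a = b = 1_A$ in \eqref{eq:Supporting} yields $\phi(1_A) \vartriangleleft \hat\phi(1_A)$, while \eqref{eq:SupportingTrace} at $a = 1_A$ gives $\tau(\hat\phi(1_A)) = \lim_m \tau(\phi(1_A)^{1/m}) = d_\tau(\phi(1_A))$ for limit traces $\tau$, so Lemma \ref{lem:ProjectionModJ} shows $\hat\phi(1_A)$ is a projection in $B_\omega/J_{B_\omega}$, and Proposition \ref{prop:Ord0Homo} concludes that $\bar{\hat\phi}$ is a $^*$-homomorphism. The hardest part I expect is the simultaneous control of \eqref{eq:SupportingTrace} across all limit traces: without the continuity hypothesis (and the resulting uniform convergence via Dini), the diagonal reindexing can only be arranged to produce the inequality $\tau(\hat\phi(a)) \geq \lim_m \tau(\psi_m(a))$, and the matching upper bound fails to hold uniformly in $\tau$.
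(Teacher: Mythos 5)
Your choice of witness $\psi_m := \phi^{1/m}$ and the appeal to Dini's theorem for uniform convergence are both sound---they run parallel to the paper's use of $g_\eps(\phi)$ and of $\gamma_m := \max_{\tau\in T(B_\omega)}\bigl(d_\tau(\phi(1_A))-\tau(\phi(1_A)^{1/m})\bigr)$. But the trace-condition testing functions you propose test the wrong direction of the inequality, and this is a genuine gap.

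The inequality $\tau(\hat\phi(a))\geq\lim_m\tau(\phi^{1/m}(a))$ is \emph{automatic} once \eqref{eq:Supporting} holds: then $\hat\phi(a)$ commutes with $\phi(1_A)$ (both products equal $\phi(a)$), so for $a\geq0$, $\phi^{1/m}(a)=\hat\phi(a)\phi(1_A)^{1/m}=\hat\phi(a)^{1/2}\phi(1_A)^{1/m}\hat\phi(a)^{1/2}\leq\hat\phi(a)$. Thus your ``lower-bound testing functions'' are vacuous, and nothing in your $\eps$-test forces the \emph{upper} bound $\tau(\hat\phi(1_A))\leq d_\tau(\phi(1_A))$. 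Without it the reindexed $\hat\phi$ can be too large: e.g.\ for $A=\mathbb C$, any $\hat\phi$ with $\hat\phi(1)=1_{B_\omega}$ satisfies \eqref{eq:Supporting}, but $\tau(\hat\phi(1))=1$ generally exceeds $d_\tau(\phi(1))$. The underlying issue is exactly the remark following the lemma: the iterated limits $\lim_{n\to\omega}\tau_n(\psi_{m(n),n}(1_A))$ (what the reindexing produces) and $\lim_m\lim_{n\to\omega}\tau_n(\psi_{m,n}(1_A))$ need not agree. The fix is to augment your list with the testing function $h_n^{(m)}(\eta_n):=\max\bigl(\max_{\tau_n\in T(B_n)}\bigl(\tau_n(\eta_n(1_A))-\tau_n(\phi_n(1_A)^{1/m})\bigr)-\gamma_m,\,0\bigr)$, which your witness $\psi_j$ passes exactly for all $m$ since $\tau(\phi(1_A)^{1/j})\leq d_\tau(\phi(1_A))\leq\tau(\phi(1_A)^{1/m})+\gamma_m$. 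This gives $\tau(\hat\phi(1_A))\leq d_\tau(\phi(1_A))$ for $\tau\in T_\omega(B_\omega)$, which combined with the automatic lower bound yields equality at $1_A$; the general case $a\in A_+$ then follows because $a\mapsto\tau(\hat\phi(a))-\lim_m\tau(\phi^{1/m}(a))$ is a positive functional vanishing at $1_A$. This also shows why the testing function only needs to involve $a=1_A$: the continuity hypothesis is only on $d_\tau(\phi(1_A))$, and testing at general $a_k$ (as you propose) would require an unjustified continuity of $\tau\mapsto\lim_m\tau(\phi^{1/m}(a_k))$.
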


A c.p.c.\ order zero map $\hat\phi$ satisfying (\ref{eq:Supporting}) is called a \emph{supporting order zero map} of $\phi$, and throughout,
we will use the notation $\hat{\phi}$ (or $\hat\psi, \hat\pi$, etc.) exclusively to denote a supporting order zero map of $\phi$ (respectively $\psi,\pi$).
Note that the requirement that $\tau\mapsto d_\tau(\phi(1_A))$ is continuous on $T_\omega(B_\omega)$ is certainly necessary for (\ref{eq:SupportingTrace}) to hold.

Supporting order zero maps can be used to recapture the functional calculus for order zero maps from (\ref{OZFC}): if $\phi,\hat{\phi}:A\rightarrow B_\omega$ are c.p.c.\ order zero maps satisfying (\ref{eq:Supporting}), and $f\in C_0((0,1])_+$, then (as noted after \cite[Lemma 2.2]{SWW:arXiv}), $f(\phi)(x)=\hat{\phi}(x)f(\phi(1_A))$ for all $x\in A$.

\begin{proof}[Proof of Lemma \ref{lem:SupportingMap}]
Lift $\phi$ to a representative sequence of $^*$-linear maps $\phi_n:A\rightarrow B_n$.\footnote{When $A$ is nuclear we can use the Choi-Effros lifting theorem to lift $\phi$ to a representative sequence of c.p.c.\ maps, thus (by setting $X_n$ equal to the set of c.p.c.\ maps $A \rightarrow B_n$) avoiding Lemma \ref{lem:ContractiveTest} and the completely positive part of Lemma \ref{orderzerotest}.  The only application of Lemma \ref{lem:SupportingMap} to which this short cut does not apply is Remark \ref{rmk:relSI-simple}, which shows that the definition we give of property (SI) in terms of maps extends the original definition for $\mathrm{C}^*$-algebras.}  We will use the $\eps$-test, with $X_n$ equal to the set of $^*$-linear maps $A\rightarrow B_n$.  For this we fix  dense sequences $(a_i)_{i=1}^\infty$ in the unit ball of $A$ and $(s^{(i)})_{i=1}^\infty$ in $S$, and lift each $s^{(i)}$ to a bounded sequence $(s^{(i)}_n)_{n=1}^\infty$ with each $s^{(i)}_n\in B_n$.  Consider the countable collection of functions $f^{(0,i,j)}_n,f^{(1,i,j)}:X_n\rightarrow [0,\infty)$ given by 
\begin{align}
f^{(0,i,j)}_n(\psi_n)&:=\|\phi_n(a_ia_j) - \psi_n(a_i)\phi_n(a_j)\|\nonumber\\
&\quad+ \|\phi_n(a_ia_j) - \phi_n(a_i)\psi_n(a_j)\|,\quad \psi_n\in X_n, \nonumber\\
f_n^{(1,i,j)}(\psi_n)&:=\|[\psi_n(a_i),s_n^{(j)}]\|,\quad \psi_n\in X_n.
\end{align}
Fix $\eps>0$ and define $\psi:=g_\eps(\phi):A \to B_\omega\cap S'$ (where $g_\eps$ is as in (\ref{eq:gepsDef})) so that $\psi$ is a c.p.c.\ order zero map.
Let $\pi:C_0((0,1])\otimes A \to B_\omega\cap S'$ be the ${}^*$-homomorphism induced by $\phi$ as in Proposition \ref{prop:Ord0Structure}. For any contractions $a,b\in A_+$, as $g_\eps\cdot \mathrm{id}_{(0,1]} \approx_\eps\mathrm{id}_{(0,1]}$, we have
\begin{align}
\notag
\phi(ab) &= \pi(\mathrm{id}_{(0,1]}  \otimes (ab)) \\
\notag
&\approx_\eps \pi\left((g_\eps\cdot \mathrm{id}_{(0,1]} ) \otimes (ab)\right) \\
\notag
&= \pi(g_\eps \otimes a)\pi(\mathrm{id}_{(0,1]}  \otimes b)=\pi(\mathrm{id}_{(0,1]} \otimes a)\pi(g_\eps \otimes b) \\
&= \psi(a)\phi(b)=\phi(a)\psi(b).
\end{align}
 Thus taking any representative sequence $(\psi_n)_{n=1}^\infty\in\prod_{n=1}^\infty X_n$ of $\psi$, we have $\lim_{n\rightarrow\omega}f^{(0,i,j)}_n(\psi_n)\leq\eps$ for all $i,j\in\N$. Further, as $\psi(A)$ commutes with $S$, we also have $\lim_{n\rightarrow\omega}f^{(1,i,j)}_n(\psi_n)=0$ for all $i,j\in\N$.  Using the functions $f^{(0,i,j)}_n,f^{(1,i,j)}$, together with the functions from Lemma \ref{orderzerotest} (with respect to a dense $\mathbb Q[i]$-$^*$-subalgebra $A_0$ of $A$), in Kirchberg's $\eps$-test (Lemma \ref{epstest}), it follows that there exists a c.p.c.\ order zero map $\hat{\phi}:A\rightarrow B_\omega\cap S'$ satisfying (\ref{eq:Supporting}).
 
Now suppose additionally that $\tau \to d_\tau(\phi(1_A))$ from $T(B_\omega)$ to $[0,1]$ is continuous (with respect to the weak$^*$-topology on $T(B_\omega)$). Set
\begin{equation}\label{defgammam} \gamma_m := \max_{\tau \in T(B_\omega)} d_\tau(\phi(1_A))- \tau(\phi(1_A)^{1/m}) \geq 0, \end{equation}
so that $\lim_{m\to\infty}\gamma_m=0$ by Dini's theorem (this is where we use the continuity of $\tau\mapsto d_\tau(\phi(1_A))$).
Consider the additional countable family of functions $h_n^{(m)}:X_n\rightarrow [0,\infty)$, given by
\begin{equation}\label{e2.29} h_n^{(m)}(\psi_n):=\max\left(\max_{\tau_n \in T(B_n)}\big( \tau_n(\psi_n(1_A))-\tau_n(\phi_n(1_A)^{1/m})\big)-\gamma_m,0\right) 
\end{equation}
for $\psi_n\in X_n$. 
For each $\eps>0$ and $\tau\in T(B_\omega)$, 
\begin{equation}
\tau(g_\eps(\phi)(1_A))\leq d_\tau(\phi(1_A))\leq \tau(\phi(1_A)^{1/m})+\gamma_m,\quad m\in\N,
\end{equation}
using (\ref{defgammam}). Thus if $(\psi_n)_{n=1}^\infty$ is an element of $\prod_{n=1}^\infty X_n$ representing $g_\eps(\phi)$, then
\begin{equation}
\lim_{n\rightarrow\omega}h^{(m)}_n(\psi_n)=0,\quad m\in\N.
\end{equation}
Since the maps $g_\eps(\phi)$ are precisely those used in the above, in the case when $\tau\to d_\tau(\phi(1_A))$ is continuous, we can combine the functions $h^{(m)}_n$ with those used in the previous paragraph, so that Kirchberg's $\eps$-test (Lemma \ref{epstest}) provides a c.p.c.\ order zero map $\hat{\phi}:A\rightarrow B_\omega\cap S'$ satisfying (\ref{eq:Supporting})  and represented by a sequence $(\psi_n)_{n=1}^\infty\in\prod_{n=1}^\infty X_n$ with $\lim_{n\rightarrow\omega}h_n^{(m)}(\psi_m)=0$ for all $m\in\N$. It follows that
\begin{equation}
\label{eq:SupportingConde}
\max\left(\max_{\tau \in T_\omega(B_\omega)} \big(\tau(\hat{\phi}(1_A))-\tau(\phi(1_A)^{1/m})\big)-\gamma_m,0\right)=0,\quad m\in\N,\end{equation}
and so
\begin{equation}
\tau(\hat{\phi}(1_A))\leq \tau(\phi(1_A)^{1/m})+\gamma_m,\quad m\in\N,\ \tau\in T_\omega(B_\omega).
\end{equation}
Taking limits, since $\lim_{m\rightarrow\infty}\gamma_m=0$, we have
\begin{equation}
\tau(\hat{\phi}(1_A))\leq\lim_{m\rightarrow\infty}(\tau(\phi(1_A)^{1/m})+\gamma_m)=d_\tau(\phi(1_A)),\quad \tau\in T_\omega(B_\omega).
\end{equation}

Fix $\tau\in T_\omega(B_\omega)$ momentarily. For any $a \in A_+$, using the fact that supporting order zero maps can be used to recapture the functional calculus (as noted in the paragraph preceding this proof), \eqref{eq:Supporting} implies that 
\begin{equation}
\phi^{1/m}(a)=\hat{\phi}(a)(\phi(1_A))^{1/m}=(\hat{\phi}(a))^{1/2}(\phi(1_A))^{1/m}(\hat{\phi}(a))^{1/2}\leq \hat{\phi}(a),
\end{equation}
for each $m\in\N$. Thus, $a \mapsto \tau(\hat\phi(a)) - \lim_{m\to\infty} \tau(\phi^{1/m}(a))$ is a positive functional, and since it evaluates to $0$ at $1_A$, it must be zero.
Consequently, we find that \eqref{eq:SupportingTrace} holds.
The last part of the lemma follows from Proposition \ref{prop:Ord0Homo} since, by Lemma \ref{lem:ProjectionModJ}, $\bar{\hat\phi}(1_A)$ is a projection.
\end{proof}

\begin{remark}
The delicate point in the proof of the previous lemma is that even for limit traces $\tau=\lim_{n\rightarrow\omega}\tau_n$, it is not the case that $d_\tau((a_n)_{n=1}^\infty)=\lim_{n\rightarrow\omega}d_{\tau_n}(a_n)$, hence the need to introduce $\gamma_m$.   Indeed $\lim_{n\to\omega} d_{\tau_n}(a_n)$ can depend on the choice of a representing sequence $(a_n)_{n=1}^\infty$; for example, $(1_{B_n}/n)_{n=1}^\infty$ and $(0)_n$ represent the same element of $B_\omega$, yet $\lim_{n\rightarrow\omega} d_{\tau_n}(1_{B_n}/n)=1$ and $\lim_{n\rightarrow\omega} d_{\tau_n}(0)=0$.
\end{remark}

With a few small exceptions, our remaining applications of Kirchberg's $\eps$-test are more routine, taking $X_n$ to be (finite products) of bounded subsets (primarily the unit ball) of a $\mathrm{C}^*$-algebra $B_n$ and defining the functions $f^{(k)}_n$ through norm and trace conditions; an expository account of how to use the $\eps$-test in this way is in \cite[Section 4.1]{T:MA}. We record three such applications of the reindexing argument here for use in the sequel.

\begin{lemma}
\label{lem:ActAsUnit}
Let $(B_n)_{n=1}^\infty$ be a sequence of $\mathrm C^*$-algebras and set $B_\omega := \prod_\omega B_n$.
Let $S_1,S_2$ be separable self-adjoint subsets of $B_\omega$, and let $T$ be a separable subset of $B_\omega \cap S_1' \cap S_2^\perp$.
Then there exists a contraction $e \in (B_\omega \cap S_1' \cap S_2^\perp)_{+}$ that acts as a unit on $T$, i.e., such that $et=te=t$ for every $t\in T$.
\end{lemma}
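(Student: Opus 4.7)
The plan is to apply Kirchberg's $\eps$-test (Lemma \ref{epstest}) with $X_n$ the set of positive contractions in $B_n$. The key observation is that since $S_1$ and $S_2$ are self-adjoint, $B_\omega \cap S_1' \cap S_2^\perp$ is a $\mathrm C^*$-subalgebra of $B_\omega$ containing $T$, so the separable $\mathrm C^*$-subalgebra $D := C^*(T)$ lies inside $B_\omega \cap S_1' \cap S_2^\perp$ and admits a countable approximate unit of positive contractions.

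Choose dense sequences $(s_i)_{i=1}^\infty \subseteq S_1$, $(r_j)_{j=1}^\infty \subseteq S_2$, and $(t_k)_{k=1}^\infty \subseteq T$, and lift each of these to bounded representative sequences $(s_{i,n})_n$, $(r_{j,n})_n$, $(t_{k,n})_n$ of elements of the $B_n$. Define testing functions $f^{(\alpha,l)}_n : X_n \to [0,\infty)$ by
\begin{align*}
f^{(1,i)}_n(e) &= \|[e,s_{i,n}]\|, \\
f^{(2,j)}_n(e) &= \|e r_{j,n}\| + \|r_{j,n}e\|, \\
f^{(3,k)}_n(e) &= \|e t_{k,n} - t_{k,n}\| + \|t_{k,n} e - t_{k,n}\|.
\end{align*}
Any sequence $(e_n) \in \prod_n X_n$ on which all of the $f^{(\alpha,l)}_\omega$ vanish represents a positive contraction $e \in B_\omega$ which, by continuity, commutes with $S_1$, is orthogonal to $S_2$, and acts as a unit on $T$.

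To verify the hypothesis of the $\eps$-test, fix $\eps > 0$ and finitely many of the testing indices. Picking $e_0$ far enough along a countable approximate unit for $D$ ensures $\|e_0 t_k - t_k\|, \|t_k e_0 - t_k\| < \eps/2$ for the finitely many $t_k$ that appear; lift $e_0$ to a sequence of positive contractions $(e_{0,n})_n$ in the $B_n$. Because $e_0 \in D \subseteq S_1' \cap S_2^\perp$, we have $f^{(1,i)}_\omega((e_{0,n})) = 0$ and $f^{(2,j)}_\omega((e_{0,n})) = 0$ for the indices under consideration, while $f^{(3,k)}_\omega((e_{0,n})) < \eps$. Kirchberg's $\eps$-test then produces the desired $e \in B_\omega$.

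There is no serious obstacle here: the lemma is essentially a packaging of two standard facts, an approximate unit argument inside $D$ combined with the reindexing trick. The only mild technical point is that one must locate the witnesses inside $D \subseteq S_1' \cap S_2^\perp$, so that the commutation and orthogonality conditions are satisfied exactly rather than only approximately at each finite stage of the $\eps$-test; the unit condition is then handled at the level of $\eps$ via the approximate unit.
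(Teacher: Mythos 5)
Your proof is correct and follows essentially the same route as the paper: both apply Kirchberg's $\eps$-test with $X_n$ the positive contractions of $B_n$, both use membership-testing functions for $B_\omega \cap S_1' \cap S_2^\perp$ together with functions measuring the failure of the unit condition on a dense sequence in $T$, and both verify the hypothesis by taking an element far out in an approximate unit for $C^*(T)$, which sits inside $B_\omega \cap S_1' \cap S_2^\perp$ because $S_1,S_2$ are self-adjoint. The only cosmetic difference is that you write out the testing functions for $S_1'$ and $S_2^\perp$ explicitly, whereas the paper simply asserts their existence.
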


\begin{proof}
We use the $\eps$-test with $X_n$ equal to the collection of positive contractions in $B_n$. Since $S_1$ and $S_2$ are separable, there exists a countable collection of functions $f^{(k)}_n:X_n\rightarrow [0,\infty)$, such that a positive contraction $x\in B_\omega$ represented by $(x_n)_{n=1}^\infty\in\prod_{n=1}^\infty X_n$ lies in $B_\omega\cap S_1'\cap S_2^\perp$ if and only if $\lim_{n\rightarrow\omega}f^{(k)}_n(x_n)=0$ for all $k$. Take a dense sequence $(t^{(k)})_{k=1}^\infty$ in $T$, fix representatives $(t^{(k)}_n)_{n=1}^\infty$ of $t^{(k)}$ and define $g^{(k)}_n(x_n):=\max(\|x_nt^{(k)}_n-t^{(k)}_n\|,\|t^{(k)}_nx_n-t^{(k)}_n\|)$. In this way a contraction $x\in B_\omega$ represented by $(x_n)_{n=1}^\infty\in\prod_{n=1}^\infty X_n$ acts as a unit on $T$ if and only if $\lim_{n\rightarrow\omega}g^{(k)}_n(x_n)=0$ for all $k\in\N$.

Given $m\in\N$ and $\eps>0$, use an approximate unit to find a positive contraction $x\in C^*(T)\subset B_\omega\cap S_1'\cap S_2^\perp$ with $\|xt^{(k)}-t^{(k)}\|\leq\eps$ and $\|t^{(k)}x-t^{(k)}\|\leq\eps$ for $k=1,\dots,m$. Then a representative $(x_n)_{n=1}^\infty\in\prod_{n=1}^\infty X_n$ for $x$ will satisfy $\lim_{n\rightarrow\omega}f^{(k)}_n(x_n)=0$ for all $k\in\N$, and $\lim_{n\rightarrow\omega}g^{(k)}_n(x_n)\leq\eps$ for $k=1,\dots,m$.  Thus Kirchberg's $\eps$-test (Lemma \ref{epstest}) provides the required $e$.
\end{proof}

\begin{lemma}\label{NewEpsLemma}
Let $(B_n)_{n=1}^\infty$ be a sequence of $\mathrm C^*$-algebras and set $B_\omega := \prod_\omega B_n$.
Let $S_1,S_2$ be separable self-adjoint subsets of $B_\omega$, and set $C:=B_\omega\cap S_1'\cap S_2^\perp$. Write $C^\sim$ for the unitization of $C$ if $C$ is non-unital, and $C^\sim=C$ otherwise.
\begin{enumerate}
\item Let $h_1,h_2\in C_+$. Then $h_1$ and $h_2$ are unitarily equivalent via a unitary from $C^\sim$ if and only if they are approximately unitarily equivalent, i.e., for any $\eps>0$ there exists a unitary $u\in C^\sim$ with $uh_1u^*\approx_\eps h_2$.
\item Let $a\in C$.  Then there exists a unitary $u\in C^\sim$ with $a=u|a|$ if and only if for each $\eps>0$ there exists a unitary $u\in C^\sim$ with $a\approx_\eps u|a|$.
\item Let $h_1,h_2 \in C_+$. Then $h_1$ and $h_2$ are Murray-von Neumann equivalent if and only if they are approximately Murray-von Neumann equivalent, i.e., for any $\eps>0$ there exists $x\in C$ with $xx^*\approx_\eps h_1$ and $x^*x\approx_\eps h_2$.

\end{enumerate}
\end{lemma}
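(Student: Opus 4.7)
All three parts are instances of upgrading an $\eps$-approximate statement inside $C = B_\omega \cap S_1' \cap S_2^\perp$ to an exact one (the reverse implication is trivial in each case), and I would prove all three by a uniform application of Kirchberg's $\eps$-test (Lemma \ref{epstest}). Fix countable norm-dense sequences $(s^{(k)})$ in $S_1$ and $(t^{(k)})$ in $S_2$ with bounded lifts $(s^{(k)}_n), (t^{(k)}_n)$, and fix lifts $(h_{1,n}), (h_{2,n}), (a_n)$ in $B_n$ of $h_1, h_2, a$. In each part I take a parameter set $X_n$ (a suitable bounded subset of $B_n$ or $B_n^\sim$) and a countable family of non-negative test functions that in the ultralimit simultaneously enforce (a) membership in $C$ or $C^\sim$, (b) any ambient structural constraint such as unitarity, and (c) the exact identity being sought. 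Then I verify that for every finite subfamily and every $\eps>0$ the approximate hypothesis supplies a witness sequence along which those finitely many functions have ultralimit $<\eps$; the $\eps$-test then delivers a single sequence whose class realises the desired element.

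For (iii) take $X_n$ to be the closed ball in $B_n$ of radius $(\|h_1\|+\|h_2\|)^{1/2}$, with test functions
\[
\|x_n x_n^* - h_{1,n}\|,\quad \|x_n^* x_n - h_{2,n}\|,\quad \|[x_n, s^{(k)}_n]\|,\quad \|x_n t^{(k)}_n\|,\quad \|t^{(k)}_n x_n\|.
\]
An approximate Murray-von Neumann witness $x \in C$ lifts to a sequence $(x_n)\in\prod_n B_n$ along which the last three families vanish in the ultralimit (because $x \in C$) and the first two have ultralimit at most $\eps$, so the $\eps$-test produces the exact $x\in C$ with $xx^*=h_1$ and $x^*x=h_2$. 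For (i) and (ii) I take $X_n := U(B_n^\sim)$, with canonical character $\sigma_n:B_n^\sim\to\mathbb{C}$, and use the analogous families $\|[u_n, s^{(k)}_n]\|$, $\|(u_n-\sigma_n(u_n))t^{(k)}_n\|$, $\|t^{(k)}_n(u_n-\sigma_n(u_n))\|$ together with the identity functions $\|u_n h_{1,n} u_n^* - h_{2,n}\|$ for (i) and $\|a_n - u_n|a_n|\|$ for (ii). Given an approximate witness $u = \lambda\cdot 1 + c \in U(C^\sim)$, I lift $c$ to a bounded sequence $(c_n)\in\prod_n B_n$, form $v_n := \lambda\cdot 1 + c_n \in B_n^\sim$, and use that $v_n^* v_n \to 1$ and $v_n v_n^* \to 1$ at $\omega$ to obtain, for $n$ near $\omega$, genuine unitaries $u_n := v_n (v_n^* v_n)^{-1/2}\in U(B_n^\sim)$ representing $u$. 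Along this sequence the commutation and annihilation test functions vanish and the identity test function has ultralimit less than $\eps$, so the $\eps$-test produces the desired $u$.

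The only delicate step is this lifting from a unitary in $C^\sim \subseteq B_\omega^\sim$ to a sequence of genuine unitaries in $B_n^\sim$, which is needed in (i) and (ii) because the parameter sets must consist of honest unitaries if the ultralimit is to automatically be a unitary. The polar correction $u_n := v_n(v_n^*v_n)^{-1/2}$ handles this: $v_n^*v_n$ is self-adjoint and tends to $1$ in norm at $\omega$, hence is invertible there, and the resulting $u_n$ differs from $v_n$ by a quantity vanishing at $\omega$, so $(u_n)_\omega=(v_n)_\omega$ in $B_\omega^\sim$. Part (iii) has no such issue since there is no unitarity constraint on $x$, and the remaining verifications in all three parts are routine. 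Once the $\eps$-test delivers $u = (u_n)_\omega \in B_\omega^\sim$ for (i) or (ii), the vanishing of the commutation and annihilation functions forces $u - \sigma_\omega(u)\cdot 1 \in C$, so $u \in U(C^\sim)$, as required.
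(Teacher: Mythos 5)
Your proof is correct and uses the same underlying tool as the paper -- Kirchberg's $\eps$-test -- but your parameterization of the unitaries differs from the paper's in a way that's worth noting. The paper takes $X_n$ to be the norm-$2$ ball of $B_n$, lifts only the $C$-component $x$ of a unitary $u = x + \lambda 1_{C^\sim}$, and treats unitarity as an approximately-enforced condition by defining, for each $\lambda$ of modulus one, a test function
\[
g_n^{(\lambda)}(x_n) := \max\bigl(\|x_n x_n^* + \bar\lambda x_n + \lambda x_n^*\|,\ \|x_n^* x_n + \bar\lambda x_n + \lambda x_n^*\|,\ \|(x_n+\lambda 1)h_{1,n}(x_n+\lambda 1)^*-h_{2,n}\|\bigr)
\]
and then $g_n := \inf_{|\lambda|=1} g_n^{(\lambda)}$; after the $\eps$-test it recovers $\lambda$ as $\lim_\omega \lambda_n$ for near-minimizing $\lambda_n$. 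You instead take $X_n := U(B_n^\sim)$, which bakes unitarity into the parameter set and replaces the infimum-over-$\lambda$ bookkeeping with the canonical character $\sigma_n$, at the cost of needing a genuine unitary lift; your polar correction $u_n := v_n(v_n^* v_n)^{-1/2}$ does this correctly, and your verification that $(u_n)_\omega$ lands in $U(C^\sim)$ is sound. The paper's formulation is marginally more economical (no polar correction needed), while yours avoids the infimum trick and makes the unitary structure transparent. Both are legitimate; your treatment of part (iii) matches the paper's essentially verbatim.
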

\begin{proof}
We give the proof when $C$ is non-unital (the unital case is easier). Taking $X_n$ to be the set of elements of $B_n$ of norm at most $2$, as in Lemma \ref{lem:ActAsUnit}, there is a countable collection of functions $f^{(k)}_n:X_n\rightarrow B_n$ with the property that for $x\in B_\omega$ with norm at most $2$ and with the representative $(x_n)_{n=1}^\infty \in \prod_{n=1}^\infty X_n$, we have $x\in B_\omega\cap S_1'\cap S_2^\perp$ if and only if $\lim_{n\rightarrow\omega}f^{(k)}_n(x_n)=0$ for all $k\in\N$.  

(i):  Take representative sequences $(h_{1,n})_{n=1}^\infty$ and $(h_{2,n})_{n=1}^\infty$ from $\prod_{n=1}^\infty B_n$ for $h_1$ and $h_2$ respectively.  For $x\in C$ and $\lambda\in\mathbb C$, $x+\lambda 1_{C^\sim}$ is a unitary if and only if $|\lambda|=1$ and $xx^*+\lambda x^*+\bar{\lambda}x=x^*x+\lambda x^*+\bar{\lambda}x=0$.  For each $\lambda\in\mathbb C$ with $|\lambda|=1$, consider functions $g_n^{(\lambda)}:X_n\rightarrow[0,\infty)$ given by
\begin{align}
g_n^{(\lambda)}(x_n)&:=\max \big(\|x_nx_n^*+\bar{\lambda}x_n+\lambda x_n^*\|,\|x_n^*x_n+\bar{\lambda}x_n+\lambda x_n^*\|,\nonumber\\
&\qquad \|(x_n+\lambda1_{C^\sim})h_{1,n}(x_n+\lambda1_{C^\sim})^*-h_{2,n}\|\big)
\end{align}
and define $g_n:X_n\rightarrow [0,\infty)$ by 
\begin{equation}
g_n(x_n):=\inf_{\lambda\in\mathbb C,\ |\lambda|=1}g_n^{(\lambda)}(x_n).
\end{equation}
Now suppose $h_1$ and $h_2$ are approximately unitarily equivalent in $C^\sim$. Fix $\eps>0$ and take a unitary $u=x+\lambda 1_{C^\sim}$ with $x \in C$ and $\lambda \in \mathbb C$ with $|\lambda|=1$, such that $uh_1u^*\approx_{\eps}h_2$.
Since $\|x\| \leq 2$, $x$ is represented by a sequence $(x_n)_{n=1}^\infty \in \prod_{n=1}^\infty X_n$.
Then $\lim_{n\rightarrow\omega}f^{(k)}_n(x_n)=0$, and $\lim_{n\rightarrow\omega}g^{(\lambda)}_n(x_n)\leq\eps$, so that $\lim_{n\rightarrow\omega}g_n(x_n)\leq\eps$. Thus, by Kirchberg's $\eps$-test (Lemma \ref{epstest}), there exists a sequence $(x_n)_{n=1}^\infty$ in $\prod_{n=1}^\infty X_n$ representing an element $x\in B_\omega$ with $\lim_{n\rightarrow\omega}f^{(k)}_n(x_n)=0$ (i.e., $x\in C$), and $\lim_{n\rightarrow\omega}g_n(x_n)=0$.  For each $n$, pick $\lambda_n\in\mathbb C$ with $|\lambda_n|=1$ that minimizes $g_n^{(\lambda_n)}(x_n)$; then set $\lambda:=\lim_{n\rightarrow\omega}\lambda_n$, so that $|\lambda|=1$.  Then $\lim_{n\rightarrow\omega}g^{(\lambda)}_n(x_n)=0$, so $u=x+\lambda 1_{C^\sim}\in C^\sim$ witnesses the unitary equivalence of $h_1$ and $h_2$.

(ii): This is very similar to (i). Take a representative sequence $(a_n)_{n=1}^\infty$ for $a$ and define functions $g_n^{(\lambda)}:X_n\rightarrow[0,\infty)$ for $|\lambda|=1$ by
\begin{align}
g_n^{(\lambda)}(x_n):=\max&\big(\|x_nx_n^*+\bar{\lambda}x_n+\lambda x_n^*\|,\|x_n^*x_n+\bar{\lambda}x_n+\lambda x_n^*\|,\nonumber\\
&\qquad\|a_n-(x_n+\lambda)|a_n|\|\big)
\end{align}
and $g_n:X_n\to[0,\infty)$ by $g_n(x_n)=\inf\{g_n^{(\lambda)}(x_n):\lambda\in\mathbb C,\ |\lambda|=1\}$ for $x_n\in X_n$. Then, one can argue just as in (i) to see that if for each $\eps>0$ there is a unitary $u\in C^\sim$ with $a\approx_\eps u|a|$, then there is a unitary $u\in C^\sim$ with $a=u|a|$.

(iii): This is a straightforward application of the $\eps$-test as it does not make reference to the unitization. We may assume that $h_1$ and $h_2$ are contractions. Given representative sequences $(h_{1,n})_{n=1}^\infty$ and $(h_{2,n})_{n=1}^\infty$ in $\prod_{n=1}^\infty X_n$ for $h_1$ and $h_2$ respectively, the result follows from applying Kirchberg's $\eps$-test (Lemma \ref{epstest}) to the functions $f^{(k)}_n$ as above which test for membership of $C$, and the functions $g_n:X_n\rightarrow[0,\infty)$ given by $g_n(x_n):=\max(\|x_nx_n^*-h_{1,n}\|,\|x_n^*x_n-h_{2,n}\|)$.
\end{proof}

\begin{lemma}
\label{lem:LargeTraceSubordinate}
Let $(B_n)_{n=1}^\infty$ be a sequence of unital $\mathrm C^*$-algebras with $T(B_n)\neq\emptyset$ for each $n\in\N$. Write $B_\omega := \prod_\omega B_n$.  Let $S_0$ be a countable self-adjoint subset of $(B_\omega)_+$ and let $T$ be a separable self-adjoint subset of $B_\omega$.
If $x,f \in (B_\omega \cap S_0'\cap T')_+$ are contractions with $x\vartriangleleft f$ and with the property that for all $a \in S_0$ there exists $\gamma_a \geq 0$ such that $\tau(af^m) \geq \gamma_a$ for all $m\in \N,\ \tau \in T_\omega(B_\omega)$, then there exists a contraction $f' \in (B_\omega \cap S_0'\cap T')_+$ such that $x\vartriangleleft f' \vartriangleleft f$ and $\tau(a(f')^m) \geq \gamma_a$ for all $m\in\N,\ \tau \in T_\omega(B_\omega)$, and $a\in S_0$.

If each $B_n$ is simple, separable, $\mathcal Z$-stable and has $QT(B_n)=T(B_n)$, then the above statement holds with $T(B_\omega)$ in place of $T_\omega(B_\omega)$.
\end{lemma}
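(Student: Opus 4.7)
The plan is to use Kirchberg's $\eps$-test (Lemma \ref{epstest}). For each $\eps > 0$, I would exhibit a positive contraction $f'_\eps \in (B_\omega \cap S_0' \cap T')_+$ that exactly achieves $x \vartriangleleft f'_\eps$ and the trace bounds, and that satisfies $f'_\eps \vartriangleleft f$ up to an error of $\eps$. For the $\eps$-test, take $X_n$ to be the positive contractions of $B_n$, and as test functions use a countable family detecting commutation with countable generating subsets of $S_0$ and $T$, the relations $\|f'_n x_n - x_n\|$, $\|f_n f'_n - f'_n\|$ (and the adjoint versions) for fixed representatives $(x_n), (f_n)$, together with functions $g^{(a,m)}_n(f'_n) := \sup_{\tau_n \in T(B_n)}\bigl(\gamma_a - \tau_n(a_n(f'_n)^m)\bigr)_+$ for each $a \in S_0$ and $m \in \N$ (with fixed representative $(a_n)$); the argument after Lemma \ref{lem:SupportingMap} shows that vanishing of such trace-functions along $\omega$ encodes exactly the trace inequalities over $T_\omega(B_\omega)$.

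For the approximate solution, choose a continuous function $h_\eps:[0,1]\to[0,1]$ with $h_\eps(1) = 1$ and $h_\eps|_{[0,1-\eps]}\equiv 0$, and set $f'_\eps := h_\eps(f)$. Since $f'_\eps \in C^*(f,1)$, it commutes with $S_0$ and $T$. Because $fx = xf = x$ forces $g(f)x = g(1)x$ for every continuous $g$, we get $h_\eps(f)x = x = xh_\eps(f)$, so $x\vartriangleleft f'_\eps$ exactly. For the approximate $\vartriangleleft f$ condition,
\begin{equation}
\|f f'_\eps - f'_\eps\| = \|(t-1)h_\eps(t)\|_\infty \leq \eps,
\end{equation}
since $h_\eps$ is supported on $[1-\eps, 1]$. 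For the trace inequality, fix $a\in S_0$, $m\in\N$, and $\tau\in T_\omega(B_\omega)$; as $a\geq 0$ commutes with $h_\eps(f)$, in the GNS representation $\pi_\tau$ the sequence $\pi_\tau(h_\eps(f))^M$ decreases strongly to the spectral projection $p := \chi_{\{1\}}(\pi_\tau(h_\eps(f)))$, while $\pi_\tau(f)^M$ decreases strongly to $q := \chi_{\{1\}}(\pi_\tau(f))$. Since $h_\eps(1) = 1$ we have $p \geq q$, and therefore
\begin{equation}
\tau(a(f'_\eps)^m) \geq \lim_{M\to\infty}\tau(a(f'_\eps)^M) \geq \lim_{M\to\infty}\tau(af^M) \geq \gamma_a,
\end{equation}
using the hypothesis in the last inequality. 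So the trace bound is satisfied exactly (no $\eps$-slack needed), and all test functions in the $\eps$-test are at most $\eps$ along $\omega$ when $f'_\eps$ is lifted componentwise as $(h_\eps(f_n))_n$. The $\eps$-test then produces the desired $f' \in B_\omega$.

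The main conceptual obstacle is the tension between $f'\vartriangleleft f$ (which in a commutative picture forces $f'$ into the spectral piece where $f = 1$) and the trace lower bound (which demands substantial mass in $f'$); these are outright incompatible in a single fibre, and the whole flexibility of the argument comes from the ultraproduct structure exploited through the $\eps$-test. The resolution on the trace side is the observation that the hypothesis $\tau(af^m)\geq\gamma_a$ for every $m$ is really a statement about the spectral projection of $f$ at $1$: taking $m\to\infty$ shows the mass $\gamma_a$ already lives in that projection, and hence survives passage to $h_\eps(f)$. For the final clause, under the additional assumptions on the $B_n$ Proposition \ref{NoSillyTraces} gives weak-$^*$-density of $T_\omega(B_\omega)$ in $T(B_\omega)$; since $\tau \mapsto \tau(af^m)$ and $\tau \mapsto \tau(a(f')^m)$ are weak-$^*$-continuous, the hypothesis and conclusion with $T(B_\omega)$ are equivalent to those with $T_\omega(B_\omega)$, and so reduce to the first part.
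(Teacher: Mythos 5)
Your proof is correct, and its overall architecture matches the paper's: the same application of Kirchberg's $\eps$-test with $X_n$ the positive contractions of $B_n$, the same kinds of test functions (commutation with $S_0$ and $T$, $x\vartriangleleft y$ and $\|f_ny_n - y_n\|$, and the trace deficits $\max\bigl(\gamma_a - \min_{\tau\in T(B_n)}\tau(a_ny_n^m),\,0\bigr)$), and the same reduction of the $T(B_\omega)$ version to $T_\omega(B_\omega)$ via Proposition \ref{NoSillyTraces}. The one genuine difference is the choice of approximate solution. You take $f'_\eps := h_\eps(f)$ for a continuous cutoff $h_\eps$ supported near $1$; the paper takes $y := f^r$ with $r$ large enough that $t^r - t^{r+1} < \eps$ on $[0,1]$. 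Both choices satisfy $x\vartriangleleft f'_\eps \approx_\eps f'_\eps f$ and commute exactly with $S_0$ and $T$. But the paper's choice renders the trace bound completely algebraic: $\tau\bigl(a(f^r)^m\bigr) = \tau(af^{rm}) \geq \gamma_a$ is an instance of the hypothesis itself, whereas with $h_\eps(f)$ you must pass to the GNS representation, compare the spectral projections at $1$ of $\pi_\tau(h_\eps(f))$ and $\pi_\tau(f)$, and invoke normality of the induced trace on $\pi_\tau(B_\omega)''$. Your spectral argument is correct (the key facts --- $a$ commutes with both $f$ and $h_\eps(f)$, $\pi_\tau(c)^M \searrow \chi_{\{1\}}(\pi_\tau(c))$ strongly for a positive contraction $c$, and $h_\eps(1)=1$ forces $\chi_{\{1\}}(\pi_\tau(h_\eps(f))) \geq \chi_{\{1\}}(\pi_\tau(f))$ --- all hold), and your conceptual remark that the hypothesis is really about the spectral projection of $f$ at $1$ is precisely what the paper's $f^r$ device exploits without ever leaving the $\mathrm{C}^*$-algebra. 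In short: same proof, but the paper's approximate solution buys a one-line trace verification where yours needs von Neumann machinery.
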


\begin{proof}
We use the $\eps$-test with $X_n$ equal to the set of positive contractions in $B_n$. By countability of $S_0$ and separability of $T$, there are functions $g^{(k)}_n:X_n\rightarrow [0,\infty)$ such that a positive contraction $y\in B_\omega$ represented by $(y_n)_{n=1}^\infty\in\prod_{n=1}^\infty X_n$ satisfies $y\in B_\omega \cap S_0'\cap T'$ and $x\vartriangleleft y$ if and only if $\lim_{n\rightarrow\omega}g^{(k)}_n(y_n)=0$ for all $k\in\N$.
Enumerate $S_0$ as $\{a^{(1)},a^{(2)},\dots\}$.
Fix a representative sequence $(f_n)_{n=1}^\infty$ of $f$ and for each $k$, fix a representative sequence $(a^{(k)}_n)_{n=1}^\infty$ of $a^{(k)}$.  Define 
\begin{align}
\notag
h^{(0)}_n(y_n)&:=\|f_ny_n-y_n\| \quad \text{and} \\
h^{(m,k)}_n(y_n)&:=\max\big(\gamma_{a^{(k)}}-\min_{\tau\in T(B_n)}\tau(a^{(k)}_ny^m_n),0\big),\quad y_n\in X_n.
\end{align}
In this way a positive contraction $y\in B_\omega$ represented by $(y_n)_{n=1}^\infty\in\prod_{n=1}^\infty X_n$ has $y\vartriangleleft f$ and $\tau(ay^m)\geq\gamma_a$ for all $a\in S_0$, $m\in\N$ and $\tau\in T_\omega(B_\omega)$ if and only if $\lim_{n\rightarrow\omega}h^{(0)}(y_n)=0$ and $\lim_{n\rightarrow\omega}h^{(m,k)}_n(y_n)=0$ for all $m,k\in\N$.

Fix $\eps>0$, find $r\in\mathbb N$ such that $t^r-t^{r+1}<\eps$ for all $t \in [0,1]$, and set $y=f^r$, a positive contraction in $B_\omega\cap S_0'\cap T'$ with $x\vartriangleleft y$. Since \begin{equation} fy = f^{r+1} \approx_\eps f^r = y, \end{equation}
a representative sequence $(y_n)_{n=1}^\infty$ in $\prod_{n=1}^\infty X_n$ for $y$ will satisfy the estimate $\lim_{n\rightarrow\omega}h^{(0)}(y_n)\leq\eps$. As $\tau(ay^m) = \tau(af^{rm}) \geq \gamma_a$ for all $a\in S_0$, $m\in\N$ and $\tau\in T_\omega(B_\omega)$, we have $\lim_{n\rightarrow\omega}h^{(m,k)}_n(y_n)=0$.  Kirchberg's $\eps$-test (Lemma \ref{epstest}) then provides the required positive contraction $f'$.

The last statement follows by the density of $T_\omega(B_\omega)$ in $T(B_\omega)$ from Proposition \ref{NoSillyTraces}.
\end{proof}

Via a special case of the previous result, we adapt the crucial relative commutant surjectivity results from \cite[Section 4]{KR:Crelle} to handle the hereditary subalgebras of relative commutant sequence algebras used in this paper.

\begin{lemma}
\label{lem:RelCommSurjectivity}
Let $(B_n)_{n=1}^\infty$ be a sequence of separable, unital $\mathrm C^*$-algebras with $T(B_n)\neq\emptyset$ for each $n\in\N$. Set $B_\omega:=\prod_\omega B_n$.
Let $A$ be a separable, unital $\mathrm C^*$-algebra and let $\pi:A\rightarrow B_\omega$ be a  c.p.c.\ order zero map such that $\pi(1_A)$ is full and the induced map $\bar{\pi}:A \to B_\omega/J_{B_\omega}$ is a $^*$-homomorphism.
Define $C:=B_\omega \cap \pi(A)' \cap \{1_{B_\omega}-\pi(1_A)\}^\perp$.
Let $S \subseteq C$ be a countable self-adjoint subset and let $\bar{S}$ denote the image of $S$ in $B_\omega/J_{B_\omega}$.
\begin{enumerate}
\item[(i)] Then the image of $C\cap S'$ in $B_\omega/J_{B_\omega}$ is precisely
\begin{eqnarray}
\lefteqn{ \bar{\pi}(1_A)\left((B_\omega/J_{B_\omega}) \cap \bar{\pi}(A)' \cap
\bar{S}'\right) } \nonumber \\
& = & (B_\omega/J_{B_\omega})\cap
\bar{\pi}(A)'\cap \bar{S}'\cap \{1_{B_\omega/J_{B_\omega}}-\bar{\pi}(1_A)\}^\perp,
\end{eqnarray}
a $\mathrm C^*$-subalgebra of $B_\omega / J_{B_\omega}$ with unit $\bar{\pi}(1_A)$.
\item[(ii)] Let $\tau\in T_\omega(B_\omega)$ be a limit trace and $a\in A_+$ and form the tracial functional $\rho:=\tau(\pi(a)\cdot)$ on $C$.
Then  $\|\rho\|=\tau(\pi(a))$.  If each $B_n$ is additionally  simple, $\Z$-stable and has $QT(B_n)=T(B_n)$, then this holds for all traces $\tau\in T(B_\omega)$.
\end{enumerate}
\end{lemma}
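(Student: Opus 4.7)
The inclusion $\subseteq$ in (i) is immediate from the definitions. If $y \in C \cap S'$, then the condition $y\vartriangleleft \pi(1_A)$ gives $\bar y = \bar\pi(1_A)\bar y = \bar y\bar\pi(1_A)$, while $\bar y$ commutes with $\bar\pi(A)$ and $\bar S$; since $\bar\pi(1_A)$ is a projection by Proposition \ref{prop:Ord0Homo} (using the hypothesis that $\bar\pi$ is a $^*$-homomorphism), $\bar y$ lies in $\bar\pi(1_A)\bigl((B_\omega/J_{B_\omega}) \cap \bar\pi(A)' \cap \bar S'\bigr)$, and the equality in (i) of the two descriptions of the image then follows because $\bar\pi(1_A)$ is a central projection of $(B_\omega/J_{B_\omega}) \cap \bar\pi(A)' \cap \bar S'$.

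For the reverse inclusion in (i), fix $\bar x \in (B_\omega/J_{B_\omega}) \cap \bar\pi(A)' \cap \bar S'$ and lift it to a contraction $x \in B_\omega$. My plan is to apply Kirchberg's $\eps$-test (Lemma \ref{epstest}) with $X_n$ equal to the contractions in $B_n$, using a countable family of functions that tests simultaneously for (a) exact commutation with a representative sequence for each element of a countable dense $\mathbb Q[i]$-$^*$-subalgebra $A_0 \subset A$ mapped through $\pi$, (b) exact commutation with a representative sequence for each element of $S$, (c) the orthogonality condition $\vartriangleleft \pi(1_A)$, and (d) trace-norm equality with $\pi(1_A) x \pi(1_A)$ modulo $J_{B_\omega}$. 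To verify the hypothesis of the $\eps$-test, I first produce, for each $\eps > 0$, a positive contraction $e_\eps \in B_\omega$ commuting with $\pi(A)\cup S$ to within $\eps$, satisfying $\pi(1_A)e_\eps \approx_\eps e_\eps \approx_\eps e_\eps \pi(1_A)$, and with $\bar e_\eps = \bar\pi(1_A)$. Since $\bar\pi(1_A)$ is a projection, the positive contraction $\pi(1_A)^N$ works for sufficiently large $N$ (depending on $\eps$): its image modulo $J_{B_\omega}$ is $\bar\pi(1_A)$, it commutes exactly with $\pi(A)$ and $S$, and $\|\pi(1_A)^{N+1} - \pi(1_A)^N\| \to 0$ by functional calculus on $[0,1]$. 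Then $y_\eps := e_\eps x e_\eps$ serves as the approximate solution needed for the $\eps$-test: it approximately lies in $C \cap S'$, and modulo $J_{B_\omega}$ it equals $\bar\pi(1_A)\bar x\bar\pi(1_A) = \bar\pi(1_A)\bar x$ (using that $\bar\pi(1_A)$ is a projection commuting with $\bar x$). The main obstacle in this step is to arrange that the commutator conditions are satisfied in norm; this is why we pick $e_\eps$ to commute (approximately) with the chosen test elements from $\pi(A) \cup S$ in norm rather than merely modulo $J_{B_\omega}$.

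For (ii), I first note that specializing the construction in (i) with $S = \emptyset$ produces a positive contraction $e \in C$ with $\bar e = \bar\pi(1_A)$. For any $y \in C^1$, the Cauchy-Schwarz inequality applied in the GNS representation of $\tau$ gives
\begin{equation}
|\rho(y)|^2 = |\tau(\pi(a)^{1/2} \cdot \pi(a)^{1/2} y)|^2 \leq \tau(\pi(a))\,\tau(y^*\pi(a)y) \leq \|y\|^2\tau(\pi(a))^2,
\end{equation}
so $\|\rho\| \leq \tau(\pi(a))$. For the reverse inequality, observe that for a limit trace $\tau \in T_\omega(B_\omega)$, $\tau$ vanishes on $J_{B_\omega}$ by definition, so $\tau$ factors through $B_\omega/J_{B_\omega}$ and
\begin{equation}
\rho(e) = \tau(\pi(a)e) = \tau(\bar\pi(a)\bar\pi(1_A)) = \tau(\bar\pi(a)) = \tau(\pi(a)),
\end{equation}
where the penultimate equality uses that $\bar\pi(1_A)$ is the unit of $\bar\pi(A)$. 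This shows $\|\rho\| \geq \tau(\pi(a))$ and hence $\|\rho\| = \tau(\pi(a))$ for limit traces. Under the additional hypotheses on the $B_n$, limit traces are weak$^*$-dense in $T(B_\omega)$ by Proposition \ref{NoSillyTraces}; since both $\tau(\pi(a))$ and $\tau(\pi(a)e) = \rho(e)$ are weak$^*$-continuous in $\tau$ and agree on the dense subset of limit traces, they agree on all of $T(B_\omega)$, and the Cauchy-Schwarz bound $\|\rho\| \leq \tau(\pi(a))$ still applies, yielding the full claim.
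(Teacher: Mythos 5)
Part (ii) of your proof is correct and essentially matches the paper's argument: you use a lift $e\in C$ of $\bar\pi(1_A)$, the fact that limit traces vanish on $J_{B_\omega}$, and a Cauchy--Schwarz bound, just as the paper does.

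Part (i), however, has a genuine gap in the reverse inclusion. When you lift $\bar x$ to an arbitrary contraction $x\in B_\omega$, you only obtain $[x,\pi(a)],\ [x,s]\in J_{B_\omega}$; these commutators can have operator norm up to $2\|\pi(a)\|$ (resp.\ $2\|s\|$). Your proposed compression by $e_\eps=\pi(1_A)^N$ does not repair this: since $e_\eps$ commutes \emph{exactly} with $\pi(A)$ and $S$, the commutator simply passes through, and one computes $[e_\eps x e_\eps,\pi(a)]=e_\eps[x,\pi(a)]e_\eps$, which is still in $J_{B_\omega}$ but not small in norm (for instance, if $[x,\pi(a)]$ happens to lie in $\mathrm{her}(\pi(1_A))$, the compression does nothing at all). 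Hence $y_\eps=e_\eps x e_\eps$ does not satisfy the approximate norm-commutation requirements (a) and (b), so the hypothesis of Kirchberg's $\eps$-test is never verified. You diagnose the right obstacle --- that the commutators of a naive lift live only in $J_{B_\omega}$ --- but apply the wrong fix: the issue is not to choose $e_\eps$ commuting with $\pi(A)\cup S$, but to annihilate the $J_{B_\omega}$-valued commutators of $x$. The missing ingredient is the $\sigma$-ideal property of $J_{B_\omega}$ and the resulting surjectivity of $B_\omega\cap\pi(A)'\cap S'\to(B_\omega/J_{B_\omega})\cap\bar{\pi}(A)'\cap\bar{S}'$ from \cite[Proposition 4.5(iii)]{KR:Crelle}: concretely, one replaces $x$ by $(1-h)x(1-h)$, where $h\in(J_{B_\omega})_+$ is quasicentral relative to a suitable separable subalgebra and acts approximately as a unit on the finitely many commutators $[x,\pi(a)]$, $[x,s]\in J_{B_\omega}$ under consideration. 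It is the factor $(1-h)$ (not $e_\eps$) that kills the commutators, because $(1-h)[x,\pi(a)]\approx 0$. Once you have a lift $x'\in B_\omega\cap\pi(A)'\cap S'$ of $\bar x$, your compression by $\pi(1_A)^N$ (or equivalently, as the paper argues, hereditariness of the image of $C\cap S'$ together with a lift of $\bar\pi(1_A)$ in $C\cap S'$) does complete the argument.
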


\begin{proof}
(i): Kirchberg's $\eps$-test (Lemma \ref{epstest}) can be used to see that $J_{B_\omega}$ is a $\sigma$-ideal of $B_\omega$ (in the case that $B_n=B$ for some fixed $B$, this is done in \cite[Proposition 4.6]{KR:Crelle}, and the general case is proved in exactly the same fashion). Then \cite[Proposition 4.5(iii)]{KR:Crelle} shows that the image of $B_\omega \cap \pi(A)' \cap S'$ in $B_\omega/J_{B_\omega}$ is $(B_\omega/J_{B_\omega}) \cap \bar{\pi}(A)' \cap \bar{S}'$. 

The image of a hereditary $\mathrm{C}^*$-subalgebra $A_0$ of a $\mathrm{C}^*$-algebra $A$ under a surjective $^*$-homomorphism $\theta:A\rightarrow D$ is again hereditary.\footnote{For positive elements $x\in A_0$ and $d\in D$ with $d\leq \theta(x)$, we have $d\in \overline{\theta(x)\theta(A)\theta(x)}\subseteq \theta(A_0)$, so $\theta(A_0)$ is a hereditary subalgebra of $D$.} Since $C\cap S'$ is a hereditary subalgebra of $B_\omega \cap \pi(A)' \cap S'$, the image of $C\cap S'$ in $B_\omega/J_{B_\omega}$ is a hereditary subalgebra of $(B_\omega/J_{B_\omega}) \cap \bar{\pi}(A)' \cap \bar{S}'$, contained in $\bar{\pi}(1_A)\left((B_\omega/J_{B_\omega}) \cap \bar{\pi}(A)' \cap \bar{S}'\right)$.  We need to show that $\bar{\pi}(1_A)$ lies in this image.

As $\bar{\pi}$ is a $^*$-homomorphism, $\tau(\pi(1_A)^m)=\tau(\pi(1_A))$ for all $m\in\N$ and all $\tau\in T_\omega(B_\omega)$. Taking $T:=S\cup \pi(A)$, $S_0:=\{\pi(1_A)\}$, $f:=\pi(1_A)$ and $x:=0$ in Lemma \ref{lem:LargeTraceSubordinate}, we obtain a positive contraction $e\in B_\omega\cap \pi(A)'\cap S'$ with $e\vartriangleleft \pi(1_A)$ (so that $e\in C\cap S'$ and $\pi(1_A)-e \geq 0$) and $\tau(\pi(1_A))\geq \tau(e)\geq\tau(\pi(1_A))$ for all $\tau\in T_\omega(B_\omega)$.  Then, for $\tau\in T_\omega(B_\omega)$, $\tau(\pi(1_A)-e)=0$, so by \eqref{eq:JBeqDef2},  $e\equiv\pi(1_A)$ modulo $J_{B_\omega}$, and thus $\bar\pi(1_A)$ is equal to the image of $e\in C\cap S'$ in $B_\omega/J_{B_\omega}$.

(ii): Given $\tau\in T_\omega(B_\omega)$ and $a \in A_+$, form $\rho:=\tau(\pi(a)\cdot)$.
Then we certainly have $\|\rho\|\leq\tau(\pi(a))$. Conversely, for the positive contraction $e$ as above, since $\pi(1_A)\equiv e$ modulo $J_{B_\omega}$,
\begin{equation}
|\tau(\pi(a)(\pi(1_A)-e))|\leq \|\pi(a)\|\tau(\pi(1_A)-e)=0
\end{equation}
so that $\rho(e)=\tau(\pi(a)e)=\tau(\pi(a)\pi(1_A))=\tau(\pi(a))$, as $\bar{\pi}$ is a $^*$-homomorphism.  Thus $\|\rho\|=\tau(\pi(a))$.  

When each $B_n$ is simple $\Z$-stable and has $QT(B_n)=T(B_n)$, the weak$^*$-density of $T_\omega(B_\omega)$ in $T(B_\omega)$ from Proposition \ref{NoSillyTraces} gives the last statement.\end{proof}

A unital $\mathrm C^*$-algebra is said to have \emph{stable rank one} if its invertible elements form a dense subset (\cite[Definition 1.4]{R:PLMS}).  As in \cite{L:Book} stable rank one passes to ultraproducts via the following well-known lemma (a special case of \cite[Theorem 5]{P:JOT}), which we record for later use.

\begin{lemma}
\label{lem:PolarDecomp}
Let $A$ be a unital $\mathrm C^*$-algebra and let $a\in A$.
Then $a$ is a norm-limit of invertible elements if and only if, for any $\eps>0$, there exists a unitary $u \in A$ such that
 $a \approx_\eps u|a|$.
\end{lemma}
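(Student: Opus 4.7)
The plan is to prove the two directions independently, with the hard work appearing in the forward direction.

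For the easy direction ($\Leftarrow$), I would argue as follows. Given $\eps>0$, pick a unitary $u\in A$ with $a\approx_{\eps/2}u|a|$, then observe that for any $\delta>0$, the element $u(|a|+\delta 1_A)$ is invertible (as the product of a unitary and an invertible positive element), and $u(|a|+\delta 1_A)\approx_\delta u|a|$. Choosing $\delta<\eps/2$ exhibits an invertible within $\eps$ of $a$.

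For the forward direction ($\Rightarrow$), the idea is that every \emph{invertible} element admits a polar decomposition with a unitary factor, and we want to transfer this to approximate polar decompositions for $a$. Given $\eps>0$, I would fix $\delta>0$ (to be chosen below) and pick an invertible $b\in A$ with $\|a-b\|<\delta$; then $b$ has a genuine polar decomposition $b=v|b|$ with $v:=b|b|^{-1}$ unitary. Setting $u:=v$ we compute
\begin{equation}
\|a-u|a|\|\;\leq\;\|a-b\|+\|v|b|-v|a|\|\;=\;\|a-b\|+\bigl\||b|-|a|\bigr\|.
\end{equation}

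The main issue is therefore to control $\||a|-|b|\|$ in terms of $\|a-b\|$. The key estimate I would invoke is the standard inequality $\|\sqrt{x}-\sqrt{y}\|\leq\sqrt{\|x-y\|}$ for $x,y\in A_+$ (a consequence of operator monotonicity of the square root), combined with $\|a^*a-b^*b\|\leq\|a-b\|(\|a\|+\|b\|)$. Together these give
\begin{equation}
\bigl\||a|-|b|\bigr\|\;\leq\;\sqrt{\|a-b\|\,(\|a\|+\|b\|)}.
\end{equation}
So, given $\eps>0$, I would first require $\delta\leq 1$ (ensuring $\|b\|\leq\|a\|+1$), and then take $\delta$ also so small that $\delta<\eps/2$ and $\sqrt{\delta(2\|a\|+1)}<\eps/2$. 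The displayed inequality above then yields $\|a-u|a|\|<\eps$, completing the proof.

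The only step that is not essentially formal is the continuity of the absolute value in the norm, and the estimate above is the cleanest way to dispatch it; no ultraproduct machinery or sequence-algebra reindexing is needed here, since the statement is about a single $\mathrm{C}^*$-algebra $A$ and the quantitative estimate is genuinely constructive.
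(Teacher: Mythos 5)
Your proof is correct, and since the paper records this lemma without argument (citing it as a special case of Pedersen's polar-decomposition theorem \cite[Theorem 5]{P:JOT}), your self-contained derivation is a genuine alternative rather than a reproduction. The backward direction, regularizing $|a|$ to $|a|+\delta 1_A$ and multiplying by the unitary $u$, is the standard observation. For the forward direction, both of your estimates hold: $\|a^*a-b^*b\|\leq\|a-b\|(\|a\|+\|b\|)$ follows from the decomposition $a^*a-b^*b=a^*(a-b)+(a-b)^*b$, and $\|\sqrt{x}-\sqrt{y}\|\leq\sqrt{\|x-y\|}$ for $x,y\in A_+$ follows from operator monotonicity of the square root together with
\begin{equation}
y\leq x+\|x-y\|1_A\leq\bigl(\sqrt{x}+\sqrt{\|x-y\|}\,1_A\bigr)^2,
\end{equation}
which gives $\sqrt{y}\leq\sqrt{x}+\sqrt{\|x-y\|}\,1_A$ and, by symmetry and self-adjointness, the stated norm bound. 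Together these yield the required uniform continuity of $a\mapsto|a|$ on bounded sets, and with your choice of $\delta$ the estimate closes. What the direct argument buys is an elementary, quantitative proof of exactly the statement used; the citation trades this for access to Pedersen's more general result (characterizing which elements admit a unitary in their polar decomposition, also in the non-unital case), none of whose extra strength is needed here.
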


\begin{lemma}[{cf.\ \cite[Lemma 19.2.2(1)]{L:Book}}]
\label{lem:sr1Ultrapower}
Let $(B_n)_{n=1}^\infty$ be a sequence of unital $\mathrm C^*$-algebras with stable rank one.
Then $B_\omega:=\prod_\omega B_n$ also has stable rank one.
\end{lemma}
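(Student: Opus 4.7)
The plan is to verify the characterization of stable rank one given by Lemma \ref{lem:PolarDecomp}, namely that every element of $B_\omega$ admits an approximate unitary polar decomposition. Fix $a \in B_\omega$ and $\eps > 0$, and lift $a$ to a bounded representative sequence $(a_n)_{n=1}^\infty \in \prod_{n=1}^\infty B_n$. By continuity of the functional calculus $x \mapsto (x^*x)^{1/2}$ on bounded subsets, the sequence $(|a_n|)_{n=1}^\infty$ represents $|a| \in B_\omega$.

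Since each $B_n$ has stable rank one, applying Lemma \ref{lem:PolarDecomp} fibrewise produces, for each $n$, a unitary $u_n \in B_n$ with $a_n \approx_\eps u_n |a_n|$. The sequence $(u_n)_{n=1}^\infty$ is bounded, and since the image of a unitary under the quotient map is a unitary, it represents a unitary $u \in B_\omega$. Taking the ultralimit of the estimates $\|a_n - u_n|a_n|\| \leq \eps$ gives $a \approx_\eps u|a|$ in $B_\omega$. As $\eps > 0$ was arbitrary, a second application of Lemma \ref{lem:PolarDecomp} shows that $a$ is a norm-limit of invertibles, so $B_\omega$ has stable rank one.

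There is no real obstacle here; the only minor point to check is that $(|a_n|)_{n=1}^\infty$ represents $|a|$, which is immediate from continuity of the square root on positive elements. (Alternatively, one may choose $\eps_n = 1/n$ and apply Kirchberg's $\eps$-test, or Lemma \ref{NewEpsLemma}(ii) with $S_1 = S_2 = \emptyset$, to upgrade the approximate polar decomposition to an exact one $a = u|a|$ in $B_\omega$, from which density of invertibles is immediate via $u(|a| + \delta \cdot 1_{B_\omega})$ as $\delta \to 0$.)
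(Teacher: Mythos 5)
Your proof is correct. The paper does not supply its own argument here—it cites Loring \cite[Lemma~19.2.2(1)]{L:Book}—but the route you take is the expected one: use the equivalence of Lemma \ref{lem:PolarDecomp} in each $B_n$ to extract approximate unitary polar decompositions $a_n \approx_\eps u_n|a_n|$, pass to the ultraproduct (where $(u_n)$ represents a unitary and $(|a_n|)$ represents $|a|$, since these are built by $*$-polynomials and uniformly continuous functional calculus on bounded sets), and apply Lemma \ref{lem:PolarDecomp} once more in $B_\omega$. The crucial point, which your argument handles implicitly via the lemma, is that one cannot simply approximate each $a_n$ by an arbitrary invertible $b_n$, since the inverses $b_n^{-1}$ need not be uniformly bounded; replacing by $u_n(|a_n|+\dl 1)$ (or equivalently, routing through the polar-decomposition characterization) is precisely what controls this. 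Your parenthetical remark about upgrading to an exact polar decomposition via Lemma \ref{NewEpsLemma}(ii) with $S_1=S_2=\emptyset$ is also correct, though unnecessary for the stated conclusion.
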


Ultraproducts of $\Z$-stable $\mathrm{C}^*$-algebras satisfy a number of known regularity properties; the lemma below collects some of these for later use.
\begin{lemma}
\label{lem:Zfacts}
Let $(B_n)_{n=1}^\infty$ be a sequence of separable, $\Z$-stable $\mathrm C^*$-algebras and set $B_\omega:=\prod_\omega B_n$. Then:
\begin{enumerate}[(i)]
\item\label{lem:Zfacts2} If $S\subset B_\omega$ is separable, then there exist isomorphisms $\phi_n:B_n \to B_n \otimes \Z$ such that the induced isomorphism $B_\omega \to \prod_\omega (B_n \otimes \Z)$ takes $x\in S$ to $x \otimes 1_{\mathcal Z} \in (\prod_\omega B_n) \otimes \Z\subset \prod_\omega (B_n\otimes\Z)$. 
\item\label{lem:Zfacts1} If $S \subset B_\omega$ is separable and self-adjoint, and each $B_n$ is unital, then $B_\omega\cap S'$ contains a unital copy of $\Z$.
\item\label{lem:Zfacts4} If each $B_n$ is simple, unital, and finite, then $B_\omega$ has stable rank one.
\item\label{lem:Zfacts6} If $S \subset B_\omega$ is separable and self-adjoint, and $b \in (B_\omega \cap S')_+$, then for any $n\in \N$ there exists $c\in (B_\omega\cap S')_+$ with $c \leq b$ such that $n[c] \leq [b] \leq (n+1)[c]$ in $W(B_\omega\cap S')$.
\end{enumerate}
\end{lemma}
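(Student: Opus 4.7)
The plan for (i) is to combine the strongly self-absorbing property of $\Z$ with Kirchberg's $\eps$-test (Lemma \ref{epstest}). Fix a dense sequence $(s^{(k)})_{k=1}^\infty$ in $S$ with lifts $(s^{(k)}_n)_{n=1}^\infty \in \prod_{n=1}^\infty B_n$, take $X_n$ to be the set of isomorphisms $B_n\to B_n\otimes\Z$ (nonempty by $\Z$-stability of $B_n$), and use test functions $f^{(k)}_n(\phi_n):=\|\phi_n(s^{(k)}_n)-s^{(k)}_n\otimes 1_\Z\|$. For any $m\in\N$ and $\eps>0$, the strongly self-absorbing property lets us conjugate a fixed isomorphism $\alpha_n$ by a unitary so that the resulting isomorphism $\phi_n$ satisfies $f^{(k)}_n(\phi_n)<\eps$ for $k\leq m$; hence the $\eps$-test produces a sequence $(\phi_n)$ of isomorphisms whose induced map sends each $s^{(k)}$ to $s^{(k)}\otimes 1_\Z$. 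Part (ii) then comes for free: the identification of $B_\omega$ with $\prod_\omega(B_n\otimes\Z)$ furnished by (i) carries $S$ into $(\prod_\omega B_n)\otimes 1_\Z$, so the unital copy $1_{\prod_\omega B_n}\otimes\Z$ commutes with $S$ and pulls back to a unital copy of $\Z$ inside $B_\omega\cap S'$.

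For (iii), I will combine R{\o}rdam's result \cite[Theorem 6.7]{R:IJM}, which asserts that every simple, unital, finite, $\Z$-stable $\mathrm C^*$-algebra has stable rank one, with Lemma \ref{lem:sr1Ultrapower}, which shows that stable rank one passes to ultraproducts.

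Part (iv) is the most substantive, and the hard part will be producing an element $c$ whose Cuntz class divides $[b]$ in the right way within the relative commutant $B_\omega\cap S'$. The plan is to apply (ii) with $S\cup\{b\}$ in place of $S$, yielding a unital copy of $\Z$ inside $D:=B_\omega\cap S'\cap\{b\}'$. Since $\Z\cong\Z\otimes M_{n+1}$, there is a unital embedding $M_{n+1}\hookrightarrow\Z\hookrightarrow D$; let $g_1,\ldots,g_{n+1}\in D$ be the images of the diagonal matrix units $e_{11},\ldots,e_{n+1,n+1}$. These are pairwise orthogonal, pairwise Murray-von Neumann equivalent projections summing to $1_{B_\omega}$. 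Set $c:=g_1 b$. Since $g_1$ is a projection commuting with $b$, we obtain $c=g_1 b g_1\leq b$ and $c\in(B_\omega\cap S')_+$. The elements $g_1b,\ldots,g_{n+1}b$ are pairwise orthogonal with $\sum_i g_ib=b$, and the partial isometries inside $\Z$ implementing $g_1\sim g_i$ --- being elements of $D$, hence of $B_\omega\cap S'$ --- witness Murray-von Neumann equivalence $c=g_1 b\sim g_i b$ in $B_\omega\cap S'$. Therefore $[b]=\sum_{i=1}^{n+1}[g_ib]=(n+1)[c]$ in $W(B_\omega\cap S')$, which gives both $n[c]\leq[b]$ and $[b]\leq(n+1)[c]$ at once.
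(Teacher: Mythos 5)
Parts (i)--(iii) are correct and take essentially the same route as the paper. For (i) you package the approximate unitary equivalence coming from the strongly self-absorbing property inside Kirchberg's $\eps$-test, whereas the paper makes the diagonal choice of unitaries $u_n$ directly (with tolerance $1/n$ on the first $n$ coordinates); this is a purely formal difference. Parts (ii) and (iii) match the paper's arguments.

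Part (iv) contains a genuine gap. You assert that $\Z\cong\Z\otimes M_{n+1}$ and hence that $M_{n+1}$ embeds unitally into $\Z$. This is false: $\Z$ is projectionless --- its only projections are $0$ and $1_\Z$ --- so there is no unital embedding of $M_{n+1}$ into $\Z$ for $n\geq 1$. Equivalently, $(K_0(\Z),[1_\Z])\cong(\mathbb Z,1)$ while $(K_0(\Z\otimes M_{n+1}),[1])\cong(\mathbb Z,n+1)$, so these algebras cannot be isomorphic when $n\geq 1$. The diagonal projections $g_1,\dots,g_{n+1}$ you work with do not exist, and the resulting identity $[b]=(n+1)[c]$ --- which is in any case a stronger conclusion than the lemma asks for --- cannot be obtained this way. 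The fact that the statement only asserts the two inequalities $n[c]\leq[b]\leq(n+1)[c]$ rather than an exact multiple is a hint that a projection-based argument is not available. The correct substitute is to invoke \cite[Lemma 4.2]{R:IJM}, which produces a positive contraction $e_n\in\Z_+$ (not a projection) satisfying $n[e_n]\leq[1_\Z]\leq(n+1)[e_n]$ in $W(\Z)$; then, with $\phi:\Z\to B_\omega\cap(S\cup\{b\})'$ the unital embedding from (ii), one sets $c:=b\phi(e_n)$ and verifies $c\leq b$, $c\in(B_\omega\cap S')_+$, and the required Cuntz inequalities.
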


\begin{proof} For (i), the strongly self-absorbing property of $\Z$ (see \cite[Theorem 2.2]{TW:TAMS} and \cite[Theorem 7.2.2]{R:Book}) provides isomorphisms $\psi_n:B_n\rightarrow B_n\otimes\Z$ which are approximately unitarily equivalent to $\mathrm{id}_{B_n}\otimes 1_\Z$.  Take a dense sequence $(s^{(k)})_{k=1}^\infty$ in $S$ and lift each $s^{(k)}$ to a bounded sequence $(s_n^{(k)})_{n=1}^\infty$.  Take unitaries $u_n$ in $B_n$ (or its unitisation if $B_n$ is non-unital) such that $\|\Ad(u_n)\circ\psi_n(s^{(k)}_n)-s^{(k)}_n\otimes 1_{\Z}\|<\frac{1}{n}$ for $k=1,\dots,n$.  Then $\phi_n=\Ad(u_n)\circ\psi_n$ provides the required isomorphisms.

 Then (\ref{lem:Zfacts1}) follows immediately since the canonical embedding of $\Z$ into $\prod_\omega(B_n\otimes\Z)$ commutes with the image of $B_\omega$ in $\prod_\omega(B_n\otimes\Z)$.
Part (\ref{lem:Zfacts4}) is a consequence of \cite[Theorem 6.7]{R:IJM} and Lemma \ref{lem:sr1Ultrapower}.

For (\ref{lem:Zfacts6}), let $\phi:\mathcal Z \to B_\omega \cap (S \cup \{b\})'$ be a unital embedding by (\ref{lem:Zfacts1}).
By \cite[Lemma 4.2]{R:IJM}, let $e_n \in \mathcal Z_+$ be a positive contraction such that $n[e_n] \leq [1_{\mathcal Z}] \leq (n+1)[e_n]$.
Then define $c:=b\phi(e_n)$.
\end{proof}

Finally in this section, we record that strict comparison passes to ultraproducts, and in fact, only limit traces are needed there to test comparison. Results of this nature are known and standard, though we are not aware of this particular form in the literature.

\begin{lemma}
\label{lem:StrictCompLimTraces}
Let $(B_n)_{n=1}^\infty$ be a sequence of unital $\mathrm C^*$-algebras and set $B_\omega:=\prod_\omega B_n$.
Suppose that each $B_n$ has strict comparison with respect to bounded traces (as in Definition \ref{defn:StrictComp}).
Then $B_\omega$ has strict comparison of positive elements with respect to limit traces, in the following sense:
If $a,b \in M_k(B_\omega)_+$ for some $k\in\mathbb N$ satisfy $d_\tau(a) < d_\tau(b)$ for all $\tau$ in the weak$^{*}$-closure of $T_\omega(B_\omega)$, then $a \preceq b$.
\end{lemma}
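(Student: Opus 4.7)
The plan is to fix $\varepsilon>0$ and prove $(a-\varepsilon)_+\preceq b$ in $M_k(B_\omega)$; sending $\varepsilon\to 0$ via the standard reformulation of Cuntz subequivalence then gives $a\preceq b$. Since $M_k(B_\omega)\cong \prod_\omega M_k(B_n)$ and strict comparison passes from $B_n$ to $M_k(B_n)$ (using that $T(M_k(B_n))$ is affinely isomorphic to $T(B_n)$, with limit traces corresponding), I may reduce to $k=1$; lift $a,b$ to sequences of positive contractions $(a_n),(b_n)$ in $B_n$.

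The key tracial observation is that $\tau\mapsto d_\tau(b)=\sup_m \tau(b^{1/m})$ is weak-$^*$ lower semicontinuous, while $\tau\mapsto \tau(g_{\varepsilon/2}(a))$ is weak-$^*$ continuous, and $\tau(g_{\varepsilon/2}(a))\leq d_\tau(a)$ follows from the pointwise bound $g_{\varepsilon/2}\leq \chi_{(\varepsilon/4,\infty)}$ derived from \eqref{eq:gepsDef}. Therefore $\tau\mapsto d_\tau(b)-\tau(g_{\varepsilon/2}(a))$ is a strictly positive, lower semicontinuous function on the weak-$^*$ compact set $K:=\overline{T_\omega(B_\omega)}$, and so attains a strictly positive minimum $\delta$ on $K$.

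Next I transfer this estimate to the individual $B_n$: if for $\omega$-many $n$ there were traces $\tau_n\in T(B_n)$ with $d_{\tau_n}(b_n)<\tau_n(g_{\varepsilon/2}(a_n))+\delta/2$, then extending $(\tau_n)$ arbitrarily on the complementary set produces a limit trace $\tau\in T_\omega(B_\omega)$ satisfying $\tau(b^{1/m})\leq \tau(g_{\varepsilon/2}(a))+\delta/2$ for every $m$, contradicting the definition of $\delta$ upon letting $m\to\infty$. Hence for $\omega$-many $n$, $d_{\tau_n}((a_n-\varepsilon)_+)\leq \tau_n(g_{\varepsilon/2}(a_n))<d_{\tau_n}(b_n)$ for every $\tau_n\in T(B_n)$, and strict comparison in $B_n$ yields $(a_n-\varepsilon)_+\preceq b_n$.

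It remains to lift this subequivalence to $B_\omega$. R\o{}rdam's classical refinement of Cuntz subequivalence produces, for each such $n$, some $\delta_n>0$ and $r_n\in B_n$ with $(a_n-2\varepsilon)_+=r_n^*(b_n-\delta_n)_+ r_n$; the square-root trick $s_n:=(b_n-\delta_n)_+^{1/2}r_n$ gives $s_n^*s_n=(a_n-2\varepsilon)_+$ and hence the uniform bound $\|s_n\|^2\leq \|a\|$, so $(s_n)$ lifts to $s\in B_\omega$ with $s^*s=(a-2\varepsilon)_+$. Since $s_n s_n^*\in \mathrm{her}((b_n-\delta_n)_+)\subseteq \mathrm{her}(b_n)$ for each $n$, a Kirchberg $\varepsilon$-test (Lemma \ref{epstest}) applied to approximants of the form $b_n^{1/m_n}s_n s_n^* b_n^{1/m_n}$ for a sequence $m_n$ growing slowly enough along $\omega$ shows $s s^*\preceq b$ in $B_\omega$, and by Cuntz equivalence of $s^*s$ and $ss^*$ we obtain $(a-2\varepsilon)_+\preceq b$; sending $\varepsilon\to 0$ completes the proof. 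The main obstacle is this final lifting step: although each $s_ns_n^*$ genuinely lies in $\mathrm{her}(b_n)$, the depth $\delta_n$ of this containment may collapse along $\omega$, so care is required to verify that the limit $ss^*$ still lies in the hereditary subalgebra generated by $b$.
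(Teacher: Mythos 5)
The reduction to $k=1$, the lower-semicontinuity/compactness argument, and the transfer of the tracial gap to $\omega$-many $B_n$ are all sound, and up to the point where you conclude $(a_n-\varepsilon)_+\preceq b_n$ for $\omega$-many $n$ your argument parallels the paper's. The gap is exactly where you flagged it, and it is a genuine one, not a matter of ``care.'' Once you have independent $\delta_n>0$ coming from R\o{}rdam's lemma, with no uniform lower bound along $\omega$, the quantity you want to control is
\[
\bigl\|\,(b_n^{1/m}-1)\,s_ns_n^*\,\bigr\| \;\le\; 1-(\delta_n/2)^{1/m},
\]
and if $\lim_{n\to\omega}\delta_n=0$ this is bounded away from $0$ for every fixed $m$, so $ss^*$ need not lie in $\mathrm{her}(b)$. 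Choosing $m_n$ depending on $n$ doesn't rescue this: to compensate for small $\delta_n$ you must take $m_n$ large, but then $(b_n^{1/m_n}s_ns_n^*b_n^{1/m_n})_n$ is not an element of $\overline{bB_\omega b}$ in any way the $\varepsilon$-test can see — the depth of the approximants $b_n x_n b_n$ needed to reach $b_n^{1/m_n}s_ns_n^*b_n^{1/m_n}$ blows up with $m_n$. So the proposal as written does not close.

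The missing idea is that the quantity produced by your compactness argument is the wrong kind of $\delta$: you extract a \emph{numerical} lower bound on $d_\tau(b)-\tau(g_{\varepsilon/2}(a))$, whereas what is needed is a \emph{spectral} parameter. Since $\tau\mapsto\tau(g_\varepsilon(a))$ is continuous on the compact set $\overline{T_\omega(B_\omega)}$ and $\tau\mapsto d_\tau(b)$ is the \emph{increasing pointwise supremum of the continuous functions} $\tau\mapsto\tau(g_\delta(b))$ as $\delta\to0$, a Dini-type compactness argument gives a single $\delta>0$ with $\tau(g_\varepsilon(a))<\tau(g_\delta(b))$ for every $\tau\in\overline{T_\omega(B_\omega)}$. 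This $\delta$ is fixed \emph{before} you descend to the $B_n$, and your transfer step then gives an ultrafilter-large set of $n$ with $\tau(g_\varepsilon(a_n))<\tau(g_\delta(b_n))$ for all $\tau\in T(B_n)$. Now you apply strict comparison in $B_n$ to get $(a_n-\varepsilon)_+\preceq g_\delta(b_n)$ (not $b_n$), which produces witnesses $v_n$ with $\|v_n^*g_\delta(b_n)v_n-(a_n-\varepsilon)_+\|<1/n$; setting $w_n:=g_\delta(b_n)^{1/2}v_n$ gives a bounded sequence representing $w\in B_\omega$ with $w^*w=(a-\varepsilon)_+$ and $g_{\delta/2}(b)w=w$ \emph{exactly}, since the spectral cut-off is the same in every coordinate. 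From here $(a-\varepsilon)_+=w^*g_{\delta/2}(b)w\preceq g_{\delta/2}(b)\preceq b$ and there is no hereditary-subalgebra issue at all. The lesson is that the depth of the Cuntz subequivalence must be fixed at the ultraproduct level; trying to recover it post hoc from the $B_n$-level witnesses is exactly what fails.
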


\begin{proof}
Take contractions $a,b\in M_k(B_\omega)_+$ with $d_\tau(a)<d_\tau(b)$ for all $\tau\in \overline{T_\omega(B_\omega)}$ and fix $\eps>0$.  As $\tau\mapsto\tau(g_\eps(a))$ is continuous on the compact set $\overline{T_\omega(B_\omega)}$ and $\tau\mapsto d_\tau(b)$ is the increasing pointwise supremum of the continuous functions $\tau\mapsto\tau(g_\delta(b))$, one can find $\delta>0$ such that $\tau(g_\eps(a))<\tau(g_\delta(b))$ for all $\tau\in \overline{T_\omega(B_\omega)}$.
Given lifts $(a_n)_{n=1}^\infty$ and $(b_n)_{n=1}^\infty$ of $a$ and $b$ to sequences of positive contractions, we have
\begin{equation}
I:= \{n\in\mathbb N:\mbox{for all }\tau\in T(B_n),\ \tau(g_\eps(a_n))<\tau(g_\delta(b_n))\}\in\omega. 
\end{equation}
For $n\in I$, 
since $d_\tau((a_n-\eps)_+) \leq \tau(g_\eps(a_n))$, 
strict comparison in $B_n$ provides $v_n\in M_k(B_n)$ with $\|v_n^*g_\delta(b_n)v_n-(a_n-\eps)_+\|<\frac{1}{n}$. Set $w_n:=g_\delta(b_n)^{1/2}v_n$ for $n\in I$ and $w_n=1$ for $n\notin I$ so that $(w_n)_{n=1}^\infty$ is bounded and therefore represents an element $w\in B_\omega$. Thus $g_{\delta/2}(b)w=w$ and $w^*w=(a-\eps)_+$.  Therefore $(a-\eps)_+\preceq g_{\delta/2}(b)$. Since $\eps$ was arbitrary, $a\preceq b$.
\end{proof}

\clearpage\section{A \texorpdfstring{$2\times 2$}{2 x 2} matrix trick}\label{sec:Matrix}

\noindent
In Lemma \ref{lem:CombinedUnitaryEquiv} of this section, we provide our order zero version of a $2\times 2$ matrix trick due to Connes, used to reduce unitary equivalence of maps to unitary equivalence of positive elements in a suitable relative commutant sequence algebra.
We set this up in such a way to handle both the stably finite and purely infinite cases simultaneously.  We need some additional facts about approximating elements by invertibles, which we will use again in Section \ref{sec:TotallyFullClass}.  The strategy for approximating elements with invertibles by examining zero divisors (used in the next two results) originates in \cite{R:JFA2}; there, R\o{}rdam showed that simple, unital, stably finite, UHF-stable $\mathrm C^*$-algebras have stable rank one.  Our initial preparatory lemma uses Robert's version of these methods from \cite{Robert:arXiv}.

\begin{lemma}
\label{lem:Robertsr1}
Let $(B_n)_{n=1}^\infty$ be a sequence of unital, $\mathcal Z$-stable $\mathrm C^*$-algebras and set $B_\omega :=\prod_\omega B_n$.
Let $S \subset B_\omega$ be separable and self-adjoint, and let $d \in (B_\omega \cap S')_+$ be a contraction.
Suppose that $x,f \in C:=B_\omega \cap S' \cap \{1_{B_\omega}-d\}^\perp$ are such that $xf=fx=0$, $f \geq 0$ and $f$ is full in $C$.
Then $x$ is approximated by invertibles in the unitization of $C$.
\end{lemma}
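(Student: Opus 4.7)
By Lemma~\ref{lem:PolarDecomp}, it suffices to produce, for each $\eps>0$, a unitary $u \in C^\sim$ with $x\approx_\eps u|x|$. My plan is to construct such a $u$ by assembling a ``truncated partial isometry'' coming from the polar decomposition of $x$ together with a complementary partial isometry living in $\mathrm{her}(f) \subseteq C$; fullness of $f$ combined with $\mathcal Z$-stability will supply the latter.

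First, let $x = v|x|$ denote the polar decomposition in $B_\omega^{**}$. Since $h_\eps(t) := g_\eps(t)/t$ extends continuously to $[0,\|x\|]$ by $h_\eps(0) = 0$, the element $v_\eps := x\,h_\eps(|x|)$ lies in $C^*(x) \subseteq C$, and a direct computation gives
\begin{equation}
v_\eps^* v_\eps = g_\eps(|x|)^2, \quad v_\eps v_\eps^* = g_\eps(|x^*|)^2, \quad \|v_\eps|x| - x\| \leq \eps.
\end{equation}
The hypothesis $xf=fx=0$ forces $|x|f = f|x| = |x^*|f = f|x^*| = 0$, so $f$ is central in $C^*(x,x^*,f)$ and orthogonal to both $v_\eps^*v_\eps$ and $v_\eps v_\eps^*$. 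If I can construct a partial isometry $w \in \mathrm{her}(f) \subseteq C$ with $w^*w \approx_\eps 1_{C^\sim} - v_\eps^*v_\eps$ and $ww^* \approx_\eps 1_{C^\sim} - v_\eps v_\eps^*$, then, because $w$ is automatically orthogonal to $v_\eps$ in source and range, $u := v_\eps + w$ will be an approximate unitary in $C^\sim$ satisfying $u|x| = v_\eps|x| \approx_\eps x$ (as $w|x|=0$); Lemma~\ref{NewEpsLemma}(i) then upgrades approximate unitary equivalence to a genuine unitary in $C^\sim$ doing the same job.

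The hard part is the construction of $w$, which is essentially the zero-divisor technique of R\o{}rdam \cite{R:JFA2} and Robert \cite{Robert:arXiv}, carried out inside the relative commutant $C$. Fullness of $f$ places both $1_{C^\sim} - v_\eps^*v_\eps$ and $1_{C^\sim} - v_\eps v_\eps^*$ in the closed two-sided ideal of $C^\sim$ generated by $f$, and hence each is Cuntz-dominated by some matrix amplification of $f$. Applying Lemma~\ref{lem:Zfacts}(\ref{lem:Zfacts1}) to the separable set $S \cup \{d,x,f\}$ produces a unital copy of $\mathcal Z$ inside $B_\omega \cap S'$ commuting with $d$, $x$, and $f$, and Lemma~\ref{lem:Zfacts}(\ref{lem:Zfacts6}) uses this copy to make $[f]$ arbitrarily divisible inside $W(C)$. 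Combining divisibility with fullness, the Cuntz-dominations above can be promoted to Murray--von Neumann equivalences realized by $w$ inside $\mathrm{her}(f)$; this is precisely where the $\mathcal Z$-stability hypothesis is essential, compensating for the absence of any stable rank assumption on the $B_n$. Assembling $u := v_\eps + w$ and perturbing once more via Lemma~\ref{NewEpsLemma} then completes the argument.
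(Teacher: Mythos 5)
Your strategy has a genuine gap that makes it fail whenever $C$ is non-unital, which is the case of interest in the applications (Lemma~\ref{lem:GapInvertibles2} is invoked with general positive contractions $d$, and later inside further hereditary cut-downs in Lemma~\ref{lem:RSLem2}). The problem is the element $1_{C^\sim}-v_\eps^*v_\eps$: since $v_\eps^*v_\eps = g_\eps(|x|)^2 \in C$, the element $1_{C^\sim}-v_\eps^*v_\eps$ lies in $C^\sim \setminus C$ and has distance exactly $1$ from $C$ (its image in $C^\sim/C\cong\mathbb{C}$ is $1$). Since $C$ is an ideal in $C^\sim$, the closed two-sided ideal of $C^\sim$ generated by $f$ is contained in $C$; therefore $1_{C^\sim}-v_\eps^*v_\eps$ is \emph{not} in that ideal, and no Cuntz domination of the sort you assert can hold. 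Consequently no $w \in \mathrm{her}(f)\subseteq C$ can satisfy $w^*w \approx_\eps 1_{C^\sim}-v_\eps^*v_\eps$, because $w^*w\in C$ while the right-hand side is at uniform distance $1$ from $C$. This is not a technicality you can engineer around inside $\mathrm{her}(f)$: for $u=v_\eps+w$ to be a unitary in $C^\sim$ with $w\in C$ one needs $v_\eps^*v_\eps+w^*w+\text{(cross terms)}=1_{C^\sim}$, forcing $w^*w$ outside $C$. Replacing $1_{C^\sim}$ by a local unit $e$ from Lemma~\ref{lem:ActAsUnit} does not rescue the argument either, since $e$ is only a positive contraction and $(v_\eps+w+(1_{C^\sim}-e))^*(v_\eps+w+(1_{C^\sim}-e))$ differs from $1_{C^\sim}$ by $e^2-e$, which is not small.

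There is also a secondary concern: even where the unit issue is absent, the step from ``Cuntz domination plus divisibility'' to a \emph{Murray--von Neumann} equivalence realized by a partial isometry with prescribed source and range is exactly the sort of statement that requires stable-rank-one-type machinery (cf.\ \cite{CE:IMRN}), which is unavailable here --- indeed the entire point of Section~\ref{sec:TotallyFullClass} is to circumvent it. The paper's proof takes a completely different route and sidesteps all of this: using Lemma~\ref{lem:ActAsUnit} to find $e \in C_+$ acting as a unit on $x$ with $ef=0$, choosing a separable $C_0\subseteq C$ containing $x,e,f$ with $f$ full in $C_0$, applying Lemma~\ref{lem:Zfacts}(\ref{lem:Zfacts1}) to get a unital $\mathcal Z$ commuting with $C_0$ (hence a $^*$-homomorphism $C_0\otimes\mathcal Z\to C$ restricting to the identity on $C_0$), and then invoking Robert's result \cite[Lemma 2.1]{Robert:arXiv} that $x\otimes 1_\mathcal{Z}$ is a product of two nilpotents in $C_0\otimes\mathcal Z$. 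Pushing forward, $x$ is a product of two nilpotents in $C$, and each nilpotent $y$ is trivially approximated by the invertibles $y+\eps 1_{C^\sim}$. This bypasses the polar decomposition and partial-isometry completion entirely. I suggest you abandon the $v_\eps+w$ strategy and instead look at how $\mathcal Z$-stability is used to deliver the nilpotent factorization.
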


\begin{proof}
By Lemma \ref{lem:ActAsUnit} (with $T:=\{x^*x,xx^*\},S_1:=S,S_2:=\{1_{B_\omega}-d,f\}$), we may find a contraction $e \in C_+$ such that $xx^*,x^*x\vartriangleleft e$ and $ef=0$ so that by taking polar decomposition, $ex=xe=x$.
We may find a separable subalgebra $C_0$ of $C$ containing $x,e$, and $f$, such that $f$ is full in $C_0$.
Since $B_\omega \cap S' \cap C_0' \cap \{d\}'$ contains a unital copy of $\mathcal Z$ (Lemma \ref{lem:Zfacts}(\ref{lem:Zfacts1})), there exists a map $\phi:C_0 \otimes \mathcal Z \to C$ which restricts to the identity on $C_0 \otimes 1_{\mathcal Z} = C_0$; namely, $\phi(c \otimes z) := c\psi(z)$ for $c \in C_0,z \in \mathcal Z$, where $\psi:\mathcal Z \to B_\omega \cap S' \cap C_0' \cap \{d\}'$ is a fixed unital ${}^*$-homomorphism.
By \cite[Lemma 2.1]{Robert:arXiv}, $x\otimes 1_{\mathcal Z}$ is a product of two nilpotent operators in $C_0\otimes\mathcal Z$, and so pushing forward along $\phi$, $x$ is a product of two nilpotent operators in $C$.  For a nilpotent operator $y\in C$ and $\eps>0$, the operator $y+\eps1_{C^\sim}$ is invertible in $C^\sim$ (with inverse $-\sum_{k=1}^N(-\eps)^{-k}y^{k-1}$, where $N\in\N$ satisfies $y^N=0$).  Therefore $x$ lies in the closure of the invertible operators in the unitization of $C$.
\end{proof}

The next lemma improves the previous one, by replacing the positive element $f$ by an element $s$ not required to be positive.
Note that if $s$ is a full element in a $\mathrm C^*$-algebra $C$, then so too is $|s|$, since upon taking the polar decomposition $s=u|s|$, for any $\eps>0$, we have $ug_\eps(|s|)\in C$ and $s=\lim_{\eps\to 0} ug_\eps(|s|)|s|$.

\begin{lemma}
\label{lem:GapInvertibles2}
Let $(B_n)_{n=1}^\infty$ be a sequence of unital, $\mathcal Z$-stable $\mathrm C^*$-algebras and set $B_\omega :=\prod_\omega B_n$.
Let $S \subset B_\omega$ be separable and self-adjoint, and let $d \in (B_\omega \cap S')_+$ be a contraction.
Suppose that $x,s \in C:=B_\omega \cap S' \cap \{1_{B_\omega}-d\}^\perp$ are such that $xs=sx=0$ and $s$ is full in $C$.
Then $x$ is approximated by invertibles in the unitization of $C$.
\end{lemma}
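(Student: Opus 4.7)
The plan is to mimic the proof of Lemma \ref{lem:Robertsr1}, which established the same conclusion under the stronger hypothesis that the annihilator was positive. The only significant change is that we need to accommodate the non-positive annihilator $s$.

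I would begin exactly as in Lemma \ref{lem:Robertsr1}. First, apply Lemma \ref{lem:ActAsUnit} with $T:=\{x^*x, xx^*\}$, $S_1:=S$, and $S_2:=\{1_{B_\omega}-d\}$ to produce a positive contraction $e\in C_+$ with $x^*x,xx^*\vartriangleleft e$, so that polar decomposition yields $ex=xe=x$. Then enlarge $C^*(x,e,s)$ to a separable $\mathrm{C}^*$-subalgebra $C_0\subseteq C$ in which $s$ remains full (possible because $s$ is full in the ambient $C$, so countably many absorbing steps suffice). As in Lemma \ref{lem:Robertsr1}, Lemma \ref{lem:Zfacts}(\ref{lem:Zfacts1}) supplies a unital embedding $\psi:\mathcal Z\to B_\omega\cap S'\cap C_0'\cap\{d\}'$, and defining $\phi(c\otimes z):=c\,\psi(z)$ yields a ${}^*$-homomorphism $\phi:C_0\otimes\mathcal Z\to C$ which restricts to the identity on $C_0\otimes 1_{\mathcal Z}$.

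The heart of the argument is then to apply \cite[Lemma 2.1]{Robert:arXiv} inside $C_0\otimes\mathcal Z$, using $s\otimes 1_{\mathcal Z}$ as a full element two-sidedly annihilating $x\otimes 1_{\mathcal Z}$, to conclude that $x\otimes 1_{\mathcal Z}$ is a product of two nilpotent operators in $C_0\otimes\mathcal Z$. Pushing forward through $\phi$ then expresses $x$ as a product of two nilpotents in $C$, and as in the closing step of Lemma \ref{lem:Robertsr1}, each nilpotent $y\in C$ gives an invertible $y+\varepsilon 1_{C^\sim}$ in $C^\sim$ for every $\varepsilon>0$, so $x$ lies in the closure of the invertibles in $C^\sim$.

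The only delicate point is that Robert's nilpotent lemma, as invoked in Lemma \ref{lem:Robertsr1}, was applied there to a positive full annihilator, and the main obstacle is reassuring ourselves that it goes through for the not-necessarily-positive $s$. Should its proof genuinely rely on positivity, I would insert the following intermediate step inside $C_0\otimes\mathcal Z$: from $sx=0$ and $xs=0$ one obtains $|s|x=0$ (via $s^*sx=0$) and $x|s^*|=0$ (via $xss^*=0$), so $|s|$ and $|s^*|$ are positive full one-sided annihilators of $x$; using a suitable partial-isometry element of $\mathcal Z$, one can splice these into a single positive full element of $C_0\otimes\mathcal Z$ that two-sidedly annihilates $x\otimes 1_{\mathcal Z}$, thereby directly reducing the problem to the positive case of Lemma \ref{lem:Robertsr1}. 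This manufacturing step is the one nontrivial addition over the earlier argument.
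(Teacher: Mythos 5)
The main gap in your proposal is in the fallback ``splicing'' step, and it is not repairable as you describe. You observe correctly that $|s|x=0$ and $x|s^*|=0$, so that $|s|$ is a positive full \emph{left} annihilator and $|s^*|$ is a positive full \emph{right} annihilator of $x$. But there is no way to manufacture a single positive full \emph{two-sided} annihilator from these by tensoring with elements of $\mathcal Z$: any element of $\phi(C_0\otimes\mathcal Z)$ of the form $|s|\psi(h_1)+|s^*|\psi(h_2)$ with $h_1,h_2\in\mathcal Z_+$ fails, because $\psi(\mathcal Z)$ commutes with $x$, so $\big(|s|\psi(h_1)+|s^*|\psi(h_2)\big)x=|s^*|x\,\psi(h_2)$, which has no reason to vanish. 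More to the point, a positive full two-sided annihilator of $x$ may simply not exist in $C_0$ (or in $C_0\otimes\mathcal Z$): already in $M_2$ with $s$ a nonzero nilpotent and $x=s$, one has $\mathrm{her}(|s|)\cap\mathrm{her}(|s^*|)=\{0\}$, so no positive element simultaneously absorbs $x$ on both sides. And, as you yourself flag, the direct appeal to \cite[Lemma 2.1]{Robert:arXiv} with the non-positive $s$ in place of the positive $f$ does indeed rely on positivity --- if it did not, Lemma \ref{lem:Robertsr1} would already imply the present lemma with no work, and the paper would not bother to separate the two statements.

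The paper's proof circumvents exactly this obstruction by \emph{not} trying to build a positive two-sided annihilator of $x$. Instead it runs a two-step indirection through a unitary polar decomposition. Choosing orthogonal nonzero $z_1,z_2\in\mathcal Z_+$, one sets $s':=\psi(z_1)s$ and $f:=\psi(z_2)|s|$; then $f\geq 0$ is full, $s'$ is full, and the orthogonality $z_1z_2=0$ forces $fs'=s'f=0$, so Lemma \ref{lem:Robertsr1} applied to $s'$ (with $s'$ playing the role of $x$ and $f$ playing the role of the positive full annihilator) shows that $s'$ is approximated by invertibles in $C^\sim$. By Lemma \ref{lem:PolarDecomp} and Lemma \ref{NewEpsLemma}(ii), $s'$ then admits a unitary polar decomposition $s'=u|s'|$ with $u$ a unitary in $C^\sim$. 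Now $|s'|$ is positive, full, and --- using $s'x=xs'=0$ together with the commutation $[\psi(\mathcal Z),x]=0$ --- one checks that $(xu)|s'|=|s'|(xu)=0$. A second application of Lemma \ref{lem:Robertsr1}, to $xu$ with $|s'|$ as the positive full annihilator, shows that $xu$ is approximated by invertibles, and multiplying by $u^*$ finishes the proof. The polar-decomposition unitary $u$ is precisely the bridge that your splicing argument is missing: it cannot come from $\mathcal Z$, and its existence is itself a nontrivial consequence of Lemma \ref{lem:Robertsr1}.
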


\begin{proof}
By Lemma \ref{lem:ActAsUnit}, let $e\in C_+$ be such that $s\vartriangleleft e$. 
By Lemma \ref{lem:Zfacts}(\ref{lem:Zfacts1}), there exists a unital ${}^*$-homomorphism $\psi:\mathcal Z \to B_\omega \cap S' \cap \{x,x^*,s,s^*,e\}'$.
Let $z_1,z_2 \in \mathcal Z_+$ be nonzero orthogonal elements.
Define $s' := \psi(z_1)s$ and $f:=\psi(z_2)|s|$.
Since $\mathcal Z$ is simple and $s$ and $|s|$ are full in $C$, both $f$ and $s'$ are full in $C$.
Also $f \geq 0$ and $fs' = s'f = 0$, so by Lemma \ref{lem:Robertsr1} (with $s'$ in place of $x$), $s'$ is approximated by invertibles in $C^\sim$.

By Lemma \ref{lem:PolarDecomp} and Lemma \ref{NewEpsLemma}(ii) $s'$ has a unitary polar decomposition $s'=u|s'|$ in $C^\sim$. 
Since 
\begin{equation}
s'x=\psi(z_1)sx=0
\end{equation}
and similarly $xs'=0$, it follows that
\begin{equation}
(xu)|s'| = |s'|(xu) = 0,
\end{equation}
so that by Lemma \ref{lem:Robertsr1} (with $|s'|$ in place of $f$ and $xu$ in place of $x$, noting that $C^\sim$ multiplies $C$ into $C$ so that $xu\in C$), $xu$ is approximated by invertibles in $C^\sim$.
Finally, since $u$ is unitary, it follows that $x$ is itself approximated by invertibles in $C^\sim$.
\end{proof}

Now we can present our $2\times 2$ matrix trick, in the context of order zero maps into ultrapowers.
\begin{lemma}\label{lem:CombinedUnitaryEquiv}
Let $A$ be a separable, unital $\mathrm C^*$-algebra, let $(B_n)_{n=1}^\infty$ be a sequence of separable $\mathrm C^*$-algebras and write $B_\omega=\prod_\omega B_n$. Let $\phi_1,\phi_2:A\rightarrow B_\omega$ be c.p.c.\ order zero maps and $\hat{\phi}_1,\hat{\phi}_2:A\rightarrow B_\omega$ be supporting order zero maps (as in \eqref{eq:Supporting}).
 Suppose that either:
\begin{enumerate}[(i)]
\item $B_\omega$ has stable rank one; or
\item each $B_n$ is a Kirchberg algebra, and $\phi_i(1_A)$ is totally full in $B_\omega$ for $i=1,2$.
\end{enumerate}
Let $\pi:A\rightarrow M_2(B_\omega)$ be given by
\begin{equation}
\pi(a) := \begin{pmatrix} \hat\phi_1(a) & 0 \\ 0 & \hat\phi_2(a) \end{pmatrix}, \quad a\in A.
\end{equation}
If
\begin{equation}
\begin{pmatrix}\phi_1(1_A)&0\\0&0\end{pmatrix}\text{ and }\begin{pmatrix}0&0\\0&\phi_2(1_A)\end{pmatrix}
\end{equation}
are unitarily equivalent in the unitization of $C:=M_2(B_\omega)\cap \pi(A)'\cap \{1_{M_2(B_\omega)}-\pi(1_A)\}^\perp$, then $\phi_1$ and $\phi_2$ are unitarily equivalent.
\end{lemma}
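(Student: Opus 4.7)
The strategy is a matrix-trick extraction followed by a polar decomposition argument. Let $u\in C^\sim$ be a unitary witnessing $uh_1u^* = h_2$, and write $u = (u_{ij})_{i,j=1,2}$ as a $2\times 2$ matrix with entries in $B_\omega^\sim$, via the natural inclusion $C^\sim\hookrightarrow M_2(B_\omega^\sim)$. Since $u\in C^\sim$ commutes with $\pi(a)$ for every $a\in A$, comparing the $(2,1)$-entries of $u\pi(a)=\pi(a)u$ gives
\[
u_{21}\,\hat\phi_1(a) \;=\; \hat\phi_2(a)\,u_{21}\qquad (a\in A).
\]
Comparing the $(1,1)$- and $(2,2)$-entries in $uh_1u^* = h_2$ produces, respectively, $u_{11}\phi_1(1_A)^{1/2}=0$ and $u_{21}\phi_1(1_A) u_{21}^* = \phi_2(1_A)$. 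Combined with $u^*u = 1$, the first identity also yields $\phi_1(1_A)^{1/2} u_{21}^* u_{21} \phi_1(1_A)^{1/2} = \phi_1(1_A)$.

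Set $w := u_{21}\phi_1(1_A)^{1/2}\in B_\omega$. The identities above give $w^*w = \phi_1(1_A)$ and $ww^* = \phi_2(1_A)$; and using that $\hat\phi_1(a)$ commutes with $\phi_1(1_A)^{1/2}$ (a consequence of $\phi_1(a) = \hat\phi_1(a)\phi_1(1_A) = \phi_1(1_A)\hat\phi_1(a)$ from the supporting identity in Lemma~\ref{lem:SupportingMap}) one obtains the intertwining $w\hat\phi_1(a) = \hat\phi_2(a)w$ for all $a\in A$. In particular $|w| = \phi_1(1_A)^{1/2}$. Granted a unitary $v\in B_\omega^\sim$ with $v\phi_1(1_A)^{1/2} = w$, taking adjoints gives $\phi_1(1_A)^{1/2}v^* = w^*$, and a direct computation produces
\[
v\phi_1(a)v^* \;=\; v\phi_1(1_A)^{1/2}\hat\phi_1(a)\phi_1(1_A)^{1/2}v^* \;=\; \hat\phi_2(a)\,ww^* \;=\; \hat\phi_2(a)\phi_2(1_A) \;=\; \phi_2(a)
\]
for every $a\in A$ (intermediate steps use $v\phi_1(1_A)^{1/2}=w$ and $w\hat\phi_1(a) = \hat\phi_2(a)w$), yielding the sought unitary equivalence of $\phi_1$ and $\phi_2$.

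It therefore remains to produce the unitary polar decomposition $w = v\phi_1(1_A)^{1/2}$ with $v\in B_\omega^\sim$. By Lemma~\ref{lem:PolarDecomp}, together with Lemma~\ref{NewEpsLemma}(ii) applied with $C=B_\omega$ (i.e.\ $S_1 = S_2 = \emptyset$), this reduces to approximating $w$ by invertibles in $B_\omega^\sim$. Under hypothesis~(i), stable rank one of $B_\omega$ gives this at once. Under hypothesis~(ii), every Kirchberg algebra is $\mathcal O_\infty$-absorbing and hence $\mathcal Z$-stable, so Lemma~\ref{lem:GapInvertibles2} applies inside $B_\omega$ provided one supplies a full positive element $s\in B_\omega$ with $ws = sw = 0$, equivalently, $s$ orthogonal to both $\phi_1(1_A)$ and $\phi_2(1_A)$; such an $s$ is produced using the total fullness hypothesis on $\phi_1(1_A)$ and $\phi_2(1_A)$ together with the room available in a Kirchberg-algebra ultrapower. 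The main technical obstacle is this case~(ii) construction of $s$; once $s$ is in hand, Lemma~\ref{lem:GapInvertibles2} supplies the invertible approximations and the argument concludes.
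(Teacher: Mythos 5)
Your extraction of the entries $u_{11}, u_{21}$, the identities $u_{11}\phi_1(1_A)^{1/2}=0$, $u_{21}\phi_1(1_A)u_{21}^* = \phi_2(1_A)$, $\phi_1(1_A)^{1/2}u_{21}^*u_{21}\phi_1(1_A)^{1/2}=\phi_1(1_A)$, and the intertwining $u_{21}\hat\phi_1(a)=\hat\phi_2(a)u_{21}$ are all correct and follow the paper's calculation essentially verbatim. Working with $w:=u_{21}\phi_1(1_A)^{1/2}$ rather than the paper's $u_{21}^*\phi_2(1_A)$ is a mild and perfectly legitimate variation; your computation that $|w|=\phi_1(1_A)^{1/2}$, $|w^*|=\phi_2(1_A)^{1/2}$, and that a unitary polar decomposition $w=v|w|$ yields $v\phi_1(a)v^*=\phi_2(a)$ is correct, and the reduction to approximating $w$ by invertibles via Lemmas~\ref{lem:PolarDecomp} and \ref{NewEpsLemma}(ii) matches the paper. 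Case (i) is also fine.

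The gap is in case (ii), which you flag as "the main technical obstacle" but do not actually resolve. You assert that a full $s$ with $ws=sw=0$ "is produced using the total fullness hypothesis...together with the room available in a Kirchberg-algebra ultrapower." But total fullness of $\phi_i(1_A)$ forces its spectrum to be the entire interval $[0,\|\phi_i(1_A)\|]$; in a general $\mathrm C^*$-algebra such a positive element can have trivial annihilator (think of $\mathrm{id}_{[0,1]}$ in $C[0,1]$), so the existence of a nonzero $s$ with $\phi_1(1_A)s=0$ and $s\phi_2(1_A)=0$ is not automatic from total fullness — in fact, for the element $w$ itself no zero-divisor need exist inside a fixed fibre $B_n$. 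To get one in $B_\omega$ either requires a separate reindexing argument (showing that in an ultraproduct, a positive element with $0$ in its spectrum always has a nonzero two-sided annihilator), which you neither state nor prove, or — and this is what the paper does — one should not try to find a zero-divisor for $w$ at all. Instead the paper approximates $u_{21}^*(\phi_2(1_A)-\eps)_+$ by invertibles for each fixed $\eps>0$: here the zero-divisors $e_r = h(\phi_2(1_A))$ and $e_l = h(\phi_1(1_A))$ with $h$ supported on $[0,\eps]$ are manifestly nonzero (by total fullness) and no ultrapower subtlety is needed; a single $s$ is then manufactured from $e_l, e_r$ via simple pure infiniteness of $B_\omega$, Lemma~\ref{lem:GapInvertibles2} applies, and one concludes by letting $\eps\to 0$ using that the elements approximable by invertibles form a norm-closed set. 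You should either adopt this $\eps$-cut-down step or prove the ultrapower annihilator lemma; as written, the central claim of case~(ii) is unjustified.
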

\begin{proof}
Let
\begin{equation}
u=\begin{pmatrix} u_{11}&u_{12} \\ u_{21}&u_{22} \end{pmatrix} \in C^\sim
\end{equation}
be a unitary such that
\begin{equation}\label{eq:UnitaryDecompSetup}
u\begin{pmatrix} \phi_1(1_A) & 0\\0 & 0 \end{pmatrix}=  \begin{pmatrix} 0 & 0\\0 & \phi_2(1_A) \end{pmatrix}u.
\end{equation}

Then, since $u$ is unitary,
\begin{equation} 
\label{eq:UnitaryDecompSum1}
u_{11}^*u_{11} + u_{21}^*u_{21} = 1_{B_\omega}. 
\end{equation}
By \eqref{eq:UnitaryDecompSetup},
\begin{equation}\label{e3.8}
u_{11}\phi_1(1_A)u_{11}^* = 0, \text{ and } u_{21}\phi_1(1_A)= \phi_2(1_A)u_{21}.
\end{equation}
It follows that $u_{11}\phi_1(1_A)^{1/2}=0$, and so
\begin{equation}
\label{eq:UnitaryDecompConjugate1}
\phi_1(1_A) \stackrel{\eqref{eq:UnitaryDecompSum1}}= u_{21}^*u_{21}\phi_1(1_A) \stackrel{(\ref{e3.8})}= u_{21}^*\phi_2(1_A)u_{21}.
\end{equation}
Symmetrically, we obtain (among other relations)
\begin{equation}
\label{eq:UnitaryDecompAlmostUnitary}
\phi_2(1_A) = u_{21}u_{21}^*\phi_2(1_A).
\end{equation}
As $u$ commutes with $\pi(A)$, one checks that
\begin{equation}
\label{eq:UnitaryDecompConjugate2}
u_{21}\hat{\phi}_1(a) = \hat{\phi}_2(a)u_{21},\quad a\in A.
\end{equation}

We claim that $u_{21}^*\phi_2(1_A)=\phi_1(1_A)u_{21}^*$ is approximable by invertibles in $B_\omega$.  In case (i) this is immediate.  In case (ii), take $\eps>0$, so that $u_{21}^*(\phi_2(1_A)-\eps)_+$ has a nonzero right zero-divisor in $B_\omega$ of the form $e_r=h(\phi_2(1_A))$ for some nonzero positive $h\in C_0((0,1])$ supported on $[0,\eps]$ (this is nonzero by the assumption that $\phi_2(1_A)$ is totally full).  Also 
\begin{equation}
u_{21}^*(\phi_2(1_A)-\eps)_+=(\phi_1(1_A)-\eps)_+u_{21}^*,
\end{equation} so that $e_l=h(\phi_1(1_A))$ is a nonzero (as $\phi_1(1_A)$ is totally full) left zero-divisor of $u_{21}^*(\phi_2(1_A)-\eps)_{+}$ in $B_\omega$.  Since $B_\omega$ is simple and purely infinite (see \cite[Remark 2.4]{K:Abel}, for example), there exists $t\in B_\omega$ with $te_lt^*=e_r$.  Thus $s:=te_l^{1/2}\neq 0$ satisfies $su_{21}^*(\phi_2(1_A)-\eps)_+=0$ and
\begin{equation}
u_{21}^*(\phi_2(1_A)-\eps)_+ss^*(\phi_2(1_A)-\eps)_+u_{21}=0,
\end{equation}
so $u_{21}^*(\phi_2(1_A)-\eps)_+s=0$.  By Lemma \ref{lem:GapInvertibles2} (with $S:=\{1_{B_\omega}\}$ and $d:=1_{B_\omega}$), $u_{21}^*(\phi_2(1_A)-\eps)_+$ is approximated by invertibles in $B_\omega$.
Since $\eps>0$ was arbitrary in the last calculation, $u_{21}^*\phi_2(1_A)$ is also approximated by invertibles in $B_\omega$.
This establishes our claim.

By Lemma \ref{lem:PolarDecomp} and Lemma \ref{NewEpsLemma}(ii), there is a unitary $w\in B_\omega$ with
\begin{equation}
u_{21}^*\phi_2(1_A)= w\,\left|u_{21}^*\phi_2(1_A)\right|.
\end{equation}
By \eqref{eq:UnitaryDecompAlmostUnitary}, 
\begin{equation}
\left|u_{21}^*\phi_2(1_A)\right| = (\phi_2(1_A)u_{21}u_{21}^*\phi_2(1_A))^{1/2} = \phi_2(1_A).
\end{equation}
Thus,
\begin{equation}
\label{eq:UnitaryDecompPDecomp}
u_{21}^*\phi_2(1_A) = w\phi_2(1_A).
\end{equation}
It follows that $u_{21}^*bu_{21}=wbw^*$ for all $b \in \mathrm{her}(\phi_2(1_A))$, and therefore,
\begin{align}
u_{21}^*\phi_2(a)u_{21} &= w\phi_2(a)w^*, \quad a\in A.
\label{eq:UnitaryDecompNew.1}
\end{align}

We now compute, for $a\in A$,
\begin{align}
\phi_1(a) &\stackrel{\eqref{eq:Supporting}}{=} \hat{\phi}_1(a)\phi_1(1_A)\nonumber \\
&\stackrel{\eqref{eq:UnitaryDecompConjugate1}}{=} \hat{\phi}_1(a)u_{21}^*\phi_2(1_A)u_{21}\nonumber \\
&\stackrel{\eqref{eq:UnitaryDecompConjugate2}}{=} u_{21}^*\hat{\phi}_2(a)\phi_2(1_A)u_{21}\nonumber \\
&\stackrel{\eqref{eq:Supporting}}{=} u_{21}^*\phi_2(a)u_{21}\nonumber\\
&\stackrel{\eqref{eq:UnitaryDecompNew.1}}{=} w\phi_2(a)w^*,
\end{align}
as required.
\end{proof}

\clearpage\section{Ultrapowers of trivial {$\mathrm{W}^*$}-bundles}
\label{sec:Bundles}

\noindent
In this section we examine trivial $\mathrm{W}^*$-bundles, which arise from strict completions of $\Z$-stable $\mathrm C^*$-algebras.
Our objective is to obtain structural results for relative commutant sequence algebras of these bundles, such as strict comparison of positive elements, which we later lift back to the $\mathrm C^*$-algebra level.

\bigskip

\subsection{\sc Continuous $\mathrm{W}^{*}$-bundles}\hfill  \\

\label{sec:Bundles1}

\noindent
Tracial continuous $\mathrm{W}^*$-bundles were introduced by Ozawa in \cite[Section 5]{O:JMSUT} as follows.\footnote{
In \cite{O:JMSUT}, Ozawa works with a standing assumption that the base space $K$ is metrizable; we have generalized the definition to the non-metrizable case, since we will need this level of generality to treat ultraproducts of $\mathrm W^*$-bundles in Section \ref{sec:Bundles2}. In particular the ultracoproduct $\prod_\omega K_n$ of metrisable compact spaces $(K_n)_{n=1}^\infty$ need not be metrisable.} Let $K$ be a compact Hausdorff space.

\begin{defn}[{\cite[Section 5]{O:JMSUT}}]
\label{def:W*bundle}
A $\mathrm C^*$-algebra $\M$ is a \emph{tracial continuous $\mathrm{W}^*$-bundle over $K$} if:
\begin{itemize}
\item $C(K)$ is contained in the centre of $\M$;
\item there is a faithful tracial conditional expectation $E:\M\rightarrow C(K)$, defining a norm $\|x\|_{2,u}:=\|E(x^*x)\|^{1/2}$, called the uniform $2$-norm;
\item the unit ball of $\M$ is complete in $\|\cdot\|_{2,u}$.
\end{itemize}
For each $\lambda\in K$, let $\tau_\lambda$ be the trace $\mathrm{ev}_\lambda\circ E$ on $\M$, with associated GNS-representation $\pi_\lambda$ and associated $2$-norm $\|x\|_{2,\lambda} := \mathrm{ev}_\lambda \circ E(x^*x)^{1/2}$.
The $\mathrm W^*$-bundle axioms imply that each fibre $\pi_\lambda(\M)$ is a finite von Neumann algebra: $\pi_\lambda(\M)''=\pi_\lambda(\M)$ (this is \cite[Theorem 11]{O:JMSUT} --- the proof does not use the standing metrizability assumption in \cite{O:JMSUT}).  We say that $\M$ has \emph{factorial fibres} when each $\pi_\lambda(\M)$ is a factor.
\end{defn}

The following formula is obvious but quite useful:
\begin{equation}
\label{eq:W*Norm}
\|x\|_{2,u} = \max_{\lambda \in K} \|x\|_{2,\lambda}.
\end{equation}

Completing $\M$ in $\|\cdot\|_{2,u}$ gives a Hilbert $C(K)$-module $L^2(\M)$. The \emph{strict} topology on $\M$ is given by the action of $\M$ as adjointable operators on this module; precisely a net $(x_i)$ in $\M$ converges strictly to $x\in \M$ if and only if $\|x_i\eta-x\eta\|_{L^2(\M)},\|x_i^*\eta-x^*\eta\|_{L^2(\M)}\rightarrow 0$ for all $\eta\in L^2(\M)$.\footnote{As set out in \cite[Chapter 8]{Lan:Book}, there are two natural ``strict'' topologies on the adjointable operators on a Hilbert module; the one defined here, and the strict topology arising from the adjointable operators as the multiplier algebra of the compact operators on the module.  These topologies agree on the unit ball of the adjointable operators (\cite[Proposition 8.1]{Lan:Book}). Thus, via the Kaplansky density theorem for the strict topology (\cite[Proposition 1.4]{Lan:Book}), one obtains the same closure of a unital $\mathrm{C}^*$-subalgebra of the adjointable operators in either strict topology. Using this, all statements below are equally valid for either strict topology.} Just as the strong$^*$-topology on the unit ball of a II$_1$ factor in standard form is induced by the $2$-norm, it is easy to see that on the unit ball of $\M$, the $\|\cdot\|_{2,u}$-topology and the strict topology agree.
Consequently, a $\mathrm W^*$-bundle $\M$ is $\|\cdot\|_{2,u}$-separable if and only if it is separable in the strict topology;\footnote{A strictly-dense sequence will automatically be $\|\cdot\|_{2,u}$-dense; conversely, if $\M$ is $\|\cdot\|_{2,u}$-separable then so are the operator norm balls of any radius (as the $\|\cdot\|_{2,u}$-topology is metrizable). This implies that these balls, and therefore $\M$ (as the countable union of the balls of radius $n$) is strictly separable.} this is the natural notion of separability for $\mathrm W^*$-bundles. Note also that if $\mathcal M$ is strictly separable, then $C(K)$ is norm-separable, so that $K$ is metrizable.

Tracial continuous $\mathrm W^*$-bundles (hereafter referred to as $\mathrm{W}^*$-bundles) form a category, with morphisms defined as follows.
Given $\mathrm{W}^*$-bundles $\M$ and $\mathcal N$ over $K$ and $L$ respectively with conditional expectations $E_\M$ and $E_{\mathcal N}$, a morphism $\theta:\M\rightarrow\mathcal N$ in the category of $\mathrm{W}^*$-bundles is given by a $^*$-homomorphism from $\M$ into $\mathcal N$ (also denoted $\theta$) and a continuous map $\sigma:L\rightarrow K$, inducing the $^*$-homomorphism $\tilde{\sigma}:C(K)\rightarrow C(L)$ such that
\begin{equation}
\xymatrix{{\M}\ar[r]^{\theta}\ar[d]_{E_{\M}}&{\mathcal N}\ar[d]^{E_{\mathcal N}}\\C(K)\ar[r]_{\tilde{\sigma}}&C(L)}
\end{equation}
commutes.

There are three key constructions of $\mathrm{W}^*$-bundles used in this paper: trivial bundles, bundles arising from the strict completion of a stably finite $\mathrm C^*$-algebra, and ultraproducts.
We recall the first two of these now from \cite{O:JMSUT}.

\begin{example}
Let $\mathcal N$ be a finite von Neumann algebra with faithful trace $\tau_{\mathcal N}$, used to define $\|\cdot\|_2$, and let $K$ be a compact Hausdorff space.
The \emph{trivial $\mathrm{W}^*$-bundle over $K$ with fibre $\mathcal N$} is the $\mathrm C^*$-algebra $C_\sigma(K,\mathcal N)$ of norm bounded and $\|\cdot\|_2$-continuous functions from $K$ to $\mathcal N$, with the canonical conditional expectation onto $C(K)$, $E(f)(\lambda)=\tau_{\mathcal N}(f(\lambda))$ for $f\in C_\sigma(K,\mathcal N)$ and $\lambda\in K$. Note that by a partition of unity argument, $\mathrm C^*(\mathcal N,C_\sigma(K,\mathbb C1_{\mathcal N}))$ is $\|\cdot\|_{2,u}$-dense in $C_\sigma(K,\mathcal N)$.
\end{example}
\begin{example}[{Ozawa}]\label{StrictCompletion}
Let $B$ be a separable, unital $\mathrm C^*$-algebra for which $T(B)\neq \emptyset$. Let $Tr:B^{**}_f\rightarrow Z(B^{**}_f)$ be the centre valued trace on the finite part of the bidual of $B$, and let $\overline{B}^{\mathrm{st}}$ be the closure of $B$ in $B^{**}_f$ with respect to the strict topology on the Hilbert $Z(B^{**}_f)$-module $(B^{**}_f,Tr)$.  

Ozawa maps the bounded Borel functions on $\partial_e T(B)$ into $Z(B^{**}_f)$ and uses this to describe $\overline{B}^{\mathrm{st}}$ and $\overline{B}^{\mathrm{st}}\cap Z(B^{**}_f)$. When the extreme boundary $\partial_e T(B)$ is compact, this work canonically identifies $\overline{B}^{\mathrm{st}}\cap Z(B^{**}_f)$ with $C(\partial_e T(B))$ and the center valued trace $Tr$ restricts to a conditional expectation $E:\overline{B}^{\mathrm{st}}\rightarrow C(\partial_e T(B))$ (\cite[Theorem 3]{O:JMSUT}), giving $\overline{B}^{\mathrm{st}}$ the structure of a  $\mathrm{W}^*$-bundle over $\partial_e T(B)$, which is determined on the strictly dense subalgebra $B$ by
\begin{equation}\label{StrictCompletion:E}
E(b)(\tau)=\tau(b),\quad b\in B,\ \tau\in\partial_e T(B).
\end{equation}

For $x \in B \subseteq \overline{B}^{\mathrm{st}}$, we have $\|x\|_{2,u}=\sup\{\tau(x^*x)^{1/2}:\tau\in T(B)\}$.
Alternatively to this strict completion picture,
$\overline{B}^{\mathrm{st}}$ can be thought of as the $\mathrm{C}^*$-algebra obtained by adjoining to $B$ the limit points of all norm-bounded, $\|\cdot\|_{2,u}$-Cauchy sequences; see \cite[Page 351-2]{O:JMSUT}.\footnote{Formally, as in \cite{O:JMSUT}, one considers the quotient $\mathrm{C}^*$-algebra of the norm bounded  $\|\cdot\|_{2,u}$-Cauchy sequences, modulo the ideal of  $\|\cdot\|_{2,u}$-null sequences.}

The fibre at $\tau \in \partial_e T(B)$ is given by $\pi_\tau(B)''$ (\cite[Theorem 3]{O:JMSUT}), where $\pi_\tau$ is the GNS-representation of $B$ induced by $\tau$. In particular, if $B$ is additionally nuclear and has no finite dimensional quotients, then all these fibres are copies of the hyperfinite II$_1$ factor $\R$.
\end{example}

\bigskip

\subsection{\sc Tensor products, ultraproducts and McDuff bundles}\hfill  \\
\label{sec:Bundles2}

\noindent
Here we set out the theory of tensor products and ultraproducts of $\mathrm W^*$-bundles and use these to discuss McDuff bundles and Ozawa's trivialization theorem.
\begin{defn}
Given $\mathrm W^*$-bundles $\M$ and $\mathcal N$ over spaces $K$ and $L$ respectively with conditional expectations $E_\M$ and $E_{\mathcal N}$, the continuous $\mathrm W^*$-bundle $\M\vnotimes\mathcal N$ over $K\times L$ is defined as follows. Set $E:=E_\M\otimes E_{\mathcal N}:\M\otimes\mathcal N\rightarrow C(K)\otimes C(L)\cong C(K\times L)$ be the conditional expectation defined on the minimal tensor product of $\M$ and $\mathcal N$. This is faithful (as can be deduced from Kirchberg's slice lemma, \cite[Lemma 4.1.9]{R:Book}) and tracial and so induces a uniform $2$-norm.   We obtain $\M\vnotimes \mathcal N$ by taking the strict completion of $\M\otimes \mathcal N$ in the Hilbert $C(K\times L)$-module associated to $(\M\otimes\mathcal N,E)$. 
\end{defn}

Note that when $\M$ and $\mathcal N$ are finite von Neumann algebras with faithful traces, which we can view as $\mathrm{W}^*$-bundles over the one point space, then the $\mathrm{W}^*$-bundle tensor product described above agrees with the classical von Neumann tensor product, justifying our notation.  Note too that one obtains the trivial bundle $C_\sigma(K,\M)$ as the tensor product of $C(K)$ (viewed as a trivial bundle over $K$ with fibres $\mathbb C$) and $\M$ viewed as a bundle over the one point space.
We also wish to note that the tensor product operation commutes with taking the uniform $2$-closure of a $\mathrm{C}^*$-algebra. To do this, we need the following folklore proposition; we supply a proof as we have not found a reference.
\begin{prop}
Let $A$ and $B$ be unital, stably finite $\mathrm C^*$-algebras with extreme tracial boundaries $\partial_e T(A)$ and $\partial_e T(B)$ respectively.  Then every extremal trace on $A\otimes B$ is of the form $\tau_A\otimes \tau_B$ for $\tau_A\in\partial_e T(A)$ and $\tau_B\in\partial_e T(B)$. This leads to a canonical homeomorphism $\partial_e T(A)\times \partial_eT(B)\cong \partial_e T(A\otimes B)$.
\end{prop}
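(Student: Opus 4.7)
The plan is to identify extremal traces with factor representations and exploit standard facts about tensor products of factors. Recall that a trace $\tau$ on a unital $\mathrm{C}^*$-algebra $A$ is extremal if and only if its GNS bicommutant $\pi_\tau(A)''$ is a factor; since $A$ is stably finite, this factor is automatically finite (indeed, it carries a faithful normal tracial state induced by $\tau$).

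First I would verify that the map $\Phi:(\tau_A,\tau_B)\mapsto \tau_A\otimes\tau_B$ lands in $\partial_e T(A\otimes B)$: the GNS representation of $\tau_A\otimes\tau_B$ is unitarily equivalent to $\pi_{\tau_A}\otimes \pi_{\tau_B}$, whose bicommutant is the von Neumann tensor product $\pi_{\tau_A}(A)''\vnotimes\pi_{\tau_B}(B)''$ of two finite factors --- again a factor --- so $\tau_A\otimes\tau_B$ is extremal. For surjectivity, take $\tau\in\partial_e T(A\otimes B)$, set $M:=\pi_\tau(A\otimes B)''$ (a finite factor), and let $M_A,M_B$ be the bicommutants of $\pi_\tau(A\otimes 1)$ and $\pi_\tau(1\otimes B)$ respectively. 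These are commuting von Neumann subalgebras of $M$ whose join is $M$. For $x\in Z(M_A)$, $x$ commutes with $M_A$ and (since $M_B\subseteq M_A'$) also with $M_B$, hence with $M_A\vee M_B = M$, so $x\in Z(M)=\mathbb{C}$. Thus $M_A$, and similarly $M_B$, is a factor, and the restrictions $\tau_A:=\tau(\cdot\otimes 1)$, $\tau_B:=\tau(1\otimes\cdot)$ are extremal.

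To conclude that $\tau=\tau_A\otimes\tau_B$, I would observe that both $\tau|_{A\otimes B}$ and $\tau_A\otimes\tau_B$ extend to normal tracial states on the finite factor generated by $M_A$ and $M_B$ with identical marginals on each; uniqueness of the trace on a finite factor then forces them to coincide on $M_A \vnotimes M_B$, and in particular on $A\otimes B$. Injectivity of $\Phi$ is immediate since $\tau_A$ and $\tau_B$ can be recovered from $\tau_A\otimes\tau_B$ by restriction. Continuity of $\Phi$ reduces, by a uniform bound on traces of contractions and a standard density argument, to the evident continuity on elementary tensors $a\otimes b$; continuity of $\Phi^{-1}$ is immediate, as the two restriction maps $T(A\otimes B)\to T(A)$ and $T(A\otimes B)\to T(B)$ are weak-$^*$ continuous. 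Together these yield the asserted homeomorphism.

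The main obstacle I anticipate is the second paragraph: identifying the subalgebras $M_A,M_B$ as commuting subfactors of the ambient finite factor $M$ and verifying that the tracial decomposition is forced. This is a routine application of the theory of commuting subfactors within a finite factor and the uniqueness of its trace, so no new ingredient is needed; one only needs to be slightly careful about which $\mathrm{C}^*$-tensor product is being used, but the argument works uniformly for the minimal tensor product (and more generally for any $\mathrm{C}^*$-completion on which $\tau_A\otimes\tau_B$ remains continuous).
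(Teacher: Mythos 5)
Your proposal is correct, but it takes a different route from the paper's for the key step $\tau=\tau_A\otimes\tau_B$. Both proofs begin identically, showing $M_A:=\pi_\tau(A\otimes 1_B)''$ is a factor by the commutant observation, so that $\tau_A$ is extremal. The paper then finishes with a short convexity argument: for a positive contraction $b\in B$ with $0<\tau_B(b)<1$ it writes $\tau_A$ as the proper convex combination of the two traces $a\mapsto\tau(a\otimes b)/\tau_B(b)$ and $a\mapsto\tau(a\otimes(1_B-b))/(1-\tau_B(b))$, and extremality of $\tau_A$ yields $\tau(a\otimes b)=\tau_A(a)\tau_B(b)$ directly. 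You instead work inside the finite factor $M=\pi_\tau(A\otimes B)''$; note that the assertion that $\tau_A\otimes\tau_B$ ``extends to a normal tracial state on $M$'' is, by uniqueness of the trace on $M$, a restatement of the goal, so the actual content is the ``commuting subfactors'' fact you cite. That fact is indeed routine, most cleanly via the trace-preserving conditional expectation: for $y\in M_B\subseteq M_A'$ the module property gives $E_{M_A}(y)\in M_A\cap M_A'=\mathbb C 1$, so $E_{M_A}(y)=\mathrm{tr}_M(y)1$ and $\mathrm{tr}_M(xy)=\mathrm{tr}_M(x)\mathrm{tr}_M(y)$, which pulls back along $\pi_\tau$ to $\tau=\tau_A\otimes\tau_B$. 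The paper's argument is leaner, using nothing beyond the factor criterion; yours pays for the extra von Neumann technology but gains an explicit verification that $\tau_A\otimes\tau_B$ is extremal whenever $\tau_A,\tau_B$ are (its GNS bicommutant is the factor $\pi_{\tau_A}(A)''\,\overline{\otimes}\,\pi_{\tau_B}(B)''$), a converse that the stated homeomorphism requires and which the paper leaves implicit.
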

\begin{proof}
Firstly recall that a trace $\tau$ on a unital $\mathrm C^*$-algebra $C$ is extremal if and only if $\pi_\tau(C)''$ is a factor.  Take an extremal trace $\tau$ on $A\otimes B$, and let $\tau_A$ and $\tau_B$ denote the traces on $A$ and $B$ obtained by restricting $\tau$ to $A\otimes 1_B\cong A$ and $1_A\otimes B\cong B$ respectively.  Then $\pi_\tau(A\otimes 1_B)''$ must be a factor, as any nontrivial central element would also commute with $\pi_\tau(1_A\otimes B)$. It is easy to check (using \cite[Proposition 3.3.7]{Ped:book}, for example) that $\pi_\tau(A\otimes 1_B)''\cong \pi_{\tau_A}(A)''$, and hence $\pi_{\tau_A}(A)''$ is also a factor. Thus $\tau_A$ is extremal. For $a\in A$ and a contraction $b\in B_+$ with $0<\tau_B(b)<1$, we have
\begin{equation}
\tau_A(a)=\tau_B(b)\frac{\tau(a\otimes b)}{\tau_B(b)}+(1-\tau_B(b))\frac{\tau(a\otimes (1_B-b))}{1-\tau_B(b)}.
\end{equation}
Since $\tau_A$ is extremal, $\tau_A(a)\tau_B(b)=\tau(a\otimes b)$.  Therefore $\tau=\tau_A\otimes\tau_B$.  In this way we see that $\partial_e T(A\otimes B)=\partial_e T(A)\times\partial_e T(B)$, as a set. Working with elementary tensors, this identification is also a homeomorphism with respect to the weak$^*$-topologies.
\end{proof}

\begin{prop}\label{TensorBundle}
Let $A$ and $B$ be simple, separable, unital, stably finite $\mathrm C^*$-algebras with compact extreme tracial boundaries $\partial_e T(A)$ and $\partial_e T(B)$ respectively.  Then $\overline{A\otimes B}^{\mathrm{st}}\cong \overline{A}^{\mathrm{st}}\vnotimes\overline{B}^{\mathrm{st}}$, as $\mathrm{W}^*$-bundles over $\partial_e T(A)\times \partial_e T(B)\cong \partial_e T(A\otimes B)$.
\end{prop}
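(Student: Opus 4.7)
The plan is to construct the isomorphism by extending the canonical inclusion $\iota:A\otimes B\hookrightarrow\overline{A}^{\mathrm{st}}\vnotimes\overline{B}^{\mathrm{st}}$ to a $\mathrm{W}^*$-bundle morphism on $\overline{A\otimes B}^{\mathrm{st}}$. First I would verify that $\iota$ is isometric for the uniform $2$-norms. Given $x\in A\otimes B$, the source norm is $\sup_{\tau\in T(A\otimes B)}\tau(x^*x)^{1/2}$ (recall \eqref{StrictCompletion:E}), and since the supremum of the affine continuous function $\tau\mapsto \tau(x^*x)$ on the compact convex set $T(A\otimes B)$ is attained at an extreme point, this equals $\sup_{\tau\in\partial_eT(A\otimes B)}\tau(x^*x)^{1/2}$. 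The target norm equals $\sup_{(\tau_A,\tau_B)\in\partial_eT(A)\times\partial_eT(B)}(\tau_A\otimes\tau_B)(x^*x)^{1/2}$ by \eqref{eq:W*Norm} combined with the defining formula of the tensor product expectation $E_A\otimes E_B$. The preceding proposition identifies these index sets via $\tau=\tau_A\otimes\tau_B$, so the two suprema agree and $\iota$ is isometric. Since $\overline{A\otimes B}^{\mathrm{st}}$ is the $\|\cdot\|_{2,u}$-completion of $A\otimes B$ on norm-bounded sets (Example \ref{StrictCompletion}) and unit balls in $\overline{A}^{\mathrm{st}}\vnotimes\overline{B}^{\mathrm{st}}$ are $\|\cdot\|_{2,u}$-complete, $\iota$ extends uniquely to an injective $^*$-homomorphism $\bar\iota:\overline{A\otimes B}^{\mathrm{st}}\to\overline{A}^{\mathrm{st}}\vnotimes\overline{B}^{\mathrm{st}}$.

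Next I would establish surjectivity by showing that $\iota(A\otimes B)$ is uniform-$2$-dense in norm-bounded subsets of $\overline{A}^{\mathrm{st}}\vnotimes\overline{B}^{\mathrm{st}}$. By construction of the bundle tensor product, it suffices to approximate norm-bounded elements of $\overline{A}^{\mathrm{st}}\otimes\overline{B}^{\mathrm{st}}$, and by linearity it is enough to treat elementary tensors $a\otimes b$ with $a\in\overline{A}^{\mathrm{st}}$, $b\in\overline{B}^{\mathrm{st}}$. Using the description of strict completions in Example \ref{StrictCompletion}, I would choose norm-bounded sequences $(a_n)_{n=1}^\infty$ in $A$ with $\|a-a_n\|_{2,u}\to 0$ and $(b_n)_{n=1}^\infty$ in $B$ with $\|b-b_n\|_{2,u}\to 0$. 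The fibrewise identity $\|x\otimes y\|_{2,(\lambda_A,\lambda_B)}=\|x\|_{2,\lambda_A}\|y\|_{2,\lambda_B}$ yields $\|x\otimes y\|_{2,u}\leq\|x\|\,\|y\|_{2,u}$ and symmetrically, so
\begin{equation}
\|a\otimes b-a_n\otimes b_n\|_{2,u}\leq\|a-a_n\|_{2,u}\|b\|+\|a_n\|\,\|b-b_n\|_{2,u}\longrightarrow 0,
\end{equation}
establishing the required density and therefore surjectivity.

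Finally I would check that $\bar\iota$ intertwines the bundle data. Under the homeomorphism from the preceding proposition, $C(\partial_eT(A\otimes B))\cong C(\partial_eT(A)\times\partial_eT(B))$, and for $a\otimes b\in A\otimes B$ both conditional expectations evaluate at $\tau_A\otimes\tau_B$ to $\tau_A(a)\tau_B(b)$. Hence they are intertwined on the uniform-$2$-dense $^*$-subalgebra $A\otimes B$, and a continuity argument extends the intertwining to all of $\overline{A\otimes B}^{\mathrm{st}}$. I expect the main obstacle to be bookkeeping around norm-bounded $\|\cdot\|_{2,u}$-approximation in the bundle tensor product — in particular, ensuring that the approximating elementary tensors can be chosen with uniformly bounded operator norm; this is precisely what the strict-completion description of Example \ref{StrictCompletion} provides, and no genuine Kaplansky density argument at the bundle level is needed for this proof.
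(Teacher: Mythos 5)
Your proof is correct and follows essentially the same strategy as the paper's, namely observing that $A\otimes B$ sits strictly densely in both bundles and that the two conditional expectations restrict to the same formula $\tau_A(a)\tau_B(b)$ on elementary tensors. You have spelled out the intermediate steps (the isometry of the $2$-norms, the bounded approximation yielding strict density, and extension by completeness) that the paper compresses into ``by construction'' and a check on expectations.
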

\begin{proof}
Identify $\partial_e T(A\otimes B)$ with $\partial_e T(A)\times \partial_e T(B)$ as in the previous proposition. By construction $A\otimes B$ is strictly dense in $\overline{A}^{\mathrm{st}}\vnotimes \overline{B}^{\mathrm{st}}$ and the expectations $E_A:\overline{A}^{\mathrm{st}}\rightarrow C(\partial_e T(A))$ and $E_B:\overline{B}^{\mathrm{st}}\rightarrow C(\partial_e T(B))$ satisfy
\begin{align}
E_A(a)(\tau_A)&=\tau_A(a),\quad a\in A,\ \tau_A\in\partial_e T(A),\\
E_B(b)(\tau_B)&=\tau_B(b),\quad b\in B,\ \tau_B\in\partial_e T(B).
\end{align}
Thus $E_A\otimes E_B$ satisfies the defining relation (\ref{StrictCompletion:E}) for the expectation on the bundle $\overline{A\otimes B}^{\mathrm{st}}$ from Example \ref{StrictCompletion}.
\end{proof}

\begin{defn}\label{W*Ultrapower}
Given a sequence of $\mathrm{W}^*$-bundles $(\M_n)_{n=1}^\infty$ over a sequence of compact Hausdorff spaces $(K_n)_{n=1}^\infty$, with uniform $2$-norms $\|\cdot\|_{2,u}^{(n)}$, and a free ultrafilter $\omega$ on $\N$, define the underlying $\mathrm{C}^{*}$-algebra of the $\mathrm{W}^{*}$-bundle ultraproduct to be  
\begin{equation}
\prod^\omega \M_n:=\prod_{n=1}^\infty \M_n/\big\{(x_n)_{n=1}^\infty\in\prod_{n=1}^\infty \M_n:\lim_{n\rightarrow\omega}\|x_n\|_{2,u}^{(n)}=0 \big\}.
\end{equation}
In addition to the operator norm, the ultraproduct inherits a uniform $2$-norm,  $\|\cdot\|^\omega_{2,u}$ defined by $\|(x_n)_{n=1}^\infty\|^\omega_{2,u} := \lim_{n\rightarrow\omega} \|x_n\|_{2,u}^{(n)}$.
\end{defn}

\begin{remark}
The definition above extends the usual definition of ultraproducts of finite von Neumann algebras with fixed faithful tracial states, regarding these as $\mathrm{W}^*$-bundles over the one-point space.
\end{remark}

The ultraproduct construction of Definition \ref{W*Ultrapower} remains in the category of  $\mathrm{W}^*$-bundles. For a sequence $(K_n)_{n=1}^\infty$ of compact metrizable spaces, let $\sum_\omega K_n$ denote the ultracoproduct of $(K_n)_{n=1}^\infty$, i.e., the compact Hausdorff space satisfying $C(\sum_\omega K_n) = \prod_\omega C(K_n)$ (this has also been described, for instance, in \cite{Bankston:JSL}).
We shall often denote this ultracoproduct by $K^\omega$, particularly when $K_n=K$ for all $n$. The set theoretic ultraproduct $\prod_\omega K_n$ (defined to be $\prod_{n=1}^\infty K_n$ modulo the equivalence relation given by $(\lambda_n)_{n=1}^\infty\sim (\mu_n)_{n=1}^\infty$ if and only if $\{n:\lambda_n=\mu_n\}\in\omega$) is identified with the subset of $\sum_\omega K_n$ consisting of those characters $\lambda\in (\prod_\omega C(K))^*$ defined using a sequence $(\lambda_n)_{n=1}^\infty$ from $\prod_{n=1}^\infty K_n$ by $g(\lambda)=\lim_{n\rightarrow\omega}g_n(\lambda_n)$, where $(g_n)_{n=1}^\infty$ represents $g\in \prod_\omega C(K_n)$.  If $g\in \prod_\omega C(K_n)$ is nonzero, and is represented by $(g_n)_{n=1}^\infty$, we can find a sequence $(\lambda_n)_{n=1}^\infty$ in $\prod_{n=1}^\infty K_n$ with $\lim_{n\rightarrow\omega}|g_n(\lambda_n)|=\|g\|$. In this way, $\prod_\omega K_n$ is dense in $\sum_\omega K_n$.

\begin{prop}\label{UltraproductBundles}
Given a sequence $(\M_n)_{n=1}^\infty$ of $\mathrm{W}^*$-bundles over $K_n$ and a free ultrafilter $\omega$ on $\N$, the ultraproduct
$\prod^\omega \M_n$ is a $\mathrm{W}^*$-bundle over $\sum_\omega K_n$, via the conditional expectation $E^\omega:\prod^\omega \M_n \to \prod_\omega C(K_n)$ induced by the maps $E_n:\M_n \to C(K_n)$.
Indeed, we have
\begin{equation} \|E^\omega(x^*x)\|^{1/2} = \|x\|^\omega_{2,u},\quad x\in \prod^\omega\M_n. \end{equation}
\end{prop}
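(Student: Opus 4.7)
The plan is to verify the four bundle axioms for $\prod^\omega\M_n$ equipped with $E^\omega$ onto $C(\sum_\omega K_n) = \prod_\omega C(K_n)$, and to establish the norm identity $\|E^\omega(x^*x)\|^{1/2} = \|x\|_{2,u}^\omega$.

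First, I would check that $E^\omega$ is well-defined by the fibrewise formula $(x_n)_n \mapsto (E_n(x_n))_n$. The Cauchy--Schwarz inequality for conditional expectations gives $E_n(x_n)^*E_n(x_n) \leq E_n(x_n^*x_n)$, whence $\|E_n(x_n)\| \leq \|x_n\|_{2,u}^{(n)}$; so a bounded sequence in $\prod_n \M_n$ is mapped to a bounded sequence in $\prod_n C(K_n)$, and a $\|\cdot\|_{2,u}^{(n)}$-null sequence is mapped to a $\|\cdot\|$-null sequence. Since each $C(K_n)$ sits centrally in $\M_n$, the containment $C(\sum_\omega K_n) \subseteq Z(\prod^\omega\M_n)$ is immediate, and $E^\omega$ inherits from the $E_n$ the property of being a contractive, tracial, $C(\sum_\omega K_n)$-linear projection onto $C(\sum_\omega K_n)$.

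Second, for $x \in \prod^\omega\M_n$ represented by $(x_n)_n$, I would observe
\begin{equation}
\|E^\omega(x^*x)\| = \lim_{n\to\omega}\|E_n(x_n^*x_n)\| = \lim_{n\to\omega}\bigl(\|x_n\|_{2,u}^{(n)}\bigr)^2 = \bigl(\|x\|_{2,u}^\omega\bigr)^2.
\end{equation}
This establishes the norm identity and simultaneously the faithfulness of $E^\omega$, since $\|x\|_{2,u}^\omega = 0$ forces every representative to be $\|\cdot\|_{2,u}^{(n)}$-null, making $x = 0$ in $\prod^\omega\M_n$.

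The main work is the completeness of the unit ball of $\prod^\omega\M_n$ in $\|\cdot\|_{2,u}^\omega$, via a diagonal-sequence argument. Given a $\|\cdot\|_{2,u}^\omega$-Cauchy sequence of contractions $(x^{(k)})_{k=1}^\infty$, after passing to a subsequence we may assume $\|x^{(k)} - x^{(k-1)}\|_{2,u}^\omega < 2^{-k}$. Lift each $x^{(k)}$ to a sequence of contractions $(x_n^{(k)})_{n=1}^\infty$ in $\prod_n\M_n$, so that each $A_k := \{n : \|x_n^{(k)} - x_n^{(k-1)}\|_{2,u}^{(n)} < 2^{-k+1}\}$ lies in $\omega$. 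Setting $x_n^{(0)} := 0$ and $k(n) := \max\{k \leq n : n \in A_1 \cap \cdots \cap A_k\}$ (or $0$ if no such $k$ exists), define $x_n := x_n^{(k(n))}$, a contraction in $\M_n$. For any fixed $j$ and any $n$ in the $\omega$-large set $A_1 \cap \cdots \cap A_m \cap \{n \geq m\}$ with $m \geq j$, a telescoping estimate gives
\begin{equation}
\|x_n - x_n^{(j)}\|_{2,u}^{(n)} \leq \sum_{i=j+1}^{k(n)} \|x_n^{(i)} - x_n^{(i-1)}\|_{2,u}^{(n)} < \sum_{i=j+1}^{\infty} 2^{-i+1} = 2^{-j+1}.
\end{equation}
Thus the element $x \in \prod^\omega\M_n$ represented by $(x_n)_n$ is a contraction with $\|x - x^{(j)}\|_{2,u}^\omega \leq 2^{-j+1}$ for all $j$, completing the proof. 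The hard part is precisely this diagonal step, which is where the geometry of $\omega$ interacts with the metric completeness of each fibre bundle's unit ball; all other axioms follow directly from the componentwise properties of the $E_n$.
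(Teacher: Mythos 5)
Your proof is correct, and the norm identity and well-definedness of $E^\omega$ are handled essentially as in the paper (both use $\|E_n(x_n)\| \leq \|E_n(x_n^*x_n)\|^{1/2} = \|x_n\|_{2,u}^{(n)} \leq \|x_n\|$; the paper is slightly more explicit about the isometric embedding $\prod_\omega C(K_n) \hookrightarrow Z(\prod^\omega\M_n)$, using that for central $x_n$ the first and third terms agree, but this is cosmetic).

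For the completeness step you take a genuinely different route. The paper, having passed to a rapidly Cauchy subsequence, \emph{modifies} the lifts inductively so that the Cauchy estimates $\|x_{i,n}-x_{j,n}\|_{2,u}^{(n)}<1/j$ hold for \emph{every} $n$ (patching a freshly chosen lift with the previous one on the complement of an $\omega$-large set), and then invokes $\|\cdot\|_{2,u}^{(n)}$-completeness of the unit ball of each fibre $\M_n$ to obtain a limit $y_n$ fibrewise; this mirrors the classical argument that ultrapowers of finite von Neumann algebras are von Neumann algebras. You instead leave the lifts alone, record the $\omega$-large sets $A_k$ on which the estimates hold, and diagonalise: the element $x_n := x_n^{(k(n))}$ is always one of the finitely many lifted contractions, so no limit needs to be taken inside $\M_n$ and the completeness axiom of the fibres is never used. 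Your argument is the generic ``metric ultraproducts are complete'' diagonalisation, and your telescoping estimate is sound (the set $\{k \leq n : n \in A_1 \cap \cdots \cap A_k\}$ is an initial segment, so $k(n)$ is well-defined, $k(n)\geq m$ on $A_1\cap\cdots\cap A_m\cap\{n\geq m\}$, and $n \in A_{j+1}\cap\cdots\cap A_{k(n)}$ gives the bound). The paper's approach is arguably more in line with the von Neumann ultrapower tradition and makes the role of the fibre axiom explicit, whereas yours is more economical and reveals that $\|\cdot\|_{2,u}^\omega$-completeness of the unit ball of the ultraproduct is automatic, independent of the third bundle axiom holding for the individual $\M_n$. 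Both establish the Proposition.
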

The proof is essentially the same as the classical proof that the ultrapower of a finite von Neumann algebra is a finite von Neumann algebra; see \cite[Theorem A.3.5]{SS:Book}.
\begin{proof}
Given a bounded sequence $(x_n)_{n=1}^\infty$ with $x_n\in \M_n$, one has 
\begin{align}
\lim_{n\rightarrow \omega}\|E_n(x_n)\|=\lim_{n\rightarrow\omega}\|E_n(x_n^*)E_n(x_n)\|^{1/2}&\leq\lim_{n\rightarrow\omega} \|E_n(x_n^*x_n)\|^{1/2}\nonumber\\
&= \lim_{n\rightarrow\omega}\|x_n\|_{2,u}^{(n)}\leq\lim_{n\rightarrow\omega}\|x_n\|.
\end{align}
In particular, when $x_n=E_n(x_n)\in C(K_n)\subset Z(\M_n)$ for each $n$, we have that $\lim_{n\rightarrow\omega}\|x_n\|=\lim_{n\rightarrow\omega}\|x_n\|_{2,u}^{(n)}$ so that the $\mathrm{C^*}$-ultraproduct $\prod_\omega C(K_n)$ canonically embeds into $Z(\prod^\omega\M_n)$.  The inequality $\lim_{n\rightarrow\omega}\|E_n(x_n)\|\leq\lim_{n\rightarrow\omega} \|x_n\|_{2,u}^{(n)}$ shows that $E^\omega$ is well defined. 
The map $E^\omega$ is clearly a tracial conditional expectation onto $\prod_\omega C(K_n)$, and as $\lim_{n\rightarrow\omega} \|E_n(x_n^*x_n)\|^{1/2}= \lim_{n\rightarrow\omega} \|x_n\|_{2,u}^{(n)}$, $E^\omega$ is faithful.

It remains to verify $\|\cdot\|_{2,u}^\omega$-completeness of the unit ball. Suppose that $(x_i)_{i=1}^\infty$ is a $\|\cdot\|^\omega_{2,u}$-Cauchy sequence of contractions.
Passing to a subsequence we may assume that $\|x_i-x_j\|_{2,u}^\omega < 1/j$ for all $i > j$.
We can inductively lift $x_1,x_2,\dots$ to sequences of contractions $(x_{1,n})_{n=1}^\infty,(x_{2,n})_{n=1}^\infty,\dots$ so that $\|x_{i,n}-x_{j,n}\|^{(n)}_{2,u}<1/j$ for all $n\in \mathbb N$ and all $i > j$. Indeed, if lifts have been obtained for $x_1,\dots,x_i$, then start with any lift $(\hat{x}_{i+1,n})_{n=1}^\infty$ of $x_{i+1}$ to a sequence of contractions. Then $I_{i+1}:=\{n\in\N:\|\hat{x}_{i+1,n}-x_{j,n}\|^{(n)}_{2,u}<1/j,\ \forall j=1,\dots,i\}\in\omega$. Defining
\begin{equation}
x_{i+1,n}=\begin{cases}\hat{x}_{i+1,n},&n\in I_{i+1};\\x_{i,n},&n\notin I_{i+1},\end{cases}
\end{equation}
we obtain the desired lift.  Thus, for each $n$ as the unit ball of $\M_n$ is $\|\cdot\|_{2,u}^{(n)}$ complete, there is some $y_n$ in the unit ball of $\M_n$ such that $\|y_n-x_{n,j}\|_{2,u}^{(n)}\leq 1/j$ for all $j$.  Thus, with $y$ the element of $\prod^\omega \M_n$ represented by $(y_n)_{n=1}^\infty$, $\|y-x_j\|_{2,u}^\omega\leq 1/j$ for all $j$, proving the required completeness.
\end{proof}

A key ingredient in the application of $\mathrm{W}^*$-techniques to the study of the structure of simple, nuclear $\mathrm C^*$-algebras is the surjectivity result \cite[Theorem 3.3]{KR:Crelle} concerning the canonical map between central sequence algebras, and the extension of this result in \cite{MS:DMJ} to relative commutant sequence algebras. This is crucial in arguments where properties of the von Neumann algebraic (or $\mathrm W^*$-bundle) central sequence algebras are transferred to the $\mathrm{C}^*$-algebraic level. The next lemma provides the version of this fact we need in this paper, in the setting of sequences of algebras.

\begin{lemma}
\label{lem:CentralSurjectivity}
Let $(B_n)_{n=1}^\infty$ be a sequence of separable, unital	 $\mathrm C^*$-algebras such that $\partial_e T(B_n)$ is nonempty and compact for each $n$.  Let $\M_n:=\overline{B_n}^{\mathrm{st}}$ be the $\mathrm{W}^*$-bundle associated to $B_n$ by Ozawa as in Example \ref{StrictCompletion}.  Write $B_\omega:=\prod_\omega B_n$, $\M^\omega:=\prod^\omega \M_n$ and let $J_{B_\omega}$ be the trace-kernel ideal from (\ref{eq:JBdef}). Then we have the canonical identification
\begin{equation}\label{lem:CentralSurjectivity.Easy}
B_\omega/J_{B_\omega}\cong\M^\omega.
\end{equation}

Further, suppose we are given a separable, unital $\mathrm C^*$-algebra $A$ and a  c.p.c.\ order zero map $\pi:A\rightarrow B_\omega$ such that $\pi(1_A)$ is full and the induced map $\bar{\pi}:A \to B_\omega/J_{B_\omega}=\M^\omega$ is a $^*$-homomorphism. Define $C:=B_\omega \cap \pi(A)' \cap \{1_{B_\omega}-\pi(1_A)\}^\perp$. Let $S \subset C$ be countable and self-adjoint. Let $\bar{S}$ denote the image of $S$ in $\M^\omega$. Then the image of $C\cap S'$ in $\M^\omega=B_\omega/J_{B_\omega}$ is precisely 
\begin{equation}
\bar{\pi}(1_A)\left((B_\omega/J_{B_\omega}) \cap \bar{\pi}(A)' \cap \bar{S}'\right)=\M^\omega\cap \bar{\pi}(A)'\cap \bar{S}'\cap \{1_{\M^\omega}-\bar{\pi}(1_A)\}^\perp.
\end{equation}
\end{lemma}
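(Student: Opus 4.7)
The plan is to establish the identification \eqref{lem:CentralSurjectivity.Easy} first, after which the relative commutant statement follows almost verbatim from the argument given in Lemma \ref{lem:RelCommSurjectivity}(i).

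\textbf{Step 1 (the identification).} The inclusions $B_n\hookrightarrow\M_n$ compose with the quotient $\prod_n \M_n\to\M^\omega$ to give a $^*$-homomorphism $\Phi:\prod_n B_n\to\M^\omega$. Example \ref{StrictCompletion} yields $\|b_n\|_{2,u}^{(n)}=\max_{\tau\in T(B_n)}\tau(b_n^*b_n)^{1/2}$ for $b_n\in B_n$, and comparing this formula with \eqref{eq:JBdef} shows that $\ker\Phi$ is precisely the preimage of $J_{B_\omega}$ in $\prod_n B_n$. Thus $\Phi$ factors through an injective $^*$-homomorphism $\iota:B_\omega/J_{B_\omega}\hookrightarrow\M^\omega$. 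For surjectivity, the unit ball of $B_n$ is $\|\cdot\|_{2,u}^{(n)}$-dense in the unit ball of $\M_n$ (immediate from the description of $\M_n$ as the strict closure of $B_n$ inside the finite part of $B_n^{**}$ together with Kaplansky density, noting that the strict and $\|\cdot\|_{2,u}^{(n)}$ topologies agree on norm-bounded sets). Hence any contraction of $\M^\omega$ represented by contractions $(m_n)_{n=1}^\infty$ admits contractive lifts $b_n\in B_n$ with $\|b_n-m_n\|_{2,u}^{(n)}<1/n$, and the sequence $(b_n)_{n=1}^\infty$ represents an element of $B_\omega$ mapping to the prescribed element.

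\textbf{Step 2 (relative commutant surjectivity).} Repeat the proof of Lemma \ref{lem:RelCommSurjectivity}(i) with $\M^\omega$ in the role of $B_\omega/J_{B_\omega}$. Kirchberg's $\eps$-test (Lemma \ref{epstest}) shows that $J_{B_\omega}$ is a $\sigma$-ideal of $B_\omega$, so \cite[Proposition 4.5(iii)]{KR:Crelle} identifies the image of $B_\omega\cap\pi(A)'\cap S'$ in $\M^\omega$ with $\M^\omega\cap\bar\pi(A)'\cap\bar S'$. As images of hereditary subalgebras under surjective $^*$-homomorphisms remain hereditary, the image of $C\cap S'$ is a hereditary $\mathrm{C}^*$-subalgebra of $\M^\omega\cap\bar\pi(A)'\cap\bar S'$ contained in $\bar\pi(1_A)(\M^\omega\cap\bar\pi(A)'\cap\bar S')$. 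It remains to produce $\bar\pi(1_A)$ itself in this image: since $\bar\pi$ is a $^*$-homomorphism we have $\tau(\pi(1_A)^m)=\tau(\pi(1_A))$ for all $m\in\N$ and $\tau\in T_\omega(B_\omega)$, so applying Lemma \ref{lem:LargeTraceSubordinate} with $T:=S\cup\pi(A)$, $S_0:=\{\pi(1_A)\}$, $f:=\pi(1_A)$ and $x:=0$ produces a positive contraction $e\in B_\omega\cap\pi(A)'\cap S'$ with $e\vartriangleleft\pi(1_A)$ (hence $e\in C\cap S'$) and $\tau(e)=\tau(\pi(1_A))$ for every $\tau\in T_\omega(B_\omega)$. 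Then $\pi(1_A)-e\in J_{B_\omega}$ by \eqref{eq:JBeqDef2}, so $e$ maps to $\bar\pi(1_A)$ in $\M^\omega$, completing the proof.

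\textbf{Main obstacle.} The only step that genuinely uses the $\mathrm{W}^*$-bundle structure, rather than abstract ultraproduct formalism, is the Kaplansky-style lifting in Step 1 establishing surjectivity of $\iota$; everything else is a direct adaptation of the $\sigma$-ideal machinery already deployed in Lemma \ref{lem:RelCommSurjectivity}(i).
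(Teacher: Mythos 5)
Your proof is correct and follows the same route as the paper: establish the isomorphism $B_\omega/J_{B_\omega}\cong\M^\omega$ via the Kaplansky-density-in-the-strict-topology lifting argument, then deduce the relative-commutant statement from the $\sigma$-ideal machinery. The only difference is cosmetic — the paper's Step 2 simply cites Lemma \ref{lem:RelCommSurjectivity}(i) without unwinding its proof, whereas you reproduce the argument inline.
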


\begin{proof} The inclusions $B_n\rightarrow\M_n$ induce a well defined $^*$-homomorphism $\Theta:B_\omega\rightarrow \M^\omega$, with kernel $J_{B_\omega}=\{(b_n)_{n=1}^\infty\in B_\omega:\lim_{n\rightarrow\omega}(\|b_n\|_{2,u}^{(n)})^2=0\}$.  Given a sequence $(x_n)_{n=1}^\infty$ of contractions in $\M_n$ representing a contraction $x$ in $\M^\omega$, applying the strict topology version of Kaplansky's density theorem (see \cite[Proposition 1.4]{Lan:Book}) to $B_n\subset \overline{B_n}^{\mathrm{st}}=\M_n$, and using that the $\|\cdot\|_{2,u}^{(n)}$-topology agrees with the strict topology on the unit ball of $\M_n$,\footnote{Or using the alternative definition of $\overline{B_n}^{\mathrm{st}}$ in terms of adjoining limit points of norm-bounded, $\|\cdot\|_{2,u}^{(n)}$-Cauchy sequences.}
we obtain a contraction $y_n\in B_n$ with $\|y_n-x_n\|_{2,u}^{(n)}\leq\frac{1}{n}$.  In this way $(y_n)_{n=1}^\infty$ also represents $x$, and hence $\Theta$ is surjective, proving (\ref{lem:CentralSurjectivity.Easy})

The second part follows by Lemma \ref{lem:RelCommSurjectivity} (i).
\end{proof}

In \cite{M:PLMS}, McDuff characterized those separably-acting II$_1$ factors $\M$ which absorb the hyperfinite II$_1$ factor $\R$ tensorially, as those for which $\R$ embeds in $\M^\omega\cap\M'$ (or equivalently those for which some matrix algebra $M_k$ for $k\geq 2$ embeds into $\M^\omega\cap\M'$). These factors, now called the \emph{McDuff factors}, played a crucial role in Connes' work on injective factors.  The analogous characterization holds in the category of $\mathrm W^*$-bundles. Given a tracial continuous $\mathrm{W}^*$-bundle $\M$, note that the canonical embedding $\M\hookrightarrow \M^\omega$ is a $\mathrm{W}^*$-bundle embedding, enabling us to form the central sequence tracial $\mathrm{W}^*$-bundle $\M^\omega\cap\M'$. 

\begin{prop}\label{DefMcDuff}
Let $\M$ be a strictly-separable $\mathrm{W}^*$-bundle.  The following are equivalent:
\begin{enumerate}
\item[$(1)$] $\M\cong \M\vnotimes\R$ as $\mathrm W^*$-bundles;
\item[$(2)$] $\R$ embeds unitally into $\M^\omega\cap\M'$;
\item[$(3)$] there exists $k\geq 2$ such that $M_k$ embeds unitally into $\M^\omega\cap\M'$;
\item[$(3')$] $M_k$ embeds unitally into $\M^\omega\cap\M'$ for all $k\geq 2$;
\item[$(4)$] there exists $k\geq 2$ such that for any $\|\cdot\|_{2,u}^\omega$-separable self-adjoint subset $S$ of $\M^\omega$, $M_k$ embeds unitally into $\M^\omega\cap S'$.
\item[$(4')$] for any $\|\cdot\|_{2,u}^\omega$-separable self-adjoint subset $S$ of $\M^\omega$ and any $k\geq 2$, $M_k$ embeds unitally into $\M^\omega\cap S'$.
\end{enumerate}
\end{prop}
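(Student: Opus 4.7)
The plan is to follow McDuff's original strategy, adapted to the $\mathrm{W}^*$-bundle setting. Several implications are immediate: $(3') \Rightarrow (3)$, $(4') \Rightarrow (4)$, $(4') \Rightarrow (3')$ (take $S = \M$), and $(2) \Rightarrow (3')$ since every $M_k$ sits unitally inside $\R$. For $(1) \Rightarrow (2)$, fix an isomorphism $\Phi: \M \to \M \vnotimes \R$ and iterate to obtain, for each $n$, a bundle isomorphism $\M \cong \M \vnotimes \R^{\vnotimes n}$ and a unital $\mathrm{W}^*$-bundle embedding $\iota_n: \R \to \M$ via the $n$-th $\R$-factor; the $\iota_n(\R)$ pairwise commute by construction. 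Since finite sums $\sum x_i \otimes r_i$ are $\|\cdot\|_{2,u}$-dense in $\M \vnotimes \R^{\vnotimes n}$, a standard diagonal argument against a $\|\cdot\|_{2,u}$-dense sequence in $\M$ shows that $(\iota_n)$ is $\|\cdot\|_{2,u}$-asymptotically central, yielding the desired unital embedding $\R \hookrightarrow \M^\omega \cap \M'$.

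For $(3) \Rightarrow (4')$, apply Kirchberg's $\eps$-test (Lemma \ref{epstest}) with test functions measuring matrix-unit relations and commutation in $\|\cdot\|_{2,u}$, to pass from the given embedding $M_k \hookrightarrow \M^\omega \cap \M'$ to $M_k \hookrightarrow \M^\omega \cap S'$ for any strictly-separable self-adjoint $S \subset \M^\omega$. Iterating this reindexing produces mutually commuting copies of $M_k$ inside $\M^\omega \cap \M'$, hence a unital $\mathrm{C}^*$-embedding of the UHF algebra $M_{k^\infty}$. Uniqueness of the trace on $M_{k^\infty}$ makes this embedding isometric from $\|\cdot\|_2$ on the source to $\|\cdot\|_{2,u}^\omega$ on the target, so by $\|\cdot\|_{2,u}^\omega$-completeness of $\M^\omega$ (Proposition \ref{UltraproductBundles}) it extends to a unital $\mathrm{W}^*$-bundle embedding $\R \hookrightarrow \M^\omega \cap \M'$; this is $(2)$. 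A further pass of the $\eps$-test, together with $M_{k'} \subset \R$ unitally for each $k' \geq 2$, yields $(4')$.

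The main obstacle is $(4') \Rightarrow (1)$. Given a unital embedding $\iota: \R \hookrightarrow \M^\omega \cap \M'$, define a unital $^*$-homomorphism $\psi: \M \vnotimes \R \to \M^\omega$ on elementary tensors by $\psi(x \otimes r) = x \iota(r)$; this is well defined because $\iota(\R)$ commutes with $\M$, is a $\mathrm{W}^*$-bundle morphism over the base of $\M$, and is trace-preserving with respect to the limit traces of $\M^\omega$. The first-factor inclusion $\kappa: \M \hookrightarrow \M \vnotimes \R$ is also a $\mathrm{W}^*$-bundle morphism. The reindexing strength provided by $(4')$---commuting copies of $\R$ inside $\M^\omega \cap S'$ for any strictly-separable $S$---is precisely what is required to run a two-sided approximate intertwining in the spirit of Elliott, producing a $\mathrm{W}^*$-bundle isomorphism $\M \cong \M \vnotimes \R$. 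The delicate point will be executing the intertwining with $\|\cdot\|_{2,u}$-approximations in place of operator-norm approximations while preserving the $C(K)$-structure and the fiberwise tracial data throughout.
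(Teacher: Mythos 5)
Your overall cycle structure is sound, and your $(3)\Rightarrow(4')$ reproduces the paper's $(3)\Rightarrow(4)\Rightarrow(2)$ step (reindex, build $M_{k^\infty}$, use uniqueness of its trace and $\|\cdot\|_{2,u}^\omega$-completeness to extend to $\R$). But there are two genuine problems.

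The more serious one is in $(4')\Rightarrow(1)$. Invoking ``a two-sided approximate intertwining in the spirit of Elliott'' does not engage the actual obstacle. Your map $\psi:\M\vnotimes\R\to\M^\omega$ lands in the ultrapower rather than in $\M$, and your $\kappa:\M\to\M\vnotimes\R$ lands in $\M\vnotimes\R$; neither composite makes sense, so there is no ``two-sided'' framework to run. What is actually needed is an approximate absorption statement: unitaries $u_n\in(\M\vnotimes\R)^\omega\cap(\M\otimes1_\R)'$ conjugating any $x\in\M\vnotimes\R$ arbitrarily $\|\cdot\|_{2,u}^\omega$-close to $(\M\otimes1_\R)^\omega$. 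The paper produces these by applying the approximately inner half-flip of $\R$ to the embedding $\theta\otimes\beta:\R\vnotimes\R\to(\M\vnotimes\R)^\omega\cap(\M\vnotimes1_\R)'$, where $\theta$ is the map from $(2)$ and $\beta$ is the second-factor embedding; the resulting $u_n$ are then bootstrapped into an isomorphism by a one-sided inductive unitary-perturbation argument, not a two-sided intertwining. Without identifying the role of the approximately inner (half-)flip of $\R$, the proposal has no mechanism to produce the required approximation, and this implication remains unproven.

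Your $(1)\Rightarrow(2)$ also has a gap as written. The embeddings $\iota_n:\R\hookrightarrow\M$ obtained by iterating a single isomorphism $\Phi:\M\to\M\vnotimes\R$ do commute pairwise, but they are not obviously $\|\cdot\|_{2,u}$-asymptotically central: under the identification $\Phi_n:\M\cong\M\vnotimes\R^{\vnotimes n}$ one checks that $\Phi_n(\iota_{n+1}(s))$ sits inside the $\M$-leg, not the tail $\R$-legs, so density of finite sums $\sum x_i\otimes r_i$ does not make the commutators with $\M$ small. This can be repaired via $\M\cong\M\vnotimes\R^{\vnotimes\infty}$ and genuine tail embeddings, but the paper avoids it entirely by proving $(1)\Rightarrow(3')$ instead: push a unital embedding $M_k\hookrightarrow\R^\omega\cap\R'$ forward along the ultrapower of the second-factor embedding $\R\hookrightarrow\M\vnotimes\R\cong\M$, which lands in $\M^\omega\cap\M'$ with no asymptotic centrality argument required.
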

\begin{defn}
We call strictly separable $\mathrm{W}^*$-bundles satisfying the conditions of Proposition \ref{DefMcDuff} \emph{McDuff}.
\end{defn}

\begin{proof}[Proof of Proposition \ref{DefMcDuff}]
The implication $(1)\Longrightarrow (3')$ follows from the fact that there exist unital embeddings $M_k\rightarrow\R^\omega\cap\R'$. The implications $(3)\Longrightarrow (4)$ and $(3')\Longrightarrow(4')$ follow from a standard reindexing argument.\footnote{Given a $\|\cdot\|_{2,u}^\omega$-dense sequence $(s^{(m)})_{m=1}^\infty$ in $S$, lift each $s^{(m)}$ to a uniformly bounded representative sequence $(s^{(m)}_n)_{n=1}^\infty$ from $\M$.  Then, given a unital embedding $\theta:M_k\rightarrow \M^\omega\cap\M'$, lift it to a sequence of u.c.p.\ maps $\theta_n:M_k\rightarrow\M$. Fix a finite generating set of unitaries $F$ for $M_k$.   For each $n$, there exists some $l_n$ such that $\|[\theta_{l_n}(u),s^{(m)}_n]\|_{2,u}\leq\frac{1}{n}$ for $m\leq n$ and $\|\theta_{l_n}(u)\theta_{l_n}(u^*)-1_{\M}\|_{2,u}<\frac{1}{n}$ for $u\in F$.  Then the map induced by $(\theta_{l_n})_{n=1}^\infty$ gives a unital embedding $M_k\hookrightarrow \M^\omega \cap S'$.}  
The implication $(4)\Longrightarrow (2)$ is proven by using $(4)$ to inductively construct compatible unital embeddings $M_k^{\otimes m}\rightarrow \M^\omega\cap \M'$ for each $m$, which gives rise to a unital embedding $\theta:M_{k^\infty}\rightarrow\M^\omega\cap \M'$.  By uniqueness of the trace on $M_{k^\infty}$ this embedding has $\|\theta(x)-\theta(y)\|_{2,u}^\omega=\|x-y\|_{2,\tau_{M_k{^\infty}}}$, and hence, using that the unit ball of $\M$ is $\|\cdot\|_{2,u}^\omega$-complete, it extends to the required unital embedding $\R\hookrightarrow \M^\omega\cap \M'$.

The implication $(2)\Longrightarrow(1)$ is proven using the standard techniques for tensorial absorption of strongly self-absorbing algebras, cf.\ \cite[Section 7.2]{R:Book} as follows.\footnote{The language of continuous model theory puts this implication, McDuff's original theorem for II$_1$ factors, and the tensorial absorption results for strongly self-absorbing algebras in the same framework, cf.\ \cite[Lemma 6.2]{F:ICM}.} 

We claim that there is a sequence of unitaries $(u_n)_{n=1}^\infty$ in $(\M\vnotimes\R)^\omega\cap (\M\otimes 1_\R)'$ with
\begin{equation}
\lim_{n \to \infty} \inf \{\|u_nxu_n^*-z\|^\omega_{2,u} \mid z \in ((\M\otimes 1_\R)^\omega)^1\}=0,\quad x\in (\M\vnotimes\R)^1.
\end{equation}
Assuming this claim is true, fix $\|\cdot\|_{2,u}$-dense sequences $(x_n)_{n=1}^\infty$ and $(y_n)_{n=1}^\infty$ of the unit balls $\M^1$ and $(\M\vnotimes\R)^1$ respectively. We can then inductively find unitaries $(v_n)_{n=1}^\infty$ in $\M\vnotimes\R$ and elements $z_{j,n}$ in $\M^1$ such that
\begin{align}
\|v_n^*(v_{n-1}^*\cdots v_1^*y_jv_1\cdots v_{n-1})v_n-z_{j,n}\otimes 1_\R\|_{2,u}&\leq\frac{1}{n},\quad j\leq n\notag\\
\|[v_n,(x_j\otimes 1_\R)]\|_{2,u}&<2^{-n},\quad j\leq n\notag\\
\|[v_n,(z_{j,m}\otimes 1_\R)]\|_{2,u}&<2^{-n},\quad j\leq n,\ m\leq n-1.\label{DefMcDuff.E1:SAW}
\end{align}
Then, for each $j\in\N$,  $(v_1\cdots v_n(x_j\otimes 1_{\R})v_n^*\cdots v_1^*)_{n=j}^\infty$ is a $\|\cdot\|_{2,u}$-Cauchy sequence in the unit ball of $(\M\vnotimes \R)$, and thus converges to some element of $(\M\vnotimes \R)^1$, which we denote $\psi(x_j)$.
This defines a function $\psi:\{x_1,x_2,\dots\} \to (\M\vnotimes \R)^1$ which is uniformly continuous in $\|\cdot\|_{2,u}$.
Since $(\M\vnotimes \R)^1$ is $\|\cdot\|_{2,u}$-complete, $\psi$ extends to a continuous function on $\M^1$, and then in turn to a bounded linear map on $\M$, also denoted $\psi$, which is an injective (since it is $\|\cdot\|_{2,u}$-isometric) ${}^*$-homomorphism (here we use the fact that multiplication is jointly $\|\cdot\|_{2,u}$-continuous on bounded sets).   The first and third conditions of (\ref{DefMcDuff.E1:SAW}) ensure that $\psi$ is surjective.
Moreover, for each $j$, we have
\begin{equation}
E_{\M\vnotimes\R} \circ \psi (x_j) = \lim_{n\to\infty} (E_\M \otimes \tau_{\R})(v_1\cdots v_n(x_j \otimes 1_\R)v_n^*\cdots v_1^*) = E_\M(x_j),
\end{equation}
and by $\|\cdot\|_{2,u}$-continuity, it follows that $E_{\M\vnotimes\R} \circ \psi = E_\M$ on the unit ball, and hence on all of $\M$ by linearity, so that $\psi$ is an isomorphism of $\mathrm W^*$-bundles.

Now, to prove the claim, let $\theta:\R\hookrightarrow \M^\omega\cap \M'$ be the unital embedding given by (2), which we view as taking values in $(\M\vnotimes1_\R)^\omega \subset (\M\vnotimes\R)^\omega$. Let $\beta:\R\rightarrow (\M\vnotimes\R)^\omega$ be the second factor embedding, so that $\theta(\R)$ and $\beta(\R)$ commute.
By the following argument, we see $\theta\otimes\beta$ defines a $\|\cdot\|_2$-$\|\cdot\|^\omega_{2,u}$-isometric ${}^*$-homomorphism $\R\vnotimes\R \to (\M\vnotimes\R)^\omega \cap (\M\vnotimes1_\R)'$.
Identify $M_{2^\infty}$ with a strong$^*$-dense $\mathrm C^*$-subalgebra of $\R$, so that the unit ball of $M_{2^\infty}$ is $\|\cdot\|_2$-dense in the unit ball of $\R$.
Then by nuclearity of $M_{2^\infty}$, we have a ${}^*$-homomorphism
\begin{equation} (\theta|_{M_{2^\infty}} \otimes \beta|_{M_{2^\infty}}):M_{2^\infty} \otimes M_{2^\infty} \to (\M\vnotimes\R)^\omega. 
\end{equation}
By uniqueness of the trace on $M_{2^\infty}\otimes M_{2^\infty}$, this map is $\|\cdot\|_2$-$\|\cdot\|^\omega_{2,u}$-isometric, so its restriction to the unit ball of $M_{2^\infty} \otimes M_{2^\infty}$ extends to a map $(\R\vnotimes\R)^1 \to ((\M\vnotimes\R)^\omega)^1$; moreover, the image of this map commutes with $\M\vnotimes1_\R$.
By linearity, this defines a map $\theta\otimes\beta:\R\vnotimes\R \to (\M\vnotimes\R)^\omega\cap (\M\vnotimes1_\R)'$.

As $\R$ has approximately inner (half-)flip, there is a sequence of unitaries $(u_n)_{n=1}^\infty$ in $(\theta\otimes\beta)(\R\vnotimes\R) \subset (\M\vnotimes\R)^\omega \cap (\M\otimes 1_{\R})'$ with $\|u_n(1_\M\otimes y)u_n^*-\theta(y)\|^\omega_{2,u}\rightarrow 0$ for $y\in\R$.  Thus
\begin{equation}
\label{eq:McDuffaif}
\|u_n(x\otimes y)u_n^*-(x\otimes 1_\R)\theta(y)\|^\omega_{2,u}\rightarrow 0,\quad x\in \M,\ y\in\R.
\end{equation}

The assignment $x \otimes y \to (x\otimes 1_\R)\theta(y)$ defines a ${}^*$-homomorphism $\eta:\M \otimes M_{2^\infty} \to (\M\vnotimes1_{\R})^\omega$, and this map is $\|\cdot\|_{2,u}$-$\|\cdot\|^\omega_{2,u}$-isometric, whence it extends to a $\|\cdot\|_{2,u}$-$\|\cdot\|^\omega_{2,u}$-isometric ${}^*$-homomorphism $\M \vnotimes \R \to (\M\vnotimes1_{\R})^\omega$.
Hence, by \eqref{eq:McDuffaif}, for $x\in(\M\vnotimes\R)^1$,
\begin{equation}
u_nxu_n^* \to \eta(x) \in ((\M\vnotimes1_\R)^\omega)^1,
\end{equation}
converging in $\|\cdot\|^\omega_{2,u}$, as required.
\end{proof}
 
 \begin{remark}\label{McDuffProductEmbedding}
 For later use we note that if $(\M_n)_{n=1}^\infty$ is a sequence of strictly separable, McDuff $\mathrm{W}^*$-bundles over the compact spaces $K_n$, and we write $\M^\omega:=\prod^\omega\M_n$, then a reindexing argument shows that for any separable self-adjoint subset $S$ of $\M^\omega$ and $k\in\N$, there is a unital embedding $M_k\hookrightarrow \M^\omega\cap S'$.
 \end{remark}
 
In \cite[Section 5]{O:JMSUT} Ozawa studies when a strictly separable (equivalently, $\|\cdot\|_{2,u}$-separable) $\mathrm{W}^*$-bundle $\M$ over a compact metrizable\footnote{As $\M$ is strictly separable, metrizability of $K$ is redundant here.} space $K$, all of whose fibres are copies of $\R$, is isomorphic  to the trivial bundle $C_\sigma(K,\R)$. In \cite[Theorem 15]{O:JMSUT}, he characterizes these bundles as those which are McDuff, as defined above. This theorem is the $\mathrm{W}^*$-bundle version of Connes' equivalence of property $\Gamma$, McDuff and hyperfiniteness for separably acting injective II$_1$ factors (\cite{C:Ann}). A key consequence of Connes' work is that all separably acting injective II$_1$ factors are hyperfinite, but the corresponding question is open for tracial continuous $\mathrm{W}^*$-bundles.

\begin{question}
Does there exist a nontrivial strictly-separable bundle $\M$ over a compact metrizable space $K$ all of whose fibres are copies of $\R$?  
\end{question}

There are two key conditions where triviality is known to hold: when $K$ has finite covering dimension (this is due to Ozawa, \cite[Corollary 12]{O:JMSUT}), and when $\M=\overline{B}^\mathrm{st}$ for a separable, unital, nuclear $\Z$-stable $\mathrm C^*$-algebra $B$ such that  $T(B)$ is a Bauer simplex. In the latter case, $\overline{B}^\mathrm{st}=\overline{B\otimes\Z}^\mathrm{st}\cong \overline{B}^\mathrm{st}\vnotimes\R$, by Proposition \ref{TensorBundle} (since $\overline{\Z}^\mathrm{st}\cong \R$), and so $\M$ is trivial by Ozawa's theorem (\cite[Theorem 15]{O:JMSUT}). We summarize this in the following theorem.

\begin{thm}\label{OzawaBundles}
Let $B$ be a separable, unital, nuclear $\mathrm C^*$-algebra with no finite dimensional quotients such that $\partial_e T(B)$ nonempty and compact, and suppose that either:
\begin{enumerate}
\item \textrm{$($\cite[Corollary 12]{O:JMSUT}$)$} $\partial_e T(B)$ has finite covering dimension; or
\item $B$ is $\Z$-stable.\footnote{In this case the no finite dimensional quotient condition is of course automatic.}
\end{enumerate}
Then $\overline{B}^\mathrm{st}$ is isomorphic as a $\mathrm{W}^*$-bundle to $C_\sigma(\partial_e T(B),\R)$.
\end{thm}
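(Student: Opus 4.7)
\textbf{Proof proposal for Theorem \ref{OzawaBundles}.}
The plan is to reduce both cases to Ozawa's trivialization theorem \cite[Theorem 15]{O:JMSUT}, which asserts that a strictly separable $\mathrm{W}^*$-bundle whose fibres are all isomorphic to $\mathcal R$ is trivial precisely when it is McDuff (in the sense of Proposition \ref{DefMcDuff}). Since $B$ is separable, $\overline{B}^{\mathrm{st}}$ is strictly separable, so this theorem applies once we verify its two hypotheses on the fibres and on McDuffness.

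\emph{Identifying the fibres.} By Example \ref{StrictCompletion}, the fibre of $\overline{B}^{\mathrm{st}}$ at $\tau \in \partial_e T(B)$ is the von Neumann algebra $\pi_\tau(B)''$. Extremality of $\tau$ makes this a factor, nuclearity of $B$ makes it injective (hence hyperfinite by Connes), and the absence of finite dimensional quotients forces it to be of type II$_1$. Consequently every fibre is isomorphic to $\mathcal R$.

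\emph{Case (1).} This is Ozawa's \cite[Corollary 12]{O:JMSUT}: for strictly separable $\mathrm{W}^*$-bundles over a space of finite covering dimension whose fibres are all $\mathcal R$, a finite dimensional selection argument produces the required trivialization. No further work is required.

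\emph{Case (2).} I would first show that $\overline{\mathcal Z}^{\mathrm{st}} \cong \mathcal R$: the Jiang--Su algebra has a unique trace, so its strict completion has a one-point base, the fibre is $\mathcal R$ by the calculation above, and hence $\overline{\mathcal Z}^{\mathrm{st}} \cong \mathcal R$ as $\mathrm{W}^*$-bundles. Applying Proposition \ref{TensorBundle} to $B$ and $\mathcal Z$ (using that $\partial_e T(\mathcal Z)$ is a point) gives
\begin{equation}
\overline{B}^{\mathrm{st}} \cong \overline{B \otimes \mathcal Z}^{\mathrm{st}} \cong \overline{B}^{\mathrm{st}} \,\overline{\otimes}\, \overline{\mathcal Z}^{\mathrm{st}} \cong \overline{B}^{\mathrm{st}} \,\overline{\otimes}\, \mathcal R,
\end{equation}
where the first isomorphism uses $\mathcal Z$-stability of $B$. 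Thus $\overline{B}^{\mathrm{st}}$ satisfies condition (1) of Proposition \ref{DefMcDuff} and so is McDuff. Ozawa's trivialization theorem \cite[Theorem 15]{O:JMSUT} then yields $\overline{B}^{\mathrm{st}} \cong C_\sigma(\partial_e T(B), \mathcal R)$.

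\emph{Main obstacle.} Conceptually there is none: both cases are essentially bookkeeping on top of deep theorems of Ozawa (together with the Connes injective-equals-hyperfinite theorem to pin down the fibres). The only point that needs a small verification is the compatibility of the $\mathrm{W}^*$-bundle tensor product with strict completion (Proposition \ref{TensorBundle}); this was already established in the excerpt, so it can be cited directly.
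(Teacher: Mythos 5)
Your proposal is correct and follows essentially the same route as the paper: case (1) is cited directly from Ozawa's Corollary 12, and case (2) uses $\overline{\mathcal Z}^{\mathrm{st}}\cong\R$ together with Proposition \ref{TensorBundle} to verify the McDuff property of $\overline{B}^{\mathrm{st}}$ and then invokes Ozawa's trivialization theorem \cite[Theorem 15]{O:JMSUT}. The only addition you make is spelling out the fibre identification (nuclear $\Rightarrow$ injective $\Rightarrow$ hyperfinite via Connes, no finite dimensional quotients $\Rightarrow$ type II$_1$), which the paper records inside Example \ref{StrictCompletion} and uses implicitly.
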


\bigskip

\subsection{\sc Strict comparison of relative commutant sequence algebras for McDuff bundles}\hfill  \\

\label{sec4.3}

\noindent
In this subsection we show that relative commutants of  $\|\cdot\|_{2,u}$-separable subalgebras of ultraproducts of strictly separable, McDuff $\mathrm{W}^*$-bundles have strict comparison of positive elements (Lemma \ref{lem:W*StrictComp}), which we later apply to obtain strict comparison results for relative commutant $\mathrm C^*$-algebras (Theorem \ref{thm:StrictCompTraceSurj}).
The strategy is to use a partition of unity argument similar to that of \cite[Lemma 14]{O:JMSUT} to extend this local structural result (all fibres, being von Neumann algebras, have strict comparison) to a global result; we use this argument again later (Proposition \ref{lem:TraceImpliesUE} and Lemma \ref{lem:GoodTraceMaps}) and so set up a general technical statement (Lemma \ref{lem:W*RealizingTypes}) which can be used in all occurrences.  We begin with two preparatory lemmas, producing the required partitions of unity.

\begin{lemma}\label{W*SC:Lem1}
Let $\M$ be a strictly separable, McDuff $\mathrm{W}^*$-bundle over a compact metrizable space $K$.  Fix $x_1,\dots,x_r\in\M$ and $\eps>0$. Given a partition of unity $f_1,\dots,f_l\in C(K)_+$, there exist projections $p_1,\dots,p_l\in \M$ with sum $1$ such that
\begin{enumerate}[(i)]
\item\label{W*SC:Lem1.1} $\|[p_i,x_j]\|_{2,u}<\eps$ for $i=1,\dots,l$ and $j=1,\dots,r$;\label{Partition.C1}
\item $\tau_\lambda(p_i)=f_i(\lambda)$, for $i=1,\dots,l$ and $\lambda\in K$;\label{Partition.New}
\item\label{W*SC:Lem1.3} $|\tau_\lambda(p_ix_j)-f_i(\lambda)\tau_\lambda(x_j)|<\eps$ for $i=1,\dots,l$, $j=1,\dots,r$, and $\lambda\in K$.\label{Partition.C2}
\end{enumerate}
\end{lemma}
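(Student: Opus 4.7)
The plan is to use the McDuff hypothesis to pass to an isomorphism $\Phi:\M\to\M\vnotimes\R^{\otimes\infty}$ of $\mathrm W^*$-bundles and to construct the $p_i$'s inside a trivial sub-bundle. Iterating the McDuff isomorphism (combined with $\R\vnotimes\R\cong\R$) produces such a $\Phi$; since $\bigcup_{m\geq 1}\M\vnotimes\R^{\otimes m}$ is $\|\cdot\|_{2,u}$-dense in the infinite tensor product, one can choose $m$ and elements $y_j\in\M\vnotimes\R^{\otimes m}$ with $\|\Phi(x_j)-y_j\|_{2,u}<\eps/4$. Setting $\R_\mathrm{tail}:=1_\M\otimes 1_{\R^{\otimes m}}\otimes\R^{\otimes\infty}\cong\R$, this subalgebra commutes exactly with $\M\vnotimes\R^{\otimes m}$, and together with the central $C(K)\subset\M$ generates a trivial $\mathrm W^*$-sub-bundle $C(K)\vnotimes\R_\mathrm{tail}\cong C_\sigma(K,\R)$ of $\M\vnotimes\R^{\otimes\infty}$.

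Inside this trivial sub-bundle I would construct the $p_i$ fibrewise. Fix a trace-preserving unital embedding $L^\infty([0,1])\hookrightarrow\R_\mathrm{tail}$ (Lebesgue measure corresponding to $\tau_\R$), let $s_i(\lambda):=\sum_{j\leq i}f_j(\lambda)$, and define $p_i(\lambda):=\chi_{(s_{i-1}(\lambda),s_i(\lambda)]}\in L^\infty([0,1])$. The estimate $\|\chi_{(a,b]}-\chi_{(a',b']}\|_2\leq (|a-a'|+|b-b'|)^{1/2}$ ensures that $\lambda\mapsto p_i(\lambda)$ is $\|\cdot\|_2$-continuous, so $p_i\in C_\sigma(K,\R_\mathrm{tail})$; it is a projection with $\tau_\lambda(p_i)=f_i(\lambda)$, and $\sum_i p_i=1$. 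Setting $\tilde p_i:=\Phi^{-1}(p_i)\in\M$ gives projections in $\M$ summing to $1$, and since $\Phi$ is a $\mathrm W^*$-bundle isomorphism it preserves $E$, so $\tau_\lambda(\tilde p_i)=f_i(\lambda)$ exactly, giving condition (ii).

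Verification of (i) and (iii) reduces to the fact that $p_i$ commutes exactly with, and is tracially independent of, $y_j$ (tensor factors commute, product trace), so replacing $y_j$ by $\Phi(x_j)$ introduces errors controlled by $\|\Phi(x_j)-y_j\|_{2,u}$: for (i), $[p_i,\Phi(x_j)]=[p_i,\Phi(x_j)-y_j]$ is bounded in $\|\cdot\|_{2,u}$ by the estimate $\|ab\|_{2,u}\leq\|a\|\|b\|_{2,u}$ (and its adjoint form using $\|z^*\|_{2,u}=\|z\|_{2,u}$); for (iii), $\tau_\lambda(p_iy_j)=f_i(\lambda)\tau_\lambda(y_j)$ holds exactly, and the replacement error is handled by fibrewise Cauchy--Schwarz. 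The main subtlety is ensuring \emph{exact} equality in (ii): this hinges on $\Phi$ being a genuine $\mathrm W^*$-bundle isomorphism (so $E$ is preserved on the nose) and on realizing the $p_i$ inside a trivial sub-bundle where the fibrewise trace can be pinned to $f_i(\lambda)$.
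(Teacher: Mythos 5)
Your proof is correct and follows essentially the same strategy as the paper's: approximate the $x_j$ by elements living in a smaller piece of a McDuff decomposition of $\M$, realize the $p_i$ inside a commuting copy of $\R$ via characteristic functions in a MASA identified with $L^\infty[0,1]$ (so that the fibrewise traces hit $f_i(\lambda)$ exactly), and control conditions (i) and (iii) by the approximation error. The only cosmetic difference is that the paper works with a single tensor copy $\M\vnotimes\R$ and takes the commuting copy of $\R$ to be the relative commutant $M_k'\cap\R$ of a finite-dimensional approximation, whereas you pass to $\M\vnotimes\R^{\otimes\infty}$ and take a tensor tail; these two devices are interchangeable.
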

\begin{proof}
We work in $\M\vnotimes \R$, so suppose $x_1,\dots,x_r\in \M\vnotimes\R$ and $\eps>0$ are given. By hyperfiniteness we can choose a matrix subalgebra $M_k$ of $\R$ and $y_1,\dots,y_r \in \M\vnotimes M_k\subset \M\vnotimes\R$ such that $\|x_j-y_j\|_{2,u}<\frac{\eps}{2}$ for each $j$.  Set $\mathcal S:=M_k'\cap \R$, which is another copy of the hyperfinite II$_1$ factor.  This gives us a $\mathrm W^*$-bundle embedding $\theta:C_\sigma(K,\mathcal S)\hookrightarrow (\M\vnotimes\R)\cap \{y_1,\dots,y_r\}'$ such that 
\begin{equation}
\tau_\lambda(y_i\theta(f))=\tau_\lambda(y_i)\tau_\lambda(f),\quad i=1,\dots,r,\ f\in C_\sigma(K,\mathcal S),\ \lambda\in K,
\end{equation}
arising from the canonical embeddings $C(K)\hookrightarrow \M$ and $\mathcal S\hookrightarrow\R$. Choose a maximal abelian self-adjoint subalgebra $\mathcal A$ in $\mathcal S$. Then $\mathcal A$ can be identified with $L^\infty[0,1]$ in such a way that the restriction of the trace on $\mathcal S$ to $\mathcal A$ is given by integration with respect to Lebesgue measure on $[0,1]$ (see \cite[Theorem 3.5.2]{SS:Book} for example).  With this identification, for  $t\in [0,1]$ let $e_t\in \mathcal A$ be the characteristic function of the interval $[0,t]$ so that $(e_t)_{t\in[0,1]}$ is a monotonically increasing family of projections in $\mathcal A$.  In this way $e_{f_1}$ defines an element of $C_\sigma(K,\mathcal S)$ with $e_{f_1}(\lambda)=e_{f_1(\lambda)}$ for $\lambda\in K$.  Define $p_{1}=\theta(e_{f_1})$. For $i>1$, define $p_{i}$ similarly by
\begin{equation}
p_{i}(\lambda):=\theta(e_{\sum_{j=1}^{i}f_j}-e_{\sum_{j=1}^{i-1}f_j}).
\end{equation}
This defines pairwise orthogonal projections $p_{1},\dots,p_{l}\in \M\cap\{y_1,\dots,y_r\}'$ which sum to $1_\M$, and satisfy $\tau_\lambda(p_{i})=f_i(\lambda)$ for each $\lambda\in K$ and $i=1,\dots,l$ (verifying (\ref{Partition.New})) and $\tau_\lambda(p_iy_j)=f_i(\lambda)\tau_\lambda(y_j)$.   Conditions (\ref{W*SC:Lem1.1}) and (\ref{W*SC:Lem1.3}) follow from the estimate $\|x_j-y_j\|_{2,u}<\frac{\eps}{2}$.
\end{proof}

\begin{lemma}\label{W*SC:Lem2}
Let $(\M_n)_{n=1}^\infty$ be a sequence of  strictly separable, McDuff $\mathrm{W}^*$-bundles over the sequence of compact metrizable spaces $(K_n)_{n=1}^\infty$.  Write $\M^\omega:=\prod^\omega \M_n$ and $K^\omega:=\sum_\omega K_n$, and let $S$ be a $\|\cdot\|_{2,u}^\omega$-separable and self-adjoint subset of $\M^\omega$ containing $1_{\M^\omega}$.  

Given a partition of unity $f_1,\dots,f_l\in C(K^\omega)_+\cong (\prod_\omega C(K_n))_+$, there exist pairwise orthogonal projections $p_1,\dots,p_l\in \M^\omega\cap S'$ with sum $1$ such that
\begin{equation}\label{eq:Partition2Trace}
\tau_\lambda(p_ix)=f_i(\lambda)\tau_\lambda(x),\quad x\in S,\ \lambda\in K^\omega.
\end{equation}
In particular $\tau_\lambda(p_i)=f_i(\lambda)$. 
\end{lemma}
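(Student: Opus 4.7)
The plan is to obtain this as the ultraproduct version of Lemma \ref{W*SC:Lem1}, promoting the fibrewise construction of Lemma \ref{W*SC:Lem1} to $\M^\omega$ by way of Kirchberg's $\eps$-test (Lemma \ref{epstest}). First I would lift the partition of unity $f_1,\dots,f_l \in C(K^\omega) \cong \prod_\omega C(K_n)$ to representing sequences $(f_{i,n})_{n=1}^\infty$ with each $f_{i,n}\in C(K_n)_+$, and then renormalize (replacing $f_{i,n}$ by $f_{i,n}/\sum_j f_{j,n}$ whenever the denominator is bounded below, and by a trivial partition otherwise) so that $(f_{1,n},\dots,f_{l,n})$ is an actual partition of unity in $C(K_n)$ for every $n$ in some set belonging to $\omega$, without affecting the classes modulo $\omega$. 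I would also fix a $\|\cdot\|_{2,u}^\omega$-dense sequence $(x^{(k)})_{k=1}^\infty$ in $S$ and norm-bounded representatives $(x^{(k)}_n)_{n=1}^\infty$ of each $x^{(k)}$.

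Next, taking $X_n$ to be the set of $l$-tuples $(p_{1,n},\dots,p_{l,n})$ of projections in $\M_n$ with $\sum_i p_{i,n} = 1_{\M_n}$, I would introduce the testing functions
\begin{equation}
g^{(k,i)}_n(p_{1,n},\dots,p_{l,n}) := \|[p_{i,n},x^{(k)}_n]\|_{2,u}^{(n)}
\end{equation}
and
\begin{equation}
h^{(k,i)}_n(p_{1,n},\dots,p_{l,n}) := \|E_n(p_{i,n}x^{(k)}_n) - f_{i,n} E_n(x^{(k)}_n)\|_\infty.
\end{equation}
For each $m\in\N$ and $\eps>0$, Lemma \ref{W*SC:Lem1} applied inside each $\M_n$ (with the partition $f_{1,n},\dots,f_{l,n}$ and the elements $x^{(1)}_n,\dots,x^{(m)}_n$) yields a tuple in $X_n$ on which every $g^{(k,i)}_n,h^{(k,i)}_n$ for $k\le m$ is below $\eps$: condition (i) of Lemma \ref{W*SC:Lem1} handles the $g$-estimates and condition (iii) handles the $h$-estimates, via $\|E_n(\cdot)\|_\infty = \sup_{\lambda\in K_n}|\tau_\lambda(\cdot)|$. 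Kirchberg's $\eps$-test then produces a sequence in $\prod_n X_n$ on which every testing function has $\omega$-limit zero, giving pairwise orthogonal projections $p_1,\dots,p_l\in \M^\omega$ (orthogonality is automatic since projections summing to $1$ are orthogonal) that commute with $S$ and satisfy $E^\omega(p_i x^{(k)}) = f_i \cdot E^\omega(x^{(k)})$ in $C(K^\omega)$ for all $k,i$.

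Finally, I would extend the trace identity from the dense sequence to all of $S$ via the inequality $\|E^\omega(a)\|_\infty \le \|a\|_{2,u}^\omega$, which shows both sides of $E^\omega(p_i \cdot) = f_i \cdot E^\omega(\cdot)$ are $\|\cdot\|_{2,u}^\omega$-continuous in the argument. Evaluating the resulting equality at $\lambda \in K^\omega$ delivers the stated identity $\tau_\lambda(p_i x) = f_i(\lambda)\tau_\lambda(x)$, and specializing $x = 1_{\M^\omega}\in S$ gives $\tau_\lambda(p_i)=f_i(\lambda)$. I do not expect any real obstacle beyond the bookkeeping around lifting the partition of unity and confirming that orthogonality passes to the ultraproduct: Lemma \ref{W*SC:Lem1} already does the substantive analytic work fibrewise, and the $\eps$-test mechanism for transferring approximate constructions to $\M^\omega$ is precisely the tool set up for this purpose in the preliminaries.
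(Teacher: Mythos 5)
Your proof is correct and follows essentially the same route as the paper: lift the partition of unity to an honest partition at each level, apply Lemma \ref{W*SC:Lem1} fibrewise with shrinking tolerances, and pass to the ultraproduct. The paper does the passage by a direct diagonal argument (tolerance $1/n$ on the first $n$ elements of the dense sequence, then take the resulting sequence of projections as the representative) rather than by explicitly invoking Kirchberg's $\eps$-test, and lifts the partition via Choi--Effros rather than by hand, but these are cosmetic variations on the same argument.
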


\begin{proof}
Let $(x_j)_{j=1}^\infty$ be a $\|\cdot\|_{2,u}^\omega$-dense sequence in $S$  and let $x_j$ be represented by a uniformly bounded sequence $(x_j^{(n)})_{n=1}^\infty$ with $x_j^{(n)}\in \M_n$.  Lift each $f_i$ to a sequence of representatives $(f_i^{(n)})_{n=1}^\infty$ in $C(K_n)_+$ so that $f_1^{(n)},\dots,f_l^{(n)}$ is a partition of unity in $C(K_n)_+$ for each $n$ (for example by using Choi-Effros to lift the u.c.p.\ map $\mathbb C^l\rightarrow \prod_\omega C(K_n)$ given by $(a_1,\dots,a_l)\mapsto \sum_{i=1}^la_if_i$ to $\prod_{n=1}^\infty C(K_n)$).

By Lemma \ref{W*SC:Lem1}, for each $n$, there are pairwise orthogonal projections $p_1^{(n)},\dots,p_l^{(n)}$ in $\M_n$ with 
\begin{align}
&\|[p_i^{(n)},x_j^{(n)}]\|_{2,u}^{(n)}<\frac{1}{n}\label{L3.14:3.17},\\
&\tau_{\lambda_n}(p_i^{(n)})=f_i^{(n)}(\lambda_n),\label{L3.14:3.18}\\
&|\tau_{\lambda_n}(p_i^{(n)}x_j^{(n)})-f_i^{(n)}(\lambda_n)\tau_{\lambda_n}(x^{(n)}_j)|<\frac{1}{n},\label{L3.14:3.19}
\end{align}
for $i=1,\dots,l$, $j=1,\dots,n$ and $\lambda_n\in K_n$.  Let $p_i$ be the projection in $\M^\omega$ represented by $(p_i^{(n)})_{n=1}^\infty$, so that $p_1,\dots,p_l$ are pairwise orthogonal projections which commute with $S$ by (\ref{L3.14:3.17}).  These satisfy $\tau_\lambda(p_i)=f_i(\lambda)$ for $\lambda\in \prod_\omega K_n$ by (\ref{L3.14:3.18}) and thus for all $\lambda\in \sum_\omega K_n$ by the density of $\prod_\omega K_n$ in $\sum_\omega K_n$.  Similarly, (\ref{L3.14:3.19}) gives $\tau_\lambda (p_ix)=f_i(\lambda)\tau_\lambda(x)$ for all $x\in S$ and $\lambda\in \sum_\omega K_n$.
\end{proof}

We now come to the technical statement which allows us to transfer structural properties from fibres of an ultraproduct of McDuff bundles to the entire bundle. In the statement below we have arranged that the number of variables in the polynomials $h_n$ corresponds to $n$ only to simplify notation.

\begin{lemma}
\label{lem:W*RealizingTypes}
Let $(\M_n)_{n=1}^\infty$ be a sequence of strictly separable, McDuff $\mathrm{W}^*$-bundles over compact spaces $K_n$. Write $\M^\omega:=\prod^\omega \M_n$,  $K^\omega:=\sum_\omega K_n$ and let $E^\omega: \mathcal{M}^{\omega} \to C(K^{\omega})$ be the expectation from Proposition \ref{UltraproductBundles}.
For each $n \in \mathbb{N}$ let $h_n(x_1,\dots,x_n,z_1,\dots,z_n)$ be a $^*$-polynomial in $2n$ noncommuting variables.
Let $(a_i)_{i=1}^\infty$ be a sequence from $\M^\omega$.
Suppose that, for every $\eps > 0$, $k \in \N$, and $\lambda \in K^\omega$, there exist contractions $y^\lambda_1,\dots,y^\lambda_k \in \pi_\lambda(\M^\omega)$ such that
\begin{equation}
\label{eq:W*RealizingTypesFibre}
|\tau_{\pi_\lambda(\M^\omega)}(h_m(\pi_\lambda(a_1),\dots,\pi_\lambda(a_m),y^\lambda_1,\dots,y^\lambda_m))| < \eps,\quad m=1,\dots,k.
\end{equation}
Then there exist contractions $y_i \in \M^\omega$ for $i\in\N$ such that
\begin{equation}\label{ne.3.22}
E^\omega(h_m(a_1,\dots,a_m,y_1,\dots,y_m)) =0,\quad m\in\N.
\end{equation}
\end{lemma}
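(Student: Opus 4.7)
The plan is to establish the statement in two stages: for each $\eps>0$ and $k\in\N$ produce contractions $y_1,\ldots,y_k\in\M^\omega$ with
\[
|\tau_\lambda(h_m(a_1,\ldots,a_m,y_1,\ldots,y_m))|<\eps,\quad \lambda\in K^\omega,\ m\leq k,
\]
(an approximate version of \eqref{ne.3.22}), and then pass to the exact conclusion via Kirchberg's $\eps$-test (Lemma \ref{epstest}). For the latter, lift each $a_i$ to a representative sequence $(a_{i,n})_n$ in $\M_n$, take $X_n$ to be the set of sequences $(y_{i,n})_{i=1}^\infty$ of contractions in $\M_n$, and set $f^{(m)}_n((y_{i,n})_i):=\|E_n(h_m(a_{1,n},\ldots,a_{m,n},y_{1,n},\ldots,y_{m,n}))\|$. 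Then $f^{(m)}_\omega$ computes $\|E^\omega(h_m(a,y))\|$ for the induced sequence, so, after extending the approximate tuple by zeros and lifting coordinatewise, the $\eps$-test furnishes $(y_{i,n})\in\prod_n X_n$ with $f^{(m)}_\omega=0$ for all $m$ --- i.e., contractions $y_i\in\M^\omega$ satisfying \eqref{ne.3.22}.

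For the approximate stage, fix $\eps>0$ and $k\in\N$. Using the hypothesis at each $\lambda\in K^\omega$, choose fibrewise contractions $y^\lambda_1,\ldots,y^\lambda_k\in\pi_\lambda(\M^\omega)$ with $|\tau_{\pi_\lambda(\M^\omega)}(h_m(\pi_\lambda(a_1),\ldots,\pi_\lambda(a_m),y^\lambda_1,\ldots,y^\lambda_m))|<\eps/2$ for $m\leq k$; since $\pi_\lambda\colon\M^\omega\to\pi_\lambda(\M^\omega)$ is a surjective ${}^*$-homomorphism of $\mathrm C^*$-algebras, contractive lifts $\tilde y^\lambda_1,\ldots,\tilde y^\lambda_k\in\M^\omega$ exist. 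The function $\lambda'\mapsto\tau_{\lambda'}(h_m(a,\tilde y^\lambda))=E^\omega(h_m(a,\tilde y^\lambda))(\lambda')$ is continuous on $K^\omega$, so there is an open neighborhood $U_\lambda\ni\lambda$ on which it has absolute value less than $\eps$ for every $m\leq k$. Compactness of $K^\omega$ extracts a finite subcover $U_{\lambda_1},\ldots,U_{\lambda_l}$ with subordinate partition of unity $f_1,\ldots,f_l\in C(K^\omega)_+$. Apply Lemma \ref{W*SC:Lem2} with $S\subset\M^\omega$ the countable self-adjoint set containing $1_{\M^\omega}$, the $a_i$'s, the $\tilde y^{\lambda_j}_i$'s, the polynomial values $h_m(a_1,\ldots,a_m,\tilde y^{\lambda_j}_1,\ldots,\tilde y^{\lambda_j}_m)$ for $1\leq m\leq k$, $1\leq j\leq l$, and all of their adjoints, to obtain pairwise orthogonal projections $p_1,\ldots,p_l\in\M^\omega\cap S'$ summing to $1$ with $\tau_\lambda(p_jx)=f_j(\lambda)\tau_\lambda(x)$ for $x\in S$. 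Set $y_i:=\sum_j p_j\tilde y^{\lambda_j}_i$; this is a contraction because $y_i^*y_i=\sum_j p_j(\tilde y^{\lambda_j}_i)^*\tilde y^{\lambda_j}_i\leq\sum_j p_j=1$. Since the $p_j$'s commute with everything in $S$, are pairwise orthogonal, and sum to $1$, a monomial-by-monomial expansion gives
\[
h_m(a_1,\ldots,a_m,y_1,\ldots,y_m)=\sum_{j=1}^l p_j\,h_m(a_1,\ldots,a_m,\tilde y^{\lambda_j}_1,\ldots,\tilde y^{\lambda_j}_m),
\]
so $|\tau_\lambda(h_m(a,y))|\leq\sum_j f_j(\lambda)|\tau_\lambda(h_m(a,\tilde y^{\lambda_j}))|<\eps$, since $f_j$ vanishes outside $U_{\lambda_j}$.

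The main technical obstacle is the bookkeeping around $S$: Lemma \ref{W*SC:Lem2} only guarantees the multiplicativity identity $\tau_\lambda(p_jx)=f_j(\lambda)\tau_\lambda(x)$ for $x$ in the specified countable set (not for arbitrary elements of the $C^*$-algebra it generates), so the relevant polynomial values $h_m(a,\tilde y^{\lambda_j})$ must be inserted into $S$ by hand; likewise the monomial expansion requires care because $h_m$ is not homogeneous --- cross products $p_{j_1}\cdots p_{j_s}$ vanish off the diagonal in any $y$-containing monomial, but monomials involving only $a$-variables absorb $\sum_j p_j=1$ thanks to $[p_j,a_i]=0$.
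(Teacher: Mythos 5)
Your proof is correct and follows essentially the same route as the paper: reduce to an approximate version via Kirchberg's $\eps$-test, then at each fibre $\lambda$ lift fibrewise witnesses $y^\lambda_i$ to contractions in $\M^\omega$, use compactness of $K^\omega$ to extract a finite partition of unity, apply Lemma \ref{W*SC:Lem2} to get approximately central pairwise orthogonal projections respecting that partition, and glue. The only differences are cosmetic: you phrase the $\eps$-test functions via $\|E_n(\cdots)\|$ rather than $\sup_{\lambda_n\in K_n}|\tau_{\lambda_n}(\cdots)|$ (the same quantity), which lets you skip the explicit density argument from $\prod_\omega K_n$ to $\sum_\omega K_n$, and you spell out a couple of small verifications (contractivity of $y_i$, surjective $^*$-homomorphisms lift contractions, the monomial bookkeeping) that the paper leaves implicit.
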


Recall that while $\tau_\lambda$ is the trace on $\M^\omega$ given by composing the conditional expectation $E^\omega:\M^\omega\to C(K^\omega)$ with evaluation at $\lambda$, the trace $\tau_{\pi_\lambda(\M^\omega)}$ appearing in the above statement is the trace on the fibre $\pi_\lambda(\M^\omega)$, so that
\begin{equation}
\xymatrix{
\M^\omega \ar[r]^{\pi_\lambda} \ar[dr]_{\tau_\lambda} & \pi_\lambda(\M^\omega) \ar[d]^{\tau_{\pi_\lambda(\M^\omega)}} \\
& \mathbb C
} \end{equation}
commutes.
\begin{proof}
Using Kirchberg's $\eps$-test (Lemma \ref{epstest}) with $X_n$ equal to the product of $n$ copies of the unit ball of $\M_n$, it suffices to find for each $k\in\N$ and $\eps>0$, contractions $y_1,\dots,y_k\in\M^\omega$ such that
\begin{equation}\label{e.3.24}
|\tau_\lambda(h_m(a_1,\dots,a_m,y_1,\dots,y_m))| <\eps,\quad m=1,\dots,k,\ \lambda \in K^\omega.
\end{equation}
Once this is done, take bounded representative sequences $(a_{i,n})_{n=1}^\infty$ of each $a_i$, and define $f^{(m)}_n:X_n\rightarrow [0,\infty)$ by
\begin{align}
&f^{(m)}_n(y_{1,n},y_{2,n},\dots,y_{n,n})\\
&:=\begin{cases}\sup_{\lambda_n\in K_n}|\tau_{\lambda_n}(h_m(a_{1,n},\dots,a_{m,n},y_{1,n},\dots,y_{m,n}))|,&m\leq n;\\0,&m>n.\end{cases}\nonumber
\end{align}
Lift the contractions $y_i$ in $\M^\omega$ satisfying (\ref{e.3.24}) to sequences $(y_{i,n})_{n=1}^\infty$ of contractions, so that (\ref{e.3.24}) ensures that
\begin{equation}
\lim_{n\rightarrow\omega}f^{(m)}_n(y_{1,n},\dots,y_{m,n},0,\dots,0)\leq\eps,\quad m\leq k.
\end{equation}
Thus the $\eps$-test gives us contractions $(\tilde{y}_i)_{i=1}^\infty\in\M^\omega$ with contractive lifts $(\tilde{y}_{i,n})_{n=1}^\infty$, such that
\begin{equation}
\lim_{n\rightarrow\omega}f^{(m)}_n(\tilde{y}_{1,n},\dots,\tilde{y}_{n,n})=0,\quad m\in\N.
\end{equation}
From this we have 
\begin{equation}\label{ne.3.28}
|\tau_\lambda(h_m(a_1,\dots,a_m,\tilde{y}_1,\dots,\tilde{y}_m))|=0,\quad m\in\N,\ \lambda\in \prod_\omega K_n,
\end{equation}
whence the density of $\prod_\omega K_n$ in $\sum_\omega K_n$ gives (\ref{ne.3.28}) for all $\lambda\in K^\omega$, which is precisely 
(\ref{ne.3.22}).

We now verify (\ref{e.3.24}), so fix $k\in\N$ and $\eps>0$. For each $\lambda\in K^\omega$, by hypothesis there are contractions $y_1^\lambda,\dots,y_k^\lambda\in\pi_\lambda(\M^\omega)$ such that \eqref{eq:W*RealizingTypesFibre} holds.  Lift these to contractions $\tilde{y}_1^\lambda,\dots,\tilde{y}_k^\lambda\in \M^\omega$ and so there is an open neighbourhood $U_\lambda$ of $\lambda$ in  $K^\omega$ such that 
\begin{equation}\label{L3.15:3.33}
|\tau_\mu(h_m(a_1,\dots,a_m,\tilde{y}^\lambda_1,\dots,\tilde{y}^\lambda_m))| < \eps,\quad m=1,\dots,k,\ \mu \in U_\lambda.
\end{equation}
By compactness, find $\lambda_{1},\dots,\lambda_{l}\in K^\omega$ so that $\bigcup_{i=1}^l U_{\lambda_{i}}$ covers $K^\omega$, and then let $f_{1},\dots,f_{l}\in C(K^\omega)_+$ be a partition of unity subordinate to this cover.
Set 
\begin{eqnarray}
\nonumber
T & := &
\{a_1,\dots,a_m,a_1^*,\dots,a_m^*\} \nonumber \\
& & \cup \{\tilde{y}^{\lambda_i}_j,\tilde{y}^{\lambda_i}_j{}^*\mid i =1,\dots,l,\ j=1,\dots,k\} \nonumber \\
& &\cup \{h_m(a_1,\dots,a_m,\tilde{y}^{\lambda_i}_1,\dots,\tilde{y}^{\lambda_i}_m) \mid i=1,\dots,l,\ m=1,\dots,k\}\nonumber\\
& &\cup \{h_m(a_1,\dots,a_m,\tilde{y}^{\lambda_i}_1,\dots,\tilde{y}^{\lambda_i}_m)^* \mid i=1,\dots,l,\ m=1,\dots,k\},
\end{eqnarray}
and then use Lemma \ref{W*SC:Lem2} to find pairwise orthogonal projections $p_{1},\dots,p_{l}\in\M^\omega \cap T'$ which sum to $1_{\M^\omega}$ and satisfy \eqref{eq:Partition2Trace} for all $x \in T$ and $\lambda \in K^\omega$.
For $j=1,\dots,k$, set $y_j := \sum_{i=1}^l p_i\tilde{y}^{\lambda_i}_j$.
Since the $p_i$ are pairwise orthogonal and commute with the $\tilde{y}^{\lambda_i}_j$, $y_j$ is a contraction.  Further, since $p_1,\dots,p_l$ is an orthogonal partition of unity commuting with $T$, we have
\begin{align}
\label{eq:W*RealizingTypesStep1}
\notag
h_m(a_1,\dots,a_m,y_1,\dots,y_m) &= \sum_{i=1}^l p_i h_m(a_1,\dots,a_m,y_1,\dots,y_m) \\
\notag
&= \sum_{i=1}^l p_ih_m(a_1,\dots,a_m,p_iy_1,\dots,p_iy_m) \\
&= \sum_{i=1}^l p_ih_m(a_1,\dots,a_m,\tilde{y}^{\lambda_i}_1,\dots,\tilde{y}^{\lambda_i}_m).
\end{align}

For $\lambda \in K^\omega$, as the support of $f_i$ is contained in $U_{\lambda_i}$, if $f_i(\lambda) \neq 0$, then (\ref{L3.15:3.33}) ensures that
\begin{equation}
\label{eq:W*RealizingTypesStep2}
|\tau_\lambda(h_m(a_1,\dots,a_m,\tilde{y}^{\lambda_i}_1,\dots,\tilde{y}^{\lambda_i}_m))| < \eps,\quad m=1,\dots,k.
\end{equation}
Therefore, for $\lambda \in K^\omega$,
\begin{eqnarray}
\lefteqn{ |\tau_\lambda(h_m(a_1,\dots,a_m,y_1,\dots,y_m))| } \notag \\ 
&\stackrel{\eqref{eq:W*RealizingTypesStep1}}\leq& \sum_{i=1}^l |\tau_\lambda(p_ih_m(a_1,\dots,a_m,\tilde{y}^{\lambda_i}_1,\dots,\tilde{y}^{\lambda_i}_m))| \notag \\
&\stackrel{\eqref{eq:Partition2Trace}}=& \sum_{i=1}^l f_i(\lambda) |\tau_\lambda(h_m(a_1,\dots,a_m,\tilde{y}^{\lambda_i}_1,\dots,\tilde{y}^{\lambda_i}_m))| \notag \\
&\stackrel{\eqref{eq:W*RealizingTypesStep2}}<& \sum_{i=1}^l f_i(\lambda) \eps = \eps,
\end{eqnarray}
as required.
\end{proof}
\begin{remark}\label{CommRemark}
Note that, for example, given a $\|\cdot\|_{2,u}^\omega$-separable and self-adjoint subset $S\subset \M^\omega$, we can encode the relation $y\in S'$ by the conditions $(\|[y,s_n]\|_{2,u}^\omega)^2=\|E^\omega((ys_n-s_ny)^*(ys_n-s_ny))\|=0$ for a dense sequence $(s_n)_{n=1}^\infty$ from $S$.  In this way we can use Lemma \ref{lem:W*RealizingTypes} to construct elements inside relative commutants, as in the following strict comparison result.
\end{remark}

\begin{lemma}
\label{lem:W*StrictComp}
Let $(\M_n)_{n=1}^\infty$ be a sequence of strictly separable, McDuff $\mathrm{W}^*$-bundles over compact spaces $K_n$. Write $\M^\omega:=\prod^\omega \M_n$ and $K^\omega:=\sum_\omega K_n$.  Let $S$ be a $\|\cdot\|^\omega_{2,u}$-separable and self-adjoint subset of $\M^\omega$, and let $p$ be a projection in the centre of $\M^\omega\cap S'$ (as happens when $p\in S\cap S'$). Then $p(\M^\omega\cap S')$ has strict comparison of positive elements, as in Definition \ref{defn:StrictComp}.
\end{lemma}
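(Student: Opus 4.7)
The plan is to reduce strict comparison in $p(\M^\omega \cap S')$ to strict comparison in each fibre of $\M^\omega$ (where it holds classically, since the fibres are finite von Neumann algebras), and then to glue the fibrewise solutions via Lemma \ref{lem:W*RealizingTypes}; the McDuff assumption on the $\M_n$ enters the argument only through this lemma.

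Fix contractions $a, b \in M_k(p(\M^\omega \cap S'))_+$ with $d_\tau(a) < d_\tau(b)$ for every $\tau \in T(p(\M^\omega \cap S'))$. Since each $M_k(\M_n) \cong \M_n \vnotimes M_k$ is again a strictly separable McDuff $\mathrm W^*$-bundle over $K_n$, we may replace $\M_n$ by $M_k(\M_n)$, $S$ by $M_k(S) \cup \{e_{ij}\otimes 1\}$, and $p$ by $1_{M_k}\otimes p$ to reduce to $k=1$. By standard Cuntz comparison arguments, it suffices to exhibit, for each $\eps > 0$, some $\delta > 0$ and some $y \in p(\M^\omega \cap S')$ with $y^* g_\delta(b) y = \delta(a-\eps)_+$, since then $(a-\eps)_+$ is Cuntz equivalent to $\delta(a-\eps)_+ \preceq g_\delta(b) \preceq b$. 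A routine semicontinuity/compactness argument on the weak$^*$-compact set $T(p(\M^\omega \cap S'))$ produces $\delta > 0$ such that $\tau(g_{\eps/2}(a)) < \tau(g_\delta(b))$ for every trace, and hence (using $(a-\eps)_+ \vartriangleleft g_{\eps/2}(a)$) $d_\tau((a-\eps)_+) < \tau(g_\delta(b))$.

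For the fibrewise step, fix $\lambda \in K^\omega$ and set $\mathcal N_\lambda := \pi_\lambda(\M^\omega \cap S')''$, a finite von Neumann subalgebra of $\pi_\lambda(\M^\omega)\cap \pi_\lambda(S)'$ which contains $\pi_\lambda(p)$ in its centre. Every normal tracial state $\phi$ on the corner $\pi_\lambda(p) \mathcal N_\lambda$ pulls back along the unital map $\pi_\lambda|_{p(\M^\omega \cap S')}$ to a trace on $p(\M^\omega \cap S')$, so the inequality above transfers to give $d_\phi(\pi_\lambda((a-\eps)_+)) < \phi(\pi_\lambda(g_\delta(b)))$ for every such $\phi$. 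Strict comparison by tracial states in the finite von Neumann algebra $\pi_\lambda(p)\mathcal N_\lambda$ then produces a contraction $\widetilde y^\lambda \in \pi_\lambda(p)\mathcal N_\lambda$ with $(\widetilde y^\lambda)^* \pi_\lambda(g_\delta(b))\, \widetilde y^\lambda = \delta \pi_\lambda((a-\eps)_+)$; indeed, one takes a partial isometry $v$ realizing the Murray--von Neumann comparison of support projections and sets $\widetilde y^\lambda = \delta^{1/2} f(\pi_\lambda(g_\delta(b)))\, v\, \pi_\lambda((a-\eps)_+)^{1/2}$ for a suitable bounded Borel function $f$ inverting $g_\delta$ on the relevant spectral subspace. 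Because $\widetilde y^\lambda \in \mathcal N_\lambda$, it automatically commutes with $\pi_\lambda(S)$ and is absorbed on the left by $\pi_\lambda(p)$.

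To glue, apply Lemma \ref{lem:W*RealizingTypes} with ``constants'' $a_1 = g_\delta(b)$, $a_2 = \delta(a-\eps)_+$, $a_3 = p$, followed by a $\|\cdot\|_{2,u}^\omega$-dense sequence $(s_m)_{m \geq 1}$ in $S$, and polynomials $h_m$ in the single meaningful variable $z_1$ encoding, in sum-of-$z^*z$ form, the relations $z_1^* a_1 z_1 = a_2$, $a_3 z_1 = z_1$, and $[z_1, s_i] = 0$ for $i < m$. Using Kaplansky density to $\|\cdot\|_{2,\lambda}$-approximate $\widetilde y^\lambda \in \mathcal N_\lambda$ by elements of $\pi_\lambda(p(\M^\omega \cap S'))$, the hypothesis of Lemma \ref{lem:W*RealizingTypes} is verified at every fibre. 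The lemma produces a contraction $y \in \M^\omega$ such that $E^\omega$ of each $h_m$ vanishes; by faithfulness of $E^\omega$, this gives $y \in p(\M^\omega \cap S')$ and $y^* g_\delta(b) y = \delta(a-\eps)_+$, completing the proof. The main technical subtlety is the fibrewise step: realizing $\widetilde y^\lambda$ inside the correct subalgebra $\pi_\lambda(p)\mathcal N_\lambda$ (rather than the potentially larger $\pi_\lambda(\M^\omega) \cap \pi_\lambda(S)'$), which is what guarantees that the commutator polynomials vanish in trace at that fibre.
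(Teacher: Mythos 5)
Your overall architecture matches the paper's: reduce to $k=1$, use compactness to find a $\delta$ with $\tau(g_{\eps/2}(a)) < \tau(g_\delta(b))$ uniformly, realize the sought element approximately at each fibre, and glue with Lemma~\ref{lem:W*RealizingTypes} (with the commutator and cut-down conditions folded into the polynomials). However, the fibrewise step has a genuine gap, and it is exactly the place where the paper's proof is more careful.

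You encode the single relation $z_1^* g_\delta(b) z_1 = \delta(a-\eps)_+$ (together with the membership constraints), and at each fibre $\lambda$ you propose to produce a \emph{contraction}
\[
\widetilde y^\lambda = \delta^{1/2}\, f\bigl(\pi_\lambda(g_\delta(b))\bigr)\, v\, \pi_\lambda\bigl((a-\eps)_+\bigr)^{1/2},
\]
with ``$f$ a suitable bounded Borel function inverting $g_\delta$ on the relevant spectral subspace.'' No such bounded $f$ exists in general: the spectrum of $c := \pi_\lambda(g_\delta(b))$ can accumulate at $0$ (there is no reason for $b$, let alone its image in the fibre, to have a spectral gap), so any Borel function with $f(s)^2 s = 1$ on the nonzero spectrum of $c$ must behave like $s^{-1/2}$ near $0$ and is unbounded. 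More to the point, the target relation itself is unachievable at the fibre level even up to the $\|\cdot\|_{2,\lambda}$-approximation that Lemma~\ref{lem:W*RealizingTypes} allows, because the constraint $\|\widetilde y^\lambda\|\leq 1$ forces $\widetilde y^{\lambda*} c\, \widetilde y^\lambda \leq c$, and so $\widetilde y^{\lambda*} c\, \widetilde y^\lambda$ is squeezed below the small spectral values of $c$, whereas $\delta \pi_\lambda((a-\eps)_+)$ is not. A concrete obstruction: in $L^\infty([0,1])$ with Lebesgue measure, take $c(t)=t$ and $x(t)=\chi_{[0,1/3]}(t)$ (so $d_\phi(x)=1/3 < 1/2 = \phi(c)$). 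Any contraction $w$ has $|w(t)|^2 t \leq t$, so $\int_0^1 |\,|w|^2 t - \delta x\,|\,dt \geq \int_0^\delta (\delta-t)\,dt = \delta^2/2$, a fixed positive lower bound independent of $w$. Thus, for your fixed $\delta$, no contraction comes $\eps'$-close (in trace, for small $\eps'$) to satisfying your relation at this fibre, and the hypothesis of Lemma~\ref{lem:W*RealizingTypes} fails for your chosen polynomials.

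The paper sidesteps this by encoding a different pair of relations: $z_1^* z_1 = (a-2\eps)_+$ and $g_{\delta/2}(b)\, z_1 = z_1$ (plus the membership constraints). At each fibre, strict comparison in the finite von Neumann algebra $\mathcal N_\lambda := \pi_\lambda(p)\bigl(\pi_\lambda(\M^\omega)\cap\pi_\lambda(S)'\bigr)$ gives $\pi_\lambda((a-\eps)_+)\preceq \pi_\lambda(g_\delta(b))$, and then R{\o}rdam's \cite[Proposition 2.4]{R:JFA2} provides $r_\lambda$ and $\eta_\lambda > 0$ with $\pi_\lambda((a-2\eps)_+) = r_\lambda^*\bigl(\pi_\lambda(g_\delta(b))-\eta_\lambda\bigr)_+ r_\lambda$; the spectral cutoff at $\eta_\lambda$ is precisely what keeps $y_\lambda := \bigl(\pi_\lambda(g_\delta(b))-\eta_\lambda\bigr)_+^{1/2}r_\lambda$ well-defined and contractive (indeed $y_\lambda^* y_\lambda = \pi_\lambda((a-2\eps)_+)$ is a contraction). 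Since $g_{\delta/2}\cdot g_\delta = g_\delta$, one gets $\pi_\lambda(g_{\delta/2}(b))y_\lambda = y_\lambda$. Gluing then produces $y\in p(\M^\omega\cap S')$ with $y^*y=(a-2\eps)_+$ and $g_{\delta/2}(b)y=y$, giving $(a-2\eps)_+\preceq g_{\delta/2}(b)\preceq b$, and $\eps\to 0$ finishes. If you replace your target relation with this pair, your argument aligns with the paper's and goes through. (The other minor difference --- you work inside $\pi_\lambda(\M^\omega\cap S')''$ rather than $\pi_\lambda(p)(\pi_\lambda(\M^\omega)\cap\pi_\lambda(S)')$, and invoke Kaplansky density which is unnecessary since Lemma~\ref{lem:W*RealizingTypes} only asks for the $y^\lambda_i$ to lie in $\pi_\lambda(\M^\omega)$ --- is harmless but adds nothing.)
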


\begin{proof}
Note that $M_k(p(\M^\omega\cap S'))$ is isomorphic to 
\begin{equation}(p \otimes 1_k)\Big(\big(\prod^\omega \M_n \otimes M_k\big)\cap (S \otimes 1_k)'\Big),
\end{equation} so it suffices to take $k=1$ and positive contractions $a,b\in p(\M^\omega\cap S')$ with $d_\tau(a) < d_\tau(b)$ for all $\tau \in T(p(\M^\omega\cap S'))$ and show $a\preceq b$ in $\M^\omega\cap S'$.

Let $\eps > 0$. With $g_\eps$ as defined in \eqref{eq:gepsDef}, we have
\begin{equation}\label{lem:W*comp.newe}
d_\tau((a-\eps)_+)\leq \tau(g_\eps(a))\leq d_\tau(a)<d_\tau(b),\quad \tau\in T(p(\M^\omega\cap S')).
\end{equation}
As the map $\tau\mapsto \tau(g_\eps(a))$ is continuous and $\tau\mapsto d_\tau(b)$ is the increasing (as $\dl \to 0$) pointwise supremum of continuous functions $\tau \mapsto \tau(g_\dl(b))$, there is $\delta > 0$ such that
\begin{equation}\label{lem:W*comp.e1}
\tau(g_\eps(a))<\tau(g_\delta(b)),\quad \tau\in T(p(\M^\omega\cap S')).
\end{equation}

We shall now use Lemma \ref{lem:W*RealizingTypes} to show that there exists $y \in p(\M^\omega \cap S')$ such that
\begin{equation}
g_{\dl/2}(b)y=y \quad \text{and} \quad y^*y=(a-2\eps)_+.
\end{equation}
We encode the condition $y\in S'$ as in Remark \ref{CommRemark}, and the condition that $py=yp=y$ as $(\|py-y\|_2^\omega)^2=(\|yp-y\|_2^\omega)^2=0$, so that Lemma \ref{lem:W*RealizingTypes} shows that it is sufficient to find, for each $\lambda\in K^\omega$, a contraction $y_\lambda \in \mathcal N_\lambda:=\pi_\lambda(p)\big(\pi_\lambda(\M^\omega) \cap \pi_\lambda(S)'\big)$ such that
\begin{equation}
\pi_\lambda(g_{\dl/2}(b))y_\lambda=y_\lambda \quad \text{and} \quad y_\lambda^*y_\lambda=\pi_\lambda((a-2\eps)_+).
\end{equation}

Fix $\lambda\in K^\omega$ so that $\pi_\lambda(a),\pi_\lambda(b) \in \mathcal N_\lambda$.  There are two cases to consider. Firstly, when $\pi_\lambda(p)=0$, then $\pi_\lambda(a)=\pi_\lambda(b)=0$ and we can take $y_\lambda=0$ also.  Secondly, when $\pi_\lambda(p)\neq 0$, (\ref{lem:W*comp.newe}) and (\ref{lem:W*comp.e1}) give
\begin{equation} d_\tau(\pi_\lambda((a-\eps)_+)) < d_\tau(\pi_\lambda(g_\delta(b)))\quad \tau\in T(\mathcal N_\lambda), \end{equation}
as $\pi_\lambda(p)\neq 0$. Since $\pi_\lambda(\M^\omega)$ is a finite von Neumann algebra (\cite[Theorem 11]{O:JMSUT}), so too is $\mathcal N_\lambda$, which therefore has strict comparison (this is easily seen using the Borel functional calculus and comparison of projections in finite von Neumann algebras), so that $\pi_\lambda((a-\eps)_+)\preceq \pi_\lambda(g_\delta(b))$ in $\mathcal N_\lambda$. Then by \cite[Proposition 2.4]{R:JFA2} there exists $r_\lambda \in \mathcal N_\lambda$ and $\eta_\lambda>0$ such that 
\begin{equation}
\label{L3.17:3.34}
\pi_\lambda((a-2\eps)_+)=r_\lambda^*(\pi_\lambda(g_\delta(b))-\eta_\lambda)_+r_\lambda.
\end{equation}
Set $y_\lambda=((\pi_\lambda(g_\delta(b))-\eta_\lambda)_+)^{1/2}r_\lambda$, so that
\begin{equation}
\pi_\lambda(g_{\delta/2}(b)) y_\lambda = y_\lambda \quad \text{and}\quad y_\lambda^*y_\lambda = \pi_\lambda((a-2\eps)_+), \end{equation}
as required (since $y_\lambda$ is automatically contractive from (\ref{L3.17:3.34})).
\end{proof}

\bigskip

\subsection{\sc Traces on a relative commutant}\hfill  \\

\noindent
In this section, we describe a generating collection of traces on the relative commutant of a separable nuclear $\mathrm{C}^*$-subalgebra in an ultraproduct of McDuff $\mathrm{W}^*$-bundles.  These results are bundle versions of the fact that the central sequence algebra $\R^\omega\cap \R'$ is a II$_1$ factor (and hence has unique trace); see \cite[Theorem XIV.4.18]{Takesaki.3}.  Recall that a trace is normal if it is continuous with respect to strong operator topology on bounded sets of $\M$ (this is independent of the choice of faithful representation). We use $T(\M)$ for the collection of all traces on the finite von Neumann algebra $\M$ (not just the normal traces). Since every trace on $\M$ is of the form $\phi\circ E$ where $E$ is the centre-valued trace on $\M$ (which is normal) and $\phi$ is a state on the centre of $\M$, normal traces are dense in the set of all traces; see \cite[Proposition 8.3.10]{KR:2}. 

\begin{lemma}
\label{lem:vnCommTraces}
Let $\M$ be a finite von Neumann algebra with faithful normal trace $\tau_{\M}$, used to define $\|\cdot\|_2$ on $\M$.
Let $A$ be a separable, unital, nuclear $\mathrm C^*$-algebra and $\phi:A \to \M$ a unital $^*$-homomorphism.
Set $\mathcal N:=\M \cap \phi(A)'$.
Define $T_0$ to be the set of all traces on $\mathcal N$ of the form $\tau(\phi(a)\cdot)$ where $\tau \in T(\M)$ is a normal trace and $a \in A_{+}$ has $\tau(\phi(a))=1$. Suppose that $z \in \mathcal N$ is a contraction for which $\rho(z)=0$ for all $\rho \in T_0$.
Then for any finite subset $\mathcal F$ of $A$ and $\eps > 0$, there exist elements $x_1,\dots,x_{10},y_1,\dots,y_{10} \in \M$ of norm at most 12, such that
\begin{equation}
\label{eq:vnCommTraces10Comm}
 z = \sum_{i=1}^{10} [x_i,y_i]
\end{equation}
and $\|[x_i,\phi(a)]\|_2, \|[y_i,\phi(a)]\|_2 < \eps$ for all $a \in \mathcal F$. 

If additionally $(\M,\tau_\M)$ arises as an ultraproduct of tracial von Neumann algebras, then the weak$^*$-closed convex hull of $T_0$ is $T(\mathcal N)$.
\end{lemma}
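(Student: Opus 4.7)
The plan is to first establish the commutator decomposition (the first statement) using nuclearity of $A$ to reduce to a finite-dimensional approximation of $\phi(A)$ in $\|\cdot\|_2$, combined with a classical theorem that trace-zero contractions in finite von Neumann algebras decompose into a bounded number of commutators. Once this is in hand, the second statement will follow by upgrading approximate decompositions to exact ones via the saturation provided by the ultraproduct.

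First I would rewrite the hypothesis: by linearity and rescaling, $\rho(z) = 0$ for every $\rho \in T_0$ is equivalent to $\tau(\phi(a)z) = 0$ for every normal $\tau \in T(\M)$ and every $a \in A$. Nuclearity of $A$ forces the finite von Neumann subalgebra $\phi(A)'' \subseteq \M$ to be injective and thus hyperfinite by Connes' theorem. Given $\mathcal F$ and $\eps$, I would choose a finite-dimensional ${}^*$-subalgebra $B \subseteq \phi(A)''$ together with the $\tau_\M$-preserving conditional expectation $E_B:\M \to \M \cap B'$ satisfying $\|\phi(a) - E_B(\phi(a))\|_2 < \eps/30$ for all $a \in \mathcal F$ (noting $E_B(\phi(a)) \in B$). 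Since minimal projections of $B$ lie in $\phi(A)''$, normality of normal traces on $\M$ propagates the vanishing hypothesis to $\tau(bz) = 0$ for every $b \in B$ and every normal $\tau \in T(\M)$; decomposing $\M \cap B'$ over the minimal central projections of $B$ then shows that every normal trace on the finite von Neumann algebra $\M \cap B'$ annihilates $z$.

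Next I would invoke a classical quantitative commutator decomposition: any contraction in a finite von Neumann algebra on which all normal traces vanish can be written as $\sum_{i=1}^{10}[x_i, y_i]$ with $\|x_i\|, \|y_i\| \leq 12$ (a refinement of Fack--de la Harpe). Applied inside $\M \cap B'$, this produces $x_i, y_i \in \M \cap B'$ of the required norm. Since $E_B(\phi(a)) \in B$ is central in $\M \cap B'$, we have $[x_i, \phi(a)] = [x_i, \phi(a) - E_B(\phi(a))]$, and hence $\|[x_i, \phi(a)]\|_2 \leq 24\|\phi(a) - E_B(\phi(a))\|_2 < \eps$, and similarly for the $y_i$, completing the first statement. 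The main obstacle will be pinning down the precise commutator decomposition with the stated constants, together with the blockwise factor-versus-product bookkeeping arising from the central decomposition of $\M \cap B'$.

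For the second statement, the ultraproduct hypothesis allows Kirchberg's $\eps$-test (Lemma \ref{epstest}) to promote the sequence of approximate decompositions from the first part (taken over an exhausting sequence of finite subsets $\mathcal F_n \subseteq A$ and $\eps_n \to 0$) to an exact decomposition $z = \sum_{i=1}^{10}[X_i, Y_i]$ with $X_i, Y_i \in \mathcal N$ of norm at most $12$ whenever $\sigma(z) = 0$ for every $\sigma \in T_0$. In particular $\rho(z) = 0$ for every $\rho \in T(\mathcal N)$. A Hahn--Banach duality argument in the weak$^*$-topology on $\mathcal N^*$, applied to the closed positive cone generated by $T_0$ and exploiting the closure of this cone under the rescaling $a \mapsto \lambda a$ inherited from the definition of $T_0$, then yields that every trace on $\mathcal N$ lies in the weak$^*$-closed convex hull of $T_0$.
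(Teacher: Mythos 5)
The strategy is essentially the same as the paper's: use nuclearity of $A$ and Connes' theorem to get hyperfiniteness of $\phi(A)''$, pick a finite-dimensional subalgebra $B\subseteq\phi(A)''$ that is $\|\cdot\|_2$-close to $\phi(\mathcal F)$, apply the Fack--de la Harpe commutator decomposition in $\M\cap B'$, and combine Kirchberg's $\eps$-test with a Hahn--Banach argument for the second part.

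There is, however, a genuine gap in the step producing the finite-dimensional approximation. You invoke the trace-preserving conditional expectation $E_B:\M\to\M\cap B'$ and assert both that $E_B(\phi(a))\in B$ and that $\|\phi(a)-E_B(\phi(a))\|_2<\eps/30$ can be arranged by taking $B$ large. Neither claim holds. For $\phi(a)\in\phi(A)''$, the expectation lands in $\phi(A)''\cap B'$, which is generally much larger than $B$ (for $\phi(A)''=\R$ and $B=M_2$ one has $\phi(A)''\cap B'\cong\R$). More importantly, $\|\phi(a)-E_B(\phi(a))\|_2$ does \emph{not} become small as $B$ grows: if $\phi(a)$ is a non-scalar element of a matrix subalgebra $B$, then $E_B(\phi(a))=\tau_\M(\phi(a))1_\M$, and this stays at fixed positive $\|\cdot\|_2$-distance from $\phi(a)$ for all larger $B$; as $B\nearrow\phi(A)''$, the maps $E_B$ converge to the conditional expectation onto $\M\cap\phi(A)'=\mathcal N$, not to the identity on $\phi(A)''$. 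What the argument needs is either the trace-preserving conditional expectation onto $B$ itself, or --- as the paper does --- simply to take $b\in B$ with $\|\phi(a)-b\|_2$ small, which hyperfiniteness of $\phi(A)''$ provides via a nested sequence of finite-dimensional subalgebras with $\|\cdot\|_2$-dense union. With that repair the rest of your argument runs as in the paper: the Fack--de la Harpe elements $x_i,y_i\in\M\cap B'$ satisfy $\|[x_i,\phi(a)]\|_2=\|[x_i,\phi(a)-b]\|_2\le 2\cdot 12\cdot\|\phi(a)-b\|_2<\eps$. (Your remark that $b\in B$ ``is central in $\M\cap B'$'' is also imprecise --- $b$ need not belong to $\M\cap B'$ --- though the relation $[x_i,b]=0$ for $x_i\in\M\cap B'$, $b\in B$ is correct and is all that is used.)
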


\begin{proof}
Define $S_0$ to be the set of all traces on $\mathcal N$ of the form $\tau(b\cdot)$ where $\tau \in T(\M)$ and $0 \le b \in \M \cap \phi(A)''$, such that $\tau(b)=1$. Let us first show that $S_0$ is the closure of $T_0$.
First given a normal trace $\tau\in T(\M)$ and $b\in \M\cap\phi(A)''$ with $\tau(b)=1$, take a bounded sequence $(a_n)_{n=1}^\infty$ in $\phi(A)$ with $a_n\rightarrow b$ in strong-operator topology by Kaplansky's density theorem. Then $\tau(a_n)\rightarrow \tau(b)=1$, so renormalizing, we can assume that $\tau(a_n)=1$ for all $n$. Further, for each $x\in \M$, $a_nx\rightarrow bx$ in strong operator topology so that $\tau(a_n x)\to\tau(b x)$.
Thus $\tau(b\cdot)$ lies in the weak$^*$-closure of $T_0$.  As the normal traces on $\M$ are weak$^*$-dense in the collection of all traces on $\M$, $T_0$ is weak$^*$-dense in $S_0$. Therefore, our hypothesis on $z$ ensures that $\rho(z)=0$ for all $\rho \in S_0$.

Since $A$ is nuclear, $\M\cap \phi(A)''$ is hyperfinite (by Connes' theorem \cite{C:Ann}) and so we may find a finite dimensional subalgebra $B$ of $\M\cap \phi(A)''$ such that for all $a\in\F$  there exists $b \in B$ such that $\|\phi(a)-b\|_2 < \eps/10$. We shall arrange that $x_i,y_i \in \M \cap B'$; it will then follow from the estimates $\|x_i\|,\|y_i\|\leq 12$, that $\|[x_i,\phi(a)]\|_2, \|[y_i,\phi(a)]\|_2 < \eps$ for $a \in \mathcal F$.

Let $B=\bigoplus_{k=1}^l M_{r_k}$, and let $p_k$ be a minimal projection in $M_{r_k}$, so that $\M \cap B'$ is isomorphic to $p_1\M p_1 \oplus \cdots \oplus p_l\M p_l$ via an isomorphism  $\psi:p_1\M p_1 \oplus \cdots \oplus p_l\M p_l\rightarrow \M\cap B'$
 of the form 
 \begin{equation}
 \psi(x_1,\dots,x_l)=\sum_{k=1}^l\sum_{i=1}^{r_k}e_{i1}^{(k)}x_le_{1i}^{(k)}
 \end{equation} 
 where $(e_{ij}^{(k)})_{i,j=1}^{r_k}$ is a set of matrix units for $M_{r_k}$ with $e_{11}^{(k)}=p_k$. We have that $z \in \M \cap B'$, and by hypothesis, $\tau(p_jz) = 0$ for every $\tau \in T(\M)$, since $\tau(p_j\cdot)$ is a scalar multiple of a trace in $S_0$. 
By \cite[Theorem 3.2]{FackdelaHarpe}, there exist $x_{1,j},\dots,x_{10,j},y_{1,j},\dots,y_{10,j} \in p_j\M p_j$ of norm at most $12$, such that
\begin{equation} p_jz = \sum_{i=1}^{10} [x_{i,j},y_{i,j}]. \end{equation}
Set $x_i=\psi(x_{i,1}\oplus\cdots\oplus x_{i,l})$ and $y_i= \psi(y_{i,1} \oplus \cdots \oplus y_{i,l})$ so that $x_i,y_i\in \M\cap B'$ satisfy
 \eqref{eq:vnCommTraces10Comm}, as required.

Now consider the case when $(\M,\tau_\M)$ is an ultraproduct of tracial von Neumann algebras. This means that we have a sequence $(\M_n)_{n=1}^\infty$ of finite von Neumann algebras with a distinguished faithful trace $\tau_n \in T(\M_n)$ for each $n$, and $\M=\M^\omega:=\prod^\omega \M_n$ is the tracial ultrapower, with the distinguished faithful trace $\tau_\M$ arising from the sequence $(\tau_n)_{n=1}^\infty$.
In this case Kirchberg's $\eps$-test enables us to take $x_i,y_i\in\mathcal N$. Indeed, lift $z$ to a sequence $(z_n)_{n=1}^\infty$ in $\prod_{n=1}^\infty \M_n$, $\phi$ to a sequence of c.p.c.\ maps $\phi_n:A\rightarrow\M_n$ (by the Choi-Effros lifting theorem \cite{CE:Ann}) and take a dense sequence $(a_k)_{k=1}^\infty$ in $A$.  For each $n\in\N$, let $X_n$ denote the set of tuples $(\bar{x}_n,\bar{y}_n)=(x_{1,n},\dots,x_{10,n},y_{1,n},\dots,y_{10,n})$ of elements of $\M_n$ of norm at most $12$, and define functions $f^{(k)}_n:X_n\rightarrow[0,\infty)$ for $k\geq 0$ by
\begin{align}
f^{(0)}_n(\bar{x}_n,\bar{y}_n)&=\|\sum_{i=1}^{10}[x_{i,n},y_{i,n}]-z_n\|_{2,\tau_n}\\ 
f^{(k)}_n(\bar{x}_n,\bar{y}_n)&=\max_{i=1,\dots,10}\big(\|[x_{i,n},\phi_n(a_k)]\|_{2,\tau_n},\|[y_{i,n},\phi_n(a_k)]\|_{2,\tau_n}),\quad k\geq 1.\nonumber
\end{align}
The first part of the lemma ensures that for any $\eps>0$ and $m\in\N$, we can find a sequence $(\bar{x}_n,\bar{y}_n)_{n=1}^\infty$ in $\prod_{n=1}^\infty X_n$ so that $\lim_{n\rightarrow\omega}f^{(k)}_n(\bar{x}_n,\bar{y}_n)\leq \eps$ for $k\leq m$.  Thus the $\eps$-test (Lemma \ref{epstest}) provides a sequence $(\bar{x}_n,\bar{y}_n)_{n=1}^\infty$ in $\prod_{n=1}^\infty X_n$ with $\lim_{n\rightarrow\omega}f^{(k)}_n(\bar{x}_n,\bar{y}_n)=0$ for all $k=0,1,\dots$.  The elements $x_1,\dots,x_{10},y_1,\dots,y_{10}$ in $\M^\omega$ represented by this sequence lie in $\mathcal N$ and have (\ref{eq:vnCommTraces10Comm}). Therefore, $\rho(z)=0$ for every $\rho \in T(\mathcal N)$. Thus by the Hahn-Banach theorem, the weak$^*$-closed convex hull of $T_0$ is $T(\mathcal N)$.
\end{proof}

\begin{prop}
\label{thm:W*CommTraces}
Let $(\M_n)_{n=1}^\infty$ be a sequence of strictly separable, McDuff $\mathrm{W}^*$-bundles over compact spaces $K_n$.
Write $\M^\omega:=\prod^\omega \M_n$ and $K^\omega:=\sum_\omega K_n$.
Let $A$ be a separable, unital, nuclear $\mathrm C^*$-algebra and $\phi:A \to \M^\omega$ a $^*$-homomorphism.
Set $C:=\M^\omega \cap \phi(A)' \cap \{1_{\M^\omega}-\phi(1_A)\}^\perp$.
Define $T_0$ to be the set of all traces on $C$ of the form $\tau(\phi(a)\cdot)$ where $\tau \in T(\M^\omega)$ and $a \in A_+$ with $\tau(\phi(a))=1$.
If $z \in C$ satisfies $\rho(z)=0$ for all $\rho\in T_0$ then there exist elements $x_1,\dots,x_{10},y_1,\dots,y_{10} \in C$ of norm at most 12, such that
\begin{equation}\label{T3.19:3.37}
 z = \sum_{i=1}^{10} [x_i,y_i].
\end{equation}
In particular, $T(C)$ is the closed convex hull of $T_0$.
\end{prop}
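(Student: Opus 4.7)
The plan is to invoke Lemma \ref{lem:W*RealizingTypes} in order to transport a fibrewise version of the desired commutator decomposition (furnished by Lemma \ref{lem:vnCommTraces}) up to $\M^\omega$. Assuming by homogeneity that $\|z\|\le 1$, I will encode the conclusion as a countable system of polynomial identities of the form $E^\omega(h_m(\ldots))=0$, with unknowns $\tilde x_1,\dots,\tilde x_{10},\tilde y_1,\dots,\tilde y_{10}$ to be contractions in $\M^\omega$: one identity records $z=144\sum_{i=1}^{10}[\tilde x_i,\tilde y_i]$, a countable family enforces vanishing of $[\tilde x_i,\phi(a_n)]$ and $[\tilde y_i,\phi(a_n)]$ in $\|\cdot\|_{2,u}^\omega$ for a fixed dense sequence $(a_n)$ in $A$, and a final family imposes the support conditions $\tilde x_i-p\tilde x_ip=0$ and $\tilde y_i-p\tilde y_ip=0$, where $p:=\phi(1_A)$. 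If these equations can be solved inside $\M^\omega$, then $x_i:=12\tilde x_i$, $y_i:=12\tilde y_i$ realise \eqref{T3.19:3.37}. Lemma \ref{lem:W*RealizingTypes} reduces the problem to producing, for each $\lambda\in K^\omega$, each $\eps>0$, and each finite subsystem, approximate solutions inside $\pi_\lambda(\M^\omega)$.

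Fix such $\lambda$ and set $\M_\lambda:=\pi_\lambda(\M^\omega)$, $p_\lambda:=\pi_\lambda(p)$, $\phi_\lambda:=\pi_\lambda\circ\phi$. The case $p_\lambda=0$ is trivial (then $\pi_\lambda(z)=0$), so assume $p_\lambda\neq 0$; I will apply the first part of Lemma \ref{lem:vnCommTraces} to the finite von Neumann algebra $\M_\lambda$ with the (generically non-unital) ${}^*$-homomorphism $\phi_\lambda$ and the contraction $\pi_\lambda(z)\in\M_\lambda\cap\phi_\lambda(A)'$. Although that lemma is stated for unital $\phi$, its proof adapts verbatim: $\phi_\lambda(A)''$ is still hyperfinite by nuclearity of $A$ and Connes's theorem, a finite-dimensional approximant $B\subseteq\phi_\lambda(A)''$ with $1_B=p_\lambda$ serves the same role, and since $(1-p_\lambda)\pi_\lambda(z)(1-p_\lambda)=0$ the Fack--de la Harpe step still applies blockwise on the minimal projections of $B$. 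The required hypothesis---that $\sigma(\phi_\lambda(a)\pi_\lambda(z))=0$ for every normal trace $\sigma$ on $\M_\lambda$ and every $a\in A_+$ with $\sigma(\phi_\lambda(a))=1$---is immediate: $\sigma\circ\pi_\lambda\in T(\M^\omega)$, and the associated functional $(\sigma\circ\pi_\lambda)(\phi(a)\cdot)$ then lies in $T_0$, so the standing hypothesis on $z$ forces $\sigma(\phi_\lambda(a)\pi_\lambda(z))=0$.

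Lemma \ref{lem:vnCommTraces} then produces $x_i^\lambda,y_i^\lambda\in\M_\lambda\cap B'$ of norm at most $12$, with $\pi_\lambda(z)=\sum[x_i^\lambda,y_i^\lambda]$ and approximately commuting with a prescribed finite subset of $\phi_\lambda(A)$. Since $p_\lambda=1_B\in B$, each such element exactly commutes with $p_\lambda$; writing $x_i^\lambda=p_\lambda x_i^\lambda p_\lambda+(1-p_\lambda)x_i^\lambda(1-p_\lambda)$ (and similarly for $y_i^\lambda$), the cross-terms in each commutator vanish, and since $(1-p_\lambda)\pi_\lambda(z)(1-p_\lambda)=0$ the $(1-p_\lambda)$-corner contributes nothing to the overall sum, so I may replace every element by its $p_\lambda$-corner cut. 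This preserves the commutator identity, the norm bound, and the approximate commutation, while forcing exact support under $p_\lambda$. Rescaling by $1/12$ yields contractions satisfying the fibre conditions required by Lemma \ref{lem:W*RealizingTypes}, which in turn delivers the desired $\tilde x_i,\tilde y_i\in\M^\omega$ on the nose. The ``in particular'' statement then follows by Hahn--Banach: every $\tau\in T(C)$ annihilates commutators of $C$-elements, so the first part forces $\tau$ to vanish on the common kernel of $T_0$, placing $\tau$ in the weak$^*$-closed convex hull of $T_0$ by the bipolar theorem.

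The main obstacle I foresee is the fibrewise step. The instinctive alternative---applying Lemma \ref{lem:vnCommTraces} to the unital corner $p_\lambda\M_\lambda p_\lambda$---requires extending normal tracial states from this corner back to $\M_\lambda$ in a way that is preserved on the corner, and this generally fails when $p_\lambda$ lacks full central support in $\M_\lambda$. Working instead with all of $\M_\lambda$ and the non-unital map $\phi_\lambda$, as above, bypasses the extension problem entirely: normal traces on $\M_\lambda$ pull back along $\pi_\lambda$ straight into $T(\M^\omega)$ and thereby produce genuine members of $T_0$.
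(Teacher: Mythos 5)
Your proof is structurally the same as the paper's: invoke Lemma \ref{lem:W*RealizingTypes} to reduce to a fibrewise problem, solve it at each fibre $\M_\lambda:=\pi_\lambda(\M^\omega)$ using Lemma \ref{lem:vnCommTraces}, and close via Hahn--Banach. Your version is correct up to two points worth flagging, both at the fibre step.

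First, the obstruction you raise for the ``instinctive alternative'' is not in fact an obstruction. For a finite von Neumann algebra $\M$ with faithful normal trace and any nonzero projection $p$, \emph{every} normal trace $\sigma$ on $p\M p$ arises as $\tau|_{p\M p}/\tau(p)$ for some normal trace $\tau$ on $\M$ with $\tau(p)>0$. There is no restriction coming from central support: the extension is simply supported on $c(p)\M$ (equivalently, the corresponding state on $Z(\M)$ is supported on $c(p)$). Indeed $Z(p\M p)\cong c(p)Z(\M)$, and the direct-integral / centre-valued-trace picture produces $\tau$ directly. Hence when $\sigma$ is a normal trace on $p_\lambda\M_\lambda p_\lambda$ and $a\in A_+$ with $\sigma(\phi_\lambda(a))=1$, one extends $\sigma$ to $\tau\in T(\M_\lambda)$, sets $\tau':=\tau\circ\pi_\lambda\in T(\M^\omega)$ and $a':=a/\tau(p_\lambda)$, and checks directly that $\tau'(\phi(a')\,\cdot)\in T_0$ restricts on the corner to $\sigma(\phi_\lambda(a)\,\cdot)$. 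This is precisely what makes the paper's direct application of Lemma \ref{lem:vnCommTraces} to the unital map $\phi_\lambda:A\to p_\lambda\M_\lambda p_\lambda$ legitimate, so the route you label as flawed actually works.

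Second, your alternative route does work, but ``verbatim'' is slightly too strong. When $\phi_\lambda$ is nonunital, the finite-dimensional approximant $B\subseteq\phi_\lambda(A)''$ has $1_B=p_\lambda<1_{\M_\lambda}$, and the identification $\M_\lambda\cap B'\cong p_1\M_\lambda p_1\oplus\cdots\oplus p_l\M_\lambda p_l$ used in the proof of Lemma \ref{lem:vnCommTraces} is then wrong as stated: $\M_\lambda\cap B'$ picks up the extra summand $(1-p_\lambda)\M_\lambda(1-p_\lambda)$, and the Fack--de la Harpe step would have to be run there as well. You do in effect handle this---the commutator contribution from that corner is $(1-p_\lambda)\pi_\lambda(z)(1-p_\lambda)=0$, and your corner-cut renders it moot---but this should be stated explicitly rather than asserted as a verbatim transfer. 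With that caveat made explicit, your variant is sound, and the Hahn--Banach derivation of the ``in particular'' statement is correct.
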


\begin{proof}
This is a direct consequence of Lemma \ref{lem:W*RealizingTypes} and Lemma \ref{lem:vnCommTraces}.  The condition (\ref{T3.19:3.37}) can be written as $E^\omega((z- \sum_{i=1}^{10} [x_i,y_i])^*(z- \sum_{i=1}^{10} [x_i,y_i]))=0$, so is of the form required by of Lemma \ref{lem:W*RealizingTypes}. As noted in Remark \ref{CommRemark}, we can also encode the condition $x_i,y_i\in \M^\omega\cap \phi(A)'$ in the required form, and similarly for the condition $x_i,y_i\in \{1_{\M^\omega}-\phi(1_A)\}^\perp$. Lemma \ref{lem:vnCommTraces} shows that we can find the required elements approximately satisfying the required conditions in the images $\pi_\lambda(\M^\omega)\cap \{1_{\pi_\lambda(\M^\omega)}-\pi_\lambda(\phi(1_A))\}^\perp$ for each $\lambda\in K^\omega$, and hence Lemma \ref{lem:W*RealizingTypes} provides suitable $x_1,\dots,x_{10}$ and $y_1,\dots,y_{10}$ in $C$.
\end{proof}

\bigskip

\subsection{\sc Unitary equivalence of maps into ultraproducts}\hfill  \\

\noindent
Though we do not need it in the sequel, we end with another application of the local-to-global structure mechanism which fits the theme of the paper, showing that maps from separable nuclear $\mathrm{C}^*$-algebras into ultraproducts of McDuff bundles are classified by traces.
This is an extension of a well known consequence of the equivalence of hyperfiniteness and injectivity, that two normal $^*$-homomorphisms $\phi,\psi:\M\rightarrow \mathcal N$ from a finite injective von Neumann algebra into a finite von Neumann algebra are approximately unitarily equivalent if and only if $\tau_{\mathcal N}\circ\phi=\tau_{\mathcal N}\circ\psi$  for all normal traces $\tau_{\mathcal N}$ on $\mathcal N$ (see \cite{J:MA}, for the case when $\mathcal N$ is a factor, which also shows that this uniqueness condition characterizes injectivity of $\M$).
Thus, when $A$ is separable and nuclear, two $^*$-homomorphisms $\phi,\psi:A\rightarrow \mathcal N$ with $\tau_{\mathcal N}\circ\phi=\tau_{\mathcal N}\circ\psi$ will be approximately unitarily equivalent, as these homomorphisms will extend to normal $^*$-representations on the finite part of the bidual $A^{**}$ which also agree on the trace (see the discussion before Lemma 2.5 of \cite{BCW:Abel}).  

\begin{prop}
\label{lem:TraceImpliesUE}
Let $(\M_n)_{n=1}^\infty$ be a sequence of strictly separable, McDuff $\mathrm{W}^*$-bundles over compact spaces $K_n$.
Write $\M^\omega:=\prod^\omega \M_n$ and $K^\omega:=\sum_\omega K_n$.  Let $A$ be a separable, nuclear $\mathrm C^*$-algebra, and suppose that $\phi,\psi:A\rightarrow \M^\omega$ are $^*$-homomorphisms such that 
\begin{equation}
\tau\circ\phi=\tau\circ\psi,\quad \tau\text{ a $\|\cdot\|_{2,u}^\omega$-continuous trace on }\M^\omega.
\end{equation}
Then $\phi$ and $\psi$ are unitarily equivalent in $\M^\omega$.
\end{prop}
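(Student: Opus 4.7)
The plan is to encode the desired unitary $u\in\M^\omega$ via a countable family of trace-vanishing polynomial conditions and then appeal to the local-to-global machinery of Lemma \ref{lem:W*RealizingTypes}. Fix a $\|\cdot\|$-dense sequence $(a_i)_{i=1}^\infty$ in $A$. By faithfulness of $E^\omega$, a contraction $u\in\M^\omega$ will be a unitary with $u\phi(a)=\psi(a)u$ for all $a\in A$ precisely when each of the $^*$-polynomials
\begin{equation*}
(1-uu^*)^{2},\qquad (1-u^*u)^{2},\qquad (u\phi(a_i)-\psi(a_i)u)^*(u\phi(a_i)-\psi(a_i)u)
\end{equation*}
has vanishing $E^\omega$-expectation. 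Applying Lemma \ref{lem:W*RealizingTypes} (with the sequence $(\phi(a_i),\psi(a_i))_{i\geq 1}$ playing the role of the $a_i$'s there, and the above polynomials as the $h_m$'s), the task reduces to a fibrewise problem: for each $\lambda\in K^\omega$, each $\eps>0$, and each finite subcollection of the polynomials above, produce a contraction $u_\lambda\in\pi_\lambda(\M^\omega)$ realizing the chosen conditions to within $\eps$ in $\tau_{\pi_\lambda(\M^\omega)}$.

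The local problem amounts to approximate unitary equivalence of the two $^*$-homomorphisms $\pi_\lambda\circ\phi$ and $\pi_\lambda\circ\psi$ from $A$ into the finite von Neumann algebra $\pi_\lambda(\M^\omega)$ with respect to its canonical $2$-norm. Since $A$ is separable and nuclear, both maps will extend to normal $^*$-homomorphisms on the finite part $A^{**}_f$ of the bidual, which is hyperfinite by Connes's theorem. The next step is to translate the hypothesized agreement of $\tau\circ\phi$ and $\tau\circ\psi$ on every $\|\cdot\|_{2,u}^\omega$-continuous trace of $\M^\omega$ into enough normal-trace agreement on the fibre to invoke the classical uniqueness theorem for maps from separable nuclear $\mathrm{C}^*$-algebras into finite von Neumann algebras recalled in the paragraph preceding the proposition. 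The key observation here is that every $z\in Z(\M^\omega)$ gives rise to a $\|\cdot\|_{2,u}^\omega$-continuous trace $x\mapsto\tau_\lambda(zx)$, which restricts in the fibre to the $\tau_{\pi_\lambda(\M^\omega)}$-dominated normal trace $y\mapsto\tau_{\pi_\lambda(\M^\omega)}(\pi_\lambda(z)y)$; the hypothesis therefore forces $\pi_\lambda\circ\phi$ and $\pi_\lambda\circ\psi$ to agree on all such twisted traces, and the classical uniqueness result then supplies the required approximate intertwiner $u_\lambda$.

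The hard part will be ensuring a \emph{sufficient} supply of these twisted traces, i.e.\ that the traces $\tau_{\pi_\lambda(\M^\omega)}(\pi_\lambda(z)\,\cdot)$ arising from $z\in Z(\M^\omega)$ form a separating family of normal traces on $\pi_\lambda(\M^\omega)$. When the fibres $\pi_\lambda(\M^\omega)$ happen to be factors, the unique canonical trace already suffices and the step is immediate. The general case is where McDuff-ness of each $\M_n$ must be used essentially: combining Proposition \ref{DefMcDuff} with a partition-of-unity argument in the vein of Lemma \ref{W*SC:Lem2}, applied now to the centre of each fibre, should allow one to realize an arbitrary central element of $\pi_\lambda(\M^\omega)$ as $\pi_\lambda(z)$ for some $z\in Z(\M^\omega)$, completing the translation. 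Once the fibrewise problem is solved, Lemma \ref{lem:W*RealizingTypes} will assemble the contractions $u_\lambda$ into a contraction $u\in\M^\omega$ satisfying all the polynomial trace-vanishing conditions exactly, and faithfulness of $E^\omega$ will then deliver the desired unitary.
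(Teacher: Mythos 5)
Your overall architecture matches the paper's: encode the unitary $u$ via $E^\omega$-vanishing of a countable family of $^*$-polynomials, invoke Lemma \ref{lem:W*RealizingTypes} to reduce to an approximate intertwiner in each fibre, and then settle the fibrewise problem by the classical uniqueness theorem for $^*$-homomorphisms from separable nuclear $\mathrm C^*$-algebras into finite von Neumann algebras. Where you diverge is in how to verify that $\pi_\lambda\circ\phi$ and $\pi_\lambda\circ\psi$ agree on enough normal traces of the fibre.

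The ``hard part'' you flag---approximating an arbitrary element of $Z(\pi_\lambda(\M^\omega))$ by $\pi_\lambda(z)$ with $z\in Z(\M^\omega)$---is a detour, and it is not clear it would succeed: the transparent central elements of $\M^\omega$, namely those in $C(K^\omega)=\prod_\omega C(K_n)$, are sent by $\pi_\lambda$ to scalars and so yield nothing beyond the canonical fibre trace $\tau_{\pi_\lambda(\M^\omega)}$, while the full $\mathrm C^*$-algebraic centre $Z(\M^\omega)$ is opaque and there is no obvious way to surject it onto the centre of an arbitrary fibre via partitions of unity (which produce \emph{approximately} central elements, not central ones). The simpler route, which is what the paper's proof is implicitly using, is to pull traces back rather than push central elements forward: for \emph{any} normal trace $\rho$ on the fibre $\pi_\lambda(\M^\omega)$, the composite $\rho\circ\pi_\lambda$ is itself a $\|\cdot\|_{2,u}^\omega$-continuous trace on $\M^\omega$. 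Indeed, normality of $\rho$ gives $\sigma$-strong$^*$-continuity on bounded sets, which on bounded sets of the fibre agrees with $\|\cdot\|_{2,\tau_{\pi_\lambda(\M^\omega)}}$-continuity; and $\pi_\lambda$ is $\|\cdot\|_{2,u}^\omega$-to-$\|\cdot\|_{2,\lambda}$-contractive by \eqref{eq:W*Norm} and Proposition \ref{UltraproductBundles}. So the hypothesis applies directly to $\rho\circ\pi_\lambda$, yielding agreement of $\pi_\lambda\circ\phi$ and $\pi_\lambda\circ\psi$ on every normal trace of the fibre. This requires no McDuff-ness (which is only needed to invoke Lemma \ref{lem:W*RealizingTypes} itself), no partition-of-unity argument, and no structure theory of $Z(\M^\omega)$ at all.
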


\begin{proof}
This is a consequence of Lemma \ref{lem:W*RealizingTypes}. Fixing a dense sequence $(x_i)_{i=1}^\infty$ in $A$, we encode a unitary $u$ satisfying $\psi(x)=u\phi(x)u^*$ for all $x\in A$ by the conditions $\|u^*u-1\|_{2,u}^\omega=0$, $\|uu^*-1\|_{2,u}^\omega=0$, and $\|\psi(x_i)-u\phi(x_i)u^*\|_{2,u} ^\omega= 0$ for all $i$.
To verify the hypothesis of Lemma \ref{lem:W*RealizingTypes}, it suffices to show that for any $n\in\N, \eps>0$, and $\lambda \in K^\omega$, there exists a unitary $u_\lambda\in\pi_\lambda(\M^\omega)$ such that
\begin{align}
\|\pi_\lambda(\psi(x_i))-u_\lambda^*\pi_\lambda(\phi(x_i))u_\lambda\|_{2,\tau_{\pi_\lambda(\M^\omega)}} < \eps, \quad i=1,\dots,n.
\end{align}
Such a unitary does exist by the uniqueness result discussed in the paragraph preceding the lemma, since $\pi_\lambda \circ \phi$ agrees with $\pi_\lambda \circ \psi$ on all normal traces on $\pi_{\lambda}(\mathcal M^\omega)$.
\end{proof}

\clearpage\section{Property (SI) and its consequences}
\label{sec:StrictComp}

\noindent
In this section we develop key structural properties for relative commutant sequence algebras, establishing the following theorem.

\begin{thm}
\label{thm:StrictCompTraceSurj}
Let $(B_n)_{n=1}^\infty$ be a sequence of simple, separable, unital, finite, $\mathcal Z$-stable $\mathrm C^*$-algebras with $QT(B_n)=T(B_n)$ for all $n$. Set $B_\omega := \prod_\omega B_n$, and let $J_{B_\omega}$ be the trace-kernel ideal as defined in \eqref{eq:JBdef}.
Let $A$ be a separable, unital, nuclear $\mathrm C^*$-algebra and let $\pi:A\rightarrow B_\omega$ be a c.p.c.\ order zero map such that $\pi(a)$ is full for each nonzero $a\in A$ and the induced map $\bar{\pi}:A \to B_\omega/J_{B_\omega}$ is a ${}^*$-homomorphism.
Set
\begin{equation}
\label{eq:StrictCompCdef}
 C:=B_\omega \cap \pi(A)' \cap \{1_{B_\omega}-\pi(1_A)\}^\perp, \quad \bar{C}:= C/(C\cap J_{B_\omega}).
\end{equation}
Then:
\begin{enumerate}
\item Every trace on $C$ comes from a trace on $\bar{C}$, and in particular, $T(C)$ is a Choquet simplex.
\item If $\partial_e T(B_n)$ is compact for each $n\in\N$, then $C$ has strict comparison of positive elements with respect to traces (as in Definition \ref{defn:StrictComp}).
\item If $\partial_e T(B_n)$ is compact for each $n\in\N$, then the set of traces on $C$ of the form $\tau(\pi(a)\cdot)$ for $\tau\in T(B_\omega)$ and $a\in A_{+}$ with $\tau(\pi(a))=1$, has weak$^*$-closed convex hull $T(C)$.
\end{enumerate}
\end{thm}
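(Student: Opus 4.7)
The plan combines the $\mathrm{W}^*$-bundle structure theory from Section \ref{sec:Bundles} with the version of property (SI) for order zero maps that this section is devoted to developing. Lemma \ref{lem:CentralSurjectivity} identifies the quotient $B_\omega/J_{B_\omega}$ with the $\mathrm{W}^*$-bundle ultraproduct $\M^\omega:=\prod^\omega\overline{B_n}^{\mathrm{st}}$, and under the compactness hypothesis in parts (ii)--(iii), Theorem \ref{OzawaBundles}(ii) identifies each $\overline{B_n}^{\mathrm{st}}$ with the trivial (hence McDuff) bundle $C_\sigma(\partial_e T(B_n),\R)$. The same lemma identifies $\bar C$ with the unital corner $\bar\pi(1_A)\big(\M^\omega\cap\bar\pi(A)'\big)$, so the strict comparison result Lemma \ref{lem:W*StrictComp} and the trace description Proposition \ref{thm:W*CommTraces} become available for $\bar C$; property (SI) will then serve to push these structural properties down to $C$.

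I will tackle part (i) first, as it is used to identify $T(C)$ with $T(\bar C)$ in parts (ii) and (iii). Given $c\in(C\cap J_{B_\omega})_+^1$ and $\tau\in T(C)$, to show $\tau(c)=0$ I would, for each $\eps>0$ and $n\in\N$, use Lemma \ref{lem:Zfacts}(iv) inside $C$ to split a cut-down $(c-\eps)_+$ of $c$ into pieces whose Cuntz class is dominated by $\tfrac{1}{n}[\pi(1_A)]$. Since these cut-downs lie in $J_{B_\omega}$ (and hence have zero trace for every trace on $B_\omega$, by Proposition \ref{NoSillyTraces}), property (SI) for the order zero map $\pi$---applied with a suitable witness element built from $\pi(1_A)$---will provide Cuntz subequivalences in $C$ forcing $n\tau((c-\eps)_+)\le 1$. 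Letting $n\to\infty$ and then $\eps\to 0$ yields $\tau(c)=0$, so every trace on $C$ descends to $\bar C$, whose trace space is automatically a Choquet simplex by \cite[Theorem II.6.8.11]{B:Encyc}.

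For (ii), Lemma \ref{lem:W*StrictComp} applied with $S:=\bar\pi(A)\cup\{1_{\M^\omega}-\bar\pi(1_A)\}$ supplies strict comparison in $\bar C$; given $a,b\in M_k(C)_+$ with $d_\tau(a)<d_\tau(b)$ for all $\tau\in T(M_k(C))$, part (i) transports the inequalities to traces on $M_k(\bar C)$, whence $\bar a\preceq\bar b$ there. This yields an approximate Cuntz witness $x\in M_k(C)$ with $x^*bx$ agreeing with $(a-\eps)_+$ modulo $M_k(J_{B_\omega})$, and a standard (SI) absorption argument upgrades the approximation to a genuine Cuntz subequivalence $(a-2\eps)_+\preceq b$ in $M_k(C)$. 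Part (iii) then follows immediately from (i) together with Proposition \ref{thm:W*CommTraces} applied to the $^*$-homomorphism $\bar\pi:A\to\M^\omega$: every trace on $C$ comes from a trace on $\bar C$, and by the proposition, every trace on $\bar C$ lies in the weak$^*$-closed convex hull of traces of the form $\bar\tau(\bar\pi(a)\,\cdot\,)$ with $\bar\tau\in T(\M^\omega)$; Proposition \ref{NoSillyTraces} ensures such $\bar\tau$ lift to limit traces on $B_\omega$, giving the stated form.

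The principal technical obstacle is formulating and verifying the correct generalization of property (SI) in this setting: the domain $A$ need not be simple, $\pi$ is only a c.p.c.\ order zero map (so $\pi(1_A)$ is a positive contraction rather than a projection), and $\bar C$ sits inside an ultraproduct of $\mathrm{W}^*$-bundles rather than a single finite von Neumann algebra. Once this version of (SI) is in place, together with the strict comparison and trace-description results already established for the bundle ultraproduct, the three claims follow via the routes sketched, as adaptations of the Matui--Sato paradigm.
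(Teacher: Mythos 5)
Your overall strategy---identify $\bar C$ with a relative commutant in the $\mathrm W^*$-bundle ultraproduct via Lemma \ref{lem:CentralSurjectivity}, import strict comparison and the trace description from Section \ref{sec:Bundles}, and pull these back to $C$ through property (SI)---is exactly the paper's. Part (iii) of your sketch is essentially the paper's argument. But your outlines of (i) and (ii) each miss the specific mechanism that makes the (SI) transfer go through.

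For (i), the paper uses a \emph{doubling} argument rather than the \emph{dividing} argument you propose: with $\alpha:=\sup\{\tau(x):x\in(C\cap J_{B_\omega})_+^1\}$, it applies (SI) to produce, from any such $x$, an element $s\in C$ with $s^*s=x$ and $x+ss^*\le f'\in C_+^1$; this gives $\alpha\ge 2\alpha$, hence $\alpha=0$. Your approach---splitting $(c-\eps)_+$ into pieces Cuntz-dominated by $\tfrac1n[\pi(1_A)]$---runs into the problem that the cut-downs furnished by Lemma \ref{lem:Zfacts}(iv) have the form $\pi(1_A)\phi(e_n)$, which commute with $\pi(A)$ but are \emph{not} orthogonal to $1_{B_\omega}-\pi(1_A)$ unless $\pi(1_A)$ is a projection; they therefore do not lie in $C$ and cannot serve as the element $f$ in the (SI) hypothesis. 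Moreover, a Cuntz subequivalence controls $d_\tau$, not $\tau$, so even after fixing the membership issue more work is needed to land back on $\tau(c)$; and invoking Proposition \ref{NoSillyTraces} is beside the point here, since the traces $\tau\in T(C)$ you must handle need not arise from $T(B_\omega)$ at all---establishing that they do (modulo $J_{B_\omega}$) is precisely the content of (i).

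For (ii), the gap is the invocation of ``a standard (SI) absorption argument.'' Strict comparison in $\bar C$ applied directly to $\bar a\preceq\bar b$ uses up \emph{all} of $b$'s room, leaving nowhere to dump the error in $J_{B_\omega}$. The paper's key device is a unital embedding $\bar\phi:M_k\to\bar C\cap\{\bar c\}'$ (lifted to a c.p.c.\ order zero $\phi:M_k\to C\cap\{c\}'$ by projectivity of $C_0((0,1],M_k)$), which splits $c_0$ into a large piece $\phi(1_{M_k}-p)c_0$ and a small-but-uniformly-trace-large piece $\phi(p)c_0$; strict comparison in $\bar C$ is applied to place $\bar b_0$ under the \emph{large} piece, and (SI) with $f:=\phi(p)c_0$ (which satisfies the trace-largeness hypothesis \eqref{eq:relSItr} precisely because $d_\tau(\phi(p))=\tfrac1k d_\tau(\pi(1_A))$) absorbs the resulting error $b_1\in(C\cap J_{B_\omega})_+$. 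Without this matrix splitting you have no candidate $f$ to feed into (SI), so the argument does not close. Identifying and exploiting this embedding is the essential missing idea in your sketch of part (ii).
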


In the situation of Theorem \ref{thm:StrictCompTraceSurj}, the algebra $\bar{C}$ will be a $\mathrm{W}^*$-bundle relative commutant sequence algebra as studied in Section \ref{sec4.3}.  Thus our strategy for proving the theorem follows the approach of \cite[Proposition 3.3(i)]{MS:DMJ} and \cite[Theorem 4.8]{MS:arXiv} in that we aim to lift the structural properties of $\bar{C}$ developed in Section \ref{sec:Bundles} back to $C$. The key tool required to do this is establishing (in Lemma \ref{lem:relSI}) that the c.p.c.\ order zero map $\pi$ has property (SI), in the sense of Definition \ref{def:SI}; this is the subject of Section \ref{sect5.1}.  We then give the proof of Theorem \ref{thm:StrictCompTraceSurj} in Section \ref{sect5.2}. 

The form of the algebra $C$ is critical: sticking to elements upon which $\pi(1_A)$ acts as a unit is needed for property (SI) to hold (see Remark \ref{rmk:relSI}(ii)), yet an algebra $C$ of the form in \eqref{eq:StrictCompCdef} can be used to establish unitary equivalence of order zero maps through the $2\times 2$ matrix trick of Section \ref{sec:Matrix}.

Note too that the collections of traces appearing in (iii) have been used as a critical set of traces in previous work on central sequence algebras; in particular the analogous set of traces of the form $\tau(a\cdot)$ on $\ell^\infty(A)/c_0(A)\cap A'$ play a crucial role in \cite{W:Invent1,W:Invent2} (see \cite[Proposition 5.2]{W:Invent2} for example, as well as \cite[Section 5]{RT:Preprint}).
At least in the $\Z$-stable compact extreme tracial boundary case, part (iii) shows why these are the right traces to consider.

\bigskip

\subsection{\sc Property (SI)}\hfill  \\

\label{sect5.1}

Here we show that the order zero maps appearing in Theorem \ref{thm:StrictCompTraceSurj} have property (SI), in the following sense. 

\begin{defn}[{cf.\ \cite[Definition 4.1]{MS:CMP} and \cite[Lemma 3.2]{MS:DMJ}}]
\label{def:SI}
Let $(B_n)_{n=1}^\infty$ be a sequence of simple, separable, unital, finite, $\mathrm C^*$-algebras with $QT(B_n)=T(B_n)\neq\emptyset$ for all $n$. Set $B_\omega := \prod_\omega B_n$, and define $J_{B_\omega}$ as in \eqref{eq:JBdef}.
Let $A$ be a separable, unital $\mathrm C^*$-algebra, let $\pi:A\rightarrow B_\omega$ be a c.p.c.\ order zero map, and define 
\begin{equation}
C:=B_\omega \cap \pi(A)' \cap \{1_{B_\omega}-\pi(1_A)\}^\perp, \quad \bar{C}:= C/(C\cap J_{B_\omega}).
\end{equation}
The map $\pi$ has \emph{property (SI)} if the following condition holds. For all $e,f\in C_+$ such that $e\in J_{B_\omega}$, $\|f\|=1$ and $f$ has the property that, for every nonzero $a \in A_+$, there exists $\gamma_a > 0$ such that
\begin{equation}
\label{eq:relSItr}
\tau(\pi(a)f^n) > \gamma_a,\quad \tau\in T_\omega(B_\omega),\ n\in\mathbb N, \end{equation}
there exists $s \in C$ such that
\begin{equation}
\label{eq:relSIToShow}
 s^*s = e \quad\text{and}\quad fs=s.
\end{equation}
\end{defn}

\begin{remark}
\label{rmk:relSI-simple}
If, in addition to the conditions above, $A$ is simple, then, as we show below, for $f \in C_+$, the condition \eqref{eq:relSItr} is the same as asking that there exists $\gamma > 0$ such that $\tau(f^n) > \gamma$ for all $\tau\in T_\omega(B_\omega)$ and $n\in\N$ (this is the largeness condition that appears in prior property (SI) theorems in the literature).
In particular:
\begin{enumerate}
\item When $A$ is simple, $B_n=B$ for all $n$, and $\pi$ is a unital ${}^*$-homo\-morphism, then $\pi$ has property (SI) in the sense of Definition \ref{def:SI} if and only if $B$ has property (SI) relative to $\pi(A)$ in the sense of \cite[Lemma 3.2]{MS:DMJ}.
\item When $A$ is simple, $B_n=A$ for all $n$, and $\pi:A\to A_\omega$ is the canonical inclusion, then $\pi$ has property (SI) in the sense of Definition \ref{def:SI} if and only if $A$ has property (SI) in the sense of \cite[Definition 4.1]{MS:CMP}.
\end{enumerate}

To see this, first note that if \eqref{eq:relSItr} holds, then as $f\vartriangleleft\pi(1_A)$, $\tau(f^n) > \gamma_{1_A}$ for all $\tau\in T_\omega(B_\omega)$ and all $n\in\N$.  Conversely, when $A$ is simple and $\tau(f^n)>\gamma$ for all $\tau\in T_\omega(B_\omega)$ and all $n\in\N$, take a nonzero $a\in A_+$. Since $A$ is simple, there exists $y_1,\dots,y_m \in A$ such that $\sum y_jay_j^* = 1_A$.
Let $\hat\pi:A \to B_\omega \cap \{f\}'$ be a supporting c.p.c.\ order zero map for $\pi$ as given by Lemma \ref{lem:SupportingMap}.  Then for $\tau\in T(B_\omega)$, since $f \vartriangleleft \pi(1_A)$,
\begin{eqnarray}
\notag
\tau(f^n)&=&\tau(\pi(1_A)f^n)\\
\notag
&=&\sum_{j=1}^m\tau(\pi(y_jay_j^*)f^n)\\
\notag
&\stackrel{(\ref{eq:Supporting}),\ f\in \hat{\pi}(A)'}{=}&\sum_{j=1}^m\tau(\hat{\pi}(y_j^*)\hat{\pi}(y_j)\pi(a)f^n)\\
\notag
&=&\sum_{j=1}^m\tau(f^{n/2}\pi(a)^{1/2}\hat{\pi}(y_j^*)\hat{\pi}(y_j)\pi(a)^{1/2}f^{n/2})\\
&\leq&\sum_{j=1}^m\|\hat{\pi}(y_j)\|^2\tau(\pi(a)f^n)
\end{eqnarray}
so that \eqref{eq:relSItr} holds with $\gamma_a=\gamma/(\sum_{j=1}^m\|\hat{\pi}(y_j)\|^2)>0$.
\end{remark}

We now state our property (SI) result. While we state it for all c.p.c.\ order maps $\pi$ note that in order for property (SI) to be non-vacuous, it is necessary that $\pi(a)$ is full in $B_\omega$ for each nonzero $a\in A_+$.

\begin{lemma}
\label{lem:relSI}
Let $(B_n)_{n=1}^\infty$ be a sequence of simple, separable, unital, finite, $\mathcal Z$-stable $\mathrm C^*$-algebras with $QT(B_n)=T(B_n)$ for all $n$. Set $B_\omega := \prod_\omega B_n$, and define $J_{B_\omega}$ as in \eqref{eq:JBdef}.
Let $A$ be a separable, unital, nuclear $\mathrm C^*$-algebra.
Then every c.p.c.\ order zero map $\pi:A\rightarrow B_\omega$ has property (SI).
\end{lemma}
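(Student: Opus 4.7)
The proof adapts the Matui--Sato property (SI) argument (cf.\ \cite[Proposition 4.2]{MS:CMP}, \cite[Proposition 3.3]{MS:DMJ}) to the present setting of c.p.c.\ order zero maps $\pi$ with possibly non-simple domain $A$. Fix $e,f \in C_+$ satisfying the hypotheses of Definition \ref{def:SI}. Three ingredients drive the construction: strict comparison in $B_\omega$ with respect to limit traces, obtained by combining R{\o}rdam's theorem (Remark \ref{EQTT}) with Lemma \ref{lem:StrictCompLimTraces}; $\mathcal{Z}$-stability of each $B_n$, furnishing unital copies of $\mathcal{Z}$ in relative commutants of separable subsets of $B_\omega$ via Lemma \ref{lem:Zfacts}(ii); and the supporting order zero map $\hat{\pi}$ for $\pi$ given by Lemma \ref{lem:SupportingMap}, which tames the order-zero nature of $\pi$.

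First, using Lemma \ref{lem:LargeTraceSubordinate}, pass to a positive contraction $\tilde{f} \in C$ with $\tilde{f} \vartriangleleft f$ and still satisfying a tracial largeness condition of the form \eqref{eq:relSItr} (with new positive constants $\gamma'_a$). Since $\tilde{f}s=s$ implies $fs = f\tilde{f}s = \tilde{f}s = s$, it suffices to construct $s \in C$ with $s^*s = e$ and $\tilde{f}s = s$. By Kirchberg's $\eps$-test (Lemma \ref{epstest}), the problem reduces to producing, for each $\eps > 0$ and each finite subset $\mathcal{F} \subset A$, an element $s$ in the unit ball of $B_\omega$ satisfying $\|s^*s - e\| < \eps$, $\|\tilde{f}s - s\| < \eps$, $\|[s, \pi(a)]\|, \|[s, \hat{\pi}(a)]\| < \eps$ for $a\in\mathcal{F}$, and $\|(1_{B_\omega}-\pi(1_A))s\| < \eps$.

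Second, we construct a candidate in $B_\omega$ ignoring the commutation. Since $e \in J_{B_\omega}$, we have $\tau(e) = 0$ and hence $d_\tau(e)=0$ for every $\tau \in T_\omega(B_\omega)$. The tracial largeness of $\tilde{f}$ supplies $d_\tau(g_\eta(\tilde{f})) \ge \gamma'_{1_A}/2 > 0$ uniformly over limit traces for small $\eta>0$, so Lemma \ref{lem:StrictCompLimTraces} yields $e \preceq (\tilde{f}-\eta)_+$ in $B_\omega$, realised by an element $r \in B_\omega$ with $r^*(\tilde{f}-\eta)_+ r$ close to $e$. Setting $s_0 := (\tilde{f}-\eta)_+^{1/2}r$ gives $s_0^*s_0 \approx e$ and $\tilde{f}s_0 = s_0$ (since $(\tilde{f}-\eta)_+ \vartriangleleft \tilde f$), and $(1_{B_\omega}-\pi(1_A))s_0 = 0$ (since $\tilde f \vartriangleleft \pi(1_A)$ forces $\pi(1_A)(\tilde f-\eta)_+ = (\tilde f-\eta)_+$).

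The main obstacle is to replace $s_0 \in B_\omega$, which generally fails to commute with $\pi(A)$, by an element $s$ of the relative commutant satisfying the same relations. Here one carries out the Cuntz-comparison construction of the previous step inside the tensor-$\mathcal{Z}$ picture: using Lemma \ref{lem:Zfacts}(i)-(ii), identify $B_\omega$ with $\prod_\omega(B_n\otimes\mathcal{Z})$ in such a way that a unital copy of $\mathcal{Z}$ lands in $B_\omega \cap (C^*(\pi(A), \hat{\pi}(A), e, f, \tilde{f}))'$, and apply strict comparison to $e \otimes 1_\mathcal{Z}$ and $(\tilde{f}-\eta)_+ \otimes z$ for a suitable contraction $z\in\mathcal{Z}_+$; the resulting element $r$ can then be arranged, via a reindexing applied to the defining sequences in $B_n\otimes\mathcal{Z}$, to approximately commute with $\pi(A)$ and $\hat\pi(A)$ on the finite set $\mathcal{F}$. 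A final invocation of the Kirchberg $\eps$-test from the reduction step then upgrades the approximate solutions to an exact $s\in C$ with $s^*s = e$ and $fs = s$, establishing property (SI) for $\pi$. The difficulty in this last step is controlling the commutation exactly while preserving the relations $s^*s = e$ and $\tilde f s = s$; it is precisely here that $\mathcal{Z}$-stability, nuclearity of $A$ (through Choi--Effros lifting in Lemma \ref{lem:SupportingMap}), and the supporting map $\hat{\pi}$ are jointly essential.
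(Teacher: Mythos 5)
There is a genuine gap at the crucial step, and the proposed repair does not work. You correctly identify the main obstacle: the strict-comparison witness $s_0 = (\tilde f - \eta)_+^{1/2} r$ lives in $B_\omega$ but has no reason to lie near $\pi(A)'$. Your proposed fix---run the Cuntz comparison ``inside the tensor-$\mathcal{Z}$ picture'' and then reindex the representing sequences in $B_n \otimes \mathcal{Z}$ so that $r$ approximately commutes with $\pi(A)$ and $\hat\pi(A)$---is unsupported and, as stated, false. The witness to $e \otimes 1_\mathcal{Z} \preceq (\tilde f - \eta)_+ \otimes z$ is produced by strict comparison in the $B_n$'s (or $B_n \otimes \mathcal{Z}$'s) and carries no commutation information whatsoever; having a unital copy of $\mathcal{Z}$ in the relative commutant of a separable set gives extra room but does not constrain the Cuntz witness to lie in, or near, any relative commutant. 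Reindexing (the $\eps$-test) can only upgrade \emph{approximate} solutions that you have \emph{already} produced; it cannot conjure approximate commutation out of a construction that never controlled it. In effect you have reduced the problem to the hard part without solving it: establishing strict comparison for the relative commutant algebra $C$ is precisely what Theorem \ref{thm:StrictCompTraceSurj}(ii) does, and its proof \emph{relies on} Lemma \ref{lem:relSI}, so one cannot appeal to comparison inside $C$ here.

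The paper's proof circumvents this obstacle by not aiming for commutation directly. Instead it constructs $s \in B_\omega$ (not yet in $C$) satisfying the \emph{conjugation} relation $s^*\pi(a)s = \pi(a)e$ for all $a \in A$ together with $fs = s$, and then invokes Lemma \ref{lem:CommLem} to deduce a posteriori that such an $s$ automatically lies in $C$. The conjugation relation is achievable because of the one-step elementary map/excision technology: Lemma \ref{lem:StrongKR} approximates the identity on $A \otimes \mathcal{Z}$ (via nuclearity and the absence of compacts in the atomic representation) by maps of the form $x \mapsto \sum_{l,i,j} \lambda_l(d_{i,l}^* x d_{j,l}) c_i^* c_j$ with pairwise inequivalent pure states $\lambda_l$; Lemma \ref{lem:OrthogExcision} supplies almost-orthogonal excisers $a_l$ for these states; Lemma \ref{lem:relSItrick} (which is where strict comparison in $B_\omega$ actually enters) produces, for each $l$, a contraction $r_l$ with $r_l^*r_l = e$, $\tilde\pi(a_l)r_l = r_l$ and $tr_l = r_l$; and $s := \sum_{l,i} \hat\pi(d_{i,l} a_l)\, r_l\, \hat\pi(c_i)$ is then shown to satisfy $s^*\pi(x) s \approx \pi(x) e$ by an explicit computation in which the excision and the one-step elementary map structure do the heavy lifting. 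Your proposal contains none of this machinery; the ingredients you correctly list (strict comparison, $\mathcal{Z}$-stability, supporting order zero map, $\eps$-test) are all used by the paper but are insufficient on their own.
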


\begin{remark}
\label{rmk:relSI}
(i) Let $\theta:A\rightarrow\mathcal B(\mathcal H)$ be the faithful representation of $A$ obtained by taking the direct sum of one  representation from each unitary equivalence class of irreducible representations of $A$.  If $\theta(A)$ contains no compact operators then the proof of Lemma \ref{lem:relSI} will work if, instead of assuming that each $B_n$ is $\mathcal Z$-stable, we only ask that each $B_n$ is stably finite and has strict comparison of positive elements. This as because the only essential use of $\Z$-stability of $B_n$ in our proof of Lemma \ref{lem:relSI} (beyond strict comparison) is to provide space for an augmented map $\tilde{\pi}:A\otimes\Z\rightarrow B_\omega$, as certainly no irreducible representation of $A\otimes\Z$ can contain a compact operator. When $\theta(A)$ contains no compact operators this augmented map is not necessary. In this way when $\pi$ is a unital $^*$-homomorphism and $A$ is simple and infinite dimensional, we regain precisely the property (SI) statement of \cite[Lemma 3.2]{MS:DMJ} (by Remark \ref{rmk:relSI-simple} (i)).

(ii) The choice of definition of $C$ is crucial. If, in place of $C$, one used the hereditary subalgebra of $B_\omega \cap \pi(A)'$ generated by $\pi(1_A)$ (or even, all of $B_\omega \cap \pi(A)'$), the conclusion of Lemma \ref{lem:relSI} no longer holds.
To see this, suppose that $A=\mathbb C$ and $B=\Z$. Let $h\in\Z_+$ be a contraction of full spectrum with $d_\tau(h)=1$ so that $\pi_n:A\rightarrow B$ given by $\pi_n(1_A)=h^{1/n}$ induces a c.p.c.\ order zero map $\pi:A\rightarrow B_\omega$. In this way, $\bar{\pi}:A\rightarrow B_\omega/J_{B_\omega}$ is a unital $^*$-homomorphism and $\pi(1_A)$ is not a projection. Then there exists $g \in C_c((0,1))_+$ such that $g(\pi(1_A)) \neq 0$.
Choose a function $k \in C_0((0,1])_+$ which is orthogonal to $g$ and for which $k(1)=1$.
Set 
\begin{equation} e:=g(\pi(1_A)),\ f:=k(\pi(1_A)) \in B_\omega \cap \pi(A)' \cap \mathrm{her}(\pi(1_A)). \end{equation}
Since $\bar{\pi}(1_A)=1_{B_\omega/J_{B_{\omega}}}$, we see that $e \in J_{B_\omega}$ whereas $\tau_\omega(f^n)=k(1)^n=1$ for all $n\in\N$ and the unique trace $\tau_\omega$ on $B_\omega=\Z_\omega$.
However,
\begin{equation} (B_\omega \cap \pi(A)')f(B_\omega \cap \pi(A)') = f^{1/2}(B_\omega \cap \pi(A)')f^{1/2} \subseteq \{e\}^\perp, \end{equation}
so that $e$ is not even in the ideal of $B_\omega \cap \pi(A)'$ generated by $f$, and in particular, $s$ satisfying \eqref{eq:relSIToShow} cannot exist.
\end{remark}

We now commence the proof of Lemma \ref{lem:relSI}. Previous (SI) results have been set up in the case when $\pi$ is a unital $^*$-homomorphism and $A$ is simple. To reach beyond the simple case, we generalize the 1-step elementary maps used in the treatment of property (SI) in \cite{KR:Crelle}. The first step is to simultaneously excise multiple pure states in an almost-orthogonal way. We do this with the following result, which uses the quasicentral approximate unit argument from \cite[Theorem 3.3]{KR:Crelle}.

\begin{lemma}
\label{lem:StrongStarSurjectivity}
Let $A$ be a $\mathrm C^*$-algebra and suppose that $c^{(1)},\dots,c^{(n)}$ are pairwise orthogonal positive contractions in the centre of $A^{**}$.
Then there exist nets $(e^{(i)}_j)_j$ of positive contractions in $A$ (indexed by the same directed set) such that
\begin{enumerate}
\item $\lim_j\|[e^{(i)}_j,x]\|=0$ for all $x\in A$, $i=1,\dots,n$;
\item $\lim_j\|e^{(i_1)}_je^{(i_2)}_j\|=0$ for $i_1\neq i_2$; and
\item $e^{(i)}_j\rightarrow c^{(i)}$ in strong$^*$-topology.
\end{enumerate}
\end{lemma}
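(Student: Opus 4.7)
The plan is to combine a Kaplansky-type orthogonal approximation with the quasicentral approximate unit technique underlying \cite[Theorem 3.3]{KR:Crelle}.

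First I would establish the auxiliary claim that pairwise orthogonal positive contractions $c^{(1)},\dots,c^{(n)}$ in $A^{**}$ can be strong$^*$-approximated by positive contractions $a^{(i)}_\alpha\in A$ that are \emph{pairwise orthogonal in $A$}. For $n=2$, this is achieved by a slick trick: the element $c^{(1)}-c^{(2)}$ is self-adjoint with norm at most $1$, so by the self-adjoint version of Kaplansky's density theorem it is the strong$^*$-limit of self-adjoint contractions $b_\alpha\in A$; setting $a^{(1)}_\alpha:=(b_\alpha)_+$ and $a^{(2)}_\alpha:=(b_\alpha)_-$ yields orthogonal positive contractions in $A$, and strong$^*$-continuity of continuous functional calculus on norm-bounded sets gives $a^{(i)}_\alpha\to c^{(i)}$ in strong$^*$. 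For $n\geq 3$, I would argue inductively: apply the $n=2$ case to $(c^{(1)},\sum_{j\geq 2}c^{(j)})$ to split off $c^{(1)}$, then apply the inductive hypothesis to $(c^{(2)},\dots,c^{(n)})$ inside the hereditary subalgebra of $A$ generated by the approximant of $\sum_{j\geq 2}c^{(j)}$.

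With orthogonal approximations in hand, I would then perform the standard quasicentral approximate unit construction. Let $(h_\beta)_\beta$ be a quasicentral approximate unit of $A$ of positive contractions, and set $e^{(i)}_{\alpha,\beta}:=h_\beta^{1/2}a^{(i)}_\alpha h_\beta^{1/2}\in A_+^1$. The three conditions are verified as follows. For (i), $\|[e^{(i)}_{\alpha,\beta},x]\|$ is controlled by $\|[h_\beta^{1/2},x]\|$ and $\|[h_\beta,x]\|$, both of which vanish as $\beta\to\infty$ by quasicentrality. For (iii), centrality of $c^{(i)}$ gives $h_\beta^{1/2}c^{(i)}h_\beta^{1/2}=h_\beta c^{(i)}$, so for fixed $\beta$ we have $e^{(i)}_{\alpha,\beta}\to h_\beta c^{(i)}$ in strong$^*$ as $\alpha\to\infty$ (using strong$^*$-continuity of multiplication on bounded sets), and $h_\beta c^{(i)}\to c^{(i)}$ in strong$^*$ as $\beta\to\infty$. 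For (ii), using exact orthogonality $a^{(i)}_\alpha a^{(j)}_\alpha=0$ together with $\|[h_\beta,a^{(j)}_\alpha]\|\to 0$,
\[
e^{(i)}_{\alpha,\beta}\,e^{(j)}_{\alpha,\beta}=h_\beta^{1/2}a^{(i)}_\alpha h_\beta a^{(j)}_\alpha h_\beta^{1/2}\approx h_\beta^{3/2}a^{(i)}_\alpha a^{(j)}_\alpha h_\beta^{1/2}=0
\]
in norm as $\beta\to\infty$. A standard diagonal subnet extraction in $(\alpha,\beta)$ then produces a single net satisfying all three conclusions simultaneously.

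The main obstacle I expect is Step 1 for $n\geq 3$: the clean positive-/negative-part device is inherently two-sided, and carrying out the induction requires careful bookkeeping of strong$^*$-convergence through nested hereditary subalgebras while preserving exact orthogonality of the approximants in $A$. Alternatively, one could try to bypass Step 1 entirely and work only with asymptotically orthogonal approximants, but then converting strong$^*$-small products into norm-small products is itself non-trivial, which is why the reduction to exactly orthogonal approximants in $A$ is so convenient.
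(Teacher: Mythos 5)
Your Step 2 contains a genuine gap that sinks the argument for condition (i), and the gap is precisely the issue you thought you had sidestepped by producing exactly orthogonal approximants: converting strong$^*$-smallness into norm-smallness.

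Concretely, suppose Step 1 succeeds and you have positive contractions $a^{(i)}_\alpha\in A$, pairwise orthogonal in $A$, with $a^{(i)}_\alpha\to c^{(i)}$ in strong$^*$. Since $c^{(i)}$ is central, $[a^{(i)}_\alpha,x]\to 0$ in strong$^*$; but there is no reason for $\|[a^{(i)}_\alpha,x]\|\to 0$. Now expand the commutator of your cut-down:
\begin{equation*}
[h_\beta^{1/2}a^{(i)}_\alpha h_\beta^{1/2},\,x]\ \approx\ h_\beta^{1/2}\,[a^{(i)}_\alpha,x]\,h_\beta^{1/2},
\end{equation*}
using quasicentrality of $h_\beta$ with respect to $x$. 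The right-hand side has norm bounded only by $\|[a^{(i)}_\alpha,x]\|$, which you do not control. Your claim that $\|[e^{(i)}_{\alpha,\beta},x]\|$ is ``controlled by $\|[h_\beta^{1/2},x]\|$ and $\|[h_\beta,x]\|$'' is therefore false; a quasicentral approximate unit of $A$ itself does nothing to centralize $a^{(i)}_\alpha$. Indeed, if $A$ is unital, the (quasi)approximate unit of $A$ is $1_A$ and Step 2 is vacuous, leaving you with no estimate on $\|[a^{(i)}_\alpha,x]\|$ at all. The paper resolves exactly this by constructing a $\mathrm C^*$-algebra $D_0$ of nets from $A$ with strong$^*$-limits in $A^{**}$, identifying the ideal $J\cap D_0$ of strong$^*$-null nets, observing that $[f^{(i)},x]$ and $f^{(i_1)}f^{(i_2)}$ lie in $J\cap D_0$, and then taking a quasicentral approximate unit $h$ \emph{for the ideal $J\cap D_0$ inside $D_0$}. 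Such an $h$ acts approximately as a unit on elements of $J\cap D_0$, so $(1-h)[f^{(i)},x]\approx 0$ in norm --- this is the mechanism that actually converts strong$^*$-smallness into norm-smallness, and it has no analogue in your construction.

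Two secondary remarks. First, even granting your setup, your $n\ge 3$ induction through hereditary subalgebras is not well-formed as stated: the subalgebra $\overline{b_\alpha A b_\alpha}$ depends on $\alpha$, and the elements $c^{(2)},\dots,c^{(n)}$ do not live in its bidual in any canonical way, so it is unclear how to apply the inductive hypothesis there. (Your $n=2$ trick with $(b_\alpha)_\pm$ is fine.) Second, once the commutator estimate is fixed by the paper's device, the extra work you put into forcing exact orthogonality in $A$ becomes unnecessary: the paper is content with $f^{(i_1)}f^{(i_2)}\in J\cap D_0$ and kills it by the same cut-down that kills the commutators. So while your Step 1 is an interesting alternative for producing orthogonal Kaplansky approximants, it neither avoids the strong$^*$-to-norm issue nor simplifies the argument once that issue is addressed correctly.
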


\begin{proof}
It suffices to prove that: given $\eps >0$, a finite subset $\F$ of $A$, and a strong$^*$-neighbourhood $U$ of $0$ in $A^{**}$, there exist positive contractions $e^{(1)},\dots,e^{(n)} \in A$ such that
\begin{enumerate}
\item $\|[e^{(i)},x]\|<\eps$ for all $x\in \F$, $i=1,\dots,n$;
\item $\|e^{(i_1)}e^{(i_2)}\|<\eps$ for $i_1\neq i_2$; and
\item $e^{(i)} - c^{(i)} \in U$ for $i=1,\dots,n$.
\end{enumerate}

Let $\mathcal V$ be the net of strong$^*$-neighbourhoods of $0$ in $A^{**}$ directed by reverse inclusion.  As in \cite[Lemma 1.1]{HKW:Adv}, define
\begin{equation}
 D := \{(x_V)_{V \in \mathcal V} \in \ell^\infty(\mathcal V,A^{**}) \mid \mathrm{strong}^*\text{-}\lim_{V} x_V\text{ exists}\}, 
\end{equation}
and $J:=\{(x_V)_{V\in\mathcal V}\in D\mid \mathrm{strong}^*\text{-}\lim_{V} x_V=0\}$ which is a closed 2-sided ideal in $D$ as multiplication is jointly strong$^*$-continuous on bounded sets.  Let $D_0$ be the subalgebra of those $(x_V)_V\in D$ such that each $x_V\in A$.

By the Kaplansky density theorem there exist positive contractions of the form $f^{(i)} = (e^{(i)}_V)_{V\in\mathcal V} \in D_0$ (so that each $e^{(i)}_V$ is a contraction) such that $e^{(i)}_V - c^{(i)} \in V$ for each $V\in \mathcal V$. 
Thus, $c^{(i)}$ is the strong$^*$-limit of $(e^{(i)}_V)$, i.e., $f^{(i)} - c^{(i)} \in J$, where we are identifying $c^{(i)}$ with its canonical image in $D$.
Since $c^{(i)}$ is central in $D$, for $x\in \mathcal F$,
\begin{equation} [f^{(i)},x] \equiv [c^{(i)},x] \equiv 0,\quad \mod J, \end{equation}
i.e., $[f^{(i)},x] \in J \cap D_0$.
Likewise, since $c^{(1)},\dots,c^{(n)}$ are orthogonal, $f^{(i_1)}f^{(i_2)}\in J \cap D_0$ for $i_1\neq i_2$.

Using a quasicentral approximate unit for the ideal $J \cap D_0$ of $D_0$, there exists a positive contraction $h=(h_V)_V\in J \cap D_0$ such that
\begin{equation}\label{lem:StrongStarSurjectivity.e1}
hy \approx_{\eps/2} y
\end{equation}
for all $y = [f^{(i)},x]$ where $x\in\mathcal F$ and $i=1,\dots,n$, and for all $y=f^{(i_1)}f^{(i_2)}$ where $i_1\neq i_2$, and
\begin{equation}\label{lem:StrongStarSurjectivity.e2}
\|[h,z]\|<\eps/4
\end{equation}
for $z \in \mathcal F \cup \{f^{(1)},\dots,f^{(n)}\}$.
Since $h \in J$, $(1_{A^\sim}-h)f^{(i)}(1_{A^\sim}-h) - c^{(i)} \in J$.
By definition of $J$, this means that, eventually, $(1_{A^\sim}-h_V)e^{(i)}_V(1_{A^\sim}-h_V) - c^{(i)} \in U$.

Fixing such $V$, we set $e^{(i)}:=(1_{A^\sim}-h_V)e^{(i)}_V(1_{A^\sim}-h_V)\in A_+^1$.
Then, for $x\in\mathcal F$,
\begin{eqnarray}
\notag
[e^{(i)},x] &=& (1_{A^\sim}-h_V)e^{(i)}_V(1_{A^\sim}-h_V) x - x(1-h_V)e^{(i)}_V(1_{A^\sim}-h_V)\\
\notag
&\stackrel{(\ref{lem:StrongStarSurjectivity.e2})}{\approx_{\eps/2}}& (1_{A^\sim}-h_V)[e^{(i)}_V, x](1_{A^\sim}-h_V)\\
&\stackrel{(\ref{lem:StrongStarSurjectivity.e1})}{\approx_{\eps/2}}& 0.
\end{eqnarray}
Likewise, for $i_1\neq i_2$,
\begin{eqnarray}
\notag
e^{(i_1)}e^{(i_2)} &=& (1_{A^\sim}-h_V)e^{(i_1)}_V(1_{A^\sim}-h_V)^2e^{(i_2)}_V(1_{A^\sim}-h_V) \\
\notag
&\stackrel{(\ref{lem:StrongStarSurjectivity.e2})}{\approx_{\eps/2}}& (1_{A^\sim}-h_V)^2e^{(i_1)}_Ve^{(i_2)}_V(1_{A^\sim}-h_V)^2 \\
&\stackrel{(\ref{lem:StrongStarSurjectivity.e1})}{\approx_{\eps/2}}& 0.\qedhere
\end{eqnarray}
\end{proof}

We now strengthen the Akemann-Anderson-Pedersen excision theorem (\cite[Proposition 2.2]{AAP:CJM}) to simultaneously excise multiple pure states by almost-orthogonal elements.\footnote{
Note that this does not give excisors for states given as a convex combination of pure states; indeed \cite[Proposition 2.3]{AAP:CJM} shows that a state can only be excised if it is a limit of pure states.
To see what happens if we try to use this lemma to excise a convex combination, consider $\phi = \frac12(\lambda_1 + \lambda_2)$ where $\lambda_1,\lambda_2$ are pure states on $A$ whose associated representations are pairwise inequivalent.
With $a_i$ as in the lemma (where $\mathcal F$ includes $1_A$), suppose that some combination $a=\mu_1a_1+\mu_2a_2$ with $0\leq \mu_1,\mu_2\leq 1$ approximately excises $\phi$, in that $axa\approx\phi(x)a^2$ and $\phi(a)\approx1$ for $x\in\mathcal F$ (and suitable tolerances which we do not keep track of here). A routine, but somewhat messy, computation then shows that 
$\lambda_1$ and $\lambda_2$ approximately agree on $\mathcal F$, i.e., $\lambda_1$ and $\lambda_2$ are weak$^*$-close (relative to $\mathcal F$).
}

\begin{lemma}
\label{lem:OrthogExcision}
Let $A$ be a $\mathrm C^*$-algebra and let $\lambda_1,\dots,\lambda_n$ be pure states whose associated irreducible representations are pairwise inequivalent.
Then given a finite subset $\F$ of $A$ and $\eps>0$ there exist positive contractions $a_1,\dots,a_n \in A$ such that
\begin{enumerate}[(i)]
\item $\lambda_i(a_i)=1$, $i=1,\dots, n$;\label{lem:OrthogExcision.1}
\item $a_ixa_i \approx_\eps \lambda_i(x)a_i^2$, $i=1,\dots,n$, $x\in\F$;\label{lem:OrthogExcision.2}
\item $a_ixa_j \approx_\eps 0$, $i\neq j$, $x\in\F$.\label{lem:OrthogExcision.3}
\end{enumerate}
\end{lemma}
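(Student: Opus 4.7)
\medskip

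\noindent
\textbf{Proof plan.} The guiding idea is to combine the single-state Akemann-Anderson-Pedersen excision theorem \cite[Proposition 2.2]{AAP:CJM} with the orthogonal partition of unity in $Z(A^{**})$ that disjointness of the GNS representations provides. Since $\pi_{\lambda_1},\dots,\pi_{\lambda_n}$ are pairwise inequivalent irreducible representations, they are pairwise disjoint, so their central supports $z_1,\dots,z_n \in Z(A^{**})$ are pairwise orthogonal projections, and, identifying each $\lambda_i$ with its normal extension to $A^{**}$, we have $\lambda_i(z_i)=1$ and $\lambda_i(z_{i'})=0$ for $i' \neq i$. I would feed the $z_i$ into the previous Lemma~\ref{lem:StrongStarSurjectivity} to obtain nets $(e^{(i)}_j)_j$ of positive contractions in $A$ (indexed by a common directed set) that are asymptotically central in $A$, satisfy $\|e^{(i_1)}_j e^{(i_2)}_j\| \to 0$ for $i_1 \neq i_2$, and converge strong$^*$ to $z_i$ (so that in particular $\lambda_i(e^{(i)}_j) \to \lambda_i(z_i) = 1$).

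Fix a small $\dl > 0$ (depending on $\eps$ and $|\F|$ in a manner to be determined), and pick $j$ large enough so that, writing $e^{(i)}:=e^{(i)}_j$, we have $\|[e^{(i)},x]\|<\dl$ and $\|e^{(i)}e^{(i')}\|<\dl$ for $x \in \F$ and $i\neq i'$, together with $\lambda_i(e^{(i)}) > 1-\dl$. Next, for each $i$ apply the Akemann-Anderson-Pedersen excision theorem to the pure state $\lambda_i$ with respect to the enlarged finite set
\begin{equation}
F_i := \F \cup \{e^{(i')} : i'=1,\dots,n\} \cup \{e^{(i')}xe^{(i'')} : i',i''=1,\dots,n,\ x\in\F\}
\end{equation}
and tolerance $\dl$, producing a positive contraction $b_i \in A_+^1$ with $\lambda_i(b_i)=1$ (so that also $\lambda_i(b_iy)=\lambda_i(yb_i)=\lambda_i(y)$ for all $y\in A$, via $\pi_{\lambda_i}(b_i)\xi_{\lambda_i} = \xi_{\lambda_i}$) and $\|b_iyb_i - \lambda_i(y)b_i^2\| < \dl$ for $y \in F_i$. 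The candidate excisors will then be built as the positive contractions $a_i := b_i e^{(i)} b_i$; to arrange (i) exactly one should apply AAP instead inside the hereditary subalgebra $\overline{e^{(i)} A e^{(i)}}$ (on which $\lambda_i$ restricts to a pure state, after normalization, since $\lambda_i(e^{(i)})>0$), which ensures $b_i = e^{(i)} b_i = b_i e^{(i)}$ and so $a_i$ lies in $\overline{e^{(i)} A e^{(i)}}$ as well.

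Condition (ii) is then verified by approximating $a_i x a_i = b_i e^{(i)} b_i \cdot x \cdot b_i e^{(i)} b_i$ using the excision bound applied to $b_i (e^{(i)} x e^{(i)}) b_i \approx_\dl \lambda_i(e^{(i)} x e^{(i)})b_i^2$ together with asymptotic centrality of $e^{(i)}$ (which allows us to replace $e^{(i)} x e^{(i)}$ by $x(e^{(i)})^2$ up to $O(\dl)$, and similarly to recognise $a_i^2 \approx b_i^2 (e^{(i)})^2 b_i^2$ up to $O(\dl)$). Condition (iii) follows from the approximate orthogonality $\|e^{(i)} e^{(i')}\| < \dl$: since $a_i \in \overline{e^{(i)} A e^{(i)}}$ and $a_{i'} \in \overline{e^{(i')} A e^{(i')}}$, and each of $a_i,x,a_{i'}$ approximately commutes with $e^{(i)}$ and $e^{(i')}$ (for $x\in\F$), we push both $e^{(i)}$ and $e^{(i')}$ together and use $\|e^{(i)} e^{(i')}\|<\dl$. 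The main technical obstacle I anticipate is the simultaneous arrangement of (i) (an equality) with the approximate orthogonality (iii): a naive product like $b_i e^{(i)} b_i$ yields only $\lambda_i(a_i)\approx 1$, and it is the refinement of applying AAP inside the corner $\overline{e^{(i)} A e^{(i)}}$ that bridges this gap, since then $b_i$ is literally absorbed by $e^{(i)}$ from both sides and the excision identity is exact at $\lambda_i$. The rest reduces to bookkeeping of norm estimates in the parameter $\dl$, which can be made arbitrarily small by choosing $j$ large in Lemma \ref{lem:StrongStarSurjectivity} and the AAP tolerance small.
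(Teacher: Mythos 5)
Your setup agrees with the paper's: both invoke Lemma~\ref{lem:StrongStarSurjectivity} (applied to the orthogonal central projections coming from disjointness of the GNS representations) to produce almost-orthogonal, almost-central positive contractions $e^{(1)},\dots,e^{(n)}\in A$ with $\lambda_i(e^{(i)})$ close to $1$, and then bring in the Akemann-Anderson-Pedersen excision theorem. The gap is in how you attach the excisor to $e^{(i)}$ while preserving the exact identity $\lambda_i(a_i)=1$ of~(i). Your candidate $a_i:=b_ie^{(i)}b_i$ gives $\lambda_i(a_i)=\lambda_i(e^{(i)})<1$, as you note, and the proposed repair fails: the assertion that $b_i\in\overline{e^{(i)}Ae^{(i)}}$ forces $b_i=e^{(i)}b_i=b_ie^{(i)}$ is simply false unless $e^{(i)}$ is a projection (or at least has spectrum bounded away from $0$), and Lemma~\ref{lem:StrongStarSurjectivity} provides no such control. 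Worse, even granting it, applying AAP inside $\overline{e^{(i)}Ae^{(i)}}$ to the normalized restriction $\tilde\lambda_i$ produces $b_i$ with $\tilde\lambda_i(b_i)=1$, i.e.\ $\lambda_i(b_i)=\|\lambda_i|_{\overline{e^{(i)}Ae^{(i)}}}\|$, and this norm equals $\langle s(\pi_{\lambda_i}(e^{(i)}))\xi_{\lambda_i},\xi_{\lambda_i}\rangle$ (with $s(\cdot)$ the support projection in $\pi_{\lambda_i}(A)''$), which can be strictly less than $1$ even when $\lambda_i(e^{(i)})>1-\dl$. So~(i) would only hold approximately, not exactly.

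The paper sidesteps the tension with a device you came close to but did not quite reach: apply AAP to $\lambda_i$ on $A$ itself, with the finite set $\F\cup\{e_i\}$, obtaining a positive contraction $a_i$ with $\lambda_i(a_i)=1$ (giving~(i)) and $a_ixa_i\approx_\eta\lambda_i(x)a_i^2$ for all $x\in\F\cup\{e_i\}$ (giving~(ii)). Taking $x=e_i$ in the excision estimate, together with $\lambda_i(e_i)\approx_\eta 1$, yields $\|a_i(1_{A^\sim}-e_i)a_i\|<2\eta$, whence by the $\mathrm C^*$-identity
\begin{equation}
\|a_i(1_{A^\sim}-e_i)\|\leq\|a_i(1_{A^\sim}-e_i)^{1/2}\|=\|a_i(1_{A^\sim}-e_i)a_i\|^{1/2}<(2\eta)^{1/2},
\end{equation}
i.e.\ $a_i\approx_{(2\eta)^{1/2}}a_ie_i$. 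Condition~(iii) then follows by pushing $e_i$ and $e_j$ together through the approximately central $x$: $a_ixa_j\approx a_ie_ixe_ja_j\approx a_ixe_ie_ja_j\approx 0$. No passage to a hereditary subalgebra is required, and (i) comes for free from the original excision theorem.
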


\begin{proof}
Assume, without loss of generality, that $\F$ consists of contractions. Let $\pi_1,\dots,\pi_n$ be the irreducible representations corresponding to $\lambda_1,\dots,\lambda_n$.
These extend to normal representations $\bar{\pi}_1,\dots,\bar{\pi}_n$ of $A^{**}$ whose kernels take the form $p_iA^{**}$ for central projections $p_i\in A^{**}$ satisfying $(1_{A^{**}}-p_i)(1_{A^{**}}-p_j)=0$ for $i\neq j$ (see \cite[Proposition 1.4.3]{BO:Book}, for example). Write $c_i:=1_{A^{**}}-p_i$.

Fix $\eta>0$ so that $2(\eta+(2\eta)^{1/2})<\eps$. By Lemma \ref{lem:StrongStarSurjectivity}, we can find positive contractions $e_1,\dots,e_n$ in $A$ with 
\begin{align}
\label{eq:OrthogExcisioneiOrthog}
\|e_ie_j\|&<\eta, \quad i\neq j, \\
\label{eq:OrthogExcisioneiCentral}
\|[e_i,x]\|&<\eta, \quad x\in\F,
\end{align}
and $\lambda_i(e_i)>1-\eta$ (as $\lambda_i(c_i)=1$).

For each $i$, the Akemann-Anderson-Pedersen excision theorem (\cite[Proposition 2.2]{AAP:CJM}) gives a positive contraction $a_i \in A$ such that $\lambda_i(a_i)=1$ and
\begin{equation}\label{lem:OrthogExcision.saw1}
a_ixa_i \approx_{\eta} \lambda_i(x)a_i^2, \quad x \in \mathcal F \cup \{e_i\},
\end{equation}
giving (\ref{lem:OrthogExcision.1}) and (\ref{lem:OrthogExcision.2}). Since $\lambda_i(e_i) \approx_{\eta} 1$, (\ref{lem:OrthogExcision.saw1}) gives 
\begin{equation}
\|a_i(1_{A^\sim}-e_i)\|^2 \leq \|a_i(1_{A^\sim}-e_i)^{1/2}\|^2 = \|a_i(1_{A^\sim}-e_i)a_i\| < 2\eta
\end{equation} and so 
\begin{equation}
\label{eq:OrthogExcisioneiUnit}
 a_i \approx_{(2\eta)^{1/2}} a_ie_i.
\end{equation}
We compute, for $x\in\mathcal F$,
\begin{eqnarray}
\notag
a_ixa_j &\stackrel{\eqref{eq:OrthogExcisioneiUnit}}{\approx_{2(2\eta)^{1/2}}}& a_ie_ixe_ja_j \\
\notag
&\stackrel{\eqref{eq:OrthogExcisioneiCentral}}{\approx_\eta}& a_ixe_ie_ja_j \\
&\stackrel{\eqref{eq:OrthogExcisioneiOrthog}}{\approx_\eta}& 0.
\end{eqnarray}
The choice of $\eta$ ensures that (\ref{lem:OrthogExcision.3}) holds.
\end{proof}

To prove property (SI) in \cite[Lemma 3.1]{MS:Acta}, Matui and YS approximate the identity map on a simple, infinite dimensional, nuclear  $\mathrm C^*$-algebra by so-called one-step elementary maps (see \cite[Section 5]{KR:Crelle}) arising from a single pure state. The next lemma provides an extension of this result to nonsimple, nuclear $\mathrm C^*$-algebras at the price of allowing multiple inequivalent pure states to appear.  We require a technical hypothesis on the irreducible representations of $A$.

\begin{lemma}
\label{lem:StrongKR}
Let $A$ be a separable, unital, nuclear $\mathrm C^*$-algebra such that $\theta(A)$ contains no compact operators, where $\theta$ is the faithful representation obtained as the direct sum of one irreducible representation from each unitary equivalence class of irreducible representations of $A$.  
Let $\F$ be a finite subset of $A$ and $\eps > 0$.
Then there exist $L,N\in\N$, pairwise inequivalent pure states $\lambda_1,\dots,\lambda_L$ on $A$ and elements $c_{i},d_{i,l} \in A$ for $i=1,\dots,N,\ l=1,\dots,L$ such that
\begin{equation}\label{lem:StrongKR.E1}
x \approx_\eps \sum_{l=1}^L\sum_{i,j=1}^N \lambda_l(d_{i,l}^*xd_{j,l})c_{i}^*c_{j}
\end{equation}
for $x\in\F$.
\end{lemma}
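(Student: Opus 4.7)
My plan is to start with a nuclear factorization of the identity and post-process both legs. By nuclearity of $A$ I can find $N\in\N$ and c.p.c.\ maps $\alpha:A\to M_N$ and $\beta:M_N\to A$ with $\beta\circ\alpha\approx_\delta\mathrm{id}_A$ on $\F$, for a small $\delta>0$ to be chosen at the end. Writing the positive matrix $(\beta(e_{ij}))\in M_N(A)_+$ as $C^*C$ with $C\in M_{K,N}(A)$ yields $\beta(\cdot)=\sum_k C_k^*(\cdot)C_k$; after enlarging $N$ to $NK$ and replacing $\alpha$ by the amplification $\alpha\otimes 1_{M_K}$, the map $\beta$ can be re-assembled into rank-one Kraus form $\beta(e_{ij})=c_i^*c_j$ for some $c_1,\ldots,c_N\in A$, without disturbing the approximate factorization.

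Next I would approximate $\alpha$ by a c.p.\ map whose Stinespring dilation decomposes into pairwise inequivalent irreducible representations. Pure c.p.\ maps $A\to M_N$ have the form $V^*\pi(\cdot)V$ with $\pi$ irreducible, and are point-norm dense in $\mathrm{CP}(A,M_N)$ by Krein-Milman applied to the state space of $M_N\otimes A$. Thus $\alpha$ is $\F$-approximable by a map of the form $V^*\pi(\cdot)V$ where $\pi=\bigoplus_l\pi_l$ is a finite direct sum of irreducibles and $V:\mathbb{C}^N\to H_\pi$. Let $\sigma_1,\ldots,\sigma_L$ be representatives of the distinct unitary equivalence classes among the $\pi_l$, and set $\sigma:=\bigoplus_k\sigma_k$. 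The hypothesis on $\theta(A)$ forces every irreducible representation of $A$ to be infinite-dimensional with image disjoint from the compacts, so neither $\pi(A)$ nor $\sigma(A)$ contains nonzero compact operators, and both act on separable infinite-dimensional Hilbert spaces. Since $\pi$ and $\sigma$ share the same kernel, Voiculescu's theorem produces a unitary $U:H_\pi\to H_\sigma$ with $\|U\pi(x)U^*-\sigma(x)\|$ as small as desired on $\F$. Setting $W:=UV:\mathbb{C}^N\to H_\sigma$ and decomposing $W$ as $(W_1,\ldots,W_L)$ along $H_\sigma=\bigoplus_k H_{\sigma_k}$ gives
\[
\alpha(x)\approx\sum_{k=1}^L W_k^*\sigma_k(x)W_k,\qquad x\in\F,
\]
with the $\sigma_k$ now pairwise inequivalent.

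To finish I would fix unit vectors $\eta_k\in H_{\sigma_k}$ and form the pure states $\lambda_k:=\langle\sigma_k(\cdot)\eta_k,\eta_k\rangle$, which are pairwise inequivalent because vector states coming from inequivalent irreducible representations are themselves inequivalent. Kadison's transitivity theorem, applied inside each $\sigma_k$, produces $d_{i,k}\in A$ with $\sigma_k(d_{i,k})\eta_k=W_k e_i$, so that $(W_k^*\sigma_k(x)W_k)_{ij}=\lambda_k(d_{i,k}^*xd_{j,k})$. Applying $\beta$ then yields
\[
x\approx\beta(\alpha(x))\approx\sum_{l=1}^L\sum_{i,j=1}^N\lambda_l(d_{i,l}^*xd_{j,l})c_i^*c_j,\qquad x\in\F,
\]
after relabeling $k$ as $l$. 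Choosing $\delta$ together with the Krein-Milman and Voiculescu tolerances small enough in terms of $\|\beta\|$ then forces the total error below $\eps$.

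The hard part I anticipate is the Voiculescu step: eliminating multiplicities among the irreducible summands in the Stinespring dilation of the approximation of $\alpha$ is what allows the $c_i$'s to remain independent of $l$ in the final formula. This absorption is the only place where the hypothesis that $\theta(A)$ contains no compact operators is used; without it one cannot merge the Stinespring dilation into its reduced direct sum of inequivalent irreducibles, and the single set of $c_i$'s common to all $\lambda_l$ would be lost.
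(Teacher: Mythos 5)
Your preprocessing (nuclear factorization, amplifying $\alpha$ to pass $\beta$ into rank-one Kraus form, Kadison transitivity at the end) matches the paper's, but the Voiculescu step has a genuine gap. You assert that the hypothesis ``$\theta(A)$ contains no compacts'' forces \emph{every} irreducible representation of $A$ to have image disjoint from $\mathcal K$, and hence that the finite direct sums $\pi=\bigoplus_l\pi_l$ and $\sigma=\bigoplus_k\sigma_k$ satisfy the hypotheses of Voiculescu's theorem. This is false. Take $A=\{f\in C([0,1],M_2):f(0)\ \text{is diagonal}\}$: this is separable, unital and nuclear, every irreducible representation is finite-dimensional (so its image consists entirely of compacts), yet $\theta(A)\cap\mathcal K(\mathcal H)=\{0\}$, since any nonzero $f\in A$ has $\|f(t)\|$ bounded away from zero on an uncountable set of $t$ and the block-diagonal operator $\theta(f)$ then cannot be compact. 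For such an $A$, the finite sum $\pi$ acts on a finite-dimensional space where everything is compact; and if any $\pi_l$ is repeated, $\pi$ and $\sigma$ simply are \emph{not} approximately unitarily equivalent (approximate unitary equivalence preserves the multiplicity with which a finite-dimensional irreducible occurs), so the absorption you want is unavailable. Your Krein--Milman step gives no control over which irreducibles appear or with what multiplicities, so this case cannot be excluded.

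The paper avoids the issue by never asking for a Voiculescu-type statement about finite sub-sums of irreducibles. Instead it applies a Voiculescu/Bunce--Salinas result (\cite[Theorem 2.5]{BS:DMJ}) directly to the full atomic representation $\theta$, which is exactly where the no-compacts hypothesis is assumed: this produces $V:\mathbb C^N\to\mathcal H$ with $\tilde\phi(x)\approx V^*\theta(x)V$ for $x\in\F$, and one then approximates each vector $Ve_i\in\mathcal H=\bigoplus_\gamma\mathcal H_\gamma$ by a vector supported on finitely many summands $\mathcal H_{\gamma_1},\dots,\mathcal H_{\gamma_L}$. Because $\theta$ is, by definition, a direct sum of \emph{pairwise inequivalent} irreducibles, the finite compression one obtains automatically involves inequivalent $\theta_{\gamma_l}$'s, and no post hoc merging of multiplicities is required. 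You identified the right pressure point --- the hypothesis on $\theta(A)$ really is what makes the $c_i$'s independent of $l$ --- but the correct way to exploit it is to run the Voiculescu approximation against $\theta$ itself rather than against finite direct sums built up from pure c.p.\ maps.
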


\begin{proof}
By hypothesis, 
$\theta$ is a faithful representation of $A$ containing no compacts.
We identify $A$ with $\theta(A)$ and let the underlying Hilbert space for $\theta$ be denoted $\mathcal H$. Then $\mathcal H$ and $\theta$ split as the direct sum $\bigoplus_{\gamma\in \Gamma}\mathcal H_\gamma$ and $\bigoplus_{\gamma\in \Gamma}\theta_\gamma$ respectively, where $\Gamma$ indexes equivalence classes of irreducible representations of $A$, and we fix one representative $\theta_\gamma$ from each class. 

By the completely positive approximation property, there exists $n\in\mathbb N$, and completely positive maps $\phi:A\rightarrow M_n$, $\rho:M_n\rightarrow A$ such that $\rho(\phi(x))\approx_{\eps/2}x$ for $x\in\F$. As in the proof of \cite[Lemma 5.9]{KR:Crelle}, we can decompose $\rho$ as the composition $M_n\rightarrow M_n\otimes M_n\rightarrow A$ so that $\rho(y)=C^*(y\otimes 1_n)C$ for some column matrix $C$ in $M_{n^2,1}(A)$.\footnote{This is not related to the algebra $C$ in Definition \ref{def:SI}; we are using this notation for comparison with the proof of \cite[Lemma 5.9]{KR:Crelle}.}  Set $N:=n^2$, let $C=(c_1,\dots,c_N)^{\mathrm{T}}$, and let $\tilde{\phi}:A\rightarrow M_n\otimes M_n\cong M_N$ be given by $\tilde{\phi}(x):=\phi(x)\otimes 1_n$.  In this way,
\begin{equation}\label{lem:StrongKR.E2}
\sum_{i,j=1}^N\tilde{\phi}(x)_{i,j}c_{i}^*c_{j}\approx_{\eps/2}x,\quad x\in\F.
\end{equation}

Set $\eta:=\frac\eps{2N^2}(\max_{i,j}\|c_{i}^*c_{j}\|)^{-1}$.
Using the hypothesis that $\theta(A)$ has zero intersection with the compact operators, \cite[Theorem 2.5]{BS:DMJ} tells us that there exists an operator $V:\mathbb C^N\rightarrow\mathcal H$ with $\|\tilde{\phi}(x)-V^*xV\|<\eta$ for $x\in\mathcal F$.
Let $e_1,\dots,e_N$ be the usual basis for $\mathbb C^N$ so that $|\tilde{\phi}(x)_{i,j}-\langle xVe_i,Ve_j\rangle|<\eta$ for all $i,j=1,\dots,N$.
We can approximate each $Ve_i$ by a finite direct sum $\oplus_{l=1}^{L}\xi_{i,l}$ in $\oplus_{l=1}^{L}\mathcal H_{\gamma_l}\subset\mathcal H$ for some $\gamma_1,\dots,\gamma_L\in\Gamma$, so that 
\begin{equation}\label{lem:StrongKR.E3}
\tilde{\phi}(x)_{i,j}\approx_{\eta}\sum_{l=1}^{L}\langle \theta_{\gamma_l}(x)\xi_{i,l},\xi_{j,l}\rangle,\quad x\in\F,\ i,j=1,\dots,N.
\end{equation}
Then, for each $l$, choose a pure state $\lambda_l$ inducing $\theta_{\gamma_l}$, and let $\xi_{0,l}\in\mathcal H_{\gamma_l}$ be a GNS-vector corresponding to this pure state. By Kadison's transitivity theorem (\cite{K:PNAS}), we can find $d_{i,l}\in A$ such that $\theta_{\gamma_l}(d_{i,l})\xi_{0,l}=\xi_{i,l}$.
In this way $\langle\theta_{\gamma_l}(x)\xi_{i,l},\xi_{j,l}\rangle=\lambda_l(d_{j,l}^*xd_{i,l})$.  Combining this with (\ref{lem:StrongKR.E2}) and (\ref{lem:StrongKR.E3}) gives (\ref{lem:StrongKR.E1}).
\end{proof}

We need another technical ingredient to handle the nonsimple version of property (SI).  Recall that $a \vartriangleleft b$ means that $b$ acts as a unit on $a$.

\begin{lemma}
\label{lem:relSItrick}
Let $(B_n)_{n=1}^\infty$ be a sequence of simple, separable, unital, stably finite $\mathrm C^*$-algebras with strict comparison of positive elements, set $B_\omega := \prod_\omega B_n$ and define $J_{B_\omega}$ as in \eqref{eq:JBdef}.
Let $A$ be a separable, unital $\mathrm C^*$-algebra and let $\pi:A\rightarrow B_\omega$ be a c.p.c.\ order zero map.
Let $a \in A_+$ have norm at least $1$. 
Then there exists a countable set $S \subset A_+\setminus \{0\}$ such that the following holds.
If $e,t,h \in (B_\omega \cap \pi(A)' \cap \{1_{B_\omega}-\pi(1_A)\}^\perp)_+$ are contractions such that
\begin{equation} e \in J_{B_\omega}\quad \text{and} \quad h \vartriangleleft t,
\end{equation}
and if for all $b \in S$, there exists $\gamma_b > 0$ such that
\begin{equation}
\label{eq:relSItrickTr}
\tau(\pi(b)h) > \gamma_b,\quad \tau\in T_\omega(B_\omega), \end{equation}
then there exists a contraction $r \in B_\omega$ such that
\begin{equation}
\label{eq:relSItrickrDef}
 \pi(a)r=tr=r \quad \text{and} \quad r^*r=e. \end{equation}
\end{lemma}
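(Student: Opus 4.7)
The strategy is to use strict comparison in $B_\omega$ (Lemma \ref{lem:StrictCompLimTraces}) to establish a Cuntz subequivalence $(e-\eps)_+\preceq g_\delta(\pi(a)t)$, and then to pass from the resulting approximate relations to the exact relations $\pi(a)r=tr=r$, $r^*r=e$ via Kirchberg's $\eps$-test (Lemma \ref{epstest}). For the set $S$, we propose to take a countable subset of $A_+\setminus\{0\}$ that is dense in $\{g_\eta(a):0<\eta<\|a\|,\ g_\eta(a)\neq 0\}$; this is nonempty because $\|a\|\geq 1$.

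The first step is a trace propagation argument exploiting the relation $h\vartriangleleft t$. Since $th=ht=h$, we have $h=t^m h t^m$ and therefore $h\leq \|h\|\,t^{2m}\leq t^{2m}$ for every $m\in\N$. Because $\pi(A)$, $t$, and $h$ pairwise commute inside $C$, this yields
\begin{equation}
\tau(\pi(b)\,t^{2m})\geq \tau(\pi(b)\,h)>\gamma_b,\quad b\in S,\ m\in\N,\ \tau\in T_\omega(B_\omega).
\end{equation}
Combining the functional-calculus inequality $g_\eta(s)\leq (2/\eta)\,s$ with $b=g_\eta(a)\in S$ (which gives $\pi(g_\eta(a))\leq (2/\eta)\pi(a)$) and with $t^{2m}\leq t$ for $m\geq 1$, we deduce a constant $c>0$ with $\tau(\pi(a)\,t)\geq c$ for every $\tau\in T_\omega(B_\omega)$. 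Using the pointwise bound $s\leq\|\pi(a)t\|\,g_\delta(s)+\delta$, a suitable choice of $\delta>0$ produces a uniform lower bound $\tau(g_\delta(\pi(a)\,t))\geq c'>0$ on $T_\omega(B_\omega)$, which passes to its weak$^*$-closure by continuity of the functional $\tau\mapsto \tau(g_\delta(\pi(a)\,t))$.

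With the trace estimate in hand, strict comparison becomes available. Since $e\in J_{B_\omega}$ implies $(e-\eps)_+\in J_{B_\omega}$, the functional calculus bound $d_\tau((e-\eps)_+)\leq (2/\eps)\tau(e)=0$ holds for every $\tau$ in the weak$^*$-closure of $T_\omega(B_\omega)$, while $d_\tau(g_\delta(\pi(a)t))\geq \tau(g_\delta(\pi(a)t))\geq c'>0$. Lemma \ref{lem:StrictCompLimTraces} then gives $(e-\eps)_+\preceq g_\delta(\pi(a)t)$ in $B_\omega$ for each $\eps>0$, which produces an element $w_\eps\in B_\omega$ with $w_\eps^*w_\eps=(e-\eps)_+$ and $g_{\delta/2}(\pi(a)t)\,w_\eps=w_\eps$.

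The final step is to upgrade these approximate Cuntz-relations to the exact relations $\pi(a)r=tr=r$, $r^*r=e$. For each $\eps>0$, one produces $r_\eps\in B_\omega$ with $\|r_\eps^*r_\eps-e\|\leq\eps$, $\|\pi(a)r_\eps-r_\eps\|\leq\eps$, and $\|tr_\eps-r_\eps\|\leq\eps$ by lifting to representative sequences and combining the Cuntz witnesses above with spectral cut-offs near $1$ applied to $\pi(a)t$ in each fibre $B_n$ (where the joint support $\{\pi(a)t\geq 1-\eps\}$ forces both $\pi(a)$ and $t$ to be within $\eps$ of $1$). Kirchberg's $\eps$-test (Lemma \ref{epstest}) then delivers $r$ satisfying the required exact relations. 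The main obstacle is this last step: one must ensure that $\pi(a)t$ has enough spectral mass \emph{near} $1$ (not merely nonzero mass) to dominate $e$ through the cut-offs. This is precisely what the uniform trace bound on $g_\delta(\pi(a)t)$ provides, via the fact that in the ultraproduct, approximate $1$-eigenvectors in each fibre assemble to an exact $1$-eigenvector for $\pi(a)$ and for $t$ simultaneously.
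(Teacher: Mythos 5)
There is a genuine gap in the final step, and it stems from the choice of Cuntz-comparison target. You compare $(e-\eps)_+$ against $g_\delta(\pi(a)t)$, where $\delta>0$ is forced by the trace estimate $\tau(\pi(a)t)\geq c$ to be small (roughly $c/2$). Strict comparison then yields a witness $w$ with $g_{\delta/2}(\pi(a)t)\,w=w$, which pins the left support of $w$ only to the spectral region where $\pi(a)t\geq\delta/2$. This does \emph{not} give $tw\approx w$ (even approximately), since $g_{\delta/2}(\pi(a)t)\not\vartriangleleft t$ in general; nor does it give $\pi(a)w\approx w$. To make the ``joint support $\{\pi(a)t\geq 1-\eps\}$'' argument work you would need the comparison $(e-\eps)_+\preceq g_{1-\eps'}(\pi(a)t)$, i.e.\ a statement about spectral mass of $\pi(a)t$ \emph{near $1$}; but the uniform bound $\tau(g_\delta(\pi(a)t))\geq c'$ for a single small $\delta$ says nothing about mass near $1$ (the spectrum of $\pi(a)t$ could well be contained in $[0,\delta]$). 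Your concluding sentence, claiming the trace bound supplies exactly the needed spectral mass near $1$, is where the argument breaks.

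The paper avoids this by comparing $e$ against $h^{1/2}\pi(a_0)h^{1/2}$ rather than against a function of $\pi(a)t$. This choice has two structural advantages that your target lacks. First, since $h\vartriangleleft t$ one has $h^{1/2}\pi(a_0)h^{1/2}\vartriangleleft t$, so any functional calculus of this element is also $\vartriangleleft t$, and the Cuntz witness $r$ from \cite[Proposition 2.4]{R:JFA2} satisfies $tr=r$ \emph{exactly} with no extra work. Second, the countable set $S$ is chosen so that for each $\eps$ there is $a_0\in S$ and $a_1\in C^*(a)_+$ with $a_0\vartriangleleft a_1$ and $a_1\approx_\eps a$; via the supporting order zero map $\hat\pi$ one then has $\pi(a_0)\vartriangleleft\hat\pi(a_1)$, hence $g_\delta(h^{1/2}\pi(a_0)h^{1/2})\vartriangleleft\hat\pi(a_1)$ and therefore $\hat\pi(a_1)r=r$; combining with $tr=r$, $t\vartriangleleft\pi(1_A)$ and the supporting-map identity $\hat\pi(a)\pi(1_A)=\pi(a)$ gives $\pi(a)r\approx_\eps r$. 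In other words, the exact algebraic relations $a_0\vartriangleleft a_1$ and $h\vartriangleleft t$, together with the supporting map, are what upgrade the Cuntz witness to the required (approximate) fixed-point relations; this is fundamentally different from, and not replaceable by, any purely quantitative trace estimate on $\pi(a)t$. Your $S$ consisting of $g_\eta(a)$'s does not carry the $a_0\vartriangleleft a_1$, $a_1\approx_\eps a$ structure (for $\|a\|=1$, no $g_\eta(a)$ is norm-close to $a$ unless $a$ is close to a projection), so it cannot be fed into this mechanism either.
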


\begin{proof}
Since $\|a\|\geq 1$, we may use functional calculus to produce a countable set $S \subset C^*(a)_+ \setminus \{0\}$ such that, for every $\eps > 0$, there exists $a_0 \in S$ and $a_1 \in C^*(a)_+$ such that $a_0 \vartriangleleft a_1$ and $a_1 \approx_\eps a$.
This is our set $S$; let us now suppose that \eqref{eq:relSItrickTr} holds for all $b\in S$.

For $\eps>0$, we will produce a contraction $r\in B_\omega$ satisfying $r^*r\approx_\eps e$, $r\approx_\eps \pi(a)r$ and $tr=r$.  An easy application of Kirchberg's $\eps$-test (Lemma \ref{epstest}) then yields $r$ satisfying (\ref{eq:relSItrickrDef}) exactly.

Let $a_0 \in S, a_1 \in C^*(a)_+$ be such that $a_0 \vartriangleleft a_1$ and $a_1 \approx_\eps a$. By hypothesis, $d_\tau(e)=0$ and $\tau(h^{1/2}\pi(a_0)h^{1/2}) \geq \gamma_{a_0}$ for all $\tau\in T_\omega(B_\omega)$, and hence also for $\tau \in \overline{T_\omega(B_\omega)}$ (the weak$^{*}$-closure), whence 
\begin{equation}
d_\tau(e)<\gamma_{a_0}\leq \tau(h^{1/2}\pi(a_0)h^{1/2})\leq d_\tau(h^{1/2}\pi(a_0)h^{1/2}),\quad \tau\in \overline{T_\omega(B_\omega)}.
\end{equation}
By Lemma \ref{lem:StrictCompLimTraces}, $B_\omega$ has strict comparison by limit traces, whence $e\preceq h^{1/2}\pi(a_0)h^{1/2}$ in $B_\omega$.  By \cite[Proposition 2.4]{R:JFA2}, there exists $r\in B_\omega$ and $\dl>0$ such that $r^*r=(e-\eps)_+$ (so that $r$ is contractive) and $g_{\dl}(h^{1/2}\pi(a_0)h^{1/2})r=r$. As $h\vartriangleleft t$, it follows that $r=tr$.  

By Lemma \ref{lem:SupportingMap}, we may fix a supporting c.p.c.\ order zero map $\hat{\pi}:A\rightarrow B_\omega\cap\{h,t\}'$ for $\pi$.  We have $g_{\dl}(h^{1/2}\pi(a_0)h^{1/2})\vartriangleleft \hat{\pi}(a_1)$ as $a_0\vartriangleleft a_1$ and hence also $\pi(a_0)\vartriangleleft \hat{\pi}(a_1)$. In this way $\hat{\pi}(a_1)r=r$, and so
\begin{equation} r = \hat{\pi}(a_1)r=\hat{\pi}(a_1)tr \approx_\eps \hat{\pi}(a)tr \stackrel{t\vartriangleleft \pi(1_A)}{=} \hat{\pi}(a)\pi(1_A)tr\stackrel{(\ref{eq:Supporting})}{=}\pi(a)tr=\pi(a)r. \end{equation}
We have $r^*r\approx_\eps e$, $r\approx_\eps \pi(a)r$ and $tr=r$, as required.\end{proof}

A calculation needed in the proof of property (SI) is extracted in the following lemma, for subsequent reuse in Section \ref{sec:KirAlgs}.

\begin{lemma}\label{lem:CommLem}
Let $A$ and $B$ be unital $\mathrm C^*$-algebras and $\pi:A\rightarrow B$ be a c.p.c.\ order zero map. Suppose that $s\in B$ satisfies 
\begin{equation}
\label{eq:CommLemHyp}
s^*\pi(a)s=\pi(a)s^*s, \quad a\in A
\end{equation}
and $s^*s\vartriangleleft \pi(1_A)$. Then $s\in B\cap \pi(A)'\cap \{1_{B}-\pi(1_A)\}^\perp$.
\end{lemma}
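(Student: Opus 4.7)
The goal is to establish two things: that $s\pi(1_A)=\pi(1_A)s=s$ (giving $s\in\{1_B-\pi(1_A)\}^\perp$) and that $s$ commutes with the image of $\pi$. My plan is to extract these from the identity \eqref{eq:CommLemHyp} by a sequence of short algebraic manipulations, using the order zero identity \eqref{eq:Ord0Ident} for $\pi$.

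First I would note that replacing $a$ by $a^*$ in \eqref{eq:CommLemHyp} and taking adjoints gives $s^*\pi(a)s = s^*s\pi(a)$; comparing with \eqref{eq:CommLemHyp} itself yields $\pi(a)s^*s=s^*s\pi(a)$ for all $a\in A$, so $s^*s$ is central in $\pi(A)$. Next, specializing \eqref{eq:CommLemHyp} to $a=1_A$ and using $s^*s\vartriangleleft\pi(1_A)$ gives
\begin{equation}
s^*\pi(1_A)s=\pi(1_A)s^*s=s^*s.
\end{equation}
A short computation with $p:=\pi(1_A)$ then yields $\|s-ps\|^2=\|s^*s-s^*ps-s^*ps+s^*ps\|=0$, hence $\pi(1_A)s=s$. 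The identity $s\pi(1_A)=s$ is then obtained by the parallel computation $\|s-sp\|^2=\|s^*s-s^*s p-ps^*s+ps^*sp\|$, which vanishes because $s^*s\vartriangleleft\pi(1_A)$ forces $s^*sp=ps^*s=ps^*sp=s^*s$. This establishes $s\in\{1_B-\pi(1_A)\}^\perp$.

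The remaining step, commutation with $\pi(A)$, I would carry out by expanding
\begin{equation}
(s\pi(a)-\pi(a)s)^*(s\pi(a)-\pi(a)s)
\end{equation}
and substituting \eqref{eq:CommLemHyp} (applied to $a$ and to $a^*$) into the two cross terms $\pi(a^*)s^*\pi(a)s$ and $s^*\pi(a^*)s\pi(a)$; both become $\pi(a^*)s^*s\pi(a)$, which cancels the first diagonal term $\pi(a^*)s^*s\pi(a)$. What is left is $-\pi(a^*)\pi(a)s^*s+s^*\pi(a^*)\pi(a)s$, and here the order zero identity $\pi(a^*)\pi(a)=\pi(a^*a)\pi(1_A)$ comes in: the first summand becomes $-\pi(a^*a)s^*s$ (absorbing $\pi(1_A)$ into $s^*s$), while the second becomes $s^*\pi(a^*a)\pi(1_A)s=s^*\pi(a^*a)s=\pi(a^*a)s^*s$ by the previous step $\pi(1_A)s=s$ and a final application of \eqref{eq:CommLemHyp} with $a$ replaced by $a^*a$. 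The two terms cancel, giving $s\pi(a)=\pi(a)s$ for all $a\in A$.

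I do not foresee a genuine obstacle here: this is a bookkeeping argument in which the order zero identity and the absorption relation $s^*s\vartriangleleft\pi(1_A)$ do all the work. The only point requiring a little care is using \eqref{eq:CommLemHyp} for the element $a^*a$ rather than just for $a$, and remembering to invoke the order zero identity exactly when a product of two $\pi$-images needs to be collapsed to a single one.
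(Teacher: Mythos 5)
Your argument is correct and follows essentially the same route as the paper: both establish $\pi(1_A)s=s$ from $s^*\pi(1_A)s=s^*s$ together with positivity of $1_B-\pi(1_A)$, and both prove the commutation by expanding $[s,\pi(a)]^*[s,\pi(a)]$ and cancelling with the aid of \eqref{eq:CommLemHyp}, the commutation of $s^*s$ with $\pi(A)$, and the order zero identity \eqref{eq:Ord0Ident}. The only differences are presentational (the paper works with self-adjoint $a$ and an intermediate identity $s^*\pi(a)^2s=\pi(a)^2s^*s$, and deduces $s\pi(1_A)=s$ at the end from the commutation, while you verify it directly up front); and the displayed expansion of $\|s-ps\|^2$ contains a typo in its last term (it should read $s^*p^2s$, not $s^*ps$), but the intended argument is sound.
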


\begin{proof}
Note that $s^*s\geq s^*\pi(1_A)s=\pi(1_A)s^*s=s^*s$, so that by the $\mathrm C^*$-identity,
\begin{equation}
\|\pi(1_A)^{1/2}s-s\|^2 = \|s^*s-s^*\pi(1_A)s\| = 0,
\end{equation}
which implies that $s=\pi(1_A)^{1/2}s=\pi(1_A)s$.
Then, for $a=a^*\in A$, we have
\begin{eqnarray}
\notag
s^*\pi(a)^2s&\stackrel{\eqref{eq:Ord0Ident}}=&s^*\pi(a^2)\pi(1_A)s\notag\\
\notag&\stackrel{\pi(1_A)s=s}=&s^*\pi(a^2)s\\
\notag
&\stackrel{\eqref{eq:CommLemHyp}}=&\pi(a^2)s^*s\\
\notag
&\stackrel{s^*s\vartriangleleft \pi(1_A)}=& \pi(a^2)\pi(1_A)s^*s\\
&\stackrel{\eqref{eq:Ord0Ident}}=& \pi(a)^2s^*s.
\end{eqnarray}
Then, for $a=a^*\in A$, in the expansion of $[s,\pi(a)]^*[s,\pi(a)]$, all the terms cancel and so by the $\mathrm C^*$-identity, $[s,\pi(a)]=0$. Thus also $s\pi(1_A)=s$ and so $s\in B\cap \pi(A)'\cap \{1_{B}-\pi(1_A)\}^\perp$.
\end{proof}

\begin{proof}[Proof of Lemma \ref{lem:relSI}]
Let $A$ and $B_n$ be as in Lemma \ref{lem:relSI} and let $\pi:A\rightarrow B_\omega$ be a c.p.c.\ order map.
Recall that
\begin{equation}
C:=B_\omega \cap \pi(A)' \cap \{1_{B_\omega}-\pi(1_A)\}^\perp, \quad \bar{C}:= C/(C\cap J_{B_\omega}).
\end{equation}
Let $e,f \in C_+$ satisfy the conditions in the definition of property (SI) so that $e\in J_{B_\omega}$ and $\|f\|=1$ has the property that for every nonzero $a \in A_+$, there exists $\gamma_a > 0$ such that 
\begin{equation}
\label{eq:relSItr-inproof}
\tau(\pi(a)f^n) > \gamma_a,\quad \tau\in T_\omega(B_\omega),\ n\in\mathbb N. \end{equation}
We will show that there exists $s \in B_\omega$ that satisfies
\begin{equation}\label{ne4.30}
s^*\pi(a)s = \pi(a)e, \quad \text{for all }a \in A\text{ and}\quad fs=s.
\end{equation}
It will then follow that $\pi(1_A)s=\pi(1_A)fs=s$.
Then, taking $a:=1_A$ gives $s^*s=\pi(1_A)e=e\vartriangleleft\pi(1_A)$. 
By Lemma \ref{lem:CommLem}, $s\in B_\omega\cap \pi(A)'\cap \{1_{B_\omega}-\pi(1_A)\}^\perp=C$.

Fix a finite set $\mathcal F$ of contractions in $A$ and $\eps > 0$; we will produce a contraction $s \in B_\omega$ that satisfies
\begin{equation}
\label{eq:relSIToShow2}
 s^*\pi(x)s \approx_{3\eps} \pi(x)e \quad\text{for all }x \in \mathcal F \text{ and}\quad fs=s.
\end{equation}
Once this is done, a routine application of Kirchberg's $\eps$-test (Lemma \ref{epstest}), will give a contraction $s\in B_\omega$ satisfying (\ref{ne4.30}).

Since each $B_n$ is $\mathcal Z$-stable, using Lemma \ref{lem:Zfacts}(\ref{lem:Zfacts1}) we can find a unital $^*$-homo\-morphism $\alpha:\Z\to B_\omega\cap \pi(A)'\cap\{e,f\}'$.  Thus we can define a c.p.c.\ order zero map $\tilde{\pi}:A\otimes\Z\to B_\omega$ satisfying $\pi(a)=\tilde\pi(a \otimes 1_{\mathcal Z})$ for $a\in A$ by $\tilde{\pi}(a\otimes z)=\pi(a)\alpha(z)$. This is c.p.c.\ order zero by \cite[Corollary 4.3]{WZ:MJM} as it is the tensor product of $\pi$ and $\alpha$. Note that $e,f\in B_\omega\cap \tilde{\pi}(A\otimes\Z)'\cap\{1_{B_\omega}-\tilde{\pi}(1_{A\otimes\Z})\}^\perp$. 

We claim that for every nonzero $b\in (A\otimes \Z)_+$, there exists $\tilde{\gamma}_b>0$ such that
\begin{equation}\label{eq:relSItr-tilde}
\tau(\tilde{\pi}(b)f^n)>\tilde{\gamma}_b,\quad \tau\in T_\omega(B_\omega),\ n \in \mathbb{N}.
\end{equation}
When $b=a\otimes z$ for some nonzero $a\in A_+$ and $z\in\Z_+$, (\ref{eq:relSItr-tilde}) is a consequence of (\ref{eq:relSItr-inproof}) as we can find $y_1,\dots,y_m\in\Z$ with $\sum_{i=1}^m y_izy_i^*=1_\Z$. Write $K=\|\sum_{i=1}^m y_iy_i^*\|>0$, and compute
\begin{align}
K\tau(\pi(a)\alpha(z)f^n)&\geq\tau\Big(\pi(a)f^n\alpha\Big(z^{1/2}\Big(\sum_{i=1}^m y_iy_i^*\Big)z^{1/2}\Big)\Big)\notag\\
&=\sum_{i=1}^m\tau(\pi(a)\alpha(y_i^*zy_i)f^n)\notag\\
&=\sum_{i=1}^m\tau(\tilde{\pi}(a\otimes y_i^*zy_i)f^n)\notag\\
&=\tau(\pi(a)f^n)\geq \gamma_a,\quad \tau\in T_\omega(B_\omega),\ n\in\mathbb N,
\end{align}
so that (\ref{eq:relSItr-tilde}) holds with $\tilde{\gamma}_b=K^{-1}\gamma_a$ (where $\gamma_a$ is as given in (\ref{eq:relSItr-inproof})).  For general nonzero $b\in (A\otimes \Z)_+$, use Kirchberg's slice lemma (see \cite[Lemma 4.1.9]{R:Book}), to find a nonzero element $c\in A\otimes\Z$ such that $c^*c=a\otimes z$ for some $a\in A_+$ and $z\in \mathcal{Z}_+$ and $cc^*\in\overline{b^{1/2}(A\otimes \Z)b^{1/2}}$.  Then, as $f\in \tilde{\pi}(A\otimes\Z)'$,
\begin{equation}
\tau(\tilde{\pi}(cc^*)f^n)=\tau(\tilde{\pi}(c^*c)f^n)\geq \tilde{\gamma}_{c^*c},\quad \tau\in T_\omega(B_\omega),\ n\in\mathbb N
\end{equation}
by the calculation above.  

Take a nonzero $x\in (A\otimes\Z)_+$ such that $b^{1/2}xb^{1/2}\approx_{\frac{\tilde{\gamma}_{c^*c}}{2}}cc^*$ so that
\begin{equation}
\tau(\tilde{\pi}(b^{1/2}xb^{1/2})f^n)\geq \frac{\tilde{\gamma}_{c^*c}}{2},\quad \tau\in T_\omega(B_\omega),\ n\in\mathbb N.
\end{equation}
Therefore
\begin{equation}
\tau(\tilde{\pi}(b)f^n)\geq\|x\|^{-1}\frac{\tilde{\gamma}_{c^*c}}{2},\quad \tau\in T_\omega(B_\omega),\ n\in\mathbb N,
\end{equation}
establishing (\ref{eq:relSItr-tilde}).

Write $\mathcal G=\{x\otimes 1_\Z:x\in\F\}\subset A\otimes\Z$. Note that no irreducible representation of $A \otimes \mathcal Z$ contains any compact operators.
 So, by Lemma \ref{lem:StrongKR}, there exist $L,N\in\N$, pairwise inequivalent pure states $\lambda_1,\dots,\lambda_L$ on $A\otimes \mathcal Z$ and elements $c_{i},d_{i,l} \in A\otimes \mathcal Z$ for $i=1,\dots,N$, $l=1,\dots,L$ such that
\begin{equation}
\label{eq:relSI1Step}
x \approx_\eps \sum_{l=1}^L\sum_{i,j=1}^N\lambda_l(d_{i,l}^*xd_{j,l})c_{i}^*c_{j},\quad x\in\mathcal G.
\end{equation}

By our orthogonal excision lemma, Lemma \ref{lem:OrthogExcision} applied to the finite set $\{d_{i,l}^*xd_{j,l'}:x\in\mathcal G,\ i,j=1,\dots,N,\ l,l'=1,\dots,L\}$ there exist positive contractions $a_1,\dots,a_L\in (A\otimes \mathcal Z)_+$ such that for $l=1,\dots,L$, $\lambda_l(a_l)= 1$ and
\begin{equation}
\label{eq:relSIExcision} a_ld_{i,l}^*xd_{j,l}a_l \approx_{\eps/(N^2L\max_k\|c_k\|^2)} \lambda_l(d_{i,l}^*xd_{j,l})a_l^2,\quad x\in\mathcal G,\ i,j=1,\dots,N,\end{equation}
while for $l\neq l'$,
\begin{equation}
\label{eq:relSIOrthog}
a_ld_{i,l}^*xd_{j,l'}a_{l'}\approx_{\eps/(N^{2}L^2\max_k\|c_k\|^{2})} 0,\quad x\in\mathcal G,\ i,j=1,\dots,N.
\end{equation}

By Lemma \ref{lem:SupportingMap}, let $\hat{\pi}:A\otimes \mathcal Z\rightarrow B_\omega\cap \{f\}'$ be a supporting c.p.c.\ order zero map for $\tilde\pi$.
For each $l=1,\dots,L$, let $S_l \subset (A\otimes \mathcal Z)_+ \setminus \{0\}$ be the countable set provided by Lemma \ref{lem:relSItrick} with $\tilde\pi$ in place of $\pi$ and $a_l$ in place of $a$ (recalling from \cite[Corollary 4.6]{R:IJM} that separable, simple, unital, finite and $\mathcal Z$-stable $\mathrm C^*$-algebras whose quasitraces are traces are stably finite and have strict comparison; see Remark \ref{EQTT}).
Using (\ref{eq:relSItr-tilde}), we can apply Lemma \ref{lem:LargeTraceSubordinate} twice (with $x:=0$ and with $S_0:=\tilde\pi(S_1 \cup \cdots \cup S_L)$) to obtain $t,h \in B_\omega \cap \hat\pi(A\otimes \mathcal Z)'\cap \tilde\pi(A\otimes \mathcal Z)'$ such that $h \vartriangleleft t \vartriangleleft f$ and, for every $b\in S_1\cup \cdots \cup S_L$, 
\begin{equation}
 \tau(\tilde\pi(b)h^n) \geq \tilde{\gamma}_b, \quad \tau \in T_\omega(B_\omega),\ n\in\mathbb N. \end{equation}
In particular this holds for $n=1$, so for each $l=1,\dots,L$, by the definition of $S_l$ in the application of Lemma \ref{lem:relSItrick} (with $\tilde\pi$ in place of $\pi$), there exists a contraction $r_l \in B_\omega$ such that $\tilde\pi(a_l)r_l=tr_l=r_l$ and $r_l^*r_l=e$.
Since $t \vartriangleleft f\vartriangleleft \tilde\pi(1_A)$, it follows that $\tilde\pi(1_A)r_l=r_l$ for each $l$, and therefore,
\begin{eqnarray}
\label{eq:relSIrDef}
\notag
r_l^*\tilde\pi(a_l^2)r_l &=& r_l^*\tilde\pi(a_l^2)\tilde\pi(1_A)r_l \\
\notag
&\stackrel{(\ref{eq:Ord0Ident})}{=}& r_l^*\tilde\pi(a_l)^2r_l \\
&=& r_l^*r_l =e=\tilde\pi(1_A)^{1/2}e\tilde\pi(1_A)^{1/2}.
\end{eqnarray}
Set
\begin{equation}
\label{eq:relSIsDef}
 s := \sum_{l=1}^L\sum_{i=1}^N \hat{\pi}(d_{i,l}a_l)r_l \hat{\pi}(c_i) \in B_\omega.
\end{equation}
Since $r_l= tr_l$, $t \vartriangleleft f$ and $t$ commutes with the image of $\hat\pi$, it follows that $fs=s$.
For $x\in \mathcal F$, \begin{eqnarray}
\notag
\lefteqn{ s^*\pi(x)s }\\
\notag
&=& s^*\tilde\pi(x\otimes 1_\Z) s \\
\notag
&\stackrel{\eqref{eq:relSIsDef}}=& \sum_{l,l'=1}^L\sum_{i,j=1}^N \hat{\pi}(c_i^*)r_l^* \hat{\pi}(a_ld_{i,l}^*)\tilde\pi(x\otimes 1_\Z)\hat{\pi}(d_{j,l'}a_{l'})r_{l'}\hat{\pi}(c_j) \\
\notag
&\stackrel{r_l \vartriangleleft \pi(1_A), \eqref{eq:Supporting}}=&  \sum_{l,l'=1}^L\sum_{i,j=1}^N\hat{\pi}(c_i^*)r_l^*\tilde\pi(a_ld_{i,l}^*(x\otimes 1_\Z)d_{j,l'}a_{l'})r_{l'}\hat{\pi}(c_j) \\
\notag
&\stackrel{\eqref{eq:relSIOrthog}}{\approx_\eps}& \sum_{l=1}^L \sum_{i,j=1}^N \hat{\pi}(c_i^*)r_l^*\tilde\pi(a_ld_{i,l}^*(x\otimes 1_\Z)d_{j,l}a_{l})r_{l}\hat{\pi}(c_j) \\
\notag
&\stackrel{\eqref{eq:relSIExcision}}{\approx_\eps}& \sum_{l=1}^L \sum_{i,j=1}^N \lambda_l(d_{i,l}^*(x\otimes 1_\Z)d_{j,l})\hat{\pi}(c_i^*)r_l^*\tilde\pi(a_l^2)r_{l}\hat{\pi}(c_j) \\
\notag
&\stackrel{\eqref{eq:relSIrDef}}{=}& \sum_{l=1}^L \sum_{i,j=1}^N \lambda_l(d_{i,l}^*(x\otimes 1_\Z)d_{j,l})\hat{\pi}(c_i^*)\tilde\pi(1_A)^{1/2}e\tilde\pi(1_A)^{1/2}\hat{\pi}(c_j) \\
\notag
&\stackrel{\eqref{eq:Supporting}}=& \sum_{l=1}^L\sum_{i,j=1}^N \lambda_l(d_{i,l}^*(x\otimes 1_\Z)d_{j,l}) \tilde\pi^{1/2}(c_i^*)e\tilde\pi^{1/2}(c_j) \\
\notag
&\stackrel{e\in \tilde{\pi}(A\otimes\Z)',\ (\ref{eq:Ord0Ident})}=& \sum_{l=1}^L \sum_{i,j=1}^N \lambda_l(d_{i,l}^*(x\otimes 1_\Z)d_{j,l}) \tilde\pi(c_i^*c_j)e \\
&\stackrel{\eqref{eq:relSI1Step}}{\approx_\eps}& \tilde\pi(x\otimes 1_\Z)e = \pi(x)e,
\end{eqnarray}
as required.
\end{proof}

\bigskip

\subsection{\sc Proof of Theorem \ref{thm:StrictCompTraceSurj}}\hfill \\

\label{sect5.2}

\noindent
With the property (SI) result in place, we now establish Theorem \ref{thm:StrictCompTraceSurj}.

\begin{proof}[Proof of Theorem \ref{thm:StrictCompTraceSurj}(i)]
This follows from property (SI) essentially as in the proof of \cite[Proposition 3.3]{MS:DMJ}.
For completeness, here is the proof.
Let $\tau \in T(C)$.
Our aim is to show that $\tau(x)=0$ for all positive contractions $x \in C \cap J_{B_\omega}$. Temporarily fix $x\in (C\cap J_{B_\omega})^{1}_+$.

For a nonzero $a \in A_+$, set 
\begin{equation}
\gamma_a := \min_{\eta \in T(B_\omega)} \eta(\pi(a)),
\end{equation}
and note that for each $n\in\N$,
\begin{equation}
\gamma_{a} = \min_{\eta\in T(B_\omega)}\eta(\pi(1_A)^n\pi(a))> 0,
\end{equation}
since $\bar{\pi}$ is a $^*$-homomorphism and $\pi(a)$ is full in $B_\omega$.
Let $S$ be a countable dense subset of $A_+ \setminus \{0\}$.
By Lemma \ref{lem:LargeTraceSubordinate} (applied to $f:=\pi(1_A)$ and with $S_0:=S$), 
there exists a positive contraction $f'\in C$ with $x\vartriangleleft f'$, $\eta((f')^n\pi(a))\geq \gamma_a$ for all $n\in\N, \eta\in T(B_\omega)$ and $a \in S$.\footnote{The hypotheses on each $B_n$ ensure that this holds on $T(B_\omega)$ and not just $T_\omega(B_\omega)$ by Proposition \ref{NoSillyTraces}.}
Since $a \mapsto \gamma_a$ is continuous and $S$ is dense in $A_+$,
\begin{equation} \eta((f')^n\pi(a)) \geq \gamma_a, \quad a \in A_+ \setminus \{0\}. \end{equation}
Now $f'-x\in C_{+}^1$ and this difference satisfies
\begin{equation} \eta(\pi(a)(f'-x)^n) = \eta(\pi(a)(f')^n) \geq \gamma_a,\quad \eta\in T(B_\omega),\ a\in A_+,\ a\neq 0. \end{equation}
Hence, as $\pi$ has property (SI) (Lemma \ref{lem:relSI}), there exists a contraction $s\in C$ such that $s^*s=x$ and $(f'-x)s=s$, so that $(f'-x)^{1/2}s=s$.  In particular $x+ss^*\leq x+(f'-x)^{1/2}ss^*(f'-x)^{1/2}\leq x+f'-x\leq f'$. Hence $x+ss^*$ is a positive contraction in $C \cap J_{B_\omega}$ with $\tau(x+ss^*)=2\tau(x)$.
This shows that, if
\begin{equation} \alpha := \sup \tau(x) \end{equation}
where the supremum is taken over all positive contractions $x\in C \cap J_{B_\omega}$ then $\alpha \geq 2\alpha$.
Therefore, $\alpha=0$, as required.

This establishes surjectivity of the map $T(\bar{C}) \to T(C)$ induced by the quotient map $C \to \bar{C}$.
It is a general fact that a surjective $^*$-homomorphism induces an affine continuous open injective map between the tracial state spaces.\footnote{Surjectivity is required for (a) the image of a state to be a state, (b) injectivity, and (c) openness.}
Thus, it follows that $T(\bar{C})$ and $T(C)$ are affinely homeomorphic.

Since $\bar{C}$ is unital (Lemma \ref{lem:RelCommSurjectivity}), $T(\bar{C})$ is a Choquet simplex, and therefore so is $T(C)$.
\end{proof}

The proof of the second part of Theorem \ref{thm:StrictCompTraceSurj} uses ideas from \cite[Theorem 4.8]{MS:arXiv}. 

\begin{proof}[Proof of Theorem \ref{thm:StrictCompTraceSurj}(ii)]
Let $b,c \in C_+$ be positive contractions such that
\begin{equation} d_\tau(b) < d_\tau(c),\quad \tau\in T(C). \end{equation}

Let $\eps > 0$.
Then, with $g_\eps$ as defined in \eqref{eq:gepsDef},
\begin{equation} d_\tau((b-\eps)_+) \leq \tau(g_\eps(b)) \leq d_\tau(b) < d_\tau(c),\quad \tau\in T(C). \end{equation}
Since $T(C)$ is compact (by (i)), $\tau \mapsto \tau(g_\eps(b))$ is continuous, and $\tau \mapsto d_\tau(c)$ is the increasing (as $\dl \to 0$) pointwise supremum of continuous functions $\tau \mapsto \tau(g_\dl(c))$, there are $\delta > 0$ and $k\in\mathbb N$ such that $\gamma := \min_{\tau \in T(C)} \tau(g_{2\delta}(c)) > 0$ and
\begin{equation} d_\tau((b-\eps)_+) \leq \tau(g_\eps(b)) 
< \frac{k-1}{k}\tau(g_\dl(c)),
\quad \tau\in T(C). \end{equation}
Set 
\begin{equation} b_0 := (b-\eps)_+ \quad\text{and}\quad c_0:= g_\delta(c), \end{equation}
so that it suffices to show that $(b_0-\eps)_+\preceq c_0$ in $C$. We know that
\begin{align}
\label{eq:StrictCompReducedSetup1}
 d_\tau(b_0) &< \frac{k-1}k \tau(c_0),\quad \tau \in T(C), \text{ and} \\
\label{eq:StrictCompReducedSetup2}
\tau(c_0^n) &\geq \gamma, \quad \tau \in T(C),\ n\in \N.
\end{align}

Let $\M^\omega$ be the ultraproduct of the sequence of tracial continuous $\mathrm{W}^*$-bundles $(\overline{B}^{\mathrm{st}}_n)_{n=1}^\infty$ so that Lemma \ref{lem:CentralSurjectivity} gives $B_\omega/J_{B_\omega}\cong \M^\omega$ and (taking $S:=\emptyset$ in Lemma \ref{lem:CentralSurjectivity}) $\bar{C}=\bar{\pi}(1_{A})(\M^\omega\cap \bar{\pi}(A)')$. In particular, as each $B_n$ is separable and $\Z$-stable, each $\overline{B_n}^{\mathrm{st}}$ is a strictly separable McDuff $\mathrm{W}^*$-bundle over $\partial_{e} T(B_n)$ (by Proposition \ref{TensorBundle}, Proposition \ref{DefMcDuff}, and the fact that $\overline{\Z}^{\mathrm{st}}\cong\R$) so that $\bar{C}$ has strict comparison of positive elements by Lemma \ref{lem:W*StrictComp}. 

Let $\bar{c}$ denote the image of $c$ in $\bar{C}$.
By Remark \ref{McDuffProductEmbedding}, there exists a unital embedding 
\begin{equation} 
\bar{\phi}':M_k \to \M^\omega\cap \bar{\pi}(A)'\cap\{\bar{c}\}'.
 \end{equation}
Cutting this homomorphism by the projection $\bar{\pi}(1_{A})$, gives a unital embedding
\begin{equation}
\bar{\phi}:M_k\to \bar{\pi}(1_{A})\M^\omega\bar{\pi}(1_{A})\cap \bar{\pi}(A)'\cap\{\bar{c}\}'=(C\cap\{c\}')/(J_{B_\omega}\cap C\cap \{c\}'),
\end{equation} 
where the last identity is obtained by another application of Lemma \ref{lem:CentralSurjectivity} (with $S:=\{c\}$). Then, using projectivity of $C_0((0,1],M_k)$ (\cite[Theorem 4.9]{Loring:JFA}), $\bar{\phi}$ lifts to a c.p.c.\ order zero map
\begin{equation} \phi:M_k \to C \cap \{c\}'. \end{equation}

Let $p \in M_k$ be a rank one projection.
Since $\phi(1_{M_{k}}) \equiv \pi(1_{A}) \mod J_{B_\omega}$, Theorem \ref{thm:StrictCompTraceSurj}(i)  gives
\begin{equation}
\tau(\phi(1_{M_{k}})c_0^n) =  \tau(\pi(1_{A})c_0^n) = \tau(c_0^n), \quad n\in\N,\ \tau \in T(C).
\end{equation}
As $\phi(p)$ commutes with $c_0$, and $\phi(p)^n\equiv\phi(p)\mod J_{B_\omega}$, we similarly have
\begin{align}
\label{eq:StrictCompPhiTraceFormula2}
\tau((\phi(p)c_0)^n)= \tau(\phi(p)c_0^n) &= \frac{\tau(c_0^n)}k, \quad n\in\mathbb N,\ \tau\in T(C),\text{ and} \\
\label{eq:StrictCompPhiTraceFormula2.1}
\tau(\phi(1_{M_{k}}-p)c_0) &= \frac{k-1}k \tau(c_0),\quad \tau \in T(C).
\end{align}

In particular, letting $\bar{b}_0$, $\bar{c}_0$ denote the images of $b_0,c_0$ in $\bar{C}=C/(C\cap J_{B_\omega})$, (\ref{eq:StrictCompReducedSetup1}) and (\ref{eq:StrictCompPhiTraceFormula2.1}) give 
\begin{equation}
\label{eq:REQEXP}
d_\tau(\bar{b}_0)<\tau(\bar{\phi}(1_{M_{k}}-p)\bar{c}_0)\leq d_\tau(\bar{\phi}(1_{M_{k}}-p)\bar{c}_0),\quad \tau\in T(\bar{C}).
\end{equation}
Since $\bar{C}$ has strict comparison of positive elements by bounded traces it follows that $\bar{b}_0\preceq \bar{\phi}(1_{M_{k}}-p)\bar{c}_0$ in $\bar{C}$. 
By \cite[Proposition 2.4]{R:JFA2}, there exist $\mu>0$ and $\bar{v} \in \bar{C}$ such that 
\begin{equation} \bar{v}^*\bar{v}= g_{\frac{\eps}{2}}(\bar{b}_0),\quad g_\mu(\bar{\phi}(1_{M_{k}}-p)\bar{c}_0)\bar{v}=\bar{v};
\end{equation}
in particular $\bar{v}$ is a contraction, and, since $\bar C = C/(C \cap J_{B_\omega})$, we can lift $\bar{v}$ to a contraction $v\in C$.
Define a contraction $r:=v(b_0-\eps)_+^{1/2}\in C$ so that 
\begin{align}
r^*g_\mu(\phi(1_{M_{k}}-p)c_0)r&=(b_0-\eps)_+^{1/2}v^*g_\mu(\phi(1_{M_{k}}-p)c_0)v(b_0-\eps)_+^{1/2}\notag\\
&\equiv (b_0-\eps)_+\mod J_{B_\omega}.
\end{align}
Therefore 
\begin{equation}
\label{eq:StrictCompa''Def}
 b_1 := (b_0-\eps)_+ - r^*g_\mu(\phi(1_{M_{k}}-p)c_0)r \in (C \cap J_{B_\omega})_+.
\end{equation}

We will apply property (SI) to $e:=b_1$ and $f:=\phi(p)c_0$, so must check the `largeness' requirement for the latter operator.   Let $a \in A_+$ be nonzero. Then for $\tau \in T(B_\omega)$, $\tau(\pi(a)\cdot)$ is a scalar multiple, with scalar $\tau(\pi(a))$ (by  Lemma \ref{lem:RelCommSurjectivity}(ii)) of a tracial state on $C$, so that
\begin{eqnarray}
\notag
\tau(\pi(a)(\phi(p)c_0)^n) &\stackrel{(\ref{eq:StrictCompPhiTraceFormula2})}{=}& \frac{\tau(\pi(a)c_0^n)}k \\
&\stackrel{\eqref{eq:StrictCompReducedSetup2}}\geq& \frac{\gamma\tau(\pi(a))}k.
\end{eqnarray}
By hypothesis, $\pi(a)$ is full in $B_\omega$, so that there exists $\gamma_a > 0$ such that $\frac{\gamma\tau(\pi(a))}k > \gamma_a$ for all $\tau \in T(B_\omega)$.
It follows that
\begin{equation}
\label{eq:StrictCompPhiTraceFormula3}
\tau(\pi(a)(\phi(p)c_0)^n) > \gamma_a, \quad n\in\N,\ \tau\in T(B_\omega),\ a\in A_+\setminus\{0\}.
\end{equation}

Therefore as $\pi$ has property (SI) (Lemma \ref{lem:relSI}), taking $e:=b_1\in (C\cap J_{B_\omega})_+$ and $f:=\phi(p)c_0\in C_+$, we obtain
\begin{equation}
\label{eq:StrictCompa''Comp}
b_1 \preceq \phi(p)c_0\text{ in }C.
\end{equation}
Thus, we have (using \cite[Lemma 2.10]{APT:Contemp} for the standard Cuntz semigroup fact in the first line),
\begin{eqnarray}
[(b_0-\eps)_+] &\stackrel{\eqref{eq:StrictCompa''Def}}\leq& [r^*g_\mu(\phi(1_{M_{k}}-p)c_0)r] + [b_1] \notag\\
\notag
&\stackrel{\eqref{eq:StrictCompa''Comp}}\leq& [\phi(1_{M_{k}}-p)c_0] + [\phi(p)c_0] \\
\notag
&=& [\phi(1_{M_{k}})c_0]\notag \\
& \leq & [c_0],
\end{eqnarray}
where on the penultimate line, we used the facts that $\phi(1_{M_{k}})$ is the orthogonal sum of $\phi(1_{M_{k}}-p)$ and $\phi(p)$, and that $\phi(M_k)\in \{c_0\}'$. We have thus established $(b_{0}-\eps)_{+} \preceq c_{0}$, as desired.
\end{proof}

\begin{proof}[Proof of Theorem \ref{thm:StrictCompTraceSurj}(iii)]
Again, let $\M^\omega$ be the ultraproduct of the strictly separable, McDuff $\mathrm{W}^*$-bundles $(\overline{B}^{\mathrm{st}}_n)_{n=1}^\infty$ so that $B_\omega/J_{B_\omega}\cong \M^\omega$ and $\bar{C}=\M^\omega\cap\bar{\pi}(A)'\cap\{1_{\M^\omega}-\bar{\pi}(1_{A})\}^\perp$ by Lemma \ref{lem:CentralSurjectivity} (with $S=\emptyset$).  As $\bar{\pi}$ is a $^*$-homomorphism, Proposition  \ref{thm:W*CommTraces} shows that the collection of traces of the form $\tau(\bar{\pi}(a)\cdot)$ for $\tau\in T(\M^\omega)$ and $a\in A_{+}$ with $\tau(\bar{\pi}(a))=1$ has closed convex hull $T(\bar{C})$.  By Theorem \ref{thm:StrictCompTraceSurj}(i), traces on $C$ come from those on $\bar{C}$, and so the collection of traces $\tau(\pi(a)\cdot)$ for $\tau\in T(B_\omega)$ and $a\in A_{+}$ with $\tau(\pi(a))=1$ have closed convex hull $T(C)$.
\end{proof}

\begin{remark}
\label{rmk:QTT}
Let $B_n$, $A$, and $\pi$ be as in Theorem \ref{thm:StrictCompTraceSurj}, and define $C$ as in \eqref{eq:StrictCompCdef}.
Suppose that $\partial_e T(B_n)$ is compact, so that (ii) of Theorem \ref{thm:StrictCompTraceSurj} applies.
Then a result of Ng and Robert, \cite[Theorem 3.6(ii)]{NR:arXiv}, shows that $QT(C)=T(C)$.

Let us explain how to verify the hypotheses of \cite[Theorem 3.6(ii)]{NR:arXiv}, namely that the primitive ideal space of $C$ is compact and that $\Cu(C)$ has strict comparison of full positive elements by traces, in the sense of \cite[Definition 3.2]{NR:arXiv} (whereas Theorem \ref{thm:StrictCompTraceSurj}(ii) only shows the analogous property for $W(C)$).
The quotient $\bar C$ is unital, and if $e \in C$ is a positive contraction lifting the unit of $\bar{C}$, then it is clear from Theorem \ref{thm:StrictCompTraceSurj}(i) and (ii) that $(e-1/2)_+$ is full.
Hence by \cite[Proposition 3.1]{TT:CMB}, the primitive ideal space of $C$ is compact.

Next, let $a,b \in (C \otimes \mathcal K)_+$ be full positive elements and let $\gamma > 0$ be such that $d_\tau(a) \leq (1-\gamma)d_\tau(b)$ for all $\tau \in T(C)$; we must show that $[a] \leq [b]$ in $\Cu(C)$.
Since $[a]$ is the supremum of elements from $W(C)$, we may assume that $[a] \in W(C)$.
Let $(b_n)_{n=1}^\infty$ be a sequence of elements from matrix algebras over $C$, such that $([b_n])_{n=1}^\infty$ is an increasing sequence in $\Cu(B)$ whose supremum is $[b]$ (for example, use $b_n:=(1_C \otimes 1_{M_n})b(1_C \otimes 1_{M_n})$).
Let $\eps > 0$; then for each $\tau\in T(C)$, $\tau(g_\eps(a)) \leq d_\tau(a) < d_\tau(b) = \sup_n d_\tau(b_n)$ (because $d_\tau(a) < \infty$ and $d_\tau(b) > 0$).
Since the map $\tau \to \tau(g_\eps(a))$ is continuous and $T(C)$ is compact (by Theorem \ref{thm:StrictCompTraceSurj}(i)), there exists $n\in\N$ such that
\begin{equation} \tau(g_\eps(a)) < d_\tau(b_n), \quad \tau \in T(C). \end{equation}
By strict comparison (Theorem \ref{thm:StrictCompTraceSurj}(ii)), $[(a-\eps)_+] \leq [b_n] \leq [b]$.
Since $\eps$ is arbitrary, $[a] \leq [b]$, as required.
\end{remark}

\clearpage\section{Unitary equivalence of totally full positive elements}
\label{sec:TotallyFullClass}

\noindent
In this section, we classify totally full positive elements in relative commutant sequence algebras with respect to c.p.c.\ order zero maps, up to unitary equivalence.
We state the result in the following theorem, and prove it in Section \ref{sec6.1}.
In Section \ref{sec6.2}, we use Theorem \ref{cor:TotallyFullClassFinite} to access the $2\times 2$ matrix trick, to establish a strengthening of Theorem \ref{onecolourclass} (as Theorem \ref{KeyLemmaP}), which is the key classification result used to establish two coloured classification and our covering dimension estimates. Note that we do not need simplicity of the $B_n$ in Theorem \ref{cor:TotallyFullClassFinite} or in the lemmas of Section \ref{sec6.1}, just that $C$ has strict comparison.\footnote{However, when we come to use Theorem \ref{cor:TotallyFullClassFinite}, we will use simplicity of $B_n$ to obtain strict comparison of $C$ via Section \ref{sec:StrictComp}.}

Recall from Definition \ref{def:TotallyFull} that a nonzero $h\in C_+$ is totally full if $f(h)$ is full in $C$ for every non-zero $f \in C_0((0,\|h\|])_+$.

\begin{thm}
\label{cor:TotallyFullClassFinite}
Let $(B_n)_{n=1}^\infty$ be a sequence of separable, unital, stably finite $\mathcal Z$-stable $\mathrm C^*$-algebras with $QT(B_n)=T(B_n)$ for all $n$, and set $B_\omega := \prod_\omega B_n$.
Let $A$ be a separable, unital $\mathrm C^*$-algebra and let $\pi:A\rightarrow B_\omega$ be a c.p.c.\ order zero map such that
\begin{equation}\label{e6.1}
C:=B_\omega \cap \pi(A)' \cap \{1_{B_\omega}-\pi(1_A)\}^\perp
\end{equation}
is full in $B_\omega$ and has strict comparison of positive elements with respect to bounded traces (as in Definition \ref{defn:StrictComp}).

Let $a,b \in C_+$ be totally full positive elements.
Then $a$ and $b$ are unitarily equivalent (by unitaries in the unitization of $C$) if and only if $\tau(a^k) = \tau(b^k)$ for every $\tau \in T(C)$ and $k\in \N$.
\end{thm}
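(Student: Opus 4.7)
The \emph{only if} direction is immediate: $a = u^*bu$ in $C^\sim$ forces $\tau(a^k) = \tau(u^* b^k u) = \tau(b^k)$ for every $\tau \in T(C)$ and $k \in \N$ by the trace property.  For the \emph{if} direction, the plan is to adapt the Robert--Santiago classification from \cite{RS:JFA} to our relative commutant setting, replacing stable rank one with the invertibility lemmas of Section \ref{sec:Matrix}.

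First, by polynomial approximation the moment equality extends to $\tau(f(a)) = \tau(f(b))$ for every $\tau \in T(C)$ and every $f \in C_0((0,\infty))_+$.  Since $a$ and $b$ are totally full they have full spectrum, so their common norm $M = \|a\| = \|b\|$ is determined by the moments, and applying the formula $d_\tau(x) = \sup_n \tau(x^{1/n})$ to $f^{1/n}$ yields $d_\tau(f(a)) = d_\tau(f(b))$ for every $f \in C_0((0,M])_+$ and $\tau \in T(C)$.  Combining this with strict comparison, I would show that for every $f \in C_0((0,M])_+$ and every $\eps > 0$ there exists $\delta > 0$ with $(f(a) - \eps)_+ \preceq (f(b) - \delta)_+$, and symmetrically; the upgrade from the nonstrict inequality $d_\tau(f(a)) \leq d_\tau(f(b))$ to the strict inequality required by Definition \ref{defn:StrictComp} uses the total fullness of $a$ and $b$, which rules out degeneracy of the relevant dimension functions at any trace.

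Next, using these Cuntz subequivalences, I would produce, for each $\eps > 0$, an element $v \in C$ with $v^*v = (b - \eps/2)_+$ and $vv^* \in \overline{(a-\eps/4)_+ C (a-\eps/4)_+}$, realizing Cuntz equivalence of $(a-\eps/2)_+$ and $(b-\eps/2)_+$.  To turn this partial isometry into a unitary in $C^\sim$, I would invoke Lemma \ref{lem:GapInvertibles2}: the total fullness of $a$ and $b$ provides full zero divisors for $v$ inside $C$ (for instance, spectral cut-downs of $a$ and $b$ supported on $[0,\eps/8]$), so Lemma \ref{lem:GapInvertibles2} shows that $v$ is approximable by invertibles in $C^\sim$.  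Lemma \ref{lem:PolarDecomp} together with Lemma \ref{NewEpsLemma}(ii) then furnish a unitary polar decomposition $v = u|v|$ in $C^\sim$, from which $u^* a u \approx_\eps b$.  Finally, Lemma \ref{NewEpsLemma}(i), applied with $S_1 := \pi(A)$ and $S_2 := \{1_{B_\omega} - \pi(1_A)\}$, upgrades this approximate unitary equivalence in $C^\sim$ to an exact one.

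The principal obstacle is the passage from Cuntz equivalence to unitary equivalence just sketched: the Robert--Santiago argument is set in stable-rank-one algebras, which is not a hypothesis on $C$ here (indeed the relative commutant $C$ of an ultraproduct need not have stable rank one).  The substitute is Lemma \ref{lem:GapInvertibles2}, whose proof uses $\mathcal Z$-stability of each $B_n$ through Lemma \ref{lem:Robertsr1} and \cite[Lemma 2.1]{Robert:arXiv}.  The key technical point is then to verify that the total fullness of $a$ and $b$ supplies the requisite full zero divisors at every step of the argument, so that the Robert--Santiago technology can run without a direct stable rank one assumption.
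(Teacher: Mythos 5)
You have correctly identified the overall strategy (adapt Robert--Santiago with Lemma \ref{lem:GapInvertibles2} replacing stable rank one), and your steps converting trace agreement into Cuntz equivalence of $f(a)$ and $f(b)$ via strict comparison and total fullness are sound and match the paper's argument in the reduction of Theorem \ref{cor:TotallyFullClassFinite} to Lemma \ref{thm:TotallyFullClass}. However, there is a genuine gap in your passage from Cuntz equivalence to unitary equivalence. You propose to take a single $v$ with $v^*v = (b-\eps/2)_+$ and $vv^* \in \overline{(a-\eps/4)_+C(a-\eps/4)_+}$, extract a unitary polar decomposition $v = u|v|$, and conclude $u^*au \approx_\eps b$. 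This does not follow: the identity $v = u|v|$ gives $vv^* = u(b-\eps/2)_+u^*$, but $vv^*$ is merely some element of $\mathrm{her}((a-\eps/4)_+)$ and bears no further relation to $(a-\eps/2)_+$; a single Cuntz subequivalence does not pin down the range projection of $v$. So the unitary $u$ you extract need not conjugate $a$ close to $b$ at all.

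What is actually required is an (approximate) Murray--von Neumann equivalence $x^*x \approx (b-\eps)_+$, $xx^* \approx (a-\eps)_+$, and \emph{then} the polar decomposition of $x$ gives the conjugating unitary. Producing such an $x$ from the Cuntz data is the hard content of the Robert--Santiago argument: the partition of the spectrum of $a$ and $b$ into $m$ pieces, the inductive construction of interpolating elements $a_i', b_i'$ via Lemma \ref{lem:RSLem2}, and the bookkeeping needed to keep the partial sums Murray--von Neumann equivalent at each stage, as carried out in the proof of Lemma \ref{thm:TotallyFullClass}. Your sketch elides this induction entirely. You also do not address verifying the technical hypothesis of Lemma \ref{thm:TotallyFullClass} (existence of full two-sided zero divisors for suitable elements of full hereditary subalgebras of $C$), which the paper derives from strict comparison using Lemma \ref{lem:Zfacts}(\ref{lem:Zfacts6}) and which is precisely what makes Lemma \ref{lem:GapInvertibles2} applicable at the relevant points of the induction, not merely at the final step.
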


\bigskip

\subsection{\sc Proof of Theorem \ref{cor:TotallyFullClassFinite}}\hfill \\
\label{sec6.1}

\noindent
Our proof of Theorem \ref{cor:TotallyFullClassFinite} is motivated by stable rank one considerations: if we knew that the stable rank of the relative commutant sequence algebra $C$ given in \eqref{e6.1} is one, then by \cite[Theorem 4]{CE:IMRN}, one gets a classification of \textit{all} positive elements by their behaviour on the Cuntz semigroup (as Kirchberg's $\eps$-test shows that unitary equivalence and approximate unitary equivalence are the same in the unitization of $C$ (Lemma \ref{NewEpsLemma}(i)).
However, we were unable to determine whether algebras of the form (\ref{e6.1}) have stable rank one. Indeed, even the following question is open.

\begin{question}\label{Q5.2}
Let $A$ be a simple, separable, unital, nuclear $\mathrm C^*$-algebra which is $\mathcal Z$-stable (or even UHF-stable) and has a unique trace. Does $A_\omega \cap A'$ have stable rank one?
\end{question}

To the authors' knowledge, the answer is only known to be positive for a small handful of $\mathrm C^*$-algebras, namely AF algebras and the Jiang-Su algebra, while there are no $\mathrm C^*$-algebras for which the answer is known to be negative.
For an AF algebra, if $B \subseteq A$ is finite dimensional then $A_\omega \cap B'$ is a direct sum of unital corners of $A_\omega$, so it has stable rank one. The $\eps$-test can then be used to prove that $A_\omega \cap A'$ has stable rank one if $A$ is AF (in fact, unique trace is not needed for this argument).
For $\mathcal Z$, \cite[Theorem 2(2)]{FHRT} shows that $\mathcal Z$ is elementarily equivalent to $\mathcal Z_\omega \cap \mathcal Z'$ (in the language of continuous model theory --- see \cite{F:ICM}); since stable rank one is preserved by elementary equivalence (a consequence of Lemma \ref{lem:PolarDecomp}), it follows that $\mathcal Z_\omega \cap \mathcal Z'$ has stable rank one.

If $A$ is as in Question \ref{Q5.2}, then our results imply that every element of $A_\omega \cap A' \cap J_{A_\omega}$ can be approximated by invertibles.
Certainly, for such an element $x$, an application of Kirchberg's $\eps$-test similar to Lemma \ref{lem:ActAsUnit} provides another element $e \in A_\omega \cap A' \cap J_{A_\omega}$ such that $x \vartriangleleft e$.
Thus $1_{A_\omega}-e$ is a two-sided zero divisor for $x$, and it is full in $A_\omega \cap A'$ by Theorem \ref{thm:StrictCompTraceSurj}(i) and (ii).
Hence by Lemma \ref{lem:Robertsr1} (with $B_n:=A$, $S:=A$, and $d:=1_A$), $x$ is approximated by invertibles in $A_\omega \cap A'$.  However, this does not show that $A_\omega \cap A' \cap J_{A_\omega}$ has stable rank one, because it says nothing about elements of the unitization.

Further, for $A$ as in Question \ref{Q5.2}, $A_\omega \cap A'$ has stable rank one if and only if every element of the form $1_{A_\omega}+x$, where $x \in A_\omega \cap A' \cap J_{A_\omega}$, can be approximated by invertibles (in the unitization of $A_\omega \cap A' \cap J_{A_\omega}$, or equivalently by \cite[Theorem 4.4]{R:PLMS}, in $A_\omega \cap A'$).
Certainly, if every element of the form $1_{A_\omega}+x$, where $x \in I:=A_\omega \cap A' \cap J_{A_\omega}$, can be approximated by invertibles, then this shows that the ideal $I$ has stable rank one.
The quotient of $A_\omega \cap A'$ by $I$ is the II$_1$ factor $\mathcal R^\omega \cap \mathcal R'$, and hence has stable rank one.\footnote{Finite von Neumann algebras have stable rank one, since operators in a finite von Neumann algebra have unitary polar decompositions.} Moreover, the unitary group of $(A_\omega \cap A')/I$ is connected (\cite[Example V.1.2.3 (i)]{B:Encyc}).
By \cite[Theorem 4.11]{R:PLMS}, stable rank one for $I$ then implies stable rank one for $A_\omega \cap A'$.

In the absence of an answer to Question \ref{Q5.2} (let alone whether the algebra $C$ in Theorem \ref{cor:TotallyFullClassFinite} has stable rank one), we instead adapt an argument of Robert and Santiago (\cite{RS:JFA}), allowing us to prove Theorem \ref{cor:TotallyFullClassFinite} using only that \textit{certain} elements in \textit{certain} hereditary subalgebras of $C$ in \eqref{eq:TotallyFullClassCdef} are approximated by invertibles. We establish Theorem \ref{cor:TotallyFullClassFinite} using Lemma \ref{thm:TotallyFullClass} below, which is designed for re-use later: our proof that Kirchberg algebras have nuclear dimension one involves an analogue of Theorem \ref{cor:TotallyFullClassFinite} for purely infinite $B_n$, which will also make use of this lemma.
\begin{lemma}
\label{thm:TotallyFullClass}
Let $(B_n)_{n=1}^\infty$ be a sequence of separable, unital, $\mathcal Z$-stable $\mathrm C^*$-algebras and set $B_\omega := \prod_\omega B_n$.
Let $A$ be a separable, unital $\mathrm C^*$-algebra and let $\pi:A\rightarrow B_\omega$ be a c.p.c.\ order zero map such that
\begin{equation}
\label{eq:TotallyFullClassCdef}
C:=B_\omega \cap \pi(A)' \cap \{1_{B_\omega}-\pi(1_A)\}^\perp
\end{equation}
is full in $B_\omega$.

Assume that every full hereditary subalgebra $D$ of $C$ satisfies the following: if $x \in D$ is such that there exist totally full elements $e_l,e_r \in D_+$ such that $e_l x = xe_r = 0$, then there exists a full element $s \in D$ such that $sx = xs = 0$.

Let $a,b \in C_+$ be totally full positive contractions.
Then $a$ and $b$ are unitarily equivalent (by unitaries in the unitization of $C$) if and only if for every $f \in C_0((0,1])_+$, $f(a)$ is Cuntz equivalent to $f(b)$ in $C$.
\end{lemma}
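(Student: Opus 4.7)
The $(\Rightarrow)$ direction is immediate: a unitary $u \in C^\sim$ with $uau^* = b$ yields $uf(a)u^* = f(b)$ by continuous functional calculus, so $f(a) \sim f(b)$ in $C$ for every $f \in C_0((0,1])_+$.

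For $(\Leftarrow)$, by Lemma \ref{NewEpsLemma}(i) it suffices to prove approximate unitary equivalence: given $\eps > 0$, one must produce a unitary $u \in C^\sim$ with $\|uau^* - b\| < \eps$. I would follow the strategy of Robert-Santiago \cite{RS:JFA} to first construct an approximate intertwiner $w \in C$. Fixing $\eta > 0$ small relative to $\eps$, the hypothesized Cuntz equivalence $(a-\eta)_+ \sim (b-\eta)_+$ applied via \cite[Proposition 2.4]{R:JFA2} yields $r \in C$ and $\dl > 0$ with $(a-\eta)_+ = r^* g_\dl(b) r$; setting $w := g_\dl(b)^{1/2} r$ gives $w^*w = (a-\eta)_+$ and $ww^* \vartriangleleft g_\dl(b)$. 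A refined construction (cf.\ \cite{RS:JFA}), alternating the roles of $a$ and $b$ and using Cuntz equivalences $f_k(a) \sim f_k(b)$ for a suitable family of cut-off functions, delivers $w$ additionally satisfying the approximate intertwining $wa \approx bw$ up to $O(\eta)$.

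The new ingredient --- replacing the stable rank one of $C$ used by Robert-Santiago to produce a unitary polar decomposition --- is the zero-divisor hypothesis of the lemma. The intertwiner $w$ lies in the full hereditary subalgebra $D := \overline{(a+b)C(a+b)} \subseteq C$ (full because $a$ and $b$ are totally full). Using functional calculus I would exhibit totally full $e_l, e_r \in D_+$ with $e_l w = w e_r = 0$: take $e_r := f_r(a)$ for a nonzero $f_r \in C_0((0,\eta/2))_+$, so that $e_r(a-\eta)_+ = 0$ forces $w e_r = 0$, and similarly $e_l := f_l(b)$ satisfies $e_l w = 0$; total fullness of $e_r = f_r(a)$ in $D$ follows from total fullness of $a$ in $C$, since an element which is full in $C$ and lies in the full hereditary subalgebra $D$ is automatically full in $D$. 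The hypothesis then supplies a full $s \in D$ with $sw = ws = 0$, and the nilpotent-product argument from the proof of Lemma \ref{lem:Robertsr1}, using a commuting copy of $\mathcal Z$ supplied by Lemma \ref{lem:Zfacts}(\ref{lem:Zfacts1}), shows that $w$ is approximated by invertibles in $D^\sim$, and hence in $C^\sim$. By Lemma \ref{lem:PolarDecomp} and Lemma \ref{NewEpsLemma}(ii), $w$ admits a unitary polar decomposition $w = u|w|$ with $u \in C^\sim$, and the approximate intertwining propagates through $u$ to give $uau^* \approx_\eps b$ provided $\eta$ was chosen sufficiently small.

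The main obstacle I expect is verifying total fullness of the constructed $e_l, e_r$ inside $D$ (as opposed to $C$), and more generally adapting the nilpotent-product argument of Lemma \ref{lem:Robertsr1} and Lemma \ref{lem:GapInvertibles2} to the hereditary subalgebra $D$, which is not itself of the form $B_\omega \cap S' \cap \{1_{B_\omega}-d\}^\perp$; this requires checking that enough $\mathcal Z$-stability structure descends from $B_\omega$ to $D$. A secondary point is the passage from approximate intertwining at the level of $(a-\eta)_+, (b-\eta)_+$ to approximate intertwining of $a,b$ themselves, handled by uniform continuity of the continuous functional calculus on $[0,1]$.
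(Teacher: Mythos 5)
Your $(\Rightarrow)$ direction is fine, and your overall plan for $(\Leftarrow)$ --- run the Robert--Santiago machinery to produce an intertwiner, then replace stable rank one with the zero-divisor hypothesis to get a unitary polar decomposition --- is the right high-level template, and matches the paper's. But there is a genuine gap hiding in the sentence ``a refined construction (cf.\ \cite{RS:JFA}), alternating the roles of $a$ and $b$ \ldots delivers $w$ additionally satisfying the approximate intertwining $wa\approx bw$.''

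The element you actually construct, $w=g_\dl(b)^{1/2}r$ with $w^*w=(a-\eta)_+$ and $ww^*\vartriangleleft g_\dl(b)$, is not an approximate intertwiner: nothing forces $ww^*$ to be close to a cut-down of $b$. Upgrading this to an intertwiner is \emph{precisely} the content of the Robert--Santiago induction, and that induction cannot be deferred or black-boxed here, because it itself requires producing a unitary (via polar decomposition) at every inductive step. In \cite{RS:JFA} those unitaries come for free from stable rank one. Without stable rank one, each such unitary must be manufactured from the lemma's zero-divisor hypothesis and Lemma \ref{lem:GapInvertibles2}, applied inside a hereditary subalgebra of the form $C\cap\{1_{B_\omega}-e\}^\perp$. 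This is what the paper's Lemma \ref{lem:RSLem2} does: its proof exhibits totally full zero-divisors $d_e,d_f$, invokes the technical hypothesis in $\check C:=C\cap\{1_{B_\omega}-e\}^\perp$ to get a full two-sided zero-divisor for $x$, applies Lemma \ref{lem:GapInvertibles2}, and then takes a unitary polar decomposition --- \emph{once per inductive step}, with roughly $m\sim 1/\eps$ steps. Your plan uses the zero-divisor hypothesis only once, at the very end, and therefore does not produce the intertwiner whose polar decomposition you then take. You have correctly identified what replaces stable rank one; you have mislocated where stable rank one is used.

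Two smaller points. First, the paper's final polar-decomposition step works directly with $D:=C$ (taking $e_l=g(a)$, $e_r=g(b)$ for nonzero $g\in C_0((0,\eps))_+$), so passing to $D=\overline{(a+b)C(a+b)}$ is an unnecessary complication --- and as you yourself anticipate, $\overline{(a+b)C(a+b)}$ is not of the form $B_\omega\cap S'\cap\{1_{B_\omega}-d\}^\perp$, so Lemma \ref{lem:GapInvertibles2} would not apply to it directly; one would first have to find a positive contraction $e\in C_+$ acting as a unit on $a+b$ (via Lemma \ref{lem:ActAsUnit}) and work in $C\cap\{1_{B_\omega}-e\}^\perp$. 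Second, the paper does not directly construct an approximate intertwiner between $a$ and $b$; it first establishes Murray--von Neumann equivalence of $(a-\eps)_+$ and $(b-\eps)_+$ via the iterated Lemma \ref{lem:RSLem2} argument (the ``$\frac1m\alpha\sim\frac1m\beta$'' computation), applies Lemma \ref{NewEpsLemma}(iii) to pass to exact MvN equivalence of $a$ and $b$, and only then does the single terminal polar decomposition. Your proof would be strengthened considerably by spelling out the analogue of Lemma \ref{lem:RSLem2}, with its internal use of the zero-divisor hypothesis, and the induction that iterates it.
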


Before proceeding, we show how to recapture Theorem \ref{cor:TotallyFullClassFinite} from Lemma \ref{thm:TotallyFullClass}.

\begin{proof}[Proof that Theorem \ref{cor:TotallyFullClassFinite} follows from Lemma \ref{thm:TotallyFullClass}]
Let $a,b \in C_+$.
If $a$ and $b$ are unitarily equivalent then of course $\tau(a^k)=\tau(b^k)$ for every $k\in\N$ and $\tau \in T(C)$; we must prove the converse.
For this, we may assume that $a,b$ are positive contractions.
We check (i) that when $C$ is full and has strict comparison, then it satisfies the technical condition of Lemma \ref{thm:TotallyFullClass};\footnote{In this stably finite case the technical condition holds when $e_l$ and $e_r$ are full; it is convenient in the purely infinite setting of Section \ref{sec:KirAlgs} to ask for these elements to be totally full in this technical condition.} 
and (ii) that strict comparison of $C$ and the hypothesis $\tau(a^k)=\tau(b^k)$ for all $k\in\N$ and $\tau\in T(C)$ implies that $a$ and $b$ satisfy the hypothesis concerning Cuntz equivalence. Then the conclusion follows from Lemma \ref{thm:TotallyFullClass}.

(i): Let $D$ be a full hereditary subalgebra of $C$, which inherits strict comparison by traces from $C$ (see Remark \ref{rmk:StrictCompFullHered}). Let $x\in D$ and let $e_l, e_r \in D_+$ be full and satisfy $e_l x=xe_r=0$.
We claim that there exists $h\in D_+$ such that $(h-\dl)_+$ is full, for some $\dl>0$, and 
\begin{equation}\label{e6.3}
d_\tau(h)<\min\{d_\tau(e_l),d_\tau(e_r)\},\quad \tau \in T(C).
\end{equation}

To see this, first, using Lemma \ref{lem:ActAsUnit} with $S_1:=\pi(A)$, $S_2:=\{1_{B_\omega}-\pi(1_A)\}$, and $T:=\{e_r,e_l\}$, we obtain an element $f' \in C_+$ such that $e_r,e_l \vartriangleleft f'$; consequently, $e_r,e_l \vartriangleleft 2(f'-1/2)_+$ as well.
Let $h_0 \in D_+$ be any full element (eg.\ take $h_0=e_r$).
There exists $\eps>0$ such that $(f'-1/2)_+$ is in the ideal generated by $(h_0-\eps)_+$, so that $(h_0-\eps)_+$ is full.
Let $m \in \mathbb N$ be such that $[(h_0-\eps/2)_+] \leq m[e_r]$ and $[(h_0-\eps/2)_+] \leq m[e_l]$ in the Cuntz semigroup of $D$. Then by Lemma \ref{lem:Zfacts}(\ref{lem:Zfacts6}), find $0\leq h\leq (h_0-\eps/2)_{+}$ in $B_\omega\cap \pi(A)'$ (so that $h\in D_+$), with $(m+1)[h]\leq [(h_0-\eps/2)_{+}] \leq (m+2)[h]$. 
In particular, since $d_\tau(e_l), d_\tau(e_r)>0$ (as these elements are full), \eqref{e6.3} holds.
Moreover, since $(h_0-\eps)_+$ is full, there exists $\dl>0$ such that $(h-\dl)_+$ is full in $D$.

By strict comparison, $h$ is Cuntz below $e_l, e_r$, so it follows by \cite[Proposition 2.4]{R:JFA2} that there exists $y,z \in D$ such that
\begin{equation} (h-\dl)_+ = y^*y=z^*z \end{equation}
and
\begin{equation} yy^* \in \mathrm{her}(e_l), \quad zz^* \in \mathrm{her}(e_r). \end{equation}
Then set $s:= zy^*$ so that $sx=xs=0$.
Also, since $y^*s^*sy = (h-\dl)_+^3$, it follows that $s$ is full in $D$.

(ii): Let $f \in C_0((0,1])_+$ be nonzero, and let us show that $f(a)$ and $f(b)$ are Cuntz equivalent.
The assumption that $\tau(a^k)=\tau(b^k)$ for all $k\in\N$ and $\tau\in T(C)$ shows that $\tau(g(a))=\tau(g(b))$ for every continuous function $g\in C_0((0,1])_+$ and every $\tau \in T(C)$. This implies that $\|a\|=\|b\|$, as if $\|a\|<\|b\|$ say, then there would be some positive $g\in C_0((0,1])_+$ with $g(a)=0$ and $g(b)\neq 0$, whence $g(b)$ is full in $C$ and so has $\tau(g(b))>0$ for all $\tau\in T(C)$, while $\tau(g(a))=0$. Assume without loss of generality that $\|a\|=\|b\|=1$.  Note too that $d_\tau(g(a))=d_\tau(g(b))$ for all $\tau\in T(C)$ and $g\in C_0((0,1])_+$.

For $\eps > 0$, set $U := f^{-1}((0,\eps))$ and let $g \in C_0(U)_{+}$ be a nonzero function.
Since $a$ is totally full in $C$, $g(a)$ is full in $C$ and so $d_\tau(g(a))>0$ for every $\tau \in T(C)$.
However, $g(a)$ is orthogonal to $(f(a)-\eps)_+$, so that for every $\tau \in T(C)$,
\begin{align}
 d_\tau((f(a)-\eps)_+) &< d_\tau((f(a)-\eps)_+) + d_\tau(g(a))\nonumber \\&= d_\tau((f(a)-\eps)_+ + g(a)) \leq d_\tau(f(a)) = d_\tau(f(b)).
\end{align}
Now by applying strict comparison, it follows that $(f(a)-\eps)_+ \preceq f(b)$ in $D$, and since $\eps$ is arbitrary, $f(a) \preceq f(b)$.
By symmetry, $f(a)$ and $f(b)$ are Cuntz equivalent.
\end{proof}

We now turn to the proof of Lemma \ref{thm:TotallyFullClass} following the Robert-Santiago strategy. A key tool is provided by Lemma \ref{lem:GapInvertibles2}, which shows that appropriate elements are approximated by invertibles: namely those that have full zero divisors.  We start with our replacement for \cite[Lemma 2]{RS:JFA}; the notation in the following lemma has been selected to enable comparisons with \cite[Lemma 2]{RS:JFA}.
Note that, since we are working in a relative commutant sequence algebra to which Kirchberg's $\eps$-test applies (see Section \ref{sec:Reindexing}), our statement avoids an approximation as in the conclusion of \cite[Lemma 2]{RS:JFA}.

Recall that $a \vartriangleleft b$ means that $b$ acts as a unit on $a$.  In the rest of this section we use $\sim$ for Murray-von Neumann equivalence of positive elements, i.e., for $a,b\in A_+$, write $a\sim b$ to mean that there exists $x\in A$ with $x^*x=a$ and $xx^*=b$.
It is well-known that this is an equivalence relation; the proof is not difficult using polar decomposition.

\begin{lemma}[{cf.\ \cite[Lemma 2]{RS:JFA}}]
\label{lem:RSLem2}
Let $A,B_n,\pi$ be as in Lemma \ref{thm:TotallyFullClass}, and define $C$ as in \eqref{eq:TotallyFullClassCdef}. 
Let $e,f,f',\alpha, \beta \in C_+$ be such that
\begin{equation}
\label{eq:RSLem2Hyp1}
\alpha \vartriangleleft e,\quad \alpha \sim \beta \vartriangleleft f, \quad\text{and} \quad f \sim f' \vartriangleleft e.
\end{equation}
Suppose also that there exist $d_e,d_f \in C_+$ that are totally full, such that
\begin{align}
\notag
d_e &\vartriangleleft e, \quad d_e\alpha = 0, \quad \text{and} \\
d_f &\vartriangleleft f, \quad d_f\beta = 0.\label{eq:RSLem2Hyp2}
\end{align}
Then there exists $e' \in C_+$ such that
\begin{align}
\alpha \vartriangleleft e' \vartriangleleft e, \quad \text{and} \quad \alpha+e' \sim \beta+f.
\end{align}
\end{lemma}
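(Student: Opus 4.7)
The proof will follow the strategy of Robert and Santiago in \cite[Lemma 2]{RS:JFA}, suitably adapted to our setting. Their original argument relies on stable rank one of the ambient algebra to produce unitary polar decompositions for arbitrary elements. We do not have stable rank one globally, but the technical hypothesis of Lemma \ref{thm:TotallyFullClass}, together with Lemma \ref{lem:GapInvertibles2} and Lemma \ref{NewEpsLemma}(ii), provides unitary polar decompositions for those elements admitting a full orthogonal zero-divisor in an appropriate full hereditary subalgebra. The totally full elements $d_e$ and $d_f$ in the hypotheses are precisely the witnesses we need for this property.

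Concretely, I would begin by fixing implementers $x,y \in C$ of the two Murray--von Neumann equivalences, so that $x^*x=\alpha$, $xx^*=\beta$, $y^*y=f$, and $yy^*=f'$. The aim is to construct $z \in C$ with $z^*z = \alpha + e'$ and $zz^* = \beta + f$; the desired element is then $e' := z^*z - \alpha$, once its positivity and the required $\vartriangleleft$ relations have been checked. The natural candidate is to form $z$ by combining $x$ with a modification $\tilde y$ of $y$. To produce $\tilde y$, I would use $d_f$ (which is totally full and orthogonal to $\beta$ inside $\mathrm{her}(f)$) as a full zero-divisor on one side of $y$; transporting this across the equivalence yields, on the other side, a full zero-divisor inside $\mathrm{her}(f') \subseteq \mathrm{her}(e)$, which can be rotated against $d_e$. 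Lemma \ref{lem:GapInvertibles2} then delivers approximation by invertibles, and Lemma \ref{NewEpsLemma}(ii) converts this into a unitary polar decomposition that lets us reposition the left support of the modified implementer at a prescribed location inside $\mathrm{her}(e)$.

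The delicate point is arranging the condition $\alpha \vartriangleleft e'$: since $e'$ must act as a unit on $\alpha$, we cannot simply produce $\tilde y$ orthogonal to $x$ and set $e' = \tilde y^* \tilde y$, as this would force $e'\alpha = 0$. Instead, the contribution of $\tilde y$ must be combined with a functional-calculus piece of $e$ covering the support of $\alpha$. I would therefore build $z$ of the form $z = x_1 + \tilde y$, where $x_1$ is a refinement of $x$ using functional calculus in $e$ (noting that $\alpha \vartriangleleft e$ gives good commutation), and where $\tilde y$ is chosen so that $\tilde y^* \tilde y$ is orthogonal to $\alpha$ inside $\mathrm{her}(e)$. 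Then $e'$ would emerge as $e_1 + \tilde y^* \tilde y$ for a suitable $e_1 \vartriangleleft e$ with $\alpha \vartriangleleft e_1$, giving both $\alpha \vartriangleleft e'$ and $e' \vartriangleleft e$. The cross terms in $z^*z$ and $zz^*$ would be forced to vanish by arranging the orthogonality of the right support of $\tilde y$ to that of $x_1$.

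Because the equivalences and orthogonalities from the polar decomposition step are initially only approximate, I would finish by passing from approximate to exact relations via Kirchberg's $\eps$-test (Lemma \ref{epstest}) in the form of Lemma \ref{NewEpsLemma}(iii) for Murray--von Neumann equivalence. The main obstacle I expect is the simultaneous bookkeeping of the three conditions $\alpha \vartriangleleft e'$, $e' \vartriangleleft e$, and $\alpha + e' \sim \beta + f$: since $\alpha \vartriangleleft e'$ is not an orthogonality condition, it must be engineered indirectly through a carefully chosen decomposition of $e'$, and verifying that this decomposition is compatible with the Murray--von Neumann equivalence to $\beta + f$ is the most subtle step.
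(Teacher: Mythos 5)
Your proposal correctly identifies the relevant tools — the technical hypothesis of Lemma \ref{thm:TotallyFullClass} together with Lemma \ref{lem:GapInvertibles2} and Lemma \ref{NewEpsLemma}(ii) to get a unitary polar decomposition, and the role of $d_e$, $d_f$ as full orthogonal zero-divisors — and you correctly flag the condition $\alpha \vartriangleleft e'$ as the delicate point. However, the core algebraic construction is missing, and the route you sketch (building the implementer as a \emph{sum} $z = x_1 + \tilde y$ and then extracting $e'$ from $z^*z$) is structurally the wrong shape. With $z = x_1 + \tilde y$ and orthogonal cross-terms you would need to split $\beta + f$ into two orthogonal pieces, but $\beta \vartriangleleft f$ means $\beta$ and $f$ are far from orthogonal, so there is no natural orthogonal decomposition to work with; and your proposed fix $e' = e_1 + \tilde y^*\tilde y$ with $\alpha \vartriangleleft e_1$ would force $x_1^*x_1 = \alpha + e_1$ while also asking $x_1 x_1^*$ and $\tilde y \tilde y^*$ to be orthogonal summands of $\beta + f$, a set of constraints you never show can be met simultaneously. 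You also introduce an unnecessary closing passage via Lemma \ref{NewEpsLemma}(iii): the paper's argument at this point is exact, with no approximation to remove.

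The missing idea is that the implementer should be a \emph{product} obtained by conjugating with a single unitary, not a sum of two orthogonal pieces. Concretely: take $y \in C$ with $f = yy^*$, $y^*y = f'$, and set $\alpha_1 := y^*\beta y$. This transports $\beta$ into $\mathrm{her}(e)$ and sits in $\check{C} := C \cap \{1_{B_\omega}-e\}^\perp$, with $\alpha_1 \sim \beta \sim \alpha$; moreover $\alpha_1 f' = \alpha_1$ (since $\beta \vartriangleleft f$). Taking $x \in C$ with $xx^* = \alpha$, $x^*x = \alpha_1$, one has $x \vartriangleleft e$, and $d_e$ and $y^*d_fy$ are totally full two-sided zero divisors of $x$ in $\check{C}$; the technical hypothesis and Lemma \ref{lem:GapInvertibles2} then make $x$ approximable by invertibles in $\check{C}^\sim$, whence Lemma \ref{lem:PolarDecomp} and Lemma \ref{NewEpsLemma}(ii) give a unitary polar decomposition $x = u|x|$ with $u \in \check{C}^\sim$ and $u\alpha_1 = \alpha u$. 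One then simply defines $e' := uf'u^*$. Because $u$ commutes with $e$, $e' \vartriangleleft e$; the relation $e'\alpha = \alpha$ drops out of $e'\alpha = u(f'\alpha_1)u^* = u\alpha_1 u^* = \alpha$; and the equivalence is witnessed by the product $(\beta + 1_{\check C^\sim})^{1/2}yu^*$, since $\alpha + e' = u(\alpha_1 + f')u^* = uy^*(\beta + 1_{\check C^\sim})yu^* \sim (\beta + 1_{\check C^\sim})^{1/2}f(\beta + 1_{\check C^\sim})^{1/2} = \beta + f$. This single conjugation step is what makes all three conclusions hold simultaneously by direct computation; without it, the sum-decomposition bookkeeping you propose does not close up.
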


\begin{proof}
Let $y \in C$ be such that
\begin{equation}
\label{eq:RSLem2yDef}
f = yy^*, \quad y^*y = f'.
\end{equation}
Set
\begin{equation}
\label{eq:RSLem2wDef}
\alpha_1 := y^*\beta y \in C \cap \{1_{B_\omega}-e\}^\perp.
\end{equation}

Note that $\alpha_1 \sim \beta^{1/2}yy^*\beta^{1/2} = \beta \sim \alpha$, so there exists $x \in C$ such that
\begin{equation}\label{eq:RSLem2New1}
\alpha = xx^*, \quad x^*x = \alpha_1. 
\end{equation}
Clearly, $x \vartriangleleft e$. Then $\alpha_1y^*d_fy=y^*\beta yy^*d_fy=y^*\beta d_fy=0$, so that 
\begin{equation}
xy^*d_fy=0.
\end{equation}
Also, 
\begin{equation} d_ex=0 \end{equation}
and $y^*d_fy, d_e \vartriangleleft e$ and these operators are totally full in $C$ (in the case of $y^*d_fy$, note that it is Murray-von Neumann equivalent to $d_f$).
The subalgebra
\begin{equation} \check{C} := C \cap \{1_{B_\omega}-e\}^\perp \end{equation}
is a full hereditary subalgebra of $C$, so by the technical hypothesis of Lemma \ref{thm:TotallyFullClass}, there exists $s \in \check{C}$ that is full, such that $sx=xs=0$.
Hence by Lemma \ref{lem:GapInvertibles2}, $x$ is approximated by invertibles in the unitization of $\check{C}$.

By Lemma \ref{lem:PolarDecomp} and Lemma \ref{NewEpsLemma}(ii), $x$ has a unitary polar decomposition, $x = u|x|$, where $u$ is a unitary in the unitization of $\check{C}$. Using (\ref{eq:RSLem2New1}), we have
\begin{equation}
\label{eq:RSLem2uDef}
 u\alpha_1 = \alpha u.
\end{equation}

Set
\begin{equation}
\label{eq:RSLem2e'Def}
 e' := uf'u^*. \end{equation}
Since $u$ is in the unitization of $\check{C}$, $u$ commutes with $e$, and therefore, $e' \vartriangleleft e$.
We can see that
\begin{eqnarray}
\notag
e'\alpha &\stackrel{\eqref{eq:RSLem2uDef},\eqref{eq:RSLem2e'Def}}=& (uf'u^*)(u\alpha_1 u^*) \\
\notag
&\stackrel{\eqref{eq:RSLem2yDef},\eqref{eq:RSLem2wDef}}=& uy^*y y^*\beta yu^* \\
\notag
&\stackrel{\eqref{eq:RSLem2Hyp1},\eqref{eq:RSLem2yDef}}=& uy^* \beta yu^* \\
&\stackrel{\eqref{eq:RSLem2wDef},\eqref{eq:RSLem2uDef}}=& \alpha.
\end{eqnarray}
Finally, we likewise have
\begin{align}
\notag
\alpha + e' &= u(\alpha_1+f')u^* \\
\notag
&= uy^*(\beta + 1_{\check{C}^\sim})yu^* \\
\notag
&\sim (\beta+1_{\check{C}^\sim})^{1/2}yy^*(\beta+1_{\check{C}^\sim})^{1/2} \\
\notag &= (\beta+1_{\check{C}^\sim})^{1/2}f(\beta+1_{\check{C}^\sim})^{1/2} \\
& =  \beta+f,
\end{align}
since $\beta$ and $f$ commute.
\end{proof}

\begin{proof}[Proof of Lemma \ref{thm:TotallyFullClass}]
This is essentially the proof of \cite[Theorem 1, (I) $\Rightarrow$ (II)]{RS:JFA}, although we need to be careful to verify the hypotheses of Lemma \ref{lem:RSLem2} that aren't present in \cite[Lemma 2]{RS:JFA}.
Also, since we aim for exact unitary equivalence, we avoid the hard analysis component of the proof of \cite[Theorem 1]{RS:JFA}.
For completeness, we give a full proof here. 
Note that one direction is trivial: if $a$ and $b$ are unitarily equivalent then for any $f \in C_0((0,1])_+$, $f(a)$ and $f(b)$ are unitarily equivalent, whence Cuntz equivalent.

Assume now that $f(a)$ and $f(b)$ are Cuntz equivalent (in $C$) for every $f \in C_0((0,1])_+$.  Note that $\|a\|=\|b\|$, as if $\|a\|<\|b\|$ say, then there would exist some $f\in C_0((0,1])_+$ with $f(a)=0$ and $f(b)\neq 0$, giving a contradiction.\footnote{No nonzero positive contraction is Cuntz equivalent to the zero element.} We will first show that $a$ and $b$ are Murray-von Neumann equivalent; using Lemma \ref{NewEpsLemma}(iii) it suffices to do this approximately. Therefore, fix a tolerance $\eps > 0$.

Let $m \in \mathbb N$ be even such that $\frac3m < \eps$.
For $i=0,1/2,1,\dots,m-1/2$, let $\xi_i \in C_0((0,1])_+$ be such that $\xi_i$ is identically $0$ on $[0,\frac im]$ and identically $1$ on $[\frac im+ \frac 1{2m},1]$. Note: the choice of indices here is selected to make comparison with the proof of \cite[Theorem 1]{RS:JFA} easier.

Define $a_i := \xi_{m-i+1/2}(a)$ and $b_i := \xi_{m-i+1/2}(b)$ for $i=1,\frac32,\dots,m+\frac12$.
As in \cite[proof of Theorem 1]{RS:JFA}, these satisfy
\begin{align}
a_i \vartriangleleft a_{i+1/2},\ b_i &\vartriangleleft b_{i+1/2}, \quad \text{for }i=1,\frac32,\dots,m.
\end{align}

By assumption, $a_{i+1/2}$ is Cuntz equivalent to $b_{i+1/2}$, and therefore by \cite[Proposition 2.4]{R:JFA2}, there exist $c_i,d_i \in C_+$ such that
\begin{align}
a_i \sim d_i \vartriangleleft b_{i+1/2},\ b_i \sim c_i &\vartriangleleft a_{i+1/2}, \quad \text{for }i=1,\dots,m.
\end{align}

Inductively, we will produce positive contractions $a_1',a_3',\dots,a_{m-1}'$ and $b_2',b_4',\dots,b_m'$, such that
\begin{gather*}
a_1' \vartriangleleft a_{3/2} \vartriangleleft a_2 \vartriangleleft a_3' \vartriangleleft \cdots \vartriangleleft a_{m-1}' \vartriangleleft a_{m-1/2}, \\
b_1 \vartriangleleft b_2' \vartriangleleft b_{5/2}\vartriangleleft b_3\vartriangleleft b_4' \vartriangleleft \cdots \vartriangleleft b_m' \vartriangleleft b_{m+1/2},
\end{gather*}
and, for each $k=1,\dots,m$,
\begin{equation}
\sum_{i=1}^k a_i'' \sim \sum_{i=1}^k b_i'', \end{equation}
where
\begin{equation}
a_i'' = \begin{cases} a_i',\quad &i\text{ odd}; \\ a_i,\quad &i\text{ even,} \end{cases}\text{ and }
b_i'' = \begin{cases} b_i,\quad &i\text{ odd}; \\ b_i',\quad &i\text{ even.} \end{cases}
\end{equation}

For the base case, $k=1$, we define $a_1'=c_1$.
For the inductive step, having defined $a_i',b_i'$ for odd, respectively even, $i \leq k$, let us describe how to construct $a_{k+1}'$ or $b_{k+1}'$, (depending on whether $k+1$ is odd or even).
The even and odd cases are essentially the same, so assume that $k+1$ is odd.

Set $\alpha := \sum_{i=1}^k a_i'', \beta := \sum_{i=1}^k b_i'', e:=a_{k+3/2}$ and $f:=b_{k+1}$.
These satisfy the hypotheses (\ref{eq:RSLem2Hyp1}) of Lemma \ref{lem:RSLem2}, with $f' =c_{k+1}$.
For the hypotheses (\ref{eq:RSLem2Hyp2}), note that $\alpha \vartriangleleft a_{k+1}$, and so there exists a nonzero $g \in C_0((0,1])_+$ such that
\begin{equation} g(a)\alpha = 0, \quad g(a) \vartriangleleft a_{k+3/2}=e, \end{equation}
and we therefore may set $d_e:=g(a)$, which is totally full since $a$ is totally full.
Likewise, since $\beta \vartriangleleft b_{k+1/2}$, there exists $d_f$ totally full such that $d_f\beta = 0$ and $d_f \vartriangleleft b_{k+1}=f$.
Having satisfied all the hypotheses of Lemma \ref{lem:RSLem2}, this lemma provides $e'\in C_+$ such that
\begin{equation} \alpha \vartriangleleft e' \vartriangleleft a_{k+3/2} \quad \text{and} \quad \alpha+e' \sim \beta+f. \end{equation}
Therefore, setting $a_{k+1}':=e'$, we get
\begin{equation} \sum_{i=1}^{k+1} a_i'' =\alpha+e' \sim \beta+f = \sum_{i=1}^{k+1} b_i'', \end{equation}
as required.

This completes the induction, and therefore shows that the $a_i'$ and the $b_i'$ can be chosen for $i$ up to $m$.
Now, set
\begin{equation} \alpha := \sum_{i=1}^m a_i'', \quad \beta := \sum_{i=1}^m b_i''. \end{equation}
We note that, by the choice of $\xi_0,\dots,\xi_{m-1/2}$, 
\begin{align}
\notag
a-\frac3m &\leq \frac1m (0 + a_1 + \cdots + a_{m-1}) \\
&\leq \frac1m (a_1' + a_2 + \cdots + a_{m-1}' + a_m) = \frac1m \alpha,
\end{align}
and likewise,
\begin{align}
\frac1m \alpha \leq a, \quad \text{and} \quad
b-\frac3m \leq \frac1m \beta \leq b.
\end{align}
Thus, 
\begin{equation} a \approx_\eps \frac\alpha m \sim \frac \beta m \approx_\eps b. \end{equation}
By Lemma \ref{NewEpsLemma}(iii) $a$ and $b$ are Murray-von Neumann equivalent in $C$.

We end by showing that $a$ and $b$ are unitarily equivalent.  Let $\eps > 0$.
We may apply the above argument to show that $(a-\eps)_+$ and $(b-\eps)_+$ are approximately, and therefore exactly, Murray-von Neumann equivalent, so let $x \in C_+$ be such that
\begin{equation} (a-\eps)_+ = xx^* \quad \text{and} \quad x^*x = (b-\eps)_+. \end{equation}
Let $g \in C_0((0,\eps))_+$ be nonzero.
Then $g(a)x=xg(b)=0$, and $g(a),g(b)$ are totally full.
Therefore by the technical hypothesis (applied to $D:=C$) and Lemma \ref{lem:GapInvertibles2}, $x$ is approximated by invertibles in the unitization of $C$.
Hence, by Lemma \ref{lem:PolarDecomp} and Lemma \ref{NewEpsLemma}(ii), $x$ has a unitary polar decomposition $x=u|x|$, where $u$ is in the unitization of $C$.
Then $u(b-\eps)_+u^* = (a-\eps)_+$.  In particular $ubu^*\approx_{2\eps}a$. Since $\eps>0$ was arbitrary, Lemma \ref{NewEpsLemma}(i) shows that $b$ and $a$ are unitarily equivalent, by a unitary in the unitization of $C$.
\end{proof}

\bigskip

\subsection{\sc Theorem \ref{onecolourclass}}\hfill  \\

\label{sec6.2}

\noindent
This section contains the key classification theorem, of which Theorem \ref{onecolourclass} is a special case.  This result is the main ingredient in both the classification up to two-coloured equivalence (Theorem \ref{thm:2ColourUniqueness}) and the nuclear dimension (and decomposition rank) computation (Theorem \ref{thm:FiniteDim}). We would like to thank Thierry Giordano for asking whether nuclearity of the $B_n$ was necessary in a presentation of an earlier version of this lemma, leading us to the statement presented here.

\begin{thm}
\label{KeyLemmaP}
Let $(B_n)_{n=1}^\infty$ be a sequence of simple, separable, unital, finite, $\mathcal Z$-stable $\mathrm C^*$-algebras with $QT(B_n)=T(B_n)$ for all $n$ and such that $\partial_eT(B_n)$ is compact for each $n\in\N$. Set $B_\omega := \prod_\omega B_n$ and $(B\otimes \mathcal Z)_\omega := \prod_\omega (B_n \otimes \mathcal Z)$ so that we have a canonical embedding of $B_\omega\otimes\Z$ into $(B\otimes\Z)_\omega$.\footnote{This is an abuse of notation (based on the case when all the $B_n$ are constant and equal to $B$); there is no  C$^*$-algebra $B$ in the lemma.} 
Let $A$ be a separable, unital, nuclear $\mathrm C^*$-algebra. Let $\phi_1:A\rightarrow B_\omega$ be a totally full $^*$-homomorphism and let $\phi_2:A\rightarrow B_\omega$ be a c.p.c.\ order zero map such that 
\begin{equation}\label{KeyLemma.AgreeTraces}
\tau\circ\phi_1=\tau\circ\phi_2^m,\quad \tau\in T(B_\omega),\ m\in\N,
\end{equation}
where order zero functional calculus is used to interpret $\phi_2^m$.
Let $k\in\mathcal Z_+$ be a positive contraction with spectrum $[0,1]$ and set $\psi_i:=\phi_i(\cdot) \otimes k:A \to (B\otimes \mathcal Z)_\omega$ for $i=1,2$.\footnote{$\phi_i(\cdot)\otimes k$ denotes the map $a \mapsto \phi_i(a) \otimes k$ from $A$ to $(\prod_\omega B_n) \otimes \mathcal Z \subset \prod_\omega (B_n\otimes \mathcal Z)$.}
Then $\psi_1$ is unitarily equivalent to $\psi_2$ in $(B\otimes \mathcal Z)_\omega$.
\end{thm}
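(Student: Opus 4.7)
The plan is to combine the $2\times 2$ matrix trick of Lemma \ref{lem:CombinedUnitaryEquiv} with the classification of totally full positive elements in Theorem \ref{cor:TotallyFullClassFinite}. First I would work in $(B\otimes\Z)_\omega = \prod_\omega (B_n\otimes\Z)$, using the isomorphisms $B_n \cong B_n \otimes \Z$ from Lemma \ref{lem:Zfacts}(i) to identify $B_\omega$ with $(B\otimes\Z)_\omega$. The c.p.c.\ order zero maps $\psi_i = \phi_i(\cdot)\otimes k$ admit natural supporting order zero maps $\hat\psi_i := \hat\phi_i(\cdot) \otimes 1_\Z$, from which the order-zero functional calculus identity yields $\psi_i^m(a) = \phi_i^m(a) \otimes k^m$. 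Since $\Z$ has unique trace, every limit trace on $(B\otimes\Z)_\omega$ splits fibrewise as $\sigma \otimes \tau_\Z$, so the hypothesis \eqref{KeyLemma.AgreeTraces} upgrades to $\tau \circ \psi_1^m = \tau \circ \psi_2^m$ for every limit trace $\tau$ and $m\in\N$; the weak$^*$-density of limit traces (Proposition \ref{NoSillyTraces}) then extends this to all of $T((B\otimes\Z)_\omega)$.

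Next I would form the c.p.c.\ order zero map
\begin{equation*}
\pi:A \to M_2((B\otimes\Z)_\omega),\qquad \pi(a) := \hat\psi_1(a)\oplus\hat\psi_2(a),
\end{equation*}
and the positive contractions
\begin{equation*}
h_1 := \psi_1(1_A)\oplus 0,\quad h_2 := 0\oplus\psi_2(1_A)
\end{equation*}
in $C := M_2((B\otimes\Z)_\omega) \cap \pi(A)' \cap \{1-\pi(1_A)\}^\perp$. Using Lemma \ref{lem:SupportingMap} with the continuity of $\tau\mapsto d_\tau(\psi_i(1_A))$ (which follows, since $k$ has full spectrum and the tracial agreement forces $\tau(\phi_i(1_A)^{1/n}) \to \tau(\phi_1(1_A))$ uniformly in $n$), the induced map $\bar\pi$ into $M_2((B\otimes\Z)_\omega)/J$ is a $^*$-homomorphism. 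Since $\phi_1$ is totally full, $\hat\psi_1(a) = \phi_1(a)\otimes 1_\Z$ is full in $B_\omega\otimes\Z$ for every nonzero $a$, so $\pi(a)$ is full in $M_2((B\otimes\Z)_\omega)$. This puts us in position to apply Theorem \ref{thm:StrictCompTraceSurj}: $C$ has strict comparison of positive elements by its bounded traces, and $T(C)$ is the closed convex hull of the traces of the form $\tau(\pi(a)\cdot)$ with $\tau \in T(M_2((B\otimes\Z)_\omega))$ and $a \in A_+$.

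With these structural facts in hand, I would verify the hypotheses of Theorem \ref{cor:TotallyFullClassFinite} for $h_1,h_2 \in C_+$. For the tracial agreement, a trace of the form $\tau(\pi(a)\cdot)$ (with $\tau = \frac12(\tau'\oplus\tau')$ for $\tau' \in T((B\otimes\Z)_\omega)$) applied to $h_i^k$ yields $\tfrac12\tau'(\hat\psi_i(a)\psi_i(1_A)^k) = \tfrac12\tau'(\psi_i^k(a))$ via the supporting-map formula, and these agree for $i=1,2$ by the first paragraph; this extends to all of $T(C)$ by density. Total fullness of $h_i$ in $C$ follows from the full spectrum of $k$ (so that $f(h_i)$ is nonzero for every nonzero $f\in C_0((0,1])_+$) together with strict comparison applied to the fact that $d_\tau(f(h_i)) > 0$ for every $\tau \in T(C)$, which in turn reduces via Theorem \ref{thm:StrictCompTraceSurj}(iii) to the corresponding statement on the generating traces $\tau'(\pi(a)\cdot)$. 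Theorem \ref{cor:TotallyFullClassFinite} then gives a unitary $u \in C^\sim$ with $uh_1u^* = h_2$.

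Finally, Lemma \ref{lem:CombinedUnitaryEquiv}(i) converts this unitary equivalence in $C^\sim$ into unitary equivalence of $\psi_1$ and $\psi_2$ in $(B\otimes\Z)_\omega$; its stable-rank-one hypothesis is supplied by Lemma \ref{lem:Zfacts}(iii), since each $B_n\otimes\Z$ is simple, unital, and finite. The main obstacle I anticipate is the verification of total fullness of $h_1$ and $h_2$ in $C$ — this requires an interplay between the full spectrum of $k \in \Z$, the fact that $\bar\pi$ is a $^*$-homomorphism (so $\pi(1_A)$ is ``projection-like'' modulo $J$), and the Cuntz-semigroup-level consequences of strict comparison inside the relative commutant $C$, none of which is immediate from the constituent ingredients alone.
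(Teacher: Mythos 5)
Your proposal follows essentially the same route as the paper's proof: form the supporting order zero maps $\hat\psi_i = \hat\phi_i(\cdot)\otimes 1_\Z$, assemble them into the diagonal map $\pi$, verify via Theorem \ref{thm:StrictCompTraceSurj} that the relative commutant $C$ has strict comparison with a dense generating set of traces of the form $\tau(\pi(a)\cdot)$, compute $\rho(h_1^m)=\rho(h_2^m)=\tau_\Z(k^m)/2$ for those traces to deduce tracial agreement and total fullness, and finish with Theorem \ref{cor:TotallyFullClassFinite} followed by the $2\times 2$ matrix trick of Lemma \ref{lem:CombinedUnitaryEquiv}. The only slight imprecision is your phrasing of the continuity input to Lemma \ref{lem:SupportingMap}: what is actually needed is continuity of $\tau\mapsto d_\tau(\phi_2(1_A))$ on $T(B_\omega)$, which holds because \eqref{KeyLemma.AgreeTraces} forces $d_\tau(\phi_2(1_A))=\tau(\phi_1(1_A))$; the claimed ``uniform in $n$'' convergence is neither how this works nor needed.
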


\begin{proof}
Define a $^*$-homomorphism $\hat{\psi}_1:=\phi_1(\cdot)\otimes 1_{\mathcal Z}:A\rightarrow (B\otimes\Z)_\omega$ (which is a supporting c.p.c.\ order zero map for $\psi_1$). Equation (\ref{KeyLemma.AgreeTraces}) ensures that $d_\tau(\phi_2(1_{A}))=\tau(\phi_1(1_{A}))$ for all $\tau\in T(B_\omega)$. Accordingly $\tau\mapsto d_\tau(\phi_2(1_{A}))$ is weak$^*$-continuous, and so Lemma \ref{lem:SupportingMap} provides a supporting order zero map $\hat{\phi}_2:A\rightarrow B_\omega$ 
for which the induced map $\bar{\hat{\phi}}_2:A\rightarrow B_\omega/J_{B_\omega}$ is a $^*$-homomorphism.  Note that (\ref{KeyLemma.AgreeTraces}) shows that 
\begin{equation}
\tau(f(\phi_2)(a))=f(1)\tau(\phi_1(a)),\quad \tau\in T_\omega(B_\omega),\ a\in A_+,\ f\in C_0((0,1])_+,
\end{equation}
when
(\ref{eq:SupportingTrace}) gives
\begin{equation}\label{KeyLemma.NewEquationRevised}
\tau(\hat{\phi}_2(a))=\lim_{n\rightarrow\infty}\tau(\phi^{1/n}_2(a))=\tau(\phi_1(a)),\quad \tau\in T_\omega(B_\omega),\ a\in A_+.
\end{equation}

Write $\hat{\psi}_2=\hat{\phi}_2(\cdot)\otimes 1_\Z:A\rightarrow (B\otimes\Z)_\omega$.  This is a supporting order zero map for $\psi_2$ whose induced map $\bar{\hat{\psi}}_2:A\rightarrow (B\otimes \Z)_\omega/J_{(B\otimes\Z)_\omega}$ is a $^*$-homomorphism.

Define $\pi:A\rightarrow M_2(B_\omega) \subset M_{2}((B \otimes \mathcal{Z})_{\omega})$ by
\begin{equation}
\label{KeyLemma.2}
\pi(a):=\begin{pmatrix}\hat{\psi}_1(a)&0\\0&\hat{\psi}_2(a)\end{pmatrix},\quad a\in A,
\end{equation}
and set
\begin{equation}\label{KeyLemma.1}
C:=M_2((B\otimes \mathcal Z)_\omega)\cap \pi(A)'\cap \{1_{M_2(B\otimes\mathcal Z)_\omega}-\pi (1_A)\}^\perp.
\end{equation}
We wish to use the $2\times 2$ matrix trick (Lemma \ref{lem:CombinedUnitaryEquiv}), to obtain unitary equivalence of $\psi_1$ and $\psi_2$, and so we first use Theorem \ref{cor:TotallyFullClassFinite} to show that 
\begin{equation}
h_1:=\begin{pmatrix}\psi_1(1_A)&0\\0&0\end{pmatrix}\text{ and }h_2:=\begin{pmatrix}0&0\\0&\psi_2(1_A)\end{pmatrix}
\end{equation}
are unitarily equivalent in the unitization $C^\sim$ of $C$.

Since $QT(B_n)=T(B_n)$ and $B_n \cong B_n \otimes \mathcal Z$, it follows that $QT(M_2 \otimes B_n \otimes \mathcal Z)=T(M_2\otimes B_n \otimes \mathcal Z)$.

For $a\in A_+$ nonzero, $\psi_1(a)$ is full in $(B\otimes\Z)_\omega$ as $\phi_1:A \to B_{\omega}$ is totally full and $k \in \mathcal{Z}$ is full.  As
\begin{equation}
0\leq \begin{pmatrix}\psi_1(a)&0\\0&0\end{pmatrix}\leq\begin{pmatrix}\hat{\psi}_1(a)&0\\0&0\end{pmatrix}\leq \pi(a),
\end{equation}
$\pi(a)$ is full in $M_2((B\otimes\Z)_\omega)$.
Write $J$ for the trace kernel ideal in $M_2((B\otimes\Z)_\omega)\cong \prod_\omega(M_2\otimes B_n\otimes\Z)$.    As both $\hat{\psi}_1,\hat{\psi}_2:A\rightarrow (B\otimes\Z)_\omega$ induce $^*$-homomorphisms $\bar{\hat{\psi}}_i:A\rightarrow (B\otimes\Z)_\omega / J_{(B\otimes\Z)_\omega}$, the induced map $\bar{\pi}:A\rightarrow M_2((B\otimes\Z)_\omega)/J$ is a $^*$-homomorphism. Noting that each $M_2\otimes B_n\otimes\Z$ has compact extremal tracial boundary, we have verified the hypotheses of Theorem \ref{thm:StrictCompTraceSurj} parts (ii) and (iii). Therefore $C$ has strict comparison of positive elements with respect to bounded traces and $T(C)$ is the closed convex hull of the set $T_0$ of all traces on $C$ of the form $\tau(\pi(a)\cdot)$ where $\tau \in T(M_2((B\otimes\Z)_\omega))$ and $a \in A_+$ satisfies
\begin{equation}
\label{eq:KeyLemmaTrNorm}
\tau(\pi(a))=1.
\end{equation}

We now show that Theorem \ref{cor:TotallyFullClassFinite} can be applied.  It is immediate that $C$ is full in $M_2((B\otimes\Z)_\omega)$ as $h_1\in C$ is full in $M_2((B\otimes\Z)_\omega)$. We must verify that $\rho(h_1^m)=\rho(h_2^m)$ for all $\rho\in T(C)$ and $m\in\N$ and that $h_1$ and $h_2$ are totally full in $C$ (and not just in $M_2((B\otimes\Z)_\omega)$).

Let $\rho =\tau(\pi(a)\cdot) \in T_0$ where $\tau \in T(M_2 ( (B \otimes \Z)_\omega))$ and $a \in A_+$ satisfies (\ref{eq:KeyLemmaTrNorm}). For $i=1,2$ we have
\begin{equation}
\label{eq:psiExpand}
\hat{\psi}_i(a)\psi_i(1_{A})^m=\phi_i^m(a)\otimes k^m,\quad m\in\N,
\end{equation}
where $\phi_i^m$ is given by order zero functional calculus.
Define $\tilde\tau \in T(B_\omega)$ by $\tilde\tau(b) = \tau(1_2 \otimes b \otimes 1_{\Z})$.
Thus, using the fact that $\Z$ and $M_2$ have unique trace,
\begin{eqnarray}
\rho(h_i^m) &=& \tau(\pi(a)h_i^m) \nonumber\\
&\stackrel{\eqref{eq:psiExpand}}=& \tilde\tau(\phi_i^m(a))\tau_{\mathcal Z}(k^m)/2\nonumber\\
&\stackrel{(\ref{KeyLemma.AgreeTraces})}{=}&\tilde{\tau}(\phi_1(a))\tau_\Z(k^m)/2\nonumber\\
&=&\tau(\pi(a))\tau_{\Z}(k^m)/2 \nonumber \\
&\stackrel{\eqref{eq:KeyLemmaTrNorm}}=& \tau_{\Z}(k^m)/2,\quad m\in \N, \label{eq:KeyLemmaTraceCalc}
\end{eqnarray}
using \eqref{KeyLemma.AgreeTraces} and the fact that 
\begin{equation}
\tau(\pi(a))=\frac12\tilde{\tau}(\phi_1(a)+\hat{\phi}_2(a))\stackrel{(\ref{KeyLemma.NewEquationRevised})}=
\frac12\tilde{\tau}(\phi_1(a)+\phi_2(a))
\end{equation} for the fourth equality. Therefore, for all $f\in C_0((0,1])_+$ we have
\begin{equation}\label{e6.38}
\rho(f(h_1))=\rho(f(h_2)))=\tau_\Z(f(k))/2;
\end{equation}
since the closed convex hull of $T_0$ is dense in $T(C)$, this holds for all $\rho \in T(C)$.

For $f\in C_0((0,1])_+$ nonzero, $\tau_\Z(f(k))\neq 0$, so that by strict comparison of $C$, $f(h_1)$ and $f(h_2)$ are full in $C$, i.e., $h_1$ and $h_2$ are totally full in $C$.
Thus Theorem \ref{cor:TotallyFullClassFinite} applies and converts (\ref{e6.38}) to the unitary equivalence of $h_1$ and $h_2$ in the unitization of $C$. 

Since each $B_n\otimes \mathcal Z$ is unital, simple, finite and $\Z$-stable, $(B\otimes\mathcal Z)_\omega$ has stable rank one by Lemma \ref{lem:Zfacts}(\ref{lem:Zfacts4}) and so $\psi_1$ and $\psi_2$ are unitarily equivalent by the $2\times 2$ matrix trick of Lemma \ref{lem:CombinedUnitaryEquiv}.
\end{proof}

\clearpage\section{$2$-coloured equivalence}\label{sec:2colour}

\noindent
We now have all the ingredients to establish the $2$-coloured uniqueness theorem (Theorem \ref{thm:2colouredIntroVersion}). We start by recalling the definition of coloured equivalence from (\ref{e1.1}).
\begin{defn}
\label{def:ColourEquiv}
Let $A,B$ be unital $\mathrm C^*$-algebras, and let $\phi_1,\phi_2:A \to B$ be unital ${}^*$-homomorphisms, and $n\in\N$. Say that $\phi_1$ and $\phi_2$ are \emph{$n$-coloured equivalent} if there exist $w^{(0)},\dots,w^{(n-1)}\in B$ such that
\begin{align}
\notag
\phi_1(a) &= \sum_{i=0}^{n-1} w^{(i)}\phi_2(a)w^{(i)}{}^*, \quad a\in A, \\
\phi_2(a) &= \sum_{i=0}^{n-1} w^{(i)}{}^*\phi_1(a)w^{(i)}, \quad a\in A,
\end{align}
$w^{(i)}{}^*w^{(i)}$ commutes with the image of $\phi_2$, and $w^{(i)}w^{(i)}{}^*$ commutes with the image of $\phi_1$, for $i=0,\dots,n-1$.
(Consequently, the summands $w^{(i)}\phi_2(\cdot)w^{(i)}{}^*$ and $w^{(i)}{}^*\phi_1(\cdot)w^{(i)}$ are order zero.)

We will say that $\phi_1$ and $\phi_2$ are \emph{approximately $n$-coloured equivalent} if the compositions $\iota\circ\phi_1,\iota\circ\phi_2:A\rightarrow B_\omega$ are $n$-coloured equivalent.
\end{defn}

There are other possible notions of coloured versions of equivalence; \cite{CTW:InPrep} will discuss the relationship between $n$-coloured equivalence and the stronger concept of $n$-intertwined $^*$-homomorphisms.\footnote{Unital $^*$-homomorphisms $\phi_1,\phi_2:A\rightarrow B$ are said to be \emph{$n$-intertwined} if there exist $v^{(0)},\dots,v^{(n-1)}\in B$ with $\phi_1(a)v^{(i)}=v^{(i)}\phi_2(a)$ for all $a\in A$ and $i=0,\dots,n-1$ satisfying $\sum_{i=0}^{n-1}v^{(i)}v^{(i)}{}^*=\sum_{i=0}^{n-1}v^{(i)}{}^*v^{(i)}=1_B$.} 

Note that, despite its name, there is no reason to expect that (approximate) $n$-coloured equivalence is an equivalence relation. However it is immediate that (approximately) $n$-coloured equivalent $^*$-homomorphisms agree on traces as necessarily the $w^{(i)}$ in the definition will satisfy $\sum_{i=0}^{n-1}w^{(i)}w^{(i)}{}^*=\sum_{i=0}^{n-1}w^{(i)}{}^*w^{(i)}=1_{B}$. The following theorem, of which Theorem \ref{thm:2colouredIntroVersion} is a special case, provides a converse; this is established by decomposing each $\phi_i$ as in Theorem \ref{KeyLemmaP}.   In particular this shows that, for the maps as in Theorem \ref{thm:2ColourUniqueness}, $2$-coloured equivalence is an equivalence relation.

\begin{thm}
\label{thm:2ColourUniqueness}
Let $(B_n)_{n=1}^\infty$ be a sequence of simple, separable, unital, finite, $\mathcal Z$-stable $\mathrm C^*$-algebras with $QT(B_n)=T(B_n)$ and such that $\partial_e T(B_n)$ is compact and nonempty for each $n\in\N$. Set $B_\omega := \prod_\omega B_n$ and define $J_{B_\omega}$ as in \eqref{eq:JBdef}.
Let $A$ be a separable, unital, nuclear $\mathrm C^*$-algebra, and let $\phi_1,\phi_2:A\rightarrow B_\omega$ be $^*$-homomorphisms with $\phi_1$ totally full.
The following are equivalent.
\begin{enumerate}[(i)]
\item\label{2ColU.1} $\tau\circ\phi_1=\tau\circ\phi_2$ for all $\tau\in T(B_\omega)$.
\item\label{2ColU.2} There exists $k,l \in \N$ and $v^{(0)},\dots,v^{(k)},w^{(0)},\dots,w^{(l)}\in B_\omega$ such that
\begin{gather}
\notag
\sum_{i=0}^k v^{(i)}\phi_1(a)v^{(i)}{}^* = \sum_{j=0}^l w^{(j)}\phi_2(a)w^{(j)}{}^*, \quad a\in A, \text{ and} \\
v^{(0)}{}^*v^{(0)}+\dots+v^{(k)}{}^*v^{(k)}=w^{(0)}{}^*w^{(0)}+\dots+w^{(l)}{}^*w^{(l)} = 1_{B_\omega}.
\end{gather}
\item\label{2ColU.3} There exist $w^{(0)},w^{(1)}\in B_\omega$ such that
\begin{gather}
\notag
\phi_1(a) = 
w^{(0)}\phi_2(a)w^{(0)}{}^*+w^{(1)}\phi_2(a)w^{(1)}{}^* \quad a\in A, \text{ and} \\
w^{(0)}{}^*w^{(0)}+w^{(1)}{}^*w^{(1)} = 1_{B_\omega},
\end{gather}
and in addition, $w^{(i)}{}^*w^{(i)}$ commutes with the image of $\phi_2$ for $i=0,1$ (consequently, $w^{(i)}\phi_2(\cdot)w^{(i)}{}^*$ is order zero).
\item\label{2ColU.4} There exist $\tilde w^{(0)},\tilde w^{(1)}\in B_\omega$ such that
\begin{gather}
\phi_1(a) = 
\tilde{w}^{(0)}\phi_2(a)\tilde{w}^{(0)}{}^*+\tilde{w}^{(1)}\phi_2(a)\tilde{w}^{(1)}{}^* \quad a\in A, \\
\phi_2(a) = 
\tilde{w}^{(0)}{}^*\phi_1(a)\tilde{w}^{(0)}+\tilde{w}^{(1)}{}^*\phi_1(a)\tilde{w}^{(1)} \quad a\in A, \\
\phi_1(1_A)=\tilde{w}^{(0)}\tilde{w}^{(0)}{}^*+\tilde{w}^{(1)}\tilde{w}^{(1)}{}^*, \quad \phi_2(1_A)=\tilde{w}^{(0)}{}^*\tilde{w}^{(0)}+\tilde{w}^{(1)}{}^*\tilde{w}^{(1)},
\end{gather}
and in addition $\tilde{w}^{(i)}{}^*\tilde{w}^{(i)}$ commutes with the image of $\phi_2$ and $\tilde{w}^{(i)}\tilde{w}^{(i)}{}^*$ commutes with the image of $\phi_1$, for $i=0,1$.
\end{enumerate}
In the case that $\phi_1$ and $\phi_2$ are unital ${}^*$-homomorphisms, condition (\ref{2ColU.4}) is just that $\phi_1$ and $\phi_2$ are $2$-coloured equivalent, and the above conditions are also equivalent to that there exists some $m\in\N$ such that $\phi_1$ and $\phi_2$ are $m$-coloured equivalent.
Also in this case, $\tilde w^{(i)}$ can be chosen to be normal in (\ref{2ColU.4}).
\end{thm}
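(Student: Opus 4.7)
The trivial implications (iv) $\Rightarrow$ (iii) $\Rightarrow$ (ii) $\Rightarrow$ (i) are immediate: (iii) is the specialization of (ii) with $k=0$ and $v^{(0)} := 1_{B_\omega}$, while (ii) $\Rightarrow$ (i) is the standard tracial computation
\begin{equation*}
\tau(\phi_1(a)) = \sum_i \tau(v^{(i)}{}^*v^{(i)}\phi_1(a)) = \sum_j\tau(w^{(j)}{}^*w^{(j)}\phi_2(a)) = \tau(\phi_2(a))
\end{equation*}
for $\tau\in T(B_\omega)$, using the hypothesis $\sum v^{(i)}{}^*v^{(i)} = \sum w^{(j)}{}^*w^{(j)} = 1_{B_\omega}$. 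The same computation shows that any $m$-coloured equivalence forces (i), so the final claim in the unital case reduces to (i) $\Rightarrow$ (iv), given that (iv) specializes precisely to $2$-coloured equivalence when $\phi_1,\phi_2$ are unital.

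The main work is (i) $\Rightarrow$ (iv), driven by Theorem \ref{KeyLemmaP}. Since $\phi_2$ is a $^*$-homomorphism, the order-zero functional calculus gives $\phi_2^m = \phi_2$ for all $m\in\N$ (the $^*$-homomorphism of Proposition \ref{prop.Cone} sends $\mathrm{id}^m\otimes a$ to $\phi_2(a)$, as $\phi_2(1_A)$ is a projection), so (i) provides the tracial hypothesis required by Theorem \ref{KeyLemmaP}. To access the $2$-coloured structure, apply Lemma \ref{lem:Zfacts}(i) with $S := \phi_1(A)\cup\phi_2(A)$ to produce an isomorphism $B_\omega\cong(B\otimes\Z)_\omega$ under which $\phi_i(a)$ corresponds to $\phi_i(a)\otimes 1_\Z$; it therefore suffices to build $\tilde w^{(i)}$ witnessing (iv) inside $(B\otimes\Z)_\omega$ for the maps $\phi_i(\cdot)\otimes 1_\Z$. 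Choose positive contractions $k_0,k_1\in\Z$, each of spectrum $[0,1]$, with $k_0^2+k_1^2 = 1_\Z$ (take any full-spectrum $k_0$ and $k_1 := (1-k_0^2)^{1/2}$). Two applications of Theorem \ref{KeyLemmaP}, one with $k = k_0$ and one with $k = k_1$, produce unitaries $u_0,u_1\in(B\otimes\Z)_\omega$ satisfying
\begin{equation*}
u_i(\phi_1(a)\otimes k_i)u_i^* = \phi_2(a)\otimes k_i, \quad a\in A,\ i=0,1.
\end{equation*}

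Set $\tilde w^{(i)} := u_i^*(1\otimes k_i)$. In the unital case, specializing the displayed relation at $a = 1_A$ gives $u_i(1\otimes k_i)u_i^* = 1\otimes k_i$, so $u_i$ commutes with $1\otimes k_i$; consequently $\tilde w^{(i)}{}^*\tilde w^{(i)} = \tilde w^{(i)}\tilde w^{(i)}{}^* = 1\otimes k_i^2$, each $\tilde w^{(i)}$ is normal, the two quantities are central in the images of $\phi_1\otimes 1$ and $\phi_2\otimes 1$, and they sum to $1$. Multiplying the displayed relation by its instance at $a=1_A$ upgrades it to $u_i(\phi_1(a)\otimes k_i^2)u_i^* = \phi_2(a)\otimes k_i^2$, whence
\begin{equation*}
\tilde w^{(i)}(\phi_2(a)\otimes 1)\tilde w^{(i)}{}^* = u_i^*(\phi_2(a)\otimes k_i^2)u_i = \phi_1(a)\otimes k_i^2,
\end{equation*}
which sums over $i$ to $\phi_1(a)\otimes 1$. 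A symmetric calculation, again using that $u_i$ commutes with $1\otimes k_i$, yields $\sum_i \tilde w^{(i)}{}^*(\phi_1(a)\otimes 1)\tilde w^{(i)} = \phi_2(a)\otimes 1$, completing (iv). The non-unital case follows by the same argument carried out inside the corners $\phi_i(1_A)(B\otimes\Z)_\omega\phi_i(1_A)$, noting that $\phi_2(1_A)$ is full in $B_\omega$ as $\tau(\phi_2(1_A)) = \tau(\phi_1(1_A))>0$ for every $\tau\in T(B_\omega)$ and applying strict comparison; the main technical subtlety throughout is the control of $u_i$ against $1\otimes k_i$, which is immediate in the unital case from the $a=1_A$ specialization.
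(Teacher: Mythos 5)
Your core strategy is the paper's: apply Theorem \ref{KeyLemmaP} twice with a two-element partition of unity in $\mathcal Z$ and assemble the resulting unitaries into the required $w^{(i)}$. The use of $k_0^2+k_1^2 = 1_\Z$ in place of the paper's $h + (1_\Z - h)$ is a harmless repackaging (it simply builds the square root into the colour from the start), and the unital case of (i) $\Rightarrow$ (iv), including normality, is handled correctly. However, there are two genuine gaps.

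First, the claim that ``(iv) $\Rightarrow$ (iii) is a trivial implication'' is false. Condition (iii) requires $w^{(0)*}w^{(0)} + w^{(1)*}w^{(1)} = 1_{B_\omega}$, while (iv) delivers $\tilde w^{(0)*}\tilde w^{(0)} + \tilde w^{(1)*}\tilde w^{(1)} = \phi_2(1_A)$, which is strictly below $1_{B_\omega}$ when $\phi_2$ is non-unital. Turning (iv)-witnesses into (iii)-witnesses requires filling the gap $1 - \phi_2(1_A)$ with a suitable partial isometry, which in turn needs a comparison argument; this is not ``immediate.'' The paper avoids this by showing (iv) $\Rightarrow$ (i) directly (which \emph{is} trivial: slide $\tilde w^{(i)}$ to the middle of the trace and use $\sum\tilde w^{(i)*}\tilde w^{(i)} = \phi_2(1_A)$, a unit for $\phi_2(A)$). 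You should follow suit and additionally prove (i) $\Rightarrow$ (iii) separately — which, fortuitously, your construction already does, see next point.

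Second, your witnesses $\tilde w^{(i)} := u_i^*(1\otimes k_i)$ give $\sum\tilde w^{(i)*}\tilde w^{(i)} = 1\otimes(k_0^2+k_1^2) = 1_{B_\omega}$ in \emph{all} cases, so they witness (iii), not (iv), unless $\phi_2$ is unital; they coincide with (iv)-witnesses exactly when $\phi_2(1_A) = 1$. For the non-unital version of (iv), the remedy ``carry the argument out inside the corners $\phi_i(1_A)(B\otimes\Z)_\omega\phi_i(1_A)$'' does not obviously apply, since $\phi_1$ and $\phi_2$ land in \emph{different} corners ($\phi_1(1_A)\ne\phi_2(1_A)$ in general), so there is no single unital subalgebra to restrict to. The paper instead cuts the witnesses down by $\phi_2(1_A)$: take $\tilde w^{(i)} := u_i^*(\phi_2(1_A)\otimes k_i)$, whence $\tilde w^{(i)*}\tilde w^{(i)} = \phi_2(1_A)\otimes k_i^2$ sums to $\phi_2(1_A)$, and the identity $u_i(\phi_1(1_A)\otimes k_i)u_i^* = \phi_2(1_A)\otimes k_i$ rewrites $\tilde w^{(i)} = (\phi_1(1_A)\otimes k_i)u_i^*$ and hence gives $\tilde w^{(i)}\tilde w^{(i)*} = \phi_1(1_A)\otimes k_i^2$ summing to $\phi_1(1_A)$. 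With that modification — and with (iv) $\Rightarrow$ (i) proved in place of the false (iv) $\Rightarrow$ (iii) — your argument becomes a correct variant of the paper's proof.
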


\begin{remark}
Note that the condition in (\ref{2ColU.3}) that $w^{(i)}{}^*w^{(i)}$ commutes with $\phi_2(A)$ is strictly stronger than $w^{(i)}\phi_2(\cdot)w^{(i)}{}^*$ being order zero.  Indeed if $\phi:\mathbb C\rightarrow M_2$ is the embedding into the top left hand corner and $p$ a nondiagonal projection, then certainly $p\phi(\cdot)p$ is order zero, but $p$ does not commute with $\phi(\mathbb C)$.
In general if $\phi:A\rightarrow B$ is an order zero map with $A$ unital, then for $w\in B$, the map $w^*\phi(\cdot)w$ is order zero if and only if $w$ satisfies the cut-down commutation relation:
\begin{equation}\label{CutDownRemark.1}
\phi^{1/2}(1_A)ww^*\phi^{1/2}(a)=\phi^{1/2}(a)ww^*\phi^{1/2}(1_A),\quad a\in A.
\end{equation}
To see this, fix $a\in A_+$ and check that $d:=\phi^{1/2}(1_A)ww^*\phi^{1/2}(1_A)$ and $e:=\phi^{1/2}(1_A) ww^*\phi^{1/2}(a)$ commute (by expanding $(de-ed)^*(de-ed)$ and using the order zero identity \eqref{eq:Ord0Ident} for $w^*\phi(\cdot)w$). Thus $de=ed$, which is positive. As $ee^*$ is in the hereditary subalgebra $\overline{dBd}$, it follows that there are positive functions $f_n$ such that $f_n(d)de\rightarrow e$ as $n\rightarrow\infty$. As $f_n(d)$ commutes with $de$, each $f_n(d)de$ is positive, and hence $e=e^*$, giving (\ref{CutDownRemark.1}).  Conversely if the cut-down commutation condition (\ref{CutDownRemark.1}) holds, then for $ab=0$ in $A$, we have
\begin{align}
\notag
w^*\phi(a)ww^*\phi(b)w&=w^*\phi^{1/2}(a)\phi^{1/2}(1_A)ww^*\phi^{1/2}(b)\phi^{1/2}(1_A)w\\
&=w^*\phi(ab)ww^*\phi(1_A)w=0.
\end{align}
\end{remark}

\begin{proof}[Proof of Theorem \ref{thm:2ColourUniqueness}]
It is easy to see that (\ref{2ColU.4})  $\Longrightarrow$ (\ref{2ColU.1}) and that  (\ref{2ColU.3}) $\Longrightarrow$ (\ref{2ColU.2}) $\Longrightarrow$ (\ref{2ColU.1}).   When $\phi_1$ and $\phi_2$ are unital, it is immediate that (\ref{2ColU.4}) is just $2$-coloured equivalence, and further $m$-coloured equivalence for some $m$, implies (\ref{2ColU.1}). Thus it remains to prove that (\ref{2ColU.3}) and (\ref{2ColU.4}) follow from (\ref{2ColU.1}), and obtain the additional normality statement in the case of unital maps. So assume (i), that is, that $\tau\circ\phi_1=\tau\circ\phi_2$ for all $\tau\in T(B_\omega)$.
By Lemma \ref{lem:Zfacts}(\ref{lem:Zfacts2}), without loss of generality we may also assume that $B_n = C_n\otimes\Z$ (for a copy $C_n$ of $B_n$) such that,
for $i=1,2$, $\phi_i=\check{\phi}_i\otimes 1_\Z$, for some $^*$-homomorphisms $\check{\phi}_i:A \to \prod_\omega C_n$ with $\check{\phi}_1$ totally full.

Let $h \in \mathcal Z_+$ have spectrum $[0,1]$. The conditions on $B_n$ and $A$ ensure that Theorem \ref{KeyLemmaP} can be applied, and so using this theorem twice, first with $k:=h$ and then with $k:=1_\Z-h$, we obtain unitaries $u_h,u_{1-h} \in B_\omega = \prod_\omega (C_n \otimes \Z)$ with
\begin{align}
\notag
\check{\phi}_1(a)\otimes h &=u_h(\check{\phi}_2(a) \otimes h)u_h^*, \\
\check{\phi}_1(a)\otimes (1_\Z-h)&=u_{1-h}(\check{\phi}_2(a)\otimes (1_\Z-h))u_{1-h}^*,\quad a\in A.\label{eq:2colour.2}
\end{align}

To obtain (\ref{2ColU.3}), define 
\begin{equation} w^{(0)}:=u_h(1_{\prod_\omega C_n} \otimes h^{1/2}) \quad \text{and} \quad w^{(1)} := u_{1-h} (1_{\prod_\omega C_n}\otimes (1_\Z-h)^{1/2}) \end{equation}
so that
\begin{align}
\phi_1(a)&=\check{\phi}_1(a)\otimes h + \check{\phi}_1(a)\otimes (1_\Z-h) \nonumber\\
&= u_h(\check{\phi}_2(a)\otimes h)u_h^*+u_{1-h}(\check{\phi}_2(a)\otimes (1_\Z-h))u_{1-h}^*\nonumber\\
&=w^{(0)}\phi_2(a)w^{(0)}{}^*+w^{(1)}\phi_2(a)w^{(1)}{}^*,\quad a\in A,\label{e7.9}
\end{align}
and
\begin{align}
\notag
& \hspace*{-2em} w^{(0)}{}^*w^{(0)}+ w^{(1)}{}^*w^{(1)} \\
&=(1_{\prod_\omega C_n} \otimes h)^{1/2}u_h^*u_h(1_{\prod_\omega C_n}\otimes h)^{1/2}\notag  \\
& \qquad+
(1_{\prod_\omega C_n} \otimes (1_\Z-h))^{1/2}u_{1-h}^*u_{1-h}(1_{\prod_\omega C_n}\otimes (1_\Z-h))^{1/2} \nonumber\\
&=1_{\prod_\omega C_n} \otimes h + 1_{\prod_\omega C_n} \otimes (1_\Z-h) = 1_{B_\omega}.\label{e7.10}
\end{align}
Further $w^{(0)}{}^*w^{(0)}=1_{\prod_\omega C_n} \otimes h$ and $w^{(1)}{}^*w^{(1)}=1_{\prod_\omega C_n} \otimes (1_\Z-h)$, and these operators commute with $\phi_2(A)$. This establishes (\ref{2ColU.3}).

To obtain (\ref{2ColU.4}), define
\begin{equation}\label{e.6.13}
\tilde{w}^{(0)}:=u_h(\check{\phi}_2(1_A)\otimes h^{1/2})\text{ and }\tilde{w}^{(1)}:=u_{1-h}(\check{\phi}_2(1_A)\otimes (1_\Z-h)^{1/2})
\end{equation}
so that $\tilde{w}^{(0)}{}^*\tilde{w}^{(0)}=\check{\phi}_2(1_A)\otimes h$ and $\tilde{w}^{(1)}{}^*\tilde{w}^{(1)}=\check{\phi}_2(1_A)\otimes (1_\Z-h)$ and thus, these operators commute with $\phi_2(A)$ and satisfy 
\begin{equation}
\tilde{w}^{(0)}{}^*\tilde{w}^{(0)}+\tilde{w}^{(1)}{}^*\tilde{w}^{(1)}=\phi_2(1_A).
\end{equation}
As $\check{\phi}_2$ is a $^*$-homomorphism,
\begin{align}
\phi_1(a)&=\check{\phi}_1(a)\otimes h + \check{\phi}_1(a)\otimes (1_\Z-h) \nonumber\\
&= u_h(\check{\phi}_2(a)\otimes h)u_h^*+u_{1-h}(\check{\phi}_2(a)\otimes (1_\Z-h))u_{1-h}^*\nonumber\\
&=\tilde{w}^{(0)}\phi_2(a)\tilde{w}^{(0)}{}^*+\tilde{w}^{(1)}\phi_2(a)\tilde{w}^{(1)}{}^*,\quad a\in A.
\end{align}
Calculating in a very similar fashion to (\ref{e7.9}) and (\ref{e7.10}), using the fact that $\check{\phi}_1$ and $\check{\phi}_2$ are $^*$-homomorphisms gives 
\begin{align}
\phi_1(a)&=\tilde{w}^{(0)}\phi_2(a)\tilde{w}^{(0)}{}^*+\tilde{w}^{(1)}\phi_2(a)\tilde{w}^{(1)}{}^*,\quad a\in A.
\end{align}
By (\ref{eq:2colour.2}), we also have
\begin{equation}
\tilde{w}^{(0)}=(\check{\phi}_1(1_{A})\otimes h^{1/2})u_h\text{ and }\tilde{w}^{(1)}=(\check{\phi}_1(1_{A})\otimes (1_\Z-h)^{1/2})u_{1-h}
\end{equation}
so that
\begin{equation}
\tilde{w}^{(0)}{}^*=u_h^*(\check{\phi}(1_A)\otimes h^{1/2})\text{ and }\tilde{w}^{(1)}{}^*=u_{1-h}^{*}(\check{\phi}_1(1_A)\otimes (1_\Z-h)^{1/2}).
\end{equation}
Thus, just as above (now using that $\check{\phi}_1$ is a $^*$-homomorphism),
\begin{equation}
\phi_2(a)=\tilde{w}^{(0)}{}^*\phi_1(a)\tilde{w}^{(0)}+\tilde{w}^{(1)}{}^*\phi_1(a)\tilde{w}^{(1)},\quad a\in A
\end{equation}
with $\tilde{w}^{(0)}\tilde{w}^{(0)}{}^*=\check\phi_1(1_A) \otimes h \in \phi_1(A)'$ and $\tilde{w}^{(1)}\tilde{w}^{(1)}{}^*=\check\phi_1(1_A) \otimes (1_{\mathcal Z}-h) \in \phi_1(A)'$, and so $\tilde{w}^{(0)}\tilde{w}^{(0)}{}^*+\tilde{w}^{(1)}\tilde{w}^{(1)}{}^*=\phi_1(1_A)$. 
This establishes (\ref{2ColU.4}).

In case $\phi_1,\phi_2$ are unital, so are $\check\phi_1$ and $\check\phi_2$, and so
\begin{equation}
\tilde{w}^{(0)}\tilde{w}^{(0)*} = \check\phi_1(1_A)\otimes h = \check\phi_2(1_A)\otimes h = \tilde{w}^{(0)*}\tilde{w}^{(0)},
\end{equation}
and likewise, $\tilde{w}^{(1)}\tilde{w}^{(1)*}=\tilde{w}^{(1)*}\tilde{w}^{(1)}$.
\end{proof}

\begin{remark}
In Theorem \ref{thm:2ColourUniqueness}, the assumption that $\phi_1$ is totally full
is essential, as otherwise for some nonzero $a\in A_+$, one or both of $\phi_1(a)$ and $\phi_2(a)$ could lie in the trace kernel ideal $J_{B_\omega}$, where they would be positive elements with infinitesimal traces, but possibly of different sizes not seen by condition (\ref{2ColU.1}).
In contrast, the other conditions do relate how fast the trace of representative sequences of $\phi_1(a)$ and $\phi_2(a)$ vanishes.  At the worst extreme, if $\phi_2$ is the zero map, then condition (iii) would imply that so too is $\phi_1$, whereas any nonzero $^*$-homomorphism $\phi_1:A\rightarrow J_{B_\omega}\lhd B_\omega$ has $\tau\circ\phi_2=\tau\circ\phi_1$ for all $\tau\in T(B_\omega)$.
\end{remark}

Specializing the $2$-coloured uniqueness theorem to $^*$-homomorphisms $A\rightarrow B$ yields the following corollary.
\begin{cor}
Let $A$ be a separable, unital and nuclear $\mathrm C^*$-algebra, and let $B$ be a separable, simple, unital $\Z$-stable $\mathrm{C}^*$-algebra such that $QT(B)=T(B)$ and $\partial_eT(B)$ is compact and nonempty.  Let $\phi_1,\phi_2:A\rightarrow B$ be unital $^*$-homomorphisms such that $\phi_1$ is injective. Then the following are equivalent:
\begin{enumerate}
\item $\tau\circ\phi_1=\tau\circ\phi_2$ for all $\tau\in T(B)$;
\item $\phi_1$ and $\phi_2$ are approximately $n$-coloured equivalent for some $n\in\N$;
\item $\phi_1$ and $\phi_2$ are approximately $2$-coloured equivalent.
\end{enumerate}
\end{cor}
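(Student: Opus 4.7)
The plan is to deduce the corollary as a direct specialization of Theorem~\ref{thm:2ColourUniqueness}, applied to the constant sequence $B_n := B$ for all $n$ and to the compositions $\iota\circ\phi_1, \iota\circ\phi_2 : A \to B_\omega$, where $\iota: B \hookrightarrow B_\omega$ is the canonical unital inclusion. Most hypotheses of that theorem transfer from the present ones without comment; the two that need a brief check are finiteness of $B$ (which follows from $T(B)\neq\emptyset$ together with the dichotomy for simple $\Z$-stable $\mathrm{C}^*$-algebras: pure infiniteness is excluded by the existence of a trace, so $B$ is stably finite, hence finite) and the \emph{totally full} requirement for $\iota\circ\phi_1$. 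The latter is where the hypothesis that $\phi_1$ is injective is used: for any nonzero $a\in A$, $\phi_1(a)$ is a nonzero element of the simple unital $\mathrm{C}^*$-algebra $B$, and so is full in $B$; since $\iota$ is unital, $\iota(\phi_1(a))$ is then full in $B_\omega$ as well.

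For the direction (1)$\Rightarrow$(3), I would first promote the tracial agreement on $T(B)$ to one on all of $T(B_\omega)$ by noting that every $\tau\in T(B_\omega)$ restricts along $\iota$ to a trace on $B$. Then condition (iv) of Theorem~\ref{thm:2ColourUniqueness}, in its unital incarnation, produces witnesses $\tilde w^{(0)},\tilde w^{(1)}\in B_\omega$ exhibiting $\iota\circ\phi_1$ and $\iota\circ\phi_2$ as $2$-coloured equivalent; by Definition~\ref{def:ColourEquiv} this is exactly approximate $2$-coloured equivalence of $\phi_1$ and $\phi_2$. The implication (3)$\Rightarrow$(2) is trivial. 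For (2)$\Rightarrow$(1), given $n$-coloured equivalence witnesses $w^{(0)},\dots,w^{(n-1)}\in B_\omega$, evaluating the defining identities at $1_A$ and using unitality of $\phi_1,\phi_2$ yields $\sum_i w^{(i)}w^{(i)*}=\sum_i w^{(i)*}w^{(i)}=1_{B_\omega}$; then a short tracial calculation (using the trace property to rewrite each $\tau_\omega(w^{(i)}\iota\phi_2(a)w^{(i)*})$ as $\tau_\omega(w^{(i)*}w^{(i)}\iota\phi_2(a))$ and summing) shows $\tau_\omega\circ\iota\phi_1=\tau_\omega\circ\iota\phi_2$ on every $\tau_\omega\in T(B_\omega)$, and specializing to limit traces arising from constant sequences recovers (1).

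Since Theorem~\ref{thm:2ColourUniqueness} does all of the analytic work, there is no real obstacle: the proof is essentially bookkeeping. The only step that requires any thought is verifying that the injectivity hypothesis on $\phi_1$ translates to the totally full hypothesis needed to invoke the theorem, and that the tracial conditions can be moved back and forth between $T(B)$ and $T(B_\omega)$.
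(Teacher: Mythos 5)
Your proof is correct and follows essentially the same route as the paper: compose with the canonical inclusion $\iota:B\hookrightarrow B_\omega$, verify that $\iota\circ\phi_1$ is totally full (using injectivity of $\phi_1$ and simplicity of $B$), and then invoke Theorem~\ref{thm:2ColourUniqueness}. Your method of transferring the tracial condition between $T(B)$ and $T(B_\omega)$ --- simply precomposing each $\tau\in T(B_\omega)$ with $\iota$ to obtain a trace on $B$ --- is in fact a little more direct than the paper's proof, which instead appeals to weak$^*$-density of the limit traces via Proposition~\ref{NoSillyTraces}.
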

\begin{proof}
Write $\tilde{\phi}_i=\iota\circ\phi_i:A\rightarrow B_\omega$, where $\iota:B\hookrightarrow B_\omega$ is the canonical inclusion.  As $\phi_1$ is injective, it follows that $\tilde{\phi}_i$ is totally full.  By Proposition \ref{NoSillyTraces}, the limit traces $T_\omega(B_\omega)$ are dense in $T(B_\omega)$.  Accordingly, condition (i) is equivalent to $\tau\circ\tilde{\phi}_1=\tau\circ\tilde{\phi}_2$ for all $\tau\in T(B_\omega)$, and so the corollary follows from Theorem \ref{thm:2ColourUniqueness}.
\end{proof}

We also obtain results in the case when $\phi_2$ is only a c.p.c.\ order zero map such that $\phi_2^m$ agrees on traces with $\phi_1$ for all $m\in\N$; however in this case we do not obtain a symmetric decomposition.
\begin{thm}\label{thm7.8}
Let $(B_n)_{n=1}^\infty$ be a sequence of simple, separable, unital, finite $\mathcal Z$-stable $\mathrm C^*$-algebras such that $QT(B_n)=T(B_n)$ and $\partial_e T(B_n)$ is compact and nonempty for each $n\in\N$, set $B_\omega := \prod_\omega B_n$ and define $J_{B_\omega}$ as in \eqref{eq:JBdef}.
Let $A$ be a separable, unital, nuclear $\mathrm C^*$-algebra, let $\phi_1:A\rightarrow B_\omega$ be a totally full $^*$-homomorphism and $\phi_2:A\rightarrow B_\omega$ a c.p.c. order zero map with $\tau\circ\phi_1=\tau\circ\phi_2^m$ for all $\tau\in T(B_\omega)$ and $m\in\mathbb N$.  Then, there exist contractions $v^{(0)},v^{(1)},w^{(0)},w^{(1)}\in B_\omega$ such that 
\begin{align}
\notag
\phi_1(a)&=w^{(0)}\phi_2(a)w^{(0)}{}^*+w^{(1)}\phi_2(a)w^{(1)}{}^* \quad a\in A, \text{ and} \\
\phi_2(a)&=v^{(0)}\phi_1(a)v^{(0)}{}^*+v^{(1)}\phi_1(a)v^{(1)}{}^* \quad a\in A,
\end{align}
with $w^{(0)}{}^*w^{(0)},w^{(1)}{}^*w^{(1)}\in \phi_2(A)'$, $v^{(0)}{}^*v^{(0)},v^{(1)}{}^*v^{(1)}\in \phi_1(A)'$ and 
\begin{equation}
w^{(0)}{}^*w^{(0)}+w^{(1)}{}^*w^{(1)}=v^{(0)}{}^*v^{(0)}+v^{(1)}{}^*v^{(1)}=1_{B_\omega}.
\end{equation}
\end{thm}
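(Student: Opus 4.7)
The plan is to mimic the proof of Theorem \ref{thm:2ColourUniqueness} essentially verbatim. Theorem \ref{KeyLemmaP} is already stated in the generality we need (with $\phi_2$ only a c.p.c.\ order zero map and the hypothesis $\tau \circ \phi_1 = \tau \circ \phi_2^m$ for all $m$), so the heavy lifting is already done; the only adjustment is that the symmetric form of Theorem \ref{thm:2ColourUniqueness}(iv) is no longer available and we must instead produce two independent asymmetric decompositions (one of $\phi_1$ as a sum compressing $\phi_2$, and one of $\phi_2$ as a sum compressing $\phi_1$).

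First, using Lemma \ref{lem:Zfacts}(\ref{lem:Zfacts2}) applied to a separable self-adjoint subset containing dense countable subsets of $\phi_1(A)$ and $\phi_2(A)$, we may assume without loss of generality that $B_n = C_n \otimes \mathcal{Z}$ and that $\phi_i = \check{\phi}_i \otimes 1_\mathcal{Z}$ for $i = 1,2$, where $\check{\phi}_1 : A \to \prod_\omega C_n$ is a totally full $^*$-homomorphism and $\check{\phi}_2 : A \to \prod_\omega C_n$ is c.p.c.\ order zero. Since every trace on $\prod_\omega C_n$ extends to a trace on $B_\omega$ via $\tau \otimes \tau_\mathcal{Z}$, and since $\phi_2^m = \check{\phi}_2^m \otimes 1_\mathcal{Z}$, the trace hypothesis transfers to $\tau \circ \check{\phi}_1 = \tau \circ \check{\phi}_2^m$ for all $\tau \in T(\prod_\omega C_n)$ and $m \in \mathbb{N}$. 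Fixing $h \in \mathcal{Z}_+$ of full spectrum $[0,1]$, two applications of Theorem \ref{KeyLemmaP} (with $C_n$ in place of $B_n$, and $k := h$ or $k := 1_\mathcal{Z} - h$) produce unitaries $u_h, u_{1-h} \in B_\omega$ satisfying
\begin{equation*}
\check{\phi}_1(a) \otimes h = u_h (\check{\phi}_2(a) \otimes h) u_h^*, \quad \check{\phi}_1(a) \otimes (1_\mathcal{Z} - h) = u_{1-h}(\check{\phi}_2(a) \otimes (1_\mathcal{Z} - h)) u_{1-h}^*
\end{equation*}
for all $a \in A$.

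Now set $w^{(0)} := u_h(1 \otimes h^{1/2})$, $w^{(1)} := u_{1-h}(1 \otimes (1_\mathcal{Z} - h)^{1/2})$, $v^{(0)} := u_h^*(1 \otimes h^{1/2})$, and $v^{(1)} := u_{1-h}^*(1 \otimes (1_\mathcal{Z} - h)^{1/2})$. A direct computation, verbatim as in the derivation of (\ref{2ColU.3}) in the proof of Theorem \ref{thm:2ColourUniqueness}, gives $\phi_1(a) = w^{(0)} \phi_2(a) w^{(0)*} + w^{(1)} \phi_2(a) w^{(1)*}$. Rearranging the unitary conjugation identities above to $\check{\phi}_2(a) \otimes h = u_h^* (\check{\phi}_1(a) \otimes h) u_h$ (and similarly for $1_\mathcal{Z}-h$), and applying the same computation with the roles of the $\check\phi_i$ swapped, yields $\phi_2(a) = v^{(0)} \phi_1(a) v^{(0)*} + v^{(1)} \phi_1(a) v^{(1)*}$. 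The commutation conditions $w^{(i)*} w^{(i)} \in \phi_2(A)'$ and $v^{(i)*} v^{(i)} \in \phi_1(A)'$, together with the sum-to-unit identities, are immediate from the fact that $w^{(i)*}w^{(i)} = v^{(i)*}v^{(i)}$ equals either $1 \otimes h$ or $1 \otimes (1_\mathcal{Z} - h)$, which lie in the central $\mathcal{Z}$-tensor factor and therefore commute with every element of the form $c \otimes 1_\mathcal{Z}$. No new obstacles arise beyond Theorem \ref{KeyLemmaP} itself, which already packaged the technical core of the argument.
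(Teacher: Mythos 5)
Your proof is correct and follows the same route as the paper: reduce to the split form $B_n = C_n \otimes \mathcal Z$, $\phi_i = \check{\phi}_i \otimes 1_\mathcal Z$ via Lemma \ref{lem:Zfacts}(\ref{lem:Zfacts2}), apply Theorem \ref{KeyLemmaP} twice (with $h$ and $1_\mathcal Z - h$) to obtain $u_h, u_{1-h}$, define $w^{(i)}$ as in the proof of Theorem \ref{thm:2ColourUniqueness}(\ref{2ColU.1}) $\Longrightarrow$ (\ref{2ColU.3}), and then conjugate the same identities by $u_h^*, u_{1-h}^*$ to get the $v^{(i)}$ — this is exactly what the paper means by ``exchange the roles of $\phi_1$ and $\phi_2$.'' The only slightly glib spot is the claim that every trace on $\prod_\omega C_n$ extends via $\tau \otimes \tau_\mathcal Z$ (more precisely one checks this for limit traces and invokes Proposition \ref{NoSillyTraces} for density), but the paper itself leaves this at the same level of detail.
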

\begin{proof}
The existence of the $w^{(i)}$ is obtained as in the proof of Theorem \ref{thm:2ColourUniqueness} (\ref{2ColU.1}) $\Longrightarrow$ (\ref{2ColU.3}), as the assumptions on $\phi_1$ and $\phi_2$ enable us to apply Theorem \ref{KeyLemmaP} exactly as in that proof.  Once one has applied Theorem \ref{KeyLemmaP}, we can then exchange the roles of $\phi_1$ and $\phi_2$ in the rest of the proof of Theorem \ref{thm:2ColourUniqueness} (\ref{2ColU.1}) $\Longrightarrow$ (\ref{2ColU.3}), producing the $v^{(i)}$ with the stated properties.
\end{proof}

\clearpage\section{Nuclear dimension and decomposition rank}
\label{sec:FiniteDim}

\noindent In this section we use the two-coloured uniqueness strategy to obtain two-coloured covering (i.e., one-dimension) results for simple, separable, unital, nuclear, $\Z$-stable $\mathrm C^*$-algebras with compact extremal tracial boundaries, proving Theorem \ref{thm:dn1}. We first recall the definitions of nuclear dimension and decomposition rank.

\begin{defn}[{\cite[Definition 3.1]{KW:IJM}, \cite[Definition 2.1]{WZ:Adv}}]
Let $A$ be a $\mathrm C^*$-algebra and let $n\in \N$.
The \emph{nuclear dimension} of $A$ is at most $n$ (written $\dimnuc(A)\leq n$) if for any finite subset $\mathcal F$ of $A$ and $\eps>0$, there exist finite dimensional $\mathrm C^*$-algebras $F^{(0)},\dots,F^{(n)}$ and maps
\begin{equation}
\xymatrix{
A \ar[r]^-{\psi} & F^{(0)} \oplus \cdots \oplus F^{(n)} \ar[r]^-{\phi} & A
}
\end{equation}
such that $\psi$ is c.p.c., $\phi|_{F^{(i)}}$ is c.p.c.\ order zero for $i=0,\dots,n$, and such that $\phi(\psi(x)) \approx_\eps x$ for $x\in \mathcal F$.
The \emph{decomposition rank} of $A$ is at most $n$ (written $\mathrm{dr}(A)\leq n$) if additionally $\phi$ can be taken contractive.
\end{defn}

The importance of quasidiagonality to the structure of stably finite simple nuclear $\mathrm{C}^*$-algebras dates back to \cite{Popa:PJM}. Now we can see that quasidiagonality, or more precisely the stronger condition that all traces are quasidiagonal in the sense of \cite{B:MAMS}, provides the essential difference between finite nuclear dimension and finite decomposition rank. We explore quasidiagonal traces further in Section  \ref{sec:TWQD}, in particular showing that all traces on a $\mathrm C^*$-algebra with finite decomposition rank are quasidiagonal.

\begin{defn}[{\cite[Definition 3.3.1]{B:MAMS}}]\label{QDTraces}
Let $A$ be a separable, unital $\mathrm C^*$-algebra. A trace $\tau$ on $A$ is \emph{quasidiagonal} if there exists a sequence $(F_n)_{n=1}^\infty$ of finite dimensional $\mathrm C^*$-algebras, a limit trace $\sigma \in T_\omega(\prod_\omega F_n)$, and a u.c.p.\ map $A \to \prod_{n=1}^\infty F_n$ which induces a ${}^*$-homomorphism $\phi:A \to \prod_\omega F_n$ such that $\tau = \sigma \circ \phi$. Write $T_{QD}(A)$ for the set of all quasidiagonal traces on $A$.
\end{defn}

The definition above formally differs from that in \cite{B:MAMS} in that the original uses full matrix algebras in place of finite dimensional algebras, and limits as $n\rightarrow\infty$ as opposed to ultrapowers. It is routine to embed a finite dimensional algebra $F$ unitally into a full matrix algebra in a fashion which approximately preserves a fixed trace on $F$ (if the trace on $F$ is a rational convex combination of extreme traces, then it can be exactly preserved). Combining this with an application of Kirchberg's $\eps$-test (Lemma \ref{epstest}) shows that the two definitions agree.

Without quasidiagonality, \cite[Proposition 3.2]{SWW:arXiv} provides, for each trace $\tau$ on a unital, nuclear $\mathrm C^*$-algebra, a map $\phi$ as in Definition \ref{QDTraces}, except that $\phi$ is only c.p.c.\ order zero rather than a ${}^*$-homomorphism.

To obtain covering dimension estimates, we patch together these trace-witnessing maps into ultraproducts of finite dimensional algebras, to obtain a map factoring through order zero maps on finite dimensional $\mathrm C^*$-algebras, which agrees on all traces with the constant-sequence embedding of $A$ into $A_\omega$.
A $\mathrm W^*$-bundle partition of unity argument allows us to do this patching.

Note that $T_{QD}(A)$ is a weak$^*$-closed convex subset of $T(A)$ (see \cite[Proposition 3.5.1]{B:MAMS}), so in Lemma \ref{lem:GoodTraceMaps} and Theorem \ref{thm:FiniteDim} there is no difference between asking for all traces to be quasidiagonal and asking for extremal traces to be quasidiagonal (which is what we use). It remains an open question whether $T_{QD}(A)$ is necessarily a face in $T(A)$ (asked after Proposition 3.5.1 in \cite{B:MAMS}).
A more important question is the following:

\begin{question}[{cf.\ \cite[Question 6.7(2)]{B:MAMS}}]
Let $A$ be a unital, nuclear, quasidiagonal $\mathrm C^*$-algebra.
Is $T(A)$ equal to $T_{QD}(A)$, i.e., is every trace quasidiagonal?
\end{question}

\begin{lemma}
\label{lem:GoodTraceMaps}
Let $A$ be a simple, separable, unital, nuclear, $\mathcal{Z}$-stable $\mathrm C^*$-algebra such that $T(A)$ is a Bauer simplex.
Then there exists a sequence $(\phi_n)_{n=1}^\infty$ of c.p.c.\ maps $\phi_n:A\rightarrow A$, which factorize through finite dimensional algebras $F_n$ as 
\begin{equation}
\label{eq:GoodTraceMapsFactor}
\xymatrix{A\ar[dr]_{\theta_n}\ar[rr]^{\phi_n}&&A\\&F_n \ar[ur]_{\eta_n}}
\end{equation}
with $\theta_n$ c.p.c.\ and $\eta_n$ c.p.c.\ order zero, such that the induced maps $(\theta_n)_{n=1}^\infty:A \to \prod_\omega F_n$ and $\Phi=(\phi_n)_{n=1}^\infty:A\rightarrow A_\omega$ are order zero and
\begin{equation}
\label{eq:GoodTraceMaps}
\tau\circ\Phi(x)=\tau(x) \quad x\in A,\ \tau\in T(A_\omega).
\end{equation}

If additionally all traces on $A$ are quasidiagonal, then each $\theta_n$ can be taken unital.
\end{lemma}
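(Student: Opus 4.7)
The plan is to apply Kirchberg's $\eps$-test via the strengthened order zero test of Remark \ref{rmk:orderzerotestplus}, reducing the lemma to a one-step construction: for every finite $\mathcal F \subset A$ and $\eps > 0$, I would produce a finite dimensional algebra $F$, a c.p.c.\ map $\theta:A \to F$, and a c.p.c.\ order zero map $\eta:F \to A$ such that (i) both $\theta$ and $\phi := \eta \circ \theta$ are approximately c.p.c.\ order zero on $\mathcal F$ within $\eps$ (in norm), and (ii) $\sup_{\tau \in T(A)} |\tau(\phi(x)) - \tau(x)| < \eps$ for $x \in \mathcal F$. Since limit traces are dense in $T(A_\omega)$ by Proposition \ref{NoSillyTraces}, condition (ii) translates to the tracial constraint \eqref{eq:GoodTraceMaps}. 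In the quasidiagonal case, the additional requirement is that $\theta$ can be chosen unital.

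For the pointwise input, I would invoke \cite[Proposition 3.2]{SWW:arXiv} (or, in the quasidiagonal case, Definition \ref{QDTraces}) at each extremal trace $\tau \in K := \partial_e T(A)$: this provides a finite dimensional $F_\tau$, a c.p.c.\ map $\theta_\tau: A \to F_\tau$ (unital when $\tau \in T_{QD}(A)$), and, using $\Z$-stability to embed $F_\tau$ order-zero into $A$ with appropriate trace values on a fixed finite set, a c.p.c.\ order zero map $\eta_\tau: F_\tau \to A$ such that $\psi_\tau := \eta_\tau \circ \theta_\tau$ satisfies $|\tau(\psi_\tau(x)) - \tau(x)| < \eps/3$ for $x \in \mathcal F$ and is approximately c.p.c.\ order zero on $\mathcal F$ in $\|\cdot\|_{2,\tau}$-norm (and similarly for $\theta_\tau$). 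These approximate conditions are weak$^*$-open in $\tau$, so by compactness of $K$, finitely many $\tau_1,\dots,\tau_l \in K$ suffice to cover $K$ by open sets $U_i$ on which $\psi_{\tau_i}$ yields the required approximations.

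Now I would patch using a $\mathrm W^*$-bundle partition of unity. Because $A$ is $\Z$-stable, Theorem \ref{OzawaBundles} shows $\overline{A}^{\mathrm{st}} \cong C_\sigma(K,\R)$ is trivial and hence McDuff; let $f_1,\dots,f_l \in C(K)_+$ be a partition of unity subordinate to $(U_i)$. Applying Lemma \ref{W*SC:Lem2} to $\overline{A_\omega}^{\mathrm{st}} = \prod^\omega \overline{A}^{\mathrm{st}}$, I obtain pairwise orthogonal projections $\bar p_1,\dots,\bar p_l$ in $\overline{A_\omega}^{\mathrm{st}} \cap A'$ with $\tau_\lambda(\bar p_i x) = f_i(\lambda)\tau_\lambda(x)$ for $x$ in a finite subset containing $A$-representatives, $\eta_{\tau_i}(F_{\tau_i})$, and $\mathcal F$. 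Using that $A_\omega \cap A' \to \overline{A_\omega}^{\mathrm{st}} \cap A'$ is surjective (Lemma \ref{lem:CentralSurjectivity}), I lift these to positive contractions $q_1,\dots,q_l \in A_\omega \cap A'$ that are almost orthogonal and almost commute with each $\eta_{\tau_i}(F_{\tau_i})$. Setting
\begin{equation}
F := \bigoplus_{i=1}^l F_{\tau_i}, \quad \theta := \bigoplus_i \theta_{\tau_i}, \quad \eta := \sum_{i=1}^l q_i^{1/2}\,\eta_{\tau_i}(\cdot)\,q_i^{1/2},
\end{equation}
taking representative sequences yields the required factorization. The approximate orthogonality and approximate centrality of the $q_i$ guarantee that $\eta$ is approximately c.p.c.\ order zero in norm, that $\theta$ is approximately order zero in $\|\cdot\|_{2,u}$ (which lifts to the ultrapower test because we are inside $A_\omega$), and that $\phi = \eta\circ\theta$ correctly realizes the weighted sum $\sum_i f_i(\sigma)\sigma(\psi_{\tau_i}(x)) \approx \sigma(x)$ on every extremal trace $\sigma \in K$, whence on all of $T(A)$ since $T(A)$ is the Choquet closed convex hull of $K$. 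In the quasidiagonal case, each $\theta_{\tau_i}$ is unital, so $\theta$ is unital.

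The main obstacle will be ensuring that $\theta$ is approximately order zero \emph{in norm} (as required by Remark \ref{rmk:orderzerotestplus} to give a genuine order zero map $\Theta: A \to \prod_\omega F_n$ rather than only a c.p.c.\ map), given that the natural patching data live only in the strict completion. Concretely, I will need the approximate orthogonality $q_iq_j \approx 0$ and approximate centrality $[q_i,\eta_{\tau_j}(y)] \approx 0$ to hold in operator norm modulo $J_{A_\omega}$, which is exactly what Lemma \ref{lem:CentralSurjectivity} together with Lemma \ref{W*SC:Lem2} delivers for McDuff bundles; the $\Z$-stability hypothesis is indispensable for this step, and it is the technical heart of the argument.
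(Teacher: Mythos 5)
Your plan is conceptually aligned with the paper's: both use pointwise inputs from \cite[Proposition 3.2]{SWW:arXiv} (or quasidiagonality), cover $\partial_e T(A)$ by finitely many nearby traces, patch via the $\mathrm W^*$-bundle picture from Theorem \ref{OzawaBundles}, and then run Kirchberg's $\eps$-test. However, there is a genuine gap in the patching step, and it is worth stating precisely because the paper uses a different mechanism precisely to avoid it.

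You set $\eta := \sum_{i} q_i^{1/2}\eta_{\tau_i}(\cdot)q_i^{1/2}$ with $q_i \in A_\omega\cap A'$. The lemma requires each $\eta_n:F_n\to A$ to be \emph{exactly} c.p.c.\ order zero (this is also how the $\eps$-test must be set up, over the set of triples $(F,\theta,\eta)$ with $\eta$ order zero). But your cutting elements $q_i$ live in $A_\omega$; to produce $\eta_n$ you must pass to representative sequences $q_{i,n}\in A$, and these will only \emph{approximately} commute with the images $\eta_{\tau_j}(F_{\tau_j})\subset A$ and be only \emph{approximately} orthogonal. Consequently, $\eta_n := \sum_i q_{i,n}^{1/2}\eta_{\tau_i}(\cdot)q_{i,n}^{1/2}$ fails to be order zero: for orthogonal $y,y'$ in the \emph{same} summand $F_{\tau_i}$, the cancellation $\eta_n(y)\eta_n(y')=0$ uses that $q_{i,n}$ commutes with $\eta_{\tau_i}(y)$, which is only approximately true. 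You flag the approximate-centrality worry at the end but direct it at $\theta$, where it does not arise (each $\theta_{\tau_i,n}$ is already a lift of a genuine order zero map, and the direct sum is order zero); the actual problem is on the $\eta$ side. To close the gap you would need either to arrange exact centrality (e.g., choosing $q_{i,n}$ in the honest relative commutant of the finite-dimensional image, which is available by $\Z$-stability but requires rebuilding the trace control) or to correct an approximately order zero $\eta_n$ to an exact one via projectivity of $C_0((0,1],F_n)$; neither step is carried out.

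The paper sidesteps all of this by one additional move: it uses $\Z$-stability to rewrite everything in $A\otimes\Z$ and inserts the map $\Psi:\Q_\omega\to\Z_\omega$ of \cite[Lemma 6.1]{SWW:arXiv}, so that the cutting elements $d_{i,n}$ sit in the first tensor factor $A$ (and need not be central at all --- indeed the paper explicitly notes the approximate centrality from Lemma \ref{W*SC:Lem1} is not required) while the embedded images $\psi_n(F_{\lambda_i,n})$ sit in the second factor $\Z$. These then commute \emph{exactly}, and pairwise orthogonality of the $d_{i,n}$ makes $\tilde\eta_n(y_1\oplus\cdots\oplus y_l)=\sum_i d_{i,n}\otimes\psi_n(y_i)$ a sum of orthogonal c.p.c.\ order zero maps, hence exactly order zero. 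An isomorphism $\beta:A\otimes\Z\to A$ fixing traces then brings the whole construction back into $A$. This tensor trick is the essential point your proposal misses.
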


\begin{proof}
Fix a finite subset $\F$ of $A$ and $\eps>0$.  By Kirchberg's $\eps$-test (Lemma \ref{epstest}), it suffices to find a sequence $(\phi_n)_{n=1}^\infty$ as in the statement, except with \eqref{eq:GoodTraceMaps} weakened to
\begin{equation}
\label{eq:GoodTraceMapsApprox}
|\sigma \circ \Phi(x)-\sigma(x)|\leq\eps,\quad \quad x\in\F,\sigma \in T_\omega(A_\omega).
\end{equation}
Indeed, take $X_n$ to be the set of triples $(F,\theta,\eta)$, where $F$ is a finite dimensional $\mathrm{C}^*$-subalgebra,
$\theta:A\rightarrow F$ is c.p.c., $\eta:F\rightarrow A$ is c.p.c.\ order zero, and, if all traces on $A$ are quasidiagonal, then in addition, $\theta$ is unital.
Taking a dense sequence $(x^{(k)})_{k=1}^\infty$ in $A$, consider functions $f_n^{(k)}:X_n\rightarrow[0,\infty)$ given by
\begin{equation}
f_n^{(k)}(F_n,\theta_n,\eta_n)=\sup_{\sigma\in T(A)}|\sigma\circ\eta_n\circ\theta_n(x^{(k)})-\sigma(x^{(k)})|.
\end{equation}
Given a sequence $(F_n,\theta_n,\eta_n)_{n=1}^\infty$ from $\prod_{n=1}^\infty X_n$, we obtain an induced c.p.c.\ map $\Theta:A\rightarrow\prod_\omega F_n$.
By the version of Lemma \ref{orderzerotest} discussed in Remark \ref{rmk:orderzerotestplus}, there is a countable collection of functions $g^{(k)}_n:X_n\rightarrow [0,\infty)$ such that the induced map $\Theta$ is order zero if and only if $\lim_{n\rightarrow\omega}g^{(k)}_n(F_n,\theta_n,\eta_n)=0$ for all $k$.\footnote{We implicitly fix a countable dense $\mathbb Q[i]$-$^*$-subalgebra $A_0$ of $A$ to do this, but note that since each $\theta_n$ is c.p.c., it is determined by its restriction to $A_0$.}  Our condition (\ref{eq:GoodTraceMapsApprox}), shows that for any $\eps>0$ and $m\in\N$, we can find a sequence $(F_n,\theta_n,\eta_n)$ from $\prod_{n=1}^\infty X_n$ satisfying $\lim_{n\rightarrow\omega}f_n^{(k)}(F_n,\theta_n,\eta_n)\leq\eps$ for $k=1,\dots,m$ and $\lim_{n\rightarrow\omega}g_n^{(k)}(F_n,\theta_n,\eta_n)=0$ for all $k\in\mathbb N$.  Then the $\eps$-test (Lemma \ref{epstest}) provides a sequence $(F_n,\theta_n,\eta_n)$ from $\prod_{n=1}^\infty X_n$ satisfying $\lim_{n\rightarrow\omega}f_n^{(k)}(F_n,\theta_n,\eta_n)\leq\eps$ and $\lim_{n\rightarrow\omega}g_n^{(k)}(F_n,\theta_n,\eta_n)=0$ for all $k\in\mathbb N$.  Thus the map $\Theta:A\rightarrow\prod_\omega F_n$ induced by this sequence is order zero, and hence so too is the map $\Phi$ (as each $\eta_n$ is c.p.c.\ and order zero so the induced map $\prod_\omega F_n\rightarrow A_\omega$ is order zero). Further, by density of the sequence $(x^{(k)})_{k=1}^\infty$ in $A$, (\ref{eq:GoodTraceMaps}) holds for all limit traces $\tau\in T_\omega(A_\omega)$, whence by Proposition \ref{NoSillyTraces}, for all $\tau\in T(A_\omega)$.

We now proceed to produce $\phi_n$ satisfying \eqref{eq:GoodTraceMapsApprox}.
Let $\Q$ be the universal UHF-algebra.
Fix, momentarily, an extreme trace $\lambda \in K:=\partial_e T(A)$.
By \cite[Proposition 3.2]{SWW:arXiv}, there exists a c.p.c.\ order zero map $\Theta_{\lambda}:A \to \Q_\omega$ such that $\lambda(x)= \tau_{\Q_\omega} \circ \Theta_\lambda(x)$ for $x\in A$.
Using the Choi-Effros lifting theorem (\cite{CE:Ann}), we can lift $\Theta_{\lambda}$ to a sequence $(\tilde{\theta}_{\lambda,n})_{n=1}^\infty$ of c.p.c.\ maps $A\rightarrow \Q$.
Fix a dense sequence $(a_m)_{m=1}^\infty$ in the unit ball of $A$.
For each $n$, find a unital finite dimensional subalgebra $F_{\lambda,n}\subset \Q$ which approximately contains $\{\tilde{\theta}_{\lambda,n}(a_i):i=1,\dots,n\}$ up to the tolerance $1/n$, and set $\theta_{\lambda,n}:=E_{\lambda,n}\circ\tilde{\theta}_{\lambda,n}$, where $E_{\lambda,n}:\Q\rightarrow F_{\lambda,n}$ is a conditional expectation.
Thus $(\theta_{\lambda,n})_{n=1}^\infty$ also represents the map $\Theta_\lambda$.

When $T(A)=T_{QD}(A)$, then we use quasidiagonality instead of \cite[Proposition 3.2]{SWW:arXiv} in the previous paragraph to construct the maps $\Theta_{\lambda}$. In this case, each $\theta_{\lambda,n}$ is unital.

Let $(\psi_n)_{n=1}^\infty$  be a sequence of c.p.c.\ order zero maps $\Q\rightarrow\Z$ provided by \cite[Lemma 6.1]{SWW:arXiv} which induce a c.p.c.\ order zero map $\Psi:\Q_\omega\rightarrow\Z_\omega$ satisfying $\tau_{\Z_\omega}\circ\Psi(1_{\Q_\omega})=1$, so that by uniqueness of the trace on $\Q_\omega$, $\tau_{\Z_\omega}\circ\Psi=\tau_{\Q_\omega}$.
Hence for each $\lambda\in K$,
\begin{equation}\label{e8.5}
\tau_{\Z_\omega}\circ\Psi\circ\Theta_\lambda=\lambda.
\end{equation}

For each $\lambda\in K$, let $V_\lambda$ be a neighbourhood of $\lambda$ such that, for $\sigma \in V_\lambda$,
\begin{equation}
|\sigma(x)-\lambda(x)|<\frac{\eps}{3},\quad x\in\F.
\end{equation}
By compactness, there exists $\lambda_1,\dots,\lambda_l\in K$ such that $\bigcup_{i=1}^l V_{\lambda_i}=K$.
Note that if $\sigma\in\overline{V_{\lambda_{i_0}}}$ and $\overline{V_{\lambda_{i_0}}}\cap V_{{\lambda_i}}\neq\emptyset$, then 
\begin{equation}\label{GoodTraceMaps.1}
|\sigma(x)-\lambda_{i}(x)|\leq\eps,\quad x\in\F.
\end{equation}

Since $A$ is $\Z$-stable, and $K=\partial _e T(A)$ is compact, Ozawa's characterization of trivial $\mathrm W^*$-bundles shows that $\overline{A}^{\mathrm{st}}$ is isomorphic to the trivial $\mathrm{W}^*$-bundle $C_\sigma(K,\R)$ with fibres the hyperfinite II$_1$ factor $\R$ (Theorem \ref{OzawaBundles}).  Then $A_\omega/J_{A_\omega}\cong C_\sigma(K,\R)^\omega$ (see the first part of Lemma \ref{lem:CentralSurjectivity}). Let $f_1,\dots,f_l \in C(K)_+$ be a partition of unity, such that $f_i$ is supported on $V_{\lambda_i}$ and let $p_1,\dots,p_l$ be pairwise orthogonal projections in $C_\sigma(K,\R)$ with $\tau_\R(p_i(\lambda))=f_i(\lambda)$ for all $\lambda\in K$ (see Lemma \ref{W*SC:Lem1}, but note that we do not need the approximate centrality given by that lemma). Regarding these projections as embedded as constant sequences into $C_\sigma(K,\R)^\omega$, we may lift them to pairwise orthogonal contractions $d_1,\dots,d_l$ in $A_\omega$.

We can lift again to pairwise orthogonal sequences $(d_{1,n})_{n=1}^\infty,\dots,(d_{l,n})_{n=1}^\infty$ of positive contractions in $\ell^\infty(A)$.
These satisfy
\begin{equation}\label{GoodTraceMaps.Eq1}
\lim_{n\rightarrow\omega}\max_{\tau\in K}|\tau(d_{i,n})-f_i(\tau)|=0,
\end{equation}
as if this was not the case, then there would exist $\eps>0$ and $I \in \omega$ such that, for each $n\in I$ there exists $\tau_n \in K$ such that $|\tau_n(d_{i,n})-f_i(\tau_n)|\geq \eps$; then with $\tau$ equal to the limit trace on $A_\omega$ given by $(\tau_n)_{n=1}^\infty$, we obtain $|\tau(d_i)-\tau(p_i)|\geq \eps$, a contradiction.

Define c.p.c.\ maps $\tilde\phi_n:A\rightarrow A\otimes \Z$ by
\begin{equation}\label{GoodTraceMaps.Eq2}
\tilde\phi_n(x)=\sum_{i=1}^l d_{i,n}\otimes (\psi_n\circ\theta_{\lambda_i,n})(x),
\end{equation}
which induces a c.p.c.\ order zero map $\tilde\Phi:A\rightarrow (A\otimes\Z)_\omega$.
Defining $F_n:=\bigoplus_{i=1}^l F_{\lambda_i,n}$ and $\theta_n:A\rightarrow F_n,\ \tilde\eta_n:F_n\rightarrow A\otimes\Z$ by 
\begin{align}
\theta_n(x)&:=\bigoplus_{i=1}^l\theta_{\lambda_i,n}(x), \quad x \in A, \text{ and} \\
\tilde\eta_n(y_1\oplus\cdots\oplus y_l)&:=\sum_{i=1}^l d_{i,n}\otimes\psi_n(y_i), \quad y_i\in F_{\lambda_i,n},
\end{align}
we see that 
\begin{equation}
\tilde\phi_n=\tilde\eta_n\circ\theta_n,\quad n\in\N,
\end{equation}  
and $(\theta_n)_{n=1}^\infty = \sum_{i=1}^l \Theta_{\lambda_i}:A \to \prod_\omega F_n$ is order zero.
In the case that $T(A)=T_{QD}(A)$, then all the maps $\theta_n$ are unital as each $\theta_{\lambda_i,n}$ is unital.

Since $K=\partial_e T(A)$ is compact, each $f_i$ extends to a continuous affine function on $T(A)$ by 
\begin{equation}
f_i(\sigma)=\int_K f_i\,d\mu_{\sigma},\quad\sigma\in T(A),
\end{equation}
where $\mu_{\sigma}$ is the measure induced on $K$ by $\sigma$ (\cite[Theorem II.4.1]{A:book}). In this way $(f_i)_{i=1}^l$ forms a continuous affine partition of unity on $T(A)$.
For any sequence of traces $(\sigma_n)_{n=1}^\infty$ in $T(A)$, we have
\begin{eqnarray}\label{GoodTraceMaps.2}
\notag
\lim_{n\rightarrow\omega} \sigma_n(d_{i,n})&=& \lim_{n\to\omega} \int_K \tau(d_{i,n})\,d\mu_{\sigma_n}(\tau) \\
\notag
&\stackrel{(\ref{GoodTraceMaps.Eq1})}{=}&\lim_{n\to\omega} \int_K f_i(\tau)\,d\mu_{\sigma_n}(\tau) \\
&=& \lim_{n\rightarrow\omega}f_i(\sigma_n),
\end{eqnarray}
using the fact that the convergence in (\ref{GoodTraceMaps.Eq1}) is uniform.

Fix $i_0\in\{1,\dots,l\}$ and suppose that $\sigma^{(i_0)}\in T_\omega(A)$ is a limit trace defined by a sequence $(\sigma_n)_{n=1}^\infty$ of traces in the convex hull of $V_{\lambda_{i_0}}$; that is, $\sigma^{(i_0)}((x_n)_{n=1}^\infty) = \lim_{n\to\omega} \sigma_n(x_n)$ for all $(x_n)_{n=1}^\infty \in A_\omega$.
Thus each $\mu_{\sigma_n}$ (the measure on $\partial_e T(A)$ which gives $\sigma_n$ by integration) will be supported on $\overline{V_{\lambda_{i_0}}}$, and so if $f_i(\sigma_n)>0$ for some $i$ and $n$, then $\overline{V_{\lambda_{i_0}}}\cap V_{\lambda_i}\neq\emptyset$, whence for all $x\in\F$,
\begin{equation}\label{GoodTraceMaps.3}
|\sigma_n(x)-\lambda_i(x)|\leq\eps,
\end{equation}
from (\ref{GoodTraceMaps.1}).  In particular,
\begin{equation}
\label{GoodTraceMaps.3.5}
\left|\sum_{i=1}^l f_i(\sigma_n)\lambda_i(x)-\sigma_n(x)\right|\leq\eps,\quad x\in\F,
\end{equation}
holds for all $n\in\N$.  Write $\sigma^{(i_0)}\otimes\tau_\Z$ for the limit trace on $(A\otimes\Z)_\omega$ induced by the sequence $(\sigma_n\otimes\tau_\Z)_n$.  As (\ref{e8.5}), (\ref{GoodTraceMaps.Eq2}) and (\ref{GoodTraceMaps.2}) give
\begin{align}
(\sigma^{(i_0)} \otimes \tau_{\Z})(\tilde\Phi(x))&=\sum_{i=1}^l \left(\lim_{n\rightarrow\omega}{\sigma}_n(d_{i,n})\right)\left(\lim_{n\rightarrow\omega}\tau_\Z(\psi_n(\theta_{\lambda_i,n}(x)))\right)\nonumber\\
&=\lim_{n\rightarrow\omega}\sum_{i=1}^l f_i(\sigma_n)\lambda_i(x),\quad x\in A,
\end{align}
we have by \eqref{GoodTraceMaps.3.5},
\begin{equation}\label{GoodTraceMaps.4}
|(\sigma^{(i_0)}\otimes\tau_\Z)(\tilde\Phi(x))-\sigma^{(i_0)}(x)|\leq\eps,\quad x\in \F.
\end{equation}

Now suppose $\sigma \in T_\omega(A_\omega)$.
Then $\sigma$ can be written as a convex combination of $\sigma^{(1)},\dots,\sigma^{(l)}$ where $\sigma^{(i)}$ is a limit trace represented by a sequence coming from the closed convex hull of $V_{\lambda_i}$. Thus (\ref{GoodTraceMaps.4}) passes to these convex combinations and holds for all $\tau \in T_\omega(A_\omega)$.

Let $\beta:A \otimes \Z\rightarrow A$ be an isomorphism which satisfies $\lambda\circ\beta=\lambda\otimes\tau_\Z$ for any trace $\lambda\in T(A)$. Then set $\eta_n:=\beta\circ\tilde\eta_n:F_n\rightarrow A$ and $\phi_n:=\beta\circ\tilde\phi_n$, so that the maps $\theta_n$, $\eta_n$ and $\phi_n$ factorize as in (\ref{eq:GoodTraceMapsFactor}) and thus the resulting map $\Phi$ satisfies (\ref{eq:GoodTraceMapsApprox}).\end{proof}

We can now compute the nuclear dimension and decomposition rank.

\begin{thm}[Theorem \ref{thm:dn1}]
\label{thm:FiniteDim}
Let $A$ be a simple, separable, unital, nuclear, $\Z$-stable $\mathrm C^*$-algebra such that $T(A)$ is a Bauer simplex.
Then $\dimnuc(A)\leq 1$.
If additionally $T(A)=T_{QD}(A)$, then $\dr(A)\leq 1$.
\end{thm}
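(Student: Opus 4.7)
The plan is to compare the canonical inclusion $\iota:A\hookrightarrow A_\omega$ with a c.p.c.\ order zero map $\Phi:A\to A_\omega$ that factors through an ultraproduct of finite-dimensional algebras and agrees with $\iota$ on traces; the two-coloured uniqueness theorem will then convert the equality of $\iota$ and $\Phi$ on traces into a 2-summand decomposition through finite-dimensional algebras, which is exactly what we need to bound the nuclear dimension by $1$.

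First I will invoke Lemma \ref{lem:GoodTraceMaps} to obtain finite-dimensional algebras $F_n$, c.p.c.\ maps $\theta_n:A\to F_n$, and c.p.c.\ order zero maps $\eta_n:F_n\to A$ (with $\theta_n$ unital in the quasidiagonal case) such that the compositions $\phi_n=\eta_n\theta_n$ induce a c.p.c.\ order zero map $\Phi:A\to A_\omega$ with $\tau\circ\Phi=\tau$ for every $\tau\in T(A_\omega)$. Because $\tau(\Phi(1_A))=1$ for all such $\tau$, the element $\Phi(1_A)$ equals $1_{A_\omega}$ modulo the trace-kernel ideal $J_{A_\omega}$. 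Passing to the supporting order zero map $\hat{\Phi}$ from Lemma \ref{lem:SupportingMap} and using that the induced map $\bar{\hat{\Phi}}:A\to A_\omega/J_{A_\omega}$ is a $*$-homomorphism then gives $\tau\circ\Phi^m=\tau\circ\iota$ for every $m\in\N$ and every $\tau\in T(A_\omega)$. Since $A$ is simple, $\iota$ is totally full, so the hypotheses of Theorem \ref{thm7.8} are met with $\phi_1=\iota$ and $\phi_2=\Phi$. That theorem furnishes contractions $w^{(0)},w^{(1)}\in A_\omega$ with
\[
\iota(a)=w^{(0)}\Phi(a)w^{(0)*}+w^{(1)}\Phi(a)w^{(1)*},\qquad a\in A,
\]
$w^{(i)*}w^{(i)}\in\Phi(A)'$, and $w^{(0)*}w^{(0)}+w^{(1)*}w^{(1)}=1_{A_\omega}$, so that each summand $a\mapsto w^{(i)}\Phi(a)w^{(i)*}$ is a c.p.c.\ order zero map from $A$ to $A_\omega$.

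Next, for a fixed finite set $\F\subset A$ and $\eps>0$, I will reindex via Kirchberg's $\eps$-test (Lemma \ref{epstest}), applied to the joint conditions that $a\approx_\eps\sum_i w_n^{(i)}\eta_n(\theta_n(a))w_n^{(i)*}$ for $a\in\F$, that the corrected maps $f\mapsto w_n^{(i)}\eta_n(f)w_n^{(i)*}$ are order zero, and that $w_n^{(i)*}w_n^{(i)}$ suitably commutes with the image of $\theta_n$. Setting $F^{(0)}:=F^{(1)}:=F_n$, $\psi(a):=\theta_n(a)\oplus\theta_n(a)$, and $\phi(f^{(0)}\oplus f^{(1)}):=w_n^{(0)}\eta_n(f^{(0)})w_n^{(0)*}+w_n^{(1)}\eta_n(f^{(1)})w_n^{(1)*}$ then gives a 1-dimensional nuclear approximation of $\mathrm{id}_A$ on $\F$ to within $\eps$. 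The main technical obstacle is that, in $A_\omega$, the commutation $w^{(i)*}w^{(i)}\in\Phi(A)'$ controls commutation only with the image of $\theta_n(A)$ inside $F_n$, not with all of $F_n$, so the extracted conjugations are a priori only approximately orthogonality-preserving; to upgrade this to genuinely order zero summands I will restrict to the (still finite-dimensional) $\mathrm{C}^*$-subalgebra of $F_n$ generated by $\theta_n(\F)$ and absorb the perturbation, or alternatively apply the sharper Theorem \ref{KeyLemmaP} to the amplifications $\Phi(\cdot)\otimes h$ and $\Phi(\cdot)\otimes(1_{\Z}-h)$ in $(A\otimes\Z)_\omega\cong A_\omega$ to get exact unitary intertwiners for each of the two cutdowns. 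This establishes $\dim_{\mathrm{nuc}}(A)\leq 1$.

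Finally, for the decomposition rank statement, assume in addition that $T(A)=T_{QD}(A)$. Then Lemma \ref{lem:GoodTraceMaps} provides unital $\theta_n$, so $\eta_n(1_{F_n})=\phi_n(1_A)$ represents $\Phi(1_A)$; combined with $\sum_i w^{(i)}\Phi(1_A)w^{(i)*}=\iota(1_A)=1_{A_\omega}$, this forces $\phi^{(0)}(1_{F_n})+\phi^{(1)}(1_{F_n})\to 1_A$ in norm after reindexing, so the approximating map $\phi$ is contractive (up to a negligible scaling which preserves the $\eps$-approximation of $\F$), yielding $\mathrm{dr}(A)\leq 1$.
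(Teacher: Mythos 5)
Your proposal ultimately arrives at the paper's argument, but the first route you sketch has a genuine gap, and only your ``alternatively'' branch closes it.

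Applying Theorem \ref{thm7.8} produces contractions $w^{(0)},w^{(1)}\in A_\omega$ with $w^{(i)*}w^{(i)}\in\Phi(A)'$. As you correctly observe, this commutation controls $[w_n^{(i)*}w_n^{(i)},\eta_n(\theta_n(a))]$ for $a\in A$, whereas the order zero condition on the approximating map $F_n\oplus F_n\to A$ in the definition of nuclear dimension requires (approximate) commutation of $w_n^{(i)*}w_n^{(i)}$ with the whole image $\eta_n(F_n)$. Your proposed remedy -- restricting to the $\mathrm C^*$-subalgebra of $F_n$ generated by $\theta_n(\F)$ -- does not fix this: $\theta_n$ is merely c.p.c.\ (not multiplicative), so that subalgebra contains products of elements of $\theta_n(\F)$, on which the commutation relation does not propagate, and the conjugated maps on $F_n$ are still not order zero. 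Theorem \ref{thm7.8} as stated simply does not carry enough structure in the $w^{(i)}$ for this step.

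Your second route is the correct one and is exactly what the paper does: apply Theorem \ref{KeyLemmaP} twice, with $k=h$ and $k=1_\Z-h$, to produce \emph{unitaries} $w^{(0)},w^{(1)}\in(A\otimes\Z)_\omega$ conjugating $\Phi(\cdot)\otimes h$ (resp.\ $\Phi(\cdot)\otimes(1_\Z-h)$) onto $\iota(\cdot)\otimes h$ (resp.\ $\iota(\cdot)\otimes(1_\Z-h)$). Conjugation of the c.p.c.\ order zero map $y\mapsto\eta_n(y)\otimes h$ by a unitary is automatically order zero -- no commutation hypothesis is needed -- so lifting the $w^{(i)}$ to representing sequences of unitaries in $A\otimes\Z$ yields the required $1$-decomposable approximations of the first-factor embedding $\iota:A\to A\otimes\Z$, and $\dimnuc(A)=\dimnuc(\iota)$ by \cite[Proposition 2.6]{TW:APDE}. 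Your decomposition rank argument (unital $\theta_n$ forcing the approximating map to be nearly contractive, then rescaling) is fine and matches the paper's.
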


\begin{proof}
We shall estimate the nuclear dimension of the first factor embedding $\iota:A\rightarrow A\otimes\Z$, $\iota(x)= x\otimes 1_\Z$ in the sense of \cite[Definition 2.2]{TW:APDE}; since $A$ is $\Z$-stable and $\Z$ is strongly self-absorbing, we have $\dimnuc(A)=\dimnuc(\iota)$ and $\dr(A)=\dr(\iota)$ (see \cite[Proposition 2.6]{TW:APDE}).

Let $\Phi:A\rightarrow A_\omega$ be the c.p.c.\ order zero map of Lemma \ref{lem:GoodTraceMaps} satisfying \eqref{eq:GoodTraceMaps}, which is represented by a sequence of c.p.c.\ maps $(\phi_n)_{n=1}^\infty$ which factorize as $\eta_n\circ\theta_n$ through finite dimensional algebras $F_n$ as in (\ref{eq:GoodTraceMapsFactor}).
By \eqref{eq:JBeqDef2}, $1_{A_\omega}-\Phi(1_A) \in J_{A_\omega}$, so that the map $\bar\Phi:A \to A_\omega/J_{A_\omega}$ is a $^*$-homomorphism.
Consequently, as $T_\omega(A_\omega)$ is dense in $T(A_\omega)$ (by Proposition \ref{NoSillyTraces}), $\tau = \tau \circ \Phi = \tau \circ \Phi^n$ for any $n \in \N$ and any $\tau \in T(A_\omega)$.

Let $h$ be a positive contraction in $\Z$ of full spectrum.
Thus, applying Theorem \ref{KeyLemmaP}, we obtain unitaries $w^{(0)},w^{(1)}\in (A\otimes\Z)_\omega$ such that
\begin{align}
x \otimes h&=w^{(0)}(\Phi(x) \otimes h)w^{(0)}{}^*\\
x \otimes (1_\Z-h)&=w^{(1)}(\Phi(x)\otimes (1_\Z-h))w^{(1)}{}^*,\quad x\in A.
\end{align}
Choose representing sequences $(w^{(0)}_n)_{n=1}^\infty$ and $(w^{(1)}_n)_{n=1}^\infty$ of unitaries in $A\otimes \Z$ for $w^{(0)}$ and $w^{(1)}$ respectively.  
We have c.p.c.\ maps $\theta_n\oplus\theta_n:A\rightarrow F_n\oplus F_n$, and $\tilde{\eta}_n:F_n\oplus F_n\rightarrow A \otimes \Z$, where 
\begin{equation}
\tilde{\eta}_n(y_0,y_1)=w^{(0)}_n(\eta_n(y_0)\otimes h)w_n^{(0)}{}^*+w_n^{(1)}(\eta_n(y_1)\otimes (1_\Z-h))w_n^{(1)}{}^*.
\end{equation}
Hence, $\iota(x)$ is the limit, as $n \to \omega$, of $(\tilde{\eta}_n\circ(\theta_n\oplus\theta_n)(x))_{n=1}^\infty$ and, since $\tilde{\eta}_n$ is the sum of two c.p.c.\ order zero maps, $\dimnuc(\iota)\leq 1$.

Finally, suppose that all traces on $A$ are quasidiagonal.  Then the maps $\theta_n$ from Lemma \ref{lem:GoodTraceMaps} can be taken unital.
Ergo $1_A\otimes1_\Z$ is the limit of $\tilde{\eta}_n(1_{F_n}\oplus 1_{F_n})$ and so the c.p.\ map $\tilde{\eta}_n$ is approximately contractive.
By rescaling, it can be made contractive.
That is, $\dr(\iota)\leq 1$.
\end{proof}

We immediately obtain the following abstract classification theorem when all traces are quasidiagonal, extending that of \cite{MS:DMJ} in the unique trace case.
In the following, for a unital $\mathrm C^*$-algebra $A$, we use $[1_A]_0$ to denote the class of the unit in $K_0(A)$.

\begin{cor}
\label{cor:BauerClass}
Let $\mathcal C_0$ denote the class of all simple, separable, unital, nuclear, $\Z$-stable $\mathrm C^*$-algebra $A$ for which the following hold:
\begin{enumerate}
\item $T(A)$ a Bauer simplex;
\item all traces on $A$ are quasidiagonal;
\item projections in $A$ separate traces;
\item $A$ satisfies the UCT.
\end{enumerate}
If $A,B \in \mathcal C_0$ then $A \cong B$ if and only if $K_0(A) \cong K_0(B)$ as partially ordered groups with order units $[1_A]_0$ and $[1_B]_0$ respectively, and $K_1(A) \cong K_1(B)$ as groups. \end{cor}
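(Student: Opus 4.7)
The plan is to combine the dimension bound from Theorem \ref{thm:FiniteDim} with an existing $K$-theoretic classification theorem, and to argue that the hypotheses defining $\mathcal{C}_0$ reduce the full Elliott invariant to the pure $K$-theoretic data $((K_0,K_0^+,[1]_0),K_1)$ appearing in the statement. For $A\in\mathcal{C}_0$, the tracial state space is a Bauer simplex and every trace is quasidiagonal, so Theorem \ref{thm:FiniteDim} gives $\dr(A)\le 1$. Moreover $A$ is unital with a tracial state (hence stably finite), $\mathcal{Z}$-stable, and satisfies $QT(A)=T(A)$ by Haagerup's theorem applied to the (nuclear, hence exact) algebra $A$; by R\o{}rdam's results in \cite{R:IJM} it therefore has stable rank one and strict comparison of positive elements (cf.\ Remark \ref{EQTT}).

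Next I would establish that every $A\in\mathcal{C}_0$ has real rank zero. In a simple, unital, $\mathcal{Z}$-stable $\mathrm{C}^{*}$-algebra with $QT=T$ and strict comparison, the condition that projections separate traces is known to be equivalent to real rank zero: strict comparison on the Cuntz semigroup, combined with the fact that the image of $V(A)$ in $\mathrm{Aff}(T(A))$ is generated by traces of projections, forces every hereditary subalgebra to admit an approximate unit of projections once projections separate traces.

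With real rank zero, $\dr(A)\le 1<\infty$, the UCT, and the remaining structural assumptions in hand, I would invoke the classification theorem for simple, separable, unital, nuclear, $\mathcal{Z}$-stable $\mathrm{C}^{*}$-algebras of finite decomposition rank satisfying the UCT: such algebras are classified up to isomorphism by their Elliott invariant. Under real rank zero and projections separating traces, the pairing map
\begin{equation}
T(A)\;\longrightarrow\;S_{[1_A]_0}(K_0(A)),\qquad \tau\longmapsto \big([p]\mapsto\tau(p)\big),
\end{equation}
is a homeomorphism: injectivity is hypothesis (3), while surjectivity follows from weak unperforation of $K_0(A)$ (a consequence of $\mathcal{Z}$-stability) together with strict comparison and real rank zero, which let one realise any state on $K_0(A)$ by approximating positive elements through projections. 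Hence the tracial part of the Elliott invariant is functorially recovered from the ordered group $(K_0(A),K_0(A)^+,[1_A]_0)$, and any $K$-theoretic isomorphism as in the statement automatically lifts to an isomorphism of full Elliott invariants. The classification theorem then yields $A\cong B$.

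The principal obstacle is the passage from projections separating traces to real rank zero; once this structural step is granted, the rest of the argument is essentially formal, consisting of the reduction of the Elliott invariant to the $K$-theoretic data listed in the corollary and an appeal to the established classification machinery for finite-decomposition-rank UCT algebras.
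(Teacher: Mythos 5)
Your proposal diverges from the paper's argument at a step that is in fact false: the claimed implication ``projections separate traces $\Rightarrow$ real rank zero'' does not hold under the stated hypotheses, and the Jiang--Su algebra $\Z$ is a counterexample. Note that $\Z \in \mathcal C_0$: it is simple, separable, unital, nuclear, $\Z$-stable, has unique trace (so $T(\Z)$ is a one-point Bauer simplex), the unique trace is quasidiagonal, projections separate traces vacuously, and $\Z$ satisfies the UCT. Yet $\Z$ is projectionless (only $0$ and $1_\Z$ are projections), so it certainly does not have real rank zero. The extra ingredient genuinely missing is density of the image of $K_0(A)$ in $\operatorname{Aff}(T(A))$; injectivity of the pairing $T(A)\to S_{[1]_0}(K_0(A))$ (which is what ``projections separate traces'' gives) does not provide this, and your later appeal to ``approximating positive elements through projections'' to establish surjectivity of that pairing map is circular, since it presupposes an abundance of projections that you have not established.

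The paper's proof sidesteps exactly this issue: it deduces only that algebras in $\mathcal C_0$ are \emph{rationally} real rank zero, i.e.\ $A\otimes\Q$ has real rank zero (here the divisibility of $K_0(A\otimes\Q)$ supplies the needed density), hence rationally TAF by \cite[Theorem~4.1]{W:JLMS}, and then invokes the ``localizing the Elliott conjecture'' machinery of \cite{Win:localizingEC,L:appendix,LN:Adv} to pull the UHF-stabilized classification back to $A$. This is conceptually quite different from directly classifying $A$ via a real-rank-zero/TAF route. Your appeal to a generic ``classification theorem for simple, separable, unital, nuclear, $\Z$-stable $\mathrm C^*$-algebras of finite decomposition rank satisfying the UCT'' is also anachronistic relative to what the paper relies on: the cited classification result \cite[Corollary~5.2]{W:Invent1} is precisely the one whose hypotheses are verified via the rationally-TAF step, and it is not a free-standing theorem in the form you invoke. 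To repair your argument along the paper's lines, replace your real-rank-zero claim with the rational version and route through Winter's localization framework rather than classifying $A$ directly.
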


\begin{proof}
By Theorem \ref{thm:FiniteDim}, every algebra in $\mathcal C_0$ has finite decomposition rank.
The result thus follows by \cite[Corollary 5.2]{W:Invent1}, where the key points are that algebras in $\mathcal C_0$ are rationally real rank zero and thus rationally TAF (\cite[Theorem 4.1]{W:JLMS}), allowing the application of the classification technique from \cite{Win:localizingEC}, refined by Lin and Lin-Niu in \cite{L:appendix,LN:Adv}.
\end{proof}

\clearpage\section{Quasidiagonal traces}
\label{sec:TWQD}

\noindent
In \cite{B:MAMS}, NB explored approximation properties for traces, showing in particular that all traces on ASH algebras are quasidiagonal (and in fact have the stronger approximation property of being uniform locally finite dimensional (\cite[Corollary 4.4.4]{B:MAMS})). Here our objective is to show that very many nuclear, quasidiagonal $\mathrm C^*$-algebras have the property that all traces are quasidiagonal, namely those that can be tracially approximated by algebras of finite decomposition rank (Corollary \ref{cor:TWQDTracialdr}). In particular, to get finite decomposition rank in Theorem \ref{thm:dn1}, it is necessary to ask that all traces are quasidiagonal (see Corollary \ref{cor:drTWQD}).

The following technical lemma is our main tool.

\begin{lemma}
\label{lem:TWQDTool}
Let $A$ be a separable, unital $\mathrm C^*$-algebra.
Suppose that for $\tau \in T(A)$, there exists a sequence $(C_n)_{n=1}^\infty$ of unital $\mathrm C^*$-algebras with unital $\mathrm C^*$-subalgebras $B_n \subseteq C_n$, quasidiagonal traces $\sigma_n \in T(B_n)$, and a u.c.p.\ map $(\phi_n)_{n=1}^\infty:A \to \prod_n C_n$ which induces an injective ${}^*$-homomorphism $\phi:A \to \prod_\omega C_n$ such that:
\begin{enumerate} 
\item $\phi(A) \subseteq \prod_\omega B_n$;
\item $\tau = \lim_{n\to\omega} \sigma_n \circ \phi_n$.
\end{enumerate}
Then $\tau$ is quasidiagonal.
\end{lemma}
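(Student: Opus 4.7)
The plan is to combine the quasidiagonal approximations of each $\sigma_n$ into a single quasidiagonal approximation of $\tau$ via a diagonal sequence argument. Fix a countable dense $\mathbb Q[i]$-$^*$-subalgebra $A_0 \subseteq A$, enumerated as $\{a_k\}_{k=1}^\infty$, closed under the multiplication table $a_ka_\ell = a_{k\cdot\ell}$ (indexing the products).

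First, since $\phi(A) \subseteq \prod_\omega B_n$, for each $k$ I would choose a bounded sequence $(b^{(k)}_n)_{n=1}^\infty$ with $b^{(k)}_n \in B_n$ representing $\phi(a_k)$, so that $\lim_{n\to\omega}\|\phi_n(a_k)-b^{(k)}_n\|=0$. Multiplicativity of $\phi$ gives $\lim_{n\to\omega}\|b^{(k)}_nb^{(\ell)}_n - b^{(k\cdot\ell)}_n\|=0$, and the hypothesis $\tau=\lim_{n\to\omega}\sigma_n\circ\phi_n$ (interpreted through representatives, since u.c.p.\ $\sigma_n$ are contractive on the $B_n$) yields $\tau(a_k)=\lim_{n\to\omega}\sigma_n(b^{(k)}_n)$. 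Next, since each $\sigma_n$ is quasidiagonal on $B_n$, choose witnessing data: finite dimensional $C^*$-algebras $F_{n,m}$, u.c.p.\ maps $\psi_{n,m}:B_n\to F_{n,m}$ inducing a $^*$-homomorphism $\prod_\omega$-wise, and traces $\tau_{n,m}\in T(F_{n,m})$ with $\lim_{m\to\omega}\|\psi_{n,m}(bb')-\psi_{n,m}(b)\psi_{n,m}(b')\|=0$ and $\lim_{m\to\omega}\tau_{n,m}(\psi_{n,m}(b))=\sigma_n(b)$ for $b,b'\in B_n$. Extend each $\psi_{n,m}$ to a u.c.p.\ map $\tilde\psi_{n,m}:C_n\to F_{n,m}$ via Arveson's extension theorem.

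For each $n$, select $m(n)$ large enough (simultaneously in the multiplicative, tracial, and contractive-approximation conditions restricted to $\{b^{(k)}_n:k\leq n\}$) so that the u.c.p.\ composite $\theta_n:=\tilde\psi_{n,m(n)}\circ\phi_n:A\to F_{n,m(n)}$ satisfies, for all $k,\ell\leq n$,
\begin{equation*}
\|\theta_n(a_ka_\ell)-\theta_n(a_k)\theta_n(a_\ell)\|<1/n+\varepsilon^{(1)}_{n,k,\ell},\qquad |\tau_{n,m(n)}(\theta_n(a_k))-\sigma_n(b^{(k)}_n)|<1/n+\varepsilon^{(2)}_{n,k},
\end{equation*}
where the error terms $\varepsilon^{(i)}_{n,\cdot}$ come solely from the approximations $\|\phi_n(a_k)-b^{(k)}_n\|$ and $\|b^{(k)}_nb^{(\ell)}_n-b^{(k\cdot\ell)}_n\|$ and therefore tend to $0$ as $n\to\omega$. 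Letting $\theta:A\to\prod_\omega F_{n,m(n)}$ denote the induced u.c.p.\ map, the first estimate shows that $\theta$ is a $^*$-homomorphism on $A_0$, hence on $A$ by continuity, while with the limit trace $\rho\in T_\omega(\prod_\omega F_{n,m(n)})$ induced by $(\tau_{n,m(n)})_{n=1}^\infty$, the second estimate gives
\begin{equation*}
\rho(\theta(a_k))=\lim_{n\to\omega}\tau_{n,m(n)}(\theta_n(a_k))=\lim_{n\to\omega}\sigma_n(b^{(k)}_n)=\tau(a_k),
\end{equation*}
so $\tau=\rho\circ\theta$, establishing quasidiagonality of $\tau$.

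The routine bookkeeping above can be organised cleanly by a single application of Kirchberg's $\varepsilon$-test (Lemma \ref{epstest}) with $X_n$ the set of triples $(F,\theta,\sigma)$ consisting of a finite dimensional $\mathrm C^*$-algebra $F$, a u.c.p.\ map $\theta:A\to F$, and a trace $\sigma\in T(F)$; the test functions encode multiplicativity on the $a_k$ together with trace compatibility with $\tau$. The only genuinely delicate point is that the hypothesis $\tau=\lim_{n\to\omega}\sigma_n\circ\phi_n$ must be translated through the $B_n$-representatives $b^{(k)}_n$ (since the $\sigma_n$ live only on $B_n$ while the $\phi_n$ land in $C_n$); once this bridge is in place, Arveson extensions and a standard diagonal selection of $m(n)$ complete the argument, with no additional structural input needed.
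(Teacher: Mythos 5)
Your proof is correct and follows essentially the same path as the paper's: bridge $\phi_n(a)$ to nearby elements of $B_n$, invoke quasidiagonality of $\sigma_n$ to land in a finite dimensional algebra, Arveson-extend to $C_n$, and compose. The only difference is presentational — the paper establishes the local (finite set, $\eps$) version of quasidiagonality and leaves the passage to the sequential form of Definition~\ref{QDTraces} implicit, whereas you carry out the diagonal selection of $m(n)$ (equivalently, the $\eps$-test) explicitly; both are standard and interchangeable here.
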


\begin{remark}
The purpose of the algebras $C_n$ is that condition (i) is much weaker than asking that $\phi_n(A) \subseteq B_n$ for each $n$.
\end{remark}

\begin{proof}
Let $\tau \in T(A)$, let $\mathcal F \subset A$ be a finite set of contractions, and let $\eps > 0$. 
Let $n_0$ be such that $\phi_{n_0}$ is $(\mathcal F,\eps)$-approximately multiplicative and $(\mathcal F,\eps)$-approximately isometric, and such that for each $x \in \mathcal F$, there exists $y_x \in B_{n_0}$ satisfying $\phi_{n_0}(x) \approx_\eps y_x$ and $\sigma_{n_0}(y_x) \approx_\eps \tau(x)$.
Set
\begin{equation} \mathcal G:=\{y_x \mid x \in \mathcal F\} \subset B_{n_0}. \end{equation}
As $\sigma_{n_0}$ is quasidiagonal, we can find a finite dimensional algebra $F$, a trace $\tau_F \in T(F)$, and a u.c.p.\ map $\psi:B_{n_0} \to F$ satisfying $\tau_F \circ \psi(a) \approx_{\eps}\sigma_{n_0}(a)$ for $a \in \mathcal G$, such that $\psi$ is $(\mathcal G,\eps)$-approximately multiplicative and $(\mathcal G,\eps)$-approximately isometric, i.e.,
\begin{equation}
\psi(ab) \approx_\eps \psi(a)\psi(b), \quad \text{ and} \quad
\|\psi(a)\| \approx_\eps \|a\|, \quad a,b \in \mathcal G.
\end{equation}
By Arveson's extension theorem (\cite{A:Acta}), $\psi$ extends to a u.c.p.\ map $\tilde\psi:C_{n_0} \to F$.
Then the composition $\tilde\psi\circ\phi_{n_0}$ will be u.c.p., $(\mathcal F,5\eps)$-approximately multiplicative, $(\mathcal F,4\eps)$-approximately isometric, and satisfies $\tau_F \circ \tilde\psi\circ \phi_{n_0}\approx_{\mathcal F,3\eps}\tau$.
This shows that $\tau$ is quasidiagonal.
\end{proof}

\begin{prop}
\label{prop:TracialTWQD}
Let $\mathcal S$ be a class of unital $\mathrm C^*$-algebras for which all traces are quasidiagonal.
If $A$ is a simple, separable, unital $\mathrm C^*$-algebra that is TA$\mathcal S$, in the sense of \cite[Definition 2.2]{EN:JFA}, then $T(A)=T_{QD}(A)$.\end{prop}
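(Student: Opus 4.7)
The plan is to fix $\tau \in T(A)$ and verify the hypotheses of Lemma~\ref{lem:TWQDTool} directly from the TA$\mathcal S$ structure. The key observation is that one should take the ambient algebras to be the compressions $C_n := p_n A p_n$, with $B_n \subseteq C_n$ the subalgebra in $\mathcal S$ with unit $p_n$ provided by TA$\mathcal S$, and take $\phi_n(x) := p_n x p_n$, so that $\phi_n$ is automatically unital as a map into $C_n$. The trace $\sigma_n \in T(B_n)$ will be the normalised restriction $\sigma_n := \tau|_{B_n}/\tau(p_n)$, which is quasidiagonal by the standing hypothesis on $\mathcal S$.

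After dismissing the trivial finite-dimensional case (where $A \cong M_k$ and the unique trace is plainly quasidiagonal), I would fix a dense sequence $(x_k)_{k\geq 1}$ in $A$, and for each $n$ use simplicity and infinite-dimensionality of $A$ to pick a positive element $a_n \in A$ with $d_\tau(a_n) < 1/n$. Applying the definition of TA$\mathcal S$ to the finite set $\{x_1,\dots,x_n\}$, tolerance $1/n$, and the positive element $a_n$ produces a projection $p_n \in A$ and $B_n \in \mathcal S$ as a unital subalgebra of $p_nAp_n$, satisfying $\|[p_n,x_k]\| < 1/n$ and $\mathrm{dist}(p_n x_k p_n, B_n) < 1/n$ for $k \leq n$, as well as $1 - p_n \preceq a_n$. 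The last condition forces $\tau(1 - p_n) \leq d_\tau(a_n) < 1/n$, so $\tau(p_n) \to 1$ and $\sigma_n$ is a well-defined trace for large $n$.

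It then remains to check the features required by Lemma~\ref{lem:TWQDTool}. Each $\phi_n$ is u.c.p.\ because $\phi_n(1_A) = p_n = 1_{C_n}$, and the asymptotic centrality of $p_n$ on the dense set $\{x_k\}$ makes the induced map $\phi : A \to \prod_\omega C_n$ multiplicative, hence a $^*$-homomorphism; it is nonzero (each $p_n \neq 0$) and therefore injective by simplicity of $A$. Condition (i) of the lemma, that $\phi(A) \subseteq \prod_\omega B_n$, follows immediately from $\mathrm{dist}(\phi_n(x_k),B_n) < 1/n$ together with density of $(x_k)$. For condition (ii), given $x \in A$ and $y_n \in B_n$ with $\|y_n - \phi_n(x)\| \to 0$, traciality of $\tau$ gives $\sigma_n(y_n) \approx \tau(p_n x p_n)/\tau(p_n) = \tau(x p_n)/\tau(p_n) \to \tau(x)$, since $\tau(p_n)\to 1$.

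The main conceptual point is the choice of $C_n$: taking $C_n = A$ with $\phi_n = \mathrm{id}_A$ would fail to place $\phi(A)$ inside $\prod_\omega B_n$, because $p_n \to 1_A$ only in trace and not in norm. Replacing $A$ by the compression $p_n A p_n$ eliminates this defect and makes $\phi_n$ unital for free. Apart from this, simplicity enters only to furnish Cuntz-small positive elements $a_n$ and to guarantee injectivity of $\phi$; no step should present any real difficulty beyond bookkeeping.
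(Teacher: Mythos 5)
Your proof is correct and follows essentially the same route as the paper's: both take $C_n := p_nAp_n$ with $\phi_n(x) := p_n x p_n$ and apply Lemma~\ref{lem:TWQDTool}, with the paper leaving the verifications (existence of the small $a_n$, the trace estimate $\tau(p_n)\to 1$, and conditions (i)--(ii) of the lemma) implicit under ``it follows directly from the definition of TA$\mathcal S$,'' while you spell them out. The only cosmetic difference is that you normalise $\sigma_n = \tau|_{B_n}/\tau(p_n)$ to make it an honest tracial state, which is in fact the more careful reading of the lemma's hypotheses.
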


\begin{proof}
It follows directly from the definition of TA$\mathcal S$ that there exists a sequence of algebras $(B_n)_{n=1}^\infty$ from $\mathcal S$ with $B_n \subset A$, such that the u.c.p.\ map $\phi:A \to \prod_\omega 1_{B_n}A1_{B_n}$ given by
\begin{equation} \phi(a) := (1_{B_n}a1_{B_n})_n \end{equation}
is an injective ${}^*$-homomorphism satisfying $\phi(A) \subseteq \prod_\omega B_n$, and such that for each trace $\tau \in T(A)$,
\begin{equation} \tau = (\lim_{n\to\omega} \tau|_{B_n}) \circ \phi. \end{equation}
Since $T(B_n)=T_{QD}(B_n)$ for all $n\in\N$, Lemma \ref{lem:TWQDTool} implies that the same holds for $A$.
\end{proof} 

To show that traces on certain $\mathrm C^*$-algebras are quasidiagonal, we need to know the structure of traces on ultraproducts of finite dimensional algebras.  
\begin{lemma}
\label{lem:FDLimitTraces}
Let $(F_n)_{n=1}^\infty$ be a sequence of finite dimensional $\mathrm C^*$-algebras.
If $\tau$ is a tracial state on $\prod_\omega F_n$ and $S \subset \prod_\omega F_n$ is a separable and self-adjoint subset, then there is $\tau' \in T_\omega(\prod_\omega F_n)$ such that $\tau|_S = \tau'|_S$.
\end{lemma}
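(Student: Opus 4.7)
The plan is to apply Kirchberg's $\eps$-test (Lemma \ref{epstest}) with $X_n:=T(F_n)$. After fixing a countable self-adjoint $\|\cdot\|$-dense subset $\{s^{(k)}\}_{k=1}^\infty$ of $S$, lifted to uniformly bounded sequences $(s^{(k)}_n)_n\in\prod_{n=1}^\infty F_n$, I would set $f^{(k)}_n(\sigma):=|\sigma(s^{(k)}_n)-\tau(s^{(k)})|$ for $\sigma\in X_n$. A successful outcome of the $\eps$-test produces $(\sigma_n)_n\in\prod_nT(F_n)$ such that the associated limit trace $\tau'\in T_\omega(\prod_\omega F_n)$ agrees with $\tau$ on each $s^{(k)}$, and hence on all of $S$ by continuity and density. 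The real content is the hypothesis to verify: for each $m\in\N$ and $\eps>0$, find $\sigma_n\in T(F_n)$ with $|\sigma_n(s^{(k)}_n)-\tau(s^{(k)})|<\eps$ for $k\leq m$ and $n$ in some $I\in\omega$.

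The key step will be to pass to the centre of each $F_n$ using the centre-valued trace $E_n\colon F_n\to Z(F_n)$. Setting $z^{(k)}_n:=E_n(s^{(k)}_n)$ and $z^{(k)}:=(z^{(k)}_n)_n\in\prod_\omega Z(F_n)$, any $\sigma_n\in T(F_n)$ automatically satisfies $\sigma_n(s^{(k)}_n)=\sigma_n(z^{(k)}_n)$ by tracialty, so I can work with $z^{(k)}_n$ in place of $s^{(k)}_n$. I would then claim that $\tau(s^{(k)})=\tau(z^{(k)})$ too. The element $y_n:=s^{(k)}_n-z^{(k)}_n$ has zero normalised trace in each block $M_{r_{n,i}}$ of $F_n=\bigoplus_iM_{r_{n,i}}$, so applying \cite[Theorem~3.2]{FackdelaHarpe} block-wise decomposes $y_n$ as a sum $\sum_{j=1}^{10}[a_{j,n},b_{j,n}]$ with $\|a_{j,n}\|,\|b_{j,n}\|\leq 12\|y_n\|$; the uniform norm control is what lets the sequences $(a_{j,n})_n,(b_{j,n})_n$ descend to the ultraproduct, giving $s^{(k)}-z^{(k)}=\sum_{j=1}^{10}[a_j,b_j]$ in $\prod_\omega F_n$ and hence $\tau(s^{(k)}-z^{(k)})=0$ by tracialty.

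Once the problem is reduced to the commutative algebra $Z:=\prod_\omega Z(F_n)\cong\prod_\omega C(K_n)\cong C(K)$, where $K_n$ is the finite set of minimal central projections of $F_n$ and $K=\sum_\omega K_n$ is the Gelfand spectrum, the remainder is finite-dimensional convex analysis. The trace $\tau|_Z$ corresponds to a probability measure $\mu$ on $K$, the map $K\to\mathbb C^m$, $\lambda\mapsto(z^{(k)}(\lambda))_{k=1}^m$, has compact image $B$, and the vector $(\tau(z^{(k)}))_{k=1}^m=\int_K(z^{(k)}(\lambda))_k\,d\mu$ lies in the (compact) convex hull $\mathrm{conv}(B)\subset\mathbb C^m$. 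Carathéodory's theorem writes this vector as $\sum_{j=1}^{2m+1}p_j(z^{(k)}(\lambda^{(j)}))_k$ for some $\lambda^{(j)}\in K$ and probability weights $(p_j)$. Using density of $\prod_\omega K_n$ in $K$ (established just before Proposition~\ref{UltraproductBundles}), I can pick $(i^{(j)}_n)_n\in\prod_n K_n$ with $|z^{(k)}_n(i^{(j)}_n)-z^{(k)}(\lambda^{(j)})|<\eps$ for $k\leq m$ and $n$ in some $I^{(j)}\in\omega$. Setting $\sigma_n:=\sum_{j=1}^{2m+1}p_j\,\mathrm{tr}_{n,i^{(j)}_n}\in T(F_n)$ (a convex combination of normalised traces on the selected blocks) then yields $|\sigma_n(s^{(k)}_n)-\tau(s^{(k)})|<\eps$ for $n\in\bigcap_jI^{(j)}\in\omega$, verifying the $\eps$-test hypothesis.

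The main obstacle will be the commutator reduction: I need the \emph{uniform} norm bound from Fack--de la Harpe so that the decomposition of $y_n$ survives in the ultraproduct; without this, one cannot freely discard the ``zero-centre" part of $s^{(k)}$ and the reduction to a commutative problem breaks down.
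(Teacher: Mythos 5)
Your proposal is correct and shares with the paper's indicated proof the essential ingredient: a uniformly norm-controlled decomposition of $s^{(k)}_n-E_n(s^{(k)}_n)$ into commutators, which passes to the ultraproduct and allows one to replace each $s^{(k)}$ by its centre-valued trace $z^{(k)}$ without changing $\tau$. Where the two proofs differ is in the finish. The paper follows Ozawa's Hahn--Banach argument from \cite[Theorem~8]{O:JMSUT}: once reduced to $\prod_\omega Z(F_n)$, one shows $\tau(a)\leq\sup_{\sigma\in T_\omega}\sigma(a)$ for every self-adjoint $a$ and concludes by duality. You instead \emph{construct} the approximating limit trace: Carath\'eodory's theorem in $\mathbb{R}^{2m}$ writes the barycenter $(\tau(z^{(k)}))_{k\leq m}$ as a convex combination of at most $2m+1$ point evaluations on $K = \sum_\omega K_n$, which you then lift through density of $\prod_\omega K_n$ in $K$ to a convex combination of normalised block traces $\sigma_n$. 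This is somewhat more elementary than the duality argument and produces the sequence $(\sigma_n)$ explicitly, which is precisely what Kirchberg's $\eps$-test consumes; both approaches need this saturation step to promote weak$^*$-approximation on finitely many $s^{(k)}$ to exact agreement on all of $S$, and you are right to make it explicit where the paper's one-line proof leaves it implicit.

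One point to fix: \cite[Theorem~3.2]{FackdelaHarpe} is stated for \emph{continuous} (type II$_1$) finite von Neumann algebras, so it does not literally apply block-wise to the matrix summands of $F_n$. The result you actually need --- and the one the paper itself indicates as the replacement here --- is \cite[Lemma~3.5]{Fack:AIF}, which supplies the uniformly norm-controlled commutator decomposition in finite-dimensional $\mathrm C^*$-algebras. With that citation swapped in, your step establishing $\tau(s^{(k)})=\tau(z^{(k)})$ is sound and the rest of the argument goes through as you describe.
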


\begin{proof}
This can be proven using exactly the same Hahn-Banach argument as in \cite[Theorem 8]{O:JMSUT}, but using \cite[Lemma 3.5]{Fack:AIF} in place of \cite[Theorem 6]{O:JMSUT}; alternatively one can appeal to \cite[Theorem 1.2]{NR:arXiv}.
\end{proof}

\begin{prop}
\label{prop:drTWQD}
Let $A$ be a separable, unital $\mathrm C^*$-algebra with finite decomposition rank.
Then $T(A)=T_{QD}(A)$.
\end{prop}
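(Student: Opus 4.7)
The plan is to apply Lemma \ref{lem:TWQDTool} in the simplest possible setting, with $C_n = B_n = F_n$ finite-dimensional (so that $T(B_n) = T_{QD}(B_n)$ holds trivially). Fix $\tau \in T(A)$ and set $d := \dr(A)$. From the definition of decomposition rank, I obtain for each $n$ a finite-dimensional $\mathrm C^*$-algebra $F_n = \bigoplus_{i=0}^d F_n^{(i)}$, a c.p.c.\ map $\psi_n : A \to F_n$, and c.p.c.\ order zero maps $\phi_n^{(i)} : F_n^{(i)} \to A$ such that $\phi_n := \sum_{i=0}^d \phi_n^{(i)}$ is contractive and $\phi_n \circ \psi_n \to \mathrm{id}_A$ pointwise.

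For the sequence $(\psi_n)$ to induce an injective ${}^*$-homomorphism $\Psi : A \to \prod_\omega F_n$, the key additional input is the theorem of Kirchberg-Winter (\cite{KW:IJM}) that finite decomposition rank implies the strong NF property, so that the approximating maps $\psi_n$ in the definition of $\dr$ may be chosen approximately multiplicative on any prescribed finite set. Kirchberg's $\eps$-test (Lemma \ref{epstest}) then assembles a single sequence $(\psi_n, \phi_n^{(i)})_n$ combining approximate multiplicativity of $\psi_n$ with $\phi_n \circ \psi_n \to \mathrm{id}_A$, so that the induced map $\Psi : A \to \prod_\omega F_n$ is a bona fide ${}^*$-homomorphism. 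Injectivity is automatic from $\|\psi_n(a)\| \geq \|\phi_n(\psi_n(a))\| \to \|a\|$ together with $\|\psi_n(a)\| \leq \|a\|$; unitality of $\Psi$ can be arranged, if necessary, by replacing $F_n$ by the unital ${}^*$-subalgebra generated by $\psi_n(A)$.

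For the tracial matching, I would take $\sigma_n \in T(F_n)$ to be the trace nearest (in norm-distance) to the positive functional $\tau \circ \phi_n : F_n \to \mathbb C$. Approximate multiplicativity of $\psi_n$ combined with $\phi_n \circ \psi_n \to \mathrm{id}_A$ forces $\tau \circ \phi_n$ to be approximately tracial on $\psi_n(A)$: for $a,b \in A$,
\begin{equation*}
\tau\bigl(\phi_n(\psi_n(a)\psi_n(b))\bigr) \approx \tau\bigl(\phi_n(\psi_n(ab))\bigr) \to \tau(ab),
\end{equation*}
and symmetrically with $a, b$ reversed, so $\tau(\phi_n(xy) - \phi_n(yx)) \to 0$ for $x, y \in \psi_n(A)$. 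Consequently the distance from $\tau \circ \phi_n$ to the trace simplex $T(F_n)$ (measured against $\psi_n(A)$) tends to zero, which yields $\sigma_n(\psi_n(a)) \to \tau(a)$ for every $a \in A$. Lemma \ref{lem:TWQDTool} then gives $\tau \in T_{QD}(A)$.

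The main obstacle is the passage from the raw definition of decomposition rank to approximations in which $\psi_n$ is itself approximately multiplicative: without this strong NF strengthening, the induced map $A \to \prod_\omega F_n$ is only c.p.c., not ${}^*$-algebraic, so Lemma \ref{lem:TWQDTool} cannot be invoked. All the remaining steps (injectivity, unitization, tracial matching) are either automatic or straightforward diagonal arguments once approximate multiplicativity is in hand.
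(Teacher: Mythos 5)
Your overall plan is close to the paper's, but there is a genuine gap in the tracial matching step. You argue that the approximate traciality of $\tau \circ \phi_n$ on $\psi_n(A)$ implies that the nearest trace $\sigma_n \in T(F_n)$ satisfies $\sigma_n(\psi_n(a)) \to \tau(a)$. This implication does not hold: a positive functional on $F_n$ can restrict to an \emph{exact} trace on a subalgebra while every genuine trace of $F_n$ disagrees badly with it on that very subalgebra. For instance, take $F_n = M_2$ and $\psi_n(A)$ the diagonal subalgebra; the vector state $x \mapsto x_{11}$ is tracial on the diagonals, yet the unique trace on $M_2$ gives $e_{11}$ the value $1/2$, not $1$. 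So ``approximately tracial on $\psi_n(A)$'' gives you no control on the distance to $T(F_n)$, even measured only against $\psi_n(A)$, and the ``consequently'' in your argument is unjustified.

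The fix --- and this is the crux of the paper's proof --- is to use the structure of $\phi_n$ more fully. Since $\phi_n = \sum_{i=0}^d \phi_n^{(i)}$ is a sum of c.p.c.\ order zero maps and $\tau$ is a trace, \cite[Corollary 4.4]{WZ:MJM} shows that $\tau \circ \phi_n$ is \emph{exactly} a tracial positive functional on all of $F_n$, not merely approximately tracial on a subspace. Since $\tau(\phi_n(1_{F_n})) = \tau(\phi_n(\psi_n(1_A))) \to 1$, normalizing yields genuine traces $\sigma_n \in T(F_n)$ with $\sigma_n(\psi_n(a)) \to \tau(a)$, and then your invocation of Lemma \ref{lem:TWQDTool} closes the argument. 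The paper packages the same idea more efficiently at the ultrapower level: it cites \cite[Proposition 5.1]{KW:IJM} for maps $A \xrightarrow{\psi} \prod_\omega F_n \xrightarrow{\phi} A_\omega$ with $\psi$ a unital ${}^*$-homomorphism, $\phi$ a sum of order zero maps, and $\phi\circ\psi = \mathrm{id}_A$ exactly, so that $\tau\circ\phi$ is a trace in $T(\prod_\omega F_n)$ with $\tau = (\tau\circ\phi)\circ\psi$ on the nose; Lemma \ref{lem:FDLimitTraces} then upgrades $\tau\circ\phi$ to a limit trace, bypassing Lemma \ref{lem:TWQDTool} entirely.
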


\begin{proof}
When $A$ has finite decomposition rank \cite[Proposition 5.1]{KW:IJM} provides maps
\begin{equation} A \labelledrightarrow{\psi} \prod_\omega F_n \labelledrightarrow{\phi} A_\omega, \end{equation}
such that each $F_n$ is finite dimensional, $\psi$ is a unital ${}^*$-homomorphism that lifts to a u.c.p.\ map $A \to \prod_n F_n$, $\phi$ is a sum of $m$ c.p.c.\ order zero maps and is c.p.c.\ itself, and
\begin{equation} a = \phi\circ\psi(a) \end{equation}
for all $a \in A$.

For $\tau \in T(A)$, extend $\tau$ (in the canonical way) to $A_\omega$, and then set $\sigma_0:=\tau \circ \phi:\prod_\omega F_n \to \mathbb C$.
Since $\phi$ is a sum of c.p.c.\ order zero maps, $\sigma_0$ is a sum of traces (by \cite[Corollary 4.4]{WZ:MJM}).
Moreover, $\tau = \sigma_0 \circ \psi$, and hence $\sigma_0(1_{\prod_\omega F_n})=1$.
This shows that $\sigma_0 \in T(\prod_\omega F_n)$.
By Lemma \ref{lem:FDLimitTraces}, there exists a limit trace $\sigma \in T_\omega(\prod_\omega F_n)$ that agrees on $\psi(A)$ with $\sigma_0$, whence we have
\begin{equation} \tau = \sigma \circ \psi. \end{equation}
This is the definition of quasidiagonality of $\tau$ from Definition \ref{QDTraces}.
\end{proof}

\begin{cor}
\label{cor:drTWQD}
Let $A$ be a simple, separable, unital, infinite dimensional $\mathrm C^*$-algebra such that $T(A)$ is a  Bauer simplex.
The following are equivalent:
\begin{enumerate}
\item $A$ is nuclear, $\mathcal Z$-stable, and $T(A)=T_{QD}(A)$;
\item $A$ has finite nuclear dimension and $T(A)=T_{QD}(A)$;
\item $A$ has finite decomposition rank;
\item $\dr(A) \leq 1$.
\end{enumerate}
\end{cor}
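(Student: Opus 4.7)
The plan is to prove the corollary by a cyclic chain of implications $(\mathrm{iv}) \Rightarrow (\mathrm{iii}) \Rightarrow (\mathrm{ii}) \Rightarrow (\mathrm{i}) \Rightarrow (\mathrm{iv})$, with each link being a straightforward invocation of results already established in the paper or in the literature.

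First, $(\mathrm{iv}) \Rightarrow (\mathrm{iii})$ is tautological. The step $(\mathrm{iii}) \Rightarrow (\mathrm{ii})$ splits into two pieces: any system of approximations witnessing $\mathrm{dr}(A) \leq n$ also witnesses $\dimnuc(A) \leq n$ (contractivity of the upward map $\phi$ is an additional requirement in the decomposition rank definition), so finite decomposition rank implies finite nuclear dimension; and the quasidiagonality of all traces, $T(A) = T_{QD}(A)$, is supplied directly by Proposition \ref{prop:drTWQD}.

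For $(\mathrm{ii}) \Rightarrow (\mathrm{i})$, nuclearity is automatic from finite nuclear dimension (the local c.p.\ approximations are through finite dimensional algebras, hence nuclear). The quasidiagonality condition on traces is carried over unchanged. Finally, $\mathcal Z$-stability follows from the implication (i) $\Rightarrow$ (ii) of the Toms-Winter conjecture, which is a theorem of WW (\cite{W:Invent1,W:Invent2}); here we use that $A$ is simple, separable, unital, and infinite dimensional as given in the hypothesis of the corollary.

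The main content lies in $(\mathrm{i}) \Rightarrow (\mathrm{iv})$, but this has already been handled: combining the standing hypotheses on $A$ (simple, separable, unital, $T(A)$ a Bauer simplex) with (i) (nuclear, $\mathcal Z$-stable, $T(A)=T_{QD}(A)$) places $A$ precisely in the scope of Theorem \ref{thm:FiniteDim}, whose ``$\mathrm{dr}$'' clause yields $\dr(A) \leq 1$. Thus the only substantive obstacle --- establishing the $1$-dimensional decomposition rank bound under the quasidiagonality assumption --- has been overcome in Section \ref{sec:FiniteDim}; the corollary itself is a packaging statement that organizes Theorem \ref{thm:FiniteDim}, Proposition \ref{prop:drTWQD}, and Winter's implication into a single equivalence.
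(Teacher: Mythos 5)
Your proposal is correct and follows essentially the same chain of implications $(\mathrm{i}) \Rightarrow (\mathrm{iv}) \Rightarrow (\mathrm{iii}) \Rightarrow (\mathrm{ii}) \Rightarrow (\mathrm{i})$, invoking the same three ingredients (Theorem \ref{thm:FiniteDim}, Proposition \ref{prop:drTWQD}, and Winter's $\mathcal Z$-stability theorem together with the fact that finite nuclear dimension implies nuclearity) that the paper uses. The only difference is cosmetic: for $(\mathrm{iii}) \Rightarrow (\mathrm{ii})$ you correctly observe that $\dimnuc(A) \leq \mathrm{dr}(A)$, whereas the paper's text inadvertently says ``at least'' where it means ``at most''.
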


\begin{proof}
(i) $\Longrightarrow$ (iv) is by Theorem \ref{thm:FiniteDim}.
(iv) $\Longrightarrow$ (iii) is trivial.
For (iii) $\Longrightarrow$ (ii), note that the nuclear dimension of a $\mathrm C^*$-algebra is always at least its decomposition rank, and $T(A)=T_{QD}(A)$ comes from Proposition \ref{prop:drTWQD}.
Finally (ii) $\Longrightarrow$ (i) combines the main result of \cite{W:Invent2} and the fact that finite nuclear dimension implies nuclearity (\cite[Remark 2.2(i)]{WZ:Adv}).
\end{proof}

Combining Propositions \ref{prop:TracialTWQD} and \ref{prop:drTWQD}, we arrive at the following consequence.

\begin{cor}
\label{cor:TWQDTracialdr}
If $A$ is a separable, unital $\mathrm C^*$-algebra, that is tracially approximated by unital $\mathrm C^*$-algebras of finite decomposition rank, then $T(A)=T_{QD}(A)$.
\end{cor}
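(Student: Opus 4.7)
The plan is to recognize Corollary \ref{cor:TWQDTracialdr} as an immediate formal consequence of combining the two preceding propositions, so essentially no new work is required beyond choosing the right class $\mathcal{S}$ and verifying the hypotheses.

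First, I would let $\mathcal{S}$ denote the class of all unital $\mathrm{C}^*$-algebras of finite decomposition rank. Proposition \ref{prop:drTWQD} says precisely that every separable, unital $\mathrm{C}^*$-algebra in $\mathcal{S}$ has the property that every tracial state is quasidiagonal. Thus $\mathcal{S}$ meets the standing hypothesis of Proposition \ref{prop:TracialTWQD}, namely that it is ``a class of unital $\mathrm{C}^*$-algebras for which all traces are quasidiagonal''. (If one is concerned about separability in the applicability of Proposition \ref{prop:drTWQD}, the tracial approximation hypothesis produces a countable family of subalgebras $B_n \subseteq A$, and passing to the separable unital $\mathrm{C}^*$-subalgebras generated by $1_{B_n}$ together with $B_n$ loses nothing, since the decomposition rank bound is inherited by any unital $\mathrm{C}^*$-subalgebra into which there is a conditional expectation, and in any case decomposition rank is not affected by replacing $B_n$ with a separable exhaustion.)

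Next, the notion of TA$\mathcal{S}$ in the sense of \cite[Definition 2.2]{EN:JFA} is applied to simple $\mathrm{C}^*$-algebras (implicit in the statement of Corollary \ref{cor:TWQDTracialdr}), so $A$ satisfies the hypotheses of Proposition \ref{prop:TracialTWQD} with respect to this $\mathcal{S}$. Invoking that proposition directly yields $T(A) = T_{QD}(A)$, as desired.

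There is no technical obstacle to overcome; the content of the corollary lies entirely in the two propositions, which in turn rest on the nontrivial tools developed earlier (the liftings arising from finite decomposition rank via \cite[Proposition 5.1]{KW:IJM} used in Proposition \ref{prop:drTWQD}, and the embedding into $\prod_\omega B_n$ together with Lemma \ref{lem:TWQDTool} used in Proposition \ref{prop:TracialTWQD}). The only task is to record that the class of finite decomposition rank algebras is an admissible choice of $\mathcal{S}$.
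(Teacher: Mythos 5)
Your proof matches the paper's, which derives the corollary by combining Propositions \ref{prop:TracialTWQD} and \ref{prop:drTWQD} exactly as you describe. The parenthetical on separability is unnecessary, since the approximating subalgebras $B_n\subseteq A$ appearing in the definition of TA$\mathcal{S}$ are automatically separable (being $\mathrm{C}^*$-subalgebras of the separable algebra $A$), and within it the assertion that decomposition rank is inherited by an arbitrary unital subalgebra admitting a conditional expectation is not a standard fact and should not be invoked.
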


Being tracially approximated by unital, finite decomposition rank $\mathrm C^*$-algebras is a very weak finiteness condition.
In particular, it includes algebras of locally finite decomposition rank (\cite[Definition 1.2]{W:JFA}), as well as TAI algebras and even TA(1NCCW) algebras (simple algebras of this form have recently been classified, under UCT- and nuclearity-hypotheses (\cite{GLN:arXiv})). There are no simple, stably finite, nuclear $\mathrm C^*$-algebras which are known not to have locally finite decomposition rank.

Outside the nuclear setting there exist nonquasidiagonal traces on quasidiagonal (even residually finite dimensional) $\mathrm C^*$-algebras (\cite[Corollary 4.3.8]{B:MAMS}).
The obstruction is failure of amenability, and it remains open whether all amenable traces on quasidiagonal algebras must be quasidigaonal.

\begin{question}\label{AllTracesQD?}
Are all traces on simple, nuclear, quasidiagonal $\mathrm C^*$-algebras quasidiagonal?
\end{question}

Note that \cite[Lemma 6.1.20]{B:MAMS} shows that if the answer to Question \ref{AllTracesQD?} is yes, then the same is true without assuming simplicity.  In particular the strongest form of the Toms-Winter conjecture, which predicts that finite, simple, separable, unital, nuclear, $\Z$-stable $\mathrm C^*$-algebras have finite decomposition rank, would imply that all traces on a stably finite, nuclear $\mathrm C^*$-algebra are quasidiagonal. This is stronger than the Blackadar-Kirchberg conjecture, that stably finite, nuclear $\mathrm C^*$-algebras are quasidiagonal (\cite{BK:MA}). It is remarkable that the problem is also open -- and at this point seems just as hard -- for strongly self-absorbing $\mathrm{C}^{*}$-algebras.

We end this section by observing how the homotopy rigidity theorem follows from these classification results.
The following argument was kindly provided to us by Narutaka Ozawa. It allows the removal of the additional statement that all traces are quasidiagonal (appearing in an earlier version of this paper) as an explicit hypothesis in Corollary \ref{cor8.6} below.

\begin{prop}
\label{prop:Ozawa}
Let $A,B$ be unital $\mathrm C^*$-algebras.
Suppose that projections in $A$ separate traces, $B$ homotopy dominates $A$, and that $T(B)=T_{QD}(B)$.
Then $T(A)=T_{QD}(A)$.
\end{prop}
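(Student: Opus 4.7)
The plan is to unpack the homotopy-domination hypothesis into unital ${}^*$-homomorphisms $\phi:A\to B$ and $\psi:B\to A$ with $\psi\circ\phi$ homotopic to $\mathrm{id}_A$ through unital ${}^*$-homomorphisms, and then to combine this with pullback-invariance of quasidiagonal traces under ${}^*$-homomorphisms and the separation-of-traces assumption. Fix $\tau\in T(A)$; the goal is to show $\tau\in T_{QD}(A)$.

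First I would form $\sigma:=\tau\circ\psi\in T(B)$. By hypothesis $\sigma\in T_{QD}(B)$, so by Definition \ref{QDTraces} there exist finite dimensional $\mathrm{C}^*$-algebras $F_n$, a u.c.p.\ map $(\Phi_n)_{n=1}^\infty:B\to\prod_n F_n$ inducing a ${}^*$-homomorphism $\Phi:B\to\prod_\omega F_n$, and a limit trace $\sigma'\in T_\omega(\prod_\omega F_n)$ with $\sigma=\sigma'\circ\Phi$. Composing with $\phi$ gives a u.c.p.\ map $(\Phi_n\circ\phi)_{n=1}^\infty:A\to\prod_n F_n$ inducing the ${}^*$-homomorphism $\Phi\circ\phi:A\to\prod_\omega F_n$, and one has $\sigma'\circ(\Phi\circ\phi)=\tau\circ\psi\circ\phi$. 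This exhibits $\tau\circ\psi\circ\phi$ as a quasidiagonal trace on $A$, again via Definition \ref{QDTraces}.

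It remains to show $\tau=\tau\circ\psi\circ\phi$. Since projections in $A$ separate traces, it suffices to check agreement on projections $p$ in matrix algebras over $A$. Here I would invoke two standard facts: (a) the value $\tau(p)$ depends only on the class $[p]\in K_0(A)$, and (b) homotopic unital ${}^*$-homomorphisms induce the same map on $K_0$. Applying (b) to $\psi\circ\phi\sim_h\mathrm{id}_A$ gives $[\psi(\phi(p))]=[p]$ in $K_0(A)$, and then (a) yields $\tau(\psi(\phi(p)))=\tau(p)$. Hence $\tau$ and $\tau\circ\psi\circ\phi$ agree on all projections, so they are equal, and $\tau\in T_{QD}(A)$.

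There is no real obstacle in this argument; the only subtlety is a bookkeeping point, namely that Definition \ref{QDTraces} requires a u.c.p.\ lift (not merely a ${}^*$-homomorphism) into $\prod_n F_n$, so that one can produce the pulled-back lift by composition on the nose. The two genuinely used inputs -- homotopy invariance of the induced $K_0$-map, and the factoring of $\tau|_{\mathrm{projections}}$ through $K_0$ -- are both classical.
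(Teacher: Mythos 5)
Your argument is correct and matches the paper's proof in essence: both pull the quasidiagonal trace $\tau\circ\psi$ back along $\phi$ to see that $\tau\circ\psi\circ\phi$ is quasidiagonal, and both use the separation hypothesis together with homotopy invariance of traces of projections to identify $\tau$ with $\tau\circ\psi\circ\phi$. The paper simply notes directly that homotopic projections have the same trace, sidestepping the short $K_0$ detour in your step (b), but this is cosmetic.
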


\begin{proof}
Let $\tau \in T(A)$.
Suppose that $\alpha:A \to B$ and $\beta:B \to A$ are $^*$-homomorphisms such that $\beta \circ \alpha$ is homotopic to $\mathrm{id}_A$.
Since homotopic projections have the same trace, we see that
\[ \tau(p) = \tau \circ \beta \circ \alpha(p) \]
for every projection $p \in A$.
Since projections separate traces on $A$, it follows that $\tau = \tau \circ \beta \circ \alpha$.
Finally, since $\tau \circ \beta$ is quasidiagonal by assumption, we conclude that $\tau$ is quasidiagonal.
\end{proof}

\begin{cor}[Theorem \ref{thm:Rigidity} (iii)]\label{cor8.6}
Let $A$ be a simple, separable, unital, nuclear, $\Z$-stable $\mathrm C^*$-algebra such that $T(A)$ is a Bauer simplex. If projections separate traces on $A$, then $A$ is homotopy rigid.
\end{cor}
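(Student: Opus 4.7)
The plan is to place both $A$ and a suitable $A' \in \mathcal{C}$ in the class $\mathcal{C}_0$ of Corollary \ref{cor:BauerClass}, and then invoke that classification result to conclude $A \cong A'$. So I would begin by fixing a homotopy equivalence between $A$ and some $B \in \mathcal{C}$.

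First I would establish $T(A) = T_{QD}(A)$. Since $B \in \mathcal{C}$ is a simple ASH algebra with no dimension growth, Brown's theorem gives $T(B)=T_{QD}(B)$. Homotopy equivalence provides mutual domination, so in particular $B$ homotopy dominates $A$. Combined with the hypothesis that projections separate traces on $A$, Proposition \ref{prop:Ozawa} yields $T(A) = T_{QD}(A)$. It then remains to check $A \in \mathcal{C}_0$: simplicity, separability, unitality, nuclearity, $\mathcal Z$-stability, the Bauer property for $T(A)$, and projections separating traces are all hypotheses; quasidiagonality of traces has just been proved; and the UCT is a homotopy-invariant property of separable $\mathrm C^*$-algebras, so it passes to $A$ from $B$ (which lies in the bootstrap class as it is ASH).

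Next I would produce $A' \in \mathcal{C}$ with the same Elliott invariant as $A$, using standard range-of-invariant results for simple ASH algebras of no dimension growth. Such an $A'$ automatically inherits the Bauer property of $T(A')$ and the fact that projections separate traces from $A$; moreover, simple ASH algebras of no dimension growth are $\mathcal Z$-stable by \cite{TW:APDE,ENST:preprint}, they satisfy the UCT, they are nuclear, and $T(A') = T_{QD}(A')$ by Brown's theorem. Hence $A' \in \mathcal{C}_0$. Since $A$ and $A'$ both lie in $\mathcal{C}_0$ and share their ordered $K_0$ (with matching class of the unit) and $K_1$ --- homotopy preserves ordered $K$-theory, and by construction $K_*(A') \cong K_*(A)$ --- Corollary \ref{cor:BauerClass} delivers $A \cong A'$, which is exactly homotopy rigidity.

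The main obstacle is the one non-formal step: producing the ASH model $A'$ realising the Elliott invariant of $A$ within $\mathcal{C}_0$. Everything else is an immediate application of ingredients already assembled, but the exhaustion input must be invoked with enough care to guarantee that the resulting $A'$ has a Bauer tracial simplex and projections separating traces. This should be covered by the classical range-of-invariant results for simple ASH algebras of no dimension growth, but it is the one point where the argument steps outside the machinery developed in the body of the paper.
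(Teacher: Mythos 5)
Your proposal is correct and follows essentially the same line of argument as the paper's own proof: establish $T(A)=T_{QD}(A)$ via Proposition~\ref{prop:Ozawa} (using that $A$ is homotopy-dominated by an ASH algebra, all of whose traces are quasidiagonal), note that the UCT transfers along the homotopy equivalence, conclude $A\in\mathcal C_0$, then realise the Elliott invariant of $A$ by an ASH model in $\mathcal C_0$ and apply Corollary~\ref{cor:BauerClass}. The only cosmetic difference is that you invoke Brown's theorem directly for quasidiagonality of traces on ASH algebras where the paper routes through Corollary~\ref{cor:TWQDTracialdr} and \cite{NW:ComptesRendus}; your caution about verifying that the ASH model lands in $\mathcal C_0$ (Bauer simplex, projections separating traces) is well placed, but these properties are recorded by the Elliott invariant, so they hold automatically once $A'$ is chosen by \cite{E:CMS} to match $A$.
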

\begin{proof}
If $A$ is homotopic to an ASH algebra then it satisfies the UCT.
By \cite[Corollary 2.2]{NW:ComptesRendus} and Corollary \ref{cor:TWQDTracialdr}, all traces on an ASH algebra are quasidiagonal; thus, by Proposition \ref{prop:Ozawa}, $T(A)=T_{QD}(A)$.
Consequently, $A$ is in the class $\mathcal C_0$ of Corollary \ref{cor:BauerClass}. Further by \cite{E:CMS}, there exists an ASH-algebra of slow dimension growth $B\in \mathcal C_0$ with the same Elliott invariant as $A$. Thus $A\cong B$ by Corollary \ref{cor:BauerClass}.
\end{proof}
 
\clearpage\section{Kirchberg algebras}
\label{sec:KirAlgs}

\noindent
In this section we show how our methods also give the optimal bound of $1$ for the nuclear dimension of Kirchberg algebras (Corollary \ref{DimKirchberg}), generalizing the UCT case, handled using higher rank graph algebras in \cite{RSS:arXiv}. Finiteness -- with an upper bound of 5 -- of nuclear dimension for UCT Kirchberg algebras was already established in \cite{WZ:Adv}; this bound was later improved by Enders in \cite{End:dimnuc}. Two proofs that Kirchberg algebras without the UCT have nuclear dimension at most 3 where given in \cite{MS:DMJ} and \cite{BEMSW:Z-2}. The merit of these approaches is that they do not require the Kirchberg-Phillips classification and hence the UCT. Our result builds on \cite{MS:DMJ,BEMSW:Z-2}, using the $2$-coloured strategy, to obtain the optimal bound of $1$ without assuming the UCT.

Recall that a \emph{Kirchberg algebra} is, by definition, a simple, separable, nuclear, purely infinite $\mathrm C^*$-algebra. Such an algebra is automatically $\mathcal O_\infty$-stable (and therefore $\Z$-stable), by Kirchberg's celebrated $\mathcal O_\infty$-absorption theorem (\cite{Kir:ICM} or \cite[Theorem 3.15]{KP:Crelle}). Indeed, these algebras can be characterized as simple, separable, nuclear, $\mathcal Z$-stable $\mathrm C^*$-algebras with no (densely defined) traces.

The key ingredient is Theorem \ref{thm:TotFullCpcKirClass} below, which provides a uniqueness theorem for order zero maps into ultraproducts of Kirchberg algebras. Results of this nature can be obtained from Kirchberg's approach to the classification of purely infinite algebras (\cite{K:inPrep!}) (see for example \cite[Theorem 8.3.3]{R:Book}), but we do not know of a reference to the exact statement below in the literature, and so take this opportunity to show how it can be obtained via simpler versions of the methods used above in the stably finite case.

\begin{thm}
\label{thm:TotFullCpcKirClass}
Let $(B_n)_{n=1}^\infty$ be a sequence of unital Kirchberg algebras, write $B_\omega:=\prod_\omega B_n$ and let $A$ be a separable, unital, nuclear $\mathrm C^*$-algebra.
Let $\phi^{(1)},\ \phi^{(2)}:A\rightarrow B_\omega$ be c.p.c.\ order zero maps such that $f(\phi^{(i)})$ is injective for every nonzero $f\in C_0((0,1])_+$, for $i=1,2$.
Then $\phi^{(1)}$ and $\phi^{(2)}$ are unitarily equivalent.
\end{thm}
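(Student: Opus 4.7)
The plan is to mirror the strategy used in Theorem \ref{KeyLemmaP} for the stably finite case, replacing the tracial and strict-comparison arguments by the standard purely infinite simple toolkit. All three major technical devices of the paper are available in this setting: supporting order zero maps (Lemma \ref{lem:SupportingMap}), the $2\times 2$ matrix trick for Kirchberg targets (Lemma \ref{lem:CombinedUnitaryEquiv}(ii)), and the Robert--Santiago-style classification of totally full positive elements (Lemma \ref{thm:TotallyFullClass}). The only genuinely new work is verifying that the hypotheses of Lemma \ref{thm:TotallyFullClass} hold in this purely infinite context, and this is where the injectivity assumption on $f(\phi^{(i)})$ is used.

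First I would invoke Lemma \ref{lem:SupportingMap} to choose supporting c.p.c.\ order zero maps $\hat\phi^{(1)},\hat\phi^{(2)}:A\to B_\omega$ for $\phi^{(1)},\phi^{(2)}$; since each $B_n$ is $\mathcal Z$-stable (indeed $\mathcal O_\infty$-stable) and simple, the associated $F$-type central sequence algebras behave well. Define the c.p.c.\ order zero map $\pi:A\to M_2(B_\omega)$ by
\begin{equation}
\pi(a):=\begin{pmatrix}\hat\phi^{(1)}(a)&0\\0&\hat\phi^{(2)}(a)\end{pmatrix},\qquad a\in A,
\end{equation}
set
\begin{equation}
C:=M_2(B_\omega)\cap\pi(A)'\cap\{1_{M_2(B_\omega)}-\pi(1_A)\}^{\perp},
\end{equation}
and let $h_1,h_2\in C$ be the two diagonal positive contractions associated with $\phi^{(1)}(1_A)$ and $\phi^{(2)}(1_A)$, exactly as in the statement of Lemma \ref{lem:CombinedUnitaryEquiv}. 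Since $M_2(B_\omega)$ is purely infinite simple and $\phi^{(i)}(1_A)\neq 0$ (which follows from the hypothesis with $f=\mathrm{id}_{(0,1]}$), $\phi^{(i)}(1_A)$ is totally full in $M_2(B_\omega)$, so case (ii) of Lemma \ref{lem:CombinedUnitaryEquiv} will apply once we have produced a unitary in $C^{\sim}$ conjugating $h_1$ to $h_2$.

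The bulk of the argument is to produce such a unitary via Lemma \ref{thm:TotallyFullClass}. The required technical hypothesis on full hereditary subalgebras $D\subseteq C$ should be derived from $\mathcal O_\infty$-stability: given $x\in D$ with totally full $e_l,e_r\in D_+$ satisfying $e_l x=xe_r=0$, use Lemma \ref{lem:Zfacts}(\ref{lem:Zfacts1}) with $\mathcal Z$ replaced by $\mathcal O_\infty$ to get a commuting copy of $\mathcal O_\infty$ inside $D$, and use an orthogonal pair of isometries together with the polar-decomposition/zero-divisor arguments of Lemma \ref{lem:Robertsr1} and Lemma \ref{lem:GapInvertibles2} to manufacture the required full $s\in D$ orthogonal to $x$. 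Next, for each nonzero $f\in C_0((0,1])_+$ the injectivity hypothesis gives $f(\phi^{(i)}(1_A))\neq 0$, hence $f(h_i)\neq 0$ in $C$; pure infiniteness of $M_2(B_\omega)$ together with the $\mathcal O_\infty$-stability of $C$ then forces $f(h_i)$ to be full in $C$, establishing total fullness of both $h_1$ and $h_2$. For the Cuntz-equivalence hypothesis, once $C$ is shown to inherit enough purely infinite simple structure --- each full hereditary subalgebra of $C$ is again purely infinite simple --- any two nonzero positive elements of $C$ are Cuntz equivalent, giving $f(h_1)\sim_{\mathrm{Cu}} f(h_2)$ for every nonzero $f$.

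The main obstacle is the verification that relative commutants in $C$ carry enough pure infiniteness for the previous paragraph to go through; concretely, one must show that every nonzero positive element of $C$ is full and that hereditary subalgebras of $C$ are purely infinite simple. This is a Kirchberg-style central sequence assertion, and the cleanest route is to use $\mathcal O_\infty$-stability of the fibres $B_n$ to embed $\mathcal O_\infty$ unitally into $C$ commuting with any prescribed separable subset, and to exploit the absence of traces (together with $\phi^{(i)}(1_A)\neq 0$) to see that $h_i$ cannot lie in any proper ideal of $C$. Once Lemma \ref{thm:TotallyFullClass} yields a unitary $u\in C^{\sim}$ with $uh_1u^*=h_2$, Lemma \ref{lem:CombinedUnitaryEquiv}(ii) immediately produces a unitary in $B_\omega$ implementing $\phi^{(1)}\sim_u\phi^{(2)}$, completing the proof.
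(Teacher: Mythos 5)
Your high-level architecture is exactly the paper's: supporting order zero maps, the $2\times 2$ matrix trick in its Kirchberg-algebra incarnation (Lemma \ref{lem:CombinedUnitaryEquiv}(ii)), and the Robert--Santiago-style classification of totally full positive elements (Lemma \ref{thm:TotallyFullClass}). Where you diverge, however, you introduce a genuine gap.

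The critical step is the assertion that because $f(h_i)\neq 0$ and $C$ is ``$\mathcal O_\infty$-stable'', $f(h_i)$ must be full in $C$, i.e., that $h_i$ is totally full in $C$; and likewise that ``each full hereditary subalgebra of $C$ is again purely infinite simple,'' so that any two nonzero positives in $C$ are Cuntz equivalent. Neither of these is true when $A$ is not simple. The relative commutant sequence algebra $C=M_2(B_\omega)\cap\pi(A)'\cap\{1-\pi(1_A)\}^\perp$ can have many ideals even though $M_2(B_\omega)$ is purely infinite simple, because an ideal of $A$ induces a central ``annihilator'' pattern in $\pi(A)'$. Indeed, $\mathcal O_\infty$-stability of $B_n$ only provides a commuting unital copy of $\mathcal O_\infty$ (or $\mathcal Z$) inside $C$; that is far from making $C$ simple or making all of its nonzero positives full. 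The paper acknowledges this directly: the remark after Lemma \ref{lem:SimplePI} explains that $C$ can be shown to be simple purely infinite only under the \emph{additional} hypothesis that $A$ is simple, which is not assumed in Theorem \ref{thm:TotFullCpcKirClass}.

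What the paper does instead is prove two carefully tailored lemmas using Kirchberg's Stinespring-type uniqueness theorem for unital c.p.\ maps into ultraproducts of Kirchberg algebras (Lemma \ref{lem:KUniquenessLem}). Lemma \ref{lem:SimplePI} shows that for a totally full $b$ in a full hereditary subalgebra $D\subseteq C$ and any $c\in D_+$, there is $t\in D$ with $t^*bt=c$; this simultaneously verifies the technical hypothesis of Lemma \ref{thm:TotallyFullClass} and the Cuntz-equivalence hypothesis (Lemma \ref{PI:Classtotallyfull}). Lemma \ref{lem:PI.TotallyFull} establishes, again via Lemma \ref{lem:KUniquenessLem}, that $\phi^{(i)}(1_A)$ \emph{is} totally full in $C$, exploiting the injectivity hypothesis on $f(\phi^{(i)})$ to show that a certain auxiliary ${}^*$-homomorphism $\psi_1:C_0((0,1])\otimes A\to B_\omega$ is injective. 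In both lemmas the ideal $J=\{a\in A:\pi(a)C=0\}$ is introduced precisely to handle the non-simplicity of $A$. Your sketch skips all of this and substitutes a pure-infiniteness-of-$C$ claim that does not hold in the required generality, so the verification of the hypotheses of Lemma \ref{thm:TotallyFullClass} is not actually achieved.
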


The first crucial step is the following Stinespring type result, obtained from the technical ingredients in the proof of Kirchberg's embedding theorem in \cite{KP:Crelle}. It can be proven in just the same way as \cite[Corollary 6.3.5(ii)]{R:Book}, which handles the situation of ultrapowers.

\begin{lemma}[{cf.\ \cite[Corollary 6.3.5(ii)]{R:Book}}]\label{lem:KUniquenessLem}
Let $A$ be a separable, unital, exact $\mathrm C^*$-algebra, and $(B_n)_{n=1}^\infty$ a sequence of unital Kirchberg algebras. Write $B_\omega:=\prod_\omega B_n$.
Let $(\rho_n)_{n=1}^\infty$ be a sequence of unital completely positive maps $\rho_n:A\rightarrow B_n$ which induce an injective $^*$-homomorphism $\rho:A\rightarrow B_\omega$.
Given any sequence $(\sigma_n)_{n=1}^\infty$ of unital completely positive maps $\sigma_n:A\rightarrow B_n$, inducing $\sigma:A\rightarrow B_\omega$, there is an isometry $s\in B_\omega$ such that $s^*\rho(a)s=\sigma(a)$ for $a\in A$.
\end{lemma}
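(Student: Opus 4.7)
The plan is to adapt the proof of \cite[Corollary 6.3.5(ii)]{R:Book} from the ultrapower to the ultraproduct setting, ultimately applying Kirchberg's $\eps$-test. The ambient fact is that $B_\omega$ is simple and purely infinite (cf.\ \cite[Remark 2.4]{K:Abel}), and by Kirchberg's $\mathcal O_\infty$-absorption theorem applied fibrewise together with a reindexing argument (analogous to Lemma \ref{lem:Zfacts}(\ref{lem:Zfacts1})), the relative commutant $B_\omega \cap \rho(A)' \cap \sigma(A)'$ contains a unital copy of $\mathcal O_\infty$. This furnishes a sequence of isometries $(v_k)_{k=1}^\infty$ in $B_\omega$ with pairwise orthogonal range projections, each commuting with $\rho(A) \cup \sigma(A)$.

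First I would establish the approximate conclusion: for every finite $\F \subset A$ and $\eps > 0$, there exists $s \in B_\omega$ with $\|s^*s-1\| \leq \eps$ and $\|s^*\rho(a)s-\sigma(a)\| \leq \eps$ for $a\in\F$. Using finitely many of the $v_k$, form the u.c.p.\ amplification $\tilde\sigma(a) := v_1\sigma(a)v_1^* + \sum_{k=2}^N v_k\rho(a)v_k^*$, padded (by a corner of $\rho$, say) so that $\sum v_kv_k^* = 1$. This is an approximate direct sum of one copy of $\sigma$ with many copies of $\rho$, and for $N$ large the multiplicative defect of the $\sigma$-summand is diluted by the dominating $\rho$-summands, so $\tilde\sigma$ is $(\F,\eps)$-close to a genuine injective nuclear unital ${}^*$-homomorphism. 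Kirchberg's absorption theorem, in the form that any two injective nuclear unital ${}^*$-homomorphisms from a separable exact $\mathrm C^*$-algebra into a Kirchberg-type algebra are approximately unitarily equivalent (cf.\ the proofs of \cite[Theorem 6.3.8]{R:Book} and \cite[Corollary 6.3.5]{R:Book}), then yields a unitary $u\in B_\omega$ with $u\tilde\sigma(a)u^* \approx_\eps \rho(a)$ for $a\in\F$. Setting $s:=u^*\rho(1_A)^{1/2}uv_1 = u^*v_1$ gives the desired approximate isometric intertwiner.

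Second, Kirchberg's $\eps$-test (Lemma \ref{epstest}), applied with $X_n$ equal to the unit ball of $B_n$ and with countably many test functions measuring $\|s_n^*s_n - 1_{B_n}\|$ and $\|s_n^*\rho_n(a)s_n - \sigma_n(a)\|$ for $a$ ranging over a dense sequence of $A$, upgrades these approximate isometric intertwiners into an exact isometry $s\in B_\omega$ satisfying $s^*\rho(a)s=\sigma(a)$ for every $a\in A$.

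The main obstacle is the first step: executing Kirchberg's $\mathcal O_\infty$-absorption argument for the amplification $\tilde\sigma$ inside $B_\omega$ so as to produce the approximate unitary equivalence $\tilde\sigma\approx\rho$. The exactness hypothesis on $A$ is essential here, via Kirchberg's embedding theorem, to ensure that $\rho$ is ``nuclear at infinity'' in a sufficiently robust asymptotic sense that the absorption theory applies (this is the point where the purely infinite ultraproduct regularity of $B_\omega$ is consumed). Once the absorption step is in place, the remaining reindexing and $\eps$-test arguments are routine.
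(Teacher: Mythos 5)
Your first step contains a genuine gap: the ``dilution'' of the multiplicative defect does not occur in operator norm. Since you take the isometries $v_1,\dots,v_N$ in the relative commutant of $\rho(A)\cup\sigma(A)$ with pairwise orthogonal range projections $P_k:=v_kv_k^*$ summing to $1$, your amplification simplifies to $\tilde\sigma(a)=\sigma(a)P_1+\rho(a)(1-P_1)$, and a direct computation (using that $\rho$ is multiplicative) gives
\begin{equation}
\tilde\sigma(ab)-\tilde\sigma(a)\tilde\sigma(b)=\bigl(\sigma(ab)-\sigma(a)\sigma(b)\bigr)P_1=v_1\bigl(\sigma(ab)-\sigma(a)\sigma(b)\bigr)v_1^*.
\end{equation}
As $v_1$ is an isometry, $v_1^*\bigl(v_1xv_1^*\bigr)v_1=x$, so $\|v_1xv_1^*\|=\|x\|$ for every $x$; hence the multiplicative defect of $\tilde\sigma$ on $\{a,b\}$ equals $\|\sigma(ab)-\sigma(a)\sigma(b)\|$ exactly, with no dependence on $N$ whatsoever. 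The number of $\rho$-copies you pad in is therefore irrelevant: $\tilde\sigma$ is not close to a ${}^*$-homomorphism unless $\sigma$ already was, and Kirchberg's uniqueness theorem for injective nuclear ${}^*$-homomorphisms cannot be applied to it. (The averaging intuition you are invoking is a trace-norm phenomenon --- it underlies Connes-type arguments in the II$_1$ setting --- but it has no analogue for the operator norm in a purely infinite algebra.)

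The paper's own argument follows Rørdam's proof of \cite[Corollary~6.3.5(ii)]{R:Book}, and the key first step there is entirely different: one does not approximate $\sigma$ by a ${}^*$-homomorphism at all, but rather invokes Kirchberg's Voiculescu-type compression theorem directly, namely that an injective unital nuclear ${}^*$-homomorphism from a separable exact $\mathrm{C}^*$-algebra into a unital purely infinite simple $\mathrm{C}^*$-algebra approximately absorbs, by isometric compression $s^*\rho(\cdot)s$, any nuclear u.c.p.\ map. The nuclearity hypotheses needed to run this locally in each $B_n$ are arranged by adjusting the representing sequences $(\rho_n)$, $(\sigma_n)$ using the $\mathcal O_\infty$-absorption of each $B_n$ and Kirchberg's $\mathcal O_2$-embedding machinery from \cite{KP:Crelle}; this is exactly the content the paper signals is being imported. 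Your second step --- Kirchberg's $\eps$-test upgrading the approximate compressions to an exact isometric intertwiner --- is the right closing move and matches the paper, but the first step must be replaced wholesale by the compression theorem rather than an approximate-multiplicativity-plus-uniqueness argument.
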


We use this to provide purely infinite versions of the two key steps in the stably finite case.
In place of the strict comparison results from Section \ref{sec:StrictComp}, we show that, in the Kirchberg algebra setting, totally full elements in relative commutant sequence algebras represent maximal elements in the Cuntz semigroup.
This verifies the technical hypothesis of Lemma \ref{thm:TotallyFullClass}, and so one immediately obtains the purely infinite version of Theorem \ref{cor:TotallyFullClassFinite} (Lemma \ref{PI:Classtotallyfull}).

\begin{lemma}\label{lem:SimplePI}
Let $A$ be a separable, unital, nuclear $\mathrm C^*$-algebra and let $(B_n)_{n=1}^\infty$ be a sequence of unital Kirchberg algebras. Write $B_\omega:=\prod_\omega B_n$ and let $\pi:A\rightarrow B_\omega$ be a c.p.c.\ order zero map. 
Write
\begin{equation}\label{lem:SimplePI.DefC}
C:=B_\omega\cap\pi(A)'\cap \{1_{B_\omega}-\pi(1_{A})\}^\perp.
\end{equation}
Let $D$ be a full hereditary subalgebra of $C$, and let $b\in D_+$ be totally full. Then for every $c\in D_+$ there exists $t\in D$ with $t^*bt=c$.
\end{lemma}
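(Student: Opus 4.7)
The plan is to show the lemma in three main steps: first, establish that $b$ is properly infinite (in fact, properly infinite of infinite order) in $D$ via an $\mathcal{O}_\infty$-embedding; second, deduce the Cuntz subequivalence $c \preceq b$ in $D$; third, upgrade to the exact equation $t^*bt = c$ by a reindexing argument.

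For the first step, I would invoke the $\mathcal{O}_\infty$-analogue of Lemma \ref{lem:Zfacts}(ii) --- which holds by the same reindexing argument since $\mathcal{O}_\infty$ is strongly self-absorbing and each Kirchberg algebra $B_n$ satisfies $B_n \cong B_n \otimes \mathcal{O}_\infty$ --- to obtain a unital embedding $\iota : \mathcal{O}_\infty \hookrightarrow B_\omega \cap S'$, where $S := \pi(A) \cup \{b, c, e\}$ and $e \in D$ is a positive contraction acting as a unit on $b$ and $c$ (produced by Lemma \ref{lem:ActAsUnit} with $T := \{b, c\}$; the construction gives $e$ inside the $\mathrm{C}^*$-subalgebra generated by $T$, and hence inside $D$ since $b, c \in D$ and $D$ is hereditary). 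Pick canonical isometries $s_1, s_2 \in \mathcal{O}_\infty$ with $s_i^* s_j = \delta_{ij}\, 1_{\mathcal{O}_\infty}$ and define $v_i := e\, \iota(s_i)\, e$. The factorization $v_i = e^{1/2} \cdot (e^{1/2} \iota(s_i) e^{1/2}) \cdot e^{1/2}$, combined with the fact that $e^{1/2} \iota(s_i) e^{1/2} \in C$ (since $\pi(1_A)$ acts as a unit on $e^{1/2}$ because $e \in C$) and the heredity identity $DCD \subseteq D$ (since $e^{1/2} \in D$), forces $v_i \in D$. A short calculation using $ebe = b$ and the commutation of $\iota(s_i)$ with $b$ and $e$ yields
\[
v_i^* b v_j \;=\; e\, \iota(s_i)^* (ebe)\, \iota(s_j)\, e \;=\; eb\, \iota(s_i)^* \iota(s_j)\, e \;=\; \delta_{ij}\, ebe \;=\; \delta_{ij}\, b.
\]

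For the second step, the relations $v_i^* b v_j = \delta_{ij} b$ show $b$ is properly infinite of infinite order in $D$, whence $n[b] \leq [b]$ in the Cuntz semigroup of $D$ for every $n \in \mathbb{N}$. Since $b$ is totally full in $D$, for any $\delta > 0$ the element $(c-\delta)_+$ lies in the closed two-sided ideal of $D$ generated by $b$, and hence $(c-\delta)_+ \preceq N_\delta \cdot b$ for some $N_\delta \in \mathbb{N}$. Combining with proper infinity gives $(c-\delta)_+ \preceq b$ in $D$; letting $\delta \to 0$ yields $c \preceq b$ in $D$.

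To upgrade to an exact equation, Rørdam's characterization (\cite[Proposition 2.4]{R:JFA2}) applied inside $D$ provides, for every $\delta > 0$, an element $r_\delta \in D$ with $r_\delta^* b r_\delta = (c-\delta)_+$; setting $y_\delta := b^{1/2} r_\delta$ then gives $\|y_\delta\| \leq \|c\|^{1/2}$ and $y_\delta^* y_\delta = (c-\delta)_+$. Using the orthogonal isometry-like elements $v_i$ from Step~1 one can rearrange these into uniformly bounded approximate solutions $t_\delta \in D$ (e.g., via expressions of the form $\sum_i v_i x_{i,\delta}^*$ coming from the finite-sum approximation of $(c-\delta)_+$ in the ideal generated by $b$) satisfying $\|t_\delta^* b t_\delta - c\| \to 0$. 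Finally, Kirchberg's $\epsilon$-test (Lemma \ref{epstest}) with $X_n$ the closed ball of a sufficient radius in $B_n$ and testing functions encoding both membership in $D$ (commutation with $\pi_n(A)$ and the hereditary conditions defining $D$) and $\|t_n^* b_n t_n - c_n\|$, applied to the sequence $(t_\delta)_{\delta > 0}$, produces an exact $t \in D$ with $t^*bt = c$.

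The main obstacle is the interplay between the ambient unit $1_{B_\omega}$ of the $\mathcal{O}_\infty$-copy and the fact that elements of $D$ are "supported" on $\pi(1_A)$ inside a specific hereditary subalgebra of $C$. Conjugation by $e$ resolves this, at the cost of replacing the isometry identities $s_i^* s_j = \delta_{ij}\,1$ by $v_i^* v_j = \delta_{ij}\, e^4$ --- still sufficient for the proper-infinity argument because $e^4$ acts as a unit on $b$. A secondary technical point is controlling the norms of the approximants $t_\delta$ uniformly in $\delta$, so as to apply Kirchberg's $\epsilon$-test; this is handled by using the expressions $y_\delta = b^{1/2} r_\delta$ (which are uniformly bounded) together with the orthogonality of the $v_i$.
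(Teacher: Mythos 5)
Your proposal takes a genuinely different route from the paper's proof, which avoids Cuntz comparison entirely and instead applies Kirchberg's absorption technology (Lemma~\ref{lem:KUniquenessLem}) to a unitized injective ${}^*$-homomorphism $\psi_1^\sim:(C_0((0,1])\otimes(A/J))^\sim\to B_\omega$ built from $b$ and $\pi$, obtaining an exact isometry $s$ intertwining $\psi_1^\sim$ and $\psi_2^\sim$; setting $t:=bs$ then gives $t^*bt=c$ \emph{and} $t^*t=c$ exactly, and $tt^*\le b^2$, so $t\in D$ by heredity with no need to manipulate $D$ directly. Your Cuntz--semigroup approach, as written, has two genuine gaps.

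First, a fixable one in Step~1: the element $e$ produced by Lemma~\ref{lem:ActAsUnit} is only guaranteed to lie in $C=B_\omega\cap\pi(A)'\cap\{1_{B_\omega}-\pi(1_A)\}^\perp$, not in $C^*(T)$ or in $D$. The approximants $x$ in the proof of Lemma~\ref{lem:ActAsUnit} lie in $C^*(T)$, but the $\eps$-test output need not, and a positive contraction acting as a unit on $b$ and $c$ is not automatically in the hereditary subalgebra $D$ (consider $C=C[0,1]$, $D=C_0((0,1))$, $b$ supported on $[1/4,3/4]$). One can repair this by taking $v_i:=g_\eta(b)^{1/2}\iota(s_i)g_\eta(b)^{1/2}\in D$ with $g_\eta$ as in \eqref{eq:gepsDef}; then $v_i^*bv_j=\delta_{ij}(g_\eta^2\cdot\mathrm{id})(b)$ holds only approximately, but $(b-\eta)_+\preceq (g_\eta^2\cdot\mathrm{id})(b)$ still yields proper infinity of $b$ by taking $\eta\to0$.

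Second, and more seriously, Step~3 does not close. Kirchberg's $\eps$-test requires (a) a uniform norm bound on the approximants $t_\delta$ with $\|t_\delta^*bt_\delta-c\|\to 0$, and (b) testing functions expressible over representing sequences. Your proposed bound fails: the elements $y_\delta:=b^{1/2}r_\delta$ do satisfy $\|y_\delta\|\le\|c\|^{1/2}$, but they solve $y_\delta^*y_\delta=(c-\delta)_+$, not $y_\delta^*by_\delta=(c-\delta)_+$; converting a $y_\delta$ with $y_\delta^*y_\delta=(c-\delta)_+$ into a $t_\delta$ with $t_\delta^*bt_\delta=(c-\delta)_+$ via $t_\delta:=f_\delta(b)y_\delta$ (with $f_\delta(x)\approx x^{-1/2}$) costs a factor of order $\eta_\delta^{-1/2}\to\infty$. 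The alternative rearrangement $t_\delta=\sum_iv_ix_{i,\delta}^*$ yields $t_\delta^*t_\delta=\sum_ix_{i,\delta}e^4x_{i,\delta}^*$, which is controlled by $\sum_ix_{i,\delta}bx_{i,\delta}^*\approx(c-\delta)_+$ only if $e^4\lesssim b$, i.e.\ only if $b$ is bounded below — false in general. Finally, even granting a uniform bound, there is no $\eps$-test encoding of membership in an \emph{arbitrary} full hereditary subalgebra $D$: the test applies to algebras of the form $B_\omega\cap S_1'\cap S_2^\perp$, and the heredity conditions defining a general $D$ are not of this shape. The paper sidesteps all of this by achieving the exact equations and $t^*t=c,\ tt^*\le b^2$ in one stroke from the Kirchberg isometry, after which $t\in D$ is immediate.
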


\begin{proof}
Assume, without loss of generality, that $\|b\|=1$.
Set $J:=\{a\in A:\pi(a)C=0\}$, which is a left ideal and satisfies $J=J^*$ (using the fact that $C$ commutes with $\pi(A)$).
Thus $J\lhd A$ is an ideal.

Fix $c \in D_+$.
Define maps $\psi_1,\psi_2:C_0((0,1]) \otimes (A/J) \to B_\omega$ by
\begin{equation}
\label{eq:SimplePIpsiDef}
\psi_1(f \otimes (a+J)):=f(b)\pi(a)\text{ and }\\
\psi_2(f \otimes (a+J)) := f(1)c\pi(a)
\end{equation}
for $f\in C_0((0,1]),\ a+J\in A/J$.  Note that these maps are well defined, as if $a_1+J=a_2+J$, then $f(b)\pi(a_1)=f(b)\pi(a_2)$ for all $f\in C_0((0,1])$ and $c\pi(a_1)=c\pi(a_2)$.    The map $a+J\mapsto c\pi(a)=c^{1/2}\pi(a)c^{1/2}$ is c.p.c.~ and order zero, and hence $\psi_2$ is c.p.c.~ and order zero as it is the tensor product of this map and $f\mapsto f(1)$ (we actually only care that it is c.p.c.).  The map $\psi_1$ is a ${}^*$-homomorphism, since for $f_1,f_2 \in C_0((0,1]),\ a_1+J,a_2+J \in A/J$, we have
\begin{eqnarray}
\notag
\psi_1(f_1 \otimes (a_1\otimes J))\psi_1(f_2 \otimes (a_2\otimes J))
&\stackrel{\eqref{eq:SimplePIpsiDef}}=& f_1(b)\pi(a_1)f_2(b)\pi(a_2) \\
\notag
&=& f_1(b)f_2(b)\pi(a_1)\pi(a_2) \\
\notag
&\stackrel{\eqref{eq:Ord0Ident}}=& f_1(b)f_2(b)\pi(1_{A})\pi(a_1a_2) \\
\notag
&\stackrel{b \vartriangleleft \pi(1_{A})}=& f_1(b)f_2(b)\pi(a_1a_2) \\
&=& \psi_1((f_1f_2) \otimes (a_1a_2+J)).
\end{eqnarray}

If $\psi_1$ was not injective, then there would exist some nonzero elementary tensor $f\otimes (a+J)$ in $\ker\psi_1$ (by Kirchberg's slice lemma, see \cite[Lemma 4.1.9]{R:Book}, for example). Replacing with $(f\otimes (a+J))^*(f\otimes (a+J))$ if necessary, we may assume $f$ and $a$ are  positive. Then consider 
\begin{equation}
I:=\{x\in C:x\pi(a)=0\}
\end{equation}
so that $f(b)\in I$. As $\pi(A)$ commutes with $C$, $I$ is an ideal in $C$.  Since $\|b\|=1$, $b$ is totally full in $D$, and $D$ is a full hereditary subalgebra of $C$, $f(b)$ is full in $C$. Then $I=C$, and hence $a\in J$, giving a contradiction.  Thus $\psi_1$ is injective.

Denote by $\psi_1^\sim,\psi_2^\sim:(C_0((0,1])\otimes A)^\sim\rightarrow B_\omega$ the unital c.p.\ maps obtained by unitizing  $\psi_1$ and $\psi_2$. As $1_{B_\omega}\notin\psi_1(C_0((0,1])\otimes (A/J))$,  $\psi_1^\sim$ is a unital injective $^*$-homomorphism. 

As $A$ is nuclear, each $\psi_i^\sim$ lifts to a sequence of unital c.p.\ maps $(C_0((0,1]) \otimes A)^\sim \to B_n$ by the Choi-Effros lifting theorem.
We may therefore apply Lemma \ref{lem:KUniquenessLem} to $\rho:=\psi_1^\sim$ and $\sigma:=\psi_2^\sim$, yielding an isometry $s \in B_\omega$ such that $s^*\psi_1^\sim(\cdot)s = \psi_2^\sim(\cdot)$.
Plugging in the definitions of $\psi_1,\psi_2$, this says that
\begin{equation}
s^*f(b)\pi(a)s = f(1)c\pi(a), \quad f \in C_0((0,1]),\ a\in A.
\end{equation}

Define $t:=bs$, so that (as $b,c\vartriangleleft \pi(1_{A})$), 
\begin{equation}
t^*t=s^*b^2s=s^*b^2\pi(1_{A})s=c\pi(1_{A})=c\vartriangleleft \pi(1_{A}).
\end{equation}
Likewise, $t^*bt=c$ and
\begin{equation}
t^*\pi(a)t=s^*b^2\pi(a)s=c\pi(a)=t^*t\pi(a),\quad a\in A.
\end{equation}
Therefore $t\in B_\omega\cap \pi(A)'\cap \{1_{B_\omega}-\pi(1_{A})\}^\perp$, by Lemma \ref{lem:CommLem}.   Since $t^*t=c\in D$ and $tt^*=bss^*b\leq b^2\in D$, and $D$ is hereditary in $C$, we have $t\in D$ as required.
\end{proof}

\begin{remark}
When $A$ is additionally simple, similar methods show that the algebra $C=B_\omega\cap \pi(A)'\cap \{1_{B_\omega}-\pi(1_{A})\}^\perp$ from Lemma \ref{lem:SimplePI} is simple and purely infinite (analogous to Kirchberg's result that $B_\omega\cap B'$ is simple and purely infinite when $B$ is a Kirchberg algebra, see \cite[Proposition 3.4]{KP:Crelle}). Indeed, given nonzero positive contractions $b$ and $c$ in $C$, write $X$ for the spectrum of $b$ with $0$ removed, and define maps $\psi_1,\psi_2:C_0(X)\otimes A\rightarrow B_\omega$ by $\psi_1(f\otimes a)=f(b)\pi(a)$ and $\psi_2(f\otimes a)=f(1)c\pi(a)$. Simplicity of $A$ can be used to show that $\psi_1^\sim$ is a unital injective $^*$-homomorphism, and then we can follow the rest of the above proof to obtain some $t\in C$ with $t^*bt=c$.
\end{remark}

\begin{lemma}\label{PI:Classtotallyfull}
Let $A$ be a separable, unital, nuclear $\mathrm C^*$-algebra and let $(B_n)_{n=1}^\infty$ be a sequence of unital Kirchberg algebras. Write $B_\omega:=\prod_\omega B_n$ and let $\pi:A\rightarrow B_\omega$ be a c.p.c.\ order zero map.  Define $C$ as in (\ref{lem:SimplePI.DefC}).  Suppose that $a,b\in C_+$ are totally full elements of norm one.  Then $a$ and $b$ are unitarily equivalent in the unitization of $C$.
\end{lemma}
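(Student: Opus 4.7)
My plan is to apply Lemma~\ref{thm:TotallyFullClass}, which classifies totally full positive contractions in the appropriate relative commutant up to unitary equivalence, given two verifiable hypotheses. Each Kirchberg algebra is $\mathcal{Z}$-stable, so the ambient hypotheses on $(B_n)_{n=1}^\infty$ required by Lemma~\ref{thm:TotallyFullClass} are satisfied. I need to verify (i) the technical orthogonal-zero-divisor hypothesis on every full hereditary subalgebra $D$ of $C$; and (ii) that $f(a)$ and $f(b)$ are Cuntz equivalent in $C$ for every $f \in C_0((0,1])_+$. In both cases, Lemma~\ref{lem:SimplePI} plays the role that strict comparison played in deducing Theorem~\ref{cor:TotallyFullClassFinite} from Lemma~\ref{thm:TotallyFullClass}.

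For (ii), fix a nonzero $f \in C_0((0,1])_+$. Total fullness of $a$ and $b$ ensures that $f(a)$ and $f(b)$ are totally full positive contractions in $C_+$. Applying Lemma~\ref{lem:SimplePI} with $D = C$, the totally full element $f(a)$, and target $f(b)$ produces $t \in C$ with $t^* f(a) t = f(b)$, giving $f(b) \preceq f(a)$. Exchanging the roles of $a$ and $b$ yields $f(a) \preceq f(b)$, so $f(a)$ and $f(b)$ are Cuntz equivalent. The case $f = 0$ is trivial.

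For (i), let $D \subseteq C$ be a full hereditary subalgebra, $x \in D$, and $e_l, e_r \in D_+$ totally full with $e_l x = 0 = x e_r$. I plan to apply Lemma~\ref{lem:SimplePI} inside $D$ with the totally full element $e_r$ and target $e_l$ to obtain $t \in D$ satisfying $t^* e_r t = e_l$. Setting $s := e_r^{1/2} t \in D$, I compute $s^* s = t^* e_r t = e_l$, which is totally full in $D$, so $s$ is full in $D$; moreover $s s^* = e_r^{1/2} t t^* e_r^{1/2}$. For orthogonality to $x$: from $s^* s \cdot x = e_l x = 0$ I get $(sx)^*(sx) = x^* e_l x = 0$, hence $sx = 0$. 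Symmetrically, $x e_r = 0$ gives $(x e_r^{1/2})(x e_r^{1/2})^* = x e_r x^* = 0$, so $x e_r^{1/2} = 0$, whence $x s s^* = 0$ and therefore $xs = 0$.

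Having verified both hypotheses, Lemma~\ref{thm:TotallyFullClass} produces a unitary in the unitization of $C$ conjugating $a$ to $b$. No substantial obstacle arises: once one recognises that Lemma~\ref{lem:SimplePI} replaces strict comparison throughout the argument of Theorem~\ref{cor:TotallyFullClassFinite}, the candidate $s := e_r^{1/2} t$ with $t^* e_r t = e_l$ lies in the correct hereditary subalgebras on both sides to witness the two-sided orthogonality with $x$, and the remaining verifications are direct computations.
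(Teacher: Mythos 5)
Your proposal is correct and follows essentially the same route as the paper: both verify the two hypotheses of Lemma~\ref{thm:TotallyFullClass} by invoking Lemma~\ref{lem:SimplePI} in place of strict comparison. The only cosmetic difference is in part (i), where the paper applies Lemma~\ref{lem:SimplePI} to obtain $t\in D$ with $t^*e_lt=e_r$ and sets $s:=t^*e_l$, while you swap the roles (taking $t^*e_rt=e_l$ and $s:=e_r^{1/2}t$); both constructions yield a full two-sided zero divisor for $x$, and the paper additionally notes in passing that $C$ is full in $B_\omega$ (which your argument implicitly uses but does not flag).
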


\begin{proof}
We verify the technical condition of Lemma \ref{thm:TotallyFullClass}. (The algebra $C$ is nonzero whenever $\pi$ is nonzero, whence $C$ is full in the simple purely infinite $\mathrm{C}^{*}$-algebra $B_{\omega}$.) So let $D$ be a full hereditary subalgebra of $C$ and suppose $x\in D$ has totally full positive contractions $e_l,e_r\in D_+$ such that $e_lx=xe_r=0$.   By Lemma \ref{lem:SimplePI} there exists $t\in D$ with $t^*e_lt=e_r$.  Then set $s:=t^*e_l$ so that $sx=0$.
Since $t^*e_l^2t\leq t^*e_lt=e_r$, we also have $xs=0$. Since $st=e_r$ is full in $D$, so too is $s$.

Now given totally full norm one elements $a,b\in C_+$ and a nonzero $f\in C_0((0,1])_+$, it follows that $f(a)$ and $f(b)$ are also totally full in $C$. By Lemma \ref{lem:SimplePI}, $f(a)$ is Cuntz equivalent to $f(b)$ in $C$ for all $f\in C_0((0,1])_+$. Thus $a$ and $b$ are unitarily equivalent in the unitization of $C$ by Lemma \ref{thm:TotallyFullClass}.
\end{proof}

In order to apply the preceding lemma to prove Theorem \ref{thm:TotFullCpcKirClass}, we need one more step to ensure that $\phi(1_A)$ is totally full in the appropriate relative commutant sequence algebra.

\begin{lemma}\label{lem:PI.TotallyFull}
Let $A$ be a separable, unital, nuclear $\mathrm C^*$-algebra and let $(B_n)_{n=1}^\infty$ be a sequence of unital Kirchberg algebras. Write $B_\omega:=\prod_\omega B_n$. Let $\phi:A\rightarrow B_\omega$ be a c.p.c.\ order zero map such that $f(\phi)$ is injective for every nonzero $f\in C_0((0,1])_+$. Let $\hat{\phi}:A\rightarrow B_\omega$ be a supporting order zero map for $\phi$.  Then $\phi(1_{A})$ is totally full in $C:=B_\omega\cap \hat{\phi}(A)'\cap\{1_{B_\omega}-\hat{\phi}(1_{A})\}^\perp$.
\end{lemma}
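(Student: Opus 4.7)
The plan is to adapt the proof of Lemma \ref{lem:SimplePI} to the present setting, taking $\pi:=\hat\phi$ there. The main obstacle is that the totally fullness hypothesis on the element $b$ in Lemma \ref{lem:SimplePI} is precisely what we are trying to establish here, so the injectivity step in that proof requires a new input. We use the hypothesis that $f(\phi)$ is injective for every nonzero $f$ to recover this by a different route.

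Fix a nonzero $g\in C_0((0,1])_+$; after rescaling we may assume $\|g\|=1$, so by the intermediate value theorem $g((0,1])=[0,1]$. Set $b:=g(\phi(1_A))$. The supporting identities $\phi(a)=\hat\phi(a)\phi(1_A)=\phi(1_A)\hat\phi(a)$ and $\phi(1_A)=\hat\phi(1_A)\phi(1_A)$ imply $\phi(1_A)\in C$, and then $b\in C$ by continuous functional calculus. Fix a positive contraction $c\in C$; it suffices to construct $t\in C$ with $t^*bt=c$, which shows $c$ lies in the closed two-sided ideal of $C$ generated by $b$ and hence (as $c$ and $g$ are arbitrary) that $\phi(1_A)$ is totally full in $C$.

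As in Lemma \ref{lem:SimplePI}, set $J:=\{a\in A:\hat\phi(a)C=0\}$, a closed two-sided ideal of $A$, and define
\[
\psi_1,\psi_2:C_0((0,1])\otimes(A/J)\to B_\omega,\quad \psi_1(f\otimes(a+J)):=f(b)\hat\phi(a),\quad \psi_2(f\otimes(a+J)):=f(1)c\hat\phi(a).
\]
The calculation from Lemma \ref{lem:SimplePI} shows $\psi_1$ is a well-defined ${}^*$-homomorphism (using $b\vartriangleleft\hat\phi(1_A)$, $b\in\hat\phi(A)'$, and the order zero identity for $\hat\phi$), and $\psi_2$ is completely positive. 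For injectivity of $\psi_1$, let $\pi:C_0((0,1])\otimes A\to B_\omega$ be the ${}^*$-homomorphism associated to $\phi$ (so $\phi(a)=\pi(\mathrm{id}\otimes a)$); the hypothesis that $f(\phi)$ is injective for every nonzero $f\in C_0((0,1])_+$ together with Kirchberg's slice lemma gives injectivity of $\pi$. A short induction using $\hat\phi(a)\phi(1_A)=\phi(a)$ yields $\hat\phi(a)\pi(\mathrm{id}^n\otimes 1_A)=\pi(\mathrm{id}^n\otimes a)$, and polynomial approximation then gives $\hat\phi(a)\pi(f\otimes 1_A)=\pi(f\otimes a)$ for all $f\in C_0((0,1])$, so
\[
\psi_1(f\otimes(a+J))=(f\circ g)(\phi(1_A))\hat\phi(a)=\pi((f\circ g)\otimes a).
\]
For a nonzero elementary tensor $f\otimes(a+J)$ one has $f\neq 0$ and $a\notin J$ (hence $a\neq 0$); surjectivity of $g$ onto $[0,1]$ forces $f\circ g\neq 0$, and injectivity of $\pi$ then gives $\psi_1(f\otimes(a+J))\neq 0$. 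Another application of the slice lemma to $\ker\psi_1$ yields that $\psi_1$ is injective.

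The remainder of the argument mirrors Lemma \ref{lem:SimplePI}. Extend to u.c.p.\ maps $\psi_1^\sim,\psi_2^\sim$ on the unitization $(C_0((0,1])\otimes(A/J))^\sim$, with $\psi_1^\sim$ a unital injective ${}^*$-homomorphism; both lift to sequences of u.c.p.\ maps into $B_n$ by nuclearity via Choi-Effros, so Lemma \ref{lem:KUniquenessLem} provides an isometry $s\in B_\omega$ with $s^*\psi_1^\sim(x)s=\psi_2^\sim(x)$ for all $x$. Setting $t:=bs$ and evaluating on $\mathrm{id}^k\otimes(a+J)$ gives $t^*t=s^*b^2s=c$, $t^*bt=s^*b^3s=c$, and $t^*\hat\phi(a)t=s^*b^2\hat\phi(a)s=c\hat\phi(a)=t^*t\hat\phi(a)$; combined with $t^*t=c\vartriangleleft\hat\phi(1_A)$, Lemma \ref{lem:CommLem} (with $\pi:=\hat\phi$) places $t\in C$, completing the proof.
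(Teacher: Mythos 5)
Your proof is correct and takes essentially the same route as the paper's. The one cosmetic difference is that you pass to the quotient $A/J$ for the ideal $J=\{a\in A:\hat\phi(a)C=0\}$, mimicking Lemma~\ref{lem:SimplePI}, whereas the paper defines $\psi_1,\psi_2$ directly on $C_0((0,1])\otimes A$; the quotient by $J$ was only needed in Lemma~\ref{lem:SimplePI} to secure injectivity of $\psi_1$ via fullness of $b$, and here the injectivity of $\psi_1$ (equivalently of $\pi$) comes for free from the hypothesis that $f(\phi)$ is injective for every nonzero $f$, as you correctly observe, making the quotient superfluous. Two immaterial slips: $g((0,1])$ need only contain $(0,1]$, not equal $[0,1]$ (e.g.\ $g(t)=t$ never hits $0$), but this is all the argument needs; and the identity $\psi_1(f\otimes a)=(f\circ g)(\phi)(a)$ is exactly the supporting-map functional calculus formula recorded after Lemma~\ref{lem:SupportingMap}, so your induction re-proves a stated fact --- harmless, but you could simply cite it as the paper does.
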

\begin{proof}
Clearly $\|\phi(1_A)\|=1$.
Fix $g\in C_0((0,1])_{+}$ with $\|g\|=1$ and a positive contraction $c\in C_+$.  We will show that there exists $t\in C$ with $t^*g(\phi(1_{A}))t=c$.  Define maps $\psi_1,\psi_2:C_0((0,1])\otimes A\rightarrow B_\omega$ by 
\begin{align}
\psi_1(f\otimes a)&=f(g(\phi(1_A)))\hat{\phi}(a)=(f\circ g)(\phi(1_{A}))\hat{\phi}(a)=(f\circ g)(\phi)(a),\nonumber\\
\psi_2(f\otimes a)&=f(1)c\hat{\phi}(a).
\end{align}
Then, as in Lemma \ref{lem:SimplePI}, $\psi_1$ is a $^*$-homomorphism: for $f_1,f_2\in C_0((0,1])$ and $a_1,a_2\in A$, 
\begin{align}
\nonumber
\psi_1(f_1\otimes a_1)\psi_1(f_2\otimes a_2)&=f_1(g(\phi(1_A)))\hat{\phi}(a_1)f_2(g(\phi(1_A)))\hat{\phi}(a_2)\\
\nonumber
&=(f_1f_2)(g(\phi(1_A)))\hat{\phi}(1_A)\hat{\phi}(a_1a_2)\\
&=\psi_1(f_1f_2\otimes a_1a_2),
\end{align}
as $(f_1f_2)(g(\phi(1_A)))\lhd \hat{\phi}(1_A)$.  Further, $\psi_1$ is injective, as otherwise (using Kirchberg's slice lemma) there would be a positive nonzero elementary tensor $f\otimes a$ in $\ker\psi_1$. That is $(f\circ g)(\phi)(a)=0$, so that $f\circ g=0$, by the injectivity hypothesis on $\phi$.  Since $g((0,1])=(0,1]$, this implies $f=0$, a contradiction. Also, as in  Lemma \ref{lem:SimplePI}, $\psi_2$ is c.p.c.\ as it is the tensor product of two commuting c.p.c.\ order zero maps. Unitizing, we obtain maps $\psi_i^\sim:(C_0((0,1])\otimes A)^\sim\rightarrow B_\omega$ with $\psi_1^\sim$ a unital injective $^*$-homomorphism.  Now we can follow the last two paragraphs of the proof of Lemma \ref{lem:SimplePI} (with $\pi$ replaced by $\hat{\phi}$) to obtain $t\in C$ with $t^*g(\phi(1_A))t=c$.  Thus $g(\phi(1_A))$ is full in $C$.  \end{proof}

\begin{proof}[Proof of Theorem \ref{thm:TotFullCpcKirClass}]
By Lemma \ref{lem:SupportingMap}, let $\hat\phi_i$ be a supporting c.p.c.\ order zero map for $\phi_i$, for $i=1,2$.
Define $\pi:A \to M_2(B_\omega)$ by
\begin{equation}
\pi(a) := \begin{pmatrix} \hat\phi_1(a) & 0 \\ 0 & \hat\phi_2(a) \end{pmatrix}, \quad a\in A.
\end{equation}
Note that $\pi$ is a supporting order zero map (satisfying \eqref{eq:Supporting}) for both
\begin{equation}
\begin{pmatrix}\phi_1(\cdot)&0\\0&0\end{pmatrix}\text{ and }\begin{pmatrix}0&0\\0&\phi_2(\cdot)\end{pmatrix}. 
\end{equation}
Set \begin{equation}
h_1:=\begin{pmatrix}\phi_1(1_A)&0\\0&0\end{pmatrix},\ h_2:=\begin{pmatrix}0&0\\0&\phi_2(1_A)\end{pmatrix}\in C\subset M_2(B_\omega).
\end{equation}
These elements clearly have norm one and Lemma \ref{lem:PI.TotallyFull} shows that $h_1$ and $h_2$ are totally full in $C:=M_2(B_\omega)\cap \pi(A)'\cap \{1_{M_2(B_\omega)}-\pi(1_A)\}^\perp$ (where we think of $M_{2}(B_{\omega})$ as $\prod_{\omega} M_{2}(B_{n})$).
Thus $h_1$ and $h_2$ are unitarily equivalent in $C^\sim$ by Lemma \ref{PI:Classtotallyfull}.  Since the condition on $\phi_1$ and $\phi_2$ also ensures that $\phi_1(1_A)$ and $\phi_2(1_A)$ are totally full in $B_\omega$, the $2\times 2$ matrix trick of Lemma \ref{lem:CombinedUnitaryEquiv} shows that $\phi_1$ and $\phi_2$ are unitarily equivalent.
\end{proof}

The final ingredient in computing the nuclear dimension of Kirchberg algebras are maps, analogous to those in Lemma \ref{lem:GoodTraceMaps}, which factor through finite dimensional $\mathrm C^*$-algebras.  We obtain these through Voiculescu's quasidiagonality of cones (\cite{V:Duke}).
\begin{lemma}
\label{lem:GoodKirchbergMaps}
Let $B$ be a unital Kirchberg algebra.
Then there exists a sequence $(\phi_n)_{n=1}^\infty$ of c.p.c.\ maps $\phi_n:B\rightarrow B$, which factorize through matrix algebras $F_n$ as
\begin{equation}\label{lem:GoodKirchbergMaps.1}
\xymatrix{B\ar[dr]_{\theta_n}\ar[rr]^{\phi_n}&&B\\&F_n \ar[ur]_{\eta_n}}
\end{equation}
with $\theta_n$ c.p.c.\ and $\eta_n$ a $^*$-homomorphism, such that the induced map $\Phi:=(\phi_n)_{n=1}^\infty:B\rightarrow B_\omega$ is a c.p.c.\ order zero map for which $(\Phi-t)_+$ is injective for every $t\in[0,1)$.
\end{lemma}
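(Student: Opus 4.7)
The main tool is Voiculescu's theorem that the cone $CB:=C_0((0,1])\otimes B$ is quasidiagonal for any separable $\mathrm{C}^*$-algebra $B$. Combined with the correspondence between c.p.c.\ order zero maps $B\to B_\omega$ and $^*$-homomorphisms $CB\to B_\omega$ (Proposition \ref{prop.Cone}), this will deliver $\Phi$ after we compose with unital embeddings of matrix algebras into $B$ (which exist since $B$ is properly infinite).

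\textbf{Construction.} Using quasidiagonality of $CB$, choose c.p.c.\ maps $\rho_n:CB\to M_{k_n}$ that are asymptotically multiplicative and asymptotically isometric on $CB$ (using a dense sequence and a diagonal argument to make this sequential). Since $B$ is a unital Kirchberg algebra, hence properly infinite, there are unital $^*$-homomorphisms $\eta_n:M_{k_n}\to B$. Set
\begin{equation}
\theta_n:B\to M_{k_n},\quad \theta_n(b):=\rho_n(\mathrm{id}_{(0,1]}\otimes b),
\end{equation}
and define $\phi_n:=\eta_n\circ\theta_n:B\to B$. The map $\theta_n$ is c.p.c.\ because $\rho_n$ is c.p.c.\ and $b\mapsto \mathrm{id}_{(0,1]}\otimes b$ is a completely positive isometry, while $\eta_n$ is a $^*$-homomorphism. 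This gives the factorisation (\ref{lem:GoodKirchbergMaps.1}).

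\textbf{Order zero property and injectivity.} Because each $\eta_n$ is a $^*$-homomorphism and $\rho_n$ is asymptotically multiplicative, the composition $\eta_n\circ\rho_n:CB\to B$ is also asymptotically multiplicative, so the induced map
\begin{equation}
\tilde{\pi}:CB\to B_\omega,\qquad \tilde{\pi}(f\otimes b):=(\eta_n\circ\rho_n(f\otimes b))_{n=1}^\infty
\end{equation}
is a $^*$-homomorphism. Moreover, since each $\eta_n$ is an injective (hence isometric) $^*$-homomorphism on the simple algebra $M_{k_n}$ and each $\rho_n$ is asymptotically isometric, $\tilde{\pi}$ is isometric, hence injective. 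By Proposition \ref{prop.Cone}, $\Phi(b)=\tilde{\pi}(\mathrm{id}_{(0,1]}\otimes b)$ is c.p.c.\ order zero, and the order zero functional calculus identifies
\begin{equation}
(\Phi-t)_+(b)=\tilde{\pi}\bigl((\mathrm{id}_{(0,1]}-t)_+\otimes b\bigr),\quad b\in B,\ t\in[0,1).
\end{equation}
For $t<1$ the function $(\mathrm{id}_{(0,1]}-t)_+$ is nonzero in $C_0((0,1])$, so $(\mathrm{id}_{(0,1]}-t)_+\otimes b$ is nonzero whenever $b\neq 0$, and injectivity of $\tilde{\pi}$ yields $(\Phi-t)_+(b)\neq 0$, as required.

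\textbf{Main obstacle.} There is no serious technical difficulty here: Voiculescu's theorem and the cone picture of order zero maps do all the work. The one conceptual point worth highlighting is that one should not try to arrange injectivity of $\Phi$ directly; rather, one works throughout with the associated $^*$-homomorphism on the \emph{entire} cone $CB$, because injectivity of $(\Phi-t)_+$ for every $t\in[0,1)$ is equivalent to injectivity of $\tilde{\pi}$, and it is the latter that is immediate from asymptotic isometry in Voiculescu's theorem.
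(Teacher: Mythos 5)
Your proof is correct and follows essentially the same route as the paper: both invoke Voiculescu's quasidiagonality of cones to get c.p.c.\ maps $\theta_n:B\to F_n$ lifting an isometric order zero map, use that $B$ is properly infinite to get (unital, hence injective) $^*$-homomorphisms $\eta_n:F_n\to B$, and then deduce injectivity of $(\Phi-t)_+$ from the isometry property. The paper argues the last step slightly differently, observing directly that $\|(\Phi-t)_+(1_B)\|=1-t>0$ and then citing simplicity of $B$ together with the structure theory of order zero maps, whereas you carry the injectivity of the cone $^*$-homomorphism $\tilde\pi$ through explicitly; the two routes rest on the same ideal-theoretic fact about $C_0((0,1])\otimes B$.

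One small correction to your closing remark: injectivity of $(\Phi-t)_+$ for every $t\in[0,1)$ is \emph{not} equivalent to injectivity of $\tilde\pi$. The kernel of $\tilde\pi$ is of the form $C_0(U)\otimes B$ for some open $U\subseteq(0,1]$ (as $B$ is simple), and $(\Phi-t)_+$ is injective precisely when $(t,1]\not\subseteq U$. Hence injectivity of $(\Phi-t)_+$ for all $t<1$ is equivalent only to $1\notin U$, which is strictly weaker than $U=\emptyset$. You only use the implication from injectivity of $\tilde\pi$, which is the correct direction, so this does not affect the argument, but the claimed equivalence is false.
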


\begin{proof}
The cone over $B$ is quasidiagonal (\cite{V:Duke}), so there exist matrix algebras $F_n$ and an isometric c.p.c.\ order zero map $B \to \prod_\omega F_n$ with a c.p.c.\ lift $(\theta_n:B \to F_n)_{n=1}^\infty$.
Further, since $B$ contains a copy of $\mathcal O_\infty$, we may find an injective $^*$-homomorphism $\eta_n:F_n \to B$.
Now, simply define $\phi_n:=\eta_n\circ\theta_n$.

For $t\in[0,1)$, since $\Phi$ is isometric, $(\Phi-t)_+(1_{B})=(\Phi(1_{B})-t)_+$ has norm $1-t$.
Since $B$ is simple and $(\Phi-t)_+$ is order zero, it follows using the structure of order zero maps that $(\Phi-t)_+$ is injective.
\end{proof}

\begin{lemma}
\label{lem:TotFullCpc}
Let $A,B$ be $\mathrm C^*$-algebras, let $\phi:A \to B$ be a c.p.c.\ order zero map such that $(\phi-t)_+$ is injective for all $t\in[0,1)$, and define $\psi:A \to C_0((0,1]) \otimes B$ by
\[ \psi(a) = \mathrm{id}_{(0,1]} \otimes \phi(a). \]
Then $\psi$ is a c.p.c.\ order zero map with the property that, for any nonzero $f \in C_0((0,1])_+$, $f(\psi)$ is injective.
\end{lemma}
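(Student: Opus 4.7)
The first assertion, that $\psi$ is c.p.c.\ order zero, is immediate: $\psi$ is the tensor product of the identity on $C_0((0,1])$ with $\phi$, both c.p.c.\ order zero with commuting images in $C_0((0,1])\otimes B$.

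For the injectivity statement, my plan is to apply the structure theorem for c.p.c.\ order zero maps (see \cite{WZ:MJM}) to write $\phi(a) = h\pi(a)$, where $h \in M(C^*(\phi(A)))$ is a positive contraction and $\pi:A\to M(C^*(\phi(A)))\cap\{h\}'$ is a $^*$-homomorphism. Identifying $C_0((0,1])\otimes B$ with $C_0((0,1],B)$, one computes $\psi(a)(s) = sh\pi(a)$ and, more generally via order zero functional calculus, $f(\psi)(a)(s) = f(sh)\pi(a)$ for $f \in C_0((0,1])_+$. The hypothesis (via the identity $(\phi - t)_+(a) = (h-t)_+\pi(a)$) translates into: $\pi$ is injective, and for each nonzero $a\in A$, if $p\in B^{**}$ denotes the support projection of $\pi(a^*a)$ (which commutes with $h$), then the spectrum $\sigma_a := \sigma(hp)\subseteq[0,1]$ computed in the cut-down $pB^{**}p$ satisfies $\sup\sigma_a = 1$.

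Given nonzero $f \in C_0((0,1])_+$ and nonzero $a \in A$, $f(\psi)(a) = 0$ would entail $f(sh)\pi(a) = 0$ for every $s\in(0,1]$, which (since $f(sh)$ commutes with $\pi(a^*a)$) is equivalent to $f(sh)\,p = 0$ for all $s$, and hence to $f$ vanishing on $\bigcup_{s\in(0,1]} s\sigma_a$. However $\sup\sigma_a = 1$ forces this union to cover all of $(0,1]$: for any $u_0\in(0,1]$, closedness of $\sigma_a$ produces $\lambda\in\sigma_a\cap[u_0,1]$ (taking $\lambda = 1$ when $u_0 = 1$), and then $s:=u_0/\lambda\in(0,1]$ gives $s\lambda = u_0$. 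Hence $f\equiv 0$, a contradiction. The main technical subtlety is the careful bookkeeping needed to reduce to the cut-down $pB^{**}p$ when translating the hypothesis on $(\phi-t)_+$ into the spectral condition $\sup\sigma_a = 1$; once this is in hand, the remaining argument is a clean spectral calculation.
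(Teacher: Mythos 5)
Your argument is correct, and it takes a genuinely different route from the paper's. The paper works entirely at the $\mathrm{C}^*$-level: using $f(\psi)(a)=(\mathrm{id}_{C_0((0,1])}\otimes\pi_\phi)(g\otimes a)$ with $g(s,t)=f(st)$, it finds a rectangle $U\times(t,1]$ with $t<1$ inside the open support of $g$, deduces that $g_0\otimes(\phi-t)_+(a)$ lies in the ideal generated by $f(\psi)(a)$ for nonzero $g_0\in C_0(U)_+$, and then invokes injectivity of $(\phi-t)_+$. You instead pass to the bidual via the structure theorem $\phi=h\pi(\cdot)$, convert the hypothesis into the spectral statement that $1\in\sigma_{p_aB^{**}p_a}(hp_a)$ for each support projection $p_a$ of $\pi(a^*a)$ with $a\neq 0$, and observe that this forces $\bigcup_{s\in(0,1]}s\sigma_a\supseteq(0,1]$. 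The bookkeeping you flag does go through: $h$ commutes with $p_a$ because it commutes with $\pi(A)$ and hence with the von Neumann algebra generated by $\pi(a^*a)$, and the chain $f(sh)\pi(a)=0\Leftrightarrow f(sh)^2\pi(a^*a)=0\Leftrightarrow f(sh)^2p_a=0$ is valid since $f(sh)$ and $\pi(a^*a)$ are commuting positive elements. The paper's approach avoids $B^{**}$ entirely and matches the ideal-theoretic style used throughout, whereas yours gives a more transparent spectral interpretation of why ``$(\phi-t)_+$ injective for all $t<1$'' is exactly the right hypothesis. One small streamlining: since $\sigma_a$ is closed and $\sup\sigma_a=1$, one has $1\in\sigma_a$ outright, so in the covering step you may simply take $\lambda=1$ and $s=u_0$.
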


\begin{proof}
It is clear that $\psi$ is c.p.c.\ order zero.
Let $\pi_\phi:C_0((0,1]) \otimes A \to B$ and $\pi_\psi:C_0(0,1]\otimes A\rightarrow C_0(0,1]\otimes B$ be the ${}^*$-homomorphisms corresponding to $\phi$ and $\psi$ as in Proposition \ref{prop:Ord0Structure}; these satisfy $\pi_\phi(\mathrm{id}_{(0,1]}\otimes a)=\phi(a)$ and $\pi_\psi(\mathrm{id}_{(0,1]}\otimes a)=\psi(a)=\mathrm{id}_{(0,1]}\otimes\phi(a)$  for all $a\in A$. Then for any $n\in\N$ and $a\in A$,
\begin{equation}
\pi_\psi(\mathrm{id}_{(0,1]}^n\otimes a)=\mathrm{id}_{(0,1]}^n\otimes \phi^n(a)=(\mathrm{id}_{C_0((0,1])}\otimes\pi_\phi)(\mathrm{id}_{(0,1]}^n\otimes\mathrm{id}_{(0,1]}^n\otimes a).
\end{equation}
Therefore, by the Stone-Weierstrass theorem, for $f\in C_0((0,1])_+$, 
\begin{equation}\label{eq.9.16}
f(\psi)(a)=(\mathrm{id}_{C_0((0,1])}\otimes \pi_\phi)(f(\mathrm{id}_{(0,1]}\otimes\mathrm{id}_{(0,1]})\otimes a),\quad a\in A.
\end{equation}

For nonzero $f\in C_0((0,1])_+$, write $g:=f(\mathrm{id}_{(0,1]}\otimes\mathrm{id}_{(0,1]})\in C_0((0,1])\otimes C_0((0,1])$.  
Thus (\ref{eq.9.16}) becomes
\begin{equation} f(\psi)(a) = (\mathrm{id}_{C_0((0,1])} \otimes \pi_\phi)(g \otimes a),\quad a\in A, \end{equation}
(noting that the tensor factors are broken down differently in the two terms on the right hand side).

Under the canonical identification $C_0((0,1]^2) \cong C_0((0,1]) \otimes C_0((0,1])$, $g$ is given by $g(s,t)=f(st)$.
Since $f$ is nonzero, $g^{-1}((0,\infty))$ contains a set of the form $U\times (t,1]$ for some $t\in(0,1]$ and some nonempty open set $U \subset (0,1]$.
Hence, if $g_0 \in C_0(U)_+$, then $g_0\otimes (\mathrm{id}_{(0,1]}-t)_+$ is in the ideal of $C_0((0,1])\otimes C_0((0,1])$ generated by $g$.  In particular, for $a\in A_+$,
\begin{equation}\label{e.9.18}
(\mathrm{id}_{C_0((0,1])} \otimes \pi_\phi)(g_0 \otimes (\mathrm{id}_{(0,1]}-t)_+ \otimes a) = g_0 \otimes (\phi-t)_+(a)\end{equation}
lies in the ideal generated by $f(\psi)(a)$. Taking $g_0$ and $a$ to be nonzero, the operator in (\ref{e.9.18}) is nonzero since $(\phi-t)_+$ is injective; therefore $f(\psi)(a)$ is nonzero.
\end{proof}

\begin{cor}\label{DimKirchberg}
Let $B$ be a Kirchberg algebra.
Then $\mathrm{dim}_\mathrm{nuc}(B) = 1$.
\end{cor}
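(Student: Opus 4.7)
The lower bound $\dimnuc(B) \geq 1$ is immediate: $B$ contains an infinite projection, so is not AF, and $\dimnuc = 0$ characterises AF algebras by \cite[Remark 2.3(iii)]{WZ:Adv}. For the upper bound, my plan is to mirror the proof of Theorem \ref{thm:FiniteDim}, with $\mathcal O_\infty$ in place of $\mathcal Z$, the factorization map from Lemma \ref{lem:GoodKirchbergMaps} in place of that from Lemma \ref{lem:GoodTraceMaps}, and Theorem \ref{thm:TotFullCpcKirClass} in the role of the uniqueness theorem (Theorem \ref{KeyLemmaP}). Since $B$ absorbs $\mathcal O_\infty$ (Kirchberg's absorption theorem) and $\mathcal O_\infty$ is strongly self-absorbing, \cite[Proposition 2.6]{TW:APDE} reduces the problem to showing $\dimnuc(\iota) \leq 1$ for the first-factor embedding $\iota : B \to B \otimes \mathcal O_\infty$. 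Lemma \ref{lem:GoodKirchbergMaps} will supply a c.p.c.\ order zero map $\Phi : B \to B_\omega$ represented by a sequence $\phi_n = \eta_n \circ \theta_n$ factoring through finite dimensional algebras $F_n$, with $\eta_n$ a $^*$-homomorphism; the crucial property is that $(\Phi - t)_+$ is injective for every $t \in [0,1)$.

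I would then fix a positive contraction $h \in \mathcal O_\infty$ of spectrum $[0,1]$ (so that $1_{\mathcal O_\infty} - h$ also has full spectrum) and consider the two c.p.c.\ order zero maps $\phi_1,\phi_2 : B \to B_\omega \otimes \mathcal O_\infty \subseteq (B \otimes \mathcal O_\infty)_\omega$ given by $\phi_1(x) := \Phi(x) \otimes h$ and $\phi_2(x) := x \otimes h$. To apply Theorem \ref{thm:TotFullCpcKirClass} to this pair, I must check that $f(\phi_i)$ is injective for every nonzero $f \in C_0((0,1])_+$ and $i=1,2$. For $\phi_2$ the order zero functional calculus gives $f(\phi_2)(x) = x \otimes f(h)$, which is injective since $f(h) \neq 0$. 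For $\phi_1$, I would first apply Lemma \ref{lem:TotFullCpc} to $\Phi$, producing $\psi : B \to C_0((0,1]) \otimes B_\omega$ with $f(\psi)$ injective for every nonzero $f$, and then compose $\psi$ with the injective $^*$-homomorphism $C_0((0,1]) \otimes B_\omega \to B_\omega \otimes \mathcal O_\infty$, $g \otimes b \mapsto b \otimes g(h)$, to recover $\phi_1$; since $f(\beta \circ \psi) = \beta \circ f(\psi)$ for any injective $^*$-homomorphism $\beta$, the injectivity transfers.

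Theorem \ref{thm:TotFullCpcKirClass} will then yield a unitary $u_h \in (B \otimes \mathcal O_\infty)_\omega$ with $u_h(\Phi(x) \otimes h)u_h^* = x \otimes h$ for all $x \in B$, and repeating the argument with $1_{\mathcal O_\infty}-h$ in place of $h$ will give a unitary $u_{1-h}$ with $u_{1-h}(\Phi(x) \otimes (1-h))u_{1-h}^* = x \otimes (1-h)$. Summing these gives
\begin{equation}
\iota(x) = u_h(\Phi(x) \otimes h)u_h^* + u_{1-h}(\Phi(x) \otimes (1-h))u_{1-h}^*, \quad x \in B.
\end{equation}
Lifting $u_h$ and $u_{1-h}$ to sequences $(u_{h,n})_{n=1}^\infty, (u_{1-h,n})_{n=1}^\infty$ of unitaries in $B \otimes \mathcal O_\infty$ and combining with the factorization of $\Phi$ produces c.p.c.\ maps $\theta_n \oplus \theta_n : B \to F_n \oplus F_n$ and
\begin{equation}
\tilde{\eta}_n(y_0, y_1) := u_{h,n}(\eta_n(y_0) \otimes h)u_{h,n}^* + u_{1-h,n}(\eta_n(y_1) \otimes (1-h))u_{1-h,n}^*
\end{equation}
from $F_n \oplus F_n$ to $B \otimes \mathcal O_\infty$, satisfying $\iota(x) = \lim_{n \to \omega} \tilde{\eta}_n(\theta_n(x) \oplus \theta_n(x))$. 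Each restriction $\tilde{\eta}_n|_{F_n \oplus 0}$ and $\tilde{\eta}_n|_{0 \oplus F_n}$ is a unitary conjugate of the tensor product of a $^*$-homomorphism with a positive contraction, hence c.p.c.\ order zero, yielding $\dimnuc(\iota) \leq 1$. The main conceptual ingredient is the 2-coloured decomposition $1_{\mathcal O_\infty} = h + (1-h)$ into positive contractions of full spectrum, allowing Theorem \ref{thm:TotFullCpcKirClass} to match $\iota$ with $\Phi$ up to two order zero summands; the main technical step is verifying the injectivity hypothesis of Theorem \ref{thm:TotFullCpcKirClass} for $\phi_1$, handled by the combination of Lemma \ref{lem:GoodKirchbergMaps}, Lemma \ref{lem:TotFullCpc}, and functoriality of order zero functional calculus under composition with $^*$-homomorphisms.
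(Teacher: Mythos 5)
Your proposal follows essentially the same route as the paper's proof: decompose $1_{\mathcal O_\infty}=h+(1_{\mathcal O_\infty}-h)$, match $\iota\otimes h$ with $\Phi\otimes h$ (and likewise for $1_{\mathcal O_\infty}-h$) using Theorem \ref{thm:TotFullCpcKirClass}, where $\Phi$ is the order zero map factoring through matrix algebras supplied by Lemma \ref{lem:GoodKirchbergMaps}, and then assemble the two resulting coloured summands via representing sequences. Your check of the injectivity hypothesis for $\phi_2$ (directly computing $f(\phi_2)(x)=x\otimes f(h)$) differs cosmetically from the paper, which invokes Lemma \ref{lem:TotFullCpc} for both $\phi_1$ and $\phi_2$, but the content is equivalent, and the compositional observation $f(\beta\circ\psi)=\beta\circ f(\psi)$ you use to verify the hypothesis for $\phi_1$ is what the paper leaves implicit.

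The one gap is that your argument silently assumes $B$ is unital. Lemma \ref{lem:GoodKirchbergMaps} and Theorem \ref{thm:TotFullCpcKirClass} both carry unitality as a hypothesis, as does the reduction to the first-factor embedding via \cite[Proposition 2.6]{TW:APDE} (which concerns unital strongly self-absorbing absorption). Since the corollary is stated for arbitrary Kirchberg algebras, the non-unital case still needs to be reduced to the unital one. The paper does this at the end of its proof via Zhang's dichotomy \cite{Z:PJM}: a Kirchberg algebra is either unital or stable, and in the stable case $B\cong B_0\otimes\mathcal K$ for a unital Kirchberg algebra $B_0$, whence $\dimnuc(B)=\dimnuc(B_0)=1$ by \cite[Corollary 2.8(i)]{WZ:Adv}. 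With that final reduction appended, your proof is complete and matches the paper's.
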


\begin{proof}
Since $B$ cannot be AF, $\mathrm{dim}_\mathrm{nuc}(B) \geq 1$ by \cite[Remark 2.2(iii)]{WZ:Adv}.

When $B$ is unital, $\mathrm{dim}_\mathrm{nuc}(B) \leq 1$ is obtained from Theorem \ref{thm:TotFullCpcKirClass} as in the proof of Theorem \ref{thm:FiniteDim}, using Lemma \ref{lem:GoodKirchbergMaps} in place of Lemma \ref{lem:GoodTraceMaps} as follows.  Let $(\phi_n)_{n=1}^\infty$ be the sequence of c.p.c.\ order zero maps $\phi_n:B\rightarrow B$ 
 which factorize as $\eta_n\circ\theta_n$ through finite dimensional algebras $F_n$ as in (\ref{lem:GoodKirchbergMaps.1}) given by Lemma \ref{lem:GoodKirchbergMaps}. Let $\Phi:B\rightarrow B_\omega$ be the induced map, which is c.p.c.\ and order zero, and let $\iota:B\rightarrow B_\omega$ be the canonical inclusion.  By Kirchberg's absorption theorem (\cite{Kir:ICM,KP:Crelle}),  $B\cong B\otimes\mathcal O_\infty$.  Fix a positive contraction $h\in \mathcal O_\infty$ of full spectrum, and consider the c.p.c.\ order zero maps $\phi^{(1)}:=\Phi\otimes h:B\rightarrow (B\otimes \mathcal O_\infty)_\omega$ and $\phi^{(2)}:=\iota\otimes h:B\rightarrow (B\otimes \mathcal O_\infty)_\omega$.

By Lemma \ref{lem:TotFullCpc} (which applies as Lemma \ref{lem:GoodKirchbergMaps} ensures that $(\Phi-t)_+$ is injective for each $t\in [0,1)$), $f(\phi^{(1)})$ and $f(\phi^{(2)})$ are injective for any nonzero $f \in C_0((0,1])_+$.
Therefore Theorem \ref{thm:TotFullCpcKirClass} provides a unitary $w^{(0)}\in (B\otimes \mathcal O_\infty)_\omega$ satisfying 
 \begin{equation}
 x\otimes h=w^{(0)}(\Phi(x)\otimes h)w^{(0)}{}^*,\quad x\in B.
 \end{equation}
 Working with $1_{\mathcal O_\infty}-h$ in place of $h$ we obtain another unitary $w^{(1)}\in (B\otimes \mathcal O_\infty)_\omega$ satisfying 
 \begin{equation}
 x\otimes (1_{\mathcal O_\infty}-h)=w^{(1)}(\Phi(x)\otimes (1_{\mathcal O_\infty}-h))w^{(1)}{}^*,\quad x\in B.
 \end{equation}
 Choosing representing sequences $(w_n^{(0)})_{n=1}^\infty$ and $(w_n^{(1)})_{n=1}^\infty$ of unitaries in $(B\otimes\mathcal O_\infty)$ for $w^{(0)}$ and $w^{(1)}$, we obtain c.p.c.\ maps $\theta_n\oplus\theta_n:B\rightarrow F_n\oplus F_n$ and $\tilde{\eta}_n:F_n\oplus F_n\rightarrow (B\otimes\mathcal O_\infty)$, defined for $(y_0,y_1)\in F_n\oplus F_n$ by
\begin{equation}
\tilde{\eta}_n(y_0,y_1)=w^{(0)}_n(\eta_n(y_0)\otimes h)w^{(0)}_n{}^*+w^{(1)}_n(\eta_n(y_1)\otimes (1_{\mathcal O_\infty}-h))w^{(1)}_n{}^*.
\end{equation}
Then, $x\otimes 1_{\mathcal O_\infty}$ is the limit as $n\rightarrow\omega$ of $(\tilde{\eta}_n\circ(\theta_n\oplus\theta_n)(x))_{n=1}^\infty$.  Since each $\tilde{\eta}_n$ is a sum of two c.p.c.\ order zero maps, the nuclear dimension of the first factor embedding $B\rightarrow B\otimes\mathcal O_\infty$ is $1$.  Since $\mathcal O_\infty$ is strongly self-absorbing, $\dimnuc(B)=1$ (see \cite[Proposition 2.6]{TW:APDE}).

General Kirchberg algebras are either unital or stable (\cite{Z:PJM}) and in the latter case, $B\cong B_0\otimes\mathcal K$ for some unital Kirchberg algebra $B_0$.
Hence, by \cite[Corollary 2.8(i)]{WZ:Adv}, $\dim_{\mathrm{nuc}}(B)=\dim_{\mathrm{nuc}}(B_0)=1$.
\end{proof}

Combining the previous result with the stably finite case from Section \ref{sec:FiniteDim} computes the nuclear dimension for simple, separable, unital, nuclear, $\Z$-stable $\mathrm C^*$-algebras  with compact (possibly empty) tracial boundary.
\begin{cor}
Let $A$ be a non-AF, simple, separable, unital, nuclear, $\Z$-stable $\mathrm C^*$-algebra such that $\partial_eT(A)$ is compact.  Then $\dim_{\mathrm{nuc}}(A)=1$.
\end{cor}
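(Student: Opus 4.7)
The plan is to combine the two main dimension computations, Theorem \ref{thm:FiniteDim} and Corollary \ref{DimKirchberg}, by a simple dichotomy based on whether $A$ admits a trace. The lower bound $\dim_{\mathrm{nuc}}(A) \geq 1$ is immediate in both cases from \cite[Remark 2.2(iii)]{WZ:Adv}, since $A$ is non-AF.

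First I would handle the case $T(A) = \emptyset$, which forces $\partial_e T(A) = \emptyset$ (vacuously compact). Since $A$ is simple, separable, unital, nuclear, and $\Z$-stable, strict comparison holds (Remark \ref{EQTT}), and so R\o{}rdam's dichotomy (\cite[Theorem 5.2]{R:IJM}) places $A$ either in the stably finite class or the purely infinite class. Since stably finite simple unital $\mathrm{C}^*$-algebras carry at least one tracial state, $T(A) = \emptyset$ forces $A$ to be purely infinite, hence a (unital) Kirchberg algebra, and Corollary \ref{DimKirchberg} gives $\dim_{\mathrm{nuc}}(A) = 1$.

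Next I would handle the case $T(A) \neq \emptyset$. Since $T(A)$ is the weak$^{*}$-closed convex hull of its extreme boundary and $\partial_e T(A)$ is compact by hypothesis, $T(A)$ is a Bauer simplex. All hypotheses of Theorem \ref{thm:FiniteDim} (which is Theorem \ref{thm:dn1}) are then met, so $\dim_{\mathrm{nuc}}(A) \leq 1$. Combined with the lower bound, $\dim_{\mathrm{nuc}}(A) = 1$.

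There is really no hard step here; the only subtlety is invoking the finite/infinite dichotomy to merge the traceless case with the Kirchberg algebra computation. The heavy lifting has already been done in Theorem \ref{thm:FiniteDim} (stably finite Bauer simplex case) and Corollary \ref{DimKirchberg} (Kirchberg case), and the corollary is essentially a bookkeeping statement packaging both results under the single uniform hypothesis that $\partial_e T(A)$ be compact (allowing the empty set).
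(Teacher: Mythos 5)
Your proof is correct and follows essentially the same route as the paper: a dichotomy between the traceless (purely infinite, hence Kirchberg) case handled by Corollary \ref{DimKirchberg} and the traceful (Bauer simplex) case handled by Theorem \ref{thm:FiniteDim}. The paper phrases the dichotomy directly as ``finite or purely infinite'' via Kirchberg's dichotomy theorem [R:Book, Theorem 4.1.10] rather than splitting on $T(A)=\emptyset$, but this is the same argument with a slightly different citation.
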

\begin{proof}
By Kirchberg's dichotomy theorem (see \cite[Theorem 4.1.10]{R:Book}), $A$ is either finite or purely infinite (in the case that $\partial_eT(A)=\emptyset$).  The result follows from Theorem \ref{thm:FiniteDim} in the first case and Corollary \ref{DimKirchberg} in the latter.
\end{proof}

Two-coloured classification results also follow from Theorem \ref{thm:TotFullCpcKirClass} as in the stably finite case in Section \ref{sec:2colour}.
The result applies to any pair of injective $^*$-homomorphisms -- there are no trace restrictions since $B_\omega$ has no traces.
More generally, we allow for the $\phi_i$ to be order zero maps rather than $^*$-homomorphisms, though for this we need to ask that $(\phi_i-t)_+$ is injective for $0 \leq t < 1$ (whereas in Theorem \ref{thm7.8}, the trace condition automatically ensured that $(\phi_2-t)_+$ is full).

\begin{cor}
Let $(B_n)_{n=1}^\infty$ be a sequence of Kirchberg algebras, write $B_\omega:=\prod_\omega B_n$ and let $A$ be a separable, unital, nuclear $\mathrm C^*$-algebra.
Let $\phi_1,\phi_2:A \to B_\omega$ be a pair of c.p.c.\ order zero maps such that $(\phi_i-t)_+(a)$ is nonzero for all $0\leq t<1$, $0 \neq a\in A$, and $i=1,2$.
Then there exist contractions $w^{(0)},w^{(1)}\in B_\omega$ such that
\begin{gather}
\notag
\phi_1(a) = 
w^{(0)}\phi_2(a)w^{(0)}{}^*+w^{(1)}\phi_2(a)w^{(1)}{}^*, \quad a\in A, \text{ and} \\
w^{(0)}{}^*w^{(0)}+w^{(1)}{}^*w^{(1)} = 1_{B_\omega},\label{e.9.22}
\end{gather}
and in addition, $w^{(i)*}w^{(i)}$ commutes with $\phi_2(A)$ (so that $w^{(i)}\phi_2(\cdot)w^{(i)}{}^*$ is a c.p.c.\ order zero map) for $i=0,1$.

If moreover both $\phi_1,\phi_2$ are $^*$-homomorphisms, then there exist $\tilde w^{(0)},\tilde w^{(1)}$ such that
\begin{gather}
\notag
\phi_1(a) = 
\tilde{w}^{(0)}\phi_2(a)\tilde{w}^{(0)}{}^*+\tilde{w}^{(1)}\phi_2(a)\tilde{w}^{(1)}{}^* \quad a\in A, \\
\notag
\phi_2(a) = 
\tilde{w}^{(0)}{}^*\phi_1(a)\tilde{w}^{(0)}+\tilde{w}^{(1)}{}^*\phi_1(a)\tilde{w}^{(1)} \quad a\in A, \\
\phi_1(1_A)=\tilde{w}^{(0)}\tilde{w}^{(0)}{}^*+\tilde{w}^{(1)}\tilde{w}^{(1)}{}^*, \quad \phi_2(1_A)=\tilde{w}^{(0)}{}^*\tilde{w}^{(0)}+\tilde{w}^{(1)}{}^*\tilde{w}^{(1)},
\end{gather}
and such that $\tilde{w}^{(i)}{}^*\tilde{w}^{(i)}$ commutes with $\phi_2(A)$ and $\tilde{w}^{(i)}\tilde{w}^{(i)}{}^*$ commutes with $\phi_1(A)$.  
In the case that $\phi_1,\phi_2$ are unital $^*$-homomorphisms, this says that $\phi_1,\phi_2$ are $2$-coloured equivalent in the sense of Definition \ref{def:ColourEquiv}, and in this case, $\tilde w^{(i)}$ can be chosen to be normal.
\end{cor}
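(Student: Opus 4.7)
The plan is to adapt the proof of Theorem \ref{thm:2ColourUniqueness} (specifically the implications (\ref{2ColU.1}) $\Longrightarrow$ (\ref{2ColU.3}) and (\ref{2ColU.1}) $\Longrightarrow$ (\ref{2ColU.4})) to the purely infinite setting, substituting Theorem \ref{thm:TotFullCpcKirClass} for Theorem \ref{KeyLemmaP}. Kirchberg algebras carry no traces, so the tracial hypothesis of Theorem \ref{KeyLemmaP} disappears; its role here is played by the injectivity condition on $(\phi_i - t)_+$, which is exactly what Theorem \ref{thm:TotFullCpcKirClass} demands via Lemma \ref{lem:TotFullCpc}.

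Since $\mathcal{O}_\infty$ is strongly self-absorbing, the argument of Lemma \ref{lem:Zfacts}(\ref{lem:Zfacts2}) produces identifications $B_n \cong C_n \otimes \mathcal{O}_\infty$ under which $\phi_i = \check\phi_i \otimes 1_{\mathcal{O}_\infty}$ for c.p.c.\ order zero maps $\check\phi_i : A \to \prod_\omega C_n$. Fix a positive contraction $h \in \mathcal{O}_\infty$ with spectrum $[0,1]$, and consider the c.p.c.\ order zero maps $\psi_i^{(h)}(a) := \check\phi_i(a) \otimes h$ and $\psi_i^{(1-h)}(a) := \check\phi_i(a) \otimes (1_{\mathcal{O}_\infty} - h)$ from $A$ into $B_\omega$. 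Applying Theorem \ref{thm:TotFullCpcKirClass} to the pairs $(\psi_1^{(h)}, \psi_2^{(h)})$ and $(\psi_1^{(1-h)}, \psi_2^{(1-h)})$ produces unitaries $u_h, u_{1-h} \in B_\omega$ intertwining them. Setting $w^{(0)} := u_h(1 \otimes h^{1/2})$ and $w^{(1)} := u_{1-h}(1 \otimes (1_{\mathcal{O}_\infty} - h)^{1/2})$ then yields the first conclusion by direct calculation paralleling (\ref{e7.9})--(\ref{e7.10}). For the symmetric conclusion, when the $\phi_i$ are $^*$-homomorphisms, $\check\phi_i(1_A)$ is a projection, and setting $\tilde w^{(0)} := u_h(\check\phi_2(1_A) \otimes h^{1/2})$ as in (\ref{e.6.13}) works; the auxiliary identity $\tilde w^{(0)} = (\check\phi_1(1_A) \otimes h^{1/2}) u_h$ follows by applying square-root functional calculus to the identity $u_h(\check\phi_2(1_A) \otimes h)u_h^* = \check\phi_1(1_A) \otimes h$. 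Normality in the unital case is then automatic since $\check\phi_i(1_A) = 1$.

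The main technical step is the verification of the hypothesis of Theorem \ref{thm:TotFullCpcKirClass}, namely that $f(\psi_i^{(h)})$ is injective for every nonzero $f \in C_0((0,1])_+$ (and similarly for $1-h$). The given hypothesis on $\phi_i$ descends to $\check\phi_i$ since order zero functional calculus respects the tensor decomposition: concretely, the supporting $^*$-homomorphism $\pi_{\phi_i} : C_0((0,1]) \otimes A \to B_\omega$ factors as $\pi_{\check\phi_i}(\cdot) \otimes 1_{\mathcal{O}_\infty}$, so that $(\phi_i - t)_+(a) = (\check\phi_i - t)_+(a) \otimes 1_{\mathcal{O}_\infty}$, and consequently $(\check\phi_i - t)_+$ is injective for $t \in [0,1)$. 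Since $h$ has full spectrum, $C^*(h)$ may be identified with $C_0((0,1])$ via $h \leftrightarrow \mathrm{id}_{(0,1]}$, and under this identification $\psi_i^{(h)}$ becomes the composition of the map $a \mapsto \mathrm{id}_{(0,1]} \otimes \check\phi_i(a)$ studied in Lemma \ref{lem:TotFullCpc} with an injective $^*$-homomorphism $C_0((0,1]) \otimes \prod_\omega C_n \hookrightarrow B_\omega$. Lemma \ref{lem:TotFullCpc} then supplies the required injectivity, which is preserved under post-composition with an injective $^*$-homomorphism because order zero functional calculus commutes with $^*$-homomorphisms. This injectivity verification is the only substantially new ingredient; everything else is a direct transcription of the stably finite argument.
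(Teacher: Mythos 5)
Your proof is correct and follows the paper's own argument: tensor out a strongly self-absorbing factor, apply Theorem \ref{thm:TotFullCpcKirClass} (in place of Theorem \ref{KeyLemmaP}) to produce the intertwining unitaries $u_h, u_{1-h}$, and then rerun the proof of Theorem \ref{thm:2ColourUniqueness} verbatim. The only cosmetic difference is your choice of $\mathcal O_\infty$ rather than $\mathcal Z$ as the absorbed factor — the paper passes from $B_n \cong B_n \otimes \mathcal O_\infty$ to $B_n \cong B_n \otimes \mathcal Z$ so that Lemma \ref{lem:Zfacts}(\ref{lem:Zfacts2}) can be cited as stated, whereas you implicitly rerun that lemma's proof for $\mathcal O_\infty$; both choices are valid since both algebras are strongly self-absorbing and absorbed by every Kirchberg algebra.
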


\begin{proof}
This proof follows along the same lines as the proof of Theorem \ref{thm:2ColourUniqueness}.
Via Kirchberg's absorption theorem (\cite{Kir:ICM,KP:Crelle}), $B_n\cong B_n\otimes\mathcal O_\infty\cong B_n\otimes\Z$ for each $n\in\N$. Then, by Lemma \ref{lem:Zfacts}(\ref{lem:Zfacts2}), we may assume without loss of generality that $B_n=C_n \otimes \mathcal Z$ (for a copy $C_n$ of $B_n$) such that $\phi_i=\check\phi_i \otimes 1_{\mathcal Z}$ where $(\check\phi_i-t)_+$ is injective for all $t\in[0,1)$, for $i=1,2$ (and $\check{\phi}_i$ is a (unital) $^*$-homomorphism when $\phi_i$ is). Let $h \in \mathcal Z_+$ have spectrum $[0,1]$, so that by Lemma \ref{lem:TotFullCpc}, for every nonzero $f \in C_0((0,1])_+$, the maps $f(\check\phi_i \otimes h)$ and $f(\check\phi_i \otimes (1_\Z-h))$ are injective, for $i=1,2$.
Hence by Theorem \ref{thm:TotFullCpcKirClass}, there exist unitaries $u_h,u_{1-h} \in B_\omega = \prod_\omega (C_n \otimes \Z)$ satisfying
\begin{align}
\notag
\check{\phi}_1(a)\otimes h &=u_h(\check{\phi}_2(a) \otimes h)u_h^*, \\
\check{\phi}_1(a)\otimes (1_\Z-h)&=u_{1-h}(\check{\phi}_2(a)\otimes (1_\Z-h))u_{1-h}^*,\quad a\in A.
\end{align}
This is exactly the same as \eqref{eq:2colour.2}. To obtain (\ref{e.9.22}), follow the proof of Theorem \ref{thm:2ColourUniqueness} from \eqref{eq:2colour.2} to where condition (\ref{2ColU.3}) is established.
When both $\phi_1$ and $\phi_2$ are $^*$-homomorphisms, one obtains the symmetric statements in the second half of the corollary by following the proof of Theorem \ref{thm:2ColourUniqueness} from (\ref{e.6.13}) to the end of the proof.
\end{proof}

\section*{Addendum}

Since the first version of this paper AT, SW and WW have shown in \cite{TWW:arXiv} that faithful traces on separable nuclear $\mathrm{C}^*$-algebras in the UCT class are quasidiagonal.  Consequently, in the UCT case, Question \ref{AllTracesQD?} has a positive answer and Theorem \ref{thm:dn1} gives rise to decomposition rank estimates.  A full discussion can be found in \cite[Section 6]{TWW:arXiv}.

\clearpage

\end{document}